\DeclareMathAlphabet{\mymathbb}{U}{BOONDOX-ds}{m}{n}
\def\twocell[#1]{\arrow[#1, dash, phantom, "\Rightarrow"{scale=1.125, yshift=-.4pt, description, allow upside down, sloped, inner sep=0pt}]}
\tikzset{curve/.style={settings={#1},to path={(\tikztostart)
			.. controls ($(\tikztostart)!\pv{pos}!(\tikztotarget)!\pv{height}!270:(\tikztotarget)$)
			and ($(\tikztostart)!1-\pv{pos}!(\tikztotarget)!\pv{height}!270:(\tikztotarget)$)
			.. (\tikztotarget)\tikztonodes}},
	settings/.code={\tikzset{quiver/.cd,#1}
		\def\pv##1{\pgfkeysvalueof{/tikz/quiver/##1}}},
	quiver/.cd,pos/.initial=0.35,height/.initial=0}
\newcommand{\mynewtheorem}[2]{\newaliascnt{#1}{theorem}\newtheorem{#1}[#1]{#2}\aliascntresetthe{#1}}
\newtheorem{theorem}{Theorem}[section]
\newtheorem{introthm}{Theorem}
\newaliascnt{introcor}{introthm}
\newtheorem{introcor}[introcor]{Corollary}
\theoremstyle{definition}
\newtheorem*{claim*}{Claim}
\newtheorem*{remark*}{Remark}
\newcommand{\notehelper}[3]{\textcolor{#3}{$\blacksquare$}\marginpar{\ifodd\thepage\raggedright\else\raggedleft\fi\color{#3}\tiny \textbf{#2:} #1}}
\newcommand{\Aa}{{\mathcal{A}}}
\newcommand{\Bb}{{\mathcal{B}}}
\newcommand{\Cc}{{\mathcal{C}}}
\newcommand{\Dd}{{\mathcal{D}}}
\newcommand{\Ee}{{\mathcal{E}}}
\newcommand{\Ff}{{\mathcal{F}}}
\newcommand{\Mm}{{\mathcal{M}}}
\newcommand{\Nn}{{\mathcal{N}}}
\newcommand{\Ii}{{\mathcal{I}}}
\newcommand{\Kk}{{\mathcal{K}}}
\newcommand{\Ll}{{\mathcal L}}
\newcommand{\Xx}{{\mathcal X}}
\newcommand{\Yy}{{\mathcal Y}}
\newcommand{\R} {{\mathbb{R}}}
\newcommand{\Z} {{\mathbb{Z}}}
\newcommand{\bbU}{\mathbb{U}}
\newcommand{\bbP}{\mathbb{P}}
\newcommand{\bbF}{\mathbb{F}}
\newcommand{\bbone}{{\mathbbm{1}}}
\newcommand{\cB}{\mathcal{B}}
\newcommand{\cC}{\mathcal{C}}
\newcommand{\cD}{\mathcal{D}}
\newcommand{\cE}{\mathcal{E}}
\newcommand{\cF}{\mathcal{F}}
\newcommand{\cK}{\mathcal{K}}
\newcommand{\cL}{\mathcal{L}}
\newcommand{\cM}{\mathcal{M}}
\newcommand{\cN}{\mathcal{N}}
\newcommand{\cO}{\mathcal{O}}
\newcommand{\cV}{\mathcal{V}}
\newcommand{\cX}{\mathcal{X}}
\newcommand{\cY}{\mathcal{Y}}
\newcommand{\cZ}{\mathcal{Z}}
\newcommand{\bfU}{\mathbf{U}}
\newcommand{\all}{\textup{all}}
\newcommand{\tensor}{\otimes}
\newcommand{\F}{\mathbb{F}}
\newcommand{\gl}{\textup{gl}}
\newcommand{\eq}{\textup{eq}}
\newcommand{\Bispan}{{\textup{Bispan}}}
\DeclareMathOperator{\Env}{Env}
\DeclareMathOperator{\Ext}{Ext}
\DeclareMathOperator{\Triv}{Triv}
\DeclareMathOperator{\ev}{ev}
\DeclareMathOperator{\laxlim}{lax\,lim}
\newcommand{\plaxlim}{\mathop{\laxlim^{\!\dagger\!}}}
\newcommand{\xto}{\xrightarrow}
\renewcommand{\phi}{\varphi}
\renewcommand{\epsilon}{\varepsilon}
\DeclareMathOperator{\Sp}{Sp}
\DeclareMathOperator{\Spc}{Spc}
\DeclareMathOperator{\Cat}{Cat}
\newcommand{\Rig}{\textup{Rig}}
\newcommand{\UCom}{\textup{UCom}}
\newcommand{\Glob}{\textup{Glob}}
\newcommand{\botimes}{{\rlap{$\scriptstyle\oplus$}\otimes}}
\DeclareMathOperator{\essim}{ess\,im}
\DeclareMathOperator{\Hom}{hom}
\DeclareMathOperator{\Map}{hom}
\DeclareMathOperator{\Fun}{Fun}
\DeclareMathOperator{\PSh}{PSh}
\DeclareMathOperator{\Ar}{Ar}
\DeclareMathOperator{\Stct}{St^{ct}}
\DeclareMathOperator{\CMon}{CMon}
\DeclareMathOperator{\AdTrip}{AdTrip}
\DeclareMathOperator{\Span}{Span}
\DeclareMathOperator{\Mack}{Mack}
\newcommand{\op}{{\textup{op}}}
\DeclareMathOperator{\core}{\iota}
\DeclareMathOperator{\CAlg}{\textup{CAlg}}
\newcommand{\colim}{\textup{colim}}
\newcommand{\const}{\textup{const}}
\newcommand{\id}{\textup{id}}
\newcommand{\pr}{\textup{pr}}
\newcommand{\fgt}{\textup{fgt}}
\newcommand{\diag}{\textup{diag}}
\newcommand{\cocart}{\textup{cc}}
\newcommand{\co}{\textup{cc}}
\newcommand{\cart}{\textup{ct}}
\newcommand{\ct}{\textup{ct}}
\newcommand{\BC}{\textup{BC}}
\newcommand{\Un}{\textup{Un}}
\newcommand{\Fin}{\hbox{$\mathcal F\kern-1.7pt\textit{in}$}}
\newcommand{\Orb}{{\vphantom{\textup{t}}\smash{\textup{Orb}}}}
\newcommand{\Glo}{\textup{Glo}}
\newcommand{\res}{\textup{res}}
\newcommand{\Nm}{\mathop{\vphantom{t}\smash{\textup{Nm}}}\nolimits}
\newcommand{\Ind}{\mathop{\textup{Ind}}\nolimits}
\newcommand{\Res}{\mathop{\textup{Res}}\nolimits}
\newcommand{\Inf}{\mathop{\textup{Inf}\hspace{.1em}}\nolimits}
\newcommand{\Sub}{\mathop{\textup{Sub}}\nolimits}
\newcommand{\ulhelper}[2]{\underline{\setbox0=\hbox{$#1#2$}\dp0=0.4pt \box0\relax}}
\newcommand{\ul}[1]{{\mathpalette\ulhelper{#1}}\hbox{\rule[-2pt]{0pt}{0pt}}}
\let\und=\ul
\newcommand{\Ncoprod}{{\cN\textup{-}\amalg}}
\newcommand{\Lcocart}{E\textup{-}\cocart}
\newcommand{\dcocart}{{\textup{-}\cocart}}
\newcommand{\PreOp}[2]{\mathrm{Op}^\ast_{#2}(#1)}
\newcommand{\Op}[2]{\mathrm{Op}_{#2}(#1)}
\newcommand{\PreCat}[1]{\mathrm{Cat}^\ast(#1)}
\newcommand{\ulPreCat}[1]{\und{\mathrm{Cat}}^\ast(#1)}
\newcommand{\iCat}[1]{\mathrm{Cat}(#1)}
\newcommand{\PreNmCat}[2]{\mathrm{NCat}^\ast_{#2}(#1)}
\newcommand{\NmCat}[2]{\mathrm{NCat}_{#2}(#1)}
\newcommand{\Nlax}{\cN\textup{-lax}}
\newcommand{\Nstr}{\cN\textup{-}\otimes}
\newcommand{\cR}{\mathcal{M}}
\def\Fglo{{\mathbb G}}
\def\Forb{{\mathbb O}}
\newcommand{\Ocoprod}{{\Forb\textup{-}\amalg}}
\newcommand{\lax}[1]{{#1\textup{-lax}}}
\newcommand{\neglax}[1]{{\neg #1\textup{-lax}}}
\newcommand{\negMlax}{\neglax{M}}
\newcommand{\sift}{\textup{sift}}
\newcommand{\Pmod}{\mathbb{P}^{\text{mod}}}
\newcommand{\Pder}{\mathbb{P}^{\text{der}}}
\newcommand{\Ppar}{\mathbb{P}^{\text{par}}}
\tikzset{mono/.style={>->},
         epic/.style={->>},
         norm/.style={->, shorten <=3pt,
		              postaction={decorate,
			                      decoration={markings,
                                              mark=at position 2pt with {\node[fill=white,inner sep=-1pt,circle] {$\scriptscriptstyle\otimes$};}}}}}
\newcommand{\oldrightarrownorm}{\mathrel{
\raise1.3pt\hbox{$\scriptscriptstyle\otimes$}\hskip-1.33pt{\to}}}
\newcommand{\buildarrowfromtikz}[1]{\mathrel{\begin{tikzcd}[ampersand replacement=\&, cramped, cells={nodes={inner sep=0pt}}, column sep=small] \null\arrow[r, #1]\&\null
\end{tikzcd}}}
\newcommand{\rightarrowepic}{\buildarrowfromtikz{epic}}
\newcommand{\rightarrowmono}{\buildarrowfromtikz{mono}}
\newcommand{\rightarrownorm}{\buildarrowfromtikz{norm}}
\newcommand{\buildarrowfromtikzl}[1]{\mathrel{\begin{tikzcd}[ampersand replacement=\&, cramped, cells={nodes={inner sep=0pt}}, column sep=small] \null\&\arrow[l, #1]\null
\end{tikzcd}}}
\newcommand{\leftarrowepic}{\buildarrowfromtikzl{epic}}
\newcommand{\leftarrowmono}{\buildarrowfromtikzl{mono}}
\newcommand{\sbar}{\mathbin{|}\hspace{-1pt}}
\let\smashp=\wedge
\newcommand{\shfy}{{\mathop{\textup{shfy}}}}
\newcommand{\incl}{{\mathop{\textup{incl}}}}
\newcommand{\Alg}{{\textup{Alg}}}
\def\NMon{{\textup{NMon}}}
\def\SpMack{{\textup{SpMack}}}
\def\SSet{{\textup{SSet}}}
\newcommand{\iso}{\xrightarrow{\;\smash{\raisebox{-0.25ex}{\ensuremath{\scriptstyle\sim}}}\;}}
\newcommand\noloc{%
	\nobreak
	\mspace{6mu plus 1mu}
	{:}
	\nonscript\mkern-\thinmuskip
	\mathpunct{}
	\mspace{2mu}
}
\title{Norms in equivariant homotopy theory}
\author{Tobias Lenz}
\address{T.L.: Mathematisches Institut, Rheinische Friedrich-Wilhelms-Universität Bonn, Endenicher Allee 60, 53115 Bonn, Germany}
\author{Sil Linskens}
\address{S.L.: NWF I - Mathematik, Universit\"at Regensburg, Universit\"atsstra{\ss}e 31, 93040 Regensburg, Germany}
\author{Phil Pützstück}
\address{P.P.: FB Mathematik und Informatik, Universität Münster, Einsteinstraße 62, 48149 Münster, Germany}
\begin{document}
\begin{abstract}
    We show that the $\infty$-category of \emph{normed algebras} in genuine $G$-spectra, as introduced by Bachmann--Hoyois, is modelled by strictly commutative algebras in $G$-symmetric spectra for any finite group $G$. We moreover provide an analogous description of Schwede's  \emph{ultra-commutative global ring spectra} in higher categorical terms.

    Using these new descriptions, we exhibit the $\infty$-category of ultra-commutative global ring spectra as a partially lax limit of the $\infty$-categories of genuine $G$-spectra for varying $G$, in analogy with the non-multiplicative comparison of Nardin, Pol, and the second author.

    Along the way, we establish various new results in parametrized higher algebra, which we hope to be of independent interest.
\end{abstract}

\begingroup\parskip=0pt
\maketitle
\tableofcontents
\endgroup

\section{Introduction}
Multiplicative structures on cohomology theories have been exploited fruitfully throughout the history of algebraic topology, ranging from the classical argument that the Hopf maps are stably non-trivial (using Steenrod operations on cohomology), over the simplification of Adams' solution of the Hopf invariant one problem via Adams operations on topological $K$-theory \cite{atiyah-adams}, to the resolution of the Kervaire invariant one problem by Hill, Hopkins, and Ravenel \cite{HHR2016Kervaire}.

The latter crucially relies on having a refined notion of \emph{genuine commutative algebras} in equivariant spectra, containing more information than mere $E_\infty$ algebras in the ($\infty$-)category\footnote{Throughout, we will refer to $(\infty,1)$-categories simply as \emph{categories}.} of equivariant spectra. Classically, these objects have been defined as strictly commutative algebras in a good pointset level model of $G$-spectra (for $G$ a finite group), like symmetric spectra or orthogonal spectra with $G$-action; borrowing terminology introduced by Schwede in the global case \cite{schwede2018global}, we will refer to these pointset level models as \emph{ultra-commutative equivariant ring spectra}. Many equivariant spectra of a more geometric nature, like real or complex equivariant homotopical bordism or topological $K$-theory can be explicitly written down as such ultra-commutative equivariant ring spectra, and this provides an extremely efficient way to encode a wealth of structure.

However, there are unfortunately also various $G$-spectra of a more combinatorial or categorical flavour that morally should admit a `genuine' multiplication, but where writing down an ultra-commutative structure is hard or even out-of-reach. A prominent example of this is \emph{equivariant algebraic $K$-theory} as considered for example in \cites{shimakawa, guillou-may, merling}, where it took a great deal of recent effort \cites{gmmo, yau} (involving wrestling with a significant amount of challenging categorical combinatorics) to refine this to a construction that builds ultra-commutative $G$-ring spectra from suitable categorical input. Even now, the available constructions are confined to inputs of a $1$-categorical nature, and it is not clear how one could adapt them to handle e.g.~multiplicative structures on equivariant Waldhausen $K$-theory.

On the other hand, higher category theory has allowed for a very clean understanding of multiplicative structures on \emph{non-equivariant} algebraic $K$-theory: in particular, work of Gepner--Groth--Nikolaus \cite{Gepner-Groth-Nikolaus} essentially shows that there is one and only one multiplicative refinement of the non-equivariant $K$-theory of symmetric monoidal ($\infty$-)categories. It is natural to hope for a similarly satisfying higher categorical treatment in the equivariant case. However, the work of the aforementioned authors crucially relies on the universal property of the category of spectra, of which there is no equivariant analogue; moreover, such an approach could only ever produce $E_\infty$ algebras in the category of $G$-spectra, which as mentioned above falls short of the kind of structure we are looking for.

\subsection*{Normed categories and normed algebras}
As the $E_\infty$ operad is the terminal operad, algebras over any other operad admit a `forgetful' functor from $E_\infty$ algebras. On the other hand, ultra-commutative ring spectra ought to be a more highly structured object than $E_\infty$ ring spectra and instead come with a natural forgetful functor \emph{to} $E_\infty$ ring spectra, so we should not expect to be able to describe them using ordinary higher algebra.\footnote{In fact, we will prove in Appendix~\ref{app:ordinary-HA} that for $G\not=1$ the forgetful functor from ultra-commutative $G$-ring spectra to $G$-spectra does not arise as a forgetful functor from a category of operadic algebras in $G$-spectra.} We therefore need another way to encode the extra multiplicative structure present in an ultra-commutative $G$-ring spectrum compared to an ordinary $E_\infty$ ring.

For this it is useful to first look at the corresponding structure on the zeroth homotopy groups $(\pi_0^HR)_{H\subset G}$ of such an ultra-commutative $G$-ring spectrum $R$: this structure can be completely computed by a Yoneda argument, and the extra data precisely consists of certain \emph{norm maps}
\[
    \Nm^H_K\colon \pi_0^KR\to\pi_0^HR
\]
for all subgroup inclusions $K\subset H\subset G$, which are morally given by sending a homotopy class $x\in\pi_0^KX$ to its $|H/K|$-fold power, with a specific, `twisted' action. These `twisted power operations' come from similarly defined maps
\[
    \Nm^H_K\Res^G_KR\to\Res^G_HR
\]
on the pointset level, where $\Res$ denotes restriction and $\Nm^H_K$ is now literally given by an $|H/K|$-fold smash product with a certain twisted action (the so-called \emph{Hill--Hopkins--Ravenel norm}, in honour of \cite{HHR2016Kervaire}). It is therefore quite natural to define a \emph{normed $G$-ring spectrum} as a $G$-spectrum $R$ together with `coherent' maps
\[
    \mathbb S\to R,\qquad R\smashp R\to R,\qquad \Nm^H_K\Res^G_KR\to\Res^G_HR \text{ for $K\subset H\subset G$.}
\]
Of course, the tricky part is to find a description that actually does take care of the infinite amount of homotopy coherence data implicit in the definition above. This was achieved by Bachmann and Hoyois \cite{BachmannHoyois2021Norms}; building on ideas of Hill and Hopkins \cite{hh1, hh2} and in line with the general philosophy of \emph{parametrized higher category theory} \cite{exposeI}, the key idea is to not view the category $\Sp_G$ of genuine $G$-spectra as an isolated object, but instead to consider the categories $\Sp_G$ for all finite groups $G$ at the same time, together with the contravariant functoriality in restrictions along group homomorphisms and the covariant functoriality in norm maps along subgroup inclusions. More formally, one can define\footnote{We give two equivalent definitions of $\ul\Sp^\otimes$ in the main text: one via model categories (Construction~\ref{constr:spotimes}) as outlined here and a purely $\infty$-categorical definition using spectral Mackey functors (see Theorem~\ref{thm:equivariant-uniqueness}, where we also show that the two definitions are indeed equivalent).} a certain (finite) product-preserving functor
\[
    \ul\Sp^\otimes\colon\Span_{\Forb}(\Fglo)\to\Cat
\]
from the category of \emph{spans} \cite{barwick2017spectral} of finite $1$-groupoids, whose objects are finite groupoids and whose morphisms are given by diagrams
\[
    \begin{tikzcd}
        A &[-.25em]\arrow[l] B\arrow[r, mono] & C
    \end{tikzcd}
\]
where the right-pointing morphism is in the wide subcategory $\Forb\subset\Fglo$ of faithful maps; this functor sends a finite group $G$ (viewed as a $1$-object groupoid) to the category $\und{\Sp}^\tensor(G) = \Sp_G$ of $G$-spectra, a left pointing, `backwards' morphism $G\xleftarrow{\;\;}H$ given by a group homomorphism $\phi\colon H\to G$ to the restriction functor $\phi^*\colon\Sp_G\to\Sp_H$, and a right-pointing, `forwards' morphism $H\rightarrowmono G$ given by a subgroup inclusion $H\subset G$ to the Hill--Hopkins--Ravenel norm $\Nm^G_H\colon\Sp_H\to\Sp_G$; the usual smash product of $G$-spectra is encoded in the functoriality with respect to the (right-pointing) fold maps $G\amalg G\rightarrowmono G$. We call such product-preserving functors $\Span_{\Forb}(\Fglo)\to\Cat$ \emph{normed global categories}, and refer to $\ul\Sp^\otimes$ as the \emph{normed global category of equivariant spectra}. The category of \emph{normed $G$-algebras} in any normed global category $\Cc^\otimes$ can then be rigorously defined as a certain category of functors over $\Span_{\Forb}(\Fglo)$ from $\Span(\mathbb F_G)$ to the cocartesian unstraightening $\Un^\cocart(\Cc^\otimes)$, i.e.~as a \emph{partially lax limit}, and for $\Cc^\otimes=\ul\Sp^\otimes$ this yields a rigorous implementation of the heuristic above for what a normed $G$-ring spectrum should be.\footnote{Note that for a fixed $G$, the definition only depends on the restriction of $\ul\Sp^\otimes$ to a functor $\Span(\mathbb F_G)\to\Cat$. This is the perspective taken in \cite{NardinShah}, where such product-preserving functors are called \emph{$G$-symmetric monoidal $G$-$\infty$-categories}.}

While the definition of normed algebras is considerably more involved than that of ultra-commutative $G$-ring spectra, they are often the more natural objects to consider in higher categorical contexts, and they are significantly easier to construct in this setting. As one example of this, combined work of Elmanto--Haugseng \cite{Elmanto_Haugseng_Bispans} as well as of Cnossen, Haugseng, and the first two authors of the present article \cite{CHLL_NRings} shows how various equivariant algebraic $K$-theory constructions refine (in one fell swoop) to produce normed $G$-ring spectra from input with extra symmetric monoidal structure; in particular, this gives a multiplicative equivariant algebraic $K$-theory functor producing normed $G$-ring spectra from stably symmetric monoidal categories with $G$-action.

\subsection*{Normed algebras vs.~the pointset model} In view of the above discussion, it would be desirable to be able to freely move between these two approaches, thereby combining the best of both worlds. However, while the expectation that normed $G$-ring spectra should be equivalent to ultra-commutative $G$-ring spectra has been expressed at several places in the literature \cites{BachmannHoyois2021Norms, NardinShah}, this comparison had resisted formal proof so far. As the first main result of the present paper, we close this gap. Let $\UCom_{G}$ denote the ($\infty$-)category obtained by Dwyer--Kan localizing the 1-category of strictly commutative algebras in symmetric $G$-spectra at the underlying stable equivalences. We prove the following theorem:

\begin{introthm}[See Theorem~\ref{thm:equivariant-model}]\label{introthm:equivariant-param}
    Let $G$ be a finite group. There exists an explicit equivalence
    \begin{equation}\tag{$*$}\label{introequ:equivariant-equivalence}
        \UCom_\text{$G$}\simeq\CAlg_G(\ul\Sp^\otimes),
    \end{equation}
    between the category of ultra-commutative $G$-ring spectra and the category of normed $G$-ring spectra. Moreover, this equivalence is
    natural in homomorphisms of finite groups and compatible with the forgetful functors to $\Sp_G$.
\end{introthm}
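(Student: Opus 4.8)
The plan is to exhibit both sides of the asserted equivalence $(*)$ as localizations of point-set categories and to reduce the comparison to a rectification statement of the classical type. Fix the category $\mathrm{Sp}^{G\Sigma}$ of $G$-symmetric spectra as the point-set model, with its smash product and the point-set Hill--Hopkins--Ravenel norms $\Nm^H_K$ for $K\subset H\subset G$. The first task is to organize these into a \emph{strict} model of a normed global category: a functor of relative categories (equivalently, of $1$-categories with weak equivalences) out of a suitable presentation of $\Span_{\Forb}(\Fglo)$ sending $G$ to $\mathrm{Sp}^{G\Sigma}$, backwards maps to point-set restrictions, and faithful maps to point-set norms. The subtlety is that the norms only satisfy the base-change and distributivity relations built into $\Span_{\Forb}(\Fglo)$ up to coherent natural isomorphism, so this step should be run through the general strictification machinery for normed structures arising from symmetric monoidal model categories developed in the parametrized-higher-algebra part of the paper; inverting the weak equivalences then produces an $\infty$-categorical normed global category, which I will denote $\ul\Sp^{\otimes,\Sigma}$.

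Second, I would identify $\ul\Sp^{\otimes,\Sigma}$ with $\ul\Sp^\otimes$ as normed global categories. On underlying categories this is the classical equivalence $\mathrm{Sp}^{G\Sigma}[W^{-1}]\simeq\Sp_G$ (with $W$ the underlying stable equivalences), and on backwards maps the equally classical identification of derived point-set restriction with $\phi^*$; the genuinely new input is that the derived point-set norm models the Bachmann--Hoyois norm. This is cleanest via a universal property: after localization $\ul\Sp^{\otimes,\Sigma}$ is a presentably symmetric monoidal, stable normed global category under the normed global category of equivariant spaces whose norms are distributive and preserve the relevant generators, so the universal characterization of equivariant stable homotopy theory as the normed stabilization of equivariant spaces supplies the desired equivalence. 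In particular $\CAlg_G(\ul\Sp^{\otimes,\Sigma})\simeq\CAlg_G(\ul\Sp^\otimes)$.

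Third, and here lies the heart of the argument, I would prove the rectification statement $\CAlg_G(\ul\Sp^{\otimes,\Sigma})\simeq\CAlg(\mathrm{Sp}^{G\Sigma})[W^{-1}]=\UCom_G$, where the right-hand side denotes the $\infty$-categorical localization. This mirrors the non-equivariant fact that strict commutative monoids in symmetric spectra model $E_\infty$-rings: one equips $\mathrm{Sp}^{G\Sigma}$ with the positive stable model structure, lifts it to strict commutative monoids, and verifies the key homotopical input — the parametrized analogue of the commutative-monoid axiom, namely that the point-set norm functors, and more generally the symmetric powers twisted by the relevant wreath-product actions, are homotopical on cofibrant objects, so that a cofibrant strict commutative monoid computes the free normed $G$-algebra on its underlying spectrum. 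A monadic bar-resolution, or the general ``strict algebras compute parametrized $\infty$-algebras'' theorem from the parametrized-algebra section, then upgrades this to the claimed equivalence of $\infty$-categories, manifestly compatibly with the forgetful functors to $\mathrm{Sp}^{G\Sigma}[W^{-1}]\simeq\Sp_G$.

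Finally, naturality in a homomorphism $\alpha\colon G'\to G$ should come for free from the uniformity of the construction: each of the three steps is functorial in $G$ — via restriction of symmetric spectra on the point-set side, via restriction of the base along the induced functor $\Span(\mathbb F_{G'})\to\Span(\mathbb F_{G})$ on the $\infty$-categorical side — and the comparisons of the second and third steps are set up over $\Span_{\Forb}(\Fglo)$. I expect the main obstacle to be the third step, and within it the verification of the parametrized commutative-monoid axiom for the positive stable model structure on $G$-symmetric spectra: this is exactly where, non-equivariantly, one is forced onto the positive model structure, while equivariantly it amounts to controlling the homotopy type of iterated twisted norms, in the spirit of the Hill--Hopkins--Ravenel norm being homotopical. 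A secondary difficulty is the clean packaging, in the first step, of the point-set norms into a single strict functor on all of $\Span_{\Forb}(\Fglo)$.
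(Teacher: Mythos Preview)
Your proposal follows the ``obvious'' direct route, and the paper explicitly explains why this route is blocked. The crux is your third step: you want to compare the point-set free functor $X\mapsto\bigvee_n X^{\wedge n}/\Sigma_n$ with the parametrized free functor $\bbP$ on $\CAlg_G(\ul\Sp^\otimes)$ via a monadicity argument. The paper does have a universal formula for $\bbU\bbP$ (Theorem~\ref{free_omnibus}), but in the equivariant context it takes the form $\colim_{\core\ul{\mathbb F}_G}\ul X$, i.e.\ a parametrized colimit over the $G$-space of finite $G$-sets (the $\mathbf O_{G\times\Sigma_n,\Gamma_n}$ of Nardin--Shah). As Remark~\ref{rk:equivariant-sucks} and the introduction spell out, there is no evident way to rewrite this in terms of restrictions, norms, and inductions, and hence no way to match it against the point-set symmetric powers; the paper says flatly that ``the formula provided by Nardin and Shah is hard to understand in model categorical terms, and we can offer no simplification of their description.'' Your appeal to a ``parametrized commutative-monoid axiom'' does not bridge this gap: knowing that norms and twisted symmetric powers are homotopical on cofibrant objects tells you that the point-set free functor derives correctly, but not that the derived functor agrees with the \emph{different} colimit computing the parametrized free functor.

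The paper's fix is to pass to the \emph{global} setting first: there the corepresenting object $\core\ul{\mathbb F}$ decomposes as $\coprod_n\ul\Sigma_n$ (Proposition~\ref{prop:decompose-global-sets}), giving the clean formula $\bbU\bbP(X)\simeq\coprod_n\Sub^1_{\Sigma_n}\Nm^{\Sigma_n}_{\Sigma_{n-1}}\Inf^1_{\Sigma_{n-1}}X$ (Theorem~\ref{thm:free-algebra-formula}), which \emph{can} be matched against the point-set formula. The equivariant statement is then deduced in Section~\ref{sec:globalize-equivariant-alg} by exhibiting both $\UCom_G\hookrightarrow\UCom_\gl$ and $\CAlg_G(\ul\Sp^\otimes)\hookrightarrow\CAlg_\gl(\ul\Sp_\gl^\otimes)$ as full subcategories with matching essential images. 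This detour also resolves a second gap in your outline: for a non-injective $\alpha\colon G'\to G$ there is no functor $\Span(\mathbb F_{G'})\to\Span(\mathbb F_G)$, so the naturality of $\CAlg_G(\ul\Sp^\otimes)$ in \emph{all} homomorphisms is genuinely nontrivial (see the introduction and Warning~\ref{warn:U-ext-no-strict}); the paper obtains it by constructing the global category $\ul\CAlg_{\eq\triangleright\gl}(\ul\Sp^\otimes)$ via Theorem~\ref{thm:global-extension}.
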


Maybe surprisingly, it is not clear that the right hand side is in fact functorial in all group homomorphisms instead of merely the injective ones, and part of the work is understanding the functoriality of $\CAlg_G(\ul\Sp^\otimes)$. Both this as well as our proof of the individual equivalences (\ref{introequ:equivariant-equivalence}) rely on \emph{global homotopy theory}, which we will recall now:

\subsection*{Global ultra-commutativity} The above examples of bordism spectra and topological $K$-theory spectra share a common feature in that they do not exist merely for a fixed group $G$, but naturally come to us as a `compatible family' of genuine $G$-spectra for all finite groups $G$.  Schwede's framework of global homotopy theory provides a rigorous way to talk about such phenomena; in its original formulation \cite{schwede2018global}, the basic objects of study are plain orthogonal or symmetric spectra, but viewed through a very fine-grained notion of weak equivalence, the \emph{global weak equivalences}, that see equivariant information for all finite groups; for every such \emph{global spectrum} $X$, we obtain a family of equivariant spectra by simply equipping $X$ with the trivial $G$-action for all finite groups $G$.\footnote{Note that, crucially, this is a procedure that would not be homotopically meaningful for the usual non-equivariant weak equivalences of symmetric or orthogonal spectra.}

One can then again define an \emph{ultra-commutative global ring spectrum} simply as a strictly commutative algebra on the $1$-categorical level, which similarly yields a family of ultra-commutative $G$-ring spectra for all $G$. The aforementioned examples of ultra-commutative equivariant bordism and topological $K$-theory spectra all arise via this procedure. In particular, despite its seeming na\"ivet\'e, this approach allows one to capture various interesting examples, and it does so in an extremely efficient and compact way. The global formalism can often be exploited fruitfully even if one is only interested in phenomena for a fixed group $G$ \cites{schwede2017powers,hausmann2022global, schwede2025snaith, LLP}.

Unfortunately, similarly to the equivariant story, the concept of global ultra-commutativity is opaque to higher categorical techniques, and this makes them hard to produce via such methods. Accordingly, one would like to have a notion of a \emph{normed global ring spectrum} defined in a higher categorical fashion. These were first introduced by the third author \cite{puetzstueck-new}. The construction relies on a certain \emph{normed global category of global spectra} $\ul\Sp_\gl^\otimes\colon\Span_{\Forb}(\Fglo)\to\Cat$; this sends the trivial group to Schwede's category $\Sp_\gl$ of global spectra, and more generally sends a finite group $G$ to the category $\Sp_\text{$G$-gl}$ of \emph{$G$-global spectra} in the sense of \cite{g-global}. The functoriality in backwards maps is again given by certain restriction functors, while the forward functoriality encodes $G$-global refinements of the smash product and the Hill--Hopkins--Ravenel norm; we refer the reader to Construction~\ref{constr:spgl} below for a precise definition. Using this, one can then define \emph{$G$-global normed ring spectra} as a certain category of functors $\Span_\Forb(\Fglo_{/G})\to\Un^{\cocart}(\ul\Sp_\gl^\otimes)$ over $\Span_{\Forb}(\Fglo)$. As in the equivariant context, we show:

\begin{introthm}[See Theorem~\ref{thm:global-model}]\label{introthm:global-param}
    For every $G$, there exists an explicit equivalence
    \[
        \UCom_\textup{$G$-gl}\simeq\CAlg_\textup{$G$-gl}(\ul\Sp^\otimes_\gl)
    \]
    between the category of $G$-global ultra-commutative ring spectra (again defined as strictly commutative algebras in a pointset model) and the category of $G$-global normed ring spectra. Moreover, these equivalences are natural in homomorphisms of finite groups and compatible with the forgetful functors to $\Sp_\textup{$G$-gl}$.
\end{introthm}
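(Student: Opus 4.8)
The plan is to bridge the two sides through a common pointset-level model together with a general strictification theorem for normed algebras, which is where most of the parametrized-algebra input enters. By construction $\UCom_\textup{$G$-gl}$ is the $\infty$-categorical localization of the $1$-category of strictly commutative monoids in a pointset model for $G$-global spectra, so the task is genuinely to compare this localization with the partially lax limit defining $\CAlg_\textup{$G$-gl}(\ul\Sp^\otimes_\gl)$.

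First I would set up the pointset model. By \cite{g-global}, $G$-global spectra are presented by a symmetric monoidal model category $\mathbf C_G$ of symmetric spectra with $G$-action (in a positive $G$-global stable model structure), and $\UCom_\textup{$G$-gl}$ is the localization of the category $\CAlg(\mathbf C_G)$ of strictly commutative monoids — with its induced commutative-monoid model structure — at the $G$-global equivalences. The first step is to organize the $\mathbf C_G$ into a strict diagram of symmetric monoidal model categories indexed by $\Span_\Forb(\Fglo)$, with backward maps acting by restriction along group homomorphisms and forward maps by indexed smash powers, the forward map along a subgroup inclusion $K\subseteq H$ being the pointset-level Hill--Hopkins--Ravenel norm $\Nm^H_K$. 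Using the homotopical properties of these functors from \cite{g-global} and \cite{puetzstueck-new} — in particular that their derived functors are exactly the $G$-global norms assembled in Construction~\ref{constr:spgl} — one checks the base-change and distributivity coherences ensuring that object-wise inversion of weak equivalences yields a finite-product-preserving functor $\Span_\Forb(\Fglo)\to\Cat$, which by construction is $\ul\Sp^\otimes_\gl$.

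The heart of the argument is a general strictification statement: for a sufficiently nice strict diagram $\ul{\mathbf D}^\otimes\colon\Span_\Forb(\Fglo)\to(\textup{symmetric monoidal model categories})$ — each value combinatorial and satisfying White's commutative monoid axiom, with forward functors left Quillen and flat enough that indexed smash powers compute the normed structure — there is a natural equivalence
\[
    \CAlg\big(\mathbf D(G)\big)[\textup{w.e.}^{-1}]\;\simeq\;\CAlg_\textup{$G$-gl}\big(\ul{\mathbf D}^\otimes[\textup{w.e.}^{-1}]\big),
\]
where on the left $G$ is viewed as a one-object groupoid. To prove this I would unwind the partially lax limit on the right: a normed algebra amounts to a compatible family of commutative algebras in the $\mathbf D(A)$ together with coherent comparisons between indexed smash powers and the multiplications. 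On the pointset side these comparisons are rigid — the $n$-fold indexed smash power of a strictly commutative monoid is canonically the monoid itself with its tautological $\Sigma_n$-action — so each object-wise comparison reduces to the classical rectification of $E_\infty$-algebras in a symmetric monoidal model category, and the remaining work is to propagate this rigidity coherently over all of $\Span_\Forb(\Fglo)$ and to promote the object-wise equivalences to an equivalence of $\infty$-categorical functors. That last step also produces the functoriality of $\CAlg_\textup{$G$-gl}(\ul\Sp^\otimes_\gl)$ in all group homomorphisms. Feeding the diagram of the previous paragraph into this theorem gives the asserted equivalence; naturality in $G$ and compatibility with the forgetful functors to $\Sp_\textup{$G$-gl}$ are inherited from the constructions, since both sides of the general equivalence respect evaluation at the distinguished object, respectively the underlying-object functor.

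I expect the strictification theorem to be the main obstacle. Already non-equivariantly, rectification is delicate: one passes to positive model structures to handle the non-cofibrancy of the sphere and resolves symmetric powers by $E\Sigma_n$. Here all of this must be carried out uniformly over every finite group and compatibly with every norm functor $\Nm^H_K$, which mixes the $\Sigma_n$-actions for varying $n$ with the ambient $G$-action; establishing the requisite positive $G$-global model structures on commutative monoids, the left Quillen property and flatness of the norms, and — most subtly — the coherence needed to upgrade object-wise rectification to a functor out of $\Span_\Forb(\Fglo)$ is where essentially all of the new parametrized-algebra results of the paper are spent. Once Theorem~\ref{thm:global-model} is available, Theorem~\ref{thm:equivariant-model} follows by a compatible localization argument, $\Sp_G$ arising from $\Sp_\textup{$G$-gl}$ by a symmetric-monoidal, norm-compatible localization and $\UCom_G$ from $\UCom_\textup{$G$-gl}$ in the same way; this is in particular how the functoriality on the equivariant side is obtained.
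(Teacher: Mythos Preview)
Your setup is correct and matches the paper: you assemble the pointset categories into a normed diagram over $\Span_\Forb(\Fglo)$, localize to obtain $\ul\Sp_\gl^\otimes$, and use the pointset identification $\ul\CAlg_\gl\big((\Sp^{\Sigma,\otimes}_\text{flat})^\flat\big)\simeq\big(\CAlg(\Sp^{\Sigma,\otimes}_\text{flat})\big)^\flat$ (this is \cite{puetzstueck-new}*{Proposition~3.12}) to produce a comparison functor $\Phi\colon\ul\UCom_\gl\to\ul\CAlg_\gl(\ul\Sp_\gl^\otimes)$ over $\ul\Sp_\gl$. Your plan to deduce the equivariant statement from the global one by a compatible localization is also what the paper does.

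The gap is in your strictification theorem. The assertion that ``each object-wise comparison reduces to the classical rectification of $E_\infty$-algebras'' does not hold in any useful sense: a normed $G$-global algebra carries strictly more structure than an $E_\infty$ algebra in $\Sp_\textup{$G$-gl}$ (namely the norm maps $\Nm^H_K\Res R\to R$ and their coherences), and classical $E_\infty$ rectification says nothing about those. You acknowledge this by writing that the remaining work is to ``propagate this rigidity coherently over all of $\Span_\Forb(\Fglo)$,'' but that phrase is a restatement of the original problem rather than a strategy. Indeed, your outline would apply verbatim to the \emph{equivariant} comparison (Theorem~\ref{introthm:equivariant-param}), which the paper explicitly says it does not know how to prove directly (see the discussion under ``Strategy and outline'' and Remark~\ref{rk:equivariant-sucks}); this is a strong indication that something essential is missing.

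The paper's route is quite different: a \emph{monadicity} argument. Both forgetful functors to $\Sp_\textup{$G$-gl}$ are monadic, so it suffices that $\Phi$ intertwines the free functors. The key input is the universal formula of Theorem~\ref{thm:free-algebra-formula}: in any (weakly) distributive normed global category one has
\[
    \mathbb U\mathbb P(X)\;\simeq\;\coprod_{n\ge0}\Sub^{G}_{\Sigma_n\times G}\Nm^{\Sigma_n\times G}_{\Sigma_{n-1}\times G}\Inf^{G}_{\Sigma_{n-1}\times G}(X),
\]
and crucially this identification is \emph{oplax natural} in normed functors. Applied simultaneously to $(\Sp^{\Sigma,\otimes}_\text{flat})^\flat$, to $\ul\Sp_\gl^\otimes$, and to the localization $L$ between them, the comparison of monads becomes the concrete pointset question of whether each constituent of the formula (coproduct, inflation, norm, subduction) derives correctly on positively flat spectra; the delicate step is that $\Sigma_n$ acts levelwise freely on $X^{\wedge n}$ for $X$ positively flat (Lemma~\ref{lemma:smash-power-free}), so that $\Sub^1_{\Sigma_n}$ needs no further replacement. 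The parametrized higher algebra of Sections~\ref{sec:basic}--\ref{sec:free-global} is developed precisely to establish this formula and its naturality, not to carry out a coherent upgrade of object-wise $E_\infty$ rectification.
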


\subsection*{Global ultra-commutative ring spectra from equivariant data} In \cite{LNP}, Nardin, Pol, and the second author made precise the heuristic that global spectra encode `compatible families of equivariant spectra' by producing an equivalence between the category of global spectra and a certain partially lax limit of the categories of $H$-equivariant spectra for varying $H$:
\begin{equation}\tag{\ensuremath{\dagger}}\label{eq:spgl-as-plaxlim}
    \Sp_\gl\simeq\plaxlim_{H\in\Fglo^\op}{}\Sp_H\hskip-2pt.
\end{equation}
In \cite{Linskens2023globalization}, the second author generalized this comparison to $G$-global spectra for any finite group $G$, and moreover recast this comparison in terms of a universal property---the global category $\ul\Sp_\gl$ of ($G$-)global spectra is obtained from the global category $\ul\Sp$ of equivariant spectra by freely adjoining certain \emph{parametrized colimits}: $\ul\Sp$ is \emph{equivariantly presentable} in the sense of \cite{CLL_Clefts}, and the inclusion $\ul\Sp\hookrightarrow\ul\Sp_\gl$ is the initial example of an equivariantly cocontinuous functor to a \emph{globally presentable} global category; roughly speaking, this means that $\ul\Sp_\gl$ is obtained from $\ul\Sp$ by freely adjoining left adjoints to restrictions along \emph{surjective} homomorphisms, such that these left adjoints satisfy a Beck--Chevalley condition for pullback squares of finite groupoids. As shown in \cite{Linskens2023globalization}, any equivariantly presentable global category admits a \emph{globalization}, i.e.~an initial equivariantly cocontinuous functor to a globally presentable category, and this globalization can be computed as a partially lax limit analogous to $(\ref{eq:spgl-as-plaxlim})$.

As the second main contribution of our paper, we investigate how the process of globalization interacts with norms. In particular, we show (Theorem~\ref{thm:Normed_Rig}) that the globalization of a normed category again admits a canonical normed structure, and that globalization sends categories of \emph{equivariant} parametrized algebras to categories of \emph{global} parametrized algebras (Theorem~\ref{thm:globalize-algebras}). Together with Theorems~\ref{introthm:equivariant-param} and~\ref{introthm:global-param} above, this has the following concrete consequences, see Theorem~\ref{thm:ucom-glob}:

\begin{introthm}\label{introthm:globalize-ucom}
    The inclusion $\ul\UCom\hookrightarrow\ul\UCom_\gl$ (left adjoint to the forgetful functors) is the initial example of an equivariantly cocontinuous functor from the global category $\ul\UCom$ of equivariant ultra-commutative ring spectra to a globally presentable category.
\end{introthm}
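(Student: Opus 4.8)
The plan is to deduce the statement from the behaviour of globalization on parametrized algebras, encoded in Theorems~\ref{thm:Normed_Rig} and~\ref{thm:globalize-algebras}, and then to translate back to ultra-commutative ring spectra using the pointset comparisons of Theorems~\ref{introthm:equivariant-param} and~\ref{introthm:global-param}. As a first step I would upgrade the latter two results to equivalences of global categories: the asserted naturality in homomorphisms of finite groups together with the compatibility with the forgetful functors should promote the fibrewise equivalences to equivalences
\[
    \ul\UCom \simeq \ul\CAlg(\ul\Sp^\otimes), \qquad \ul\UCom_\gl \simeq \ul\CAlg(\ul\Sp_\gl^\otimes)
\]
of global categories, compatible with the forgetful functors down to $\ul\Sp$ and $\ul\Sp_\gl$ respectively; here $\ul\CAlg(\ul\Sp^\otimes)$ and $\ul\CAlg(\ul\Sp_\gl^\otimes)$ denote the global categories of equivariant, resp.\ global, parametrized algebras, with fibres $\CAlg_G(\ul\Sp^\otimes)$ and $\CAlg_{G\text{-gl}}(\ul\Sp_\gl^\otimes)$.

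Next I would identify $\ul\Sp_\gl^\otimes$ with the globalization of $\ul\Sp^\otimes$ as a normed global category. By \cite{Linskens2023globalization} the underlying global category $\ul\Sp_\gl$ is the globalization of $\ul\Sp$, i.e.\ $\ul\Sp\hookrightarrow\ul\Sp_\gl$ is the initial equivariantly cocontinuous functor to a globally presentable global category; Theorem~\ref{thm:Normed_Rig} upgrades this globalization to a normed global category, and what remains is to match this canonical normed structure with the explicit $G$-global smash products and Hill--Hopkins--Ravenel norms of Construction~\ref{constr:spgl}. With this identification in hand, Theorem~\ref{thm:globalize-algebras} applied to $\Cc^\otimes=\ul\Sp^\otimes$ shows at once that the inclusion $\ul\CAlg(\ul\Sp^\otimes)\hookrightarrow\ul\CAlg(\ul\Sp_\gl^\otimes)$ is the globalization, the source being equivariantly presentable since $\ul\Sp^\otimes$ is.

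It then remains to transport this along the two equivalences of the first step. This is immediate for the universal property and for global presentability of the target, and the only substantive point is to recognise that the globalization inclusion $\ul\CAlg(\ul\Sp^\otimes)\hookrightarrow\ul\CAlg(\ul\Sp_\gl^\otimes)$ corresponds under the equivalences to the left adjoint of the forgetful functor $\ul\UCom_\gl\to\ul\UCom$: both are induced by the strong normed globalization functor $\ul\Sp^\otimes\to\ul\Sp_\gl^\otimes$ and its lax right adjoint through the parametrized adjunction $\CAlg(F)\dashv\CAlg(U)$, so they agree once the forgetful functor $\ul\UCom_\gl\to\ul\UCom$ is itself identified with the one induced by the forgetful functor $\ul\Sp_\gl\to\ul\Sp$.

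The main obstacle is the coherence bookkeeping behind the first two steps rather than any isolated calculation. One has to verify that the equivalences of Theorems~\ref{introthm:equivariant-param} and~\ref{introthm:global-param} are compatible not only with restriction along group homomorphisms but with the full normed global category structure, so that they descend to equivalences of the global categories of parametrized algebras compatibly with the forgetful functors; and one has to pin down that the abstractly produced normed structure on the globalization of $\ul\Sp^\otimes$ from Theorem~\ref{thm:Normed_Rig} really is the hand-built $G$-global one of Construction~\ref{constr:spgl}, not merely a normed structure on the same underlying global category. A lesser but still genuine point is the compatibility of the inclusion in Theorem~\ref{thm:globalize-algebras} with the free--forgetful adjunctions, needed so that the transported functor is literally the left adjoint $\ul\UCom\hookrightarrow\ul\UCom_\gl$ appearing in the statement.
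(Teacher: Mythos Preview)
Your proposal is correct and follows essentially the same route as the paper. The paper's proof of Theorem~\ref{thm:ucom-glob} cites the commutative square
\[
\begin{tikzcd}
    \ul\UCom\arrow[r,hook]\arrow[d,"\sim"'] & \ul\UCom_\gl\arrow[d,"\sim"]\\
    \ul\CAlg_{\eq\triangleright\gl}(\ul\Sp^\otimes)\arrow[r,"\Ll"'] & \ul\CAlg_\gl(\ul\Sp^\otimes_\gl)
\end{tikzcd}
\]
established in the proof of Theorem~\ref{thm:equivariant-model}, and then concludes via Theorem~\ref{thm:globalize-algebras} together with Corollary~\ref{cor:globalize-sp-monoidal} (the latter being exactly your step of matching the normed structure on $\Glob(\ul\Sp^\otimes)$ with that of Construction~\ref{constr:spgl}). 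One organizational difference worth noting: the global category you denote $\ul\CAlg(\ul\Sp^\otimes)$ is the paper's $\ul\CAlg_{\eq\triangleright\gl}(\ul\Sp^\otimes)$, and its very existence as a \emph{global} category (functorial in all group homomorphisms, not merely injective ones) is produced as part of Theorem~\ref{thm:global-extension} rather than read off from Theorem~\ref{introthm:equivariant-param}; likewise, the commuting square above and the identification of $\Ll$ with $\ul\UCom\hookrightarrow\ul\UCom_\gl$ are obtained in the paper by comparing essential images inside $\ul\UCom_\gl\simeq\ul\CAlg_\gl(\ul\Sp_\gl^\otimes)$, rather than via the adjunction argument you sketch. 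These are precisely the ``coherence bookkeeping'' points you flagged, and the paper resolves them by building the equivariant comparison \emph{through} the global one rather than independently.
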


\begin{introcor}
    There are equivalences \[\UCom_\textup{$G$-gl}\simeq\plaxlim\limits_{\phi\colon H\to G}{}\,\UCom_H\] natural in the finite group $G$ and compatible with the forgetful functors.
\end{introcor}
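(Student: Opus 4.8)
The plan is to read the corollary off from Theorem~\ref{introthm:globalize-ucom} by feeding it into the general machinery of \cite{Linskens2023globalization}. Recall from \emph{loc.\ cit.}\ that every equivariantly presentable global category $\ul\Cc$ admits a globalization $\ul\Cc\hookrightarrow\ul\Cc_\gl$, and that this globalization is computed pointwise as a partially lax limit
\[
    \ul\Cc_\gl(G)\;\simeq\;\plaxlim\limits_{\phi\colon H\to G}\ul\Cc(H),
\]
generalizing $(\ref{eq:spgl-as-plaxlim})$, with this equivalence natural in $G$ and intertwining the evaluation functors of $\ul\Cc_\gl$ with the structure maps of the partially lax limit. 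By Theorems~\ref{introthm:equivariant-param} and~\ref{introthm:global-param}, the global category $\ul\UCom$ of equivariant ultra-commutative ring spectra has value $\UCom_H$ at $H$, is equivariantly presentable, and its globalization, identified by Theorem~\ref{introthm:globalize-ucom}, is the global category $\ul\UCom_\gl$ with value $\UCom_\textup{$G$-gl}$ at $G$.

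Putting these together, I would obtain for every finite group $G$ a chain of equivalences
\[
    \UCom_\textup{$G$-gl}\;\simeq\;\ul\UCom_\gl(G)\;\simeq\;\plaxlim\limits_{\phi\colon H\to G}\UCom_H,
\]
and, since the globalization formula of \cite{Linskens2023globalization} is natural in $G$, this assembles into an equivalence of global categories; evaluating at each $G$ then yields the stated equivalences together with their naturality in $G$. Compatibility with the forgetful functors I would deduce by running the same argument for the map of normed global categories $\ul\UCom_\gl\to\ul\Sp_\gl$ that forgets the multiplicative structure, which lies over the corresponding equivariant map $\ul\UCom\to\ul\Sp$: by functoriality of the globalization formula, the forgetful functor $\UCom_\textup{$G$-gl}\to\Sp_\textup{$G$-gl}$ is then identified with the map on partially lax limits that fibrewise forgets $\UCom_H\to\Sp_H$, and for $\ul\Cc=\ul\Sp$ this recovers the $G$-global form of $(\ref{eq:spgl-as-plaxlim})$.

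The genuinely new mathematical content is all packaged inside Theorem~\ref{introthm:globalize-ucom} --- which itself rests on Theorems~\ref{thm:Normed_Rig} and~\ref{thm:globalize-algebras} together with the pointset comparisons of Theorems~\ref{introthm:equivariant-param} and~\ref{introthm:global-param} --- so that what is left above is essentially formal. The point I expect to require the most care is bookkeeping rather than a real obstacle: one must make sure that the laxness in the partially lax limit produced by the globalization formula of \cite{Linskens2023globalization} is imposed along exactly the non-faithful homomorphisms $\phi\colon H\to G$ occurring in the statement, and that each identification in the chain above respects the functoriality of $\UCom_\textup{$G$-gl}$, and of the normed-algebra categories, in \emph{all} homomorphisms rather than merely the injective ones --- a functoriality which, as stressed in the introduction, is not automatic and is itself part of what Theorems~\ref{introthm:equivariant-param} and~\ref{introthm:globalize-ucom} provide.
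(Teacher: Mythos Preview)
Your proposal is correct and matches the paper's approach: the corollary is deduced from Theorem~\ref{introthm:globalize-ucom} (which in the body is packaged together with the corollary as Theorem~\ref{thm:ucom-glob}) by invoking the partially lax limit formula for globalizations from \cite{Linskens2023globalization}, with naturality in $G$ and compatibility with the forgetful functors following from functoriality of that formula. One small point of interpretation: in the paper the phrase ``compatible with the forgetful functors'' refers to the structure maps $\UCom_{G\text{-gl}}\to\UCom_G$ of the globalization (cf.\ the statement of Theorem~\ref{thm:ucom-glob}), not to the forgetful functors to spectra, though your discussion covers this as well.
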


Similarly to the non-multiplicative comparison, we can view the special case $G=1$ as a rigorous version of the slogan that an ultra-commutative global ring spectrum encodes a compatible family of ultra-commutative equivariant ring spectra.

Note that in view of Theorem~\ref{introthm:equivariant-param}, each $\UCom_H$ is itself a certain partially lax limit of equivariant spectra. We also show that there is a more compact way to organize this information, expressing $G$-global ultra-commutative ring spectra as a single partially lax limit of equivariant spectra:

\begin{introthm}[see Corollary~\ref{cor:ucom-via-equivariant-spectra}]\label{introthm:ucomgl-from-sp}
    There exists an equivalence
    \[
        \UCom_\textup{$G$-gl}\simeq\plaxlim_{H\in\Span_{\Forb}(\Fglo_{/G})}\Sp_H
    \]
where the partially lax limit is lax away from faithful backwards maps $H\leftarrowmono K$. Moreover, this equivalence is again natural in $G$, and it lifts the equivalence
    \[
        \Sp_\textup{$G$-gl}\simeq \plaxlim_{H\in(\Fglo_{/G})^\op}\Sp_H
    \]
    from \cite{Linskens2023globalization}.
\end{introthm}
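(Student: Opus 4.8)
The plan is to chain together the comparison theorems already established and then collapse an iterated partially lax limit into a single one. First, \cref{thm:global-model} identifies $\UCom_\textup{$G$-gl}$ with the category $\CAlg_\textup{$G$-gl}(\ul\Sp_\gl^\otimes)$ of $G$-global normed ring spectra. By construction the latter is the category of sections over $\Span_\Forb(\Fglo_{/G})$ of the cocartesian unstraightening of $\ul\Sp_\gl^\otimes$ that are cocartesian over backwards maps, hence tautologically a partially lax limit
\[
    \UCom_\textup{$G$-gl}\;\simeq\;\plaxlim_{H\in\Span_\Forb(\Fglo_{/G})}\Sp_\textup{$H$-gl}
\]
of the categories of \emph{$H$-global} spectra. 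It remains to resolve each fibre $\Sp_\textup{$H$-gl}$ back into genuine equivariant spectra. For this I would invoke the normed refinement of the globalization comparison of \cite{Linskens2023globalization}: the normed global category $\ul\Sp_\gl^\otimes$ is the globalization of the normed global category $\ul\Sp^\otimes$ of equivariant spectra, and this globalization is computed fibrewise by a partially lax limit $\Sp_\textup{$H$-gl}\simeq\plaxlim_{(L\to H)\in(\Fglo_{/H})^\op}\Sp_L$ generalizing \eqref{eq:spgl-as-plaxlim} — naturally in $H$ and, crucially, compatibly with the full normed functoriality and not merely with restriction functors. That this holds is essentially the content of \cref{thm:Normed_Rig} together with the non-multiplicative statement of \cite{Linskens2023globalization}.

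Substituting this fibrewise into the previous display turns $\UCom_\textup{$G$-gl}$ into an iterated partially lax limit, which I would collapse by a Fubini-type interchange. The bookkeeping runs as follows. The diagram after substitution is indexed by the total category of the assignment $H\mapsto(\Fglo_{/H})^\op$ over $\Span_\Forb(\Fglo_{/G})$; since iterated slices compose, an object of this total category is simply a finite groupoid equipped with a map to $BG$, and its morphisms carry backwards functoriality along all homomorphisms (contributed by both layers) together with forwards functoriality along faithful homomorphisms (contributed by the outer layer only). After a cofinality reduction absorbing the usual arrow-category discrepancy, the total category is identified with $\Span_\Forb(\Fglo_{/G})$ itself, with fibre functor $H\mapsto\Sp_H$. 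One then tracks the laxness conventions through the interchange: the cocartesianness over backwards maps built into the definition of $\CAlg_\textup{$G$-gl}$ in the outer layer, combined with the laxness data that the globalization comparison encodes in the inner layer, single out precisely the faithful backwards maps as the non-lax ones. This yields the desired equivalence $\UCom_\textup{$G$-gl}\simeq\plaxlim_{H\in\Span_\Forb(\Fglo_{/G})}\Sp_H$, lax away from faithful backwards maps.

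Naturality in $G$ follows because every step is natural in $G$: \cref{thm:global-model} and the globalization comparison are natural by construction, and the functoriality of $\CAlg_\textup{$G$-gl}$ in $G$ is exactly what is set up earlier in the paper. The asserted lifting of the comparison $\Sp_\textup{$G$-gl}\simeq\plaxlim_{(\Fglo_{/G})^\op}\Sp_H$ is obtained by restricting all sections along the backwards inclusion $(\Fglo_{/G})^\op\hookrightarrow\Span_\Forb(\Fglo_{/G})$: this intertwines the two forgetful functors, and the resulting square of equivalences commutes because all the forgetful functors occurring are compatible, by \cref{introthm:equivariant-param,introthm:global-param}.

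The step I expect to be the main obstacle is the interchange, together with the naturality-in-norms of the globalization comparison. The partially lax limits at hand are genuinely partial and indexed by span-shaped diagrams, so no off-the-shelf Fubini theorem applies; the interchange has to be run at the level of (co)cartesian fibrations over $\Span_\Forb(\Fglo)$, with close attention to which edges are required to be cocartesian, and the resulting fibration must then be matched with the one cut out by the right-hand side. Equally delicate is establishing that $\ul\Sp_\gl^\otimes$ is the globalization of $\ul\Sp^\otimes$ \emph{as a normed global category} and that this globalization is computed by the expected partially lax limit; this is precisely what \cref{thm:Normed_Rig} and \cref{thm:globalize-algebras} are for, and checking that their output is exactly the diagram appearing after the substitution is the heart of the matter.
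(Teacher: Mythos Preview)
Your overall direction is right—combine \cref{thm:global-model} with the identification of $\ul\Sp_\gl^\otimes$ as the normed globalization of $\ul\Sp^\otimes$—but the execution takes a harder path than the paper. You propose to first write $\CAlg_\text{$G$-gl}(\ul\Sp_\gl^\otimes)$ as a partially lax limit of $H$-global spectra, then substitute the fibrewise globalization formula for each $\Sp_\text{$H$-gl}$, and finally run a Fubini interchange on the resulting iterated partially lax limit. The paper bypasses this entirely. Once $\ul\Sp_\gl^\otimes\simeq\Rig^\otimes(\ul\Sp^\otimes)$ is established (\cref{cor:globalize-sp-monoidal}, via \cref{thm:Normed_Rig}), it applies \cref{thm:algebras_in_Rig}, which computes $\ul\CAlg^\Nn_\Ff(\Rig(\Cc))(A)$ \emph{directly} as the desired partially lax limit through a three-line chain of adjunction equivalences
\[
\ul\CAlg^\Nn_\Ff(\Rig\,\Cc)(A)\simeq\Fun^{\Nstr}(\Env(\ul A^{\Ncoprod}),\Rig\,\Cc)\simeq\Fun^{\lax\Ee}(\Env(\ul A^{\Ncoprod}),\Cc)\simeq\Fun^{\neglax\Mm}(\ul A^{\Ncoprod},\Cc),
\]
using first the universal property of $\Rig$ (right adjoint to the inclusion of strict into $\Ee$-lax functors) and then a refined envelope adjunction (\cref{cor:lax-env}). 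The last term is by definition the partially lax limit over $\Span_\Nn(\Ff_{/A})$ with the correct marking, and the commutative square (\ref{diag:compatibility-CAlg-in-Rig}) immediately gives the compatibility with the forgetful functors, hence the lifting statement. So what you flag as ``the main obstacle''—the Fubini interchange with its delicate bookkeeping of markings—simply does not arise in the paper's argument; the universal property of rigidification absorbs your substitution step in one move. Note also that the result you cite, \cref{thm:globalize-algebras}, is a different statement (it globalizes the categories of \emph{equivariant} algebras and is used for \cref{introthm:globalize-ucom}); the result actually doing the work here is \cref{thm:algebras_in_Rig}.
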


\subsection*{Tambara models} By a result of Brun \cite{brun-Tambara}, the homotopy rings $(\pi_0^HR)_{H\subset G}$ of an equivariant ultra-commutative ring spectrum $R$ naturally acquire the structure of a \emph{$G$-Tambara functor}. Such a $G$-Tambara functor encodes three kinds of functoriality between the individual rings $\pi_0^HR$:
\begin{enumerate}
    \item a contravariant functoriality along subgroup inclusions (and conjugations by elements of $G$), resulting in ring homomorphisms $\Res^H_K\colon\pi^H_0R\to\pi_0^KR$ for all $K\subset H\subset G$,
    \item a first covariant functoriality in subgroup inclusions, resulting in additive \emph{transfer homomorphisms} $\textup{Tr}^H_K\colon\pi_0^KR\to\pi_0^HR$, \textit{and}
    \item a second covariant functoriality in subgroup inclusions, resulting in the previously mentioned \emph{norm maps}, which are multiplicative (but non-additive!) maps $\Nm^H_K\colon\pi_0^KR\to\pi_0^HR$.
\end{enumerate}
These three kinds of functoriality have to satisfy various complicated compatibility conditions, which are most conveniently dealt with by hiding them in the composition law of a certain homotopy $1$-category ${\text{h\kern.5pt}}\Bispan(\mathbb F_G)$ of \emph{bispans} in finite $G$-sets. Given this category, one may define $G$-Tambara functors as product preserving functors ${\text{h\kern.5pt}}\Bispan(\mathbb F_G)\to\text{Set}$ satisfying a suitable `grouplikeness' condition.

In \cite{CHLL_NRings}, the first two authors have shown together with Cnossen and Haugseng that \emph{connective} normed $G$-ring spectra can in fact be equivalently described as (`higher') \emph{$G$-Tambara functors} in spaces. In light of Theorem~\ref{introthm:equivariant-param} above, we can recast this as follows:

\begin{introcor}[See Theorem~\ref{thm:equivariant-Tambara}]
    For any finite group $G$ we have an equivalence
    \[
        \UCom_{G,\ge0}\simeq\textup{Tamb}_G\subset\Fun^\times(\Bispan(\mathbb F_G),\Spc)
    \]
    between the category of connective ultra-commutative $G$-ring spectra and the category of $G$-Tambara functors in spaces.
\end{introcor}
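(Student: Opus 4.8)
The plan is to deduce this from Theorem~\ref{introthm:equivariant-param} together with the main comparison of \cite{CHLL_NRings}, so that it becomes essentially a formal consequence of results already in hand. First I would record that Theorem~\ref{introthm:equivariant-param} provides an equivalence $\UCom_G\simeq\CAlg_G(\ul\Sp^\otimes)$ compatible with the forgetful functors down to $\Sp_G$. Since by definition $\UCom_{G,\ge0}\subset\UCom_G$ is the full subcategory spanned by those ring spectra whose underlying $G$-spectrum is connective, and connectivity is therefore a property detected after applying the forgetful functor, the equivalence restricts to an equivalence
\[
    \UCom_{G,\ge0}\;\simeq\;\CAlg_G(\ul\Sp^\otimes)_{\ge0}
\]
onto the full subcategory of \emph{connective} normed $G$-ring spectra (those with connective underlying $G$-spectrum).

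Second, I would invoke \cite{CHLL_NRings}: the content of that result is precisely an equivalence $\CAlg_G(\ul\Sp^\otimes)_{\ge0}\simeq\textup{Tamb}_G$ identifying connective normed $G$-ring spectra with the full subcategory $\textup{Tamb}_G\subset\Fun^\times(\Bispan(\mathbb F_G),\Spc)$ of those product-preserving functors whose additive part---the restriction along $\Span(\mathbb F_G)\hookrightarrow\Bispan(\mathbb F_G)$---is grouplike, equivalently a connective $G$-Mackey functor in spaces. Composing the two equivalences yields the desired $\UCom_{G,\ge0}\simeq\textup{Tamb}_G$. If one additionally wants the statement that the functor induced on $\pi_0$ recovers Brun's classical $G$-Tambara functor $(\pi_0^HR)_{H\subset G}$, this can be read off from the compatibility of all equivalences involved with the respective forgetful functors to $\Sp_G$ and to (connective) $G$-Mackey functors, but it is not needed for the statement as phrased.

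The one point requiring genuine care---and which I expect to be the main obstacle---is the bookkeeping needed to see that the category of ``normed $G$-ring spectra'' appearing in \cite{CHLL_NRings} agrees on the nose with $\CAlg_G(\ul\Sp^\otimes)$, and that the connectivity hypothesis used there matches the one cutting out $\UCom_{G,\ge0}$. Both reduce to the observations that \cite{CHLL_NRings} builds its normed algebras from the same datum---the $G$-symmetric monoidal $G$-category underlying $\ul\Sp^\otimes$, cf.\ the footnote after Theorem~\ref{introthm:equivariant-param}---and that a normed $G$-ring spectrum is connective precisely when the restriction to $\Span(\mathbb F_G)$ of its associated functor on bispans is grouplike. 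Granting this, the rest is the formal composition described above; no new homotopy-coherence argument is required.
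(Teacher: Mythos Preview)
Your proposal is correct and matches the paper's approach exactly: the paper states Theorem~\ref{thm:equivariant-Tambara} with a bare \qed, deducing it precisely by combining Theorem~\ref{thm:equivariant-model} (the equivalence $\UCom_G\simeq\CAlg_G(\ul\Sp^\otimes)$ compatible with forgetful functors) with \cite{CHLL_NRings}*{Theorems~5.6.1 and~5.6.3}. You also correctly flag the only genuine bookkeeping point, namely that the normed $G$-category $\ul\Sp^\otimes|_{\Span(\mathbb F_G)}$ constructed here agrees with the one used in \cite{CHLL_NRings}; the paper handles this via the uniqueness statement in Theorem~\ref{thm:equivariant-uniqueness}, but as you note it is not a new homotopy-coherence argument.
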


Similarly, the homotopy groups of an ultra-commutative \emph{global} ring spectrum naturally come with additive transfers and multiplicative norms along {injective} group homomorphisms \cite{schwede2018global}*{§5.1}, assembling into the structure of a \emph{global Tambara functor}. However, if one wants to carry out the program of \cite{CHLL_NRings} to compare normed algebras in global spectra with (higher) global Tambara functors, one soon runs into a serious obstacle: namely, \cite{CHLL_NRings} crucially relied on having two equivalent descriptions of the normed structure on $\ul\Sp$, one in terms of Mackey functors and the other one in terms of classical pointset models and Hill--Hopkins--Ravenel norms. The argument given in \textit{op.\ cit.} that they are indeed equivalent used that the individual functors $\Sigma^\infty\colon\Spc_{G,*}\to\Sp_G$ already have (non-parametrized) universal properties in $\CAlg(\smash{\Pr^\textup{L}})$, which is no longer true for their global cousins. Fortunately, our results about the interplay of norms and globalization allow us to overcome this hurdle: we provide a Mackey functor description of the normed structure on $\ul\Sp_\gl^\otimes$ refining the non-multiplicative comparison of \cites{global-mackey,CLL_Spans, puetzstueck} (see Theorem~\ref{thm:normed-comparison}), and we use this together with the results of \cite{CHLL_NRings} to show:

\begin{introthm}[See Theorem~\ref{thm:global-Tambara}]\label{introthm:global-tambara}
    There is an equivalence
    \[
        \UCom_{\gl,\ge0}\simeq\textup{Tamb}_\gl\subset\Fun^\times(\Bispan(\Fglo,\Forb,\Forb),\Spc)
    \]
    between the category of connective ultra-commutative global ring spectra and the category of global Tambara functors in spaces.
\end{introthm}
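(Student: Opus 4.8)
The plan is to deduce this from Theorem~\ref{thm:global-model}, the Mackey-theoretic description of the normed structure on $\ul\Sp^\otimes_\gl$ provided by Theorem~\ref{thm:normed-comparison}, and the recognition theorem of \cite{CHLL_NRings}, following the same template as the equivariant case (Theorem~\ref{thm:equivariant-Tambara}) but with Theorem~\ref{thm:normed-comparison} now supplying the input that was automatic equivariantly. First I would invoke Theorem~\ref{thm:global-model} (the case $G=1$ of Theorem~\ref{introthm:global-param}): it gives an equivalence $\UCom_\gl\simeq\CAlg_\gl(\ul\Sp^\otimes_\gl)$ compatible with the forgetful functors to $\Sp_\gl$. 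Since on both sides the connective objects are by definition the preimages of $(\Sp_\gl)_{\ge 0}$ under these forgetful functors, the equivalence restricts to $\UCom_{\gl,\ge 0}\simeq\CAlg_{\gl,\ge 0}(\ul\Sp^\otimes_\gl)$, and it remains to identify the right-hand side with $\textup{Tamb}_\gl\subset\Fun^\times(\Bispan(\Fglo,\Forb,\Forb),\Spc)$.

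Next I would replace $\ul\Sp^\otimes_\gl$ by its Mackey-functor model: by Theorem~\ref{thm:normed-comparison}, $\ul\Sp^\otimes_\gl$ is equivalent, as a normed global category, to the normed global category of spectral global Mackey functors, whose multiplication and norms are induced by the bispan enrichment of the relevant span $\infty$-category of finite groupoids. This is the crucial move: unlike the pointset/Hill--Hopkins--Ravenel description, this model makes the semiadditive and stable structure manifest alongside the norms, so one is not forced to invoke a universal property of the global suspension functor $\Sigma^\infty$ (which, as recalled in the introduction, does not have one).

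With this model in hand, the results of \cite{CHLL_NRings} apply directly. They identify, for a normed category of spectral Mackey functors of this shape, the connective normed algebras with the grouplike product-preserving functors out of the associated bispan category valued in spaces: under the Mackey equivalence a normed algebra becomes a product-preserving functor on $\Span(\Fglo,\Forb,\Forb)$ valued in $\CAlg$, the norm data extends this uniquely to a product-preserving functor on $\Bispan(\Fglo,\Forb,\Forb)$ valued in connective spectra, and, via the Segal-style recognition of connective (global) spectra, the connectivity condition translates exactly into grouplikeness of the underlying $\Spc$-valued functor, i.e.\ into membership in $\textup{Tamb}_\gl$. Chasing through these identifications yields the desired equivalence, and since every step is carried out at the level of parametrized and normed objects, the result is automatically compatible with the forgetful functors and with the naturality already established in Theorems~\ref{introthm:global-param} and~\ref{thm:normed-comparison}.

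The hard part is Theorem~\ref{thm:normed-comparison} itself, but that I may assume here; granting it, the remaining work is compatibility bookkeeping between the two papers' conventions: checking that the normed global category produced by Theorem~\ref{thm:normed-comparison} is presented in exactly the form to which the machinery of \cite{CHLL_NRings} applies, that the resulting span and bispan $\infty$-categories are the ones appearing in the statement, and that ``connective'' as used there matches connectivity of the underlying global spectrum. These are routine but need care, precisely because the global setting lacks the shortcut available equivariantly.
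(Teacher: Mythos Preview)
Your proposal is correct and follows essentially the same approach as the paper: combine Theorem~\ref{thm:global-model} with the Mackey model from Theorem~\ref{thm:normed-comparison} and then apply \cite{CHLL_NRings}*{Theorem~4.3.6} to the ring context $(\Fglo,\Forb,\Forb)$. The one step the paper makes more explicit than your ``Segal-style recognition'' remark is the passage from $\ul\SpMack_{\gl,\ge0}^\otimes$ to the unstable $\ul\Mack_\gl^\otimes$ via the fully faithful lax-normed right adjoint $\iota$ of Proposition~\ref{prop:deloc-normed}; this is needed because the cited result from \cite{CHLL_NRings} is formulated for $\ul\Mack_\gl^\otimes$ rather than its stabilization, and it is the lax-normed compatibility of $\iota$ that guarantees postcomposition with it is fully faithful on normed algebras.
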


We also provide a $G$-global generalization of this (see \textit{loc.~cit.}) as well as a version for the \emph{global commutative ring spectra with multiplicative deflations} recently considered by Blumberg, Mandell, and Yuan \cite{deflations}, see Theorem~\ref{thm:BMY-Tambara}.

\subsection*{Strategy and outline} The most natural approach to prove a comparison like Theorem~\ref{introthm:equivariant-param} or Theorem~\ref{introthm:global-param} is to use a monadicity argument: once we know both sides are monadic over the same base, it suffices to construct a comparison functor that commutes with the forgetful functors, and then verify a Beck--Chevalley condition for the free functors. And indeed, in both cases there are natural comparison maps from the model categorical construction to the higher categorical one \cite{puetzstueck-new}, which basically amount to changing the order in which we Dwyer--Kan localize or form normed algebras.

However, \emph{rigorously} carrying out this monadicity argument requires not only having `formul\ae' to compute the two free functors (in the sense of an unspecified equivalence), but to also understand how these descriptions interact with the comparison map. In particular, while Nardin and Shah \cite{NardinShah} already provide a formula for free normed $G$-algebras, it is not clear to the present authors how one could deduce Theorem~\ref{introthm:equivariant-param} from this.

In the present paper, we circumvent this issue by providing a \emph{universal formula} for the free functors, i.e.~one that is compatible with all normed functors. By construction of our comparison functor, this replaces the above problem with the problem of understanding in which sense the universal formula \emph{derives} when we pass from the model categories of (equivariant or global) spectra to their Dwyer--Kan localizations. Unfortunately, it seems that this is still not viable in the equivariant case: the formula provided by Nardin and Shah is hard to understand in model categorical terms, and we can offer no simplification of their description.

Somewhat surprisingly (and fortunately for us!), it turns out that the global case is actually easier to handle from this perspective: the formula we prove is a straightforward `genuine' version of the usual formula $\coprod_{n\ge 0} (X^{\otimes n})_{h\Sigma_n}$ for free algebras in a presentably symmetric monoidal category, and the model categorical results of \cites{g-global,Lenz-Stahlhauer} are more than enough to understand how this formula derives, and hence prove Theorem~\ref{introthm:global-param}. Accordingly, as already alluded to above, the order in which we presented  our results was purposefully misleading: we will not prove Theorem~\ref{introthm:equivariant-param} directly, but only deduce it near the end of the whole paper from the global comparison; as the localization $\Sp_\text{$G$-gl}\to\Sp_G$ is known to lift to a localization $\UCom_\text{$G$-gl}\to\UCom_G$, this reduces to similarly exhibiting $\CAlg_G(\ul\Sp^\otimes)$ as a localization of $\CAlg_\text{$G$-gl}(\ul\Sp^\otimes_\gl)$. Both this localization result as well as the formula for free normed global algebra require developing of a great deal of (parametrized) higher algebra, and this actually accounts for the bulk of the present article.

We now turn to a linear overview of the paper.

In Section~\ref{sec:basic} we develop the basics of parametrized higher algebra needed throughout this article, in particular introducing parametrized operads and normed categories; rather than defining these concepts as fibrations internal to parametrized higher category theory as in \cite{NardinShah}, we will follow \cite{BHS_Algebraic_Patterns} in viewing them as ordinary fibrations over suitable span categories. We further define parametrized categories of normed algebras and prove a representability result for them that will be crucial in later sections.

In Section~\ref{sec:free-nalg-general} we introduce a variant of the notion of \emph{distributivity}, originally studied by Nardin \cite{nardin2017thesis}, for our setup. We show that for a distributive normed category $\Cc^\otimes$, the forgetful functor $\mathbb U$ from normed algebras in $\Cc^\otimes$ to $\Cc$ itself has a left adjoint $\mathbb P$, and we provide a universal formula for this left adjoint (or, more precisely, for the composite $\mathbb U\mathbb P$). While all of this happens in a very general setting, we specialize this discussion to the context of global equivariant homotopy theory in Section~\ref{sec:free-global}.

In Section~\ref{sec:global-model} we introduce the normed global category of global spectra $\ul\Sp^\otimes_\gl$ and show that it is indeed distributive in the above sense. We go on to prove Theorem~\ref{introthm:global-param}, following the strategy outlined above.

We then shift gears in Section~\ref{sec:norms-and-globalization} and begin studying the interaction of globalization and normed structures. For this it will be convenient to exhibit the globalization construction as an instance of a more general \emph{rigidification} construction due to Abellán \cite{Abellan23}, which admits a universal property for functors \emph{into} it. We show that the rigidification of a normed category inherits a canonical normed structure, and that the resulting normed category again has a universal property. Combining this universal property with the representability result for categories of normed algebras we obtain Theorem~\ref{introthm:ucomgl-from-sp}.

Building on this, Section~\ref{sec:globalize-equivariant-alg} studies normed algebras in rigidifications of normed categories. We in particular explain how to assemble the categories $\CAlg_G(\ul\Sp^\otimes)$ into a global category, and show that the globalization of the result is given by the global category of normed algebras in $\ul\Sp_\gl^\otimes$. This allows us to deduce Theorem~\ref{introthm:equivariant-param} from its global counterpart, which then immediately yields Theorem~\ref{introthm:globalize-ucom}.

In Section~\ref{sec:tambara} we shift gears once more, and introduce alternative descriptions of the normed categories $\ul\Sp^\otimes$ and $\ul\Sp^\otimes_\gl$ in terms of \emph{spectral Mackey functors}. In the global case, this crucially relies on the results of Section~\ref{sec:norms-and-globalization} on the interaction of normed structures and rigidification. With these models at hand, we then use the results of \cite{CHLL_NRings} to describe $G$-global \emph{connective} ultra-commutative ring spectra as space-valued Tambara functors, in particular proving Theorem~\ref{introthm:global-tambara}.

The paper concludes with three short appendices: in Appendix~\ref{app:orthogonal} we compare two different pointset models of ultra-commutative global ring spectra, while Appendix~\ref{app:basechange} is devoted to the proof of parametrized versions of \emph{smooth and proper basechange} and the basechange results for functors of span categories established in \cite{CHLL_Bispans}. Finally, we make precise in Appendix~\ref{app:ordinary-HA} that ultra-commutative equivariant or global ring spectra cannot be described using ordinary higher algebra.

\subsection*{Acknowledgements}
The authors thank Clark Barwick, Bastiaan Cnossen, Stefan Schwede, and the anonymous referee for feedback on the paper. T.L.~would like to thank Jan Steinebrunner for an enlightening exchange about equifiberedness. S.L.~would like to thank Bastiaan Cnossen for many interesting discussions related to this project, and Fernando Abell\'an for explaining the relevance of \cite{Abellan23} to the notion of globalization developed in \cite{Linskens2023globalization}. It proved invaluable for the present work. P.P.~thanks Maxime Ramzi for a helpful discussion on free symmetric monoidal categories.

T.L.\ is an associate member of the Hausdorff Center for Mathematics
at the University of Bonn (DFG GZ 2047/1, project ID 390685813).

S.L.\ is an associate member of the SFB 1085 Higher Invariants.

P.P.'s contribution to this project was funded by the Deutsche Forschungsgemeinschaft (DFG, German Research Foundation) -- Project-ID 427320536 -- SFB 1442, as well as under Germany's Excellence Strategy EXC 2044/2 -390685587, Mathematics Münster: Dynamics--Geometry--Structure.

\section{Parametrized higher algebra}\label{sec:basic}

\subsection{Recollections on categories of spans}
We begin with some recollections on categories of spans,
which will be vital for almost everything in this paper.
For details, we refer the reader to \cite{HHLNa}.

\begin{definition}[\cite{barwick2017spectral}*{Definition~5.2}, \cite{HHLNa}*{Definition 2.1}]
    An \emph{adequate triple} $(\cC,\cC_B,\cC_F)$ consists of a category $\cC$
    together with two wide subcategories $\cC_B,\cC_F \subset \cC$
    of `backwards' and `forwards' maps,\footnote{\cite{barwick2017spectral}  and \cite{HHLNa} instead use the terminology `ingressive' and `egressive' for what we call forwards and backwards maps, respectively.}
    such that pullbacks of morphisms in $\cC_B$ along morphisms in $\cC_F$
    exist in $\cC$, and both $\cC_B$ and $\cC_F$ are stable under pullback.
    We write $\AdTrip$ for the category of adequate triples,
    where morphisms $(\cC,\cC_B,\cC_F) \to (\cD,\cD_B,\cD_F)$ are functors $\cC \to \cD$
    preserving forwards and backwards maps as well as pullbacks of forwards along backwards maps.
\end{definition}

\begin{example}
    If $\cC$ is any category, then $(\cC,\cC,\core\cC)$, $(\cC,\core\cC,\cC)$, and $(\cC,\core\cC,\core\cC)$ are adequate triples, where $\core\cC\subset\cC$ denotes the maximal subgroupoid.
\end{example}

\begin{example}
    If $\cC$ is a category admitting pullbacks, then $(\cC,\cC,\cC)$ forms an adequate triple.
\end{example}

Quite often, the following special case will be enough for our needs.

\begin{definition}
    We say that a category $\cC$ together with a wide subcategory $\cC_F \subset \cC$
    is a \emph{span pair} if $(\cC,\cC,\cC_F)$ is an adequate triple.
\end{definition}

The main use of adequate triples is that they present the minimum necessary conditions
to build a \emph{span category} (called the \emph{effective Burnside category} in Barwick's original treatment \cite{barwick2017spectral}). Namely by \cite{HHLNa}*{Definition~2.12}, there is a functor
\[
    \Span \colon \AdTrip \to \Cat,\ (\cC,\cC_B,\cC_F) \mapsto \Span_{B,F}(\cC),
\]
where the category $\Span_{B,F}(\cC)$ has the same objects as $\cC$, and morphisms from $X$ to $Y$ are given by spans
\[
\begin{tikzcd}[row sep=small]
	& Z \\
	X && Y
	\arrow["b"', from=1-2, to=2-1]
	\arrow["f", from=1-2, to=2-3]
\end{tikzcd}
\]
where $b$ is a morphism in $\cC_B$ and $f$ is a morphism in $\cC_F$. Composition is by taking pullbacks in $\cC$.

Given a span pair $(\cC,\cC_F)$, we will denote the span category associated to $(\cC,\cC,\cC_F)$
by $\Span_F(\cC)$, or sometimes write $\Span_{\all,F}(\cC)$ for emphasis.
Similarly, we will sometimes use $\Span_{B,\all}(\cC)$
to denote the category of spans associated to an adequate triple of the form $(\cC,\cC_B,\cC)$.
Finally, if $\cC$ admits pullbacks we will write $\Span(\cC)$ for the span category associated to $(\cC,\cC,\cC)$.

Let us collect some of the main results on span categories from \cite{HHLNa}.

\begin{theorem}\label{thm:spans}
    Let $(\cC,\cC_B,\cC_F)$ be an adequate triple.
    \begin{enumerate}
        \item The category $\AdTrip$ has all limits and they are computed in $\Cat$.
        \item The functor $\Span \colon \AdTrip \to \Cat$ is a right adjoint, hence preserves all limits.

        \item There is a natural equivalence $\Span_{B,F}(\cC)^\op \simeq \Span_{F,B}(\cC)$.

        \item We have inclusions of wide subcategories
            \[
                \cC_B^\op \hookrightarrow \Span_{B,F}(\cC)
                \qquad\text{and}\qquad
                \cC_F \hookrightarrow \Span_{B,F}(\cC),
            \]
            sending a morphism $b \colon X \to Y$ in $\cC_B$ to the `backwards span' $Y \xleftarrow{b} X = X$,
            and a morphism $f \colon Y \to Z$ in $\cC_F$ to the `forwards span' $Y = Y \xto{f} Z$, respectively.
            Moreover, in the case that $\cC_B = \core\cC$, the second inclusion
            induces an equivalence $\cC_F \simeq \Span_{\simeq,F}(\cC)$.
            Analogously, $\cC_B^\op \simeq \Span_{B,\simeq}(\cC)$.

        \item The backwards and forwards maps form an (orthogonal) factorization system
            $(\Span_{B,F}(\cC),\cC_B^\op,\cC_F)$.
    \end{enumerate}
\end{theorem}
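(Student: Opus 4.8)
The plan is to work throughout in the explicit model for the span construction recalled in \cite{HHLNa}, where $\Span_{B,F}(\cC)$ is presented as the complete Segal space whose space of $n$-simplices is the groupoid $\Fun_{\AdTrip}(\Tw[n]^{\pm},(\cC,\cC_B,\cC_F))$; here $\Tw[n]^{\pm}$ is the twisted-arrow category of $[n]$, made into an adequate triple by declaring its two evident alternating families of edges to be the forwards and backwards maps (adequate because the relevant squares in $\Tw[n]$ are already cartesian). The genuine content is to promote the informal recipe ``morphisms are spans, composition by pullback'' to a coherent functor $\AdTrip\to\Cat$ with this Segal presentation; this is exactly what \cite{HHLNa} achieves via two-variable fibrations, and I would cite it wholesale. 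Granting it, the five assertions are comparatively formal, as follows.

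For (1): given a diagram $i\mapsto(\cC_i,\cC_{i,B},\cC_{i,F})$ in $\AdTrip$, form $\lim_i\cC_i$ in $\Cat$ together with the wide subcategories $\lim_i\cC_{i,B}$ and $\lim_i\cC_{i,F}$, and check this triple is again adequate: existence and stability of the pullbacks of backwards along forwards maps are representability conditions, so a compatible family of such pullbacks in the $\cC_i$ assembles to an object of $\lim_i\cC_i$ — using that the transition functors preserve these pullbacks — which has the right universal property since mapping spaces in a limit of categories are the limits of mapping spaces; thus limits in $\AdTrip$ are computed in $\Cat$. For (2), exhibit the left adjoint $L\colon\Cat\to\AdTrip$ directly: since $\AdTrip$ is presentable, put $L\cD:=\colim_{([n]\to\cD)}\Tw[n]^{\pm}$, the colimit over the simplex category of $\cD$. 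Then $\Fun_{\AdTrip}(L\cD,(\cC,\cC_B,\cC_F))\simeq\lim_{([n]\to\cD)}\Fun_{\AdTrip}(\Tw[n]^{\pm},(\cC,\cC_B,\cC_F))=\lim_{([n]\to\cD)}\Span_{B,F}(\cC)_n$, and the last limit recovers $\Fun(\cD,\Span_{B,F}(\cC))$ because $\Span_{B,F}(\cC)$ is a complete Segal space; this is natural in both variables, so $L\dashv\Span$, and in particular $\Span$ preserves all limits.

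Statements (3)--(5) I would deduce by transporting explicit maps of the indexing posets $\Tw[n]^{\pm}$ through the Segal presentation. For (3): the isomorphism $[n]\cong[n]^{\op}$ induces an isomorphism of adequate triples $\Tw[n]^{\pm}\cong\Tw[n]^{\mp}$ interchanging the two families of edges, and precomposition identifies the simplicial object $n\mapsto\Span_{B,F}(\cC)_n$ with the opposite of $n\mapsto\Span_{F,B}(\cC)_n$. For (4): the collapse functors $\Tw[n]^{\pm}\to[n]$ and $\Tw[n]^{\pm}\to[n]^{\op}$ are maps of adequate triples once $[n]$, resp.\ $[n]^{\op}$, is given the all-forwards, resp.\ all-backwards, structure, and they induce the two claimed inclusions; when $\cC_B=\core\cC$ a levelwise inspection shows that the datum of a span with invertible backwards leg is the same as that of its forwards leg, so the relevant map is a levelwise equivalence of Segal spaces, hence an equivalence (the dual case being symmetric via (3)). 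For (5): every span $X\xleftarrow{b}Z\xrightarrow{f}Y$ factors as the backwards span $(X\xleftarrow{b}Z=Z)$ followed by the forwards span $(Z=Z\xrightarrow{f}Y)$; closure of $\cC_B^{\op}$ and $\cC_F$ under composition and under equivalences is immediate since those compositions only form pullbacks along identities; and orthogonality of $\cC_B^{\op}$ against $\cC_F$, after inserting these factorizations, unwinds to a lifting problem whose essentially unique solution is precisely the pullback furnished by the adequacy hypothesis.

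The crux is therefore the foundational input — constructing $\Span$ as a coherent functor on $\AdTrip$ with the Segal presentation above — which I would not reprove but cite from \cite{HHLNa}. Among the numbered items, the fiddliest to execute carefully is (5): unlike for a $1$-categorical factorization system, orthogonality here asserts that a certain \emph{space} of lifts is contractible rather than merely nonempty, so one must use the full universal property of the adequacy pullback at the level of mapping spaces, not just its existence. A close second is (2), because of the Rezk-completeness bookkeeping hidden in the identification of $\lim_{([n]\to\cD)}\Span_{B,F}(\cC)_n$ with $\Fun(\cD,\Span_{B,F}(\cC))$.
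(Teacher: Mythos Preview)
Your proposal is correct, and in fact considerably more detailed than the paper's own proof, which consists entirely of pointers to \cite{HHLNa}: the five items are attributed in order to Lemma~2.4, Theorem~2.18, Lemma~2.14, Proposition~2.15, and Proposition~4.9 of that reference, with no further argument. Your sketches are in the spirit of what those results in \cite{HHLNa} establish, and you explicitly acknowledge relying on that paper for the foundational Segal-space construction of $\Span$, so the two approaches are aligned at the level of genuine input. The main difference is expository: you outline the arguments (the pointwise check for (1), the explicit left adjoint via $\colim_{[n]\to\cD}\Tw[n]^{\pm}$ for (2), the combinatorics of $\Tw[n]^{\pm}$ for (3)--(4), and the pullback-as-lift analysis for (5)), whereas the paper treats the theorem purely as a recollection. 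One small point: in (2) you invoke presentability of $\AdTrip$ to form the colimit defining $L$; this is true but not argued, and strictly speaking the adjunction verification you give (matching mapping spaces) already suffices once you know the colimit exists, so you might either justify presentability or note that the colimit can be formed by hand.
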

\begin{proof}
    In the listed order, these results are \cite{HHLNa}*{Lemma 2.4, Theorem 2.18, Lemma 2.14, Proposition 2.15, and Proposition 4.9}.
\end{proof}

\subsection{Normed precategories and preoperads}

For the remainder of this section, we fix a span pair $(\cF,\cN)$.

\begin{notation}
	We will denote maps in $\cN$ as $A\rightarrownorm B$.
\end{notation}

\begin{definition}
We let $\PreOp{\cF}{\cN}$ denote the subcategory of $\Cat_{/\Span_{\cN}(\cF)}$ whose objects are the \emph{$(\cF,\cN)$-preoperads}, defined to be those functors which admit cocartesian lifts for all morphisms in $\cF^{\op}\subset  \Span_{\cN}(\cF)$,
and whose morphisms are those functors over $\Span_{\cN}(\cF)$ which preserve such lifts.
\end{definition}

\begin{warn}
    Beware that the condition that $\cX \to \Span_{\cN}(\cF)$ admits cocartesian lifts of morphisms in $\cF^{\op}$ is strictly stronger than demanding that the pullback $\cX \times_{\Span_{\cN}(\cF)} \cF^{\op}\to\cF^\op$ be a cocartesian fibration.
\end{warn}

\begin{definition}
We let $\PreNmCat{\cF}{\cN}$ denote the category $\Fun(\Span_{\cN}(\cF),\Cat)$. We call objects of this category \emph{$\cN$-normed $\cF$-precategories}. We view this as a non-full subcategory of $\PreOp{\cF}{\cN}$ via the identification of $\PreNmCat{\cF}{\cN}$ with cocartesian fibrations over $\Span_{\cN}(\cF)$ given by unstraightening.
\end{definition}

\begin{remark}\label{rem:classical_op}
    Suppose $\cF = \cN = \mathbb{F}$ is the category of finite sets. Then by \cite{BHS_Algebraic_Patterns}*{Corollary B}, operads in the sense of Lurie correspond to a full subcategory of $\PreOp{\mathbb{F}}{\mathbb{F}}$. This connection inspires our choice of notation. In §\ref{subsec:operads} we will single out the correct category of \emph{$(\cF,\cN)$-operads} which generalize Lurie's operads.
\end{remark}

\begin{definition}
Let $\cX\to \Span_{\cN}(\cF)$ be a preoperad. Note that the pullback of $\cX$ along the inclusion $\cF^{\op}\subset \Span_{\cN}(\cF)$ is a cocartesian fibration over $\cF^{\op}$. In particular we obtain a functor
\[
\mathrm{fgt}\colon \PreOp{\cF}{\cN}\to \Fun(\cF^{\op},\Cat).
\]
For consistency of notation we denote $\Fun(\cF^{\op},\Cat)$ by $\PreCat{\cF}$ and call its objects \emph{$\cF$-precategories}.
\end{definition}

\begin{remark}
    Functors $\cF^\op\to\Cat$ are typically called \emph{$\cF$-categories} in the literature, and they are the fundamental object of study in \emph{parametrized higher category theory} \cite{exposeI}. As we will explain in Remarks~\ref{rk:why-pre-1} and~\ref{rk:why-pre-2}, it will be more convenient for our purposes to reserve that name for functors satisfying an additional product preservation condition.
\end{remark}

\begin{remark}\label{rem:2cat-structure}
    We note that each of the categories $\PreOp{\cF}{\cN}, \PreNmCat{\cF}{\cN}$ and $\PreCat{\cF}$ is a subcategory of a slice of $\Cat$.
    For every $S$, the functor $S \times - \colon \Cat \to \Cat_{/S}$
    is symmetric monoidal for the cartesian symmetric monoidal structures,
    and moreover factors through $\Cat^{\cocart}_{/S}$, the category of cocartesian fibration over $S$;
    under the equivalence of the latter with $\Fun(S,\Cat)$, it is simply
    given by $\const \colon \Cat \to \Fun(S,\Cat)$.
    In particular, all of the above categories are canonically $\Cat$-modules,
    and the forgetful functors $\PreNmCat{\cF}{\cN} \to \PreOp{\cF}{\cN} \to \PreCat{\cF}$ are canonically lax $\Cat$-linear.

    All of these $\Cat$-module structures are adjoint to $\Cat$-enrichments
    in the sense of Lurie, and we denote the resulting hom categories in $\PreOp{\cF}{\cN}, \PreNmCat{\cF}{\cN}$ and $\PreCat{\cF}$ by
    \[
        \Fun_{\cF}^{\Nlax}(-,-),
        \quad \Fun_{\cF}^{\Nstr}(-,-),
        \quad \text{and} \quad \Fun_{\cF}(-,-)
    \]
    respectively.

    More specifically, all of these categories canonically inherit the structure
    of an $(\infty,2)$-category from $\Cat_{/S}$, as we now briefly recall; for a detailed overview of the following, we refer the reader
    to \cite{Heyer-Mann}*{Appendices C and D}.
    By definition, an $(\infty,2)$-category is a $\Cat$-enriched category,
    and an $(\infty,2)$-functor is a $\Cat$-enriched functor.
    There are two prominent models of enriched $\infty$-categories,
    developed by Lurie \cite{HA}*{Definition 4.2.1.28}
    and by Gepner--Haugseng \cite{Gepner-Haugseng} and Hinich \cite{Hinich}, respectively,
    which were shown to be equivalent in a strong sense by Heine \cite{Heine}.
    Using the Lurie model, if $\cC$ is a $\Cat$-module
    such that the functor $- \times c \colon \Cat \to \cC$ admits a right adjoint
    for every $c \in \cC$, then $\cC$ inherits the structure of an $(\infty,2)$-category
    \cite{Heyer-Mann}*{Example C.1.11} (although weaker conditions suffice),
    and the category of $(\infty,2)$-functors between two such $\Cat$-modules
    agrees with the category of lax $\Cat$-linear functors \cite{Heyer-Mann}*{Example C.2.2}.
    Recall also from \cite{HA}*{Corollary 7.3.2.7}
    that a right adjoint of a $\Cat$-linear functor automatically obtains a lax $\Cat$-linear structure, which thus upgrades the entire adjunction to one of $(\infty,2)$-categories.
    In particular, in this case the natural adjunction equivalence
    of mapping spaces lifts to one of mapping categories.
\end{remark}

\begin{remark}\label{rem:adj_in_functor_cat}
Let $S$ be a category. Then a morphism $L\colon F\to G$ is a left adjoint in the $(\infty,2)$-category $\Fun(S,
\Cat)$ if it is a \emph{right adjointable natural transformation}, by which we mean that
\begin{enumerate}
	\item For every $X\in S$ the functor $L_X$ admits a right adjoint $R_X$.
	\item For every morphism $f\colon X\to Y$, the \emph{Beck--Chevalley transformation}
	\[
	F(f) R_X \xRightarrow{\eta} R_Y L_Y F(f) R_X\simeq R_Y G(f)L_XR_X \xRightarrow{\epsilon} R_Y G(f)
	\]
	is an equivalence.
\end{enumerate}
This is the content of \cite{HA}*{Proposition 7.3.2.11}, after applying the unstraightening equivalence. Dually, a morphism is a right adjoint if it is a \emph{left adjointable natural transformation}. Specializing the above, we obtain a description of adjunctions in $\PreCat{\cF}$ and $\PreNmCat{\cF}{\cN}$, which we will use throughout.
\end{remark}

Our goal now is to repeat some of the basic constructions on operads in our context.

\begin{definition}\label{def:n-cocart}
    Let $\cC\colon \cF^{\op}\to \Cat$ be an $\cF$-precategory.
    \begin{enumerate}
        \item We write $\Triv(\cC)$ for the composition  $\Un^\co(\cC) \to \cF^\op \to \Span_\cN(\cF)$ of the cocartesian unstraightening of $\cC$ with the inclusion $\cF^\op \hookrightarrow \Span_\cN(\cF)$.

        \item Let $p \colon \Un^\ct(\cC) \to \cF$ denote the cartesian unstraightening of $\cC$. We define $\cC^{\Ncoprod}$ to be the functor
         \[
     	    \Span(p) \colon \Span_{\cart,\cN}(\Un^\ct(\cC)) \to \Span_\cN(\cF),
         \]
         where in the source we have taken the span category in $\Un^\ct(\cC)$ whose backwards maps are cartesian and whose forward maps are those lying over $\cN$. These two subcategories determine an adequate triple by \cite{HHLNa}*{Proposition 2.6}.
    \end{enumerate}
\end{definition}

\begin{lemma}\label{lem:unfurl}
    Let $\cC$ be an $\cF$-precategory.
    \begin{enumerate}
        \item Both $\Triv(\cC)$ and $\cC^{\Ncoprod}$ are $(\cF,\cN)$-preoperads.

        \item There exists a natural map $\incl_\cC \colon \Triv(\cC) \to \cC^{\Ncoprod}$ of $(\cF,\cN)$-preoperads induced by the pullback square
            \[\begin{tikzcd}
                {\Un^\co(\cC)} & {\cC^{\Ncoprod}} \\
                {\cF^\op} & {\Span_\cN(\cF)}
                \arrow[from=1-1, to=1-2]
                \arrow[from=1-1, to=2-1]
                \arrow["\lrcorner"{anchor=center, pos=0.125}, draw=none, from=1-1, to=2-2]
                \arrow[from=1-2, to=2-2]
                \arrow[from=2-1, to=2-2]
            \end{tikzcd}\]
        \item Suppose that $\cC$ is left $\cN$-adjointable in the sense of
            \cite[Definition 2.2.5]{Elmanto_Haugseng_Bispans},
            i.e.~that for every $n\colon A\rightarrownorm B$ in $\Nn$ the functor $n^*\coloneqq\Cc(n)\colon\Cc(B)\to\Cc(A)$ admits a left adjoint $n_!$ and that every pullback
        \begin{equation*}
            \begin{tikzcd}
                A\arrow[d,"f'"']\arrow[dr,pullback]\arrow[r, norm, "n'"] & B\arrow[d, "f"]\\
                C\arrow[r, norm, "n"'] & D
            \end{tikzcd}
        \end{equation*}
        in $\cF$ with horizontal maps in $\cN$ as indicated, is sent to a left adjointable square, meaning that the Beck--Chevalley transformation
        \[
            n'_!f^{\prime*}\to f^{*}n_!
        \]
        is an equivalence. Then $\cC^{\Ncoprod}$ is a cocartesian fibration classifying an extension of $\cC$ to an $\cN$-normed $\cF$-precategory sending the forward maps to the left adjoints.
    \end{enumerate}
\end{lemma}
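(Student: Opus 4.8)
The plan is to treat the three claims in order: the first two reduce to the factorization system $(\cF^\op,\cN)$ on $\Span_\cN(\cF)$ from \cref{thm:spans}(5) together with the (co)cartesian unstraightening dictionary, while the third is where the real content lies.

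\emph{Parts (1) and (2).} I would start from the following general observation: if $p\colon\cX\to\cF^\op$ is a cocartesian fibration, then the composite $\cX\xrightarrow{p}\cF^\op\hookrightarrow\Span_\cN(\cF)$ admits cocartesian lifts of all morphisms in $\cF^\op$, and an edge of $\cX$ lying over $\cF^\op$ is cocartesian for the composite precisely when it is $p$-cocartesian. This is immediate from the mapping-space criterion: precomposing a span with a backwards morphism of $\Span_\cN(\cF)$ amounts simply to postcomposing its backwards leg (no pullback is required, the forwards leg of a backwards morphism being an identity), so the comparison square of mapping spaces is contractibly cartesian exactly because $\cF^\op$ is the left class of the factorization system. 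Applied to $p\colon\Un^\co(\cC)\to\cF^\op$ this shows $\Triv(\cC)$ is an $(\cF,\cN)$-preoperad. For $\cC^{\Ncoprod}$ the same computation goes through once one unwinds the span category: since $\Un^\ct(\cC)\to\cF$ is a cartesian fibration, a backwards morphism of $\Span_\cN(\cF)$---given by a map $g$ of $\cF$---and a lift $\tilde A$ of its target admit a cartesian lift $g^*\tilde A\to\tilde A$ in $\Un^\ct(\cC)$, and the corresponding backwards span of $\Span_{\cart,\cN}(\Un^\ct(\cC))$ is $\Span(p)$-cocartesian because composites of cartesian edges are cartesian. For Part~(2), I would identify $\cF^\op\hookrightarrow\Span_\cN(\cF)$ with $\Span$ applied to the morphism of adequate triples $(\cF,\cF,\core\cF)\to(\cF,\cF,\cN)$ via $\cF^\op\simeq\Span_{\all,\simeq}(\cF)$ (\cref{thm:spans}(4)); since $\Span$ preserves pullbacks (\cref{thm:spans}(1),(2)), the pullback $\cC^{\Ncoprod}\times_{\Span_\cN(\cF)}\cF^\op$ is $\Span$ of the pullback adequate triple, whose underlying category is $\Un^\ct(\cC)$, with cartesian edges as backwards maps and edges lying over equivalences of $\cF$ as forwards maps. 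Unwinding, a span in this adequate triple from $(A,x)$ to $(B,y)$ is exactly a map $g\colon B\to A$ in $\cF$ together with a morphism $g^*x\to y$ in $\cC(B)$, i.e.~a morphism of $\Un^\co(\cC)$ over $\cF^\op$; this yields the asserted pullback square and hence $\incl_\cC$, which is a map of preoperads because base change of a functor admitting cocartesian lifts over a subcategory preserves and detects the relevant cocartesian edges.

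\emph{Part (3).} Under left $\cN$-adjointability I would additionally exhibit cocartesian lifts over forwards maps: for $n\colon A\rightarrownorm B$ and $\tilde A\in\cC(A)$, the forwards span of $\Span_{\cart,\cN}(\Un^\ct(\cC))$ with identity backwards leg and with forwards leg the edge $\tilde A\to n_!\tilde A$ classified by the unit $\tilde A\to n^*n_!\tilde A$ should be $\Span(p)$-cocartesian over $n$. Granting this, $p$-cocartesian lifts are available over both classes of the factorization system, and since every morphism of $\Span_\cN(\cF)$ factors as a backwards followed by a forwards map, composing the respective lifts produces $p$-cocartesian lifts of all morphisms; hence $\cC^{\Ncoprod}\to\Span_\cN(\cF)$ is a cocartesian fibration. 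Its straightening restricts to $\cC$ along $\cF^\op$ by Part~(2), and by construction sends each forwards map $n$ to $n_!$, which is precisely the asserted extension of $\cC$ to an $\cN$-normed $\cF$-precategory sending forwards maps to left adjoints. I expect the main obstacle to be verifying that these forwards spans are genuinely $\Span(p)$-cocartesian: unpacking the mapping-space criterion forces one to compose, in both $\Span_\cN(\cF)$ and $\Span_{\cart,\cN}(\Un^\ct(\cC))$, such a forwards span past an arbitrary span, which reduces to commuting $n_!$ past a restriction $b^*$ along a pullback square in $\cF$ with $\cN$-legs---an equivalence exactly by the Beck--Chevalley clause of left $\cN$-adjointability. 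Alternatively one can phrase $\cC^{\Ncoprod}$ as an instance of the unfurling construction over span categories and invoke the corresponding fibrancy statement; but the hypotheses here are arranged so that the hands-on argument is short once the bookkeeping is set up.
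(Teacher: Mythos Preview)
Your proposal is correct. The paper's proof takes the shortcut you mention only at the end: it identifies $\cC^{\Ncoprod}$ as an instance of Barwick's unfurling construction and cites \cite[Theorem~3.1, Example~3.4, Corollary~3.18]{HHLNa} for all claims about it---the cocartesian lifts over $\cF^\op$, the pullback square, and Part~(3). For $\Triv(\cC)$ the paper uses that $\cF^\op\subset\Span_\cN(\cF)$ is right cancellable (equivalent to your factorization-system observation), and for $\incl_\cC$ it again appeals to the characterization of cocartesian edges from \cite[Theorem~3.1]{HHLNa}. Your hands-on arguments are essentially what those citations unwind: the mapping-space check for forwards lifts in Part~(3), reducing to Beck--Chevalley, is precisely the content of the unfurling theorem in this case, and your $\Span$-preserves-limits computation of the pullback is a clean self-contained alternative. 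What the paper's approach buys is brevity and a pointer to the general machinery; what yours buys is independence from \cite{HHLNa} and a clearer view of exactly where left $\cN$-adjointability enters.
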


\begin{proof}
  	We note that $\cC^{\Ncoprod}$ is Barwick's unfurling construction, and all claims about it
    follow immediately from \cite[Theorem 3.1, Example 3.4, Corollary 3.18]{HHLNa}.

    It remains to see that $\Triv(\cC)$ is a cocartesian fibration
    and that $\incl_\cC$ preserves cocartesian lifts of backwards maps.
    The former follows immediately from the fact that $\cF^{\op}\to \Span_{\cN}(\cF)$ is itself an $(\cF,\cN)$-operad, which in turn follows immediately from the fact that the subcategory $\cF^{\op}\subset \Span_{\cN}(\cF)$ is right cancellable.

   Finally, to see that the map $\incl_\cC$ preserves cocartesian edges over $\cF^{\op}$ we may appeal to the characterization of cocartesian edges in $\cC^{\Ncoprod}$ from Theorem 3.1 of \emph{op.~cit.}
\end{proof}

\begin{example}\label{ex:Triv-Ncoprod-rep}
    Write $\ul A$ for the $\Ff$-precategory represented by $A\in\Ff$. Then we have $\Triv(\ul A)\simeq(\Ff_{/A})^\op$ and $\ul A^{\Ncoprod}\simeq\Span_\Nn(\Ff_{/A})\coloneqq\Span(\Ff_{/A},\Ff_{/A},\Nn\times_\Ff\Ff_{/A})$ naturally in $A$. Under these identifications, the map $\incl_{\ul A}\colon\Triv(\ul A)\hookrightarrow\ul A^{\Ncoprod}$ is simply given by the inclusion of backwards maps.
\end{example}

The next crucial construction we wish to imitate is the envelope of operads. Thankfully, this has already been done by \cite{BHS_Algebraic_Patterns} in the generality of an arbitrary factorization system $(S,E,M)$. For later use, we recall the relevant definitions and results in this generality.

\begin{notation}
    We denote arrows in $M$ by $\rightarrowmono$ and arrows in $E$ by $\rightarrowepic$.
\end{notation}

\begin{definition}\label{def:e-cocart}
Consider a category $S$ equipped with a subcategory $S_0$. We let $\Cat^{S_0\dcocart}_{/S}$ denote the subcategory of $\Cat_{/S}$
on those objects $\cX \to S$ which are \emph{$S_0$-cocartesian fibrations}, i.e.~admit cocartesian lifts for all morphisms in $S_0$,
and those morphisms which preserve these lifts.
\end{definition}

Of particular interest now is the case that $S_0 = E$.

\begin{example}\label{ex:span-fact-sys}
    For $S=\Span_\Nn(\Ff)$ equipped with its standard factorization system from \cref{thm:spans}(5),
    the category $\Cat_{/S}^{E\dcocart}$ is equal to $\PreOp{\cF}{\cN}$,
    while $\Cat^\cocart_{/S}$ agrees with $\PreNmCat{\cF}{\cN}$.
\end{example}

\begin{definition}
    We denote by $\Env(1) \coloneqq \Ar_M(S)$ the full subcategory of $\Ar(S)$ spanned by the maps in $M$. We view $\Env(1)$ as a category over $S$ via the target projection.
\end{definition}

\begin{definition}\label{def:envelope}
Let $\cX\to S$ be an $E$-cocartesian fibration. We denote by $\Env(\cX)$ the following pullback
\[\begin{tikzcd}
	{\Env(\cX)} & \cX \\
	{\Env(1)} & {S\rlap.}
	\arrow[from=1-1, to=1-2]
	\arrow[from=1-1, to=2-1]
	\arrow[from=1-2, to=2-2]
	\arrow[""{name=0, anchor=center, inner sep=0}, "s", from=2-1, to=2-2]
	\arrow["\lrcorner"{anchor=center, pos=0.125}, draw=none, from=1-1, to=0]
\end{tikzcd}\]
We view $\Env(\cX)$ as a category over $S$ via the composite
\[
\Env(\cX) \to 	{\Env(1)}  \xrightarrow{t} S.
\]
\end{definition}

Note that $\Env(1)$ is $\Env(-)$ applied to the final object $1=\id_S$ of $\Cat^{\Lcocart}_{/S}$, as the notation suggests. The following is \cite[Theorem E]{BHS_Algebraic_Patterns}:

\begin{theorem}\label{thm:BHS}
	Let $(S,E,M)$ be a factorization system.
	\begin{enumerate}
		\item\label{BHS-1} Let $p\colon \cX\to S$ be an $E$-cocartesian fibration. Then $\Env(\cX)$ is a cocartesian fibration. Moreover, a morphism in $\Env(\cX)$ corresponding to a pair
		\[
            \left( f\colon x\to y,
            \begin{tikzcd}
                p(x)\arrow[d, mono]\arrow[r, "p(f)"] & p(y)\arrow[d,mono]\\
                a\arrow[r] & b
            \end{tikzcd}
            \right)
        \]
        is cocartesian if and only if $p(f)$ belongs to $E$ and $f$ is a cocartesian lift of $p(f)$.
		\item The functor $\Env(-)$ restrict to a functor
		\[
		\Env(-) \colon \Cat^{\Lcocart}_{/S} \to \Cat^{\cocart}_{/S}.
		\]
		As such it is a $\Cat$-linear left adjoint to the inclusion $\Cat^{\cocart}_{/S} \hookrightarrow  \Cat^{\Lcocart}_{/S}$, with unit $\eta$ given at $\cX$ by the identity section $e \mapsto (e, \id_{p(e)})$.
		\item The induced functor $\Env(-) \colon \Cat^{\Lcocart}_{/S} \to (\Cat^{\cocart}_{/S})_{/\Env(1)}$ is fully faithful. In particular, $\Env(-) \colon \Cat^{\Lcocart}_{/S} \to \Cat^{\cocart}_{/S}$ is  conservative.
	\end{enumerate}
\end{theorem}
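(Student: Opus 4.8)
The plan is to deduce all three parts from a careful analysis of the single category $\Env(1)=\Ar_M(S)$ equipped with its two projections $s,t\colon\Ar_M(S)\rightrightarrows S$ (source and target). The crucial first step is to show that the target projection $t$ is a cocartesian fibration and that a morphism of $\Ar_M(S)$ --- a commuting square with both vertical edges in $M$ --- is $t$-cocartesian precisely when its top (source) edge lies in $E$. Since $\Ar(S)\to S$ via the target is already cocartesian, with cocartesian edges exactly the squares whose top edge is invertible, this reduces to ``promoting invertible to $E$'' using the factorization system: given $m\colon a\to b$ in $M$ and $\beta\colon b\to b'$ in $S$, factor the composite $\beta m$ as $a\xrightarrow{e}c\xrightarrow{m'}b'$ with $e\in E$ and $m'\in M$; then the square $(e,\beta)\colon m\to m'$ is a $t$-cocartesian lift of $\beta$, the relevant universal property being exactly a lifting problem of $e$ against a map of $M$, which has a contractible space of solutions because $(E,M)$ is an orthogonal factorization system. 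The converse follows from essential uniqueness of $(E,M)$-factorizations. The only fiddly point is matching the mapping spaces of $\Ar(S)$ with the lifting spaces of the factorization system.

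For part (1), recall that $\Env(\cX)=\cX\times_S\Ar_M(S)$ (pullback along $s$), regarded over $S$ by applying $t$ to the projection $q\colon\Env(\cX)\to\Ar_M(S)$. As $q$ is a pullback of the $E$-cocartesian fibration $p\colon\cX\to S$ along $s$, it admits cocartesian lifts of exactly those morphisms of $\Ar_M(S)$ that $s$ carries into $E$, which by the previous paragraph are the $t$-cocartesian morphisms. A pasting argument for cocartesian fibrations --- or a direct verification, lifting first at the $\Ar_M(S)$-level via the factorization system and then at the $\cX$-level via $p$-cocartesianness --- then shows $tq$ is a cocartesian fibration whose cocartesian morphisms are precisely the pairs $(f\colon x\to y,\ \text{square})$ with $p(f)\in E$ and $f$ a $p$-cocartesian lift of $p(f)$. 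I would also record the structural consequence used below: every object $(x,m\colon a\to b)$ of $\Env(\cX)$ receives a cocartesian morphism $(x,\id_a)\to(x,m)$ over $m$ --- its square is $(\id_a,m)$, whose top edge $\id_a$ lies in $E$, and $\id_x$ is $p$-cocartesian --- so $\Env(\cX)$ is generated under cocartesian pushforward by the image of the identity section $\eta_\cX\colon x\mapsto(x,\id_{p(x)})$.

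For parts (2) and (3), note first that $\eta_\cX$ lies over $S$ and, by the characterization just obtained, sends $p$-cocartesian lifts of $E$-morphisms to cocartesian morphisms, so it is a morphism in $\Cat^{\Lcocart}_{/S}$. To see it exhibits $\Env$ as left adjoint to the inclusion, given $\cY\in\Cat^{\cocart}_{/S}$ and $f\colon\cX\to\cY$ over $S$ preserving $E$-cocartesian lifts, one constructs $\bar f\colon\Env(\cX)\to\cY$ with $(x,m\colon a\to b)\mapsto m_!f(x)$ (cocartesian pushforward in $\cY$): by the structural consequence $\bar f$ automatically preserves cocartesian morphisms and satisfies $\bar f\circ\eta_\cX\simeq f$, and any cocartesian-preserving extension of $f$ along $\eta_\cX$ must agree with $\bar f$. $\Cat$-linearity follows from $\Env(K\times\cX)\cong K\times\Env(\cX)$ over $S$. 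For (3), pulling $\Env(\cX)$ back along the identity section $S\hookrightarrow\Ar_M(S)=\Env(1)$ returns $\cX$, naturally in $\cX$; as $\Env(f)$ is by construction a map over $\Env(1)$, this exhibits a retraction of $\Env$, hence it is conservative. Combined with the adjunction, the same retraction identifies maps $\Env\cX\to\Env\cX'$ over $\Env(1)$ with maps $\cX\to\cX'$ in $\Cat^{\Lcocart}_{/S}$: a map $\cX\to\Env\cX'$ over both $S$ and $\Env(1)\to S$ must send each object to one of the form $(x',\id)$, hence factors uniquely through $\eta_{\cX'}$; this is the asserted full faithfulness into the slice over $\Env(1)$.

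The hard part will be upgrading the object-level recipes ``$\bar f(x,m)=m_!f(x)$'' and ``$g$ factors through $\eta_{\cX'}$'' to coherent statements about functors and mapping \emph{categories}, i.e.\ genuinely checking that $\Env$ corepresents the right functor and that the adjunction is $\Cat$-linear. The cleanest route, which I would take, is not to build $\bar f$ by hand but to observe that $\Env(\cX)=\cX\times_S\Ar_M(S)$ is an $M$-constrained variant of the Gepner--Haugseng--Nikolaus free cocartesian fibration $\cX\times_S\Ar(S)$, and to deduce the adjunction and full faithfulness formally from the universal property of the latter together with the analysis of $t$-cocartesian edges of $\Ar_M(S)$ above. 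Pushing this through is precisely the content of \cite[Theorem E]{BHS_Algebraic_Patterns}, which we invoke.
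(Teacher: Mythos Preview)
Your proposal is correct and takes essentially the same approach as the paper: both ultimately defer to \cite{BHS_Algebraic_Patterns}, with the paper citing the specific propositions (2.2.2, 2.1.4, 2.2.4, 5.3.4, 2.3.2) and you sketching the strategy behind them before invoking Theorem~E of the same reference. Your outline of the argument --- analyzing $t$-cocartesian edges in $\Ar_M(S)$ via the factorization system, then pasting along the pullback, then deducing the adjunction from the free-cocartesian-fibration description --- accurately reflects how the proof goes in that source.
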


\begin{proof}
The first item is the content of \cite[Proposition 2.2.2]{BHS_Algebraic_Patterns}. The adjunction in (2) is proven by combining Proposition 2.1.4 and Proposition~2.2.4 of \emph{op.\ cit,} while $\Cat$-linearity of $\Env(-)$ is proven in Proposition 5.3.4. Finally, (3) is Proposition 2.3.2 of \emph{op.~cit.}
\end{proof}

In view of \cref{ex:span-fact-sys} we may now apply this result to the case of $(\cF,\cN)$-preoperads.

\begin{definition}
We denote by
\[
\Env(-)\colon \PreOp{\cF}{\cN}\to \PreNmCat{\cF}{\cN}
\]
the left adjoint to the inclusion functor $\PreNmCat{\cF}{\cN}\hookrightarrow \PreOp{\cF}{\cN}$ obtained via the previous result.
Since it is $\Cat$-linear, this canonically upgrades to an adjunction of $(\infty,2)$-categories
in view of \cref{rem:2cat-structure}.
\end{definition}

\begin{remark}\label{rem:cocart_in_Env_1}
By \cite{HHLNb}*{Corollary~2.22}, $\Env(1) = \Ar_{\cN}(\Span_{\cN}(\cF))$ is the category of spans in $\Ar_\Nn(\Ff)$ of the form
\[\begin{tikzcd}
	X & Z & Y \\
	A & C & B
	\arrow[norm, from=1-1, to=2-1]
	\arrow[from=1-2, to=1-1]
		\arrow[norm, from=1-2, to=1-3]
		\arrow["\lrcorner"{anchor=center, pos=0.125, rotate=-90}, draw=none, from=1-2, to=2-1]
		\arrow[norm, from=1-2, to=2-2]
		\arrow[norm, from=1-3, to=2-3]
		\arrow[from=2-2, to=2-1]
		\arrow[norm, from=2-2, to=2-3]
	\end{tikzcd}\]
	where the left square is a pullback and all the marked arrows are, by our standing notational convention, in $\cN$. Unwinding the description of cocartesian edges given in Theorem~\ref{thm:BHS}, we find that such a diagram is $t$-cocartesian if and only if the map $Z \rightarrownorm Y$ is an equivalence.
\end{remark}

We now argue for the expected universal properties of $\Triv(-)$ and $(-)^{\Ncoprod}$.

\begin{proposition}\label{prop:calg}
	The constructions $\Triv(-)$ and $(-)^{\Ncoprod}$ upgrade to $\Cat$-linear left adjoints
	\[
	\Triv, (-)^{\Ncoprod} \colon \PreCat{\cF} \to \PreOp{\cF}{\cN}.
	\]
	such that the natural transformation $\incl \colon \Triv \Rightarrow (-)^{\Ncoprod}$ is $\Cat$-linear.
    Moreover:
    \begin{enumerate}
        \item $(-)^{\Ncoprod}$ preserves all limits, hence also admits a left adjoint.
        \item The functor $\Triv$ is fully faithful, and it is left adjoint to the forgetful functor $\PreOp{\cF}{\cN}\to \PreCat{\cF}$, with unit given by the evident equivalence $\id\simeq\fgt\circ\Triv$.
    \end{enumerate}
\end{proposition}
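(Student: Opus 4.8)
The plan is to dispatch $\Triv$ together with its adjunction to $\fgt$ first, since these are elementary, then to treat $(-)^{\Ncoprod}$, and to handle $\Cat$-linearity of both functors and of $\incl$ by one direct computation with the module structures. For $\Triv$ I would first record that $\cF^\op$ is the $E$-part of the factorization system on $S\coloneqq\Span_\cN(\cF)$ from \cref{thm:spans}(5), and that $j\colon\cF^\op\hookrightarrow S$ is a subcategory inclusion: it is faithful and identifies each mapping space with a union of path components (a forward leg being an equivalence is a property), hence is a monomorphism in $\Cat$, so $\cF^\op\times_S\cF^\op\simeq\cF^\op$. Then, for any $(\cF,\cN)$-preoperad $\pi\colon\cX\to S$, the pullback $\cX\times_S\cF^\op$ is exactly the wide subcategory $\cX_E\subseteq\cX$ on the morphisms lying over $E$, and --- using that $\cX$ is a \emph{preoperad} together with right-cancellability of $\cF^\op\subset S$ --- the $E$-cocartesian edges of $\cX$ are precisely the cocartesian edges of $\cX_E\to\cF^\op$; in particular $\fgt(\cX)$ is the $\cF$-precategory classified by $\cX_E\to\cF^\op$. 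Since any functor $\Un^\co(\cC)\to\cX$ over $S$ necessarily lands in $\cX_E$ (every morphism of $\Un^\co(\cC)$ maps into $E$) and this factorization is unique ($\cX_E\hookrightarrow\cX$ being a monomorphism), such functors that preserve $E$-cocartesian lifts are identified with maps of cocartesian fibrations $\Un^\co(\cC)\to\cX_E$ over $\cF^\op$, i.e.\ with natural transformations $\cC\Rightarrow\fgt(\cX)$. This yields $\Triv\dashv\fgt$ with the tautological unit; and $\fgt\circ\Triv(\cC)$ is classified by $\Un^\co(\cC)\times_S\cF^\op\simeq\Un^\co(\cC)\times_{\cF^\op}(\cF^\op\times_S\cF^\op)\simeq\Un^\co(\cC)$, so the unit is an equivalence and $\Triv$ is fully faithful. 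The $(\infty,2)$-categorical refinement comes for free, since $\fgt$ is automatically lax $\Cat$-linear.

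For $\Cat$-linearity, I would use that on a slice $\Cat_{/T}$ the module structure of \cref{rem:2cat-structure} is $K\otimes(\cX\xrightarrow{\pi}T)\simeq(K\times\cX\to\cX\xrightarrow{\pi}T)$, and that this restricts to all the subcategories in play. Combining $\Un^\co(\const_K\times\cC)\simeq K\times\Un^\co(\cC)$ over $\cF^\op$, $\Un^\ct(\const_K\times\cC)\simeq K\times\Un^\ct(\cC)$ over $\cF$, and the fact that $\Span$ of a product adequate triple whose first factor is a category $K$ (with all morphisms declared both forwards and backwards) is $K\times\Span(-)$, one reads off natural equivalences $\Triv(K\otimes\cC)\simeq K\otimes\Triv(\cC)$ and $(K\otimes\cC)^{\Ncoprod}\simeq K\otimes\cC^{\Ncoprod}$, and that $\incl_{K\otimes\cC}$ is identified with $K\otimes\incl_\cC$; this exhibits $\Triv$, $(-)^{\Ncoprod}$ and $\incl$ as $\Cat$-linear.

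For $(-)^{\Ncoprod}$, limit-preservation (item (1)) is immediate on unwinding the definition: $\Un^\ct$ preserves limits (it belongs to the equivalence $\PreCat{\cF}\simeq\Cat^{\ct}_{/\cF}$, and cartesian fibrations are closed under limits in $\Cat_{/\cF}$); passing from $\Un^\ct(\cC)$ to the adequate triple $(\Un^\ct\cC,\ct,\cN)$ preserves limits, since limits of adequate triples are computed in $\Cat$ by \cref{thm:spans}(1) and both marked subcategories are detected componentwise; and $\Span\colon\AdTrip\to\Cat$ preserves limits by \cref{thm:spans}(2). As limits in $\PreCat{\cF}$ and $\PreOp{\cF}{\cN}$ are computed in $\Cat_{/\cF^\op}$ resp.\ $\Cat_{/S}$, the composite preserves limits, and since both categories are presentable the adjoint functor theorem hands back a left adjoint.

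The main obstacle is showing that $(-)^{\Ncoprod}$ is \emph{itself} a left adjoint; equivalently, as both categories are presentable, that it preserves colimits. This is the only genuinely nonformal point, because $(-)^{\Ncoprod}$ is built from the span construction, which is \emph{a priori} only a right adjoint, and $\Un^\ct$ does not interact with colimits in $\Cat_{/\cF}$. I expect to prove colimit-preservation by hand from the explicit description of the unfurled category $\cC^{\Ncoprod}=\Span_{\ct,\cN}(\Un^\ct\cC)$ and of its cocartesian edges in \cite{HHLNa}*{Theorem~3.1}, verifying that the formation of $\cC^{\Ncoprod}$ is compatible with colimits of $\cF$-precategories; since the representables $\ul A$ generate $\PreCat{\cF}$ under colimits and tensoring with categories, and $(-)^{\Ncoprod}$ is pinned down on them by \cref{ex:Triv-Ncoprod-rep} and $\Cat$-linearity, it follows that $(-)^{\Ncoprod}$ is the essentially unique $\Cat$-linear, colimit-preserving functor $\PreCat{\cF}\to\PreOp{\cF}{\cN}$ sending $\ul A$ to $\Span_\cN(\cF_{/A})$, and that $\incl$ is the $\Cat$-linear transformation induced by the inclusions $\Triv(\ul A)\hookrightarrow\ul A^{\Ncoprod}$ of \cref{ex:Triv-Ncoprod-rep}.
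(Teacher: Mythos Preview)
Your treatment of $\Triv$ (the adjunction with $\fgt$, full faithfulness via $\cF^\op\times_S\cF^\op\simeq\cF^\op$) and of limit-preservation for $(-)^{\Ncoprod}$ is essentially the paper's argument and is fine. The $\Cat$-linearity paragraph is a bit informal---producing equivalences $F(K\otimes\cC)\simeq K\otimes F(\cC)$ is not quite the same as exhibiting a $\Cat$-linear structure---but this is a minor point; the paper gets around it by observing that a limit-preserving functor is automatically symmetric monoidal for cartesian structures and then checking compatibility with $\const$.

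The genuine gap is exactly where you flag it: showing $(-)^{\Ncoprod}$ preserves colimits. You write ``I expect to prove colimit-preservation by hand from the explicit description'' but give no argument, and the alternative characterization you sketch (via generators $\ul A$) is only available \emph{after} colimit preservation is known, so it cannot substitute for a proof. A direct verification from the description of morphisms in $\Span_{\ct,\cN}(\Un^\ct\cC)$ is not straightforward, because colimits in $\PreOp{\cF}{\cN}$ are not computed in $\Cat_{/S}$ and there is no obvious pointwise formula to check against.

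The paper's argument here is worth knowing. It reduces to showing that for each $A\in\cF$ the composite $\ev_A\circ\Env\circ(-)^{\Ncoprod}$ preserves colimits (since $\Env$ is a conservative left adjoint and the $\ev_A$ jointly detect equivalences). Via a chain of pullback squares one identifies this composite with
\[
\PreCat{\cF}\xrightarrow{\pi_A^*}\Fun((\cN_{/A})^\op,\Cat)\xrightarrow[\sim]{\Un^\ct}\Cat^\ct_{/\cN_{/A}}\xrightarrow{\fgt}\Cat_{/\cN_{/A}}\longrightarrow\Cat,
\]
where $\pi_A\colon\cN_{/A}\to\cF$ is the forgetful functor. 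Each arrow here preserves colimits; the nontrivial one is $\fgt\colon\Cat^\ct_{/T}\to\Cat_{/T}$, which is a result of Ramzi. This is the missing idea in your proposal.
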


\begin{remark}
    In view of \cref{rem:2cat-structure}, $\Triv$ and $(-)^{\Ncoprod}$
    thus canonically upgrade to left adjoint 2-functors between 2-categories,
    and $\incl$ becomes a 2-natural transformation.
\end{remark}

\begin{proof}[Proof of Proposition~\ref{prop:calg}]
    We begin by showing the first addendum, that $(-)^{\Ncoprod}$ preserves all limits.
    Note that the forgetful functor $\PreOp{\cF}{\cN} \to \Cat_{/\Span_\cN(\cF)}$ is a conservative right adjoint.
    The composite of $(-)^{\Ncoprod}$ with it factors as
    \[
        \PreCat{\cF} \simeq \Cat^\cart_{/\cF} \xto{\Phi} \AdTrip_{/(\cF,\all,\cN)} \xto{\Span} \Cat_{/\Span_\cN(\cF)}
    \]
    where $\Phi$ sends a cartesian fibration $\cX \to \cF$ to the adequate triple
    $(\cX, \cX_\cart, \cX \times_\cF \cN)$ over $(\cF,\cF,\cN)$ (the former is indeed adequate by \cite[Proposition 2.6]{HHLNa}).
    By \cref{thm:spans} limits in $\AdTrip$ are computed pointwise and $\Span$ is a right adjoint,
    so that it remains to see that the functors
    \[
        \Cat_{/\cF}^\cart \to \Cat_{/\cF},\ \cX \mapsto \cX^\cart
        \quad\text{and}\quad
        \Cat_{/\cF}^\cart \to \Cat_{/\cN},\ \cX \mapsto \cX \times_\cF \cN
    \]
    preserve limits. The second functor factors as the right adjoint $\Cat_{/\cF}^\cart \to \Cat_{/\cN}^\cart$
    followed by the forgetful $\Cat_{/\cN}^\cart \to \Cat_{/\cN}$, which preserves limits
    by \cite[Theorem 1.2]{GHN17}. Analogously, the first functor actually also lands in $\Cat^\cart_{/\cF}$,
    where under the equivalence with $\Fun(\cF^\op,\Cat)$ it corresponds to postcomposing with the groupoid
    core functor, which clearly preserves limits.

	Next, we will employ the adjoint functor theorem of \cite{HTT}*{Corollary 5.5.2.9} to see that $(-)^{\Ncoprod}$ admits a right adjoint. The domain is clearly presentable and the codomain is by \cite{BHS_Algebraic_Patterns}*{Proposition 2.3.7} and so it remains to see that the functor preserves colimits.
	This can be tested after postcomposing with the conservative left adjoint $\Env(-)$ and then evaluating at every $A \in \Span_\cN(\cF)$.
	We may consider the following pullbacks, where $\pi_A$ denotes the composite $\cN_{/A} \to \cN \to \cF$:
    \[\begin{tikzcd}[cramped]
        {\Un^\ct(\pi_A^*\cC)} & {\Ar(\cN)\times_{s,\cF} \Un^\ct(\cC)} & {\Env(\cC^{\Ncoprod})} & {\cC^{\Ncoprod}} \\
        {\cN_{/A}} & {\Ar(\cN)} & {\Ar_\cN(\Span_\cN(\cF))} & {\Span_\cN(\cF)} \\
        {1} & \cN & {\Span_\cN(\cF)}
        \arrow[from=1-1, to=1-2]
        \arrow[from=1-1, to=2-1]
        \arrow[from=1-2, to=1-3]
        \arrow[from=1-2, to=2-2]
        \arrow[from=1-3, to=1-4]
        \arrow[from=1-3, to=2-3]
        \arrow[from=1-4, to=2-4]
        \arrow[""{name=0, anchor=center, inner sep=0}, from=2-1, to=2-2]
        \arrow[from=2-1, to=3-1]
        \arrow[""{name=1, anchor=center, inner sep=0}, from=2-2, to=2-3]
        \arrow["t", from=2-2, to=3-2]
        \arrow[""{name=2, anchor=center, inner sep=0}, "s", from=2-3, to=2-4]
        \arrow["t", from=2-3, to=3-3]
        \arrow[""{name=3, anchor=center, inner sep=0}, "A", from=3-1, to=3-2]
        \arrow[""{name=4, anchor=center, inner sep=0}, from=3-2, to=3-3]
        \arrow["\lrcorner"{anchor=center, pos=0.125}, draw=none, from=1-1, to=0]
        \arrow["\lrcorner"{anchor=center, pos=0.125}, draw=none, from=1-2, to=1]
        \arrow["\lrcorner"{anchor=center, pos=0.125}, draw=none, from=1-3, to=2]
        \arrow["\lrcorner"{anchor=center, pos=0.125}, draw=none, from=2-1, to=3]
        \arrow["\lrcorner"{anchor=center, pos=0.125}, draw=none, from=2-2, to=4]
    \end{tikzcd}\]
    The bottom left square is a pullback by definition, while the bottom right is by the fact that forward maps in a span category are left cancellable.\footnote{We recall that a class $\Xx$ of maps in a category $\Cc$ is called \emph{left cancellable} if given any composable maps $f,g$ in $\Cc$ such that both $fg$ and $f$ belong to $\Xx$, also $g$ belongs to $\Xx$.} The top right square is a pullback by definition of the envelope.
    The rectangle consisting of the top middle and right squares
    is a pullback by the definition of $\cC^{\Ncoprod}$ and the fact that
    $\Ar(\cN) \to \Span_\cN(\cF)$ factors as $\Ar(\cN) \xto{s} \cN \to \Span_\cN(\cF)$.
    Finally, to see that the rectangle consisting of the entire top row
    is a pullback, recall that $\Span$ is a right adjoint from the category of adequate triples.
	Viewing $\cN_{/A} \simeq \Span_{\simeq,\all}(\cN_{/A})$ as span category with backwards maps only the equivalences,
	we compute that the pullback is the span category with forwards maps $\Un^\ct(\pi_A^*\Cc)$,
	and backwards morphisms the subcategory on cartesian morphisms lying over $\core(\cN_{/A})$.
	However a cartesian map lying over an equivalence is itself an equivalence,
	and thus the pullback has no non-trivial backwards maps.

	We conclude that the following diagram commutes:
    \[\begin{tikzcd}
        {\PreCat{\cF}} & {\PreOp{\cF}{\cN}} && {\PreNmCat{\cF}{\cN}} \\
        {\Fun((\cN_{/A})^\op,\Cat)} & {\Cat^\cart_{/(\cN_{/A})}} & {\Cat_{/(\cN_{/A})}} & \Cat
        \arrow["{{(-)^{\Ncoprod}}}", from=1-1, to=1-2]
        \arrow["{{\pi_A^*}}"', from=1-1, to=2-1]
        \arrow["\Env", from=1-2, to=1-4]
        \arrow["{{\ev_A}}", from=1-4, to=2-4]
        \arrow["\sim"', "\Un^\ct", from=2-1, to=2-2]
        \arrow["\fgt", from=2-2, to=2-3]
        \arrow[from=2-3, to=2-4]
    \end{tikzcd}\]
    Each functor in the bottom composite commutes with colimits: for the first two and the last one this is clear and for $\fgt$ this was shown in \cite[Corollary A.5]{ramzi-monoidal-grothendieck}. Thus, the top composite preserves colimits for all $A\in \cF$, and hence also $(-)^{\Ncoprod}$ does.

    Next, since $(-)^{\Ncoprod}$ preserves limits,
    it is symmetric monoidal for the cartesian symmetric monoidal structures.
    By \cref{rem:2cat-structure} the $\Cat$-modules structures on both source and target are similarly
    defined by symmetric monoidal functors from $\Cat$, and so it will suffice for $\Cat$-linearity to show that the diagram
    \[
        \begin{tikzcd}[column sep=small]
            & \Cat\arrow[dl, bend right=10pt, "\const"']\arrow[dr, bend left=10pt, "\const"]\\
            \PreCat{\Ff}\arrow[rr, "(-)^\Ncoprod"'] && \PreOp{\Ff}{\Nn}
        \end{tikzcd}
    \]
    commutes as a diagram of symmetric monoidal functors or, equivalently, as a diagram of ordinary functors. This follows at once from the equivalences
    \begin{align*}
		(\mathop\const K)^{\Ncoprod}
		&= \Span_{\cart,\cN}(\Un^\ct(\mathop\const K)) \simeq \Span(K_{\simeq,\all} \times (\Ff,\Ff,\Nn))\\
		&\simeq \Span_{\simeq,\all}(K)\times \Span_\Nn(\Ff) \simeq K \times \Span_\Nn(\Ff)=\mathop\const K,
	\end{align*}
	where $K_{\simeq,\all}$ denotes $K$ viewed as adequate triple with only forwards maps.

	On the other hand, $\Triv(-)$ is clearly a $\Cat$-linear left adjoint, since we may simply restrict the $\Cat$-linear adjunction
	\[
	    i_!\colon \Cat_{/\cF^{\op}}\rightleftarrows \Cat_{/\Span_{\cN}(\cF)}\noloc i^*
	\]
    induced by the inclusion $i\colon \cF^{\op} \to \Span_{\cN}(\cF)$ to an adjunction between $\PreCat{\cF}$ and $\PreOp{\cF}{\cN}$. This also shows that the right adjoint of $\Triv$ is $\fgt = i^*$, with the evident unit. That $\Triv(-)$ is fully faithful follows immediately from the fact that $\cF^\op \times_{\Span_\cN(\cF)}\cF^\op$ is equivalent to $\cF^\op$.
	Finally, in view of the pullback in \cref{lem:unfurl} defining the natural map $\incl\colon \Triv \Rightarrow (-)^{\Ncoprod}$, it follows immediately that $\incl$ is also a $\Cat$-linear transformation.
\end{proof}

\subsection{Parametrized categories of normed algebras} As an upshot of the above, we can now give various equivalent definitions of parametrized categories of parametrized algebras:

\begin{definition}
    We denote the right adjoint of the 2-functor $(-)^{\Ncoprod}$ by
    \[
        \und{\CAlg}_\cF^\cN \colon \PreOp{\cF}{\cN} \to \PreCat{\cF}.
    \]
    We call $\und{\CAlg}_\cF^\cN(\cO)$ the \emph{$\cF$-precategory of $\cN$-normed
    algebras} in $\cO$.
    By slight abuse of notation, we will use the same notation
    for the composition with the forgetful functor
    $\PreNmCat{\cF}{\cN} \to \PreOp{\cF}{\cN}$,
    which is right adjoint to the composite ${\Env}\big((-)^{\Ncoprod}\big)$.
\end{definition}

For $\cC \in \PreNmCat{\cF}{\cN}$ and $\cD \in \PreCat{\cF}$ we thus obtain natural equivalences
\[
    \Fun_\cF(\cD,\und{\CAlg}_\cF^\cN(\cC))
    \simeq \Fun_{\cF}^{\Nstr}(\Env(\cD^{\Ncoprod}), \cC).
\]
Similarly, we obtain an equivalence
\[
    \Fun_{\cF}(\cD,\fgt(\cC)) \simeq \Fun_{\cF}^{\Nstr}(\Env(\Triv(\cD)),\cC).
\]
Given an object $A\in \cF$, we we continue to let $\ul{A}$ denote the presheaf represented by $A$, considered as an object of $\PreCat{\cF}$. As recalled in \cite[Lemma 2.2.7]{CLL_Global}, there is an equivalence
\[
    \Fun_{\cF}(\ul{A},\cD) \simeq \cD(A)
\]
which is natural in both $A$ and $\cD$. Combining this with the equivalence above and with Example~\ref{ex:Triv-Ncoprod-rep}, we obtain natural equivalences
\[\hskip-16.24pt\hfuzz=20.25pt
\begin{aligned}
    \und{\CAlg}_{\cF}^{\cN}(\cC)(A)
    &\simeq \Fun_{\cF}^{\Nstr}(\Env(\ul{A}^{\Ncoprod}),\cC)
    \simeq \Fun_{\cF}^{\Nlax}(\ul A^\Ncoprod,\Cc)\simeq\Fun_{\cF}^{\Nlax}(\Span_\Nn(\Ff_{/A}),\Cc)
    \\
    \cC(A) &\simeq \Fun_{\cF}^{\Nstr}(\Env(\Triv(\ul{A})),\cC)
    \simeq\Fun_{\cF}^{\Nlax}(\Triv(\ul A),\Cc)\simeq
    \Fun_{\cF}^{\Nlax}((\Ff_{/A})^\op,\Cc)
\end{aligned}
\]

\begin{definition}
Precomposition by the transformation
\[\Env(\incl) \colon \Env(\Triv(-)) \Rightarrow \Env((-)^{\Ncoprod})\] induces for every $\cC\in \PreNmCat{\cF}{\cN}$ a functor
\[
\bbU\colon \und{\CAlg}_{\cF}^{\cN}(\cC) \to \cC
\]
of $\cF$-precategories, defined so that the following square commutes
\[\begin{tikzcd}
{\Fun_{\cF}^{\Nstr}(\Env(\und{A}^{\Ncoprod}), \cC)} & {\und{\CAlg}_\cF^\cN(\cC)(A)} \\
	{\Fun_\cF^{\Nstr}(\Env(\Triv(\und{A})),\cC)} & {\cC(A)}
	\arrow["\sim", from=1-1, to=1-2]
	\arrow["\Env(\incl)^*"', from=1-1, to=2-1]
	\arrow["{\mathbb{U}(A)}", from=1-2, to=2-2]
	\arrow["\sim", from=2-1, to=2-2]
	\arrow[ draw=none, from=2-1, to=2-2]
\end{tikzcd}\]
naturally in $A\in \cF$.
\end{definition}

\begin{definition}
    Given an $\cF$-precategory $\cC$, we refer to $\lim_{\cF^\op} \cC$ as the underlying category of $\cC$. Note that if $\cF$ admits a final object $1$, then $\lim_{\cF^\op} \cC \simeq \cC(1)$.
\end{definition}

\begin{lemma}
    There is an equivalence natural in $\cN$-normed $\cF$-precategories $\cC$
    \[
        \CAlg_\Ff^\Nn(\Cc)
        \coloneqq \lim_{\cF^\op}\und{\CAlg}_\cF^\cN(\cC)
        \simeq\Fun^{\Nlax}_\Ff(\Span_\Nn(\Ff),\Cc)
    \]
    which is compatible with the structure maps $\CAlg_\Ff^\Nn(\Cc)\to\ul\CAlg_\Ff^\Nn(\Cc)(A)$ of the defining limit.
\end{lemma}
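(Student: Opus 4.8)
The plan is to collapse the limit over $\cF^{\op}$ to a single $\cN$-lax mapping category, combining density of representables in $\PreCat{\cF}$ with the adjunction $(-)^{\Ncoprod}\dashv\und{\CAlg}_{\cF}^{\cN}$ and the limit-preservation of $(-)^{\Ncoprod}$. The first ingredient is the general fact that the underlying category of an $\cF$-precategory $\cD$ is corepresented by the terminal $\cF$-precategory $\const_{\ast}$ (the constant functor $\cF^{\op}\to\Cat$ at the one-point category): there is a natural equivalence $\lim_{\cF^{\op}}\cD\simeq\Fun_{\cF}(\const_{\ast},\cD)$ under which the limit projection to $\cD(A)$ corresponds to restriction along the canonical map $\ul A\to\const_{\ast}$, followed by the equivalence $\Fun_{\cF}(\ul A,\cD)\simeq\cD(A)$ recalled above. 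I would prove this by combining the density statement $\const_{\ast}\simeq\colim_{A\in\cF}\ul A$ in $\PreCat{\cF}=\Fun(\cF^{\op},\Cat)$ — which reduces to the classical statement for space-valued presheaves since $\Spc\hookrightarrow\Cat$ preserves colimits — with the observation that the contravariant mapping-category functor $\Fun_{\cF}(-,\cD)$ carries colimits to limits (transparent here: tensoring by a fixed $K\in\Cat$ is computed pointwise by $K\times(-)$, which preserves colimits in $\Cat$), the cocone $(\ul A\to\const_{\ast})_{A}$ being the one witnessing the density colimit.

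Applying this to $\cD=\und{\CAlg}_{\cF}^{\cN}(\Cc)$ gives $\CAlg_\Ff^\Nn(\Cc)=\lim_{\cF^{\op}}\und{\CAlg}_{\cF}^{\cN}(\Cc)\simeq\Fun_{\cF}(\const_{\ast},\und{\CAlg}_{\cF}^{\cN}(\Cc))$. Now I would feed this into the adjunction: by the definition of $\und{\CAlg}_{\cF}^{\cN}$ as a right adjoint together with \cref{rem:2cat-structure} (which upgrades the $\Cat$-linear adjunction $(-)^{\Ncoprod}\dashv\und{\CAlg}_{\cF}^{\cN}$ to one of $(\infty,2)$-categories, hence to a natural equivalence of mapping categories), one has $\Fun_{\cF}(\cD,\und{\CAlg}_{\cF}^{\cN}(\Cc))\simeq\Fun^{\Nstr}_{\cF}(\Env(\cD^{\Ncoprod}),\Cc)\simeq\Fun^{\Nlax}_{\cF}(\cD^{\Ncoprod},\Cc)$ for every $\cD\in\PreCat{\cF}$; with $\cD=\const_{\ast}$ this yields $\CAlg_\Ff^\Nn(\Cc)\simeq\Fun^{\Nlax}_{\cF}((\const_{\ast})^{\Ncoprod},\Cc)$. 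Finally, $(\const_{\ast})^{\Ncoprod}$ is the terminal $(\cF,\cN)$-preoperad: $(-)^{\Ncoprod}$ preserves all limits by \cref{prop:calg}(1), hence the terminal object, and the terminal object of $\PreOp{\cF}{\cN}$ is — viewing preoperads as objects of $\Cat_{/\Span_{\cN}(\cF)}$ — the identity $\id_{\Span_{\cN}(\cF)}$, which is exactly the object written $\Span_{\cN}(\cF)$ in the mapping-category notation; alternatively one may read this off the computation $(\const K)^{\Ncoprod}\simeq K\times\Span_{\cN}(\cF)$ carried out in the proof of \cref{prop:calg}. Composing the three equivalences gives $\CAlg_\Ff^\Nn(\Cc)\simeq\Fun^{\Nlax}_{\cF}(\Span_{\cN}(\cF),\Cc)$, natural in $\Cc$ since every step is.

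For the compatibility with the structure maps I would trace this chain: the structure map $\CAlg_\Ff^\Nn(\Cc)\to\und{\CAlg}_\Ff^\Nn(\Cc)(A)$ is the limit projection, which by the first step is restriction along $\ul A\to\const_{\ast}$, hence — under the adjunction — restriction along $\ul A^{\Ncoprod}\to(\const_{\ast})^{\Ncoprod}$; as the target is terminal, this last map is simply the structure map of $\ul A^{\Ncoprod}\simeq\Span_{\cN}(\cF_{/A})$ over $\Span_{\cN}(\cF)$ (\cref{ex:Triv-Ncoprod-rep}), i.e.\ the map induced by $\cF_{/A}\to\cF$, which is precisely the restriction functor implicit in the identification $\und{\CAlg}_{\cF}^{\cN}(\Cc)(A)\simeq\Fun^{\Nlax}_{\cF}(\Span_{\cN}(\cF_{/A}),\Cc)$ of the displays preceding the lemma. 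I expect the main (though still routine) obstacle to be exactly this bookkeeping — checking that the density colimit is witnessed by the cocone $(\ul A\to\const_{\ast})_{A}$ on the nose, and that this witnessing is carried coherently through the $(\infty,2)$-categorical adjunction — while the substantive inputs ($(-)^{\Ncoprod}$ being a limit-preserving left adjoint, and the upgraded adjunction of \cref{rem:2cat-structure}) are already in hand.
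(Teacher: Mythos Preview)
Your proof is correct and follows essentially the same route as the paper's: both rest on density of representables ($\colim_{A}\ul A\simeq 1$ in $\PreCat{\cF}$) together with the adjunction $(-)^{\Ncoprod}\dashv\und{\CAlg}_{\cF}^{\cN}$ and the identification $1^{\Ncoprod}\simeq\Span_{\cN}(\cF)$. The only cosmetic difference is the order of operations---you pass to the terminal object first and then apply the adjunction (invoking limit-preservation of $(-)^{\Ncoprod}$ from \cref{prop:calg}(1)), whereas the paper applies the adjunction levelwise and then invokes its colimit-preservation as a left adjoint to collapse the diagram.
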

\begin{proof}
    We want to show that the unique diagram
    \[
        \ul{(-)}^\Ncoprod\to\const(1^\Ncoprod)
    \]
    is a colimiting cocone in $\PreOp{\Ff}{\Nn}$, where $1$ denotes the final object of $\PreCat{\Ff}$, so that $1^\Ncoprod=\Span_\Nn(\Ff)$ is the final $(\Ff,\Nn)$-preoperad. As $(-)^\Ncoprod$ is a left adjoint, it suffices for this that the colimit of the $\ul A$'s in $\PreCat{\Ff}$ is terminal. As $\Fun(\Ff^\op,\Spc)\hookrightarrow\Fun(\Ff^\op,\Cat)=\PreCat{\Ff}$ is both a left and a right adjoint, this just amounts to the well known fact that the colimit of the Yoneda embedding is terminal.
\end{proof}

For later use, let us record two further descriptions of $\ul\CAlg^\Nn_\Ff$:

\begin{lemma}\label{lemma:CAlg-slice}
    Let $A\in\Ff$ be arbitrary and let $\pi\colon\Ff_{/A}\to\Ff$ denote the forgetful functor. Then we have an equivalence natural in $\cN$-normed $\cF$-precategories $\cC$
    \[
        \ul\CAlg^\Nn_{\Ff_{/A}}(\Span(\pi)^*\Cc)\coloneqq\ul\CAlg_{\Ff_{/A}}^{\Nn\times_\Ff\Ff_{/A}}(\Span(\pi)^*\Cc)\simeq\pi^*\ul\CAlg_{\Ff}^\Nn(\Cc).
    \]
    Passing to underlying categories gives $\CAlg_{\cF_{/A}}^\cN(\Span(\pi)^*\cC) \simeq \und{\CAlg}_\cF^\cN(\cC)(A)$.
    \begin{proof}
        We have a $\Cat$-linear adjunction
        \[
            \Span(\pi)_!\coloneqq\Span(\pi)\circ-\colon \Cat_{/\Span_\Nn(\Ff_{/A})}\rightleftarrows\Cat_{/\Span_\Nn(\Ff)}:\Span(\pi)^*.
        \]
        Using that $\Span(\pi)$ is itself cocartesian over $\Ff^\op$, one easily checks that this restricts to a $\Cat$-linear adjunction $\PreOp{\Ff_{/A}}{\Nn}\rightleftarrows\PreOp{\Ff}{\Nn}$. We therefore have
        \begin{align*}
            \ul\CAlg^\Nn_\Ff(\Cc)(\pi(B))&\simeq
            \Fun_\Ff^{\Nlax}(\Span_\Nn(\Ff_{/\pi(B)}),\Cc)\\
            &\simeq
            \Fun_\Ff^{\Nlax}(\Span(\pi)_!\Span_\Nn((\Ff_{/A})_{/B}),\Cc)\\&\simeq
            \Fun_{\Ff_{/A}}^{\Nlax}(\Span_\Nn((\Ff_{/A})_{/B}),\Span(\pi)^*\Cc)\\
            &\simeq
            \ul\CAlg^\Nn_{\Ff_{/A}}(\Span(\pi)^*\Cc)(B)
        \end{align*}
        naturally in $\Cc$ and in $B\in(\Ff_{/A})^\op$.
    \end{proof}
\end{lemma}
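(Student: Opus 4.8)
The plan is to reduce the statement to base change along $\pi$, combined with the representability formula $\ul\CAlg^\Nn_\Ff(\Cc)(C)\simeq\Fun_\Ff^{\Nlax}(\Span_\Nn(\Ff_{/C}),\Cc)$ established above.

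First I would produce a $\Cat$-linear adjunction
\[
    \Span(\pi)_!\colon\Cat_{/\Span_\Nn(\Ff_{/A})}\rightleftarrows\Cat_{/\Span_\Nn(\Ff)}\noloc\Span(\pi)^*,
\]
where $\Span(\pi)$ is the image under $\Span$ of the evident morphism of adequate triples $\pi\colon(\Ff_{/A},\Ff_{/A},\Nn\times_\Ff\Ff_{/A})\to(\Ff,\Ff,\Nn)$ and $\Span(\pi)_!$ is postcomposition with $\Span(\pi)$, and then verify that it restricts to a $\Cat$-linear adjunction $\PreOp{\Ff_{/A}}{\Nn}\rightleftarrows\PreOp{\Ff}{\Nn}$. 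Since $\Span(\pi)$ preserves the backward/forward factorization systems of \cref{thm:spans}(5), it carries $(\Ff_{/A})^\op$ into $\Ff^\op$, which makes the claim for $\Span(\pi)^*$ essentially formal: cocartesian lifts over $\Ff^\op$ pull back to cocartesian lifts over $(\Ff_{/A})^\op$. For $\Span(\pi)_!$ the point is that $\pi$ is a right fibration, so $\pi^\op\colon(\Ff_{/A})^\op\to\Ff^\op$ is a cocartesian fibration and moreover $\Span(\pi)$ itself admits cocartesian lifts over $\Ff^\op$; given a preoperad $\cX\to\Span_\Nn(\Ff_{/A})$ one then obtains cocartesian lifts of the composite $\cX\to\Span_\Nn(\Ff)$ over morphisms of $\Ff^\op$ by pasting, first choosing a $\Span(\pi)$-cocartesian lift in $(\Ff_{/A})^\op$ and then lifting that to $\cX$ via the preoperad condition. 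As the adjunction is $\Cat$-linear, \cref{rem:2cat-structure} promotes it to an adjunction of $(\infty,2)$-categories, giving natural equivalences of mapping categories $\Fun_\Ff^{\Nlax}(\Span(\pi)_!(-),-)\simeq\Fun_{\Ff_{/A}}^{\Nlax}(-,\Span(\pi)^*(-))$.

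Next I would unwind the iterated slices: for $B=(B_0\to A)$ in $\Ff_{/A}$ there is a canonical equivalence $(\Ff_{/A})_{/B}\simeq\Ff_{/B_0}=\Ff_{/\pi(B)}$, under which the forgetful functor $(\Ff_{/A})_{/B}\to\Ff_{/A}$ postcomposed with $\pi$ becomes the forgetful functor $\Ff_{/\pi(B)}\to\Ff$; consequently $\Span(\pi)_!\Span_\Nn((\Ff_{/A})_{/B})\simeq\Span_\Nn(\Ff_{/\pi(B)})$ in $\Cat_{/\Span_\Nn(\Ff)}$, naturally in $B$. Feeding this through the representability formula then yields
\begin{align*}
    \ul\CAlg^\Nn_\Ff(\Cc)(\pi(B))
    &\simeq\Fun_\Ff^{\Nlax}(\Span_\Nn(\Ff_{/\pi(B)}),\Cc)\\
    &\simeq\Fun_\Ff^{\Nlax}(\Span(\pi)_!\Span_\Nn((\Ff_{/A})_{/B}),\Cc)\\
    &\simeq\Fun_{\Ff_{/A}}^{\Nlax}(\Span_\Nn((\Ff_{/A})_{/B}),\Span(\pi)^*\Cc)\\
    &\simeq\ul\CAlg^\Nn_{\Ff_{/A}}(\Span(\pi)^*\Cc)(B),
\end{align*}
all natural in $\Cc$ and in $B\in(\Ff_{/A})^\op$. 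Since the left-hand side is by definition $(\pi^*\ul\CAlg^\Nn_\Ff(\Cc))(B)$, this is the first asserted equivalence. For the statement about underlying categories I would apply $\lim_{(\Ff_{/A})^\op}$; as $\Ff_{/A}$ has terminal object $\id_A$, this limit is evaluation at $\id_A$, sending the right-hand side to $\CAlg^\Nn_{\Ff_{/A}}(\Span(\pi)^*\Cc)$ and the left-hand side to $\ul\CAlg^\Nn_\Ff(\Cc)(\pi(\id_A))=\ul\CAlg^\Nn_\Ff(\Cc)(A)$.

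The hardest part will be the bookkeeping in the first step --- in particular checking that $\Span(\pi)_!$ sends preoperads to preoperads, which genuinely uses the stronger ``admits cocartesian lifts over $\Ff^\op$'' condition rather than merely the pullback to $\Ff^\op$ being a cocartesian fibration (compare the Warning above), and which rests on the compatibility of $\Span(\pi)$ with the factorization systems together with the standard pasting behaviour of cocartesian edges over composable functors. Everything downstream of that is a formal consequence of the $(\infty,2)$-categorical adjunction and the representability formula.
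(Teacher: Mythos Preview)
Your proposal is correct and follows essentially the same approach as the paper: set up the $\Cat$-linear adjunction $\Span(\pi)_!\dashv\Span(\pi)^*$, restrict it to preoperads, and then run the same four-line chain of natural equivalences using the representability formula and the identification $(\Ff_{/A})_{/B}\simeq\Ff_{/\pi(B)}$. You supply more detail than the paper's ``one easily checks'' (the pasting argument for cocartesian lifts, the explicit limit at the end), and you correctly flag the subtle point that $\Span(\pi)$ must admit cocartesian lifts over $\Ff^\op$ in the strong sense of the Warning---this is exactly the statement that $\ul A^{\Ncoprod}$ is a preoperad, already established in \cref{lem:unfurl}, so you could simply cite that rather than re-arguing it.
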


We call an object $A\in\Ff$ \emph{productive} if every $B\in\Ff$ admits a product $A\times B$ in $\Ff$.

\begin{lemma}\label{lemma:CAlg-shift-general}
    For every productive $A\in\Ff$, we have an equivalence
    \[
        \ul\CAlg^\Nn_\Ff(\Cc)(A)\simeq\CAlg^\Nn_\Ff(\Cc(A\times-))=\CAlg^\Nn_\Ff(\Span(A\times-)^*\Cc)
    \]
    natural in $\Cc\in\PreNmCat{\Ff}{\Nn}$. Moreover, this equivalence is also natural with respect to arbitrary maps between productive objects of $\Ff$,
    and is compatible with the forgetful functors to $\cC(A)$.
\begin{proof}
    The forgetful map $\pi\colon\Ff_{/A}\to\Ff$ has a right adjoint $\rho$, given by sending $B\in\Ff$ to $\pr\colon A\times B\to A$. In particular $\rho$ is cofinal, so that restriction along $\rho^\op$ induces an equivalence
    $\cC(A) = \lim_{(\cF_{/A})^\op} \pi^*\cC \simeq \lim_{\cF^\op} \cC(A \times -)$.

    We will now see that an analogous argument works for the span categories.
    Namely, a straightforward computation using \cite[Corollary C.21]{BachmannHoyois2021Norms} shows that $\rho$ induces a \emph{left} adjoint on spans, such that the units and counits are backwards maps. Moreover, the square
\[\begin{tikzcd}
	{\Span_{\cN}(\cF)} &[2em] {\Span_{\cN}(\cF_{/A})} \\
	{\Span_{\cN}(\cF)} & {\Span_{\cN}(\cF)}
	\arrow["{\Span(\rho)}", from=1-1, to=1-2]
	\arrow[from=1-1, to=2-1,equals]
	\arrow[from=1-2, to=2-2]
	\arrow["{\Span(A\times-)}"', from=2-1, to=2-2]
\end{tikzcd}\] commutes.
It follows from \cite{Linskens2023globalization}*{Proposition~4.16} that restriction along $\Span(\rho)$ induces an equivalence
\begin{multline*}
	\ul\CAlg_{\Ff}^\Nn(\Cc)(A)
	\simeq\Fun^{\Nlax}_{\cF}(\Span_{\cN}(\cF_{/A}), \Cc)\\
	\iso \Fun^{\Nlax}_{\cF}(\Span_{\cN}(\cF),\Cc(A\times-))
	\simeq\CAlg_\Ff^\Nn(\Cc(A\times-))
\end{multline*}
and one easily checks that this has the desired naturality properties.
\end{proof}
\end{lemma}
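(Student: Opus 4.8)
The plan is to compare the two presentations of the sides that are already available to us. Recall (via Example~\ref{ex:Triv-Ncoprod-rep} and the computations preceding the definition of $\bbU$) that $\ul\CAlg^\Nn_\Ff(\Cc)(A)\simeq\Fun^{\Nlax}_\Ff(\Span_\Nn(\Ff_{/A}),\Cc)$, where $\Span_\Nn(\Ff_{/A})\simeq\ul A^\Ncoprod$ is viewed over $\Span_\Nn(\Ff)$ via $\Span(\pi)$ for $\pi\colon\Ff_{/A}\to\Ff$ the forgetful functor, while $\CAlg^\Nn_\Ff(\Cc(A\times-))\simeq\Fun^{\Nlax}_\Ff(\Span_\Nn(\Ff),\Span(A\times-)^*\Cc)$ by definition. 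Since $A$ is productive, $\pi$ admits a right adjoint $\rho\colon\Ff\to\Ff_{/A}$, $B\mapsto(\pr\colon A\times B\to A)$, with $\pi\rho=A\times-$. First I would check that $\pi$ and $\rho$ are morphisms of adequate triples between $(\Ff,\Ff,\Nn)$ and $(\Ff_{/A},\Ff_{/A},\Nn\times_\Ff\Ff_{/A})$; for $\rho$ this reduces to the observation that $A\times-$ carries $\Nn$ into $\Nn$ and preserves the relevant pullbacks, which follows from the adequate triple axioms. Applying $\Span$ then yields functors $\Span(\pi)$, $\Span(\rho)$ with $\Span(\pi)\circ\Span(\rho)=\Span(A\times-)$ (the commuting square in the statement), and precomposition with $\Span(\rho)$ sends a section over $\Span(\pi)$ that is cocartesian over the backwards maps to one over $\Span(A\times-)$ with the same property, producing the comparison functor.

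The heart of the matter is that this comparison is an equivalence, and the input I would use is that $\Span(\rho)$ is a \emph{left} adjoint to $\Span(\pi)$ whose unit and counit have components in the backwards maps. For this, note that the counit $\pi\rho\Rightarrow\id_\Ff$ (componentwise the projections $A\times B\to B$) and the unit $\id_{\Ff_{/A}}\Rightarrow\rho\pi$ of the adjunction $\pi\dashv\rho$ are levelwise given by honest morphisms of $\Ff$, resp.\ of $\Ff_{/A}$, all of which count as backwards maps in the adequate triples above; applying $\Span$ reverses their direction and, using \cite{BachmannHoyois2021Norms}*{Corollary~C.21} for the Beck--Chevalley compatibility with forwards maps, assembles them into a unit $\id_{\Span_\Nn(\Ff)}\Rightarrow\Span(\pi)\Span(\rho)$ and counit $\Span(\rho)\Span(\pi)\Rightarrow\id$ exhibiting $\Span(\rho)\dashv\Span(\pi)$ with backwards unit and counit. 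I would then invoke \cite{Linskens2023globalization}*{Proposition~4.16}, which provides that restriction along such a functor is an equivalence on the relevant $\Fun^{\Nlax}$-categories: the point is that an adjunction with backwards unit and counit is compatible with the `cocartesian over the backwards maps' condition cut out in the definition of $\Fun^{\Nlax}$, so that restriction along $\Span(\rho)$ and its inverse both preserve it. I expect this middle step to be the main obstacle: it requires pinning down the adjoint data on $\Span(\rho)$ precisely enough---in particular that the unit and counit genuinely lie in the backwards maps---and checking the hypotheses of Proposition~4.16. Once that is in place, the equivalence is essentially formal.

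It remains to verify the naturality and compatibility assertions. Naturality in $\Cc$ is automatic since all functors in sight are precomposition functors. Naturality in a map $g\colon A\to A'$ between productive objects follows from naturality of $\pi$ and $\rho$: the map $g$ induces $\Ff_{/A}\to\Ff_{/A'}$ over $\Ff$, compatible with the $\rho$'s up to the canonical map $A\times-\to A'\times-$, so the commuting squares and hence the equivalences built from them assemble coherently. For compatibility with the forgetful functors to $\Cc(A)$, I would use that the composite $\Ff^\op\hookrightarrow\Span_\Nn(\Ff)\xrightarrow{\Span(\rho)}\Span_\Nn(\Ff_{/A})$ of the backwards inclusion with $\Span(\rho)$ equals $\Ff^\op\xrightarrow{\rho}(\Ff_{/A})^\op\hookrightarrow\Span_\Nn(\Ff_{/A})$; restricting a normed algebra along the backwards inclusion on either side recovers $\bbU(A)$ (via $\Triv\dashv\fgt$ and Example~\ref{ex:Triv-Ncoprod-rep}), and the leftover discrepancy is exactly the equivalence $\Cc(A)=\lim_{(\Ff_{/A})^\op}\pi^*\Cc\simeq\lim_{\Ff^\op}\Cc(A\times-)$ coming from cofinality of $\rho$ (the comma category $(X\to A)/\rho\simeq\Ff_{X/}$ has an initial object), which identifies the two copies of $\Cc(A)$.
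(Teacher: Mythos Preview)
Your proposal is correct and follows essentially the same approach as the paper: both use the adjunction $\pi\dashv\rho$, invoke \cite{BachmannHoyois2021Norms}*{Corollary~C.21} to obtain $\Span(\rho)\dashv\Span(\pi)$ with backwards unit and counit, and then appeal to \cite{Linskens2023globalization}*{Proposition~4.16} for the equivalence on $\Fun^{\Nlax}$. You supply more detail on the adequate-triple verifications and on the compatibility with the forgetful functors than the paper does (which subsumes these under ``a straightforward computation'' and ``one easily checks''), but the architecture of the argument is identical.
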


We will now make the $\cN$-normed $\cF$-precategories $\Env(\ul{A}^{\Ncoprod}), \Env(\Triv(\ul{A}))$ and the map
$\Env(\incl)\colon \Env(\Triv(\ul{A}))\to \Env(\ul{A}^{\Ncoprod})$ explicit.
To this end, it will be convenient to restrict our attention to the case
where $\cN \subset \cF$ is \emph{left cancellable}, as will be the case in all examples of interest to us.

\begin{definition}
    A span pair $(\cF,\cN)$ is called \emph{left cancellable} if it satisfies the following property: given any composable maps $f,g$ in $\cF$ such that $fg$ and $f$ belong to $\Nn$, also $g$ belongs to $\Nn$.
\end{definition}

For the remainder of this subsection, we fix a left cancellable span pair $(\cF,\cN)$.

\begin{construction}\label{constr:n-a}
    The target projection $t \colon \Ar_\cN(\cF) \to \cF$ is a cartesian fibration
    classifying the functoriality of the slices $\cN_{/B}$ in pullbacks along arbitrary maps $A \to B$ in $\cF$. We denote this cartesian straightening by $\cN_{/-} \colon \cF^\op \to \Cat$.
    Now let $A \in \cF$ and consider the pullback on the left:
    \[\begin{tikzcd}[cramped]
        {\Ar_\cN(\cF) \times_\cF \cF_{/A}} & {\cF_{/A}} &&& A \\
        {\Ar_\cN(\cF)} & \cF && X & Y \\
        \cF &&& C & D
        \arrow[from=1-1, to=1-2]
        \arrow["p"', from=1-1, to=2-1]
        \arrow["\lrcorner"{anchor=center, pos=0.125}, draw=none, from=1-1, to=2-2]
        \arrow[from=1-2, to=2-2]
        \arrow["s", from=2-1, to=2-2]
        \arrow["t"', from=2-1, to=3-1]
        \arrow[from=2-4, to=1-5]
        \arrow[from=2-4, to=2-5]
        \arrow[norm, from=2-4, to=3-4]
        \arrow[from=2-5, to=1-5]
        \arrow[norm, from=2-5, to=3-5]
        \arrow[from=3-4, to=3-5]
    \end{tikzcd}\]
    Clearly $tp$ is a cartesian fibration, and a general morphism in it can be depicted
    as the diagram on the right. It is $tp$-cartesian precisely if the square is a pullback.
    We denote the cartesian straightening of $tp$ by
    \[
        \cF_{/A} \times_\cF \cN_{/-} \coloneqq \Stct\left(\Ar_\cN(\cF) \times_\cF \cF_{/A}\right) \colon \cF^\op \to \Cat
    \]

    In fact, $\cF_{/A} \times_\cF \cN_{/-} \in \PreCat{\cF}$
    is covariantly functorial in $A$ via postcomposition;
    this follows immediately from the fact that given $f \colon A \to B$ in $\cF$,
    the functor $f_! \colon \cF_{/A} \to \cF_{/B}$ over $\cF$ is a map of cartesian fibrations.
    We therefore obtain a functor
    \[
        \cF_{/\bullet} \times_\cF \cN_{/-} \colon \cF \to \PreCat{\cF},\ A \mapsto \cF_{/A} \times_\cF \cN_{/-}.
    \]
\end{construction}

\begin{lemma}\label{lem:n-a}
    Let $(\cF,\cN)$ be a left cancellable span pair and $\cF_{/\bullet} \times_\cF \cN_{/-}$ as constructed
    above. Then:
    \begin{enumerate}
        \item For $X \in \cF$, the $\cF$-precategory $\cF_{/X} \times_\cF \cN_{/-}$ is left $\cN$-adjointable
            in the sense of \cref{lem:unfurl}(3).
            We denote the resulting $\cN$-normed $\cF$-precategory by
            \[
                \und{\cN}^X \coloneqq (\cF_{/X} \times_\cF \cN_{/-})^{\Ncoprod}.
            \]
            Note that by construction, $\und{\cN}^X|_{\cF^\op} = \cF_{/X} \times_\cF \cN_{/-}$.

        \item The induced functor
            \[
                \und{\cN}^{(\bullet)} \colon \cF \to \PreNmCat{\cF}{\cN}
            \]
            is right $\cN$-coadjointable in the sense of \cite[Variant 2.2.10, Remark 2.2.11]{Elmanto_Haugseng_Bispans}.
            Concretely, this means that for any morphism $n \colon A \rightarrownorm B$
            the $\cN$-normed $\cF$-functor
            \[
                n_! \colon \und{\cN}^{A} \to \und{\cN}^B
            \]
            induced by postcomposition admits a right adjoint $n^*$ (given by pulling back), and every pullback square of the form
            \begin{equation}\label{diag:N-pb-repeat}
                \begin{tikzcd}
                    A\arrow[d,"f'"']\arrow[dr,pullback]\arrow[r, norm, "n'"] & B\arrow[d, "f"]\\
                    C\arrow[r, norm, "n"'] & D
                \end{tikzcd}
            \end{equation}
            is sent to a right-adjointable square, so that the following
            square commutes via the Beck--Chevalley transformation
            \[\begin{tikzcd}
                {\und{\cN}^A} & {\und{\cN}^B} \\
                {\und{\cN}^C} & {\und{\cN}^D}
                \arrow["{f'_!}"', from=1-1, to=2-1]
                \arrow["{n'^*}"', from=1-2, to=1-1]
                \arrow["{f_!}", from=1-2, to=2-2]
                \arrow["\sim"{description}, Rightarrow, from=2-1, to=1-2,shorten=5pt]
                \arrow["{n^*}", from=2-2, to=2-1]
            \end{tikzcd}
            \]
    \end{enumerate}
\end{lemma}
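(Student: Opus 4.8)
The plan is to make both $\cC_X\coloneqq\cF_{/X}\times_\cF\cN_{/-}$ and $\und\cN^X$ completely explicit in terms of spans and then reduce every adjointability and Beck--Chevalley claim to the stability of pullbacks in $\cF$ under pasting. By \cref{constr:n-a}, $\cC_X(Y)$ is the category of pairs $(V\rightarrownorm Y, V\to X)$ with $V\rightarrownorm Y$ in $\cN$, and $\cC_X(h)$ for $h\colon Z\to Y$ pulls the left leg back along $h$ and composes the decoration $V\to X$ with the projection $Z\times_Y V\to V$; equivalently $\cC_X=\Stct(\Ar_\cN(\cF)\times_\cF\cF_{/X})$.

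For \textbf{part (1)} I would verify directly that $\cC_X$ is left $\cN$-adjointable in the sense of \cref{lem:unfurl}(3). For $n\colon A\rightarrownorm B$ in $\cN$, the left adjoint of $n^{*}=\cC_X(n)$ is the functor $n_!\colon\cC_X(A)\to\cC_X(B)$ postcomposing the left leg with $n$, i.e. $(U\rightarrownorm A,U\to X)\mapsto(U\rightarrownorm A\rightarrownorm B,U\to X)$; the unit and counit are built from the projections out of the pullbacks along $n$, and the triangle identities are immediate. For the Beck--Chevalley condition one takes a pullback square \eqref{diag:N-pb-repeat} in $\cF$ with horizontal legs in $\cN$ and chases an object $(W\rightarrownorm C,W\to X)$ around $n'_!f'^{*}$ and around $f^{*}n_!$; both outputs carry the left leg coming from $A\times_C W\simeq B\times_D W$, the identification being a pasting of \eqref{diag:N-pb-repeat} with the base change of $W\to C$ along $f'$, and the Beck--Chevalley $2$-cell is exactly this equivalence. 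Then \cref{lem:unfurl}(3) yields that $\und\cN^X\coloneqq\cC_X^{\Ncoprod}\to\Span_\cN(\cF)$ is a cocartesian fibration whose forward functoriality is given by these $n_!$, and $\und\cN^X|_{\cF^\op}=\cC_X$ holds by construction of $(-)^{\Ncoprod}$.

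For \textbf{part (2)}, the functor $\und\cN^{(\bullet)}$ comes from the naturality in \cref{constr:n-a}: a map $g\colon X\to X'$ induces a morphism of adequate triples $\Ar_\cN(\cF)\times_\cF\cF_{/X}\to\Ar_\cN(\cF)\times_\cF\cF_{/X'}$ over $(\cF,\cF,\cN)$ (it preserves $tp$-cartesian morphisms, morphisms over $\cN$, and the relevant pullbacks, since it only changes the decoration), hence a functor $\und\cN^X\to\und\cN^{X'}$ over $\Span_\cN(\cF)$; that it preserves \emph{all} cocartesian edges, and is thus an $\cN$-normed $\cF$-functor, follows because postcomposing the decoration with $g$ commutes strictly with postcomposing the left leg with any $n\in\cN$. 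Now fix $n\colon A\rightarrownorm B$ in $\cN$. By \cref{rem:adj_in_functor_cat}, applied to $\PreNmCat{\cF}{\cN}=\Fun(\Span_\cN(\cF),\Cat)$, the functor $n_!\colon\und\cN^A\to\und\cN^B$ has a right adjoint there iff (i) each fibre $(n_!)_Y\colon\cC_A(Y)\to\cC_B(Y)$, $Y\in\cF$, has a right adjoint, and (ii) the Beck--Chevalley transformations along all morphisms of $\Span_\cN(\cF)$ are equivalences. For (i) the fibre $(n_!)_Y$ postcomposes the decoration with $n$, and its right adjoint sends $(W\rightarrownorm Y,W\to B)$ to $(A\times_B W\rightarrownorm W\rightarrownorm Y,A\times_B W\to A)$, the pullback $A\times_B W$ existing by the span-pair axiom. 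For (ii) one factors an arbitrary morphism of $\Span_\cN(\cF)$ as a backwards followed by a forwards map; by \cref{lem:unfurl}(3) the cocartesian pushforward of $\und\cN^A$ along a backwards map pulls back the left leg and along a forwards map postcomposes the left leg, so both checks reduce to associativity and commutativity of iterated pullbacks in $\cF$ (e.g. $Z\times_Y(A\times_B W)\simeq A\times_B(Z\times_Y W)$). Finally, for a pullback square \eqref{diag:N-pb-repeat} the resulting square of $\und\cN^{(-)}$'s commutes since $nf'=fn'$ in $\cF$, and its mate $f'_!n'^{*}\Rightarrow n^{*}f_!$ is an equivalence because mates of pointwise adjunctions in a functor $\infty$-category are computed pointwise, whence this is again a pasting identity $A\times_B W\simeq C\times_D W$ coming from $A\simeq B\times_D C$, exactly as in part (1).

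The main difficulty is not any single computation --- each is a pasting of pullbacks --- but the coherence bookkeeping around them: reducing $(\infty,2)$-categorical adjointability in $\PreNmCat{\cF}{\cN}$ to fibrewise adjointability plus Beck--Chevalley via \cref{rem:adj_in_functor_cat}, using the explicit description of cocartesian edges in the unfurling $\cC_X^{\Ncoprod}$ from \cref{lem:unfurl} (i.e. from \cite{HHLNa}) to control the functoriality over genuine spans rather than just over $\cF^\op$, and tracking naturality in $X$ and in the morphisms of the square \eqref{diag:N-pb-repeat}, all of which is ultimately governed by functoriality of $\Span\colon\AdTrip\to\Cat$ recalled in \cref{thm:spans}.
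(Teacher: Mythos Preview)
Your proof is correct. For part~(1) it matches the paper's argument. For part~(2) there is a genuine difference of route: you verify right $\cN$-coadjointability of $\und\cN^{(\bullet)}$ directly in $\PreNmCat{\cF}{\cN}=\Fun(\Span_\cN(\cF),\Cat)$ via \cref{rem:adj_in_functor_cat}, which forces you to check Beck--Chevalley compatibility of the pointwise right adjoints against \emph{all} morphisms of $\Span_\cN(\cF)$, both backwards and forwards (and you correctly do so by factoring). The paper instead observes that $(-)^{\Ncoprod}$ is a 2-functor, hence preserves adjunctions and adjointable squares, and thereby reduces the claim to right $\cN$-coadjointability of $\cF_{/\bullet}\times_\cF\cN_{/-}\colon\cF\to\PreCat{\cF}$; this only requires the Beck--Chevalley check against restrictions along maps in $\cF$. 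Your approach is more hands-on and avoids appealing to the 2-functoriality of the unfurling; the paper's is more economical and lets the 2-categorical structure do the work. One remark: in its description of the unit for part~(1), the paper singles out that the section $\phi\colon Y\to Y\times_B A$ lies in $\cN$ by left cancellability and calls this ``one of the crucial points where this assumption is needed,'' whereas you do not isolate where that hypothesis enters your argument.
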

\begin{proof}
    Let $X \in \cF$, and consider a map $n \colon A \rightarrownorm B$ in $\cN$.
    Clearly the pullback $n^* \colon \cN_{/B} \to \cN_{/A}$
    admits a left adjoint given by postcomposition with $n$,
    and similarly one checks that we obtain an adjunction
    \[
        n_! \colon \cF_{/X} \times_\cF \cN_{/A} \to \cF_{/X} \times_\cF \cN_{/B} \noloc n^*.
    \]
    For example, the unit map at some $X \leftarrow Y \rightarrownorm A$ is given by
    the map $\phi$ in the following commutative diagram
    \[\begin{tikzcd}[cramped]
        X & Y & {Y \times_B A} & Y \\
        & A & {A \times_B A} & A \\
        & B & A
        \arrow[from=1-2, to=1-1]
        \arrow[norm, from=1-2, to=2-2]
        \arrow[norm, from=1-3, to=1-2]
        \arrow["\lrcorner"{anchor=center, pos=0.125, rotate=-90}, draw=none, from=1-3, to=2-2]
        \arrow[norm, from=1-3, to=2-3]
        \arrow["\phi"', from=1-4, to=1-3]
        \arrow["\lrcorner"{anchor=center, pos=0.125, rotate=-90}, draw=none, from=1-4, to=2-3]
        \arrow[norm, from=1-4, to=2-4]
        \arrow["n"',norm, from=2-2, to=3-2]
        \arrow[norm, from=2-3, to=2-2]
        \arrow["\lrcorner"{anchor=center, pos=0.125, rotate=-90}, draw=none, from=2-3, to=3-2]
        \arrow[norm, from=2-3, to=3-3]
        \arrow[from=2-4, to=2-3]
        \arrow["n",norm, from=3-3, to=3-2]
    \end{tikzcd}\]
    Note that $\phi$ is indeed a map in $\cN$ by left cancellability,
    and this is one of the crucial points where this assumption is needed.

    The fact that the pullback square (\ref{diag:N-pb-repeat}) induces a left-adjointable square,
    i.e.~that the induced square
    \begin{equation}\label{diag:n-a}\begin{tikzcd}[cramped]
        {\cF_{/X} \times_\cF \cN_{/A}} & {\cF_{/X} \times_\cF \cN_{/B}} \\
        {\cF_{/X} \times_\cF \cN_{/C}} & {\cF_{/X} \times_\cF \cN_{/D}}
        \arrow["{n'^*}"', from=1-2, to=1-1]
        \arrow["{f'^*}", from=2-1, to=1-1]
        \arrow["{f^*}"', from=2-2, to=1-2]
        \arrow["{n^*}", from=2-2, to=2-1]
    \end{tikzcd}\end{equation}
    is horizontally left adjointable, is a simple but tedious exercise in pullback pasting.

    For the second claim, we have to show that $\und{\cN}^{(\bullet)} \coloneqq (-)^{\Ncoprod} \circ (\cF_{/\bullet} \times_\cF \cN_{/-})$ is right $\cN$-coadjointable.
    Since 2-functors preserve adjointable squares (see e.g.~\cite[Lemma D.2.7]{Heyer-Mann}),
    it suffices to show that $\cF_{/\bullet} \times_\cF \cN_{/-} \colon \cF \to \PreCat{\cF}$
    is right $\cN$-coadjointable.

    So fix $n \colon A \rightarrownorm B$ in $\cN$. We need to show that the $\cF$-functor
    $n_! \colon \cF_{/A} \times_\cF \cN_{/-} \to \cF_{/B} \times_\cF \cN_{/-}$
    induced by postcomposing with $n$ admits a right adjoint. For some fixed $X \in \cF$, one checks that pullback along induces a right adjoint
    \[
        n^*(X) \colon \cF_{/B} \times_\cF \cN_{/X} \to \cF_{/A} \times_\cF \cN_{/X}
    \]
    to $n_!(X)$. By \Cref{rem:adj_in_functor_cat} the condition for these to assemble into a right adjoint to the $\cF$-functor
    $n_!$ is that for each $f \colon X \to Y$ the square
    \[\begin{tikzcd}[cramped]
        {\cF_{/A} \times_\cF \cN_{/X}} & {\cF_{/B} \times_\cF \cN_{/Y}} \\
        {\cF_{/B} \times_\cF \cN_{/X}} & {\cF_{/B} \times_\cF \cN_{/Y}}
        \arrow["{n_!(X)}"', from=1-1, to=2-1]
        \arrow["{f^*}"', from=1-2, to=1-1]
        \arrow["{n_!(Y)}", from=1-2, to=2-2]
        \arrow["{f^*}", from=2-2, to=2-1]
    \end{tikzcd}\]
    is vertically right adjointable, which is again a simple exercise in pullback pasting.

    Finally, it remains to see that a pullback square as in (\ref{diag:N-pb-repeat})
    \[\begin{tikzcd}[cramped]
        {\cF_{/A} \times_\cF \cN_{/-}} & {\cF_{/C} \times_\cF \cN_{/-}} \\
        {\cF_{/B} \times_\cF \cN_{/-}} & {\cF_{/D} \times_\cF \cN_{/-}}
        \arrow["{n'_!}", from=1-1, to=1-2]
        \arrow["{f'_!}"', from=1-1, to=2-1]
        \arrow["{f_!}", from=1-2, to=2-2]
        \arrow["{n'_!}"', from=2-1, to=2-2]
    \end{tikzcd}\]
    which is horizontally right adjointable, i.e.~that the Beck--Chevalley transformation
    $f'_!n'^* \Rightarrow n^*f_!$ of $\cF$-functors is an equivalence.
    This can be checked pointwise at some $X \in \cF$,
    where the argument is entirely analogous to the horizontal left adjointability
    of (\ref{diag:n-a}) considered above.
\end{proof}

\begin{proposition}\label{prop:fna}
    Let $(\cF,\cN)$ be a left-cancellable span pair. There is a natural equivalence
    \[
        \und{\cN}^{A}\simeq \Env(\ul{A}^{\Ncoprod}).
    \]
    Moreover, under this identification the map $\Env(\incl)\colon \Env(\Triv(\und{A}))\to \Env(\und{A}^{\Ncoprod})$ is given by the inclusion $\iota \und{\cN}^A\hookrightarrow \und{\cN}^A$ of the pointwise groupoid core.
\end{proposition}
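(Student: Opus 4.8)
The plan is to build an explicit comparison morphism $\Phi\colon\Env(\ul A^{\Ncoprod})\to\und\cN^A$ of $\cN$-normed $\cF$-precategories, to show it is an equivalence by a fibrewise argument, and then to read off the assertion about $\Env(\incl)$ from the description of cocartesian edges in envelopes given in \cref{thm:BHS}(1). First I recall the two sides: by \cref{ex:Triv-Ncoprod-rep} we have $\ul A^{\Ncoprod}\simeq\Span_\Nn(\Ff_{/A})$ over $\Span_\Nn(\Ff)$, with $\incl_{\ul A}\colon\Triv(\ul A)\simeq(\Ff_{/A})^\op\hookrightarrow\ul A^{\Ncoprod}$ the inclusion of the wide subcategory of backwards maps; and $\cF_{/A}\times_\cF\cN_{/-}$ is left $\cN$-adjointable by \cref{lem:n-a}(1), so by \cref{lem:unfurl}(3) its unfurling $\und\cN^A=(\cF_{/A}\times_\cF\cN_{/-})^{\Ncoprod}$ is a genuine cocartesian fibration over $\Span_\Nn(\Ff)$, i.e.\ an object of $\PreNmCat{\Ff}{\Nn}$, with underlying $\cF$-precategory $\cF_{/A}\times_\cF\cN_{/-}$ and with cocartesian pushforwards along forward maps given by the left adjoints $n_!$.

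To construct $\Phi$, I would first write down the map of $\cF$-precategories $j\colon\ul A\to\cF_{/A}\times_\cF\cN_{/-}$ sending an object $B\to A$ to the pair $(\id_B\colon B\rightarrownorm B,\,B\to A)$ (using that $\cN$ is wide); this is natural in $B$ because the functoriality of $\cF_{/A}\times_\cF\cN_{/-}$ is by pullback. Applying the left adjoint $(-)^{\Ncoprod}$ of \cref{prop:calg} yields a morphism $\ul A^{\Ncoprod}\to\und\cN^A$ in $\PreOp{\Ff}{\Nn}$; since its target lies in $\PreNmCat{\Ff}{\Nn}$ and $\Env$ is left adjoint to the inclusion $\PreNmCat{\Ff}{\Nn}\hookrightarrow\PreOp{\Ff}{\Nn}$ by \cref{thm:BHS}(2), it factors uniquely through the unit as $\ul A^{\Ncoprod}\xrightarrow{\eta}\Env(\ul A^{\Ncoprod})\xrightarrow{\Phi}\und\cN^A$, with $\Phi$ a morphism of cocartesian fibrations over $\Span_\Nn(\Ff)$ that is manifestly natural in $A$.

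It then suffices to check that $\Phi$ is a fibrewise equivalence. Over $B\in\cF\subseteq\Span_\Nn(\Ff)$ the fibre of $\Env(\ul A^{\Ncoprod})=\Env(1)\times_{\Span_\Nn(\Ff)}\ul A^{\Ncoprod}$ is $\ul A^{\Ncoprod}\times_{\Span_\Nn(\Ff)}\cN_{/B}$ (the fibre of $\Env(1)=\Ar_\Nn(\Span_\Nn(\Ff))\to\Span_\Nn(\Ff)$ over $B$ being $\cN_{/B}$, viewed over $\Span_\Nn(\Ff)$ via the source projection); unwinding the span descriptions and using left cancellability of $(\cF,\cN)$, this has objects the pairs $(X\to A,\,X\rightarrownorm B)$ and hom-sets the maps $X\to X'$ lying over both $A$ and $B$, which is precisely $(\cF_{/A}\times_\cF\cN_{/-})(B)$, the fibre of $\und\cN^A$ over $B$. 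Tracing the identity section defining $\eta$ together with the map $j$ and the description of the cocartesian pushforwards of $\und\cN^A$ (which on $\cN_{/-}$ are postcomposition), one finds that $\Phi$ on fibres is the evident identification $(X\to A,\,X\rightarrownorm B)\mapsto(X\rightarrownorm B,\,X\to A)$, hence an equivalence; thus $\Phi$ is an equivalence. (Alternatively one can bypass the fibrewise computation: the right $\cN$-coadjointability of $\und\cN^{(\bullet)}$ from \cref{lem:n-a}(2) should let one prove $\Fun^{\Nstr}_\cF(\und\cN^A,-)\simeq\Fun^{\Nlax}_\cF(\ul A^{\Ncoprod},-)$, so that $\und\cN^A$ corepresents the same functor on $\PreNmCat{\Ff}{\Nn}$ as $\Env(\ul A^{\Ncoprod})$ does by the adjunction $\Env\circ(-)^{\Ncoprod}\dashv\und\CAlg^\cN_\cF$, whence $\Phi$ is an equivalence by Yoneda.)

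For the final assertion, $\incl_{\ul A}$ is faithful and bijective on objects, and since $\Env(-)$ is a pullback along $\Env(1)\to\Span_\Nn(\Ff)$ these properties pass to $\Env(\incl)\colon\Env(\Triv(\ul A))\to\Env(\ul A^{\Ncoprod})$. By \cref{thm:BHS}(1) a morphism of $\Env(\ul A^{\Ncoprod})$ is cocartesian over $\Span_\Nn(\Ff)$ exactly when it projects to a backwards map and is a cocartesian lift thereof; as the backwards part of $\ul A^{\Ncoprod}$ is $\Triv(\ul A)$ and every backwards map there is a cocartesian lift of its image (being a base change in $\cF_{/A}$), the image of $\Env(\incl)$ is precisely the wide subcategory of cocartesian edges, and $\Env(\incl)$ is an equivalence onto it. Finally, for any cocartesian fibration with straightening $E\colon T\to\Cat$, a morphism over $t\to t'$ is cocartesian iff its component in the fibre over $t'$ is invertible, so the wide subcategory of cocartesian morphisms is the unstraightening of $\core\circ E$, i.e.\ the pointwise groupoid core; transporting along the equivalence $\Phi$ therefore identifies $\Env(\incl)$ with the inclusion $\core\,\und\cN^A\hookrightarrow\und\cN^A$. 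The step I expect to be the main obstacle is the fibrewise identification: pinning down the fibres of the envelope (defined abstractly as a pullback) and of Barwick's unfurling $(-)^{\Ncoprod}$ and checking that $\Phi$ realizes the evident matching between them, rather than merely that the fibres are abstractly equivalent — this is exactly the point where left cancellability of $(\cF,\cN)$ enters, as it is needed for the two descriptions of the hom-sets in the fibres to coincide (and, in the alternative argument, it is what makes \cref{lem:n-a} applicable).
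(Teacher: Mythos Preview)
Your proof is correct, and the second half (identifying $\Env(\incl)$ with the groupoid-core inclusion via the description of cocartesian edges in \cref{thm:BHS}(1)) is essentially the paper's argument. The first half, however, takes a genuinely different route.

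You build a comparison map $\Phi$ by applying $(-)^{\Ncoprod}$ to an explicit $j\colon\ul A\to\cF_{/A}\times_\cF\cN_{/-}$ and invoking the $\Env\dashv\incl$ adjunction, then argue fibrewise that $\Phi$ is an equivalence. The paper instead recognises both sides as the \emph{same} span category: since $\Span$ preserves limits (\cref{thm:spans}) and $\Ar_\cN(\Span_\cN(\cF))$ has the span description of \cref{rem:cocart_in_Env_1}, one computes
\[
\Env(\ul A^{\Ncoprod})
\;\simeq\;\Ar_\cN(\Span_\cN(\cF))\times_{\Span_\cN(\cF)}\Span_\cN(\cF_{/A})
\;\simeq\;\Span_{\mathrm{pb},\cN}\!\bigl(\Ar_\cN(\cF)\times_\cF\cF_{/A}\bigr),
\]
and the right-hand side is by definition $\und\cN^A=(\cF_{/A}\times_\cF\cN_{/-})^{\Ncoprod}$ as Barwick's unfurling (\cref{def:n-cocart}, \cref{constr:n-a}). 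No comparison map needs to be traced. Your approach has the advantage of producing an explicit natural $\Phi$; the paper's buys brevity and avoids exactly the step you flag as delicate---showing that $\Phi$ on fibres is the evident identification \emph{as a functor}, not merely on objects. (This can be done by factoring an arbitrary fibre morphism through $\eta$ and cocartesian edges, but it is more work than your sketch indicates.) Finally, your parenthetical corepresentability alternative is not a proof as stated: it asserts that \cref{lem:n-a}(2) ``should let one prove'' $\Fun^{\Nstr}_\cF(\und\cN^A,-)\simeq\Fun^{\Nlax}_\cF(\ul A^{\Ncoprod},-)$ without saying how, and in fact the paper deduces that corepresentability statement (\cref{cor:forget-as-restrict}) \emph{from} the present proposition, not the other way around.
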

\begin{proof}
    Recall from Example~\ref{ex:Triv-Ncoprod-rep} that $\und{A}^\Ncoprod$ is naturally equivalent to $\Span_{\cN}(\cF_{/A})\to \Span_{\cN}(\cF)$, so that its envelope is then given by
    \[
        \Env(\und{A}^{\Ncoprod})
        \simeq \Ar_\cN(\Span_\cN(\cF)) \times_{\Span_\cN(\cF)} \Span_\cN(\cF_{/A}).
    \]
    Identifying $\Ar_\cN(\Span_\cN(\cF))$ as in Remark~\ref{rem:cocart_in_Env_1} and appealing to \cref{thm:spans}, we see that ${\Env(\und{A}^\Ncoprod)}$ is naturally equivalent to the category
    \begin{equation}\label{eq:unstr_of_N_A}
        \Span_{\mathrm{pb},\cN}(\Ar_{\cN}(\cF)\times_{\cF}\cF_{/A}),
    \end{equation}
    whose morphisms are spans of the form
    \begin{equation}\label{diag:map-in-env-ncoprod-a}
        \begin{tikzcd}
        A & A & A \\
        X & Y & Z \\
        B & C & D\rlap.
        \arrow[equal, from=1-2, to=1-1]
        \arrow[equal, from=1-2, to=1-3]
        \arrow[from=2-1, to=1-1]
        \arrow[norm, from=2-1, to=3-1]
        \arrow[from=2-2, to=1-2]
        \arrow[from=2-2, to=2-1]
        \arrow[norm, from=2-2, to=2-3]
        \arrow["\lrcorner"{anchor=center, pos=0.125, rotate=-90}, draw=none, from=2-2, to=3-1]
        \arrow[norm, from=2-2, to=3-2]
        \arrow[from=2-3, to=1-3]
        \arrow[norm, from=2-3, to=3-3]
        \arrow[from=3-2, to=3-1]
        \arrow[norm, from=3-2, to=3-3]
        \end{tikzcd}
    \end{equation}
    On the other hand, (\ref{eq:unstr_of_N_A}) also precisely agrees with
    $\und{\cN}^A \coloneqq (\cF_{/A} \times_\cF \cN_{/-})^\Ncoprod$ by definition,
    see \cref{def:n-cocart,constr:n-a}.

    By \Cref{lem:unfurl}(2) and the definition of the envelope, we obtain pullback squares
    \[\begin{tikzcd}
        {\Env(\Triv(\und{A}))} & {\Env(\und{A}^\Ncoprod)} & {\Ar_{\cN}(\Span_{\cN}(\cF))} \\
        {(\cF_{/A})^{\op}} & {\Span_{\cN}(\cF_{/A})} & {\Span_{\cN}(\cF)}
        \arrow[hook, from=1-1, to=1-2]
        \arrow[from=1-1, to=2-1]
        \arrow[from=1-2, to=1-3]
        \arrow[from=1-2, to=2-2]
        \arrow["{s}",from=1-3, to=2-3]
        \arrow[""{name=0, anchor=center, inner sep=0}, hook, from=2-1, to=2-2]
        \arrow[""{name=1, anchor=center, inner sep=0}, from=2-2, to=2-3]
        \arrow["\lrcorner"{anchor=center, pos=0.125}, draw=none, from=1-1, to=0]
        \arrow["\lrcorner"{anchor=center, pos=0.125}, draw=none, from=1-2, to=1]
    \end{tikzcd}\]
    Note that a general morphism (\ref{diag:map-in-env-ncoprod-a})
in $\Env(\und{A}^\Ncoprod)$ is cocartesian precisely if the map $Y \rightarrownorm Z$ is an equivalence (cf.~the description of cocartesian morphisms in $\Env(1)= \Ar_\cN(\Span_\cN(\cF))$ from \cref{rem:cocart_in_Env_1}).
    Thus, $\Env(\Triv(\und{A})) \subset \Env(\und{A}^{\Ncoprod})$ is precisely the subcategory on the cocartesian morphisms.
    The identification of $\Env(\incl)$ therefore follows from the general fact that for any functor $F \colon \cC \to \Cat$ with cocartesian straightening $p \colon \cX \to \cC$, the inclusion $\cX_\cocart \hookrightarrow \cX$ of the subcategory of cocartesian morphisms corresponds under cocartesian straightening to the natural transformation $\core F \Rightarrow F$.
\end{proof}

\begin{corollary}\label{cor:forget-as-restrict}
    There is a commutative square, natural in $\cC \in \PreNmCat{\cF}{\cN}$ and $A \in \cF^\op$,
\[\begin{tikzcd}
	{\Fun_{\cF}^{\Nstr}(\und{\cN}^A, \cC)} &[1em] {\und{\CAlg}_\cF^\cN(\cC)(A)} \\
	{\Fun_\cF^{\Nstr}(\core \und{\cN}^A,\cC)} & {\cC(A),}
	\arrow["\sim", from=1-1, to=1-2]
    \arrow["{(\core \ul{\cN}^A \hookrightarrow \ul{\cN}^A)^*}"', from=1-1, to=2-1]
	\arrow["{\mathbb{U}}", from=1-2, to=2-2]
	\arrow["\sim", from=2-1, to=2-2]
	\arrow["{\ev_{(\id_A,\id_A)}}"', draw=none, from=2-1, to=2-2]
\end{tikzcd}\]
where the bottom horizontal equivalence evaluates at $(\id_A,\id_A) \in \core (\cF_{/A} \times_\cF \cN_{/A}) = \core \und{\cN}^A(A)$.
\end{corollary}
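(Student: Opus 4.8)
The plan is to obtain the square by transporting the square defining $\bbU$ along the identifications of Proposition~\ref{prop:fna}. Recall that $\bbU$ was characterised by the requirement that, naturally in $A\in\cF^\op$ and $\cC\in\PreNmCat{\cF}{\cN}$, the square
\[\begin{tikzcd}
{\Fun_{\cF}^{\Nstr}(\Env(\und{A}^{\Ncoprod}), \cC)} & {\und{\CAlg}_\cF^\cN(\cC)(A)} \\
{\Fun_\cF^{\Nstr}(\Env(\Triv(\und{A})),\cC)} & {\cC(A)}
\arrow["\sim", from=1-1, to=1-2]
\arrow["{\Env(\incl)^*}"', from=1-1, to=2-1]
\arrow["{\mathbb{U}(A)}", from=1-2, to=2-2]
\arrow["\sim", from=2-1, to=2-2]
\end{tikzcd}\]
commutes, the horizontal equivalences being the representability equivalences recorded just before the definition of $\bbU$. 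By Proposition~\ref{prop:fna} there is a natural equivalence $\Env(\und{A}^{\Ncoprod})\simeq\und{\cN}^A$ carrying $\Env(\Triv(\und A))$ onto the pointwise groupoid core $\core\und{\cN}^A$ and the map $\Env(\incl)$ onto the inclusion $\core\und{\cN}^A\hookrightarrow\und{\cN}^A$. Substituting these identifications into the displayed square yields a commuting square of exactly the shape asserted in the Corollary, and naturality in $\cC$ and $A$ is inherited from the naturality of the square defining $\bbU$, of the representability equivalences, and of the equivalences of Proposition~\ref{prop:fna}.

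It then remains only to recognise the bottom equivalence $\Fun_\cF^{\Nstr}(\core\und{\cN}^A,\cC)\simeq\cC(A)$ as evaluation at $(\id_A,\id_A)$. For this I would unwind it as the composite
\[
\Fun_\cF^{\Nstr}(\Env(\Triv(\und A)),\cC)\simeq\Fun_\cF^{\Nlax}(\Triv(\und A),\cC)\simeq\Fun_\cF(\und A,\fgt\cC)\simeq\cC(A),
\]
where the first equivalence is the envelope adjunction of Theorem~\ref{thm:BHS}(2) (whose unit is the identity section $e\mapsto(e,\id_{p(e)})$), the second is the adjunction $\Triv\dashv\fgt$ of Proposition~\ref{prop:calg}(2) (with unit the evident equivalence $\id\simeq\fgt\circ\Triv$), and the third is the representability equivalence $\Fun_\cF(\und A,-)\simeq\ev_A$. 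Unwinding, a normed functor $F$ is sent to $F$ evaluated at $\eta(\id_A)$, where $\id_A\in\und A(A)$ is viewed as the object of $\Un^\co(\und A)=\Triv(\und A)$ lying over $A\in\Span_\cN(\cF)$ and $\eta$ is the identity section; using the explicit description of $\Env(\und A^{\Ncoprod})$ from the proof of Proposition~\ref{prop:fna} and the identification $\Env(\Triv(\und A))\subset\Env(\und A^{\Ncoprod})\simeq\und{\cN}^A$, this object $\eta(\id_A)$ is precisely $(\id_A,\id_A)\in\core(\cF_{/A}\times_\cF\cN_{/A})=\core\und{\cN}^A(A)$. Hence the bottom equivalence is $\ev_{(\id_A,\id_A)}$, which finishes the proof.

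The only step requiring genuine care is this last identification of the evaluation object, which amounts to chasing the identity section of the envelope through the explicit description in Proposition~\ref{prop:fna}; everything else is a direct substitution of Proposition~\ref{prop:fna} into the definition of $\bbU$, so I do not expect any real obstacle beyond this bit of bookkeeping.
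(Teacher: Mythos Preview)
Your proof is correct and follows essentially the same approach as the paper: both arguments transport the defining square of $\bbU$ through Proposition~\ref{prop:fna} and then identify the bottom equivalence by chasing $\id_A$ through the unit of the envelope adjunction (the identity section $e\mapsto(e,\id_{p(e)})$), landing at $(\id_A,\id_A)$. The paper is terser, combining your two adjunction steps into a single restriction along the composite unit $\eta$, but the content is identical.
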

\begin{proof}
After \Cref{prop:fna}, the only non-trivial fact is the identification of the bottom arrow. However, combining the fact that $\Triv$ is fully faithful and that the natural Yoneda equivalence $\Fun_\cF(\und{A},\cC) \simeq \cC(A)$ is given by evaluating at $\id_A\in \ul{A}(A)$, we see that the bottom horizontal equivalence in the square is given by the composite
\[
    \Fun_\cF^{\Nstr}(\core \und{\cN}^A, \cC)
    \xto{\eta^*} \Fun_{\cF}(\und{A},\cC)
    \xto{\ev_{\id_A}} \cC(A),
\]
so the claim follows from the description of the unit $\eta$ of \cref{thm:BHS}.
\end{proof}

\begin{remark}\label{rem:free-normed}
	We may summarize the previous result as follows. The top equivalence of the commutative square identifies $\und{\cN}^A$ as the free $\cN$-normed $\cF$-precategory with an $\cN$-normed algebra in degree $A$, while the bottom identifies its groupoid core $\core \und{\cN}^A$ as the free $\cN$-normed $\cF$-precategory with an object in degree $A$. The commutativity of the square shows that the object $(\id_A,\id_A) \in \und{\cN}^{A}(A)$ is the universal $\cN$-normed $\cF$-algebra in degree $A$.
\end{remark}

\begin{corollary}\label{cor:iota_N_rep}
	The functor of $\cN$-normed $\cF$-precategories
	\[
	\Map_{\Span_{\cN}(\cF)}(A,-) \to \core \und{\cN}^A
	\]
	classified by $(\id_A,\id_A) \in \core (\cF_{/A} \times_\cF \cN_{/A}) \eqqcolon \core \und{\cN}^A(A)$ is an equivalence. If $A$ is terminal, it is the unique such equivalence.
\end{corollary}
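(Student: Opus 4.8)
The plan is to recognize both $\Map_{\Span_{\cN}(\cF)}(A,-)$ and $\core\und{\cN}^A$ as corepresenting the same $\Cat$-valued functor on $\PreNmCat{\cF}{\cN}$, namely $\cC\mapsto\cC(A)$, and to observe that the map in the statement is exactly the comparison matching the respective universal objects, so that it is an equivalence formally. To begin, note that $\core\und{\cN}^A$ takes values in spaces, being the pointwise groupoid core of $\und{\cN}^A$. Consequently, by the co-Yoneda lemma for $\Spc$-valued copresheaves on $\Span_{\cN}(\cF)$, the object $(\id_A,\id_A)\in\core\und{\cN}^A(A)$ is precisely the datum of a natural transformation $\phi\colon\Map_{\Span_{\cN}(\cF)}(A,-)\to\core\und{\cN}^A$ characterized by $\phi_A(\id_A)=(\id_A,\id_A)$; this is the functor in question.

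For the equivalence, I would invoke two facts. First, the bottom row of \cref{cor:forget-as-restrict} (see also \cref{rem:free-normed}) identifies $\core\und{\cN}^A$ as the free $\cN$-normed $\cF$-precategory on an object in degree $A$: $\Fun_{\cF}^{\Nstr}(\core\und{\cN}^A,\cC)\simeq\cC(A)$ naturally in $\cC$, via evaluation at $(\id_A,\id_A)$. Second, $\Map_{\Span_{\cN}(\cF)}(A,-)$ corepresents the same functor, with universal object $\id_A$: it is the left Kan extension of the point along $\{A\}\hookrightarrow\Span_{\cN}(\cF)$, this left Kan extension is $\Cat$-linear and left adjoint to $\ev_A$, and so the adjunction lifts to the $(\infty,2)$-level as in \cref{rem:2cat-structure}, yielding $\Fun_{\cF}^{\Nstr}(\Map_{\Span_{\cN}(\cF)}(A,-),\cC)\simeq\cC(A)$ naturally (the analogue for $\Fun(\Span_{\cN}(\cF),\Cat)$ of the Yoneda equivalence $\Fun_{\cF}(\ul A,\cD)\simeq\cD(A)$ recalled earlier in the paper). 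Because $\phi$ sends $\id_A$ to $(\id_A,\id_A)$, precomposition $\phi^*$ corresponds under these two identifications to $\id_{\cC(A)}$ for every $\cC$; by the Yoneda lemma in $\PreNmCat{\cF}{\cN}$, $\phi$ is therefore an equivalence. (One could also argue pointwise: $\Map_{\Span_{\cN}(\cF)}(A,B)\simeq\core(\cF_{/A}\times_\cF\cN_{/B})=\core\und{\cN}^A(B)$ by the construction of the span category and \cref{constr:n-a}, and by the unfurling description of the normed functoriality of $\und{\cN}^A$ from \cref{lem:unfurl}(3) — backwards maps pull back the $\cN$-leg, forwards maps postcompose it — one checks that $\phi_B$ carries a span $\alpha\colon A\to B$ to $\und{\cN}^A(\alpha)_!(\id_A,\id_A)=\alpha$, i.e.\ $\phi_B$ is the identity under that identification.)

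For the last sentence, suppose $A$ is terminal in $\cF$. Since $\phi$ already exhibits an equivalence, the space of equivalences $\Map_{\Span_{\cN}(\cF)}(A,-)\iso\core\und{\cN}^A$ is a torsor under $\mathrm{Aut}(\Map_{\Span_{\cN}(\cF)}(A,-))$; as this object is $\Spc$-valued, the automorphism space agrees with the one in $\Fun(\Span_{\cN}(\cF),\Spc)$, which by the (co-)Yoneda lemma is $\mathrm{Aut}_{\Span_{\cN}(\cF)}(A)$. I would then show this is contractible. A self-span $A\leftarrow Y\rightarrownorm A$ of the terminal object $A$ has both legs equal to the unique map $Y\to A$; if it is invertible with inverse $A\leftarrow Y'\rightarrownorm A$, then comparing the composite with $\id_A=(A=A=A)$ forces the pullback $Y\times_A Y'$ — which exists as a pullback of an $\cN$-map and equals the product $Y\times Y'$ since $A$ is terminal — to be terminal, so that $\Map_{\cF}(W,Y)\times\Map_{\cF}(W,Y')\simeq\ast$ for all $W$; hence $Y$ is terminal, i.e.\ $Y\simeq A$ and the span is canonically $\id_A$. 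Thus $\mathrm{Aut}_{\Span_{\cN}(\cF)}(A)\simeq\mathrm{B}\mathrm{Aut}_{\cF}(A)\simeq\ast$, the torsor is trivial, and the equivalence is unique.

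Everything here is formal once \cref{cor:forget-as-restrict} is in hand; the only mildly delicate points are the universal property of $\Map_{\Span_{\cN}(\cF)}(A,-)$ as the free normed precategory on an object in degree $A$ (or, in the pointwise alternative, the unfurling bookkeeping identifying $\und{\cN}^A(\alpha)_!(\id_A,\id_A)$ with $\alpha$) and the contractibility of $\mathrm{Aut}_{\Span_{\cN}(\cF)}(A)$ for terminal $A$ — the latter I expect to be the main obstacle, though still routine.
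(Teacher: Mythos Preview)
Your proposal is correct and follows essentially the same approach as the paper: both argue that $\core\und{\cN}^A$ and $\Map_{\Span_\cN(\cF)}(A,-)$ corepresent $\cC\mapsto\cC(A)$ with matching universal objects (via \cref{cor:forget-as-restrict} and Yoneda, respectively), and that uniqueness for terminal $A$ follows from $\mathrm{Aut}_{\Span_\cN(\cF)}(A)\simeq\ast$. You spell out the latter contractibility argument in more detail than the paper (which simply asserts it); your intermediate notation ``$\simeq\mathrm{B}\mathrm{Aut}_\cF(A)$'' is a bit odd, but the surrounding argument that an invertible self-span forces $Y$ terminal is correct and suffices.
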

\begin{proof}
	The claimed equivalence follows immediately from the identification of the equivalence
	\[
	{\Fun_\cF^{\Nstr}(\core \und{\cN}^A,\cC)}\to {\cC(A)}
	\] with $\ev_{\id_A,\id_A}$ from \cref{cor:forget-as-restrict}. In the case that $A$ is terminal,
	we see that it does not have any automorphisms in $\Span_\cN(\cF)$,
	hence by the Yoneda lemma also $\Map_{\Span_\cN(\cF)}(A,-)$ does
	not have any automorphisms, which yields the uniqueness of the equivalence.
\end{proof}

\begin{example}
    In the case of classical symmetric monoidal categories, which as mentioned in \Cref{rem:classical_op} is recovered by the special case $(\cF,\cN) = (\F,\F)$, the above results exhibit the cocartesian symmetric monoidal category of finite sets $\F^\amalg$ as the free symmetric monoidal category
    with a commutative algebra object, and its groupoid core $\core \F^\amalg$
    (which is the commutative monoid $\coprod_{n \geq 0} B\Sigma_n$)
    as the free symmetric monoidal category on one object.
    In both cases, the universal object is given by the one-point set.
\end{example}

We close this discussion by recording some convenient properties of the $\Ff$-precategories of the form $\ul\CAlg^\Nn_\Ff(\Cc)$.

\begin{definition}\label{def:fw-colims}
    Let $\cC$ be an $\cF$-precategory.
    \begin{enumerate}
        \item For a small category $K$, we say that $\cC$ admits \emph{fiberwise $K$-colimits} if $\Cc(A)$ has $K$-colimits for every $A\in \cF$, and for every $f\colon A\to B$ in $\cF$ the functor $f^* \colon \Cc(B)\to\Cc(A)$ preserves $K$-colimits.

        \item If $\Kk$ is a class of small categories, then we say that $\Cc$ has \emph{fiberwise $\Kk$-colimits} if it has fiberwise $K$-colimits for every $K\in\Kk$.

        \item An $\cF$-functor $F \colon \cC \to \cD$ is said to preserve $\cK$-colimits
            if each $F(A) \colon \cC(A) \to \cD(A)$ does.
    \end{enumerate}
    All of these definitions apply analogously to $\cN$-normed $\cF$-precategories $\cC$,
    e.g.~by viewing them as $\Span_\cN(\cF)^\op$-precategories.
\end{definition}

\begin{remark}\label{rem:fiberwise-colim-cotensor}
    Let $\cC$ be an $\cF$-precategory and $K$ an ordinary category.
    Using the internal hom $\und{\Fun}_\cF$ and the $\Cat$-tensoring of $\PreCat{\cF}$,
    we see that it is also cotensored via
    \[
        \cC^K = \und{\Fun}_\cF(\mathop{\const} K, \cC) \simeq \Fun(K,-) \circ \cC
    \]
    where the second equivalence is an instance of the natural equivalences
    \begin{align*}
        \und{\Fun}_\cF(\mathop{\const} K,\cC)(A)
        &\simeq \Fun_\cF(\mathop{\const} K, \und{\Fun}_\cF(\und{A},\cC))\\
        &\simeq \Fun(K, \Fun_\cF(\und{A},\cC))\\
        &\simeq \Fun(K,\cC(A)).
    \end{align*}
    In particular, it follows that $\cC$ admits fiberwise $K$-colimits
    precisely if the diagonal functor $\cC \to \cC^K$ admits a left adjoint,
    cf.~\Cref{rem:adj_in_functor_cat}.
    Similarly, a functor $F \colon \cC \to \cD$ preserves $K$-colimits precisely
    if the following square is vertically left adjointable:
    \[\begin{tikzcd}
        {\cC^K} & {\cD^K} \\
        \cC & \cD\rlap.
        \arrow["{F_*}", from=1-1, to=1-2]
        \arrow[from=2-1, to=1-1]
        \arrow["F", from=2-1, to=2-2]
        \arrow[from=2-2, to=1-2]
    \end{tikzcd}\]
    Again, everything applies verbatim to $\cN$-normed $\cF$-precategories.
\end{remark}

\begin{lemma}\label{lem:calg-fib-colims}
    Let $\cC \colon \Span_\cN(\cF) \to \Cat$ be an $\cN$-normed $\cF$-precategory.
    \begin{enumerate}
        \item The forgetful functor $\bbU \colon \und{\CAlg}_\cF^\cN(\cC)(A) \to \cC(A)$ is conservative.
        \item If $\cK$ is a class of small categories and $\cC$ admits fiberwise $\cK$-colimits,
            then also $\und{\CAlg}_\cF^\cN(\cC)$ admits fiberwise $\cK$-colimits,
            and $\bbU$ preserves them.
    \end{enumerate}
\end{lemma}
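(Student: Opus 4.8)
The plan is to run both parts through the concrete description of $\und\CAlg_\cF^\cN$ by lax normed functors. Recall from the chain of equivalences preceding \cref{cor:forget-as-restrict} that there are natural identifications $\und\CAlg_\cF^\cN(\cC)(A)\simeq\Fun_\cF^{\Nlax}(\ul A^\Ncoprod,\cC)$ and $\cC(A)\simeq\Fun_\cF^{\Nlax}(\Triv(\ul A),\cC)$, under which $\bbU$ is given by restriction along $\incl\colon\Triv(\ul A)\to\ul A^\Ncoprod$; this is the content of \cref{cor:forget-as-restrict}, \cref{prop:fna}, and the envelope adjunction. By \cref{ex:Triv-Ncoprod-rep} this $\incl$ is the inclusion of backwards maps $(\cF_{/A})^\op\hookrightarrow\Span_\cN(\cF_{/A})$, so in particular it is a bijection on objects. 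Since $\ul A^\Ncoprod$ is an $(\cF,\cN)$-preoperad and $\cC$ is a genuine cocartesian fibration over $\Span_\cN(\cF)$, the category $\Fun_\cF^{\Nlax}(\ul A^\Ncoprod,\cC)$ is the full subcategory of the category of sections of the pullback cocartesian fibration $\ul A^\Ncoprod\times_{\Span_\cN(\cF)}\cC\to\ul A^\Ncoprod$ spanned by those sections carrying cocartesian lifts of backwards maps to cocartesian edges; likewise $\Fun_\cF^{\Nlax}(\Triv(\ul A),\cC)$ is the category of cocartesian sections over $(\cF_{/A})^\op$, which (that category having the initial object $\id_A$) is evaluation at $\id_A$ and recovers $\cC(A)$. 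This is the picture I would work in.

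For part (1), a morphism $\alpha$ in $\Fun_\cF^{\Nlax}(\ul A^\Ncoprod,\cC)$ is a natural transformation of such sections lying over the identity of $\Span_\cN(\cF)$, hence an equivalence if and only if each component $\alpha_x$, for $x$ an object of $\ul A^\Ncoprod$, is an equivalence in the relevant fibre of $\cC$ (cf.\ \cref{rem:adj_in_functor_cat}). As $\incl$ is a bijection on objects, $\incl^*\alpha$ being an equivalence forces every $\alpha_x$ to be an equivalence, so $\alpha$ is an equivalence; postcomposing with the (a fortiori conservative) equivalence $\Fun_\cF^{\Nlax}(\Triv(\ul A),\cC)\simeq\cC(A)$ shows $\bbU$ is conservative. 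No colimit hypotheses enter here.

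For part (2), the point to keep in mind is that $\cC$ admitting fiberwise $\cK$-colimits \emph{as a $\Span_\cN(\cF)^\op$-precategory} means in particular that \emph{every} transition functor $\cC(\phi)$, with $\phi$ a morphism of $\Span_\cN(\cF)$ — including the norm functors — preserves $K$-colimits for $K\in\cK$. Granting this, straightening the pullback fibration exhibits the ambient section category as $\lim_{x\in\ul A^\Ncoprod}\cC(\pi_A(x))$ for $\pi_A\colon\Span_\cN(\cF_{/A})\to\Span_\cN(\cF)$; all of its transition functors are of the form $\cC(\phi)$, so it admits $K$-colimits and these are computed pointwise in the fibres of $\cC$. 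One then checks that the full subcategory $\Fun_\cF^{\Nlax}(\ul A^\Ncoprod,\cC)$ is closed under these pointwise colimits: if every $s_i$ carries a cocartesian lift $e$ of a backwards map $\bar e$ to a cocartesian edge, then because $\cC(\bar e)$ preserves $K$-colimits the canonical comparison map for $\colim_i s_i$ at $e$ is again an equivalence, so $\colim_i s_i$ is again lax normed. Hence $\und\CAlg_\cF^\cN(\cC)(A)$ has $K$-colimits, computed pointwise; the structure maps $\und\CAlg_\cF^\cN(\cC)(f)$ for $f\colon A\to B$ in $\cF$ are restrictions along the preoperad maps $\ul f^\Ncoprod\colon\ul A^\Ncoprod\to\ul B^\Ncoprod$ of \cref{ex:Triv-Ncoprod-rep}, and restriction along a map of preoperads sends pointwise colimits to pointwise colimits, so these preserve $K$-colimits too, giving the fiberwise $\cK$-colimits on $\und\CAlg_\cF^\cN(\cC)$. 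Finally $\bbU=\incl^*$ is also such a restriction, so it preserves the pointwise $K$-colimits; under the identification $\Fun_\cF^{\Nlax}(\Triv(\ul A),\cC)\simeq\cC(A)$ (evaluation at $\id_A$, which carries pointwise colimits to colimits in $\cC(A)$) this is exactly the statement that $\bbU$ preserves fiberwise $\cK$-colimits.

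The one genuinely delicate point — and the step I would be most careful with — is the closure of the lax normed condition under pointwise colimits: this is precisely where one needs that the norm functors, and not merely the restriction functors $f^*$, preserve $K$-colimits, which is why the hypothesis must be phrased for $\cC$ as a $\Span_\cN(\cF)^\op$-precategory rather than just over $\cF^\op$. Everything else is routine manipulation of sections of (pullbacks of) cocartesian fibrations together with the already-established identification of $\bbU$ with restriction along $\incl$. One could alternatively obtain the \emph{existence} of fiberwise colimits in $\und\CAlg_\cF^\cN(\cC)$ more formally, by applying the $2$-functor $\und\CAlg_\cF^\cN$ — a right adjoint, hence preserving cotensors by $\Cat$ — to the adjunction $\colim_K\dashv\Delta$ characterising fiberwise colimits in \cref{rem:fiberwise-colim-cotensor}; but verifying that $\bbU$ preserves them seems to require the concrete description above in any case, so there is little to gain.
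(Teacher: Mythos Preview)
Your argument for part~(1) is essentially the paper's: $\bbU$ is restriction along an essentially surjective map, hence conservative. The paper phrases this via $\core\und\cN^A\hookrightarrow\und\cN^A$ rather than $\incl\colon(\cF_{/A})^\op\hookrightarrow\Span_\cN(\cF_{/A})$, but these are the same after the identifications of \cref{prop:fna}.

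For part~(2) the paper takes exactly the abstract route you sketch in your final paragraph: $\und\CAlg_\cF^\cN$ is the right adjoint in a $\Cat$-linear adjunction and hence preserves cotensors; as a 2-functor it then carries the adjunction $\colim_K\dashv\diag$ on $\cC$ to the analogous one on $\und\CAlg_\cF^\cN(\cC)$. Your concrete section-category argument is also correct and has the merit of making the pointwise nature of the colimits visible; one small slip is that the ambient section category is the \emph{lax} limit of $\cC\circ\pi_A$ over $\ul A^\Ncoprod$, not the honest $\lim$ you write, though your reasoning treats it correctly as such.

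Your closing assessment slightly undersells the abstract approach. The $\bbU$-preservation does follow formally: since $\incl\colon\Triv\Rightarrow(-)^\Ncoprod$ is $\Cat$-linear (\cref{prop:calg}), so is the induced $\bbU\colon\und\CAlg_\cF^\cN\Rightarrow\fgt$, and its component at $\cC^K$ therefore identifies with $(\bbU_\cC)^K$ under the cotensor equivalences. The naturality square of $\bbU$ at the \emph{normed} morphism $\colim_K\colon\cC^K\to\cC$ then says precisely that $\bbU$ intertwines the two colimit functors. The paper's written proof is indeed silent on this last step, so your instinct that something is missing there is fair, but the gap can be filled without unwinding sections.
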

\begin{proof}
    The first claim is immediate from the fact that $\core \und{\cN}^A \hookrightarrow \und{\cN}^A$
    is essentially surjective, and $\bbU$ is given by restricting along this.
    For the second claim, recall that $\und{\CAlg}_\cN^\cF$ is the right adjoint
    in a $\Cat$-linear adjunction. In particular, it preserves the $\Cat$-cotensoring.
    Since it is also a 2-functor, it therefore sends the adjunction
    \[
        \colim_K \colon \cC^K \rightleftarrows \cC \noloc \diag
    \]
    witnessing that $\cC$ has fiberwise $K$-colimits for $K \in \cK$ to the analogous adjunction
    \[
        \colim_K \colon \und{\CAlg}_\cF^\cN(\cC)^K \rightleftarrows \und{\CAlg}_\cF^\cN(\cC) \noloc \diag
    \]
    which then shows that also $\und{\CAlg}_\cF^\cN(\cC)$ admits fiberwise $K$-colimits.
\end{proof}

We will later identify reasonable conditions on $\cC$ so that this forgetful functor is monadic, see \cref{cor:par_U_monadic}.

\begin{corollary}\label{cor:calg-coprod}
    Let $\cC$ be an $\cN$-normed $\cF$-precategory.
    Then $\und{\CAlg}_\cF^\cN(\cC)$ is left $\cN$-adjointable
    in the sense of \cref{lem:unfurl}(3),
    and thus defines an $\cN$-normed $\cF$-precategory
    \[
        \und{\CAlg}_\cF^\cN(\cC)^{\Ncoprod} \in \PreNmCat{\cF}{\cN}.
    \]
\end{corollary}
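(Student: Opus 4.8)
The plan is to reduce the statement to the representability of $\und{\CAlg}_\cF^\cN(\cC)$ together with the right $\cN$-coadjointability established in \Cref{lem:n-a}, by transporting the latter through a suitable $2$-functor. By \Cref{cor:forget-as-restrict} (using \Cref{prop:fna}) there is an equivalence $\und{\CAlg}_\cF^\cN(\cC)(A)\simeq\Fun_\cF^\Nstr(\und{\cN}^A,\cC)$ natural in $A\in\cF^\op$; writing $\Gamma\coloneqq\Fun_\cF^\Nstr(-,\cC)$ and letting $\und{\cN}^{(\bullet)}\colon\cF\to\PreNmCat{\cF}{\cN}$ be the functor of \Cref{lem:n-a}, this says that $\und{\CAlg}_\cF^\cN(\cC)\simeq\Gamma\circ\und{\cN}^{(\bullet)}$ as functors $\cF^\op\to\Cat$. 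By \Cref{rem:2cat-structure}, $\Gamma$ is the $\Cat$-enriched mapping-category functor of the $(\infty,2)$-category $\PreNmCat{\cF}{\cN}$ out of the fixed object $\cC$, and hence upgrades to a $2$-functor $\PreNmCat{\cF}{\cN}^{\op}\to\CAT$ which is contravariant on $1$-morphisms and covariant on $2$-morphisms.

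I would then feed the right $\cN$-coadjointability of $\und{\cN}^{(\bullet)}$ (\Cref{lem:n-a}(2)) through $\Gamma$. For $n\colon A\rightarrownorm B$ in $\cN$, \Cref{lem:n-a}(2) gives an adjunction $n_!\dashv n^*$ in $\PreNmCat{\cF}{\cN}$ with $n_!=\und{\cN}^n\colon\und{\cN}^A\to\und{\cN}^B$; since $2$-functors preserve adjunctions (a $2$-functor contravariant on $1$-cells sends $L\dashv R$ to $\Gamma(R)\dashv\Gamma(L)$), this becomes an adjunction $\Gamma(n^*)\dashv\Gamma(n_!)$. Under the identification above $\Gamma(n_!)=\Gamma(\und{\cN}^n)$ is precisely the structure map $\und{\CAlg}_\cF^\cN(\cC)(n)$, which therefore admits the left adjoint $\Gamma(n^*)$; this verifies condition~(i) of left $\cN$-adjointability in the sense of \Cref{lem:unfurl}(3). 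For condition~(ii), recall that $2$-functors also preserve adjointable squares: \Cref{lem:n-a}(2) provides, for each pullback square as in~(\ref{diag:N-pb-repeat}) with horizontal maps in $\cN$, a right-adjointable square built from the $\und{\cN}^{(-)}$'s, and applying $\Gamma$ turns it into a left-adjointable square of $\und{\CAlg}_\cF^\cN(\cC)$'s, i.e.\ exhibits the Beck--Chevalley transformation $n'_!f'^*\to f^*n_!$ as an equivalence. Conditions~(i) and~(ii) together say that $\und{\CAlg}_\cF^\cN(\cC)$ is left $\cN$-adjointable, so \Cref{lem:unfurl}(3) produces the cocartesian fibration $\und{\CAlg}_\cF^\cN(\cC)^\Ncoprod\in\PreNmCat{\cF}{\cN}$, which is exactly the assertion.

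The only step that requires genuine care is keeping the variances straight: one must check that applying the $2$-functor $\Gamma$ — which reverses $1$-cell composition but preserves the direction of $2$-cells — to the right $\cN$-coadjointability data of $\und{\cN}^{(\bullet)}$ produces precisely the left $\cN$-adjointability data of $\cC\mapsto\und{\CAlg}_\cF^\cN(\cC)$ in the exact form demanded by \Cref{lem:unfurl}(3), in particular that the reversal of composition correctly interchanges the two legs of each Beck--Chevalley mate. Once this bookkeeping is dispatched the proof is purely formal, all the substantive pullback-pasting having already been done in \Cref{lem:n-a}.
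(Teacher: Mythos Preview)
Your proposal is correct and follows essentially the same approach as the paper's proof: the paper also argues directly from the identification $\und{\CAlg}_\cF^\cN(\cC)(A)\simeq\Fun_\cF^{\Nstr}(\und{\cN}^A,\cC)$ of \cref{cor:forget-as-restrict}, the $2$-functoriality of $\Fun_\cF^{\Nstr}(-,\cC)$, and the right $\cN$-coadjointability of $\und{\cN}^{(\bullet)}$ from \cref{lem:n-a}(2). You simply spell out in more detail the variance bookkeeping that the paper leaves implicit.
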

\begin{proof}
    This follows immediately from the fact that $\Fun_\cF^\cN(-,\cC) \colon \PreNmCat{\cF}{\cN}^\op \to \Cat$ is a 2-functor (the mapping-functor of the 2-category
    $\PreNmCat{\cF}{\cN}$), the identification $\und{\CAlg}_\cF^\cN(A) \simeq \Fun_\cF^{\cN}(\und{\cN}^A,\cC)$ from \cref{cor:forget-as-restrict},
    and the fact that $\und{\cN}^{(-)} \colon \cF \to \PreNmCat{\cF}{\cN}$
    is right $\cN$-coadjointable by \cref{lem:n-a}(3).
\end{proof}

\subsection{Parametrized operads}\label{subsec:operads}
In this section we will single out the analogue of operads amongst the $(\cF,\cN)$-preoperads. For later use we will introduce the definition in the generality of an arbitrary factorization system.

\begin{definition}\label{def:operad}
    Let $(S,E,M)$ be a factorization system such that $E$ and $S$ have finite products and the inclusion $E\hookrightarrow S$ preserves them. We call a functor $p\colon \cX\to S$ an \emph{$(S,E,M)$-operad} if
    \begin{enumerate}
        \item $p$ is $E$-cocartesian, i.e.~admits cocartesian lifts for maps in $E$,
        \item $\cX$ has finite products,
        \item $p$ preserves finite products, \emph{and}
        \item for all $X,Y\in\cX$ the projections $X\gets X\times Y\to Y$ are $p$-cocartesian.
    \end{enumerate}
    We define $\Op{S,E}{M}\subset \Cat^{\Lcocart}_{/S}$ as the full subcategory
    spanned by the $(S,E,M)$-operads.
\end{definition}

\begin{example}
    The above definition is inspired by the example where $(S,E,M) = (\Span(\F),\F^\op,\F)$ is the canonical
    factorization system on the category of spans of finite sets,
    where we will refer to the subcategory $\F^\op \subset \Span(\F)$ as the \emph{inert} morphisms.
    By work of Barkan--Haugseng--Steinebrunner, the resulting category
    of $(S,E,M)$-operads is equivalent to the usual category $\mathrm{Op}$ of operads as defined by Lurie. More specifically, it follows from \cite[Theorem 2.1.11, Proposition 2.2.6, Proposition 2.2.10]{Haugseng-Lawvere}
    that the category $\mathrm{Op}$ is equivalent to the subcategory $\mathrm{Op}(\Span(\F)) \subset \Cat_{/\Span(\F)}$
    on objects $p \colon \cO \to \Span(\F)$ satisfying
    \begin{enumerate}
        \item[$(1')$] $p$ is has cocartesian lifts of inert (i.e.~backwards)morphisms.
        \item[$(2')$] For $X \in \cO$ over $n$ in $\Span(\F)$, the $p$-cocartesian morphisms $X \to X_i$
            over the spans $\rho_i = (n \xleftarrow{i} * = *)$ exhibit $X$ as the product $\prod_{i=1}^n X_i$ in $\cO$.
        \item[$(3')$] The functor $\cO_n \to \prod_{i=1}^n \cO_1$ given by cocartesian transport over the maps $\rho_i$,
            is essentially surjective.
    \end{enumerate}
    and morphisms over $\Span(\F)$ which preserve finite products.

    Note that $(1')$ is literally the same as condition $(1)$. We claim that given this, the conditions $(2')$ and $(3')$ are equivalent to conditions $(2)$--$(4)$ of \cref{def:operad}.
    In particular, together with the above results we thus have an equivalence
    \[
        \mathrm{Op} \simeq \Op{\Span(\F),\F^\op}{\F}.
    \]
    To see the claim, fix $p \colon \cO \to \Span(\F)$ which admits cocartesian lifts of inerts.
    If $p$ also satisfies $(2')$ and $(3')$, then it will satisfy $(2)$--$(4)$ by \cite[Lemma 2.2.7]{Haugseng-Lawvere}.
    Conversely, suppose that $p$ satisfies $(2)$--$(4)$, i.e.~is a $(\Span(\F),\F^\op,\F)$-operad.
    Then it also satisfies $(2')$ and $(3')$ by the following lemma.
\end{example}

\begin{lemma}\label{lem:tem-operad}
    Let $p \colon \cX \to S$ be an $(S,E,M)$-operad.
    \begin{enumerate}
        \item Let $A,B \in S$. Then the cocartesian transport along the projections induces an equivalence
            $\cX_{A \times B} \iso \cX_A \times \cX_B$ with inverse given by taking the product in $\cX$.
            Moreover, $\cX_1$ is the terminal category.
        \item Suppose $f \colon X \to Y$ and $g \colon X \to Z$ are edges in $\cX$
            lying over $E$.
            Then $(f,g) \colon X \to  Y \times Z$ is cocartesian if and only if both $f$ and $g$ are cocartesian.

    \end{enumerate}
\end{lemma}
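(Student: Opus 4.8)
The plan is to establish the three items in order, using (1) in the proof of (2) and both in the proof of (3).

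\emph{(1).} Since $p$ preserves finite products, the product bifunctor of $\cX$ restricts to a functor $\times\colon\cX_A\times\cX_B\to\cX_{A\times B}$. Conversely, as $E$ has finite products and $E\hookrightarrow S$ preserves them, the projections $\pi_A\colon A\times B\to A$ and $\pi_B\colon A\times B\to B$ lie in $E$, hence admit $p$-cocartesian lifts, and cocartesian transport along them gives a functor $L\colon\cX_{A\times B}\to\cX_A\times\cX_B$. First I would observe that $L\circ\times\simeq\id$: by condition~(4) of \cref{def:operad} the projections $U\times V\to U$ and $U\times V\to V$ are themselves $p$-cocartesian lifts of $\pi_A$ and $\pi_B$, so they compute $L(U\times V)\simeq(U,V)$ naturally. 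Then I would check, by a direct mapping-space computation using condition~(4) to identify $\Map^{\pi_A}_{\cX}(U\times V,U')\simeq\Map_{\cX_A}(U,U')$ and symmetrically, that $\times$ is fully faithful and that moreover $L\dashv\times$. For the remaining assertion of (1): $p$ preserves the empty product, so $\cX$ has a terminal object $\ast$ over $1$; condition~(4) then forces both $W\times\ast\to W$ and $W\times\ast\to\ast$ to be $p$-cocartesian over $\id_1$, hence equivalences, for every $W\in\cX_1$, so $W\simeq\ast$, and as $\ast$ has no nontrivial automorphisms $\cX_1\simeq\ast$.

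\emph{The main obstacle} is the essential surjectivity of $\times$, equivalently --- since $L\dashv\times$ with $\times$ fully faithful, so that $\times$ exhibits $\cX_A\times\cX_B$ as a reflective subcategory of $\cX_{A\times B}$ with reflector $L$ --- the conservativity of $L$, equivalently the assertion that the canonical map $W\to\pi_{A!}W\times\pi_{B!}W$ is $p$-cocartesian for every $W$. The clean route is not to argue by hand but to invoke the standard unstraightening description of product-preserving functors: a cocartesian fibration over a category with finite products whose total space has finite products preserved by the projection and with $p$-cocartesian product-projections is precisely the unstraightening of a finite-product-preserving functor to $\Cat$. Applying this to $\cX\times_S E\to E$ --- which is a cocartesian fibration exactly because $p$ is $E$-cocartesian, and to which conditions~(2)--(4) apply by the previous paragraph --- gives $\cX_{A\times B}\simeq\cX_A\times\cX_B$; I would cite the relevant general statement (in the spirit of the fibration calculus of \cite{HTT}). (A direct proof of essential surjectivity instead runs into a circularity: it appears to require the ``if'' half of (2), which itself uses (1).)

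\emph{(2).} The ``only if'' direction is immediate: $f=\mathrm{pr}_Y\circ(f,g)$ with $\mathrm{pr}_Y$ cocartesian by (4), so $f$ is a composite of cocartesian edges, and likewise $g$. For ``if'', write $\bar f=p(f)$, $\bar g=p(g)$ and note that $(\bar f,\bar g)\colon p(X)\to p(Y)\times p(Z)$ lies in $E$, being the pairing of $\bar f,\bar g\in E$ (products in $E$ agree with those in $S$). Let $\tilde e\colon X\to(\bar f,\bar g)_!X$ be a $p$-cocartesian lift and $\theta$ the induced comparison over the identity, so $\theta\tilde e=(f,g)$. Since $\mathrm{pr}_Y\circ\theta\circ\tilde e=\mathrm{pr}_Y\circ(f,g)=f$ is cocartesian while $\tilde e$ is cocartesian, the cancellation property of cocartesian edges \cite[2.4.1.7]{HTT} makes $\mathrm{pr}_Y\circ\theta$ a cocartesian lift of the projection out of $p(Y)\times p(Z)$; comparing it with $\mathrm{pr}_Y$ shows that the $\pi_{p(Y)}$-transport of $\theta$ is an equivalence, and symmetrically for $Z$. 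By (1) the functor $\cX_{p(Y)\times p(Z)}\to\cX_{p(Y)}\times\cX_{p(Z)}$ is an equivalence, so $\theta$ is one, whence $(f,g)=\theta\tilde e$ is cocartesian.

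\emph{(3).} If $F$ preserves $p$-cocartesian lifts of $E$-morphisms: for $U\in\cX_a,V\in\cX_b$ it carries the cocartesian projections $U\times V\to U$, $U\times V\to V$ (cocartesian over $\pi_a,\pi_b$ by (4)) to cocartesian lifts of $\pi_a,\pi_b$ in $\cX'$, so by the ``if'' part of (2) the comparison $F(U\times V)\to F(U)\times F(V)$ is cocartesian over the identity, hence an equivalence; and $F(\ast)\in\cX'_1$ is terminal by (1). Thus $F$ preserves finite products. The converse is the harder direction; it again reduces to the unstraightening fact above --- a product-preserving functor over $S$ between unstraightenings of product-preserving functors is the unstraightening of a natural transformation of product-preserving functors, hence automatically preserves cocartesian lifts of $E$-morphisms --- and I would deduce it from that, using (1) to reduce the verification to the behaviour on fibres.
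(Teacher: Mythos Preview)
Your argument for (2) and the forward direction of (3) are correct and match the paper (which for (3) simply cites \cite[Lemma 2.2.10]{Haugseng-Lawvere}); the terminal-fiber part of (1) is also the paper's argument.

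The genuine gap is at your ``main obstacle'' in (1). The unstraightening fact you propose to cite---that a cocartesian fibration over $E$ with finite products preserved and cocartesian projections classifies a product-preserving functor---is not an independent black box: the assertion that the straightening preserves the product $A\times B$ \emph{is} the assertion that cocartesian transport $\cX_{A\times B}\to\cX_A\times\cX_B$ is an equivalence, which is exactly what you are trying to prove. (In this paper that implication is \cref{cor:product_unstr}(2)$\Rightarrow$(1), and its proof invokes the present lemma.) So the citation is circular, and the same circularity recurs in your converse of (3). The paper avoids your detour through $L\dashv\times$ entirely: after showing (as you do) that $\times$ is a section of $\Phi$, so that $\Phi$ is essentially surjective, it argues \emph{directly} that $\Phi$ is fully faithful via a mapping-space computation for \emph{arbitrary} $X,Y\in\cX_{A\times B}$, not merely those in the image of $\times$. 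That finishes (1) without needing essential surjectivity of $\times$ as a separate step and without any external citation.
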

\begin{proof}
    It is easy to see from the assumptions on $p$ that the supposed inverse
    $\cX_A \times \cX_B \to \cX \times \cX \xto{\times} \cX$
    factors through $\cX_{A \times B}$ and defines a section of the cocartesian transport
    $\Phi \colon \cX_{A \times B} \to \cX_A \times \cX_B$.
    In particular, $\Phi$ is essentially surjective.
    To see that $\Phi$ is also fully faithful, consider $X,Y \in \cX_{A \times B}$.
    We have the following two commutative diagrams
    \[\hskip-47.17pt\hfuzz=47.18pt\begin{tikzcd}[cramped]
        &[-1em] {\cX_A(\pr_{A!}X,\pr_{A!}Y) \times \cX_B(\pr_{B!}X,\pr_{B_!}Y)} &[-1em] {\cX_A(\pr_{A!}X,\pr_{A!}Y)} &[1em] {\cX(X,\pr_{A!}Y)_{\pr_A}} \\
        {\cX_{A \times B}(X,Y)} & {\cX(X,\pr_{A!}Y)_{\pr_A} \times\cX(X,\pr_{B!}Y)_{\pr_B}} & {\cX(\pr_{A!}X,\pr_{A!}Y)} & {\cX(X,\pr_{A!}Y)} \\
        {\cX(X,Y)} & {\cX(X,\pr_{A!}Y)\times \cX(X,\pr_{B!}Y)} & {S(A,A)} & {S(A\times B,A)}
        \arrow["\sim", from=1-2, to=2-2]
        \arrow[from=1-2, to=2-2]
        \arrow["\sim", from=1-3, to=1-4]
        \arrow[from=1-3, to=2-3]
        \arrow[from=1-4, to=2-4]
        \arrow["\Phi"{description}, from=2-1, to=1-2]
        \arrow[from=2-1, to=2-2]
        \arrow[from=2-1, to=3-1]
        \arrow[from=2-2, to=3-2]
        \arrow["{(X \to\pr_{A!}X)^*}", from=2-3, to=2-4]
        \arrow[from=2-3, to=3-3]
        \arrow[from=2-4, to=3-4]
        \arrow[from=3-1, to=3-2]
        \arrow[""{name=0, anchor=center, inner sep=0}, "{\pr_A^*}", from=3-3, to=3-4]
        \arrow[pullback, from=2-3, to=0]
    \end{tikzcd}\]
    The bottom square on the right is a pullback since $X \to \pr_{A!}X$ is cocartesian.
    The vertical maps are fiber sequences, which give the top equivalence,
    which is also the left factor in the vertical equivalence in the left diagram,
    the other factor coming from the analogous diagram for $B$.
    Also the vertical maps in the left square are fiber inclusions.
    By assumption on $p$, we know that $Y \iso \pr_{A!}Y \times \pr_{B!}Y$,
    so that the bottom and hence also the top map in that square is an equivalence.
    By 2-out-of-3, we then conclude that $\Phi$ is fully faithful.

    It remains to show that the fiber over the terminal object $1 \in S$ is contractible.
    By assumption, $\cX$ has a terminal object $\textbf1$, which is contained in the fiber over 1,
    where it is in particular terminal again. It therefore suffices to show that all objects of $\cX_1$
    are equivalent (say, in $\cX$). So let $X \in \cX_1$ and pick a product $X \times \textbf1$ in $\cX$.
    By assumption, $p$ sends this to $1 \times 1 \simeq 1$, hence both projections lie over equivalences,
    so are themselves equivalences, giving $X \simeq X \times \textbf1 \simeq \textbf1$.

    For the second claim in this lemma, suppose $f \colon Y \to Z$ and $g \colon X \to Z$ are edges in $\cX$.
    If $(f,g) \colon X \to Y \times Z$ is cocartesian, then clearly $f$ and $g$ are too
    since by assumption on $p$ projections are cocartesian.
    For the converse, we obtain the following commutative diagram,
    where maps labelled `$\co$' are cocartesian (and exist by assumption), and vertical maps are fiberwise
    \[\begin{tikzcd}[cramped]
        X & {(pf,pg)_!X} & {\pr_{Y!}(pf,pg)_!X}\rlap. \\
        & {Y \times Z} & Y
        \arrow["\co", from=1-1, to=1-2]
        \arrow["{(f,g)}"', from=1-1, to=2-2]
        \arrow["\co", from=1-2, to=1-3]
        \arrow[from=1-2, to=2-2]
        \arrow[from=1-3, to=2-3]
        \arrow["{\pr_Y}"', from=2-2, to=2-3]
        \arrow["\co", draw=none, from=2-2, to=2-3]
    \end{tikzcd}\]
    Since cocartesian maps compose, we see that the top right corner is actually $(pf)_!X$,
    and hence the right vertical map is an equivalence since $f$ is cocartesian.
    But this map is the image of the middle vertical map under $(\pr_{pY})_! \colon \cX_{pY \times pZ} \to \cX_{pY}$.
    The same argument shows that its image under $(\pr_{pZ})_!$ is an equivalence,
    and since these functors induce an equivalence $\cX_{pY \times pZ} \to \cX_{pY} \times \cX_{pZ}$,
    it follows that the middle vertical map is already an equivalence,
    and hence $(f,g)$ is cocartesian.

\end{proof}

\begin{corollary}\label{cor:product_unstr}
	Let $S$ be a category with finite products and let $F\colon S\to\Cat$ be a functor with cocartesian unstraightening $p\colon\Un^{\co}(F)\to S$. Then the following are equivalent:
	\begin{enumerate}
		\item $F$ preserves finite products.
		\item $\Un^{\co}(F)$ has finite products, the functor $p$ preserves them, and for every $X,Y\in \Un^{\co}(F)$ the projections $X\gets X\times Y\to Y$ are $p$-cocartesian.
		\item $\Un^{\co}(F)$ has finite products, the functor $p$ preserves them, and an edge $X\to Y\times Z$ is cocartesian if and only if the projections $X\to Y$ and $X\to Z$ are.
		\end{enumerate}
\end{corollary}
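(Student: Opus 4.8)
The plan is to recognize conditions (2) and (3) as two equivalent reformulations of the single assertion that $p = \Un^{\co}(F)$ is a parametrized operad for a suitably chosen factorization system, and then to feed this through \cref{lem:tem-operad}. Concretely, I would equip $S$ with the factorization system $(S, S, \core S)$, in which every morphism is forwards and only equivalences are backwards; this is trivially a factorization system to which \cref{def:operad} applies, as $S$ has finite products by hypothesis and the identity $S \hookrightarrow S$ preserves them. For this factorization system an $E$-cocartesian fibration is just a cocartesian fibration, so---since $\Un^{\co}(F)$ is a cocartesian fibration to begin with---condition (2) says precisely that $p$ is an $(S, S, \core S)$-operad in the sense of \cref{def:operad}.

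Granting this, I would obtain (2)$\Leftrightarrow$(3) directly from \cref{lem:tem-operad}(2): every edge of $\cX$ lies over $E = S$, so that lemma says an edge $X \to Y \times Z$ is $p$-cocartesian exactly when both projections are; combined with the finite-product clauses already contained in (2), this is (3), while conversely applying the ``only if'' half of (3) to the identity equivalence $\id = (\pr_Y, \pr_Z)\colon Y \times Z \to Y \times Z$ recovers the statement that projections in $\cX$ are $p$-cocartesian, which is the remaining content of (2). Likewise, (2)$\Rightarrow$(1) follows from \cref{lem:tem-operad}(1): cocartesian transport along a morphism of $S$ straightens to $F$ applied to that morphism, so the equivalence $\cX_{A \times B} \iso \cX_A \times \cX_B$ and the contractibility of $\cX_1$ provided there are exactly the assertions that the canonical comparison maps $F(A \times B) \to F(A) \times F(B)$ and $F(1) \to *$ are equivalences.

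That leaves (1)$\Rightarrow$(2), which I would prove by constructing finite products in $\cX = \Un^{\co}(F)$ by hand. The terminal object should be the unique object of the contractible fiber $\cX_1 \simeq F(1)$, which is terminal in $\cX$ by the mapping-space formula for a cocartesian fibration; for a binary product of $X$ over $a$ and $Y$ over $b$, I would take the object $Z \in \cX_{a \times b}$ corresponding to $(X, Y)$ under $F(a \times b) \simeq F(a) \times F(b)$ together with the cocartesian lifts $Z \to X$ and $Z \to Y$ of the two projections of $a \times b$, and then verify---using the mapping-space formula for $p$, the splitting $\Map_S(c, a \times b) \simeq \Map_S(c, a) \times \Map_S(c, b)$, and product-preservation of $F$---that $\Map_\cX(W, Z) \simeq \Map_\cX(W, X) \times \Map_\cX(W, Y)$ naturally in $W$. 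By construction $p$ preserves this product and its projections are cocartesian, so (2) follows. The only genuinely computational step is this last naturality check; alternatively, one can avoid it by invoking the standard fact that the cocartesian unstraightening of a finite-product-preserving functor to $\Cat$ has finite products computed fiberwise.
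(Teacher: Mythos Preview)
Your proposal is correct and follows essentially the same approach as the paper: both recognize (2) as the statement that $p$ is an $(S,S,\core S)$-operad, deduce (2)$\Rightarrow$(1),(3) from \cref{lem:tem-operad}, obtain (3)$\Rightarrow$(2) by applying (3) to the identity, and handle (1)$\Rightarrow$(2) by constructing the product $Z\in\cX_{a\times b}$ corresponding to $(X,Y)$ under $F(a\times b)\simeq F(a)\times F(b)$ and verifying its universal property via the mapping-space decomposition for a cocartesian fibration. The paper carries out the mapping-space computation you sketch in slightly more detail, but the argument is the same.
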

\begin{proof}
    Note that (2) implies that $p$ is an $(S,S,\core S)$-operad.
    Hence the implications $(2) \Rightarrow (1),(3)$ follow from the above lemma.
    Moreover, the implication (3) $\Rightarrow$ (2) is clear since the identity on $X \times Y$
    is always cocartesian, hence by (3) also the projections $X \leftarrow X \times Y \to Y$ are cocartesian.

    It remains to show (1) $\Rightarrow$ (2). Let $p \colon \cX \to S$ denote the cocartesian
    unstraightening of $F$, and let $A,B\in S$, $X\in\cX_A$, and $Y\in\cX_B$ be arbitrary.
    As $F$ preserves products, there is a unique $Z\in\cX_{A\times B}$ whose cocartesian pushforwards along the projections $A\gets A\times B\to B$ are given by $X$ and $Y$, respectively.
    It therefore suffices to show that any $Y\in\cX_{A\times B}$ is the product of its cocartesian pushforwards.
    This follows by running the argument for part (1) of the previous lemma backwards.

    Namely, we let $X \in \cX$ be arbitrary and consider the following commutative diagram
    \[\begin{tikzcd}[cramped, row sep=3.5ex]
        {\cX_{A \times B}((f,g)_!X,Y)} & {\cX_A(f_!X,\pr_{A!}Y) \times \cX_B(g_!X,\pr_{B_!}Y)} \\
        {\cX(X,Y)_{(f,g)}} & {\cX(X,\pr_{A!}Y)_{f} \times\cX(X,\pr_{B!}Y)_{g}} \\
        {\cX(X,Y)} & {\cX(X,\pr_{A!}Y)\times \cX(X,\pr_{B!}Y)} \\
        {S(pX,A \times B)} & {S(pX,A) \times S(pX,B)}
        \arrow["\sim", from=1-1, to=1-2]
        \arrow["\sim"', from=1-1, to=2-1]
        \arrow["\sim", from=1-2, to=2-2]
        \arrow[from=1-2, to=2-2]
        \arrow["\psi", from=2-1, to=2-2]
        \arrow[from=2-1, to=3-1]
        \arrow[from=2-2, to=3-2]
        \arrow["\Psi", from=3-1, to=3-2]
        \arrow[from=3-1, to=4-1]
        \arrow[from=3-2, to=4-2]
        \arrow[from=4-1, to=4-2]
    \end{tikzcd}\]
    where we are interested in showing that $\Psi$ is an equivalence,
    and have taken vertical fibers over an arbitrary $(f,g) \colon pS \to A \times b$
    to reduce to showing that $\psi$ is an equivalence.
    This in turn follows by 2-out-of-3, since the top vertical maps are equivalences
    by the universal property of cocartesian morphisms,
    and the top horizontal map is an equivalence since $F$ preserves finite products.
    It remains to see that the unique object $\textbf1 \in \cX_1 = F(1) = *$ is terminal in $\cX$.
    For $X \in \cX$, we have
    \[
        \cX(X,\textbf1) \simeq \cX(X,1)_{pX \to 1} \simeq \cX_1((pX \to 1)_!X,1) = *.\qedhere
    \]
\end{proof}

Another source of examples of $(S,E,M)$-operads comes from the following result.

\begin{proposition}\label{prop:envelope-operad}
Let $(S,E,M)$ be a factorization system such that $E$ and $S$ have finite products preserved by the inclusion.
Assume moreover that finite products of maps in $M$ are again in $M$. Consider the envelope
\[
\Env\colon\Cat_{/S}^{E\dcocart}\to\Cat_{/S}^\cocart \simeq \Fun(S,\Cat).
\]
Then $p \colon \cX \to S$ in $\Cat^{E\dcocart}_{/S}$ is an $(S,E,M)$-operad if and only if $\Env(p)$ straightens to a product-preserving functor $S\to\Cat$.
    \begin{proof}
        The assumptions guarantee that $\Ar_M(S)$ has finite products, preserved by both the source and target map.
        Suppose first that $p$ is an $(S,E,M)$-operad. It follows that for every $\cX$ with finite products and any finite product preserving $p\colon \cX\to S$ the pullback $\Env(\cX) = \cX\times_S\Ar_M(S)$ again admits finite products, preserved by both projections, and in particular by $\cX\times_S\Ar_M(S)\to \Ar_M(S)\to S$.
        In view of \cref{cor:product_unstr} it only remains to show that if $p\colon\cX\to S$ is an $(S,E,M)$-operad,
        then any $(X,f\colon C\rightarrowmono A\times B) \in (\cX \times_S\Ar_M(S))_{A\times B}$
        is the product of its pushforwards to $A$ and $B$.

        For this we use the description of cocartesian edges recalled in Theorem~\ref{thm:BHS}. Namely, the cocartesian lift of $\pr_B$ starting at $(X,f)$ has second component given by the unique square
        \[
            \begin{tikzcd}
                C\arrow[d,mono,"f"']\arrow[r,epic, "p_B"] & C_B\arrow[d,mono, "j_B"]\\
                A\times B\arrow[r, "\pr_B"', epic] & B\rlap,
            \end{tikzcd}
        \]
        and first component given by the cocartesian edge $X\to p_{B!}X$; the cocartesian lift of $\pr_A$ is given analogously. As products in $\cX\times_S\Ar_M(S)$ are componentwise, we are reduced to proving that $C_A\gets C\to C_B$ is a product diagram in $S$, since then also $(p_B)_!X \leftarrow X \to (p_A)_!X$ is a product diagram in $\cX$
        by \cref{lem:tem-operad}(1).

        For this, we observe that since $E$ admits and the inclusion $E \subset S$ preserves products,
        the map $(p_A,p_B) \colon C \rightarrowepic C_A \times C_B$ lies in $E$.
        By construction, the composite
        \[\begin{tikzcd}[cramped]
            C &[1.67em] {C_A \times C_B} &[1.33em] {A \times B}
            \arrow["{(p_A,p_B)}", epic, from=1-1, to=1-2]
            \arrow["{j_A \times j_B}", mono, from=1-2, to=1-3]
        \end{tikzcd}\]
        agrees with the map $f$, which belongs to $M$. As also $j_A\times j_B$ belongs to $M$ (being a product of maps in $M$), uniqueness of factorizations shows that $(p_A,p_B)$ is an equivalence as desired.

        For the converse, suppose that $\Env(\cX)$ is (the unstraightening of a) product preserving functor,
        so that by \cref{cor:product_unstr} $\Env(\cX)$ admits finite products
        preserves by $\Env(\cX) \to \Ar_M(S) \xto{t} S$,
        and projections are cocartesian.
        Note that by definition we have the following pullback squares
        \[\begin{tikzcd}
            \cX & {\Env(\cX)} & \cX \\
            S & {\Ar_M(S)} & S \\
            & S
            \arrow[from=1-1, to=1-2]
            \arrow["p"', from=1-1, to=2-1]
            \arrow["\lrcorner"{anchor=center, pos=0.275}, draw=none, from=1-1, to=2-2,xshift=-2pt]
            \arrow[from=1-2, to=1-3]
            \arrow["q"',from=1-2, to=2-2]
            \arrow["\lrcorner"{anchor=center, pos=0.125}, draw=none, from=1-2, to=2-3,xshift=-2pt]
            \arrow["p", from=1-3, to=2-3]
            \arrow["{\id_{(-)}}"', from=2-1, to=2-2]
            \arrow["s"', from=2-2, to=2-3]
            \arrow["t"', from=2-2, to=3-2]
        \end{tikzcd}\]
        Given $X,Y \in \cX$ we can take the products $pX \times pY$ in $S$
        and $(Z,f \colon pZ \rightarrowmono pV) \simeq (X,\id_{pX}) \times (Y, \id_{pY})$ in $\Env(\cX)$.
        Since $tq$ preserves finite products, we get $pV = pX \times pY$.
        Moreover, the projection $(Z,f) \to (X,\id_{pX})$ is $tq$-cocartesian,
        so by the description of cocartesian edges in \cref{thm:BHS}(1)
        we get that $pZ \to pX$ lies in $E$ and that $Z \to X$ is $p$-cocartesian. Analogously, $pZ \to pY$ lies in $E$,
        hence by assumption $f \colon pZ \to pX \times pY$ is both in $E$ and $M$, hence an equivalence.
        In particular, $(Z,f)$ is equivalent in $\Env(\cX)$ to $(Z,\id_{pX \times pY})$,
        and it then follows from the left pullback square that $Z$ is a product
        of $X$ and $Y$ in $\cX$.
        By this construction and the uniqueness of products, it is clear that $p$ preserves products, and that projections are $p$-cocartesian. Thus $p$ is an $(S,E,M)$-operad.
    \end{proof}
\end{proposition}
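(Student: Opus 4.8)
The plan is to deduce both directions from the criterion of \cref{cor:product_unstr}: a functor $S \to \Cat$ is product-preserving precisely when its cocartesian unstraightening has finite products, the structure functor to $S$ preserves them, and the projection morphisms $X \leftarrow X \times Y \to Y$ are cocartesian. Since by definition $\Env(\cX) = \cX \times_S \Ar_M(S)$ with structure map $t \circ \pr$, and since the hypotheses guarantee that $\Ar_M(S) \subset \Ar(S)$ is closed under finite products — computed pointwise, so that both $s$ and $t$ preserve them — the whole argument reduces to comparing products in this pullback with products in $\cX$ and in $\Ar_M(S)$, using the explicit description of $\Env(p)$-cocartesian edges from \cref{thm:BHS}(1).

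For the forward direction, assuming $p \colon \cX \to S$ is an $(S,E,M)$-operad, I would first observe that $\Env(\cX)$ inherits finite products from the two factors of the pullback (both having products preserved by their maps to $S$) and that $\Env(p)$ preserves them. It then remains, by \cref{cor:product_unstr}(3), to check that every object $(X, f \colon C \rightarrowmono A \times B)$ lying over a product is the product of its cocartesian pushforwards along $\pr_A$ and $\pr_B$. Using the formula for cocartesian lifts, these pushforwards are computed by $(E,M)$-factoring $\pr_A f$ and $\pr_B f$ as $C \rightarrowepic C_A \rightarrowmono A$ and $C \rightarrowepic C_B \rightarrowmono B$ and pushing $X$ forward accordingly; since products in the pullback are componentwise, I am reduced to showing that $C_A \leftarrow C \to C_B$ is a product square in $S$ and then invoking \cref{lem:tem-operad}(1) to split the $\cX$-component. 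The first point follows because $(p_A,p_B) \colon C \to C_A \times C_B$ lies in $E$, its composite with the $M$-map $j_A \times j_B$ recovers $f \in M$, and uniqueness of factorizations forces $(p_A,p_B)$ to be an equivalence.

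For the converse, assuming $\Env(\cX)$ straightens to a product-preserving functor, I would recover $\cX$ as the fiber of $q \colon \Env(\cX) \to \Ar_M(S)$ over the identity section $S \hookrightarrow \Ar_M(S)$. Given $X, Y \in \cX$, form their product $(Z, f \colon pZ \rightarrowmono pV)$ in $\Env(\cX)$; since $\Env(p)$ preserves products, $pV \simeq pX \times pY$, and since the two projections out of $(Z,f)$ are $\Env(p)$-cocartesian, \cref{thm:BHS}(1) forces both $pZ \to pX$ and $pZ \to pY$ to lie in $E$, whence $f$ lies in both $E$ and $M$ and is therefore an equivalence. Thus $(Z,f) \simeq (Z, \id_{pX \times pY})$ lies in the fiber $\cX$, and the pullback square identifies it as a product of $X$ and $Y$ there; the same analysis of the projections shows that $p$ preserves products and that projection morphisms are $p$-cocartesian, so $p$ is an $(S,E,M)$-operad.

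I expect the main obstacle to be the forward direction's reduction to the splitting of $C$ over $A \times B$: one has to be careful that ``products in the pullback are componentwise'' genuinely applies — which needs $p$, $s$, and $t$ to preserve the products in question — and the identification of the two cocartesian pushforwards with the factorization data, together with the pullback-pasting that shows $(p_A,p_B)$ is the $E$-part of a factorization of $f$, is exactly where the hypotheses on $(S,E,M)$ (products of $M$-maps lying in $M$, and $E \hookrightarrow S$ preserving products) are used simultaneously. The remaining verifications — closure of $\Ar_M(S)$ under products, preservation statements, and the fiber identification in the converse — are routine given \cref{thm:BHS} and \cref{cor:product_unstr}.
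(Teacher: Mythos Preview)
Your proposal is correct and follows essentially the same approach as the paper: both directions are reduced to \cref{cor:product_unstr}, with the forward direction handled by the factorization argument showing $(p_A,p_B)$ is an equivalence via uniqueness of $(E,M)$-factorizations, and the converse handled by identifying $\cX$ as the fiber over the identity section and showing the $M$-component of the product in $\Env(\cX)$ is forced to be an equivalence. The structure, key lemmas, and individual steps all match.
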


We now apply these definitions to the case of $(S,E,M) = (\Span_\Nn(\Ff),\cF^\op,\cN)$.

\begin{definition}\label{def:weakly_extensive}
    We say a span pair $(\cF,\cN)$ is \emph{weakly extensive} if
    \begin{enumerate}
        \item $\cF$ has finite coproducts,
        \item\label{wext:2} the morphisms in $\cN$ are closed under finite coproducts, \emph{and}
        \item the functor
        \[
            \amalg \colon \prod_{i=1}^{n} \cF^{\cN}_{/A_i} \to \cF^{\cN}_{/\amalg_i A_i}
        \]
        is an equivalence for every $n\geq 0$ and $A_0,\dots,A_n\in \cF$, where $\cF^{\cN}_{/A_i} \subset \cF_{/A_i}$
        is the full subcategory on those maps which belong to $\cN$.
    \end{enumerate}
    We say that $(\cF,\cN)$ is \emph{extensive} if furthermore
    \begin{enumerate}\setcounter{enumi}{3}
        \item $\cN$ contains the maps $\emptyset \to A$ and $(\id,\id)\colon A \amalg A \to A$ for all $A$, \emph{and}
        \item condition (3) holds for $\cF_{/A}$ in place of the full subcategories $\cF^\cN_{/A}$, i.e.~$\cF$ is an extensive category.
    \end{enumerate}
\end{definition}

By \cite[Proposition 2.2.5]{CHLL_NRings} if $(\cF,\cN)$ is weakly extensive then $\Span_{\cN}(\cF)$ admits finite products and the inclusion $\cF^{\op}\subset \Span_{\cN}(\cF)$ preserves finite products. Hence we obtain a notion of $(\cF,\cN)$-operads by specializing the definitions above.

\begin{definition}\label{def:f-n-operad}
Let $(\cF,\cN)$ be a weakly extensive span pair. We say an $(\cF,\cN)$-preoperad $\cO$ is an \emph{$(\cF,\cN)$-operad} if it is a $(\Span_{\cN}(\cF),\cF^{\op},\cN)$-operad in the sense of \Cref{def:operad}. We denote the resulting full subcategory of $\PreOp{\cF}{\cN}$ by $\Op{\cF}{\cN}$.
\end{definition}

\begin{definition}
We say an $\cF$-precategory $\cC\colon \cF^{\op}\to \Cat$ is an \emph{$\cF$-category} if it is product preserving. This defines a full subcategory $\iCat{\cF}$ of $\PreCat{\cF}$.
\end{definition}

\begin{remark}\label{rk:why-pre-1}
    We will typically apply this to the case where $\cF=\mathbb F[T]$ is the finite coproduct completion of an arbitrary category $T$. In this case, $\mathbb F[T]$-categories can be equivalently described as functors $T^\op\to\Cat$, and are known as \emph{$T$-categories} \cites{exposeI,shah2021parametrized, nardin2016exposeIV, CLL_Global}. While it may seem silly at first to consider $\mathbb F[T]$-categories instead of $T$-precategories, the reason for this is that only the former allows to encode the interaction with norms.
\end{remark}

\begin{definition}
We say an $\cN$-normed $\cF$-precategory $\cC\colon \Span_{\cN}(\cF)\to \Cat$ is an \emph{$\cN$-normed $\cF$-category} if one of the two by \Cref{cor:product_unstr} equivalent conditions hold:
\begin{enumerate}
	\item $\cC$ preserves finite products,
	\item The cocartesian unstraightening of $\cC$ is an $(\cF,\cN)$-operad.
\end{enumerate}
We denote the resulting full subcategory of $\PreNmCat{\cF}{\cN}$ by $\NmCat{\cF}{\cN}$.
\end{definition}

\begin{remark}\label{rk:why-pre-2}
    By definition, $\cN$-normed $\cF$-precategories are nothing else than $\Span_\Nn(\Ff)^\op$-precategories, with the $\Nn$-normed $\Ff$-\emph{categories} precisely corresponding to the $\Span_\Nn(\Ff)^\op$-\emph{categories}. This is the key reason why it is advantageous to work with precategories wherever possible: the category of $\Span_\Nn(\Ff)^\op$-categories does not fit into the original framework of parametrized higher category theory \cite{exposeI} (as $\Span_\Nn(\Ff)^\op$ is not a free coproduct completion) nor into the more general toposic framework of \cites{martini2021yoneda,martiniwolf2021limits} (as product preserving presheaves on $\Span_\Nn(\Ff)^\op$ are typically not a topos), but by working with precategories we can still apply a lot of foundational results from parametrized category theory.
\end{remark}

All our constructions restrict to the above full subcategories, under reasonable assumptions on the pair $(\cF,\cN)$:

\begin{theorem}\label{thm:oper_operad}
Let $(\cF,\cN)$ be a weakly extensive span pair.
\begin{enumerate}
	\item The functor $\fgt \colon \PreOp{\cF}{\cN}\to \PreCat{\cF}$ restricts to a functor
	\[
        \hspace{4em}\fgt \colon \Op{\cF}{\cN}\to \iCat{\cF}.
	\]
	\item\label{triv_on_operad} The functor $\Triv \colon \PreCat{\cF}\to \PreOp{\cF}{\cN}$ restricts to a functor
	\[
        \hspace{4em}\Triv\colon \iCat{\cF}\to \Op{\cF}{\cN}.
	\]
	\item The functor $(-)^{\Ncoprod}\colon \PreCat{\cF}\to \PreOp{\cF}{\cN}$ restricts to a functor
	\[
        \hspace{4em}(-)^{\Ncoprod} \colon \iCat{\cF}\to \Op{\cF}{\cN}.
	\]
	\item The functor  $\Env\colon\PreOp{\cF}{\cN}\to \PreNmCat{\cF}{\cN}$ restricts to a functor
	\[
        \hspace{4em}\Env\colon\Op{\cF}{\cN}\to \NmCat{\cF}{\cN}.
	\]
	\item If $(\Ff,\Nn)$ is even extensive, then $\und{\CAlg}^{\cN}_{\cF}\colon\PreNmCat{\cF}{\cN}\to \PreCat{\cF}$ restricts to a functor
	\[
	    \hspace{4em}\und{\CAlg}^{\cN}_{\cF}\colon\NmCat{\cF}{\cN}\to \Cat({\cF}).
	\]
\end{enumerate}
\end{theorem}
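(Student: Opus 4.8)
The plan is to treat the five items in turn; items (1)--(4) will follow quickly from the machinery already assembled, and item (5) is the real content.

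For (1), observe that $\fgt(\cO)$ sends $A\in\cF$ to the fibre $\cO_A$, so that product-preservation of $\fgt(\cO)\colon\cF^\op\to\Cat$ is precisely the assertion that $\cO_\emptyset\simeq *$ and $\cO_{A\amalg B}\simeq\cO_A\times\cO_B$ via the cocartesian pushforwards along the projections. Since $\cF^\op\hookrightarrow\Span_\cN(\cF)$ preserves finite products by \cite{CHLL_NRings}*{Proposition~2.2.5}, the initial object of $\cF$ is terminal in $\Span_\cN(\cF)$ and $A\amalg B$ is the product of $A,B$ there, so both assertions are instances of \cref{lem:tem-operad}(1). For (2), recall from \cref{lem:unfurl}(1) that $\Triv(\cC)$ is always an $(\cF,\cN)$-preoperad, which is condition~(1) of \cref{def:operad}; the remaining conditions only concern products, and they hold because $q\colon\Un^\co(\cC)\to\cF^\op$ is an $(\cF^\op,\cF^\op,\core\cF^\op)$-operad by \cref{cor:product_unstr}, because the inclusion $i\colon\cF^\op\hookrightarrow\Span_\cN(\cF)$ preserves finite products (so $iq$ does too), and because a $q$-cocartesian edge over a morphism of $\cF^\op$ is also $iq$-cocartesian---$\cF^\op$ being right-cancellable in $\Span_\cN(\cF)$, which is exactly the point that makes $\Triv(\cC)$ a preoperad in the proof of \cref{lem:unfurl}.

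For (3) and (4) we apply \cref{prop:envelope-operad} with $(S,E,M)=(\Span_\cN(\cF),\cF^\op,\cN)$; its hypotheses hold because $\cF^\op\hookrightarrow\Span_\cN(\cF)$ preserves finite products and a finite product of maps in $\cN\subset\Span_\cN(\cF)$ is a finite coproduct in $\cF$ of maps in $\cN$, hence again in $\cN$ by weak extensivity. Item~(4) is then immediate: for an $(\cF,\cN)$-operad $\cO$, the functor $\Env(\cO)$ straightens to a product-preserving functor, i.e.\ is an $\cN$-normed $\cF$-category. For item~(3), we must show that $\Env(\cC^{\Ncoprod})$ is product-preserving when $\cC$ is. By the computation of $\Env(\cC^{\Ncoprod})$ in the proof of \cref{prop:calg}, its value at $A$ is the total space of the cartesian fibration over $\cN_{/A}$ with fibre $\cC(U)$ over $(U\rightarrownorm A)$; since $\cN_{/\emptyset}\simeq *$ and $\cN_{/A\amalg B}\simeq\cN_{/A}\times\cN_{/B}$ by weak extensivity, and since under the latter the forgetful functor $\cN_{/A}\times\cN_{/B}\to\cF$ is $(U,V)\mapsto U\amalg V$, postcomposing with the product-preserving $\cC$ yields the external product of the functors $(U\rightarrownorm A)\mapsto\cC(U)$ and $(V\rightarrownorm B)\mapsto\cC(V)$; its cartesian unstraightening has total space $\Env(\cC^{\Ncoprod})(A)\times\Env(\cC^{\Ncoprod})(B)$, and similarly $\Env(\cC^{\Ncoprod})(\emptyset)\simeq *$, as needed.

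Item~(5) is the main obstacle, and it is here that full extensivity is needed. Using $\und{\CAlg}_\cF^\cN(\cC)(A)\simeq\Fun_{\cF}^{\Nlax}\big(\Span_\cN(\cF_{/A}),\cC\big)$ (the definition of $\und{\CAlg}$ together with $\ul A^{\Ncoprod}\simeq\Span_\cN(\cF_{/A})$ from \cref{ex:Triv-Ncoprod-rep}), it suffices to prove that $\Fun_{\cF}^{\Nlax}(-,\cC)$ sends the canonical map $\coprod_i\Span_\cN(\cF_{/A_i})\to\Span_\cN(\cF_{/\coprod_iA_i})$ to an equivalence for every $\cN$-normed $\cF$-category $\cC$ and all $A_1,\dots,A_n\in\cF$ (the case $n=0$ giving $\und{\CAlg}_\cF^\cN(\cC)(\emptyset)\simeq *$). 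Extensivity of $\cF$ gives $\cF_{/\coprod_iA_i}\simeq\prod_i\cF_{/A_i}$, and disjointness of coproducts together with stability of $\cN$ under pullback and under coproducts upgrades this to an equivalence of span pairs $(\cF_{/\coprod_iA_i},\cN\times_\cF\cF_{/\coprod_iA_i})\simeq\prod_i(\cF_{/A_i},\cN\times_\cF\cF_{/A_i})$; since $\Span$ preserves products this identifies $\Span_\cN(\cF_{/\coprod_iA_i})$ with $\prod_i\Span_\cN(\cF_{/A_i})$, with structure map to $\Span_\cN(\cF)$ given by the $n$-ary product functor precomposed with $\prod_i\Span(\pi_i)$, where $\pi_i\colon\cF_{/A_i}\to\cF$ is the forgetful functor. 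Pulling $\cC$ back along this product and using product-preservation of $\cC$ then exhibits it, over $\prod_i\Span_\cN(\cF_{/A_i})$, as the external product of the $\Span(\pi_i)^*\cC$, so the claim reduces to the statement that an $\cN$-normed functor out of a product of span categories into such an external product is the same datum as an $n$-tuple of $\cN$-normed functors out of the factors. The hard part will be this last splitting statement: I expect it to be proved by analysing cocartesian edges (via \cref{thm:spans} and \cref{lem:tem-operad}), the point being that cocartesian-edge-preservation forces the components of a normed algebra in an external product to be ``constant in the remaining coordinates'' while the span functoriality then carries precisely the separate normed-algebra structures---an argument presumably organised most cleanly by a cofinality statement for normed functors in the spirit of \cref{lemma:CAlg-shift-general} and \cite{Linskens2023globalization}*{Proposition~4.16}.
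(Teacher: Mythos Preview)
Your arguments for items (1), (2), and (4) are essentially the same as the paper's. For (3) you take a genuinely different route: instead of directly verifying that $\cC^{\Ncoprod}$ satisfies the operad axioms (the paper does this via \cite{BachmannHoyois2021Norms}*{Corollary~C.21(2)}, checking that coproducts in $\Un^\ct(\cC)$ yield products in the span category), you use the converse direction of \cref{prop:envelope-operad} together with the explicit identification of $\Env(\cC^{\Ncoprod})(A)$ from the proof of \cref{prop:calg}. This is a nice shortcut and avoids the somewhat tedious verification of the hypotheses of the cited corollary; one only needs to confirm the decomposition $\cN_{/A\amalg B}\simeq\cN_{/A}\times\cN_{/B}$ at the level of morphisms (which does follow from weak extensivity, but you should say so explicitly).

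For item (5), however, there is a genuine gap: you correctly reduce to a ``splitting statement'' but do not prove it. Your closing expectation that it should be organised ``in the spirit of \cref{lemma:CAlg-shift-general}'' is pointing in the right direction, but the actual argument requires two moves you have not made. First, use \cref{lemma:CAlg-slice} to replace $\cF$ by $\cF_{/X\amalg Y}$, thereby reducing to the case where $X\amalg Y$ is \emph{terminal}. Under this assumption, extensivity gives $\cF\simeq\cF_{/X}\times\cF_{/Y}$, and one checks that both $X$ and $Y$ are then productive (products $X\times A$ and $Y\times A$ exist for all $A$, with $X\times A\hookrightarrow A\hookleftarrow Y\times A$ a coproduct decomposition). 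Second---and this is the key point replacing your unproven splitting---observe that $\CAlg^\cN_\cF(-)=\ul\CAlg^\cN_\cF(-)(1)$ is \emph{corepresented} on $\NmCat{\cF}{\cN}$ (by the terminal object), hence preserves products; combined with \cref{lemma:CAlg-shift-general} and the product-preservation of $\cC$ giving $\cC((X\amalg Y)\times-)\simeq\cC(X\times-)\times\cC(Y\times-)$, this yields $\ul\CAlg^\cN_\cF(\cC)(X\amalg Y)\simeq\ul\CAlg^\cN_\cF(\cC)(X)\times\ul\CAlg^\cN_\cF(\cC)(Y)$ directly. The corepresentability observation is what makes the splitting tractable without any explicit cocartesian-edge analysis.
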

\begin{proof}
~
\begin{enumerate}
    \item Suppose $p\colon \cO\to \Span_{\cN}(\cF)$ is an $(\cF,\cN)$-operad. Then the pullback of $p$ to $\cF^{\op}$ is a cocartesian fibration which satisfies the conditions of \Cref{cor:product_unstr}(2). We conclude by said corollary that its straightening is a product preserving functor, i.e.~$\fgt(\cO)$ is an $\cF$-category.

    \item Suppose $\Cc$ is an $\cF$-category. Then its unstraightening satisfies the conditions of \Cref{cor:product_unstr}(2) as a functor to $\cF^{\op}$. However the inclusion $\cF^{\op}\hookrightarrow \Span_{\cN}(\cF)$ is product preserving, and so $\Triv(\Cc)$ is an $(\cF,\cN)$-operad.

    \item Consider the cartesian unstraightening $p\colon \Un^{\ct}(\Cc)\to \Ff$ of an $\Ff$-category $\Cc$.
        By the dual of \Cref{cor:product_unstr}, $\Un^{\ct}(\Cc)$ admits coproducts. We will verify the assumptions of \cite[Corollary C.21(2)]{BachmannHoyois2021Norms} to show that $\Span_{\cart,\Nn}(\Un^{\ct}(\Cc))$ admits products, computed as coproducts in $\Un^{\ct}(\Cc)$. To begin we show that
        \[
            \hspace{4em}\amalg\colon \Un^{\ct}(\Cc)\times \Un^{\ct}(\Cc)\to \Un^{\ct}(\Cc)
        \]
        is a functor of adequate triples. The fact that $\Un^{\ct}(\Cc)$ preserves maps lying over maps in $\Nn$ follows from the fact that $p$ preserves coproducts and \Cref{def:weakly_extensive}(2). That $\amalg$ preserves cartesian edges follows by \Cref{cor:product_unstr}$^{\op}(3)$. From this it follows immediately that $\amalg$ preserves ambigressive pullback squares, since any square in $\Un^{\ct}(\Cc)$ over a pullback square in $\Ff$ such that two parallel sides are $p$-cartesian is a pullback square, see \cite[proof of Proposition 2.6]{HHLNa}.

        Next, we show that the unit and counit of the $\amalg \dashv \Delta$ adjunction are cartesian edges. In the first case this follows from the fact that the diagonal is $p$-cartesian, once again using \Cref{cor:product_unstr}$^{\op}(3)$. In the second case this follows from the fact that the coprojections are $p$-cartesian by (2) of the above cited lemma. Finally, we have to show that the unit and counit naturality squares for maps in $\cN$ are pullback squares. This  follows immediately from the above characterization of pullback squares in $\Un^{\ct}(\Cc)$ combined with the fact that the squares
        \[\hspace{4em}\begin{tikzcd}
            {A\amalg A} & A && A & {A\amalg B} \\
            {B\amalg B} & B && {A'} & {A'\amalg B'}
            \arrow["\nabla", from=1-1, to=1-2]
            \arrow[norm, from=1-1, to=2-1]
            \arrow[norm, from=1-2, to=2-2]
            \arrow[from=1-4, to=1-5]
            \arrow[norm, from=1-4, to=2-4]
            \arrow[norm, from=1-5, to=2-5]
            \arrow["\nabla", from=2-1, to=2-2]
            \arrow[from=2-4, to=2-5]
        \end{tikzcd}\]
        are pullbacks in $\cF$ by \cite[Remark 2.2.3 and Proposition 2.2.4]{CHLL_NRings}. We conclude that $\Cc^{\Ncoprod}= \Span_{\cart,\cN}(\Un^{\ct}(\Cc))$ admits products computed as coproducts in $\Un^{\ct}(\Cc)$.
        Using this, the fact that $p \colon \Un^\ct(\cC) \to \cF$ satisfies the dual of \cref{cor:product_unstr},
        and that backwards maps in $\Span_{\cart,\all}(\Un^\ct(\cC))$ are $\Span(p)$-cocartesian,
        it follows that $\cC^{\Ncoprod}$ is an $(\cF,\cN)$-operad.

    \item The fact that $\Env$ restricts as claimed follows immediately by applying \Cref{prop:envelope-operad}
        to $(S,E,M) = (\Span_\cN(\cF),\cF^\op,\cN)$. The conditions of this cited proposition
        are satisfied by the definition of a weakly extensive span pair, see~\cref{def:weakly_extensive}.

    \item Let $X,Y\in\Ff$ arbitrary. Then the coproduct diagram $X\hookrightarrow X\amalg Y\hookleftarrow Y$ lifts through the forgetful functor $\pi\colon\Ff_{/X\amalg Y}\to\Ff$ to a coproduct diagram in $\Ff_{/X\amalg Y}$. It will therefore suffice to show that $\pi^*\ul\CAlg^\Nn_\Ff(\Cc)$ is an $\Ff_{/X\amalg Y}$-category. In view of Lemma~\ref{lemma:CAlg-slice} we may therefore assume, after replacing $(\Ff,\Nn)$ by $(\Ff_{/X\amalg Y},\Ff_{/X\amalg  Y}\times_\Ff\Nn)$, that $X\amalg Y$ is final.

 	In this case the extensivity of $\cF$ implies that $\cF\simeq \cF_{/X\amalg Y} \simeq \cF_{/X}\times \cF_{/Y}$. Under this equivalence the objects $X$ and $Y$ correspond to the objects $(\id_X,\emptyset)$ and $(\emptyset,\id_Y)$ respectively. Now note that the equivalence $\cF_{/\emptyset}\simeq \ast$ shows that $\emptyset$ is a strict initial object in any extensive category, i.e.~every map $X\to \emptyset$ is an equivalence. A simple computation shows that products by strict initial objects always exist (and are given by $\emptyset$), and so we conclude that for every $A\in \cF$, the products $X\times A$ and $Y\times A$ exists, and that
 	 \[
        \hspace{4em}X\times A\hookrightarrow \underbrace{(X\amalg Y)\times A}_{\simeq A}\hookleftarrow Y\times A
 	\]
 	is a coproduct diagram in $\cF$. In particular, we obtain an equivalence $\Cc((X\amalg Y)\times-)\iso\Cc(X\times-)\times\Cc(Y\times-)$ via the restrictions for any $\Nn$-normed $\Ff$-category $\Cc$. Since $\CAlg_\cF^\cN(-) = \ul\CAlg^\Nn_\Ff(-)(1)$ is corepresented on $\NmCat{\Ff}{\Nn}$, we conclude that
 	\[
        \hspace{4em}\CAlg^\Nn_\Ff\big(\Cc((X\amalg Y)\times-)\big)
        \iso \CAlg^\Nn_\Ff\big(\Cc(X\times-)\big) \times \CAlg^\Nn_\Ff\big(\Cc(Y\times-)\big)
 	\]
 	via the restrictions. Lemma~\ref{lemma:CAlg-shift-general} then identifies this with the natural map $\ul\CAlg^\Nn_\Ff(\Cc)(X\amalg Y)\to\ul\CAlg^\Nn_\Ff(\Cc)(X)\times\ul\CAlg^\Nn_\Ff(\Cc)(Y)$, finishing the proof. \qedhere
\end{enumerate}
\end{proof}

\begin{corollary}\label{cor:calg-cocomplete}
    Let $(\cF,\cN)$ be an extensive left cancellable span pair,
    and suppose that $\cC$ is an $\cN$-normed $\cF$-category which admits fiberwise sifted colimits.
    Then $\und{\CAlg}_\cF^\cN(\cC)$ is a fiberwise cocomplete $\cF$-category
    which is left $\cN$-adjointable.
\end{corollary}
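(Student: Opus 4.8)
The plan is to read the three assertions off from results established earlier, the only step requiring an actual idea being the passage from \emph{sifted} fiberwise colimits to \emph{all} fiberwise colimits.

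First, $\und{\CAlg}_\cF^\cN(\cC)$ is an $\cF$-category: since $(\cF,\cN)$ is extensive and $\cC$ is an $\cN$-normed $\cF$-category, this is exactly \Cref{thm:oper_operad}(5). Moreover, \Cref{cor:calg-coprod} shows that $\und{\CAlg}_\cF^\cN(\cC)$ is left $\cN$-adjointable in the sense of \Cref{lem:unfurl}(3); concretely, the restriction along any morphism of $\cN$ admits a left adjoint, and the Beck--Chevalley transformation attached to any pullback square in $\cF$ whose two horizontal legs lie in $\cN$ is an equivalence.

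It remains to prove fiberwise cocompleteness. Applying \Cref{lem:calg-fib-colims}(2) with $\cK$ the class of sifted categories --- using the standing hypothesis that $\cC$ admits fiberwise sifted colimits --- shows that $\und{\CAlg}_\cF^\cN(\cC)$ admits fiberwise sifted colimits. Since every small colimit can be built out of finite coproducts and sifted colimits, a category admitting finite coproducts and sifted colimits is cocomplete, and a functor between two such categories preserving finite coproducts and sifted colimits is cocontinuous (see e.g.\ \cite{HTT}*{\S5.5.8}); hence it suffices to produce fiberwise \emph{finite} coproducts. Here extensivity enters. By \Cref{def:weakly_extensive}(4), $\cN$ contains the fold maps $\nabla_A\colon A\amalg A\to A$ and the maps $u_A\colon\emptyset\to A$. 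Since $\und{\CAlg}_\cF^\cN(\cC)$ is product-preserving, the coproduct inclusions yield equivalences $\und{\CAlg}_\cF^\cN(\cC)(A\amalg A)\simeq\und{\CAlg}_\cF^\cN(\cC)(A)\times\und{\CAlg}_\cF^\cN(\cC)(A)$ and $\und{\CAlg}_\cF^\cN(\cC)(\emptyset)\simeq *$, under which $\nabla_A^*$ becomes the diagonal and $u_A^*$ the unique functor to the point. The left adjoints $(\nabla_A)_!$ and $(u_A)_!$ supplied by left $\cN$-adjointability are therefore left adjoint to the diagonal, resp.\ to the projection to the point, i.e.\ they compute binary coproducts, resp.\ an initial object, in each fiber $\und{\CAlg}_\cF^\cN(\cC)(A)$. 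Finally, for every $f\colon A\to B$ in $\cF$ the squares
\[
\begin{tikzcd}
A\amalg A\arrow[r,"\nabla_A"]\arrow[d,"f\amalg f"'] & A\arrow[d,"f"]\\
B\amalg B\arrow[r,"\nabla_B"'] & B
\end{tikzcd}
\qquad\qquad
\begin{tikzcd}
\emptyset\arrow[r]\arrow[d] & A\arrow[d,"f"]\\
\emptyset\arrow[r] & B
\end{tikzcd}
\]
are pullbacks in $\cF$ (because $\cF$ is extensive) with horizontal legs in $\cN$, so left $\cN$-adjointability provides Beck--Chevalley equivalences $f^*(\nabla_B)_!\simeq(\nabla_A)_!(f\amalg f)^*$ and $f^*(u_B)_!\simeq(u_A)_!$; that is, $f^*$ preserves binary coproducts and initial objects. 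Together with the fiberwise sifted colimits, this shows that $\und{\CAlg}_\cF^\cN(\cC)$ is fiberwise cocomplete.

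The one step that is not pure bookkeeping is this last one: recognizing that left $\cN$-adjointability along the fold and empty-coproduct maps, together with the extensivity of $(\cF,\cN)$, is precisely the statement that the fibers of $\und{\CAlg}_\cF^\cN(\cC)$ have finite coproducts preserved by restriction, so that the fiberwise sifted colimits furnished by \Cref{lem:calg-fib-colims} in fact upgrade to all fiberwise colimits. Everything else is a direct appeal to results already proved.
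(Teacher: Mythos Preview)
Your proof is correct and follows the same approach as the paper's: both invoke \Cref{thm:oper_operad}(5), \Cref{lem:calg-fib-colims}, and \Cref{cor:calg-coprod}, then obtain fiberwise finite coproducts from left $\cN$-adjointability along fold and initial maps together with product preservation. The only difference is that the paper outsources this last step to \cite{CLL_Global}*{Proposition~4.2.14}, whereas you spell out the argument directly (including the Beck--Chevalley verification that restrictions preserve finite coproducts); this is exactly what the cited proposition does.
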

\begin{proof}
    By \cref{thm:oper_operad}(5) and \cref{lem:calg-fib-colims}
    $\und{\CAlg}_\cF^\cN(\cC)$ is an $\cF$-category admitting fiberwise sifted colimits.
    Moreover, \cref{cor:calg-coprod} (which requires the assumption of left-cancellability) shows that it is left $\cN$-adjointable.
    Since $\cN$ contains all fold maps by extensivity
    and $\und{\CAlg}_\cF^\cN(\cC)$ is an $\cF$-category (and not just a precategory),
    this also gives fiberwise finite coproducts by \cite{CLL_Global}*{Proposition 4.2.14}.
\end{proof}

\section{Distributivity and free functors}\label{sec:free-nalg-general}
Let $\Cc$ be an $\cN$-normed $\cF$-precategory, and consider the forgetful functor
\[
    \bbU\colon \ul{\CAlg}_{\Ff}^{\cN}(\Cc) \to \Cc
\]
from the precategory of $\Nn$-normed algebras in $\Cc$ to $\Cc$ itself. In this section we will provide conditions for the existence of a left adjoint of $\mathbb U$, i.e.~for the existence of free normed algebras in $\Cc$, and prove a formula computing it.

Recall that even in the classical case of symmetric monoidal categories, one needs to impose conditions on the interaction between colimits and the symmetric monoidal structure to understand free commutative algebras. More specifically, we may require that the tensor product preserves colimits in each variable separately. This condition can be split up into the tensor product preserving sifted colimits as a functor $\Cc\times \Cc \to\Cc$, and requiring that coproducts should \emph{distribute} over the tensor product in the sense that we have specific equivalences
\[
    (A\amalg B)\otimes(C\amalg D)\simeq (A\otimes C)\amalg(A\otimes D)\amalg (B\otimes C)\amalg (B\otimes D).
\]
We begin by introducing an analogue of this in our context.

\subsection{Distributivity}

Phrasing the condition of distributivity for $\cN$-normed $\cF$-categories requires many more conditions on the pair $(\cF,\cN)$. For later use we in fact introduce a more general context, in which we single out a class $\cR\subset \cF$ of `indexed coproducts' which we would like our norms to `distribute' along.

\begin{definition}\label{def:dist}
	We call a triple $(\cF,\cN,\cR)$ of a category $\cF$ together with two wide subcategories $\cN,\cR\subset\cF$ a \emph{distributive context} if
	\begin{enumerate}
        \item\label{dist:1} $(\cF,\cN)$ is a weakly extensive left cancellable span pair.\label{cond:distr-ext-sp}
		\item\label{dist:2} $(\cF,\cR)$ is a span pair.
		\item\label{dist:3} For every $n\colon A\rightarrownorm B$ in $\cN$, the pullback functor $n^*\colon\cR_{/B}\to \cR_{/A}$ admits a right adjoint $n_*$, and for any pullback
		\[
		\begin{tikzcd}
			A'\arrow[r, "n'", norm]\arrow[dr,pullback]\arrow[d, "g"'] & B'\arrow[d, "f"]\\
			A\arrow[r, "n"', norm] & B
		\end{tikzcd}
		\]
		in $\cF$ with $n,n'\in\cN$ as indicated, the Beck--Chevalley transformation
		\[
		f^*n_*\to n'_*g^*
		\]
		between functors $\cR_{/A}\to \cR_{/B'}$ is an equivalence.
		\item\label{dist:4} Both $\cR$ and $\cF$ admit finite coproducts, preserved by the inclusion $\cR\hookrightarrow\cF$.
		\item $\cR$ is the right class of a factorization system $(\Ee,\cR)$ on $\cF$.
		\item $(\cF,\cE,\cN)$ is an adequate triple.
		\item\label{dist:6} Maps in $\cE$ are closed under coproducts in $\cF$.
	\end{enumerate}
\end{definition}

\begin{remark}\label{rk:bispan-triple-alt}
	In the language of \cites{Elmanto_Haugseng_Bispans,CHLL_Bispans, CHLL_NRings}, the first three conditions (sans left cancellability) say that $(\cF,\cN,\cR)$ is a \emph{bispan triple}.
\end{remark}

\begin{notation}
    Recall that we denote maps in $\cN$ by $A\rightarrownorm B$. We will further denote maps in $\cR$ by $A\rightarrowmono B$ and maps in $\cE$ by $A\rightarrowepic B$.
\end{notation}

\begin{remark}\label{rem:orbital-dist}
	Let $T$ be any category. Recall that a wide subcategory $P\subset T$ is called an \emph{orbital subcategory} \cite{CLL_Global}*{Definition~4.2.2} if the finite coproduct completions $\mathbb F[P]\subset\mathbb F[T]$ form a span pair; we call $T$ itself \emph{orbital} if it is an orbital subcategory of itself, i.e.~if $\mathbb F[T]$ has all pullbacks.

	If $P\subset T$ is orbital, then the span pair $(\mathbb F[T],\mathbb F[P])$ is always extensive \cite{CHLL_NRings}*{Example~3.1.9}. Assume now that we are given a factorization system $(E,M)$ on $T$. Then $\mathbb F[M]$ is the right half of a factorization system on $\mathbb F[T]$, with the left class given by those maps in $\mathbb F[T]$ that decompose as finite coproducts of maps in $E$ (i.e.~compared to $\mathbb F[E]$ we exclude fold maps).

	We therefore see that given a category $T$ with a wide subcategory $P\subset T$ and a factorization system $(M,E)$, the triple $(\mathbb F[T], \mathbb F[P], \mathbb F[M])$ is a distributive context whenever the following assumptions are satisfied:
	\begin{enumerate}
		\item Both $P$ and $M$ are orbital subcategories of $T$.
		\item $(T,E)$ is a span pair.
		\item For every $n\colon x\to y$ in $\mathbb F[P]$ the pullback $n^*\colon\mathbb F[M]_{/y}\to\mathbb F[M]_{/y}$ admits a right adjoint $n_*$ satisfying the basechange condition from Remark~\ref{rk:bispan-triple-alt}.
	\end{enumerate}
	We further remark that in the case that $T=M=P$, the second condition is automatic and the third condition simplifies to $\mathbb F[T]$ being locally cartesian closed.
\end{remark}

\begin{example}\label{ex:gog}
    We specialize the previous remark to $T=M=P=\Glo$ the ($2$-)\hskip0ptcategory of finite connected $1$-groupoids, so that $\Fglo\coloneqq\mathbb F[\Glo]$ is the category of finite $1$-groupoids (which clearly has all pullbacks). As $\Fglo$ is locally cartesian closed (e.g.~by straightening--unstraightening), we see that $(\Fglo,\Fglo,\Fglo)$ is a distributive context. On the other hand, the subcategory $\Orb\subset\Glo$ of \emph{faithful} functors is an orbital subcategory by \cite{CLL_Global}*{Example~4.2.5}, so writing $\Forb\coloneqq\mathbb F[\Orb]$ we also obtain a distributive context $(\Fglo,\Forb,\Fglo)$. This latter context is the example we are most interested in the bulk of this paper.
\end{example}

\begin{example}\label{ex:goo}
    Varying the previous example, we can also consider the factorization system on $\Glo$ with right class given by $\Orb$ and left class $\text{Surj}$ given by the full functors of connected finite $1$-groupoids \cite{LNP}*{Remark~6.14}. We will write $\mathbb E$ for the left class of the corresponding factorization system on $\Fglo$ with right class $\Forb$.

    By \cite{CLL_Global}*{Lemma~5.2.3}, we have an equivalence $\Forb_{/G}\simeq\Fun(G,\mathbb F)$ natural in $G\in\Fglo^\op$; in particular, the pullback maps $\Forb_{/G}\to\Forb_{/H}$ have right adjoints satisfying the Beck--Chevalley condition, so that also $(\Fglo, \Fglo, \Forb)$ and $(\Fglo,\Forb,\Forb)$ are distributive contexts. The latter context will be our main example throughout Section~\ref{sec:globalize-equivariant-alg}.
\end{example}

\begin{example}\label{ex:FG3}
    Write $\mathbb F_G$ for the category of finite $G$-sets, which is equivalently the finite coproduct completion of the category of finite \emph{transitive} $G$-sets. As $\mathbb F_G$ has pullbacks and is locally cartesian closed, we see that also $(\mathbb F_G,\mathbb F_G,\mathbb F_G)$ is a distributive context.
\end{example}

Let us now discuss the notion of distributivity for $\cN$-normed $\cF$-categories associated to a distributive context. For this we need some definitions.

\begin{definition}[{{cf.~\cite[Definition 2.4.1, 2.4.3]{Elmanto_Haugseng_Bispans}}}]\label{def:distributivity_diagram}
	Consider a distributive context $(\cF,\cN,\cR)$
	together with morphisms $m \colon A \rightarrowmono B$ in $\cR$ and $n\colon B\rightarrownorm C$ in $\cN$.
	We call a diagram
	\begin{equation}\label{distr_diagram}
		\begin{tikzcd}[row sep=scriptsize]
			& X & Y \\
			A \\
			& B & C
			\arrow["{n'}", norm, from=1-2, to=1-3]
			\arrow["\epsilon"', from=1-2, to=2-1]
			\arrow["\lrcorner"{anchor=center, pos=0.125}, draw=none, from=1-2, to=3-3]
			\arrow["m'", mono, from=1-3, to=3-3]
			\arrow["{m}"', mono, from=2-1, to=3-2]
			\arrow["n", norm, from=3-2, to=3-3]
			\arrow["{m''}",mono, from=1-2,to=3-2]
		\end{tikzcd}
	\end{equation}
	a \emph{distributivity diagram} for $m$ and $n$ if the square is a pullback, $m'$ is again in $\cR$, and the composite
	\[
	\Map_{\cF_{/C}}(\phi,m')
	\xto{n^*} \Map_{\cF_{/B}}(n^*\phi,m'')
	\xto{\epsilon_*} \Map_{\cF_{/B}}(n^*\phi,m)
	\]
	is an equivalence for all $\phi\colon D\to C$ in $\cF$. To connect to \Cref{def:dist}, the universal property of a distributivity diagram precisely witnesses $m'$ as a right adjoint object for pullback along $n$ and ensures that the Beck--Chevalley transformations in
	$(\ref{dist:3})$ are equivalences \cite{Elmanto_Haugseng_Bispans}*{Remark~2.4.5 and Lemma~2.4.6}. As such, distributivity diagrams are necessarily unique if they exist.
\end{definition}

\begin{remark}
    Note that since we check the universal property of a distributivity diagram against all maps $\phi$ in $\cF$ (and not just $\phi$ in $\cR$), this notion is invariant under enlarging $\cR$ and $\cN$, i.e.~if $(\cF,\cN',\cR')$ is another distributive context such that $\cR'\supset\cR$, $\cN'\supset\cN$, then a distributivity diagram with respect to $(\cF,\cN,\cR)$ is also a distributivity diagram with respect to $(\cF,\cN',\cR')$.
\end{remark}

\begin{remark}\label{rem:dist_diag_comm_in_span}
    We observe that given a distributivity diagram (\ref{distr_diagram}), the diagram
    \[\begin{tikzcd}
        Y & Y & C \\
        X & X & B \\
        A & A & B
        \arrow[Rightarrow, no head, from=1-1, to=1-2]
        \arrow["{m'}", mono, from=1-2, to=1-3]
        \arrow["{n'}", norm, from=2-1, to=1-1]
        \arrow[Rightarrow, no head, from=2-1, to=2-2]
        \arrow["\epsilon"', from=2-1, to=3-1]
        \arrow["{n'}", norm, from=2-2, to=1-2]
        \arrow["\lrcorner"{anchor=center, pos=0.125, rotate=90}, draw=none, from=2-2, to=1-3]
        \arrow["{m''}"', mono, from=2-2, to=2-3]
        \arrow["\lrcorner"{anchor=center, pos=0.125, rotate=-90}, draw=none, from=2-2, to=3-1]
        \arrow["\epsilon", from=2-2, to=3-2]
        \arrow["n"', norm, from=2-3, to=1-3]
        \arrow[Rightarrow, no head, from=2-3, to=3-3]
        \arrow[Rightarrow, no head, from=3-2, to=3-1]
        \arrow["m", mono, from=3-2, to=3-3]
    \end{tikzcd}\]
    defines a commutative square in $\Span_{\cN,\all}(\cF) \simeq \Span_{\cN}(\cF)^{\op}$. By \cite{Elmanto_Haugseng_Bispans}*{Proposition 2.5.9} it is in fact a pullback square in $\Span_\cN(\cF)^\op$. Moreover, as shown there, a pullback of the cospan above exists in $\Span_{\cN}(\cF)^{\op}$ if and only if a distributivity diagram for $m$ and $n$ exists.

    On the other hand, the inclusion $\Ff\hookrightarrow\Span_\Nn(\Ff)^\op$ sends pullback squares of the form
    \[
        \begin{tikzcd}
            A\arrow[r]\arrow[dr,pullback]\arrow[d,mono] & B\arrow[d,mono]\\
            C\arrow[r] & D
        \end{tikzcd}
    \]
    to pullback squares by Proposition~2.5.7 of \emph{op.\ cit.} By writing any map in $\Span_{\cN}(\cF)^{\op}$ as a composite of maps in $\cN^{\op}$ and $\cF$, we conclude that $(\cF,\cN,\cR)$ satisfies $(\ref{dist:2})$ and $(\ref{dist:3})$ of \Cref{def:distributivity} if and only if $(\Span_{\cN}(\cF)^{\op},\cM)$ is a span pair.
\end{remark}

\begin{remark}\label{rk:goo-ggg-pb}
	Note that the inclusion $\Forb\hookrightarrow\Fglo$ preserves pullbacks (as $\Forb$ is left cancellable and closed under basechange); as also the notions of distributivity diagrams agree, we conclude that the inclusion $\Span(\Forb)\hookrightarrow\Span_\Forb(\Fglo)^\op$ preserves pullbacks along forward maps.
\end{remark}

We are now ready to introduce the main definition.

\begin{definition}\label{def:distributivity}
	Let $(\cF,\cN,\cR)$ be a distributive context. We say an $\cN$-normed $\cF$-precategory $\cC \colon \Span_\cN(\cF) \to \Cat$ is \emph{$\cR$-distributive} if
	\begin{enumerate}
		\item $\cC$ has fiberwise sifted colimits (Definition~\ref{def:fw-colims}), i.e.~it factors through the subcategory $\Cat(\sift) \subset \Cat$ of categories admitting sifted colimits and functors preserving them;\label{cond:fw-sifted}
		\item given a map $m \colon A \to B$ in $\cR$, the induced restriction $m^* \colon \cC(B) \to \cC(A)$
		admits a left adjoint $m_! \colon \cC(A) \to \cC(B)$;
		\item for all $f\colon C\to B$ in $\cF$, $\Cc$ sends the pullback square
		\[\begin{tikzcd}
			{C \times_B A} & A \\
			C & B
			\arrow["{f'}", from=1-1, to=1-2]
			\arrow["{m'}"', mono, from=1-1, to=2-1]
			\arrow["\lrcorner"{anchor=center, pos=0.125}, draw=none, from=1-1, to=2-2]
			\arrow["m", mono, from=1-2, to=2-2]
			\arrow["f"', from=2-1, to=2-2]
		\end{tikzcd}\]
		in $\cF$ to a (vertically) left adjointable square, i.e.~the Beck--Chevalley transformation $m'_!f'^* \to f^*m_!$ is an equivalence; and
        \item for every distributivity diagram (\ref{distr_diagram}), the commutative square
        \[\begin{tikzcd}[cramped,sep=scriptsize]
            {\cC(Y)} && {\cC(C)} \\
            {\cC(X)} \\
            {\cC(A)} && {\cC(B)}
            \arrow["{m'^*}"', from=1-3, to=1-1]
            \arrow["{n'_\tensor}", from=2-1, to=1-1]
            \arrow["{\epsilon^*}", from=3-1, to=2-1]
            \arrow["{n_\tensor}"', from=3-3, to=1-3]
            \arrow["{m^*}", from=3-3, to=3-1]
        \end{tikzcd}\]
		obtained by applying $\Cc$ to the commutative square in $\Span_{\cN}(\cF)$ from \Cref{rem:dist_diag_comm_in_span} is horizontally left adjointable, i.e.~the Beck--Chevalley transformation
		$m'_!n'_\tensor \epsilon^* \to n_\tensor m_!$
		is an equivalence.\label{cond:distributivity}
	\end{enumerate}
	In the special case where $\cR = \cF$, we will simply call $\cC$ a \emph{distributive} $\cN$-normed $\cF$-precategory.
\end{definition}

\begin{remark}
	If $(\cF,\cN)$ is extensive and $\cC$ is an $\cN$-normed $\cF$-category (i.e.~preserves finite products), then each $\Cc(A)$ carries a natural symmetric monoidal structure by restricting $\Cc$ along the unique product-preserving functor $\Span(\mathbb F)\to\Span_\cN(\cF)$ sending the singleton to $A$.

    Condition (1) then implies that the tensor product preserves sifted colimits (as a functor $\cC(A)\times\cC(A)\to\cC(A)$ or, equivalently, in each variable separately). On the other hand, if also $\cR$ contains fold maps, then specializing the distributivity condition $(\ref{cond:distributivity})$ to the case where $n\colon A\amalg A\to A$ and $m\colon (A\amalg A)\amalg (A\amalg A)\to A\amalg A$ are fold maps, shows (after a straightforward, but somewhat tedious computation) that the tensor product preserves finite coproducts in each variable separately, hence preserves all colimits in each variable separately.

	In general, one should think of $(-)_!$ as `summation,' while $(-)_\tensor$ is `multiplication.' The final condition implies that we may distribute these operations, analogously to how one distributes addition and multiplication in rings. For a more precise explanation of this analogy we refer the reader to \cite[Section 1.2]{Elmanto_Haugseng_Bispans}.
\end{remark}

\begin{remark}\label{rem:distributivity_is_base_change}
	Our notion of distributivity is inspired by \cite{Elmanto_Haugseng_Bispans}. By the discussion in \Cref{rem:dist_diag_comm_in_span}, see also Theorem~2.5.2 of \emph{op.\ cit.}, we can rephrase the third and fourth conditions as a Beck--Chevalley condition for maps in $\Span_{\Nn}(\Ff)^\op$: they are equivalent to every pullback square
	\[
	\begin{tikzcd}
		A\arrow[r]\arrow[dr,pullback]\arrow[d,mono] & B\arrow[d,mono]\\
		C\arrow[r] & D
	\end{tikzcd}
	\]
	in $\Span_{\Nn,\all}(\Ff)\simeq\Span_{\Nn}(\Ff)^\op$ with the vertical maps in $\cR\subset\Ff\subset \Span_{\Nn,\all}(\Ff)$ being vertically left adjointable. We also say that the left adjoints for maps in $\cR$ \emph{satisfy basechange} with respect to all maps in $\Span_\Nn(\Ff)^\op$.
\end{remark}

\begin{remark}\label{rk:nardin-shah}
	For an (`atomic') orbital category $T$ such that $\mathbb F[T]$ is locally cartesian closed, we expect that an $\mathbb{F}[T]$-normed $\mathbb{F}[T]$-category is distributive if and only if it is a distributive $T$-symmetric monoidal $T$-category in the sense of \cite{NardinShah}*{Definition~3.2.3}.
\end{remark}

\begin{example}\label{ex:gog-terminology}
	Consider the distributive context $(\Fglo,\Forb,\Fglo)$ from Example~\ref{ex:gog}; we will refer to ($\Fglo$-)distributive $\Forb$-normed $\Fglo$-categories as \emph{distributive normed global categories}.\footnote{From a systematic point of view, we should refer to $\Forb$-normed $\Fglo$-categories as \emph{equivariantly normed global categories} instead, and reserve the name \emph{(globally) normed global categories} for $\Fglo$-normed $\Fglo$-categories. However, as the latter will play virtually no role in this paper, we have opted for the above terminology.}
    In Theorem~\ref{thm:spgl-dist} we will show that the categories of \emph{$G$-global spectra} from \cite{g-global}*{Chapter 3} assemble into a distributive normed global category.
\end{example}

\begin{example}\label{ex:borel-distributive}
    If $\Cc\colon\Span(\mathbb F)\to\Cat$ is any symmetric monoidal category, then we can form the right Kan extension $\Cc^\flat$ along the unique product-preserving functor $\Span(\mathbb F)\hookrightarrow\Span_{\Forb}(\Fglo)$ which preserves the final object. One can give the following description of the resulting functoriality, c.f.~\cite[Example 3.1]{puetzstueck-new}:
    on objects, $\cC^\flat$ is given by $G\mapsto G\text{-}\Cc\coloneqq\Fun(G,\Cc)$ with the evident functoriality in backwards maps. If $G\amalg G\to G$ is a fold map, then the functoriality in the corresponding forward map is given by the tensor product in $\Cc$, with the diagonal action. In other words, the restriction of $\cC^\flat$ along $\Span(\F) \to \Span_\Forb(\Fglo),\ * \mapsto G$ classifies the pointwise symmetric monoidal structure on $G\text{-}\cC$. Finally, if $G$ is a finite group, and $H\subset G$ is a subgroup, then the norm $\Nm^G_H$ of an $H$-object is given by $X^{\otimes|G/H|}$ with a specific $G$-action. If $\Cc$ is actually a symmetric monoidal $1$-category, the $G$-action can be explicitly written down in terms of the symmetry isomorphisms of $\Cc$, and $\Nm^G_H$ is known as the \emph{symmetric monoidal norm}.

    We call $\Cc^\flat$ the \emph{Borel normed global category} associated to $\Cc$; it is distributive whenever $\Cc$ is cocomplete and the tensor product preserves colimits in each variable separately, see \cite{Elmanto_Haugseng_Bispans}*{Proposition~3.2.8}.
\end{example}

\begin{example}
	Consider the distributive context $(\Fglo,\Forb,\Forb)$ from Example~\ref{ex:goo}; in analogy with the terminology of \cite{CLL_Clefts} we will refer to $\Forb$-distributive $\Forb$-normed $\Fglo$-categories as \emph{equivariantly distributive normed global categories}.

    Every distributive normed global category is in particular also equivariantly distributive. In Proposition~\ref{prop:sp-equ-distr} we will moreover show that the categories of genuine $G$-spectra assemble into an equivariantly distributive normed global category (which however, crucially, is \emph{not} $\Fglo$-distributive).
\end{example}

\begin{example}\label{ex:G-distributivity}
    Recall the distributive context $(\mathbb F_G,\mathbb F_G,\mathbb F_G)$ from Example~\ref{ex:FG3}. Product preserving functors $\Span(\mathbb F_G)\to\Cat$ are known as \emph{$G$-symmetric monoidal $G$-categories} \cite{NardinShah} or \emph{normed $G$-categories} \cite{CHLL_NRings}. Taking Remark~\ref{rk:nardin-shah} for granted, the corresponding notion of distributivity is explored in \cites{nardin2017thesis, NardinShah}. Contrary to what one might expect, this distributive context will play no serious role in the present paper.
\end{example}

The next goal of this section is to construct free algebras in distributive $\cN$-normed $\cF$-precategories. The result we will prove is:

\begin{theorem}\label{free_omnibus}
	Let $\cC$ be a distributive $\cN$-normed $\cF$-precategory. Then:
    \begin{enumerate}
    \item  The forgetful functor
	\[
	\bbU \colon \und{\CAlg}_\cF^\cN(\cC) \to \cC
	\]
	admits a parametrized left adjoint $\bbP$, which exhibits $ \und{\CAlg}_\cF^\cN(\cC)(A)$ as monadic over $\cC(A)$ for all $A\in \cF$. Moreover, the functors $\bbP$ are oplax natural in normed functors of $\cN$-normed $\cF$-precategories.

    \item\label{omnibus:formula_for_free} If $\cF$ admits a final object $1\in \cC$, we may compute $\bbU\bbP\colon \cC(1)\to \cC(1)$ as the composite
	\[
	\cC(1)
	\simeq \Fun_\cF^{\Nstr}(\core \und{\cN}^1,\cC)
	\xto{\res} \Fun_\cF(\fgt(\core \und{\cN}^1),\cC)
	\xto{\colim} \Fun_\cF(\ul{1},\cC) \simeq \cC(1),
	\]
	where $\colim$ denotes the left adjoint to restriction along $\fgt(\core \und{\cN}^1)\to \ul{1}$. This composite is again oplax natural in normed functors, as is the identification with $\mathbb U\mathbb P$.
    \end{enumerate}
\end{theorem}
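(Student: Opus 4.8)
The plan is to realize $\bbP$ as a pointwise (relative) left Kan extension of normed functors, with distributivity providing exactly the hypothesis that makes this Kan extension exist and land in \emph{honest} normed functors; the formula for $\bbU\bbP$ then drops out of the pointwise Kan-extension formula together with the representability results of the previous subsection. I will first treat the situation of~(\ref{omnibus:formula_for_free}), where $\cF$ has a terminal object $1$, and then bootstrap. For the bootstrap, recall from \Cref{rem:adj_in_functor_cat} that a parametrized left adjoint amounts to pointwise left adjoints $\bbP(A)$ of the $\bbU(A)$ together with a Beck--Chevalley condition along each $f\colon A\to B$ in $\cF$. By \Cref{lemma:CAlg-slice} (after passing to underlying categories), $\bbU(A)$ is identified with the forgetful functor $\bbU$ of the $\cF_{/A}$-normed $\cF_{/A}$-precategory $\Span(\pi)^*\cC$ evaluated at the terminal object $\id_A\in\cF_{/A}$; one checks that $\Span(\pi)^*\cC$ is again distributive, since slices of distributive contexts are distributive contexts and fiberwise colimits, indexed coproducts and basechange are all stable under restriction along $\Span(\pi)$. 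So~(\ref{omnibus:formula_for_free}) applied over $\cF_{/A}$ produces each $\bbP(A)$, the Beck--Chevalley condition comes from naturality of the construction along $f_!\colon\cF_{/A}\to\cF_{/B}$, and this same naturality yields the oplax functoriality of $\bbP$ in normed functors; it thus remains to handle the terminal-object case.

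So assume $\cF$ has a terminal object $1$. Using the envelope adjunction, \Cref{prop:fna} identifies $\bbU$ with restriction of normed functors into $\cC$ along the pointwise groupoid-core inclusion $\core\und{\cN}^{1}\hookrightarrow\und{\cN}^{1}$; equivalently, via \Cref{ex:Triv-Ncoprod-rep}, with the corresponding restriction along the backwards inclusion $\cF^\op=\Triv(\ul{1})\hookrightarrow\ul{1}^{\Ncoprod}=\Span_\cN(\cF)$ at the level of lax normed functors. I would define $\bbP$ to be the associated pointwise relative left Kan extension: its value at a point $s$ is the colimit over the comma category $\cF^\op\times_{\Span_\cN(\cF)}\Span_\cN(\cF)_{/s}$ of the diagram in $\cC(s)$ obtained from the given object of $\cC(1)$ via the transition functors of $\cC$. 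These colimits exist because $\cC$ is fiberwise cocomplete: it has fiberwise sifted colimits by \Cref{def:distributivity}, and distributivity also makes available all the left adjoints $m_!$, which together with product-preservation give fiberwise finite coproducts (cf.\ the proof of \Cref{cor:calg-cocomplete}, \Cref{cor:calg-coprod}, and the remark after \Cref{def:distributivity}). The substance is that this pointwise Kan-extension section is again a \emph{normed} functor, i.e.\ respects the forward (norm) functoriality and not merely the backwards one; this is exactly the statement that the transition functors of $\cC$---including the norms $n_\tensor$---commute with the Kan-extension colimits, which is the Beck--Chevalley reformulation of distributivity in \Cref{rem:distributivity_is_base_change}. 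A standard cofinality argument then upgrades this into a genuine adjunction $\bbP\dashv\bbU$, and monadicity of $\bbU(A)$ over $\cC(A)$ follows from the Barr--Beck--Lurie theorem, since $\bbU$ is conservative and preserves fiberwise---in particular all---geometric realizations by \Cref{lem:calg-fib-colims}.

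To identify $\bbU\bbP\colon\cC(1)\to\cC(1)$ with the stated composite, restrict $\bbP$ back along $\cF^\op\hookrightarrow\Span_\cN(\cF)$. Using \Cref{cor:iota_N_rep} to identify $\core\und{\cN}^{1}\simeq\Map_{\Span_\cN(\cF)}(1,-)$, the comma category appearing at the terminal object, together with its diagram, is rewritten as the cocartesian unstraightening of the underlying $\cF$-precategory $\fgt(\core\und{\cN}^{1})$ equipped with the diagram coming from $\cC(1)\simeq\Fun^{\Nstr}_\cF(\core\und{\cN}^{1},\cC)\xrightarrow{\res}\Fun_\cF(\fgt(\core\und{\cN}^{1}),\cC)$. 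The colimit over this unstraightening is, by definition, the value of the left adjoint $\colim$ to restriction along $\fgt(\core\und{\cN}^{1})\to\ul{1}$, so $\bbU\bbP$ agrees with the claimed composite; concretely this exhibits $\bbU\bbP X$ as the ``genuine'' analogue of $\coprod_{n\ge0}(X^{\tensor n})_{h\Sigma_n}$, the summands being the indexed norms of $X$ along the components of $\fgt(\core\und{\cN}^{1})$. For oplax naturality: $\bbU$ is strictly natural in normed functors $\cC\to\cD$ (both $\ul\CAlg_\cF^\cN$ and the underlying-$\cF$-precategory functor are functors of $\cC$), so its left adjoints $\bbP$ inherit the mate transformations making them oplax natural; likewise $\res$ and restriction along $\fgt(\core\und{\cN}^{1})\to\ul{1}$ are strictly natural, so $\colim\circ\res$ is oplax natural and the identification $\bbU\bbP\simeq\colim\circ\res$ is compatible with both oplax structures, both being mates of the same strictly commuting data.

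The step I expect to be the main obstacle is the assertion in the second paragraph that the pointwise relative left Kan extension of a normed functor into a distributive $\cC$ is again a normed functor (equivalently, that restriction along $\core\und{\cN}^{A}\hookrightarrow\und{\cN}^{A}$ admits a pointwise-computed left adjoint already within normed, not merely lax, functors). This is precisely the point at which distributivity gets consumed, and establishing it requires a careful analysis of the comma categories $\cF^\op\times_{\Span_\cN(\cF)}\Span_\cN(\cF)_{/s}$ and of how cocartesian transport in $\Un^\co(\cC)$ along norm spans interacts with these colimits.
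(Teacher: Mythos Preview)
Your high-level strategy is exactly the paper's: construct $\bbP(A)$ as a pointwise left Kan extension along $\core\und{\cN}^A\hookrightarrow\und{\cN}^A$ (equivalently along $\Env(\incl)$), read off the formula for $\bbU\bbP$ from the pointwise description, deduce monadicity from Barr--Beck--Lurie via \cref{lem:calg-fib-colims}, and obtain oplax naturality by passing to mates. You have also correctly pinpointed the one genuine obstacle.

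Where your outline and the paper diverge is in how that obstacle is handled. Your sketch says distributivity ``is exactly'' the Beck--Chevalley hypothesis needed for the pointwise Kan extension to be a \emph{normed} functor, and appeals to fiberwise cocompleteness of $\cC$. But $\cC$ is only a \emph{precategory} here, so you cannot invoke product-preservation to manufacture fiberwise finite coproducts from the $m_!$; and the comma categories $\pi_A^*(\core\und\cN^1)_{/Y}$ governing the pointwise Kan extension are not a priori of a shape for which $\cC$ is known to have colimits. The paper invests the two subsections preceding this point in exactly this issue: it introduces a class $\und{\bfU}_M^\times$ of indexing $T$-precategories (those left Kan extended from $M$ and product-preserving there), proves that distributivity implies $\und{\bfU}_M^\times$-cocompleteness (\cref{thm:CatM-cocomplete}, \cref{prop:distributivity}), and then establishes a general ``distributed Kan extension'' result (\cref{thm:cleft-Kan}) saying that if the comma categories of $f$ lie in $\und{\bfU}_M^\times$, then $f_!$ exists, is pointwise, and is compatible with restriction to $M$. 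This is verified for $\Env(\incl)$ in \cref{lem:left_kan_along_env} using the right-fibration structure of envelopes. Without this intermediary notion, your direct comma-category analysis would have to reprove essentially the same content.

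Two smaller points. First, for assembling the $\bbP(A)$ into a parametrized left adjoint, the paper does not slice and bootstrap; it observes that $\und{\cN}^A\to\und{\cN}^B$ is conservative and a pointwise right fibration, so the square $\core\und{\cN}^A\to\und{\cN}^A$ over $\core\und{\cN}^B\to\und{\cN}^B$ is a pullback, and applies parametrized smooth/proper basechange (\cref{prop:smooth-proper-basechange}) to get the required Beck--Chevalley condition directly. Your slicing argument via \cref{lemma:CAlg-slice} is plausible but circuitous. Second, no ``cofinality argument'' is needed to obtain the adjunction $\bbP\dashv\bbU$: once the left Kan extension exists within normed functors it is automatically the left adjoint to restriction. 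The formula in~(\ref{omnibus:formula_for_free}) then comes from chasing a diagram that combines \cref{thm:cleft-Kan} (restriction commutes with $f_!$) with the pointwise formula at $\id_1$; see \cref{cor:univ-formula-term-ex}.
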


We will in fact prove this theorem by much more abstract and general considerations in parametrized category theory. We begin with this now, and only return to the question of constructing free algebras in distributive normed categories
in \cref{subsec:free-normed-algebras}.

\subsection{Distributed colimits}\label{subsec:cleft_colimits}
This subsection and the one following it will be quite different from the rest of this paper in that we will require a good deal of parametrized higher category theory. The reader more interested in our applications to equivariant homotopy theory may take the previous theorem on faith and proceed to \Cref{sec:free-global}.

\begin{convention}\label{cleft-convention}
    Throughout, we fix a factorization system $(T,E,M)$ such that $M$ and $T$ have finite coproducts, and the inclusion $M\hookrightarrow T$ preserves them. The reader is welcome to think of $T = \Span_{\cN}(\cF)^{\op}$ for a weakly extensive span pair $(\cF,\cN)$, even more so if it is part of a distributive context.
\end{convention}

\begin{warn}\label{warn:t-cat}
    Compared to the rest of this paper, our basic object of study will be \emph{contravariant} functors $T^\op\to\Cat$ instead of covariant functors $\Span_\Nn(\Ff)\to\Cat$. This results in various conventions being the opposite of the conventions used elsewhere in this paper: for example, when we specialize $T$ as indicated above, then the norms will correspond to the \emph{left} class $E$ of the factorization system on $T$.

    This is unfortunate, but unavoidable: we will heavily rely on the existing param\-etrized literature, and in particular on \cites{martiniwolf2021limits,CLL_Clefts}\footnote{Note that Martini--Wolf work in the setting of $\cB$-parametrized category theory for a topos $\cB$. To apply their results to our setting of $T$-precategories, we pick $\cB = \PSh(T)$.}, which use the contravariant convention (for good reason).
\end{warn}

\begin{definition}
    We define the $T$-precategory $\ulPreCat{T}$ given in level $A\in T$ by $\Fun((T_{/A})^{\op},\Cat)$,
    with functoriality induced by postcomposition.
\end{definition}

\begin{definition}
    We write $\und{\bfU}_M^\times \subset \ulPreCat{T}$ for the full subcategory
    given in degree $A$ by the essential image of the fully faithful inclusion
    \[
        \Fun^\times((M_{/A})^\op,\Cat)\hookrightarrow\Fun((M_{/A})^\op,\Cat)\xhookrightarrow{\text{incl}_!}\Fun((T_{/A})^\op,\Cat)=\ulPreCat{T}(A).
    \]
\end{definition}

To see that this definition makes sense, we first have to show:

\begin{lemma}\label{lem:um-subcat}
    $\und{\bfU}_M^\times$ is indeed a subcategory of $\ulPreCat{T}$.
    Moreover, a $T_{/A}$-precategory $\cC$ is contained in $\und{\bfU}_M^\times(A)$ if and only if it is a $T_{/A}$-category and lies in the image of $\incl_!$.
    \begin{proof}
        Note that the inclusion $\incl_!\colon\PSh(M_{/A})\hookrightarrow \PSh(T_{/A})$ preserves pullbacks by \cite[Lemma 3.14 and Proposition 3.33]{CLL_Clefts} and the final object for trivial reasons, hence it preserves all finite limits. Moreover both its right adjoint as well as the inclusion
        \[
            P_\Sigma(M_{/A}) = \Fun^\times((M_{/A})^\op,\Spc) \hookrightarrow \PSh(M_{/A})
        \]
        preserves all limits. Thus, all of these functors induce functors on complete Segal objects by \cite{martini2021yoneda}*{Lemma~3.3.1}, where $\incl_!$ is again left adjoint to restriction.
        Identifying categories with complete Segal objects in spaces,
        we may thus work with coefficients in $\Spc$ instead of $\Cat$.

        Now the left Kan extension $\incl_! \colon \PSh(M_{/A}) \hookrightarrow \PSh(T_{/A})$
        restricts from free cocompletions to free sifted cocompletions.
        In fact, since $M_{/A} \subset T_{/A}$ preserves finite coproducts,
        we see that if $\incl_!X$ preserves finite products, so does $X = (\incl_!X)|_{M_{/A}}$,
        so that we get the following pullback square of large categories
        \[\begin{tikzcd}
            {P_\Sigma(M_{/A})} & {P_{\Sigma}(T_{/A})} \\
            {\PSh(M_{/A})} & {\PSh(T_{/A})}\rlap.
            \arrow["{\incl_!}", hook, from=1-1, to=1-2]
            \arrow[hook, from=1-1, to=2-1]
            \arrow[pullback, from=1-1, to=2-2]
            \arrow[hook, from=1-2, to=2-2]
            \arrow["{\incl_!}"', hook, from=2-1, to=2-2]
        \end{tikzcd}\]
        We need to show that the composite inclusion assembles into a $T$-subcategory.
        Because the square is a pullback, it suffices to check
        this for the bottom horizontal inclusion,
        which was done in \cite[Theorem 4.24]{CLL_Clefts},
        and for the right vertical inclusion.
        For this it remains to see that given a morphism $f \colon A \to B$ in $T$,
        restriction along $(T_{/f})^\op \colon (T_{/A})^\op \to (T_{/B})^\op$
        preserves finite product preserving presheaves.
        This follows from the fact that $T_{/f}$ preserves finite coproducts,
        as for both source and target they are computed in $T$.
        The addendum follows immediately from the fact that the above square is a pullback.
    \end{proof}
\end{lemma}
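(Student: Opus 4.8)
The plan is to exhibit $\und{\bfU}_M^\times$ as the intersection of two sub-$T$-precategories of $\ulPreCat{T}$, each of which is either already available in the literature or elementary: the one whose $A$-value is the image of $\incl_!\colon\Fun((M_{/A})^\op,\Cat)\hookrightarrow\ulPreCat{T}(A)$, and the one whose $A$-value consists of the product-preserving presheaves $\Fun^\times((T_{/A})^\op,\Cat)$, i.e.\ the $T_{/A}$-categories. Concretely, I would prove that these two fit into a pullback square levelwise, and deduce both halves of the lemma from it.

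The first step is to reduce from $\Cat$-valued to $\Spc$-valued presheaves. Identifying categories with complete Segal objects in spaces, it suffices by \cite{martini2021yoneda}*{Lemma~3.3.1} to check that the three functors in play --- the left Kan extension $\incl_!\colon\PSh(M_{/A})\to\PSh(T_{/A})$, its right adjoint $\incl^*$, and the inclusion $P_\Sigma(M_{/A})\hookrightarrow\PSh(M_{/A})$ --- preserve enough limits to pass to complete Segal objects, and that $\incl_!\dashv\incl^*$ descends to an adjunction on Segal objects. Here $\incl^*$ and the inclusion $P_\Sigma\hookrightarrow\PSh$ preserve all limits, while $\incl_!$ preserves pullbacks by \cite{CLL_Clefts}*{Lemma~3.14 and Proposition~3.33} and the terminal object, hence all finite limits.

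With $\Spc$-coefficients, I would then establish the pullback square of (large) categories
\[
    P_\Sigma(M_{/A})\simeq\PSh(M_{/A})\times_{\PSh(T_{/A})}P_\Sigma(T_{/A}),
\]
with structure maps $\incl_!$ and the inclusion. This rests on three points: (i) $\incl_!$ is fully faithful (since $M_{/A}\hookrightarrow T_{/A}$ is); (ii) $\incl_!$ sends $P_\Sigma(M_{/A})$ into $P_\Sigma(T_{/A})$, because $\incl_!$ preserves sifted colimits, carries representables to representables, and $P_\Sigma(T_{/A})$ is closed under sifted colimits in $\PSh(T_{/A})$; and (iii) conversely, if $\incl_!X\in P_\Sigma(T_{/A})$ then $X=(\incl_!X)|_{M_{/A}}\in P_\Sigma(M_{/A})$, using that $M_{/A}\hookrightarrow T_{/A}$ preserves finite coproducts. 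Point (iii) is where the coproduct-preservation hypothesis on $M\hookrightarrow T$ is used.

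Finally, since the displayed square is a pullback of full subcategories, to see that $\und{\bfU}_M^\times$ assembles into a sub-$T$-precategory of $\ulPreCat{T}$ it is enough to check that the bottom map $\incl_!$ and the right map $P_\Sigma(T_{/A})\hookrightarrow\PSh(T_{/A})$ are each preserved under the functoriality of $\ulPreCat{T}$, i.e.\ under restriction along $(T_{/f})^\op$ for $f\colon A\to B$ in $T$. For $\incl_!$ this is \cite{CLL_Clefts}*{Theorem~4.24}; for the $P_\Sigma$-condition it follows because $T_{/f}\colon T_{/A}\to T_{/B}$ preserves finite coproducts (computed in $T$ on both sides), so restriction along it preserves product-preserving presheaves. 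The addendum is then immediate from the pullback square. The step I expect to be the main obstacle is the passage to space-valued coefficients, namely verifying that $\incl_!$ preserves finite limits (pullbacks plus the terminal object) so that the whole picture survives on complete Segal objects; everything after that is bookkeeping with pullbacks of subcategories.
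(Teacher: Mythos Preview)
Your proposal is correct and follows essentially the same route as the paper's proof: reduce to $\Spc$-coefficients via complete Segal objects using finite-limit preservation of $\incl_!$ (citing \cite{CLL_Clefts}*{Lemma~3.14 and Proposition~3.33}), establish the pullback square $P_\Sigma(M_{/A})\simeq\PSh(M_{/A})\times_{\PSh(T_{/A})}P_\Sigma(T_{/A})$, and then check the two legs define $T$-subcategories via \cite{CLL_Clefts}*{Theorem~4.24} and coproduct-preservation of $T_{/f}$. Your justification of point (ii) via sifted colimits of representables is slightly more explicit than the paper's phrasing, but the argument is the same.
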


Given any full subcategory $\ul{\textbf{U}}\subset\ul\Cat^*(T)$, the general theory of parametrized category theory as developed by Martini--Wolf \cite{martiniwolf2021limits} gives us a notion of \emph{$\ul{\textbf{\textup U}}$-cocompleteness} of $T$-precategories, which we will now recall.

\begin{definition}[cf.~\cite{martiniwolf2021limits}*{Proposition~4.1.12 and Proposition~4.2.4}]
    Let $T$ be any category, and let $\Cc,\Kk$ be $T$-precategories. We say that $\Cc$ \emph{admits $\Kk$-colimits}, if the functor $\const\colon\Cc\to\ul\Fun_T(\Kk,\Cc)$ admits a parametrized left adjoint $\colim_\cK$, i.e.~an internal left adjoint in the $(\infty,2)$-category $\Cat^*(T)$. Here $\ul\Fun_T$ denotes the internal hom in the cartesian closed category $\Cat^*(T)$.

    Moreover, we say that a functor $F\colon\Cc\to\Dd$ of $T$-precategories \emph{preserves $\cK$-colimits} if the square
    \[
        \begin{tikzcd}
            \Cc\arrow[r, "F"]\arrow[d,"\const"'] &[2.5em] \Dd\arrow[d,"\const"]\\
            \ul\Fun_T(\Kk,\Cc)\arrow[r, "{\ul\Fun_T(\Kk,F)}"'] & \ul\Fun_T(\Kk,\Dd)
        \end{tikzcd}
    \]
    is vertically left adjointable.
\end{definition}

\begin{example}[see~\cite{martiniwolf2021limits}*{Example~4.1.14 and Remark~4.1.15}]\label{ex:fiberwise}
    If $\Kk=\const(K)$ for some category $K$, then $\Cc$ has $\Kk$-colimits if and only if it \emph{has fiberwise $K$-colimits} in the sense of Definition~\ref{def:fw-colims}, i.e.~every $\Cc(A)$ has $K$-colimits, and for every $f\colon A\to B$ in $T$ the restriction functor $f^*\colon\Cc(B)\to\Cc(A)$ preserves $K$-colimits.
\end{example}

\begin{definition}[see \cite{martiniwolf2021limits}*{Definition~5.2.1}]\label{defi:param-colims}
    Let $\ul{\textbf U}\subset\und{\Cat}^*(T)$ be a full subcategory. We say that a $T$-precategory $\Cc$ \emph{is $\ul{\textbf{\textup U}}$-cocomplete} if $\pi_A^*\Cc$ has $\Kk$-colimits for every $A\in T$ and $\Kk\in\ul{\textbf{U}}(A)$; here $\pi_A\colon T_{/A}\to T$ is the forgetful functor, with induced functor $\pi_A^*\colon\Cat^*(T)\to\Cat^*(T_{/A})$.

    Similarly, we say that $F\colon\Cc\to\Dd$ \emph{preserves $\ul{\textbf{\textup U}}$-colimits} (or \emph{is $\ul{\textbf{\textup U}}$-cocontinuous}) if $\pi_A^*F$ preserves $\Kk$-colimits for every $A\in T$ and $\Kk\in\ul{\textbf{U}}(A)$.
\end{definition}

\begin{example}
    Assume $\ul{\textbf{U}}$ is a subcategory of $A\mapsto T_{/A}$ (viewed as a subcategory of $\ul\Cat^*(T)$ by means of the Yoneda embedding). Then \cite{CLL_Global}*{Lemma 2.3.14} shows that being $\ul{\textbf{U}}$-cocomplete is equivalent to the following `pointwise' conditions:
    \begin{enumerate}
        \item For every $A\in T$ and every object $f\colon B\to A$ of $\ul{\textbf U}(A)$, the functor $f^*\colon\Cc(A)\to\Cc(B)$ has a left adjoint $f_!$.
        \item For every pullback square
        \[
            \begin{tikzcd}
                B'\arrow[d,"\beta"']\arrow[dr,pullback] \arrow[r, "f'"] & A'\arrow[d, "\alpha"]\\
                B\arrow[r, "f"'] & A
            \end{tikzcd}
        \]
        the Beck--Chevalley transformation $f'_!\beta^*\to\alpha^*f_!$
        is an equivalence (note that $f'_!$ exists, as $f'=\alpha^*(f)$ in $\ul{\textbf{U}}$).
    \end{enumerate}
\end{example}

\begin{example}\label{ex:distributivity-as-cocompleteness}
    Let $U\subset T$ be a wide subcategory closed under basechange (i.e.~$(T,U)$ is a span pair) and define $\ul{\textbf U}(A)$ for each $A\in T$ as the full subcategory spanned by the maps $B\to A$ in $U$; we will also refer to $\ul{\textbf{U}}$-cocomplete categories as \emph{$U$-cocomplete}.

    Specializing the previous example, we see that a $T$-precategory $\Cc$ is $\ul{\textbf U}$-cocomplete if and only if it is left $U$-adjointable, i.e.~for every $f\colon B\to A$ in $U$ the restriction $f^*\colon\Cc(A)\to\Cc(B)$ admits a left adjoint, satisfying basechange with respect to pullbacks in $T$.

    In particular, \Cref{rem:distributivity_is_base_change} shows that if $(\Ff,\Nn,\Mm)$ is a distributive context, then the last two conditions in the definition of $\Mm$-distributivity are equivalent to $\Mm$-cocompleteness in the above sense. Combining this with \Cref{ex:fiberwise}, we may note that any cocomplete $\Span_\cN(\cF)^\op$-precategory is a distributive $\cN$-normed $\cF$-precategory.
\end{example}

Our goal for the remainder of this subsection will be to prove the following theorem, which provides a concrete characterization of $\und{\bfU}_M^\times$-cocompleteness:

\begin{theorem}\label{thm:CatM-cocomplete}
    Let $(T,E,M)$ be a factorization system such that $(T,M)$ is a span pair and $M$ and $T$ have finite coproducts, preserved by the inclusion $M\hookrightarrow T$. Then a $T$-precategory $\Cc$ is $\und{\bfU}_M^\times$-cocomplete provided that
    \begin{enumerate}
    	\item it has fiberwise sifted colimits, that is the functor $\Cc\colon T^{\op}\to \Cat$ factors through the non-full subcategory $\Cat(\mathrm{sift}) \subset \Cat$, {and}
    	\item for every $m\colon A\to B$ in $M$ the restriction $m^*$ has a left adjoint $m_!$ satisfying basechange for pullbacks in $T$, i.e.~for every pullback
    	\[\begin{tikzcd}
    		{A'} \arrow[dr,pullback] & A \\
    		{B'} & B
    		\arrow["g", from=1-1, to=1-2]
    		\arrow["m'"', from=1-1, to=2-1, mono]
    		\arrow["m", from=1-2, to=2-2, mono]
    		\arrow["f", from=2-1, to=2-2]
    	\end{tikzcd}\]
    	in $T$ such that $m$ (and so $m'$) is in $M$, the associated Beck--Chevalley transformation
    	\[
    	m'_!g^* \to f^* m_!
    	\]
    	is an equivalence.
    \end{enumerate}
\end{theorem}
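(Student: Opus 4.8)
The plan is to deduce this from the internal theory of parametrized colimits of \cite{martiniwolf2021limits}, by resolving an arbitrary product-preserving $\Cat$-valued presheaf on $M_{/A}$ into the two kinds of colimits that hypotheses (1) and (2) make available. First I would reduce to the base category: by Definition~\ref{defi:param-colims}, we must show that $\pi_A^*\cC$ admits $\cK$-colimits for every $A \in T$ and every $\cK \in \und{\bfU}_M^\times(A)$. But the triple $(T_{/A},E_{/A},M_{/A})$ again satisfies all the hypotheses of the theorem — finite coproducts in $T_{/A}$ and $M_{/A}$ are created by the projections to $T$ and $M$, $\pi_A^*\cC$ inherits fiberwise sifted colimits, and the Beck--Chevalley condition of (2) persists because pullbacks in $T_{/A}$ are pullbacks in $T$ — and $\incl_!$ is compatible with further slicing (as in the proof of Lemma~\ref{lem:um-subcat}). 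So it suffices to prove the theorem with $\cK = \incl_!\cK_0$ for $\cK_0 \in \Fun^\times(M^\op,\Cat)$.

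Next I would identify the two generating classes of shapes. Hypothesis (2) says exactly that $\cC$ admits $\ul m$-colimits for every $m \in M$, where $\ul m = \incl_!\Map_M(-,m) \simeq \Map_T(-,m)$ is the represented $T$-category (the left Kan extension of a representable being again the representable at the same object); this is the internal incarnation of ``colimits indexed by a representable $=$ left adjoint to restriction satisfying basechange'', see the example after Definition~\ref{defi:param-colims} and \cite{martiniwolf2021limits}. Hypothesis (1) says that $\cC$ admits $\const(K)$-colimits for every sifted $\infty$-category $K$; in particular for $K = \Delta^\op$.

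The heart of the argument is a resolution of $\cK_0$ together with a colimit formula. Because $\incl_!$ preserves finite limits and all colimits (proof of Lemma~\ref{lem:um-subcat}), it commutes with passage to complete Segal objects: writing $\cK_0^\bullet\colon\Delta^\op\to\Fun^\times(M^\op,\Spc)=P_\Sigma(M)$ for the Segal object of $\cK_0$, the Segal object of $\cK$ is $\incl_!\cK_0^\bullet$. Here product-preservation of $\cK_0$ is essential: it forces each $\cK_0^n$ to lie in $P_\Sigma(M)$, hence to be a \emph{sifted} colimit $\cK_0^n\simeq\colim_{i\in I_n}\Map_M(-,m_i^{(n)})$ of representables, and applying $\incl_!$ yields $\cK^n\simeq\colim_{i\in I_n}\ul{m_i^{(n)}}$. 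Combining this with the parametrized analogue of the expression of a colimit over $\cK$ as the geometric realization of its simplicial replacement, and with the decomposition of a colimit over a colimit of shapes, one computes for any $d\colon\cK\to\cC$
\[
    \colim_{\cK} d \;\simeq\; \colim_{n\in\Delta^\op}\ \colim_{i\in I_n}\ (m_i^{(n)})_!\bigl(d|_{\ul{m_i^{(n)}}}\bigr),
\]
where the innermost functors $(m_i^{(n)})_!$ exist and satisfy basechange by (2), and the outer colimits, indexed by the sifted categories $I_n$ and $\Delta^\op$, exist fiberwise and commute with restriction by (1). Since all of these ingredients are natural, this assembles into the parametrized left adjoint of $\const\colon\cC\to\ul\Fun_T(\cK,\cC)$ witnessing that $\cC$ admits $\cK$-colimits.

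The hard part will be making this last step rigorous: internalizing the simplicial-replacement formula for parametrized colimits over a shape of the form $\incl_!\cK_0$ and justifying the interchange of the iterated colimits, and checking that $\incl_!$ interacts with complete Segal objects and with sifted cocompletion exactly as claimed — though much of the latter is already contained in the proof of Lemma~\ref{lem:um-subcat}. The conceptual takeaway is that product-preservation of $\cK_0$ is precisely what confines the simplices $\cK_0^n$ to $P_\Sigma(M)$, i.e.\ makes them built from $M$-representables using only sifted colimits, so that no pushout-type colimits (which $\cC$ is not assumed to have) ever appear.
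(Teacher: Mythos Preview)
Your reduction to the base, and the identification of the two kinds of colimits supplied by hypotheses (1) and (2), are correct, and the conceptual takeaway in your final paragraph is exactly right. But the central step has a genuine gap which is more than a matter of rigour.

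The closure principle you invoke (``colimit over a colimit of shapes'') is Lemma~\ref{lemma:colimit-closure}: if $\Cc$ has fiberwise $K$-colimits, then the class of shapes $\Ii$ for which $\Cc$ admits $\Ii$-colimits is closed under $K$-colimits. To use this you would need to exhibit $\cK$ as a sifted colimit, \emph{in $\PreCat{T}$}, of shapes for which $\Cc$ already has colimits. Your proposed building blocks are the representables $\ul m$ and their sifted colimits, which together recover exactly $\incl_!P_\Sigma(M)$. But the Segal levels $\cK_0^n$ do not give such a decomposition of $\cK$: since $\Spc\hookrightarrow\Cat$ preserves sifted colimits, the sifted closure of the representables stays inside the groupoid-valued $T$-precategories, and the geometric realization of the Segal object of $\cK_0$ computes the groupoid completion of $\cK_0$, not $\cK_0$ itself. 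So the closure lemma with your generators cannot reach a $\cK$ whose values are not all groupoids. Your alternative is the parametrized Bousfield--Kan formula $\colim_\cK d\simeq\bigl|[n]\mapsto\colim_{\cK^n}(d\circ\text{first vertex})\bigr|$, but this is a nontrivial statement you have not supplied (one would need, e.g., a parametrized analogue of the finality of the last-vertex map), and proving it is of comparable difficulty to the theorem itself.

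The paper avoids this by enlarging the generating class so that the closure lemma \emph{does} apply. Lemma~\ref{lemma:basic-colim} shows that $\pi_A^*\Cc$ admits $\incl_!\shfy\bigl(\coprod_{k=1}^r B_k\times[n_k]\bigr)$-colimits for any $B_k\in M_{/A}$ and $n_k\ge0$: since $\shfy(\coprod_kB_k)$ is representable (by the coproduct $\amalg_kB_k$ taken in $M$), condition~(2) handles that piece; and since the projection $\coprod_kB_k\times[n_k]\to\coprod_kB_k$ has a right adjoint (each $[n_k]$ has a terminal object) and $\incl_!\circ\shfy$ is a $2$-functor, restriction along its image already admits a left adjoint. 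The main proof then shows that every object of $\Fun^\times((M_{/A})^\op,\Cat)$ is a sifted colimit of such generators: via complete Segal spaces everything in $\Fun((M_{/A})^\op,\Cat)$ is a colimit of $B\times[n]$'s; Bousfield--Kan \emph{in this cocomplete shape category} (not in $\Cc$) rewrites that as a $\Delta^\op$-colimit of coproducts; one applies $\shfy$; and one filters the coproducts by finite subsets. The $[n]$-factors and the finite coproducts are both essential --- without them the generation statement fails, and with them Lemma~\ref{lemma:basic-colim} handles the base case with no further work.
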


The proof of \Cref{thm:CatM-cocomplete} will require some preparations.

\begin{lemma}[{cf.~\cite{CLL_Clefts}*{Lemma~4.7--Lemma~4.9}}]\label{lemma:colimit-closure}
    Let $T$ be arbitrary, let $K$ be a small (non-parametrized) category, and let $\Cc$ be a $T$-precategory which has fiberwise $K$-colimits. Then for every $A \in T$, the class of $T_{/A}$-precategories $\Ii$ such that $\pi_A^*\Cc$ has $\Ii$-colimits is closed under $K$-colimits.
    \begin{proof}
        Replacing $T$ by $T_{/A}$, we may assume that $A$ is terminal.
        Let $\Ii_\bullet\colon K\to\Cat_{T}$ be a diagram such that $\Cc$ has $\Ii_k$-colimits for all $k\in K$, i.e.~each
        \[
            \ul\Fun_T(1,\Cc)\xrightarrow{\,\textup{const}\,}\ul\Fun_T(\Ii_k,\Cc)
        \]
        has a left adjoint. We have to show that
        \[
            \ul\Fun_T(1,\Cc)\xrightarrow{\,\textup{const}\,}\ul\Fun_T(\colim_K \Ii_\bullet,\Cc)\simeq\lim\nolimits_{K^\op}\ul\Fun_T(\Ii_\bullet,\Cc)
        \]
        has a left adjoint. For this we will show more generally that given $g=(g_k)\colon\Cc\to\lim_{K^\op}\Dd_k$ such that each individual $g_k$ has a left adjoint $f_k$, then the total map has a left adjoint.

        Note first that we have a pointwise left adjoint by \cite{descent-lim}*{Theorem B${}^\op$}. By \cite{martiniwolf2021limits}*{Corollary~3.2.11} it then only remains to show that these pointwise left adjoints $f$ satisfy the Beck--Chevalley condition with respect to restriction along maps in $T$. For this, we unravel the construction of $f$ from \cite{descent-lim} partially as follows:
        \begin{enumerate}
            \item For $(X_k)\in\lim_{K^\op}\Dd_k(B)$, the value $f(X_k)$ is given as colimit over $K$ of some diagram with $k\mapsto f_k(X_k)$.
            \item The counit $fg=\colim_{k\in K} f_kg_k(X)\to X$ is induced by a cocone given at $k\in K$ by the counit of $f_k\dashv g_k$.
            \item The unit $X\to gf(X)$ is given after restricting to each component by the composite $X\to g_kf_k(X)\to g_k\colim_{k\in K} f_k(X)$.
        \end{enumerate}
        It follows that the Beck--Chevalley map for commuting $f$ with restriction along $\alpha\colon A\to B$ in $T$ is given by the composite
        \begin{align*}
            f\alpha^*((X_k)_{k\in K})=
            \colim_k f_k\alpha^*X_k\xrightarrow[\smash{\raise3.5pt\hbox{$\scriptstyle\sim$}}]{\colim\,\BC_!}{}&\colim_k \alpha^*f_k(X_k)\\
            \xrightarrow[\smash{\raise3.5pt\hbox{$\scriptstyle\sim$}}]{\,\BC_!\,}{}&\alpha^*\big(\colim_k f_k(X_k)\big)\simeq \alpha^*f((X_k)_{k\in K}).
        \end{align*}
        Here the first Beck--Chevalley map is an equivalence since each $f_k$ is a $T$-functor, while the second Beck--Chevalley map is an equivalence as restrictions in $\Cc$ preserve $K$-colimits by assumption.
    \end{proof}
\end{lemma}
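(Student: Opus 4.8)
The plan is to first reduce to the case $A=1$ terminal. Replacing $T$ by the slice $T_{/A}$, note that $\pi_A^*\Cc$ is again a $T_{/A}$-precategory whose fibers and restriction functors occur among those of $\Cc$, so it still has fiberwise $K$-colimits; and the assertion for $T$ at the object $A$ is precisely the assertion for $T_{/A}$ at its terminal object $\id_A$. Once $T$ has a terminal object $1$, evaluation at $1$ identifies $\Cc$ with $\ul\Fun_T(1,\Cc)$, and ``$\Cc$ has $\Ii$-colimits'' means exactly that $\const\colon\Cc\to\ul\Fun_T(\Ii,\Cc)$ admits a parametrized left adjoint.

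So fix a diagram $\Ii_\bullet\colon K\to\Cat_{T}$ such that $\Cc$ has $\Ii_k$-colimits for every $k$, and put $\Ii\coloneqq\colim_K\Ii_\bullet$. Since $\ul\Fun_T(-,\Cc)$ carries colimits in its first variable to limits, there is an equivalence $\ul\Fun_T(\Ii,\Cc)\simeq\lim_{K^\op}\ul\Fun_T(\Ii_\bullet,\Cc)$, under which $\const\colon\Cc\to\ul\Fun_T(\Ii,\Cc)$ becomes the functor induced by the compatible family of constant-diagram functors $\const\colon\Cc\to\ul\Fun_T(\Ii_k,\Cc)$. It therefore suffices to establish the following general fact, which I then apply with $\Dd_\bullet=\ul\Fun_T(\Ii_\bullet,\Cc)$ and $g=\const$: given a diagram $\Dd_\bullet\colon K^\op\to\Cat_{T}$ and a $T$-functor $g=(g_k)\colon\Cc\to\lim_{K^\op}\Dd_\bullet$ each of whose components $g_k$ admits a parametrized left adjoint $f_k$, the functor $g$ admits a parametrized left adjoint.

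To prove this I would proceed in two steps. First, for each $B\in T$ the functor $g_B\colon\Cc(B)\to\lim_{K^\op}\Dd_\bullet(B)$ admits an ordinary left adjoint $f_B$ by \cite{descent-lim}*{Theorem~B${}^\op$}; the hypothesis used here is that $\Cc(B)$ has $K$-colimits, and the same reference describes $f_B$ explicitly, as a colimit over $K$ of a diagram with values $k\mapsto(f_k)_B(X_k)$, together with explicit formulas for the unit and counit (the counit $f_Bg_B\Rightarrow\id$ being the cocone on the counits of $f_k\dashv g_k$, and the $k$-th component of the unit the composite of the unit of $f_k\dashv g_k$ with a structure map of that colimit). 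Second, by \cite{martiniwolf2021limits}*{Corollary~3.2.11} these pointwise left adjoints assemble into a parametrized left adjoint of $g$ as soon as they are compatible with restriction, i.e.~for every $\alpha\colon B'\to B$ in $T$ the Beck--Chevalley transformation $f_{B'}\alpha^*\Rightarrow\alpha^*f_B$ is an equivalence.

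The Beck--Chevalley verification is the only real work, and I expect it to be the main obstacle. Feeding the explicit formula for $f_B$ into the definition of the Beck--Chevalley transformation, one sees that $f_{B'}\alpha^*\Rightarrow\alpha^*f_B$ evaluated at a tuple $(X_k)_k$ is the composite
\[
  \colim_{k\in K}(f_k)_{B'}\alpha^*X_k\longrightarrow\colim_{k\in K}\alpha^*(f_k)_BX_k\longrightarrow\alpha^*\colim_{k\in K}(f_k)_BX_k,
\]
where the first map is the colimit of the Beck--Chevalley maps $(f_k)_{B'}\alpha^*\Rightarrow\alpha^*(f_k)_B$ and the second is the canonical comparison witnessing that $\alpha^*$ commutes with the $K$-colimit. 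The first map is an equivalence because each $f_k$, being a parametrized left adjoint, is in particular a $T$-functor and hence commutes with restrictions; the second is an equivalence because $\alpha^*\colon\Cc(B)\to\Cc(B')$ preserves $K$-colimits, which is exactly the part of the hypothesis ``$\Cc$ has fiberwise $K$-colimits'' we have not yet used. This produces the desired parametrized left adjoint and finishes the proof. (This is a mild generalization of \cite{CLL_Clefts}*{Lemma~4.7--Lemma~4.9}.)
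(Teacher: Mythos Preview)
Your proof is correct and follows essentially the same approach as the paper: the same reduction to terminal $A$, the same generalization to $g=(g_k)\colon\Cc\to\lim_{K^\op}\Dd_k$, the same appeal to \cite{descent-lim}*{Theorem~B${}^\op$} and \cite{martiniwolf2021limits}*{Corollary~3.2.11}, and the same decomposition of the Beck--Chevalley map into the two equivalences you identify.
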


\begin{lemma}
    Assume $M$ has finite coproducts. Then the inclusion \[\Fun^\times(M^\op,\Cat)\hookrightarrow\Fun(M^\op,\Cat)\] has a left adjoint which we denote by $\shfy$. The functor $\shfy$ sends a finite coproduct $\coprod_{i=1}^n \ul A_k$ of representables to the functor $\ul A$ represented by the coproduct $A\coloneqq\coprod_{i=1}^n A_k$ in $M$.
\end{lemma}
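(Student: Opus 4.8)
The plan is to exhibit the inclusion $\Fun^\times(M^\op,\Cat)\hookrightarrow\Fun(M^\op,\Cat)$ as an instance of a localization, and then compute the localization functor on representables. The key observation is that $\Fun^\times(M^\op,\Cat)$ is precisely the category of presheaves (valued in $\Cat$) that send the finite coproducts of $M$ to products; since $M^\op$ is a category with finite products, this is a reflective subcategory cut out by the product-preservation conditions. Concretely, I would first invoke the adjoint functor theorem: $\Fun(M^\op,\Cat)$ is presentable, the full subcategory $\Fun^\times(M^\op,\Cat)$ is closed under limits and filtered colimits (product-preservation is tested by a limit condition, and filtered colimits of $\Cat$ commute with finite products), hence it is an accessible localization. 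This produces the left adjoint $\shfy$.

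The substantive part is the formula: I claim $\shfy$ of the representable $\ul A$ is again $\ul A$ (which is already product-preserving, since $M^\op(\coprod_k A_k, -) \simeq \prod_k M^\op(A_k,-)$ in $\Cat$), and that $\shfy$ of a finite coproduct $\coprod_{k=1}^n \ul{A_k}$ in $\Fun(M^\op,\Cat)$ is $\ul{A}$ where $A = \coprod_k A_k$ in $M$. To see this I would check the universal property directly: for any $F \in \Fun^\times(M^\op,\Cat)$,
\[
    \Fun^\times(M^\op,\Cat)\big(\ul A, F\big) \simeq F(A) \simeq \prod_{k=1}^n F(A_k) \simeq \prod_{k=1}^n \Fun(M^\op,\Cat)\big(\ul{A_k}, F\big) \simeq \Fun(M^\op,\Cat)\Big(\coprod_{k=1}^n \ul{A_k}, F\Big),
\]
using the Yoneda lemma at the first and third steps, product-preservation of $F$ at the second, and the universal property of the coproduct at the last; all equivalences are natural in $F$. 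This identifies $\ul A$ with the reflection of $\coprod_k \ul{A_k}$, and naturality in $F$ together with the Yoneda lemma pins down the unit map as the one induced by the coprojections $A_k \to A$ in $M$.

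I do not expect a serious obstacle here: the argument is a routine application of the presentable adjoint functor theorem together with the Yoneda lemma. The one point that deserves a word of care is the verification that $\Fun^\times(M^\op,\Cat)$ is stable under the relevant colimits in $\Fun(M^\op,\Cat)$ (needed for accessibility of the localization) — but filtered colimits in $\Cat$ commute with finite products, so this is immediate, and one could alternatively cite the general fact that for a category $\cD$ with finite products the product-preserving functors out of $\cD^\op$ form an accessible localization of all presheaves. If one wants the explicit colimit-preservation statements later in the paper it may be cleanest to record that $\shfy$, being a left adjoint, preserves colimits, and that on representables it is computed by the above formula; everything else follows formally.
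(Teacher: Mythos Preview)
Your proposal is correct and follows essentially the same approach as the paper: both establish the existence of $\shfy$ via the Adjoint Functor Theorem (the paper verifies presentability of $\Fun^\times(M^\op,\Cat)$ by identifying it with complete Segal objects in $P_\Sigma(M)$, whereas you argue via closure under limits and filtered colimits), and both deduce the formula on coproducts of representables by comparing corepresented functors using the Yoneda lemma. The only point where the paper is slightly more explicit is the presentability of the subcategory itself, which you gesture at but do not spell out.
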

\begin{proof}
    We can identify the source with complete Segal objects in the free sifted cocompletion
    $P_\Sigma(M) = \Fun^\times(M^\op,\Spc)$ of $M$, which is presentable by \cite{HTT}*{Proposition~5.5.8.10(1)}.
    Hence also $\Fun^\times(M^\op,\Cat)$ is presentable, and since the inclusion
    clearly preserves filtered colimits and all limits,
    the existence of the left adjoint $\shfy$ follows via the Adjoint Functor Theorem.
    The statement about representables follows at once by comparing corepresented functors using the Yoneda Lemma.
\end{proof}

\begin{lemma}\label{lemma:basic-colim}
    Let $(T,E,M)$ be as in \Cref{thm:CatM-cocomplete}, let $\Cc$ be a $T$-precategory such that restrictions along maps in $M$ admit left adjoints satisfying the Beck--Chevalley condition, and let $A\in T$. Then $\pi_A^*\Cc$ has
    $\incl_!\shfy(\coprod_{k=1}^n B_k\times I_k)$-colimits for all $B_1,\dots,B_n\in M_{/A}$ and all categories $I_1,\dots,I_n$ with terminal objects.
\end{lemma}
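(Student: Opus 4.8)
The strategy is to reduce to the two hypotheses we are allowed to use — fiberwise sifted colimits of $\cC$ (which we do \emph{not} need here directly, but we do need that restrictions along $M$-maps have Beck--Chevalley left adjoints) and the closure result of \Cref{lemma:colimit-closure} — by decomposing the parametrized shape $\incl_!\shfy(\coprod_{k=1}^n B_k\times I_k)$ into more elementary pieces. First I would replace $T$ by $T_{/A}$, so that $A$ becomes terminal and $\pi_A^*\cC$ becomes $\cC$; note that $(T_{/A},E_{/A},M_{/A})$ is again a factorization system with $M_{/A}$ closed under coproducts, since coproducts in a slice over a terminal object agree with those in $T$. So it suffices to prove: if restrictions along $M$-maps in $\cC$ have left adjoints satisfying basechange, then $\cC$ has $\incl_!\shfy(\coprod_{k=1}^n B_k\times I_k)$-colimits for $B_k\in M$ and $I_k$ with terminal objects.

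The key structural observation is that $\shfy$ commutes with finite coproducts (it is a left adjoint) and sends $B_k\times I_k$ — i.e.\ the constant $I_k$-shaped diagram on the representable $\ul B_k$ — to something one can describe: writing $I_k = \colim$ of its objects in $\Cat$ along the constant cone, or rather using that $I_k$ has a terminal object $t_k$, the inclusion $\{t_k\}\hookrightarrow I_k$ is a right adjoint, hence cofinal, so the $T$-precategory $\incl_!(\ul B_k\times I_k)$ is weakly equivalent (for the purposes of admitting colimits — cofinal functors detect colimits, cf.\ \cite{martiniwolf2021limits}) to $\incl_!(\ul B_k)$. Thus $\cC$ has $\incl_!(\ul B_k\times I_k)$-colimits iff it has $\incl_!\ul B_k$-colimits, and the latter is exactly the statement that $m_k^*$ has a left adjoint satisfying basechange, where $m_k\colon B_k\to A=1$ is the structure map — which is our hypothesis. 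Hence $\cC$ admits $\incl_!\ul B_k$-colimits for each $k$.

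It remains to assemble the finite coproduct. Here I would argue that $\incl_!\shfy(\coprod_k B_k\times I_k)$ is itself built as a \emph{finite coproduct} (hence a finite colimit, hence a certain $K$-colimit with $K$ a finite poset) of the pieces $\incl_!(\ul B_k\times I_k)$ inside $\Cat^*(T)$, so that \Cref{lemma:colimit-closure} applies: since $\cC$ has fiberwise finite coproducts (finite coproducts are sifted-free but in any case finite coproducts in each fiber exist and are preserved by restriction — this follows from the Beck--Chevalley left adjoints along fold maps, or one simply notes finite coproducts are among the colimits guaranteed by having left adjoints to restriction along the $M$-maps $\emptyset\to 1$ and using extensivity; alternatively invoke the hypothesis of fiberwise sifted colimits in the full \Cref{thm:CatM-cocomplete} together with finite coproducts), the class of shapes admitting colimits is closed under finite coproducts, and each $\incl_!(\ul B_k\times I_k)$-colimit exists by the previous paragraph. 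Chaining these, $\cC$ has $\incl_!\shfy(\coprod_k B_k\times I_k)$-colimits. The main obstacle I anticipate is the bookkeeping in the last step: one must check that $\shfy$ and $\incl_!$ genuinely turn the coproduct $\coprod_k B_k\times I_k$ of $T$-precategories into a \emph{parametrized} coproduct of the individual $\incl_!\shfy(B_k\times I_k)$ in a way compatible with \Cref{lemma:colimit-closure}, and that cofinality of $\{t_k\}\hookrightarrow I_k$ survives application of $\incl_!$ — both are true but require care with the definitions, and I would spell them out using the explicit description of $\shfy$ on coproducts of representables from the penultimate lemma and the fact that $\incl_!$ preserves colimits.
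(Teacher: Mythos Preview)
There is a genuine gap in your coproduct step. You want to assemble the shape $\incl_!\shfy(\coprod_k B_k\times I_k)$ from the individual $\incl_!\shfy(\ul B_k\times I_k)$ via \Cref{lemma:colimit-closure}, but that lemma requires $\Cc$ to admit \emph{fiberwise} colimits over the indexing shape---here, fiberwise finite coproducts. None of your proposed justifications for this hold: finite coproducts are not sifted (so the full hypothesis of \Cref{thm:CatM-cocomplete} does not help); extensivity is not assumed here; and although fold maps do lie in $M$, the hypothesis only gives you left adjoints $\nabla_!\colon\Cc(A\amalg A)\to\Cc(A)$, which is not the coproduct functor since $\Cc$ is only a $T$-\emph{pre}category, so $\Cc(A\amalg A)\not\simeq\Cc(A)\times\Cc(A)$ in general.

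The paper sidesteps this entirely by absorbing the coproduct into the representable rather than into $\Cc$: since $M_{/A}$ has finite coproducts, one has $\shfy(\coprod_k \ul B_k)\simeq\ul B$ for $B=\coprod_k B_k\in M_{/A}$, and $\incl_!\ul B=\ul B$ as a $T_{/A}$-precategory. Then having $\ul B$-colimits is \emph{directly} the hypothesis, since $B\to A$ is a single map in $M$. The remaining factor
\[
\ul\Fun_{T_{/A}}\big(\incl_!\shfy(\textstyle\coprod_k B_k),\pi_A^*\Cc\big)\to\ul\Fun_{T_{/A}}\big(\incl_!\shfy(\textstyle\coprod_k B_k\times I_k),\pi_A^*\Cc\big)
\]
is then handled by observing that the projection $\coprod_k B_k\times I_k\to\coprod_k B_k$ of $M_{/A}$-precategories has a right adjoint (picking out the terminal objects), and that $\incl_!\circ\shfy$ is a $2$-functor, hence preserves this adjunction; applying the contravariant $2$-functor $\ul\Fun_{T_{/A}}(-,\pi_A^*\Cc)$ then yields the desired left adjoint. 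Your cofinality idea for the $I_k$'s is closely related to this, but note you must track $\shfy$ throughout (you slip into writing $\incl_!(\ul B_k\times I_k)$, but $\ul B_k\times\const I_k$ is not product-preserving unless $I_k=\ast$), and you need an argument that $\incl_!\circ\shfy$ preserves the relevant structure---the paper's $2$-functoriality argument handles this cleanly.
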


\begin{warn}
    It is tempting to assume that $\Cc$ is a $T$-category and then try and argue via adjointness here. However, while this extra assumption would guarantee equivalences $\Fun_{T_{/A}}(\incl_!\shfy(X),\pi_A^*\Cc)\simeq\Fun_{M_{/A}}(X,\pi_A^*(\Cc|_M))$, this does not lift to a statement about $T_{/A}$-parametrized functor categories (for example, the categories on the right do not even a priori assemble into a $T_{/A}$-category).
\end{warn}

    \begin{proof}[Proof of Lemma~\ref{lemma:basic-colim}]
        We have to show that the restriction
        \[
        \pi_{A}^*\Cc\hskip0pt minus .5pt\to\hskip0pt minus .5pt\ul\Fun_{T_{/A}}\hskip 0pt minus 2pt\Big(\mathop{\text{incl}}\nolimits_!\shfy\Big(\coprod_{k=1}^n\hskip 0pt minus 1.5pt B_k\hskip0pt minus .5pt\times\hskip 0pt minus .5pt I_k\Big),\pi_{A}^*\Cc\Big)\]
        admits a left adjoint. To this end, we observe that it factors as
        \[
            \pi_A^*\Cc\to\ul\Fun_{T_{/A}}\hskip-1pt minus 1pt\Big(\hskip-1pt minus 1pt\mathop{\text{incl}}\nolimits_!\shfy\Big(\coprod_{k=1}^nB_k\Big),\pi_A^*\Cc\Big)\to\ul\Fun_{T_{/A}}\hskip-1pt minus 1pt\Big(\hskip-1pt minus 1pt\mathop{\text{incl}}\nolimits_!\shfy\Big(\coprod_{k=1}^n B_k\times I_k\Big),\pi_A^*\Cc\Big),
        \]
        so it will be enough to construct left adjoints to these two maps individually.

        For the first map, we observe that $\shfy(\coprod_{k=1}^nB_k)$ is the object represented by the coproduct $B\coloneqq B_1\amalg\cdots\amalg B_n$ in $M_{/A}$, and since left Kan extension preserves representables,
        also $\text{incl}_!\shfy(\coprod_{k=1}^nB_k)$ is represented by $B$ in $T_{/A}$.
        Thus the first map above can be identified with $\pi_A^*\Cc\to\ul\Fun_{T_{/A}}(\ul B,\pi_A^*\Cc)$,
        which has a left adjoint by assumption on $\Cc$.

        For the second map, we observe that the projection $\coprod_{k=1}^n B_k\times I_k\to\coprod_{k=1}^n B_k$ admits a right adjoint by picking out the terminal objects of the $I_k$'s. It will therefore suffice that $\text{incl}_!\circ\shfy$ sends adjoint functors of $M_{/A}$-precategories to adjoint functors of $T_{/A}$-precategories.

        For this, we will upgrade each of the functors
        \begin{align*}
            \Fun((M_{/A})^\op,\Cat)&\xrightarrow{\shfy}\Fun^\times((M_{/A})^\op,\Cat)\hookrightarrow\Fun((M_{/A})^\op,\Cat)\\
            &\xrightarrow{\text{incl}_!}\Fun((T_{/A})^\op,\Cat)
        \end{align*}
       to functors of $(\infty,2)$-categories. This is trivial for the middle inclusion. As this inclusion moreover preserves cotensors, \cite[Lemma~A.2.14(2)]{mazel-gee-stern} shows that it admits an $(\infty,2)$-left adjoint $L$; the underlying functor of $L$ is then necessarily left adjoint to the underlying functor of the inclusion, i.e.\ equivalent to $\shfy$, providing the desired enrichment of the first functor in the above composite. 
       Finally, we obtain the enrichment on $\textup{incl}_!$ in the same way from the evident enrichment of its right adjoint (or alternatively by identifying it with restriction along the localization $T_{/A}\to M_{/A}$).
    \end{proof}

\begin{proof}[Proof of Theorem~\ref{thm:CatM-cocomplete}]
    Let $A\in T$ be arbitrary. We want to show that $\pi_A^*\Cc$ has all $\Fun^\times((M_{/A})^\op,\Cat)$-colimits.
    For this we first note that by the previous lemma it has $\text{incl}_!\shfy\big(\coprod_{k=1}^rB_k\times[n_k]\big)$-colimits for all $B_k\in M_{/A}$ and $n_k\ge0$. To complete the proof, it will now suffice by Lemma~\ref{lemma:colimit-closure} that $\Fun^\times((M_{/A})^\op,\Cat)\subset\Fun((T_{/A})^\op,\Cat)$ is generated under sifted colimits by objects of this form.

    As $\text{incl}_!\colon\Fun((M_{/A})^\op,\Cat)\to\Fun((T_{/A})^\op,\Cat)$ is a fully faithful left adjoint, it will suffice to prove the corresponding statement for $\Fun^\times((M_{/A})^\op,\Cat)\subset\Fun((M_{/A})^\op,\Cat)$. Identifying $\Cat$ with complete Segal spaces, we see that the right hand side is a localization of $\PSh(\Delta\times M_{/A})$, so every object in it can be written as a (single) colimit of objects of the form $B\times[n]$ for $B\in M_{/A}$ and $n\ge 0$. Using the Bousfield--Kan formula, we may rewrite this colimit as a $\Delta^\op$-shaped colimit of coproducts. Applying the left adjoint $\shfy \colon \Fun((M_{/A})^\op,\Cat)\to\Fun^\times((M_{/A})^\op,\Cat)$, we then see that any object of $\Fun^\times((M_{/A})^\op,\Cat)$ is a sifted colimit of objects of the form
    \[
        \shfy\left(\coprod_{i\in I} B_i\times[n_i]\right).
    \]
    Filtering $I$ by its finite subsets, we can write this as a filtered (hence a fortiori sifted) colimit of objects of above form, finishing the proof.
\end{proof}

\subsection{Distributed Kan extensions}\label{subsec:cleft-Kan}
As in Convention~\ref{cleft-convention}, we let $(T,E,M)$ denote a factorization system such that $(T,M)$ is a span pair, $M$ and $T$ have finite coproducts, and the inclusion $M \hookrightarrow T$ preserves them. The reader is again welcome to think of $T = \Span_{\cN}(\cF)^{\op}$ for a weakly extensive span pair $(\cF,\cN)$.

Having just set up a basic theory of colimits in this setting, we now turn our attention to left Kan extensions:

\begin{remark}\label{rk:pointwise-Kan}
    Let $f\colon\Xx\to\Yy$ be a $T$-functor and $\Cc$ be a $T$-precategory. \cite{martiniwolf2021limits}*{Theorem~6.3.5} provides a sufficient criterion for the existence of a \emph{left Kan extension}, i.e.~a left adjoint $f_!$ to the restriction $f^*\colon\ul\Fun_T(\Yy,\Cc)\to\ul\Fun_T(\Xx,\Cc)$: analogously to the non-parametrized situation, it is enough that $\pi_A^*\Cc$ has $(\pi_A^*\Xx\times_{\pi_A^*\Yy}\pi_A^*\Yy_{/Y})$-colimits for every $A\in T$ and $Y\in\Yy(A)$. If this criterion is satisfied, we will say that the left Kan extension is \emph{pointwise}. Remark~6.3.6 of \emph{op.\ cit.}\ shows that in this case a parametrized analogue of Kan's\ \emph{pointwise formula} holds: the square
    \[
        \begin{tikzcd}
            \Fun_{T_{/A}}(\pi_A^*\Xx,\pi_A^*\Cc)\arrow[d,"\fgt^*"'] &[.5em] \arrow[l, "f^*"'] \Fun_{T_{/A}}(\pi_A^*\Yy,\pi_A^*\Cc)\arrow[d, "Y^*"]\\
            \Fun_{T_{/A}}(\pi_A^*\Xx_{/Y},\pi_A^*\Cc) &\arrow[l, "\const"] \Fun_{T_{/A}}(\ul A,\pi_A^*\Cc)\simeq\Cc(A)
        \end{tikzcd}
    \]
    is horizontally left adjointable, where we abbreviate $\pi_A^*\Xx_{/Y}\coloneqq\pi_A^*\Xx\times_{\pi_A^*\Yy}\pi_A^*\Yy_{/Y}$.
\end{remark}

As the main result of this subsection, we will give a criterion for when we can compute a left Kan extension along a $T$-functor $f$ in terms of the left Kan extension along the underlying $M$-functor $f|_M$.
To state this result in full glory, we first have to upgrade the assignment $f\mapsto f|_M$ to a parametrized functor:

\begin{construction}
    Let $\alpha\colon S\to T$ be any functor. The functor $\alpha^*\colon\PreCat{T}\to\PreCat{S}$ preserves products, yielding for any $T$-precategories $\Xx,\Cc$ a natural equivalence $\alpha^*(\Xx\times\Cc)\simeq (\alpha^*\Xx)\times(\alpha^*\Cc)$. Adjoining over in one variable yields maps $\alpha^*\ul\Fun_T(\Xx,\Cc)\to\ul\Fun_S(\alpha^*\Xx,\alpha^*\Cc)$ natural in $\Xx$ and $\Cc$.

    Specializing to the inclusion $M\hookrightarrow T$, we obtain functors \[\res\colon\ul\Fun_T(\Xx,\Cc)|_M\to\ul\Fun_M(\Xx|_M,\Cc|_M)\] and naturality squares
    \begin{equation}\label{diag:BC-res}
        \begin{tikzcd}
            \ul\Fun_T(\Yy,\Cc)|_M\arrow[d,"\res"']\arrow[r, "f^*"] & \ul\Fun_T(\Xx,\Cc)|_M\arrow[d,"\res"]\\
            \ul\Fun_M(\Yy|_M,\Cc|_M)\arrow[r, "f|_M^*"'] & \ul\Fun_M(\Xx|_M,\Cc|_M)
        \end{tikzcd}
    \end{equation}
    for any $f\colon\Xx\to\Yy$.
\end{construction}

\begin{lemma}\label{lemma:BC-res-pw}
    Let $A\in M$ arbitrary. Then the above square agrees in degree $A$ up to equivalence with the analogous naturality square
    \begin{equation}\label{diag:BC-res-pw}
        \begin{tikzcd}
            \Fun_{T_{/A}}(\pi_A^*\Yy,\pi_A^*\Cc)\arrow[d, "\res"']\arrow[r, "f^*"] & \Fun_{T_{/A}}(\pi_A^*\Xx,\pi_A^*\Cc)\arrow[d, "\res"]\\
            \Fun_{M_{/A}}(\pi_A^*\Yy|_M,\pi_A^*\Cc|_M)\arrow[r, "f|_M^*"'] & \Fun_{M_{/A}}(\pi_A^*\Xx|_M,\pi_A^*\Cc|_M)
        \end{tikzcd}
    \end{equation}
    where we write $\pi_A$ for both of the forgetful functors $T_{/A}\to T$ and $M_{/A}\to M$.
    \begin{proof}
        \cite{CLL_Global}*{Corollary~2.2.11(ii)} shows that the comparison maps
        \begin{align*}
            \pi_A^*\ul\Fun_T(-,-)&\Rightarrow\ul\Fun_{T_{/A}}(\pi_A^*(-),\pi_A^*(-))\\
            \pi_A^*\ul\Fun_M(-,-)&\Rightarrow\ul\Fun_{M_{/A}}(\pi_A^*(-),\pi_A^*(-))
        \end{align*}
        are invertible. As Beck--Chevalley maps compose, this identifies the two naturality squares
        \[\hskip-26pt\hfuzz=26.05pt
        \footnotesize
            \begin{tikzcd}[cramped]
                \pi_A^*\ul\Fun_T(\Yy,\Cc)|_M\arrow[d,"\pi_A^*\res"']\arrow[r, "f^*"] & \pi_A^*\ul\Fun_T(\Xx,\Cc)|_M\arrow[d,"\pi_A^*\res"]\\
                \pi_A^*\ul\Fun_M(\Yy|_M,\Cc|_M)\arrow[r, "f|_M^*"'] & \pi_A^*\ul\Fun_M(\Xx|_M,\Cc|_M)
            \end{tikzcd}
            \qquad
            \begin{tikzcd}[cramped]
                \smash{\ul\Fun_{T_{/A}}}(\pi_A^*\Yy,\pi_A^*\Cc)|_{M_{/A}}\arrow[d, "\res"']\arrow[r, "f^*"] & \smash{\ul\Fun_{T_{/A}}}(\pi_A^*\Xx,\pi_A^*\Cc)|_{M_{/A}}\arrow[d, "\res"]\\
                \smash{\ul\Fun_{M_{/A}}}(\pi_A^*\Yy|_M,\pi_A^*\Cc|_M)\arrow[r, "f|_M^*"'] & \smash{\ul\Fun_{M_{/A}}}(\pi_A^*\Xx|_M,\pi_A^*\Cc|_M)
            \end{tikzcd}
        \]
        and the lemma follows by evaluating at the terminal object $\id_A\in M_{/A}$.
    \end{proof}
\end{lemma}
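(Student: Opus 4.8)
The plan is to reduce the claim to the compatibility of the parametrized internal hom with slicing recorded in \cite{CLL_Global}*{Corollary~2.2.11}, together with the fact that restriction along $M\hookrightarrow T$ commutes with passing to slices. First I would recall that for the forgetful functor $\pi_A\colon T_{/A}\to T$ the comparison transformation $\pi_A^*\ul\Fun_T(-,-)\Rightarrow\ul\Fun_{T_{/A}}(\pi_A^*(-),\pi_A^*(-))$ is an equivalence of $T_{/A}$-precategories, $2$-naturally in both arguments, and likewise with $M$ in place of $T$. Moreover, for an $M$-precategory $\Dd$ one has $(\pi_A^*\Dd)(\id_A)\simeq\Dd(A)$, so that evaluation at $A\in M$ factors as $\pi_A^*$ followed by evaluation at the terminal object $\id_A\in M_{/A}$; the same holds on the $T$-side, and under the above comparison $\ul\Fun_{T_{/A}}(\pi_A^*\Xx,\pi_A^*\Cc)(\id_A)\simeq\Fun_{T_{/A}}(\pi_A^*\Xx,\pi_A^*\Cc)$, with the analogous statement over $M_{/A}$.

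Next I would observe that, writing $\alpha\colon M\hookrightarrow T$ and $\alpha_{/A}\colon M_{/A}\hookrightarrow T_{/A}$ for the inclusions, the identity $\pi_A\circ\alpha_{/A}=\alpha\circ\pi_A$ yields $\alpha_{/A}^*\pi_A^*\simeq\pi_A^*\alpha^*$. Since both $\alpha^*$ and $\pi_A^*$ preserve finite products, transporting the product-preservation equivalence $\alpha^*(\Xx\times\Cc)\simeq\alpha^*\Xx\times\alpha^*\Cc$ along these identifications shows that applying $\pi_A^*$ to the restriction functor $\res\colon\ul\Fun_T(\Xx,\Cc)|_M\to\ul\Fun_M(\Xx|_M,\Cc|_M)$ and precomposing and postcomposing with the comparison equivalences of the previous paragraph yields exactly the analogous restriction functor $\ul\Fun_{T_{/A}}(\pi_A^*\Xx,\pi_A^*\Cc)|_{M_{/A}}\to\ul\Fun_{M_{/A}}(\pi_A^*\Xx|_M,\pi_A^*\Cc|_M)$ for the sliced precategories; indeed, $\res$ is nothing but the product-preservation equivalence adjoined over in one variable, and the same description applies verbatim to $\alpha_{/A}$. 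Likewise $\pi_A^*f^*$ gets identified with the restriction-induced map $f^*$ on the sliced internal homs.

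Granting this, applying $\pi_A^*$ to the naturality square (\ref{diag:BC-res}) and transporting everything along the comparison equivalences for $\ul\Fun_T$, $\ul\Fun_M$ and their sliced counterparts identifies it, as a commutative square of $M_{/A}$-precategories, with the corresponding naturality square for the sliced internal homs; here one uses that mates and Beck--Chevalley $2$-cells compose coherently, so that the identification is of the whole square and not merely of its corners. Evaluating this square at $\id_A\in M_{/A}$ then produces precisely (\ref{diag:BC-res-pw}), which finishes the proof.

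The part I expect to require the most care is the bookkeeping in the second paragraph: making precise that the ``adjoin over one variable'' construction producing $\res$ is compatible with the $\pi_A^*$-comparison equivalences as a diagram of $(\infty,2)$-functors, rather than merely pointwise. Concretely, one must verify that the mate of the product-preservation equivalence under the comparison is again the product-preservation equivalence for $\alpha_{/A}$, and that forming mates commutes with the composition of the various comparison $2$-cells; this is a routine but slightly delicate invocation of the coherence of mates together with \cite{CLL_Global}*{Corollary~2.2.11} (and its proof). Everything else is formal.
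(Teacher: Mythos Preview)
Your proposal is correct and follows essentially the same route as the paper: invoke the comparison equivalences $\pi_A^*\ul\Fun_T\simeq\ul\Fun_{T_{/A}}(\pi_A^*-,\pi_A^*-)$ (and their $M$-analogue) from \cite{CLL_Global}*{Corollary~2.2.11}, transport the $\pi_A^*$-image of the square (\ref{diag:BC-res}) along these to the sliced square, and evaluate at $\id_A\in M_{/A}$. The paper compresses your second and third paragraphs into the single phrase ``Beck--Chevalley maps compose,'' but the content is the same.
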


We can now state our criterion:

\begin{theorem}\label{thm:cleft-Kan}
	Let $f\colon\Xx\to\Yy$ be a map of small $T$-categories. Assume that for every $e\colon B\rightarrowepic C$ in $E\subset T$, the map $e^*\colon\Yy(C)\to\Yy(B)$ is a right fibration and the naturality square
	\[
	\begin{tikzcd}
		\Xx(C)\arrow[d,"e^*"']\arrow[r, "f"] & \Yy(C)\arrow[d,"e^*"]\\
		\Xx(B)\arrow[r, "f"'] & \Yy(B)
	\end{tikzcd}
	\]
    is a pullback. Then the pointwise left Kan extension functors
    \[
        f_!\colon \und{\Fun}_T(\Xx,\Cc)\to\und{\Fun}_T(\Yy,\Cc)\quad\text{and}\quad
        f|_{M!}\colon\ul\Fun_M(\Xx|_M,\Cc|_M)\to\ul\Fun_M(\Yy|_M,\Cc|_M)
    \]
    exist for every $\und{\bfU}_M^\times$-cocomplete $T$-precategory $\Cc$, and the Beck--Chevalley transformations
    populating the following squares are equivalences:
    \[\hskip-11.11pt\hfuzz=11.12pt\begin{tikzcd}[cramped]
        {\und{\Fun}_T(\cX,\cC)|_{M}} & {\und{\Fun}_T(\cY,\cC)|_M} &[-1em] {\Fun_T(\cX,\cC)} & {\Fun_T(\cY,\cC)} \\
        {\und{\Fun}_M(\cX|_M,\cC|_M)} & {\und{\Fun}_M(\cY|_M, \cC|_M)} & {\Fun_M(\cX|_M,\cC|_M)} & {\Fun_M(\cY|_M,\cC|_M)}\rlap.
        \arrow["{f_!|_M}", from=1-1, to=1-2]
        \arrow["\res"', from=1-1, to=2-1]
        \arrow["\res", from=1-2, to=2-2]
        \arrow["{f_!}", from=1-3, to=1-4]
        \arrow["\res"', from=1-3, to=2-3]
        \arrow["\res", from=1-4, to=2-4]        \arrow["\BC_!"{description}, Rightarrow, from=2-1, to=1-2,shorten=7pt]
        \arrow["{f|_{M!}}"', from=2-1, to=2-2]
        \arrow["\BC_!"{description}, Rightarrow, from=2-3, to=1-4,shorten=7pt]
        \arrow["{f|_{M!}}"', from=2-3, to=2-4]
    \end{tikzcd}\]
    Moreover, if $f$ is fully faithful, then so are $f_!$ and $f|_{M!}$.
\end{theorem}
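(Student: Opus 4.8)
The plan is to reduce everything to the pointwise criterion for parametrized left Kan extensions (Remark~\ref{rk:pointwise-Kan}) once the relevant comma $T$-categories are understood. First I would replace $T$ by $T_{/A}$ in order to reduce to the case that $A$ is terminal: the factorization system, the $\und{\bfU}_M^\times$-cocompleteness of $\pi_A^*\Cc$, and the hypotheses on $f$ along maps in $E$ are all inherited. For $Y\in\Yy(1)$ one then computes the comma $T$-category $\Xx_{/Y}\coloneqq\Xx\times_\Yy\Yy_{/Y}$ levelwise as $\Xx_{/Y}(B)\simeq\Xx(B)\times_{\Yy(B)}\Yy(B)_{/B^*Y}$, using the standard description of parametrized slices; since slices of product-preserving presheaves along a section are again product-preserving, $\Xx_{/Y}$ is a $T$-category.

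The key step is to show that $\Xx_{/Y}$ lies in $\und{\bfU}_M^\times(1)$. By the addendum to Lemma~\ref{lem:um-subcat} it suffices to see that $\Xx_{/Y}$ is in the essential image of $\incl_!$, i.e.\ that the counit $\incl_!\incl^*\Xx_{/Y}\to\Xx_{/Y}$ is an equivalence. Unwinding the pointwise formula for $\incl_!\colon\Fun((M_{/1})^\op,\Cat)\to\Fun(T^\op,\Cat)$, its value at $B$ is $\incl^*\Xx_{/Y}$ evaluated at the $M$-part $\bar B$ of the factorization of $B\to 1$ (the object $B\to\bar B$ being terminal in the relevant comma category by the orthogonal lifting property), and the counit at $B$ is the restriction along $B\to\bar B$; a short $2$-out-of-$3$ argument then shows the counit is an equivalence if and only if $e^*\colon\Xx_{/Y}(C)\to\Xx_{/Y}(B)$ is an equivalence for every $e\colon B\rightarrowepic C$ in $E$. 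This last point is where the hypotheses enter: as $e^*\colon\Yy(C)\to\Yy(B)$ is a right fibration, the canonical map $\Yy(C)_{/C^*Y}\to\Yy(B)_{/e^*C^*Y}=\Yy(B)_{/B^*Y}$ is an equivalence (slices of a right fibration compute slices of the base), and since $\Xx(C)\simeq\Xx(B)\times_{\Yy(B)}\Yy(C)$, a pullback-pasting computation identifies $e^*\colon\Xx_{/Y}(C)\to\Xx_{/Y}(B)$ with this equivalence. Consequently $\pi_A^*\Cc$ admits $\Xx_{/Y}$-colimits, so $f_!$ exists by Remark~\ref{rk:pointwise-Kan}; the same bookkeeping, together with the identification $(\Xx|_M)_{/Y}\simeq\Xx_{/Y}|_{M_{/A}}$, yields the existence of $f|_{M!}$.

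For the Beck--Chevalley comparison I would use Lemma~\ref{lemma:BC-res-pw} to reduce the parametrized square to the unparametrized square over each $T_{/A}$, hence (after the reduction above) to showing the square over $T$ has invertible $\BC_!$. Evaluating at $G\in\Fun_T(\Xx,\Cc)$ and at an object $(B,Z)$ with $B\in M$ and $Z\in\Yy(B)$, the pointwise formula writes $(f_!G)(B)(Z)$ as the colimit over $\pi_B^*\Xx_{/Z}\simeq\incl_!\big(\Xx_{/Z}|_{M_{/B}}\big)$ of the restriction of $G$ along the projection $\pi_B^*\Xx_{/Z}\to\pi_B^*\Xx$, evaluated at $\id_B$. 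The remaining input is the general principle that for an $M$-category $\Kk'$ and a $\und{\bfU}_M^\times$-cocomplete $\Cc$, restriction induces an equivalence $\und{\Fun}_T(\incl_!\Kk',\Cc)|_M\simeq\und{\Fun}_M(\Kk',\Cc|_M)$ (pointwise by the $\incl_!\dashv\incl^*$ adjunction, together with base change for $\incl_!$ along maps in $M$), under which $\colim_{\incl_!\Kk'}|_M$ is a left adjoint to $\const_M$ and hence agrees with $\colim_{\Kk'}$; applying this with $\Kk'=\Xx_{/Z}|_{M_{/B}}$ identifies $(f_!G)(B)(Z)$ with $f|_{M!}(G|_M)(B)(Z)$, and a diagram chase shows the resulting comparison is $\BC_!$. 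As this holds for all $(B,Z)$ and all $A$, both Beck--Chevalley transformations in the statement are equivalences.

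Finally, since the Kan extensions are pointwise, and a fully faithful functor of $T$-categories is pointwise fully faithful so that $f|_M$ is again a (pointwise) fully faithful $M$-functor, the standard argument that a pointwise left Kan extension along a fully faithful functor has invertible unit (cf.~\cite{martiniwolf2021limits}) applies in both the $T$- and $M$-parametrized settings, giving the full faithfulness of $f_!$ and $f|_{M!}$. The main obstacle I anticipate is the key step identifying $\Xx_{/Y}$ with $\incl_!$ of its restriction to $M$: keeping track of the variance of the comma categories, verifying that the comparison equivalence along maps in $E$ is genuinely the restriction functor (so that it assembles into the counit), and the coproduct/product-preservation bookkeeping required to land in $\und{\bfU}_M^\times$, are the delicate points.
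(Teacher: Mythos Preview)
Your approach is essentially the paper's. The paper factors the argument through an intermediate criterion (Lemma~\ref{lem:cleft-Kan}): if every slice $\pi_A^*\Xx_{/Y}$ lies in $\und{\bfU}_M^\times(A)$, then the pointwise Kan extensions exist, the Beck--Chevalley maps are invertible, and full faithfulness propagates. Two short lemmas then verify this criterion from the hypotheses on $f$, using exactly the right-fibration/pullback-pasting computation you describe (and the characterization of $\incl_!$-objects via invertibility of $e^*$ for $e\in E$). You have merged these steps, which is fine.

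One caution on your Beck--Chevalley argument: the ``general principle'' you invoke, a parametrized equivalence $\und{\Fun}_T(\incl_!\Kk',\Cc)|_M\simeq\und{\Fun}_M(\Kk',\Cc|_M)$, is stronger than what is actually needed and is the kind of statement the Warning before Lemma~\ref{lemma:basic-colim} cautions against. The paper uses only the non-parametrized version---that $\res\colon\Fun_{T_{/B}}(\Aa,\pi_B^*\Cc)\to\Fun_{M_{/B}}(i^*\Aa,i^*\pi_B^*\Cc)$ is invertible whenever $\Aa$ is left Kan extended from $M_{/B}$---which follows from $\Cat$-linearity of $i_!\dashv i^*$ (here $i_!\simeq\lambda^*$ for the localization $\lambda\colon T_{/B}\to M_{/B}$). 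This suffices because the pointwise formula already reduces the comparison to individual values in $\Cc(B)$. The paper then organizes the verification as a careful pasting of adjointable squares rather than your ``diagram chase''; for the global-sections square it passes through the extension to $\PSh(T)$ and the map $i_!1_M\to 1_T$, which avoids assuming $T$ has a terminal object. Your pointwise-at-$(B,Z)$ approach should also work, but the identification of the resulting comparison with $\BC_!$ is exactly the part that needs to be spelled out.
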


\begin{remark}
    The pullback condition in the above theorem corresponds to the notion of \emph{equifiberedness} from \cite{BS_Equifibred, BHS_Algebraic_Patterns}.
\end{remark}

The proof of Theorem~\ref{thm:cleft-Kan} will consist of two steps: first, we will establish an even more general (but harder to verify) criterion ensuring that the $T$-parametrized Kan extension exists and can be computed in $M$-categories, and then we will show that the assumptions of the theorem imply the assumptions of the general criterion. So let's begin with the former:

\begin{lemma}\label{lem:cleft-Kan}
    Let $f\colon\Xx\to\Yy$ be a functor of small $T$-precategories such that for all $A\in T,Y\in\Yy(A)$ the slice $T_{/A}$-precategory $\pi_A^*(\Xx)_{/Y}$ is contained in $\und{\bfU}_M^\times(A) \subset \und{\Cat}_T(A)$.
    Then the parametrized pointwise left Kan extensions $f_!\colon\und{\Fun}_T(\Xx,\Cc)\to\und{\Fun}_T(\Yy,\Cc)$ and $f|_{M!}$ exist for every $\und{\bfU}_M^\times$-cocomplete $T$-precategory $\Cc$,
    and the Beck--Chevalley maps considered in \cref{thm:cleft-Kan} are equivalences.
    Moreover, if $f$ is fully faithful, then so are $f_!$ and $f|_{M!}$.
\end{lemma}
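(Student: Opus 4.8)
## Proof plan for Lemma~\ref{lem:cleft-Kan}

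The plan is to reduce the parametrized statement to a pointwise one and then apply the criteria of \cite{martiniwolf2021limits} together with \Cref{thm:CatM-cocomplete}. First I would address existence: by Remark~\ref{rk:pointwise-Kan}, the parametrized left Kan extension $f_!\colon\und{\Fun}_T(\Xx,\Cc)\to\und{\Fun}_T(\Yy,\Cc)$ exists as a pointwise Kan extension as soon as $\pi_A^*\Cc$ admits $\pi_A^*(\Xx)_{/Y}$-colimits for all $A\in T$ and $Y\in\Yy(A)$. By hypothesis $\pi_A^*(\Xx)_{/Y}\in\und{\bfU}_M^\times(A)$, so this holds precisely because $\Cc$ is assumed $\und{\bfU}_M^\times$-cocomplete. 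The same reasoning applied to $M$ in place of $T$ — using that $\Cc|_M$ is then a fortiori $\und{\bfU}_M^\times$-cocomplete as an $M$-precategory (the restriction of $\pi_A^*(\Xx)_{/Y}$ to $M_{/A}$ is the slice category $\pi_A^*(\Xx|_M)_{/Y}$, and $\und{\bfU}_M^\times(A)$ consists of $T_{/A}$-categories in the essential image of $\incl_!$, so they are detected on $M_{/A}$) — gives the existence of $f|_{M!}$.

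Next I would prove that the Beck--Chevalley transformations $\BC_!$ in the two displayed squares are equivalences. Since equivalences of $T$-functors, resp.\ $M$-functors, are detected pointwise, it suffices to work at a fixed $A\in M$. By Lemma~\ref{lemma:BC-res-pw}, the naturality square~(\ref{diag:BC-res}) agrees in degree $A$ with the square~(\ref{diag:BC-res-pw}) of ordinary functor categories, so it is enough to check that this latter square is horizontally left adjointable. Both $f^*$ and $f|_M^*$ are restrictions along the vertical maps, and the left adjoints are computed by the pointwise formula of Remark~\ref{rk:pointwise-Kan}: evaluating $f_!$ at $Y\in\Yy(A)$ is the colimit over $\pi_A^*(\Xx)_{/Y}$, and evaluating $f|_{M!}$ at the same $Y$ is the colimit over $\pi_A^*(\Xx|_M)_{/Y}=\big(\pi_A^*(\Xx)_{/Y}\big)|_M$. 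Since $\pi_A^*(\Xx)_{/Y}$ lies in $\und{\bfU}_M^\times(A)$, it is by \Cref{lem:um-subcat} of the form $\incl_!$ of an $M_{/A}$-category, and $\incl_!$ is left adjoint to restriction along $T_{/A}\to M_{/A}$; hence the two colimits are computed by the same parametrized colimit in $\pi_A^*\Cc$, compatibly with the comparison functor $\res$. This forces $\BC_!$ to be an equivalence; I expect the bookkeeping here — chasing the pointwise formula through the identification $\und{\bfU}_M^\times(A)\simeq\essim(\incl_!)$ and the adjunction $\incl_!\dashv(\text{restriction})$ — to be the main obstacle, essentially because one must verify that the comparison map $\res$ intertwines the two colimit functors and not merely that the two colimits agree abstractly.

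Finally, for the addendum, suppose $f$ is fully faithful. Fully faithfulness of a $T$-functor is again detected pointwise, and for each $A$ fully faithfulness of $f|_M\colon\Xx|_M(A)\to\Yy|_M(A)$ follows from that of $f$. For the Kan extensions, I would invoke the standard fact that a pointwise left Kan extension along a fully faithful functor is again fully faithful, with unit an equivalence: concretely, for $X\in\Xx(A)$ the slice $\pi_A^*(\Xx)_{/f(X)}$ has a terminal object (namely $\id_{f(X)}$ together with the unit equivalence $X\simeq f^*f(X)$, using full faithfulness of $f$), so the defining colimit is computed at that terminal object and the unit $g\to f^*f_!g$ is an equivalence for every $g\in\und{\Fun}_T(\Xx,\Cc)$. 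The same argument applies verbatim to $f|_M$. Hence $f_!$ and $f|_{M!}$ are fully faithful, completing the proof.
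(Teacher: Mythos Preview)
Your plan is essentially the paper's own argument: existence via the Martini--Wolf criterion (Remark~\ref{rk:pointwise-Kan}), reduction of the Beck--Chevalley check to a single degree via Lemma~\ref{lemma:BC-res-pw}, and then exploiting that the slice $\pi_A^*(\Xx)_{/Y}$ lies in the essential image of $\incl_!$. You also correctly isolate the one nontrivial point, namely that the comparison map $\res$ must intertwine the two colimit functors and not merely that the two colimits agree abstractly.

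The paper resolves exactly this ``bookkeeping'' by a concrete pasting argument. After reducing via Lemma~\ref{lemma:BC-res-pw}, one postcomposes with the jointly conservative family of evaluation functors $\Fun_{M_{/A}}(\pi_A^*\Yy|_M,\pi_A^*\Cc|_M)\to\Cc(B)$ for $B\to A$ in $M$ and $Y\in\Yy(B)$, and rewrites the resulting rectangle as a pasting whose constituent squares are adjointable either by the pointwise formula or because both vertical $\res$ maps are \emph{equivalences}. The last point is the crux: since $M\subset T$ is left cancellable the inclusion $i\colon M_{/B}\hookrightarrow T_{/B}$ is fully faithful, and $\Cat$-linearity of $i_!\dashv i^*$ then shows that $\res\colon\Fun_{T_{/B}}(\Aa,\Bb)\to\Fun_{M_{/B}}(i^*\Aa,i^*\Bb)$ is invertible whenever $\Aa$ is left Kan extended from $M_{/B}$---which holds for both $\ul B$ and, by hypothesis, the slice $\pi_B^*\Xx_{/Y}$. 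This is precisely the mechanism that turns your heuristic ``the two colimits are the same'' into an honest identification of Beck--Chevalley maps.

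One point your plan does not address: the \emph{second} square in Theorem~\ref{thm:cleft-Kan} concerns the global-section categories $\Fun_T(\Xx,\Cc)$ rather than $\ul\Fun_T(\Xx,\Cc)(A)$, and $T$ need not have a terminal object. The paper handles this by extending $T$-precategories to limit-preserving functors $\PSh(T)^\op\to\Cat$ and working at the terminal presheaf $1_T$, factoring through $i_!1_M$; the same pasting technique then applies. Your pointwise reduction only covers the first square.

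For the fully faithful addendum, the paper simply cites \cite[Theorem 6.3.5]{martiniwolf2021limits}, which already contains this conclusion; your terminal-object argument is of course equivalent.
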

\begin{proof}
    The assumption on the slices and the fact that $\cC$ is $\und{\bfU}_M^\times$-cocomplete
    guarantees by \cite[Theorem 6.3.5]{martiniwolf2021limits} (recalled in Remark~\ref{rk:pointwise-Kan} above)
    the existence of the pointwise parametrized Kan extensions
    \begin{align*}
        f_! \colon &\und{\Fun}_T(\cX,\cC) \to \und{\Fun}_T(\cY,\cC)
        \\
        f|_{M!} \colon &\und{\Fun}_M(\cX|_M,\cC|_M) \to \und{\Fun}_M(\cY|_M,\cC|_M).
    \end{align*}
    The cited theorem further shows that both of these functors are fully faithful whenever $f$ is.

    To show that the Beck--Chevalley transformation populating the left hand square is invertible, we may argue pointwise for every $A\in T$. By Lemma~\ref{lemma:BC-res-pw}, we are then reduced to showing that the Beck--Chevalley transformation associated to
    \begin{equation}\label{diag:to-paste-top}
        \begin{tikzcd}[cramped, row sep = scriptsize]
            \Fun_{T_{/A}}(\pi_A^*\Xx,\pi_A^*\Cc)\arrow[d, "\text{res}"'] &[.5em] \arrow[l, "f^*"'] \Fun_{T_{/A}}(\pi_A^*\Yy,\pi_A^*\Cc)\arrow[d, "\text{res}"]\\
            \Fun_{M_{/A}}(\pi_A^*\Xx|_M,\pi_A^*\Cc|_M) &\arrow[l, "f^*"] \Fun_{M_{/A}}(\pi_A^*\Yy|_M,\pi_A^*\Cc|_M)
        \end{tikzcd}
    \end{equation}
    is an equivalence, i.e.~this square is (horizontally) left adjointable.
    The composites
    \begin{multline}\label{eq:res-joint-conservative}
        \Fun_{M_{/A}}(\pi_A^*\Yy|_M,\pi_A^*\Cc|_M)\xrightarrow{\;M_{/f}^*\;}
        \Fun_{M_{/B}}(M_{/f}^*\pi_A^*\Yy|_M,M_{/f}^*\pi_A^*\Cc|_M)
        \\\simeq
        \Fun_{M_{/B}}(\pi_B^*\Yy|_M,\pi_B^*\Cc|_M)
        \xrightarrow{Y^*}\Fun_{M_{/B}}(\ul B, \pi_B^*\Cc|_M)
    \end{multline}
    for varying $f\colon B\to A$ in $M$ and $Y\in \Cc(B)$ are jointly conservative, so it suffices to check that the Beck--Chevalley map is invertible after applying each of these.

    Consider now the pasting
    \begin{equation}
        \begin{tikzcd}[cramped,row sep=scriptsize]
            \Fun_{T_{/A}}(\pi_A^*\Xx,\pi_A^*\Cc)\arrow[d, "\text{res}"'] &[.5em] \arrow[l, "f^*"'] \Fun_{T_{/A}}(\pi_A^*\Yy,\pi_A^*\Cc)\arrow[d, "\text{res}"]\\
            \Fun_{M_{/A}}(\pi_A^*\Xx|_M,\pi_A^*\Cc|_M)\arrow[d] &\arrow[l, "f^*"'] \Fun_{M_{/A}}(\pi_A^*\Yy|_M,\pi_A^*\Cc|_M)\arrow[d]\\
            \Fun_{M_{/B}}(\pi_B^*\Xx|_M,\pi_B^*\Cc|_M)\arrow[d] &\arrow[l, "f^*"']
            \Fun_{M_{/B}}(\pi_B^*\Yy|_M,\pi_B^*\Cc|_M)\arrow[d, "Y^*"]\\
            \Fun_{M_{/B}}((\pi_B^*\Xx_{/Y})|_M,\pi_B^*\Cc|_M) & \arrow[l, "\const"'] \Fun_{M_{/B}}(\ul B,\pi_B^*\Cc|_M)
        \end{tikzcd}
    \end{equation}
    where the vertical maps in the middle square are the composite of the first two maps in $(\ref{eq:res-joint-conservative})$. The middle square is left adjointable by the lemma, as $M_{/f}$ agrees up to equivalence with the forgetful functor $(M_{/A})_{/f}\to M_{/B}$. On the other hand, the bottom square is left adjointable by the pointwise formula (Remark~\ref{rk:pointwise-Kan}). As Beck--Chevalley maps compose, we are therefore reduced to showing that the total pasting is left adjointable.

    By naturality and functoriality of $\res$, this pasting can be rewritten as
    \[
        \begin{tikzcd}[cramped,row sep = scriptsize]
            \Fun_{T_{/A}}(\pi_A^*\Xx,\pi_A^*\Cc)\arrow[d] &[.5em] \arrow[l, "f^*"'] \Fun_{T_{/A}}(\pi_A^*\Yy,\pi_A^*\Cc)\arrow[d]\\
            \Fun_{T_{/B}}(\pi_B^*\Xx,\pi_B^*\Cc)\arrow[d] &[.5em] \arrow[l, "f^*"'] \Fun_{T_{/B}}(\pi_B^*\Yy,\pi_B^*\Cc)\arrow[d, "Y^*"]\\
            \Fun_{T_{/B}}(\pi_B^*\Xx_{/Y},\pi_B^*\Cc)\arrow[d,"\res"'] &\arrow[l, "\const"'] \Fun_{T_{/B}}(\ul B,\pi_B^*\Cc)\arrow[d,"\res"]\\
            \Fun_{M_{/B}}((\pi_B^*\Xx_{/Y})|_M,\pi_B^*\Cc|_M) & \arrow[l,"\const"'] \Fun_{M_{/B}}(\ul B|_M,\pi_B^*\Cc|_M)
        \end{tikzcd}
    \]
    with the top maps induced by restriction along $T_{/f}$. This time, the top square is left adjointable by the lemma, while the middle square is so by the pointwise formula, so we are reduced to showing that the bottom square is left adjointable. For this, we will show that both vertical maps are actually equivalences.

    As $M\subset T$ is left cancellable (being the right class of a factorization system), the inclusion $i\colon M_{/B}\hookrightarrow T_{/B}$ is fully faithful, hence so is the left Kan extension functor $i_!$. By $\Cat$-linearity of $i_!\dashv i^*$, we conclude that $\res\colon\Fun_{T_{/B}}(\Aa,\Bb)\to\Fun_{M_{/B}}(i^*\Aa,i^*\Bb)$ is invertible whenever the $T_{/B}$-precategory $\Aa$ is left Kan extended from $M_{/B}$. But this holds for $\ul B$ (for trivial reasons) as well as for the slice $\pi_B^*\Xx_{/Y}$ (by assumption). This finishes the proof that the first of the two Beck--Chevalley transformations is invertible.

    For the right hand square, recall that any $T$-precategory can be extended uniquely to a limit preserving functor $\PSh(T)^\op\to\Cat$. By~\cite{martiniwolf2021limits}*{Corollary 3.2.11}, we then see that $f^*\colon\ul\Fun_T(\Yy,\Cc)(A)\to\ul\Fun_T(\Xx,\Cc)(A)$ admits a left adjoint for every $A\in\PSh(T)$, satisfying basechange for restrictions along arbitrary maps in $\PSh(T)$. Consider now the pasting
    \[
        \begin{tikzcd}[row sep=scriptsize]
            \ul\Fun_T(\Xx,\Cc)(1_T)\arrow[d, "\alpha^*"'] &\arrow[l, "f^*"']  \ul\Fun_T(\Yy,\Cc)(1_T)\arrow[d, "\alpha^*"]\\
            \ul\Fun_T(\Xx,\Cc)(i_!1_M)\arrow[d,"\sim"'] &\arrow[l,"f^*"'] \ul\Fun_T(\Yy,\Cc)(i_!1_M)\arrow[d,"\sim"]\\
            \ul\Fun_T(\Xx,\Cc)|_M(1_M)\arrow[d,"\res"'] &\arrow[l, "f^*"'] \ul\Fun_T(\Yy,\Cc)|_M(1_M)\arrow[d,"\res"]\\
            \ul\Fun_M(\Xx|_M,\Cc|_M)(1_M) &\arrow[l, "f^*"']\ul\Fun_M(\Yy|_M,\Cc|_M)(1_M)
        \end{tikzcd}
    \]
    where we write $1_T\in\PSh(T), 1_M\in\PSh(M)$ for the terminal objects, $i_!\colon\PSh(M)\to\PSh(T)$ for the left Kan extension functor, and $\alpha\colon i_!1_M\to 1_T$ for the unique map. We want to show that the total rectangle is left adjointable. By what we just said, the top rectangle is left adjointable, while the above shows (after evaluating at $1_M$) that the bottom rectangle is left adjointable. The middle rectangle is again left adjointable since the vertical maps are invertible. Using once more that Beck--Chevalley maps compose, we conclude.
\end{proof}

To prove that the comma categories are contained in $\und{\bfU}_M^\times$ we use the following criteria.

\begin{lemma}
	A $T_{/A}$-precategory $\Cc$ is left Kan extended from $M_{/A}$ if and only if the restriction functor $e^*\colon\Cc(C)\to\Cc(B)$ is an equivalence for every $e\colon B\rightarrowepic C$ in $T_{/A}\times_TE$.
	\begin{proof}
        By \cite{CLL_Clefts}*{proof of Proposition~3.33}, the inclusion $i\colon M_{/A}\hookrightarrow T_{/A}$ admits a left adjoint $\lambda$, which is a localization at $T_{/A}\times_TE$. We can then identify $i_!\simeq\lambda^*$, and the claim is an instance of the universal property of localization.
	\end{proof}
\end{lemma}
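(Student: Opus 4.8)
The plan is to deduce the lemma from a localization statement about the inclusion $i\colon M_{/A}\hookrightarrow T_{/A}$ together with the universal property of localizations of presheaf categories.

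First I would recall — this is essentially what is extracted in the proof of \cite{CLL_Clefts}*{Proposition~3.33} — that $i$ admits a left adjoint $\lambda\colon T_{/A}\to M_{/A}$, given by sending an object $f\colon B\to A$ to the right factor $m\colon B'\rightarrowmono A$ of its essentially unique $(E,M)$-factorization $f=m\circ e$; the unit at $f$ is the morphism $e\colon(B\xrightarrow{f}A)\to(B'\xrightarrow{m}A)$ of $T_{/A}$, whose underlying map $e\colon B\rightarrowepic B'$ lies in $E$, so that the unit lies in $T_{/A}\times_T E$. Using uniqueness of factorizations together with closure of $E$ under composition, one checks that $\lambda$ inverts precisely the morphisms of $T_{/A}\times_T E$ — more precisely, that this class generates the strongly saturated class of $\lambda$-equivalences — so that $\lambda$ exhibits $M_{/A}$ as the localization $T_{/A}[(T_{/A}\times_T E)^{-1}]$.

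Next I would identify the left Kan extension functor $i_!$. Passing to opposite categories turns the adjunction $\lambda\dashv i$ into $i^\op\dashv\lambda^\op$, so the comma categories computing the pointwise left Kan extension along $i^\op$ have terminal objects supplied by the counit of $i^\op\dashv\lambda^\op$; hence $i_!$ is simply restriction along $\lambda$, i.e.\ $i_!\simeq\lambda^*$. Since $\lambda^\op$ is again a localization, restriction along it is fully faithful, with essential image exactly the local objects: those $T_{/A}$-precategories $\Cc\colon(T_{/A})^\op\to\Cat$ sending every morphism of $(T_{/A}\times_T E)^\op$ to an equivalence.

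It then only remains to combine these observations. By definition $\Cc$ is left Kan extended from $M_{/A}$ precisely when the counit $i_!i^*\Cc\to\Cc$ is an equivalence, i.e.\ when $\Cc$ lies in the essential image of $i_!\simeq\lambda^*$; by the previous step this happens if and only if $e^*\colon\Cc(C)\to\Cc(B)$ is an equivalence for every $e\colon B\rightarrowepic C$ in $T_{/A}\times_T E$, which is the assertion. The only step here that is not pure formalism is the localization statement of the second paragraph — in particular the claim that the localized class is \emph{exactly} $T_{/A}\times_T E$ (equivalently, that this class already generates the strongly saturated class of $\lambda$-equivalences), which is where the factorization-system axioms are needed; as this is carried out in \cite{CLL_Clefts}, I would simply cite it and keep the rest of the argument formal.
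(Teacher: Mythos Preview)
Your proof is correct and follows essentially the same approach as the paper's: both arguments cite \cite{CLL_Clefts} for the fact that $i\colon M_{/A}\hookrightarrow T_{/A}$ has a left adjoint $\lambda$ exhibiting $M_{/A}$ as the localization at $T_{/A}\times_T E$, identify $i_!\simeq\lambda^*$, and conclude via the universal property of localization. You simply spell out the details (the explicit description of $\lambda$ via $(E,M)$-factorizations, the terminal-object argument for $i_!\simeq\lambda^*$, and the identification of the essential image of $\lambda^*$) that the paper leaves implicit.
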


\begin{lemma}
    Let $f\colon\Xx\to\Yy$ be a functor of $T$-categories satisfying the assumptions of Theorem~\ref{thm:cleft-Kan}.
    Then it also satisfies the assumptions of \cref{lem:cleft-Kan}, i.e.~for all $A \in T, Y \in \cY(A)$
    the slice $T_{/A}$-category $(\pi_A^*\Xx)_{/Y}$ is contained in $\und{\bfU}_M^\times(A) \subset \und{\Cat}_T(A)$.
	\begin{proof}
        As $\Xx$ and $\Yy$ are $T$-categories, $\smash{(\pi_A^*\Xx)_{/Y}}$ is a $\smash{T_{/A}}$-category, and so the restriction to $M_{/A}$ is an $M_{/A}$-category. To prove that $(\pi_A^*\Xx)_{/Y}$ is in $\und{\bfU}_M^\times(A)$ it therefore suffices to prove that it is left Kan extended from $M_{/A}$, see \cref{lem:um-subcat}.
        By the previous lemma, we have to show that $e^*\colon(\pi_A^*\Xx)_{/Y}(C)\to(\pi_A^*\Xx)_{/Y}(B)$ is an equivalence for every map
		\[
		\begin{tikzcd}[column sep=small]
			B\arrow[dr, bend right=15pt]\arrow[rr, "e"] && C\arrow[dl, bend left=15pt,"\gamma"]\\
			&A
		\end{tikzcd}
		\]
		in $T_{/A}\times_TE$. Plugging in the definitions, this is the map
		\begin{equation}\label{eq:slice-map}
			\Xx(C)\times_{\Yy(C)}\Ar(\Yy(C))\times_{\Yy(C)}\{\gamma^*Y\}\to
			\Xx(B)\times_{\Yy(B)}\Ar(\Yy(B))\times_{\Yy(B)}\{e^*\gamma^*Y\}
		\end{equation}
		induced by $e^*\colon\Xx(C)\to\Xx(B)$ and $e^*\colon\Yy(C)\to\Yy(B)$. As the latter is a right fibration,
		\begin{equation}\label{eq:right-fib-equiv}
			e^*\colon\Ar(\Yy(C))\times_{\Yy(C)}\{\gamma^*y\}\to\Ar(\Yy(B))\times_{\Yy(B)}\{e^*\gamma^*Y\}
		\end{equation}
		is an equivalence, see e.g.~\cite{kerodon}*{Tag \href{https://kerodon.net/tag/0199}{\texttt{0199}}}. Rewriting
		\begin{align*}
			\hskip-8.48pt\hfuzz=8.5pt\Xx(C)\times_{\Yy(C)}\Ar(\Yy(C))\times_{\Yy(C)}\{\gamma^*y\}
			&\simeq
			\Xx(B)\times_{\Yy(B)}\Yy(C)\times_{\Yy(C)}\Ar(\Yy(C))\times_{\Yy(C)}\{\gamma^*Y\}\hfuzz=8.5pt\\
			&\simeq
			\Xx(B)\times_{\Yy(B)}\Ar(\Yy(C))\times_{\Yy(C)}\{\gamma^*Y\}
		\end{align*}
		using the second assumption and the pasting law for pullbacks, we then conclude that $(\ref{eq:slice-map})$ is a basechange of the equivalence $(\ref{eq:right-fib-equiv})$, hence itself an equivalence.
	\end{proof}
\end{lemma}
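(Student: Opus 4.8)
The plan is to verify directly the criterion of \cref{lem:um-subcat}, which says that a $T_{/A}$-precategory lies in $\und{\bfU}_M^\times(A)$ precisely when it is a $T_{/A}$-category that is left Kan extended from $M_{/A}$. That $(\pi_A^*\Xx)_{/Y}$ is a $T_{/A}$-category is formal: the restriction $\pi_A^*$ preserves finite products (the forgetful functor $T_{/A}\to T$ preserves finite coproducts), slices of $T$-categories over an object are again parametrized categories — the relevant `product' in a slice $T_{/A}$-category decomposes fiberwise, since the product of a tuple of arrows is an arrow — and $(\pi_A^*\Xx)_{/Y}=\pi_A^*\Xx\times_{\pi_A^*\Yy}(\pi_A^*\Yy)_{/Y}$ is a pullback of $T_{/A}$-categories along $T_{/A}$-functors, hence a $T_{/A}$-category. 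For the Kan extension statement I would invoke the preceding lemma, which reduces the claim to showing that the restriction $e^*\colon(\pi_A^*\Xx)_{/Y}(C)\to(\pi_A^*\Xx)_{/Y}(B)$ is an equivalence for every $e\colon B\rightarrowepic C$ in $T_{/A}\times_TE$; write $\gamma\colon C\to A$ for the structure map, so that $B$ carries the structure map $\gamma e$ and $(\gamma e)^*Y\simeq e^*\gamma^*Y$.

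Next I would unwind this functor. Unpacking the definition of the parametrized slice, $(\pi_A^*\Xx)_{/Y}(C)$ is the comma category $\Xx(C)\times_{\Yy(C)}\Ar(\Yy(C))\times_{\Yy(C)}\{\gamma^*Y\}$ of the functor $f\colon\Xx(C)\to\Yy(C)$ over the object $\gamma^*Y$, and likewise over $B$ with $\gamma^*Y$ replaced by $e^*\gamma^*Y$; the map in question is induced by $e^*$ on both $\Xx$ and $\Yy$. Here the two hypotheses of \cref{thm:cleft-Kan} enter. Since $e$ lies over $E$, the naturality square relating $f$ and $e^*$ is a pullback, so $\Xx(C)\simeq\Xx(B)\times_{\Yy(B)}\Yy(C)$; and since $e^*\colon\Yy(C)\to\Yy(B)$ is a right fibration, the induced functor on slices $\Ar(\Yy(C))\times_{\Yy(C)}\{\gamma^*Y\}\to\Ar(\Yy(B))\times_{\Yy(B)}\{e^*\gamma^*Y\}$ is a trivial fibration, hence an equivalence (the standard fact about right fibrations, \cite{kerodon}*{Tag~0199}).

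To finish I would combine these. Substituting $\Xx(C)\simeq\Xx(B)\times_{\Yy(B)}\Yy(C)$ into the comma description and cancelling the factor $\Yy(C)\times_{\Yy(C)}(-)$ via the pasting law for pullbacks gives $(\pi_A^*\Xx)_{/Y}(C)\simeq\Xx(B)\times_{\Yy(B)}\big(\Ar(\Yy(C))\times_{\Yy(C)}\{\gamma^*Y\}\big)$, with the map to $\Yy(B)$ being source-projection followed by $e^*$; the same rewriting applies over $B$. Under these identifications the restriction functor becomes $\id_{\Xx(B)}\times_{\Yy(B)}(-)$ applied to the equivalence of the previous paragraph, hence is a base change of an equivalence and therefore an equivalence itself, as desired.

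The substantive point to be careful about is purely a matter of bookkeeping: one must track which of the source or target projections of each arrow category is used in each fiber product, and check that the chain of pullback-pasting identifications carries the restriction functor to the asserted base change rather than merely matching its source and target. The only genuinely non-formal input is the general fact that slices of $T$-categories are $T$-categories and are compatible with $\pi_A^*$, together with the characterization of objects left Kan extended from $M_{/A}$ in terms of inverting restrictions along $T_{/A}\times_T E$; this is precisely where the hypothesis that $\Xx$ and $\Yy$ are $T$-categories (rather than mere precategories) is used, and it is what allows the hypotheses of \cref{thm:cleft-Kan} — a pullback square of categories and a right fibration — to be fed into the argument.
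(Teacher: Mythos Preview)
Your proposal is correct and follows essentially the same approach as the paper: reduce via \cref{lem:um-subcat} and the preceding lemma to showing that $e^*$ is an equivalence on slice values, unwind the slice as a comma pullback, then feed in the pullback-square hypothesis to rewrite $\Xx(C)$ and the right-fibration hypothesis (same Kerodon reference) to get the slice equivalence, concluding by pullback pasting. The only difference is cosmetic---you give slightly more detail on why the slice is a $T_{/A}$-category and add some bookkeeping remarks---but the argument is the same.
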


The lemma above, when combined with \Cref{lem:cleft-Kan}, amounts to a proof of \Cref{thm:cleft-Kan}.

\subsection{Free normed algebras}\label{subsec:free-normed-algebras}
We now return to a distributive context $(\cF,\cN,\cR)$, and explain how we can apply the previous abstract results to deduce the existence of free algebras. Namely, we will specialize the above to obtain a notion of $\und{\bfU}_\cR^\times$-cocompleteness of $\cN$-normed $\cF$-precategories $\cC$,
which holds whenever $\cC$ is $\cR$-distributive.
In order to do this, we first have to produce a factorization system on $\Span_{\cN}(\cF)^\op = \Span_{\cN,\all}(\cF)$
with right class given by $\cR \subset \cF \subset \Span_{\cN}(\cF)^\op$. For this, note first that there are two wide subcategory inclusions
\[
    \Span_{\Ee,\cN}(\cF) \subset \Span_{\cN}(\cF)
    \quad\text{and}\quad
    \cR^{\op}\simeq \Span_{\cR,\simeq}(\cF) \subset \Span_{\cN}(\cF).
\]

\begin{proposition}\label{prop:part_fact_on_spans}
Suppose $(\cF,\cN,\cR)$ is a distributive context. Then the pair
$(\cR^\op, \Span_{\Ee,\cN}(\cF))$ defines a factorization system on $\Span_\cN(\cF)$.
\end{proposition}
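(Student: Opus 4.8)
The plan is to apply the standard recognition criterion for factorization systems: since, as recalled immediately before the statement, $\cR^\op$ and $\Span_{\Ee,\cN}(\cF)$ are wide subcategories of $\Span_\cN(\cF)=\Span_{\all,\cN}(\cF)$ — hence closed under composition and containing all equivalences — it suffices to (A) exhibit, for every morphism $f$ of $\Span_\cN(\cF)$, a factorization $f=r\circ\ell$ with $\ell\in\cR^\op$ and $r\in\Span_{\Ee,\cN}(\cF)$, and (B) show that every morphism of $\cR^\op$ is left orthogonal to every morphism of $\Span_{\Ee,\cN}(\cF)$; the remaining ingredient of the criterion, stability of both classes under retracts, I would read off at the end from closure of $\Ee$, $\cR$ and $\cN$ under retracts in $\cF$ by representing retract diagrams in $\Span_\cN(\cF)$ by spans.

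For (A): given $f=(X\xleftarrow{b}Z\xrightarrow{n}Y)$, factor the backwards leg with the factorization system $(\Ee,\cR)$ on $\cF$ (condition~(5) of Definition~\ref{def:dist}) as $Z\xrightarrow{e}W\xrightarrow{s}X$ with $e\in\Ee$ and $s\in\cR$. Then $f=r\circ\ell$, where $\ell\colon X\to W$ is the backwards span on $s$ (so $\ell\in\cR^\op$) and $r\colon W\to Y$ is the span $W\xleftarrow{e}Z\xrightarrow{n}Y$ (so $r\in\Span_{\Ee,\cN}(\cF)$, using that $(\cF,\Ee,\cN)$ is an adequate triple by condition~(6)): their composite is computed by the trivial pullback of $\id_W$ along $e$ and equals $X\xleftarrow{se=b}Z\xrightarrow{n}Y=f$.

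For (B): left orthogonality is stable under composition in each variable, and $\cR^\op$ is generated under composition by the backwards spans $\beta_s$ on morphisms $s\in\cR$, while $\Span_{\Ee,\cN}(\cF)$ is generated under composition by the backwards spans $\beta_e$ on morphisms $e\in\Ee$ together with the forwards spans $\nu_n$ on morphisms $n\in\cN$ — the latter by Theorem~\ref{thm:spans}(5) applied to the adequate triple $(\cF,\Ee,\cN)$. So it is enough to check that each $\beta_s$ is left orthogonal to each $\nu_n$ and to each $\beta_e$. The first holds because backwards maps are left orthogonal to forwards maps, by Theorem~\ref{thm:spans}(5) for $(\cF,\cF,\cN)$. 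For the second I would verify the mapping-space pullback criterion: writing $\beta_s\colon A\to B$ (so $s\colon B\to A$) and $\beta_e\colon C\to D$ (so $e\colon D\to C$), one must show that the square of mapping spaces in $\Span_\cN(\cF)$
\[
\Map(B,C)\ \longrightarrow\ \Map(A,C)\times_{\Map(A,D)}\Map(B,D)
\]
induced by precomposition with $\beta_s$ and postcomposition with $\beta_e$ is an equivalence. Now precomposition with $\beta_s$ replaces the backwards leg $b'$ of a span by $s\circ b'$, and postcomposition with $\beta_e$ replaces a span $(-\xleftarrow{}E\xrightarrow{n}C)$ by $(-\xleftarrow{}E\times_C D\xrightarrow{}D)$, where the projection $E\times_C D\to E$ is the base change of $e$ along $n$ (still in $\Ee$) and the new forwards leg the base change of $n$ along $e$ (still in $\cN$), both by condition~(6) of Definition~\ref{def:dist}. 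Thus a point of the right-hand side amounts to spans $A\xleftarrow{b'}E\xrightarrow{n}C$ and $B\xleftarrow{b''}E''\xrightarrow{n''}D$ together with an identification $\theta\colon E\times_C D\simeq E''$ over $A$ and $D$; setting $\psi=b''\theta$ produces a commuting square in $\cF$ whose left vertical map is the projection $E\times_C D\to E$ (in $\Ee$) and whose right vertical map is $s$ (in $\cR$), so the orthogonality $\Ee\perp\cR$ in $\cF$ yields a unique $b\colon E\to B$ with $sb=b'$ and $b$ compatible with $\psi$. The span $B\xleftarrow{b}E\xrightarrow{n}C$ is then the sought point of $\Map(B,C)$, and a routine pullback-pasting check shows this assignment is inverse to the comparison map.

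Assembling (A), (B) and retract-stability through the recognition criterion completes the proof. The one genuinely involved step — the main obstacle — is the span-composition bookkeeping underlying $\beta_s\perp\beta_e$, which is exactly where the pullback-stability axioms of Definition~\ref{def:dist}, above all condition~(6), are used.
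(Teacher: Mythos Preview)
Your approach is correct, but it takes a genuinely different route from the paper. The paper's proof never verifies orthogonality directly: instead it invokes \cite{BS_Equifibred}*{Proposition~A.0.4}, which says that a pair $(\cL,\cR)$ is a factorization system if and only if it \emph{uniquely factors} the ambient category, and then applies their composition result (Proposition~A.0.2) twice. Concretely, $(\cF^\op,\cN)$ uniquely factors $\Span_\cN(\cF)$ and $(\cR^\op,\Ee^\op)$ uniquely factors $\cF^\op$, so the triple $(\cR^\op,\Ee^\op,\cN)$ uniquely factors $\Span_\cN(\cF)$; since $(\Ee^\op,\cN)$ uniquely factors $\Span_{\Ee,\cN}(\cF)$, a second application of their Proposition~A.0.2 collapses the last two into $(\cR^\op,\Span_{\Ee,\cN}(\cF))$. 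This is extremely short once one accepts the external reference.

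Your direct verification is more self-contained and conceptually transparent. The factorization step~(A) is exactly right, and your reduction of orthogonality to generators in~(B) is clean. The case $\beta_s\perp\nu_n$ is indeed free from the standard factorization system $(\cF^\op,\cN)$. The case $\beta_s\perp\beta_e$ is the real work: your description of the mapping-space comparison is correct, and the construction of the inverse via $\Ee\perp\cR$ in $\cF$ together with the adequate-triple condition~(6) is the right idea, but the phrase ``routine pullback-pasting check'' conceals the $\infty$-categorical content---you must show the comparison map is an equivalence of \emph{spaces}, not merely a bijection on $\pi_0$, which requires tracking the space of equivalences of spans rather than just individual identifications. This is doable (the mapping spaces in span categories are themselves groupoids of spans, and the argument organises into a fibrewise check over the space of middle objects), but it is precisely the labour that the paper outsources to \cite{BS_Equifibred}. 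Finally, the retract-stability you mention is not a separate obligation: once you have wide subcategories, full orthogonality, and factorisations, both classes are automatically $\cL={}^{\perp}\cR$ and $\cR=\cL^{\perp}$, hence retract-closed.
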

\begin{proof}
    In view of \cite{BS_Equifibred}*{Proposition A.0.4},
    the notion of a factorization system $(\cC,\cC_L,\cC_R)$ on a category $\cC$
    is equivalent to the statement that $(\cC_L,\cC_R)$ \emph{uniquely factors} $\cC$
    in the sense of Definition A.0.1 of \emph{op.\ cit.}
    So we want to show that $(\cR^\op,\Span_{\Ee,\cN}(\cF))$ uniquely factors $\Span_\cN(\cF)$.

    We know from \cref{thm:spans}(5) above that $(\cF^\op,\cN)$ uniquely factors $\Span_\cN(\cF)$,
    and by assumption that $(\cR^\op,\cE^\op)$ uniquely factors $\cF^\op$.
    Therefore, an application of Proposition A.0.2 of \emph{op.\ cit.\ }shows that the triple
    $(\cR^\op,\cE^\op,\cN)$ uniquely factors $\Span_\cN(\cF)$.
    Now again \cref{thm:spans}(5) tells us that $(\cE^\op,\cN)$ uniquely factors $\Span_{\Ee,\cN}(\cF)$,
    and hence another application of their Proposition A.0.2 yields that $(\cR^{\op},\Span_{\Ee,\cN}(\cF))$ uniquely factors $\Span_{\cN}(\cF)$, as desired.
\end{proof}

Taking opposites, we obtain the desired
factorization system $(\Span_{\cN,\cE}(\cF), \cR)$ on $\Span_{\cN}(\cF)^\op$. Since $(\cF,\cN)$ is a weakly extensive span pair, \cite[Proposition 2.2.5(1)]{CHLL_NRings} shows that the coproduct in $\cF$ gives a product in $\Span_\cN(\cF)$.
In other words, $\Span_{\cN}(\cF)^\op$ admits coproducts
and the inclusion $\cF \subset \Span_{\cN}(\cF)^\op$ preserves them.
Moreover, since we assumed that $\cR$ has finite coproducts preserved by $\cR \subset \cF$,
we have now verified all assumptions necessary to apply the definitions and results of the previous
subsection to the current setting. In particular, we obtain a notion of $\und{\bfU}_\cR^\times$-cocompleteness
for $\cN$-normed $\cF$-precategories.
Using \cref{thm:CatM-cocomplete}, we can then show:

\begin{proposition}\label{prop:distributivity}
    Let $(\cF,\cN,\cR)$ be a distributive context and let $\cC$ be an $\cR$-distributive $\Nn$-normed $\Ff$-precategory.
    Then $\cC$ is $\und{\bfU}_\cR^\times$-cocomplete.
\end{proposition}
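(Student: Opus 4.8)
The plan is to deduce this directly from \Cref{thm:CatM-cocomplete}. As recalled in the discussion preceding the statement, a distributive context equips $T \coloneqq \Span_\cN(\cF)^\op$ with the factorization system $(\Span_{\cN,\Ee}(\cF), \cR)$ obtained by dualizing \Cref{prop:part_fact_on_spans}, and both $T$ and $M \coloneqq \cR$ have finite coproducts preserved by the inclusion $\cR \hookrightarrow \cF \hookrightarrow T$; moreover $\und{\bfU}_\cR^\times$ is by definition the instance of $\und{\bfU}_M^\times$ attached to this data. Thus we are precisely in the situation of \Cref{thm:CatM-cocomplete}, and it suffices to check that $\cC$ satisfies the two hypotheses of that theorem.

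The first hypothesis asks that $\cC \colon T^\op = \Span_\cN(\cF) \to \Cat$ factor through the non-full subcategory $\Cat(\sift) \subset \Cat$; this is word for word condition $(\ref{cond:fw-sifted})$ in the definition of $\cR$-distributivity, so there is nothing to do. The second hypothesis asks that for every $m \colon A \to B$ in $\cR = M$ the restriction $m^* \colon \cC(B) \to \cC(A)$ admit a left adjoint $m_!$, and that these left adjoints satisfy base change along every pullback square in $T$ with vertical legs in $M$. Existence of $m_!$ is exactly condition $(2)$ of \Cref{def:distributivity}. For the base change statement I would invoke \Cref{rem:distributivity_is_base_change}, which says that conditions $(3)$ and $(4)$ of \Cref{def:distributivity} are together equivalent to the assertion that every pullback square in $\Span_{\cN,\all}(\cF) \simeq \Span_\cN(\cF)^\op = T$ whose vertical maps lie in $\cR$ is vertically left adjointable --- i.e.\ that the Beck--Chevalley transformation $m'_! g^* \to f^* m_!$ is an equivalence for each such square. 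That is exactly the base change condition appearing in \Cref{thm:CatM-cocomplete}(2).

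With both hypotheses verified, \Cref{thm:CatM-cocomplete} immediately yields that $\cC$ is $\und{\bfU}_\cR^\times$-cocomplete. The only point requiring any care is this last one, namely matching conditions $(3)$--$(4)$ of $\cR$-distributivity with the single base change clause of \Cref{thm:CatM-cocomplete}(2) through the identification $\Span_{\cN,\all}(\cF) \simeq \Span_\cN(\cF)^\op$; but this has already been carried out in \Cref{rem:distributivity_is_base_change}, so I expect no genuine obstacle --- the substance of the result is entirely contained in \Cref{thm:CatM-cocomplete}, \Cref{prop:part_fact_on_spans}, and \Cref{rem:distributivity_is_base_change}.
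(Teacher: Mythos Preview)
Your proposal is correct and follows essentially the same route as the paper: both reduce to \Cref{thm:CatM-cocomplete} after verifying its hypotheses via the factorization system of \Cref{prop:part_fact_on_spans} and the reformulation of conditions (3)--(4) of \Cref{def:distributivity} as a base change condition (the paper cites \Cref{ex:distributivity-as-cocompleteness} where you cite \Cref{rem:distributivity_is_base_change}, but these carry the same content). One small point you glide over: \Cref{thm:CatM-cocomplete} also assumes that $(T,M) = (\Span_\cN(\cF)^\op,\cR)$ is a span pair, which the paper verifies explicitly via \Cref{rem:dist_diag_comm_in_span}; this is not quite contained in the ``discussion preceding the statement'' you appeal to, though it is implicit in your later use of \Cref{rem:distributivity_is_base_change}.
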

\begin{proof}
   By \Cref{rem:dist_diag_comm_in_span}, $(\Span_{\cN}(\cF)^\op,\cR)$ is a span pair, and so we may sensibly apply \cref{thm:CatM-cocomplete}
    to the factorization system $(\Span_{\cN,\cE}(\cF),\cR)$ and our $\cR$-distributive
    $\cN$-normed $\cF$-precategory $\cC$.
    Point (1) of \cref{def:distributivity} precisely spells out the definition of $\cC$ having fiberwise
    sifted colimits, while point (2) is identical to the second condition of \cref{thm:CatM-cocomplete}. Finally, we have seen in Example~\ref{ex:distributivity-as-cocompleteness} that points (3) and (4) of our definition of $\cR$-distributivity are equivalent to the adjoints $m_!$ satisfying basechange along all maps in $\Span_{\cN}(\cF)^{\op}$.
\end{proof}

In particular, we may consider the case $\cM = \cF$ (hence $\cE = \core\cF$ and $\Span_{\cE,\cN}(\cF) = \cN$)
and apply the results of \cref{subsec:cleft-Kan} to distributive $\Nn$-normed $\Ff$-precategories. We will explain now how this allows us to produce free normed algebras:

\begin{lemma}[cf.~\cite{NardinShah}*{Proposition~4.3.3 and Theorem~4.3.4}]\label{lem:left_kan_along_env}
Suppose $f\colon \cO\to \cO'$ is a map of $(\cF,\cN)$-operads and that $\cC$ is a distributive $\cN$-normed $\cF$-precategory. Then the restriction
\[
\Env(f)^*\colon {\Fun_{\cF}^{\Nstr}(\Env(\cO'), \cC)} \to {\Fun_{\cF}^{\Nstr}(\Env(\cO), \cC)}
\]
along the envelope of $f$ admits a left adjoint given by pointwise left Kan extension. Moreover, the Beck--Chevalley map
\[
    \begin{tikzcd}        \Fun_\cF^{\Nstr}(\Env(\cO),\cC)\arrow[d,"\fgt"']\arrow[r, "\Env(f)_!"] &[1em] \Fun_\cF^{\Nstr}(\Env(\cO'),\cC)\arrow[d, "\fgt"]\\       \Fun_\cF(\Env(\cO)|_{\Ff^\op},\Cc|_{\Ff^\op})\arrow[r, "\Env(f)_!"']\arrow[ur,"\BC_!"{description},Rightarrow,shorten=10pt] & \Fun_\cF(\Env(\cO')|_{\Ff^\op},\Cc|_{\Ff^\op})
    \end{tikzcd}
\]
is an equivalence.
\end{lemma}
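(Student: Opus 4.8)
The plan is to deduce the lemma from \cref{thm:cleft-Kan}, applied to $T \coloneqq \Span_\cN(\cF)^\op$ equipped with the factorization system of \cref{prop:part_fact_on_spans} in the case $\cR = \cF$, so that $\cE = \core\cF$, the right class of the factorization system on $T$ is $M = \cF$, and the left class is $\Span_{\cN,\cE}(\cF)\simeq\cN^\op$. Under these identifications one has $\PreCat{M} = \PreCat{\cF}$, the forgetful functor $\fgt$ is exactly restriction along $M\hookrightarrow T$ (dually along $\cF^\op\hookrightarrow\Span_\cN(\cF)$), and $\Fun_T(-,-) = \Fun_\cF^{\Nstr}(-,-)$; in particular the non-parametrized square appearing in the conclusion of \cref{thm:cleft-Kan} is precisely the square in the statement of the lemma. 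Since $\cC$ is distributive it is $\und{\bfU}_{\cF}^\times$-cocomplete by \cref{prop:distributivity} together with \cref{ex:distributivity-as-cocompleteness}, which is the standing hypothesis of \cref{thm:cleft-Kan} on $\cC$; and since $\cO$ and $\cO'$ are $(\cF,\cN)$-operads, their envelopes are $\cN$-normed $\cF$-categories by \cref{thm:oper_operad}(4), i.e.\ small $T$-categories. So the remaining task is to check the two geometric hypotheses of \cref{thm:cleft-Kan} for the $T$-functor $\Env(f)\colon\Env(\cO)\to\Env(\cO')$.

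The morphisms of $T$ in the left class correspond to forward spans in $\Span_\cN(\cF)$, i.e.\ to maps $n\colon C\rightarrownorm B$ in $\cN$, so one must show that $\Env(\cO')(n)\colon\Env(\cO')(C)\to\Env(\cO')(B)$ is a right fibration and that the naturality square of $\Env(f)$ along $n$ is a pullback. I would obtain this by unwinding the pullback description $\Env(\cO) = \cO\times_{\Span_\cN(\cF)}\Ar_\cN(\Span_\cN(\cF))$ of \cref{def:envelope}. Left cancellability of $(\cF,\cN)$ identifies each fibre $\Env(1)(A) = \Ar_\cN(\Span_\cN(\cF))_{/A}$ with the ordinary slice $\cN_{/A}$, and the descriptions of cocartesian edges in \cref{rem:cocart_in_Env_1} and \cref{thm:BHS}(1) show that the cocartesian transport of $\Env(1)$ along the forward span $n$ is post-composition $n_!\colon\cN_{/C}\to\cN_{/B}$, while that of $\Env(\cO)$ along $n$ acts as $n_!$ on the $\Ar_\cN$-coordinate and identically on the $\cO$-coordinate. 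Consequently, for each $A$ the category $\Env(\cO)(A)$ is the basechange of the $\cF^\op$-cocartesian fibration $\cO\to\Span_\cN(\cF)$ along the source projection $\cN_{/A}\to\Span_\cN(\cF)$, and $\Env(\cO)(n)$ is the basechange of $n_!$ along $\Env(\cO)(B)\to\cN_{/B}$; hence the naturality square of $\Env(f)$ along $n$ is the basechange along $n_!$ of the map $\Env(f)\colon\Env(\cO)(B)\to\Env(\cO')(B)$ lying over $\cN_{/B}$, which is therefore a pullback, and $\Env(\cO')(n)$ is a right fibration because $n_!$ is one. Finally, using left cancellability once more, $n_!\colon\cN_{/C}\to\cN_{/B}$ is identified with the forgetful functor $(\cN_{/B})_{/(C,n)}\to\cN_{/B}$ out of a slice category, hence is a right fibration, classified by $\Map_{\cN_{/B}}(-,(C,n))$.

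Granting the hypotheses, \cref{thm:cleft-Kan} produces the pointwise left Kan extension $\Env(f)_!$ as a left adjoint of $\Env(f)^*$ on $\Fun_\cF^{\Nstr}$ and shows that the Beck--Chevalley transformation in the square it exhibits---which, as observed above, is the square in the lemma---is an equivalence. I expect the main obstacle to be the verification of these two geometric hypotheses: this is where one must carry out the bookkeeping with the envelope sketched in the second paragraph, in particular the identification of $\Env(\cO)(A)$ as a basechange of $\cO$ and the compatibility of $\Env(f)$ with these basechanges. Everything else is a matter of matching the contravariant conventions of \cref{subsec:cleft-Kan} with the covariant ones used for $(\cF,\cN)$-operads; one may also note that the hypothesis verification only uses that $f$ is a map of $(\cF,\cN)$-\emph{preoperads}, the operad assumption entering solely through \cref{thm:oper_operad}(4) to guarantee that the envelopes are genuine $T$-categories.
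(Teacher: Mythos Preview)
Your proposal is correct and follows essentially the same approach as the paper: both apply \cref{thm:cleft-Kan} to $T=\Span_\cN(\cF)^\op$ with the factorization system $(\cN^\op,\cF)$, invoke \cref{prop:distributivity} for $\und{\bfU}_\cF^\times$-cocompleteness and \cref{thm:oper_operad}(4) for the envelopes being $T$-categories, and verify the two geometric hypotheses by identifying the cocartesian transport of $\Env(\cO)$ along a forward span as the basechange of the postcomposition right fibration $n_!\colon\cN_{/C}\to\cN_{/B}$. Your observation that the hypothesis verification only uses that $f$ is a map of preoperads is also made implicitly in the paper's proof.
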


\begin{proof}
We will show that a left adjoint to $\Env(f)^*$ exists by applying \Cref{thm:cleft-Kan} for the factorization system $(\cN^{\op},\cF)$ on $\Span_{\cN}(\cF)^{\op}$. To check the hypotheses, we note that by the previous proposition, $\Cc$ is $\ul{\bfU}_{\cF}^{\times}$-cocomplete. Also, the source and target of $\Env(f)$ are both $\cN$-normed $\cF$-categories by \Cref{thm:oper_operad}.

Next, consider a map $n\colon B\rightarrownorm C$ in $\cN$. Note that for an $\cN$-normed $\cF$-precategory\footnote{Also known as a $\Span_{\cN}(\cF)^{\op}$-precategory.} restriction along $n$, viewed as a morphism in $\cN^{\op}$, is what we have typically denoted $n_\otimes$. We may then observe that by the characterization of the cocartesian morphisms in $\Env(\cO)$ from \Cref{thm:BHS} (see also \cite[Proposition 2.3.3]{BHS_Algebraic_Patterns}), for an arbitrary $(\cF,\cN)$-preoperad $\cO$ the functor $n_\otimes\colon\Env(\cO)(B)\to\Env(\cO)(C)$ is equivalent to the basechange of the postcomposition functor $\cN_{/B}\to \cN_{/C}$, and hence a right fibration.

Finally, unpacking the definitions we find that the naturality square
\[
\begin{tikzcd}
	\Env(\cO)(B) \arrow[r]\arrow[d, "n_\otimes"'] & 	\Env(\cO')(B)\arrow[d, "n_\otimes"]\\
	\Env(\cO)(C) \arrow[r] & \Env(\cO')(C)
	\end{tikzcd}
	\]
	can be written as the pullback
	\[\hspace{-9.9444pt}
	\left(
	\begin{tikzcd}[cramped]
	\cO\arrow[r]\arrow[d,equal] & \cO' \arrow[d,equal]\\
	\cO\arrow[r] 							   & \cO'\vphantom{_{/B}}
	\end{tikzcd}
	\right)
	\longrightarrow
	\left(
	\begin{tikzcd}[cramped]
		\Span_{\cN}(\cF)\arrow[d,equal]\arrow[r,equal] & \Span_{\cN}(\cF)\arrow[d,equal]\\
		\Span_{\cN}(\cF)\arrow[r,equal] & \Span_{\cN}(\cF)\vphantom{_{/B}}
	\end{tikzcd}
	\right)
	\longleftarrow
	\left(\,
	\begin{tikzcd}[cramped]
		\vphantom{E}\vphantom{_{/B}}\smash{\cN_{/B}}\arrow[d]\arrow[r,equal] & \smash{\cN_{/B}}\vphantom{_{/B}}\arrow[d]\\
		\vphantom{E}\vphantom{_{/B}}\smash{\cN_{/C}}\arrow[r,equal] & \vphantom{E}\smash{\cN_{/C}}\vphantom{_{/B}}
	\end{tikzcd}
	\right)
	\]
	of pullback squares, so it is itself a pullback. We conclude by applying \Cref{thm:cleft-Kan}.
\end{proof}

\begin{theorem}\label{thm:free-algs}
    Let $\cC$ be a distributive $\cN$-normed $\cF$-precategory. Then the forgetful functor
    \[
        \bbU \colon \und{\CAlg}_\cF^\cN(\cC) \to \cC
    \]
    admits a left adjoint $\bbP$, and for each $A \in \cF$ we have an equivalence of adjunctions natural in $A$
    \[\begin{tikzcd}
            {\Fun_{\cF}^{\Nstr}(\und{\cN}^A, \cC)} & {\und{\CAlg}_\cF^\cN(\cC)(A)} \\
            {\Fun_\cF^{\Nstr}(\core \und{\cN}^A,\cC)} & {\cC(A)}
            \arrow["\sim", from=1-1, to=1-2]
            \arrow[""{name=0, anchor=center, inner sep=0}, "{i_A^*}", shift left=3, from=1-1, to=2-1]
            \arrow[""{name=1, anchor=center, inner sep=0}, "{{\mathbb{U}}(A)}", shift left=3, from=1-2, to=2-2]
            \arrow[""{name=2, anchor=center, inner sep=0}, "{(i_A)_!}", shift left=3, from=2-1, to=1-1]
            \arrow["\sim"', from=2-1, to=2-2]
            \arrow[""{name=3, anchor=center, inner sep=0}, "{\bbP(A)}", shift left=3, from=2-2, to=1-2]
            \arrow["\dashv"{anchor=center}, draw=none, from=2, to=0]
            \arrow["\dashv"{anchor=center}, draw=none, from=3, to=1]
    \end{tikzcd}\]
    Moreover, $\bbP$ is oplax natural with respect to normed functors $\cC \to \cD$.
\end{theorem}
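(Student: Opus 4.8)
The plan is to realize $\bbU$ levelwise as restriction along the envelope of a map of operads, obtain the levelwise left adjoints from \cref{lem:left_kan_along_env}, and then assemble these into a parametrized adjunction, reading off the two naturality statements by tracking mates.

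Fix $A\in\cF$. Since $\cF$ has finite coproducts, the represented $\cF$-precategory $\ul A$ is product-preserving, i.e.~an $\cF$-category, so by \cref{lem:unfurl} together with \cref{thm:oper_operad}(2),(3) the natural map $\incl_{\ul A}\colon\Triv(\ul A)\to\ul A^{\Ncoprod}$ is a map of $(\cF,\cN)$-operads. By \cref{prop:fna} its envelope is the pointwise groupoid-core inclusion $i_A\colon\core\und{\cN}^A\hookrightarrow\und{\cN}^A$, and by \cref{cor:forget-as-restrict} the forgetful functor $\bbU(A)$ is identified, naturally in $A$ and through the equivalences $\Fun_\cF^{\Nstr}(\und{\cN}^A,\cC)\simeq\und{\CAlg}_\cF^\cN(\cC)(A)$ and $\Fun_\cF^{\Nstr}(\core\und{\cN}^A,\cC)\simeq\cC(A)$, with the restriction functor $i_A^*=\Env(\incl_{\ul A})^*$. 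As $\cC$ is distributive, \cref{lem:left_kan_along_env} applies to $f=\incl_{\ul A}$ and produces a left adjoint $(i_A)_!$ of $i_A^*$ computed by pointwise left Kan extension; transporting through the above equivalences yields the asserted square of adjunctions, with $\bbP(A)\simeq(i_A)_!$ left adjoint to $\bbU(A)$.

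It then remains to upgrade the $\bbP(A)$ to a parametrized left adjoint $\bbP$ of the $\cF$-functor $\bbU$. By (the dual of) \cref{rem:adj_in_functor_cat} it suffices, beyond the levelwise adjoints just constructed, to verify the Beck--Chevalley condition, which here amounts to saying that for every $g\colon A\to B$ in $\cF$ left Kan extension along $i_A$ commutes with restriction along $\und{\cN}^g$, i.e.~that $\bbP(A)\,(\core\und{\cN}^g)^*\Rightarrow(\und{\cN}^g)^*\,\bbP(B)$ is an equivalence; here $\und{\cN}^g$ and $\core\und{\cN}^g$ are induced by postcomposition with $g$. The commutative square with vertical maps $i_A,i_B$ and horizontal maps $\core\und{\cN}^g,\und{\cN}^g$ is the image, under the pullback-preserving functor $\Env$, of the square of $(\cF,\cN)$-operads
\[\begin{tikzcd}
	\Triv(\ul A)\arrow[r]\arrow[d,"\incl_{\ul A}"'] & \Triv(\ul B)\arrow[d,"\incl_{\ul B}"]\\
	\ul A^{\Ncoprod}\arrow[r] & \ul B^{\Ncoprod}
\end{tikzcd}\]
with horizontal maps again induced by $g$. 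Using the identifications $\Triv(\ul A)\simeq(\cF_{/A})^\op$ and $\ul A^{\Ncoprod}\simeq\Span_\cN(\cF_{/A})$ of \cref{ex:Triv-Ncoprod-rep}, the fact that $\incl_{\ul A}$ is the inclusion of backwards maps, and that postcomposition with $g$ is conservative (so that the spans in $\Span_\cN(\cF_{/A})$ mapping to backwards spans are exactly the backwards spans), a short direct verification shows this square is a pullback, hence so is the resulting square of $\cN$-normed $\cF$-categories. The required Beck--Chevalley equivalence is then an instance of the parametrized base-change results of \cref{app:basechange} for pointwise left Kan extensions along a pullback square, applied exactly as in the proof of \cref{lem:left_kan_along_env} with the $i_A$ the pointwise-core inclusions. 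The same reasoning, now applied to mates of natural equivalences, also gives the naturality in $A$ of the square of adjunctions.

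Finally, $\und{\CAlg}_\cF^\cN(-)$ and $\fgt(-)$ are functors $\PreNmCat{\cF}{\cN}\to\PreCat{\cF}$ and $\bbU$ is a natural transformation between them all of whose components admit parametrized left adjoints whenever the source is distributive; passing to mates therefore produces an oplax natural transformation $\bbP$ in the direction $\fgt\Rightarrow\und{\CAlg}_\cF^\cN$, which is exactly the asserted oplax naturality in normed functors $\cC\to\cD$. The main obstacle is the Beck--Chevalley verification above: although the relevant square of operads is a strict pullback, deducing that left Kan extension commutes with the restriction functors $\und{\cN}^g$ requires the parametrized smooth and proper base-change machinery of \cref{app:basechange}, and this is where the real work lies.
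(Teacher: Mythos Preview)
Your overall strategy matches the paper's: identify $\bbU(A)$ with restriction along $i_A=\Env(\incl_{\ul A})$ via \cref{prop:fna} and \cref{cor:forget-as-restrict}, obtain the levelwise left adjoints from \cref{lem:left_kan_along_env}, verify a Beck--Chevalley condition to assemble these into a parametrized adjoint, and pass to mates for oplax naturality. Your route to the pullback square (via $\Env$ preserving pullbacks, since it is given by pulling back along the source map $\Ar_\cN(\Span_\cN(\cF))\to\Span_\cN(\cF)$) is a mild variant of the paper's direct argument using conservativity of $\und{\cN}^g$; both are fine.

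The gap is in the Beck--Chevalley step itself. A pullback square alone does not suffice to invoke the results of \cref{app:basechange}: \cref{prop:smooth-proper-basechange} requires one leg of the square to be smooth or proper. The paper supplies precisely this missing ingredient by observing that $\und{\cN}^g\colon\und{\cN}^A\to\und{\cN}^B$ is a pointwise right fibration (its value at each $X\in\cF$ is a basechange of the slice projection $\cF_{/A}\simeq(\cF_{/B})_{/(A,g)}\to\cF_{/B}$), hence proper by \cref{prop:param-lfib} and \cref{prop:lfib-smooth}. Your phrase ``applied exactly as in the proof of \cref{lem:left_kan_along_env}'' does not cover this: that lemma checks right-fibration conditions on the \emph{norm} maps $n_\otimes$ in order to feed into \cref{thm:cleft-Kan}, which are different maps serving a different purpose than $\und{\cN}^g$. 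Once you add the one-line verification that $\und{\cN}^g$ is a pointwise right fibration, your argument goes through and coincides with the paper's.
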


\begin{proof}
    Recall from \cref{prop:fna} that $i_A$ is equivalent to the map \[\Env(\incl)\colon \Env(\Triv(\ul{A}))\to \Env(\ul{A}^{\Ncoprod})\] and that the diagram
    \[\begin{tikzcd}
    	{\Fun_{\cF}^{\Nstr}(\Env(\und{A}^{\Ncoprod}), \cC)} & {\und{\CAlg}_\cF^\cN(\cC)(A)} \\
    	{\Fun_\cF^{\Nstr}(\Env(\Triv(\und{A})),\cC)} & {\cC(A).}
    	\arrow["\sim", from=1-1, to=1-2]
    	\arrow["\Env(\incl)^*"', from=1-1, to=2-1]
    	\arrow["{\mathbb{U}(A)}", from=1-2, to=2-2]
    	\arrow["\sim", from=2-1, to=2-2]
    	\arrow[ draw=none, from=2-1, to=2-2]
    \end{tikzcd}\]
    commutes naturally in $A$ by definition. Note that $\incl\colon \Triv(\ul{A}) \to \ul{A}^{\Ncoprod}$ is a map of $(\cF,\cN)$-operads by \Cref{thm:oper_operad}. So by the previous lemma, we deduce the existence of a left adjoint $\bbP(A) \coloneqq (i_A)_!$ to restriction which clearly makes the diagram above commute.

    It remains to check that the pointwise left adjoints constructed above are natural in $A$. Said differently, we have to show that the various $\bbP(A)$ assemble into a \emph{parametrized} left adjoint of $\bbU$, i.e.~that given a map $f\colon A\to B$ in $\cF$ with induced functor $f\colon \ul{\cN}^A\to \ul{\cN}^B$, the Beck--Chevalley transformation filling the diagram
    \[\begin{tikzcd}
    	{{\Fun_{\cF}^{\Nstr}(\iota\und{\cN}^B, \cC)} } & {{\Fun_{\cF}^{\Nstr}(\und{\cN}^B, \cC)} } \\
    	{{\Fun_{\cF}^{\Nstr}(\iota\und{\cN}^A, \cC)} } & {{\Fun_{\cF}^{\Nstr}(\und{\cN}^A, \cC)} }
    	\arrow["{(i_B)_!}", from=1-1, to=1-2]
    	\arrow["{(\iota f)^*}"', from=1-1, to=2-1]
    	\arrow["{f^*}", from=1-2, to=2-2]
    	\arrow[shorten <=5pt, shorten >=5pt, Rightarrow, from=2-1, to=1-2, "\BC_!"{description}]
    	\arrow["{(i_A)_!}"', from=2-1, to=2-2]
    \end{tikzcd}\]
    is invertible. However, we note that the square
    \[\begin{tikzcd}
    	{\iota\ul{\cN}^A} & {\ul{\cN}^A} \\
    	{\iota\ul{\cN}^B} & {\ul{\cN}^B}
    	\arrow["{i_A}", from=1-1, to=1-2]
    	\arrow["\iota f"', from=1-1, to=2-1]
    	\arrow["\lrcorner"{anchor=center, pos=0.125}, draw=none, from=1-1, to=2-2]
    	\arrow["f", from=1-2, to=2-2]
    	\arrow["{i_B}", from=2-1, to=2-2]
    \end{tikzcd}\]
    is a pullback since $f\colon \ul{\cN}^A\to \ul{\cN}^B$ is conservative.
    On the other hand, $f \colon \und{\cN}^A \to \und{\cN}^B$ is a (pointwise) right fibration, and
    hence the claim follows from parametrized smooth/proper basechange (see~\cref{prop:lfib-smooth} and \cref{prop:smooth-proper-basechange}).

    Finally, the oplax naturality is obtained from the naturality of $\bbU$ by passing to left adjoints, see \cite[Corollary F]{HHLNb}.
\end{proof}

As promised, we can now deduce:

\begin{corollary}\label{cor:par_U_monadic}
Suppose $\Cc$ is a distributive $\Nn$-normed $\Ff$-precategory and let $A\in \cF$. Then the forgetful functor $\mathbb{U}\colon \und{\CAlg}^{\cN}_{\cF}(\Cc)(A)\to \Cc(A)$ is a monadic right adjoint.
\begin{proof}
    Since $\bbU$ admits a left adjoint $\bbP$ by the previous result,
    and moreover is conservative and preserves sifted colimits by \cref{lem:calg-fib-colims},
    this follows from the Barr--Beck--Lurie theorem, see \cite[Theorem 4.7.3.5]{HA}.
\end{proof}
\end{corollary}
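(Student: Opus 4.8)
\emph{Proof proposal.} The plan is to verify the hypotheses of the Barr--Beck--Lurie monadicity theorem \cite[Theorem 4.7.3.5]{HA} for the functor $\bbU\colon\und{\CAlg}^{\cN}_{\cF}(\Cc)(A)\to\Cc(A)$. There are three things to check: that $\bbU$ admits a left adjoint, that $\bbU$ is conservative, and that $\bbU$ preserves colimits of $\bbU$-split simplicial objects.

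First, the existence of a left adjoint $\bbP$ to $\bbU$ is exactly the content of \cref{thm:free-algs}, which applies since $\Cc$ is distributive by hypothesis; the left adjoint is constructed pointwise in $A$ as the left Kan extension along the groupoid-core inclusion $i_A\colon\core\ul{\cN}^A\hookrightarrow\ul{\cN}^A$. Second, conservativity of $\bbU$ is \cref{lem:calg-fib-colims}(1), which in turn comes down to the fact that $\core\ul{\cN}^A\hookrightarrow\ul{\cN}^A$ is essentially surjective and $\bbU$ is restriction along this inclusion. Third, for the colimit-preservation condition it suffices to observe that $\Cc$, being distributive, has fiberwise sifted colimits (\cref{def:distributivity}(1)), so that \cref{lem:calg-fib-colims}(2) applies to give that $\und{\CAlg}^{\cN}_{\cF}(\Cc)$ also has fiberwise sifted colimits and that $\bbU$ preserves them; in particular $\bbU$ preserves geometric realizations of simplicial objects, which is more than enough since $\bbU$-split simplicial objects are in particular simplicial objects. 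Combining these three points, Barr--Beck--Lurie yields that $\bbU$ is monadic.

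I do not expect any serious obstacle here: the corollary is a formal consequence of results already established, and the only mild subtlety is making sure the colimit hypothesis of Barr--Beck--Lurie is met, which is handled by passing from split simplicial objects to arbitrary sifted (or even just geometric-realization-shaped) colimits via \cref{lem:calg-fib-colims}.
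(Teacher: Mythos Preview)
Your proposal is correct and follows essentially the same approach as the paper: both apply the Barr--Beck--Lurie theorem, using \cref{thm:free-algs} for the left adjoint and \cref{lem:calg-fib-colims} for conservativity and preservation of sifted colimits. Your version simply spells out the three hypotheses in slightly more detail than the paper's one-sentence proof.
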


Finally we may obtain the advertised formula for free algebras.

\begin{corollary}\label{cor:univ-formula-term-ex}
	Assume $\Ff$ has a terminal object $1$, and let $\Cc$ be any distributive $\Nn$-normed $\Ff$-precategory.
    Then $\mathbb U\mathbb P\colon\Cc(1)\to\Cc(1)$ is equivalent to the functor
	\[
        \cC(1)
        \simeq \Fun_\cF^{\Nstr}(\core \und{\cN}^1,\cC)
        \xto{\;\res\;} \Fun_\cF(\core \cN_{/-},\cC)
        \xto{\;\colim\;} \cC(1)
	\]
    where $\cN_{/-} \colon \cF^\op \to \Cat$ is given by $A\mapsto\cN_{/A}$ with functoriality via pullback. Moreover, this equivalence is itself oplax natural in normed functors $\Cc\to\Dd$.
\end{corollary}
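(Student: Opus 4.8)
The plan is to combine the universal formula from Theorem~\ref{thm:free-algs} with the general Kan-extension comparison of Theorem~\ref{thm:cleft-Kan}, specialised to the factorization system $(\cN^\op,\cF)$ on $\Span_\cN(\cF)^\op$. By Theorem~\ref{thm:free-algs}, $\bbU\bbP\colon\cC(1)\to\cC(1)$ is the composite of the equivalence $\cC(1)\simeq\Fun_\cF^{\Nstr}(\core\und{\cN}^1,\cC)$, the left Kan extension $(i_1)_!$ along $i_1\colon\core\und{\cN}^1\hookrightarrow\und{\cN}^1$, and the equivalence $\Fun_\cF^{\Nstr}(\und{\cN}^1,\cC)\simeq\cC(1)$ which (unwinding \cref{cor:forget-as-restrict}) is evaluation at $(\id_1,\id_1)$, i.e.\ restriction along $\ul 1\hookrightarrow\und{\cN}^1|_{\Ff^\op}$ followed by the Yoneda equivalence $\Fun_\cF(\ul1,\cC)\simeq\cC(1)$. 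So the content of the corollary is to identify this $(i_1)_!$ with the displayed $\colim$, which amounts to: (a) showing the Kan extension along $i_1$ can be computed on underlying $\Ff$-precategories, and (b) identifying the relevant underlying data.

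First I would invoke \cref{lem:left_kan_along_env} (or directly Theorem~\ref{thm:cleft-Kan}) for the map of $(\cF,\cN)$-operads $\incl\colon\Triv(\ul1)\to\ul1^\Ncoprod$, whose envelope is $i_1\colon\core\und{\cN}^1\to\und{\cN}^1$ by \cref{prop:fna}; the hypotheses of \cref{thm:cleft-Kan} for this map were already verified in the proof of \cref{lem:left_kan_along_env}. This gives a Beck--Chevalley equivalence
\[
    \fgt\circ(i_1)_!\simeq(i_1|_{\Ff^\op})_!\circ\fgt\colon\Fun_\cF^{\Nstr}(\core\und{\cN}^1,\cC)\to\Fun_\cF(\und{\cN}^1|_{\Ff^\op},\cC).
\]
Next, since $1$ is terminal in $\Ff$, \cref{ex:Triv-Ncoprod-rep} together with \cref{def:n-cocart,constr:n-a} identifies $\und{\cN}^1|_{\Ff^\op}=\cF_{/1}\times_\cF\cN_{/-}\simeq\cN_{/-}$ and $\core\und{\cN}^1|_{\Ff^\op}\simeq\core\cN_{/-}$, with $i_1|_{\Ff^\op}$ the inclusion of the pointwise groupoid core $\core\cN_{/-}\hookrightarrow\cN_{/-}$. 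Under these identifications the source equivalence $\cC(1)\simeq\Fun_\cF^{\Nstr}(\core\und{\cN}^1,\cC)$ postcomposed with $\fgt$ is exactly the first map $\cC(1)\simeq\Fun_\cF(\core\cN_{/-},\cC)$ in the displayed composite.

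Then I would compose the pointwise Kan extension $(i_1|_{\Ff^\op})_!\colon\Fun_\cF(\core\cN_{/-},\cC)\to\Fun_\cF(\cN_{/-},\cC)$ with restriction along $\ul1\hookrightarrow\cN_{/-}$ (picking out $\id_1$) and the Yoneda equivalence $\Fun_\cF(\ul1,\cC)\simeq\cC(1)$; by the pointwise formula for parametrized Kan extensions (Remark~\ref{rk:pointwise-Kan}) this composite is precisely $\colim$ over the slice $\ul1\times_{\cN_{/-}}(\cN_{/-})_{/\id_1}\simeq\core\cN_{/-}$, i.e.\ the second map $\Fun_\cF(\core\cN_{/-},\cC)\xto{\colim}\cC(1)$ in the statement. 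Chaining these identifications, and noting that the target equivalence $\Fun_\cF^{\Nstr}(\und{\cN}^1,\cC)\simeq\cC(1)$ factors through $\fgt$ and the same restriction-and-Yoneda as just used (by \cref{cor:forget-as-restrict} and the description of the unit $\eta$ in \cref{thm:BHS}), yields the claimed formula for $\bbU\bbP$. Finally, the oplax naturality in normed functors is inherited from the oplax naturality of $\bbP$ in Theorem~\ref{thm:free-algs} together with the (strict) naturality of all the restriction and Yoneda equivalences and of the Beck--Chevalley comparison of \cref{thm:cleft-Kan}.

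The main obstacle I anticipate is bookkeeping rather than conceptual: one must carefully track that the two equivalences $\cC(1)\simeq\Fun_\cF^{\Nstr}(\core\und{\cN}^1,\cC)$ and $\Fun_\cF^{\Nstr}(\und{\cN}^1,\cC)\simeq\cC(1)$ are implemented via the \emph{same} underlying restriction-to-$\ul1$ map after applying $\fgt$, so that the $\fgt$'s cancel correctly against the Beck--Chevalley square of \cref{thm:cleft-Kan}; this is exactly the compatibility already encoded in the commuting square of \cref{thm:free-algs} and in \cref{cor:forget-as-restrict}, so the argument is really just an unwinding of those diagrams, but writing it so that the naturality in $\cC$ (needed for the oplax-naturality addendum) is manifest requires some care.
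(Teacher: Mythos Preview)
Your proposal is correct and follows essentially the same approach as the paper: organize $\bbU\bbP$ via the square of \cref{thm:free-algs}, pass to underlying $\cF$-precategories using the Beck--Chevalley comparison of \cref{thm:cleft-Kan}, and then invoke the pointwise formula (Remark~\ref{rk:pointwise-Kan}) to identify the resulting Kan extension with the displayed $\colim$; the paper simply packages these steps into a single large commutative diagram. One small slip: the comma category you should be writing is $\core\cN_{/-}\times_{\cN_{/-}}(\cN_{/-})_{/\id_1}$ (not $\ul1\times_{\cN_{/-}}(\cN_{/-})_{/\id_1}$), which is equivalent to $\core\cN_{/-}$ since $\id_1$ is terminal in $\cN_{/1}$---but your conclusion and the rest of the argument are unaffected.
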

\begin{proof}
    The result follows from the following commutative diagram:
    \[\begin{tikzcd}
        {\cC(1)} &[.5em] {\und{\CAlg}_\cF^\cN(\cC)(1)} &[.5em] {\cC(1)} \\
        {\Fun^{\Nstr}_\cF(\core \und{\cN}^1, \cC)} & {\Fun_\cF^{\Nstr}(\und{\cN}^1,\cC)} & {\Fun_\cF^{\Nstr}(\core \und{\cN}^1,\cC)} \\
        {\Fun_\cF(\core \cN_{/-},\cC)} & {\Fun_\cF(\cN_{/-}, \cC)} & {\Fun_\cF(\core \cN_{/-},\cC)} \\
        {\Fun_\cF(\core \cN_{/-},\cC)} & {\cC(1)} & {\cC(1).}
        \arrow["{\bbP(1)}", from=1-1, to=1-2]
        \arrow["{\bbU(1)}", from=1-2, to=1-3]
        \arrow["{\ev_{\id_1}}"', from=2-1, to=1-1]
        \arrow["\sim", draw=none, from=2-1, to=1-1]
        \arrow["{i_!}", from=2-1, to=2-2]
        \arrow["\res", from=2-1, to=3-1]
        \arrow["{\ev_{\id_1}}", from=2-2, to=1-2]
        \arrow["\sim"', draw=none, from=2-2, to=1-2]
        \arrow["{i^*}", from=2-2, to=2-3]
        \arrow["\res"', from=2-2, to=3-2]
        \arrow["{\ev_{\id_1}}", from=2-3, to=1-3]
        \arrow["\sim"', draw=none, from=2-3, to=1-3]
        \arrow["\res"', from=2-3, to=3-3]
        \arrow["{(i|_{\cF^\op})_!}", from=3-1, to=3-2]
        \arrow[Rightarrow, no head, from=3-1, to=4-1]
        \arrow["{(i|_{\cF^\op})^*}", from=3-2, to=3-3]
        \arrow["{\ev_{\id_1}}"', from=3-2, to=4-2]
        \arrow["{\ev_{\id_1}}"', from=3-3, to=4-3]
        \arrow["\colim", from=4-1, to=4-2]
        \arrow[Rightarrow, no head, from=4-2, to=4-3]
    \end{tikzcd}\]
 	The middle left square commutes by \cref{thm:cleft-Kan} while the left bottom square commutes by the pointwise formula for left Kan extensions (Remark~\ref{rk:pointwise-Kan}), since the comma category $\core \cN_{/-} \times_{\cN_{/-}} (\cN_{/-})_{/\id_1}$ is equivalent to $\core \cN_{/-}$. Commutativity of the rest of the diagram is clear. Since the right vertical composite is the identity on $\cC(1)$, the claim follows.
    All maps except the horizontal ones on the left are natural in normed functors $\cC^\tensor \to \cD^\tensor$.
    The squares on the left are (by definition) horizontally right adjointable,
    and the right adjoints are all natural with respect to normed functors $\cC^\tensor \to \cD^\tensor$,
    so that the left adjoints inherit an oplax naturality
    using \cite[Corollary F]{HHLNb}.
\end{proof}

Combining the three previous results proves \Cref{free_omnibus}.

\section{Free normed global algebras}\label{sec:free-global}

Recall from \cref{ex:gog,ex:gog-terminology} that we have a distributive context $(\Fglo,\Forb,\Fglo)$,
where $\Fglo$ is the ($\infty$-)category of finite 1-groupoids and
$\Forb$ denotes the wide subcategory of $\Fglo$ spanned by the faithful functors.
Following the conventions in Example~\ref{ex:G-distributivity}, we refer to $\Fglo$-(pre)categories as \emph{global (pre)categories},
and to $\Forb$-normed $\Fglo$-(pre)categories as \emph{normed global (pre)categories.}
Given such normed global precategories $\cC$ and $\Dd$, we write
\[
    \Fun_{\gl}^{\otimes}(\cC,\Dd) \coloneqq \Fun_{\Fglo}^{{\raisebox{-.65pt}{${\scriptstyle\Forb}$\hspace{.2pt}}\mathrel{\textup{-}}\otimes}}(\Cc,\Dd)\quad
    \text{and}\quad \und{\CAlg}_\gl(\cC) \coloneqq \und{\CAlg}_\Fglo^\Forb(\cC)
\]
for the category of normed global functors and the global precategory of (equivariantly) normed algebras in $\cC$, respectively. Moreover, it will be convenient to introduce the following suggestive notation for the functoriality of a normed global category.

\begin{notation}
    Let $\cC$ be a normed global category.
    \begin{enumerate}
    	 \item For every injective group homomorphism $H \hookrightarrow G$
    	we have a \emph{restriction functor}
    	\[
    	\Res_H^G\colon \Cc(G)\to \Cc(H)
    	\]
    	induced by the span $G\hookleftarrow H = H$.
            A left adjoint, if it exists, gives an \emph{induction functor}
            \[
                \Ind_H^G \colon \cC(H) \to \cC(G).
            \]

        \item For every surjective group homomorphism $K \rightarrowepic G$
        we have an \emph{inflation functor}
            \[
                \Inf^G_K\colon \Cc(G) \to \Cc(K)
            \]
            induced by the span $G\leftarrowepic K = K$; we will use the same notation in the case that $K$ is the empty groupoid. A left adjoint, if it exists, gives a \emph{subduction functor}
            \[
                \Sub_K^G \colon \cC(K) \to \cC(G).
            \]
        \item For every injective group homomorphism $H \hookrightarrow G$
            we have \emph{norm functor}
            \[
                \Nm_H^G \colon \Cc(H)\to \Cc(G)
            \]
            induced by the span $H=H\hookrightarrow G$. We will employ the same notation when $H$ is the empty groupoid, in which case this is given for a normed global category by the inclusion of the symmetric monoidal unit.
    \end{enumerate}
    Note that in each case we commit a slight abuse of notation,
    as the functors actually depend on the specific morphisms of groupoids induced
    by the maps of groups, which are left implicit.
\end{notation}

Recall from Example~\ref{ex:gog} the notion of distributive normed global (pre)categories.
We will find use for the following slight weakening of this notion.

\begin{definition}
    We say a normed global precategory $\Cc$ is \emph{weakly distributive} if it satisfies the following conditions:
    \begin{enumerate}
    	\item Each $\Cc(A)$ has sequential colimits, and these are preserved by the restriction and norm functors.
        \item Given a map $f\colon G\to K$ in $\Fglo$, the restriction $f^*\colon\Cc(K)\to\Cc(G)$ has a left adjoint $f_!$.
        \item The left adjoints satisfy basechange along all maps in $\Span_{\Forb}(\Fglo)^\op$. Just as in \Cref{def:distributivity} this decomposes into two separate conditions.
    \end{enumerate}
\end{definition}

\begin{remark}
    The only difference between weak distributivity in the sense of the preceding definition and distributivity in the sense of \cref{def:distributivity} is that for the former we do not assume the existence of all fiberwise sifted colimits, but only the sequential ones. This additional flexibility will become useful in the next section in order to apply our theorem to a certain normed global $1$-category whose underlying category is given by the category of \emph{flat symmetric spectra}, since we do not even know whether the latter admits reflexive coequalizers.
\end{remark}

Let us quickly record the following stability property of weakly distributive global categories, which will be useful later.

\begin{lemma}\label{lem:shift-distr}
	If $\cC$ is a weakly distributive normed global precategory
	and $G \in \Fglo$, then $\cC(- \times G)$ is again weakly distributive.
\end{lemma}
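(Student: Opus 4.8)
The plan is to realise $\cC(-\times G)$ as the restriction of $\cC$ along an endofunctor of $\Span_\Forb(\Fglo)$ and then to transport all the relevant structure from $\cC$. First I would note that $(-\times G)\colon\Fglo\to\Fglo$ preserves faithful functors (a faithful functor of finite $1$-groupoids stays injective on automorphism groups after taking a product with $G$) and preserves all pullbacks (these commute with products); hence $(-\times G)$ is an endomorphism of the adequate triple $(\Fglo,\Fglo,\Forb)$ and induces an endofunctor $\Span(-\times G)$ of $\Span_\Forb(\Fglo)$. Thus $\cC(-\times G)=\cC\circ\Span(-\times G)$ — the normed global precategory already considered in \cref{lemma:CAlg-shift-general} — is again a normed global precategory. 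Unwinding the description of backwards and forwards maps from \cref{thm:spans} shows that the restriction functor of $\cC(-\times G)$ along a map $f\colon A\to B$ of $\Fglo$ is the restriction functor $(f\times G)^*$ of $\cC$, that its norm functor along a faithful $n\colon A\hookrightarrow B$ is the norm functor of $\cC$ along the (again faithful) map $n\times G$, and that $(f\times G)^*$ admits a left adjoint in $\cC(-\times G)$ as soon as it does in $\cC$.

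Given these identifications, conditions (1) and (2) in the definition of weak distributivity for $\cC(-\times G)$ are immediate from the corresponding conditions for $\cC$: the categories $\cC(-\times G)(A)=\cC(A\times G)$ have sequential colimits, preserved by the restriction and norm functors above, and each restriction $f^*$ has a left adjoint. For condition (3), I would invoke \cref{rem:distributivity_is_base_change}, which reduces it to conditions (3) and (4) of \cref{def:distributivity}: that $\cC(-\times G)$ sends every pullback square of $\Fglo$ to a vertically left-adjointable square, and that it sends the square in $\Span_\Forb(\Fglo)$ attached to every distributivity diagram (see \cref{rem:dist_diag_comm_in_span}) to a horizontally left-adjointable one. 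Since $\cC(-\times G)=\cC\circ\Span(-\times G)$ and $\cC$ is itself weakly distributive, both statements reduce to showing that $(-\times G)\colon\Fglo\to\Fglo$ carries pullback squares to pullback squares and distributivity diagrams to distributivity diagrams.

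The first of these is immediate, and the second — which I expect to be the only non-formal point — goes as follows. Recall from \cref{def:distributivity_diagram} that a distributivity diagram for $m\colon A\to B$ and a faithful $n\colon B\hookrightarrow C$ exhibits $m'\colon Y\to C$ as the dependent product $n_*m$ in the locally cartesian closed category $\Fglo$ (\cref{ex:gog}), with $X=B\times_C Y$ and $\epsilon$ the counit of $n^*\dashv n_*$ (the requirement $m'\in\cR$ being vacuous here). As $(-\times G)$ preserves pullbacks it commutes with $n^*$ and with the formation of $X$, so it suffices to know that it commutes with $n_*$; this is precisely the Beck--Chevalley equivalence for dependent products along the pullback square obtained by pulling $n\colon B\to C$ back along the projection $C\times G\to C$, which holds in any locally cartesian closed category. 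Therefore $(-\times G)$ carries the distributivity diagram for $(m,n)$ to the distributivity diagram for $(m\times G,\,n\times G)$, and applying $\cC$ and using its weak distributivity completes the verification of condition (3). Everything outside this Beck--Chevalley input is routine bookkeeping with the functoriality of $\Span(-\times G)$.
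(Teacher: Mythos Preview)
Your argument is correct and follows essentially the same route as the paper: both reduce to showing that $(-\times G)\colon\Fglo\to\Fglo$ preserves pullbacks and distributivity diagrams, with the former immediate and the latter the only substantive point. Your invocation of Beck--Chevalley for dependent products in the locally cartesian closed category $\Fglo$ is exactly the content of the paper's explicit diagram chase comparing mapping spaces via the adjunction $(\pr_!\dashv\pr^*)$, just phrased more conceptually.
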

\begin{proof}
	That $\cC(- \times G)$ again has fiberwise sequential colimits is immediate, as is the existence of left adjoints to restrictions. Finally, basechange for such maps will follow easily from the fact that
	\[
	-\otimes G \coloneqq \Span_{\Forb}(-\times G) \colon \Span_{\Forb}(\Fglo) \to \Span_{\Forb}(\Fglo)
	\]
	preserves pullbacks along backwards maps, for which we argue now. It suffices to show that $-\otimes G$ preserves pullbacks of forwards and backwards maps along backwards maps separately. The latter is immediate from the fact $- \times G \colon \Fglo \to \Fglo$ preserves pullbacks and $\Fglo^\op \subset \Span_\Forb(\Fglo)$
    sends pullbacks in $\Fglo$ to pullbacks in $\Span_\Forb(\Fglo)$.
    For the former, it suffices to show that $-\times G\colon \Fglo\to \Fglo$ preserves distributivity diagrams. Borrowing the notation of \Cref{def:distributivity_diagram}, this follows immediately from the following commutative diagram
	\[\hskip-37pt\hfuzz=37pt\begin{tikzcd}[cramped]
		{\Map_{\mathcal{F}_{/C\times G}}(\varphi,m'\times G)} &[.5em] {\Map_{\mathcal{F}_{/B\times G}}((n\times G)^*\varphi,m''\times G)} &[.5em] {\Map_{\mathcal{F}_{/B\times G}}((n\times G)^*\varphi,m\times G)} \\
		{\Map_{\mathcal{F}_{/C}}(\varphi,m')} & {\Map_{\mathcal{F}_{/B}}(n^*\varphi,m'')} & {\Map_{\mathcal{F}_{/B}}(n^*\varphi, m).}
		\arrow["{(n\times G)^*}", from=1-1, to=1-2]
		\arrow["\sim", from=1-1, to=2-1]
		\arrow["{(\epsilon \times G)_*}", from=1-2, to=1-3]
		\arrow["\sim", from=1-2, to=2-2]
		\arrow["\sim", from=1-3, to=2-3]
		\arrow["{n^*}", from=2-1, to=2-2]
		\arrow["\epsilon_*", from=2-2, to=2-3]
	\end{tikzcd}\]
	The vertical equivalences are the adjunction equivalences, while the left and right squares commutes by basechange and naturality, respectively.
\end{proof}

In this section we will prove the key computational input for Theorem~\ref{introthm:global-param}:

\begin{theorem}\label{thm:free-algebra-formula}
    Let $\Cc$ be a weakly distributive normed global category and let $G$ be any finite group. Then $\mathbb U\colon \ul\CAlg_\gl(\Cc)(G)\to\Cc(G)$ has a left adjoint $\mathbb P$, and there exists an equivalence
    \begin{equation}\label{eq:da-formula}
        \psi\colon\coprod_{n=0}^\infty \Sub^{G}_{\Sigma_n\times G}\Nm^{\Sigma_n\times G}_{\Sigma_{n-1}\times G}\Inf^G_{\Sigma_{n-1}\times G}(X) \iso\mathbb U\mathbb P(X)
    \end{equation}
    natural in $X\in\Cc(G)$, with the convention that $\Sigma_{-1}=\emptyset$ is the empty groupoid. Moreover, these equivalences can be chosen oplax naturally in $\Cc$. In particular, we have for every further weakly distributive normed global category $\Dd$, every normed global functor $F\colon\Cc\to\Dd$, and every $X\in\Cc(G)$ a commutative diagram
    \[
        \begin{tikzcd}
            \coprod_{n=0}^\infty \Sub^{G}_{\Sigma_n \times G}\Nm^{\Sigma_n \times G}_{\Sigma_{n-1} \times G}\Inf^{G}_{\Sigma_{n-1} \times G}F(X)\arrow[r, "\psi", "\sim"']\arrow[d, "\BC_!"'] & \mathbb U\mathbb P(F(X))\arrow[d, "\BC_!"]\\
            F\left(\coprod_{n=0}^\infty \Sub^{G}_{\Sigma_n \times G}\Nm^{\Sigma_n \times G}_{\Sigma_{n-1} \times G}\Inf^G_{\Sigma_{n-1} \times G} X\right)\arrow[r, "F(\psi)"', "\sim"] & F(\mathbb U\mathbb PX)\rlap.
        \end{tikzcd}
    \]
\end{theorem}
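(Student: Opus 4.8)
The strategy is to apply the universal formula for free normed algebras from \Cref{cor:univ-formula-term-ex} (part of \Cref{free_omnibus}) to the distributive context $(\Fglo,\Forb,\Fglo)$ and to the specific group $G$, and then to identify the abstract expression $\colim_{\res}$ appearing there with the concrete sum \eqref{eq:da-formula}. First I would reduce to the case where $G$ is terminal: by \cref{lemma:CAlg-shift-general} we have $\ul\CAlg_\gl(\Cc)(G) \simeq \CAlg_\gl(\Cc(G\times-))$, and by \cref{lem:shift-distr} the normed global category $\Cc(G\times-)$ is again weakly distributive, so it suffices to prove the formula for $\CAlg_\gl(\Dd)(1)$ over $\Cc(G\times 1) = \Cc(G)$ for arbitrary weakly distributive $\Dd$; the abstract inflation/norm/subduction functors on $\Cc(G\times-)$ unwind to the displayed functors $\Inf^G_{\Sigma_{n-1}\times G}$, $\Nm^{\Sigma_n\times G}_{\Sigma_{n-1}\times G}$, $\Sub^G_{\Sigma_n\times G}$ since $\Sigma_k \times G \to G$ are the relevant maps in $\Fglo$ after the shift. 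Strictly speaking \Cref{cor:univ-formula-term-ex} is stated for distributive (not merely weakly distributive) $\Cc$; here one observes that the only place fiberwise sifted colimits enter the proof of \Cref{thm:CatM-cocomplete,thm:free-algs} through \cref{lemma:basic-colim,lemma:colimit-closure} is via categories $I_k$ with terminal objects and via the Bousfield--Kan rewriting, and in the present situation the indexing category $\core\Forb_{/-}$ is a groupoid, so only fiberwise \emph{sequential} colimits (indexed over $\coprod_{n} B(\Sigma_n\times G)$, a disjoint union of $1$-groupoids, filtered by the summands) are needed — which is exactly what weak distributivity supplies.

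Next I would make $\colim_{\res}\colon \Fun_\Fglo(\core\Forb_{/-},\Cc) \to \Cc(1)$ explicit. By \cref{cor:iota_N_rep}, $\core\ul\Forb^1 \simeq \Map_{\Span_\Forb(\Fglo)}(1,-)$, so $\core\Forb_{/-}$, as a global precategory, is the functor $A \mapsto \core(\Forb_{/A})$ with pullback functoriality; since $\Forb_{/\ast} \simeq \mathbb F$ and a finite set $\{1,\dots,n\}$ has automorphism groupoid $B\Sigma_n$, the underlying category $\lim_{\Fglo^\op}\core\Forb_{/-}$ and more precisely the relevant free-algebra colimit decomposes over $n \geq 0$. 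The point is that $\core\Forb_{/-}$ is the coproduct $\coprod_{n\geq 0}$ of the global subcategories $(\Sigma_n)$ represented by the one-object groupoids $B\Sigma_n$ (equivalently $\Sigma_n \in \Fglo$, using the equivalence $\Forb_{/B\Sigma_n}\simeq\Fun(B\Sigma_n,\mathbb F)$ of \cref{ex:goo}, restricted to cores). Feeding this into the pointwise formula (\cref{rk:pointwise-Kan}) for the Kan extension $(i|_{\Fglo^\op})_!$ along $\core\Forb_{/-}\to\ul 1$, the colimit becomes $\coprod_{n\geq 0}$ of the colimit over $B\Sigma_n = B(\Sigma_n)$, and the latter colimit over the one-object groupoid $\Sigma_n$ unwinds, via the backwards-map functoriality $\Inf$ and the subduction $\Sub$ left adjoint to $\Inf^1_{\Sigma_n}$, to $\Sub^1_{\Sigma_n}(X_n)$, where $X_n$ is the value of $\res(X)$ on the object $n \in \core\Forb_{/\ast}$.

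It then remains to identify $X_n$ with $\Nm^{\Sigma_n}_{\Sigma_{n-1}}\Inf^1_{\Sigma_{n-1}}X$. Here $\res\colon \Fun^{\Forb\text-\otimes}_\Fglo(\core\ul\Forb^1,\Cc) \xrightarrow{\sim} \Cc(1) \to \Fun_\Fglo(\core\Forb_{/-},\Cc)$ sends $X$ to the composite $\core\Forb_{/-}\hookrightarrow \ul\Forb^1 \to \Cc$ where the second map is the normed functor corepresented by $X$ (cf.\ \cref{rem:free-normed}: $(\id_1,\id_1)$ is the universal normed algebra in degree $1$). So $X_n$ is obtained by transporting $X$ along the forward (norm) map in $\Span_\Forb(\Fglo)$ corresponding to the inclusion $B\Sigma_{n-1}\hookrightarrow B\Sigma_n$ of the point-stabilizer, composed with an inflation; concretely, the object $n = \{1,\dots,n\} \in \core\Forb_{/B\Sigma_n}$ together with the structure of $\ul\Forb^1$ corresponds to the span $1 \leftarrowepic B\Sigma_{n-1} \hookrightarrow B\Sigma_n$ (the epi being $B\Sigma_{n-1}\to 1$ and the mono the stabilizer inclusion), which the normed functor sends to $\Nm^{\Sigma_n}_{\Sigma_{n-1}}\Inf^1_{\Sigma_{n-1}}X$. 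This is a bookkeeping computation with spans of groupoids, using the factorization system of \cref{thm:spans}(5) and the identification of $\ul\Forb^1 = \ul 1^{\Forb\text-\amalg} \simeq \Span_\Forb(\Fglo)$ from \cref{ex:Triv-Ncoprod-rep}. I expect \textbf{this span-chasing identification of the $n$-th summand as an indexed orbit over $B\Sigma_n$ of the point-stabilizer norm} to be the main obstacle: one must carefully track how the factorization-system description of morphisms in $\Env(\ul 1^{\Forb\text-\amalg})$, the core-inclusion $i$, and the pointwise Kan extension formula interact, and in particular verify the edge case $n \leq 1$ (where $\Sigma_{-1}=\emptyset$ and the summand is just the monoidal unit $\Nm^{G}_\emptyset$, resp.\ $X$ itself for $n=1$). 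Finally, the oplax naturality in $\Cc$, and the resulting Beck--Chevalley square for a normed global functor $F$, are inherited directly from the oplax naturality already asserted in \Cref{cor:univ-formula-term-ex} together with the compatibility of the shift equivalence \cref{lemma:CAlg-shift-general} with normed functors; the displayed commuting square is then just that oplax-naturality 2-cell, which is invertible because both horizontal maps $\psi$ are equivalences.
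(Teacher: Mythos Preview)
Your overall strategy matches the paper's: reduce to $G=1$ via \cref{lemma:CAlg-shift-general,lem:shift-distr}, use the universal formula from \cref{cor:univ-formula-term-ex}, decompose $\core\ul{\mathbb F}\simeq\coprod_{n\ge0}\ul\Sigma_n$, and identify each summand via the span $1\leftarrow\Sigma_{n-1}\hookrightarrow\Sigma_n$. The identification of the summands and the oplax naturality argument are essentially as in the paper.

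However, your handling of weak distributivity has a genuine gap. You claim that the proofs of \cref{thm:CatM-cocomplete,thm:free-algs} only use sequential colimits ``in the present situation'' because $\core\Forb_{/-}$ is a groupoid. This is not correct: the proof of \cref{thm:CatM-cocomplete} uses the Bousfield--Kan formula to write an arbitrary object of $\Fun^\times((M_{/A})^\op,\Cat)$ as a $\Delta^\op$-shaped colimit of coproducts, and this step genuinely requires geometric realizations, not just sequential colimits. The machinery of \cref{thm:cleft-Kan,lem:left_kan_along_env,thm:free-algs} then rests on the resulting $\und{\bfU}_M^\times$-cocompleteness, so you cannot simply assert that the chain of arguments survives under the weaker hypothesis.

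The paper instead avoids this issue by a Yoneda embedding trick that you do not mention: it embeds a weakly distributive $\Cc^\otimes$ into the parametrized presheaf category $\widehat{\Cc}^\otimes=\ul\PSh_{\Span_\Forb(\Fglo)^\op}(\Cc^{\otimes,\op})^\op$, which is cocomplete and therefore genuinely distributive. One then applies \cref{cor:univ-formula-term-ex} there, and shows that $\mathbb U\mathbb P$ restricts to an endofunctor of $\Cc(1)\subset\widehat{\Cc}(1)$ using the specific fact that $\Cc^\otimes$ has $\Ext(\core\ul{\mathbb F})$-colimits (which \emph{does} follow from weak distributivity via \cref{lemma:colimit-closure} and the sequential presentation of $\core\ul{\mathbb F}$). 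From this one deduces that $\mathbb P$ itself restricts. The oplax naturality then transfers because the Yoneda embedding is natural in normed functors. This is the missing idea in your proposal.
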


\begin{remark}
As per our conventions explained above, the zeroth summand $\Sub^G_{\Sigma_0\times G}\Nm^{\Sigma_0\times G}_\emptyset\Inf^G_\emptyset$ is the constant functor with value the symmetric monoidal unit in $\Cc(G)$.
\end{remark}

\begin{example}
    Assume $\cC$ is a symmetric monoidal category such that the tensor product commutes with colimits in each variable. Applying the above to $G=1$ and the Borel category $\Cc^\flat$ (Example~\ref{ex:borel-distributive}) recovers the classical formula
    \[
        \coprod_{n=0}^\infty (X^{\otimes n})_{h\Sigma_n}
        \simeq \mathbb U\mathbb P(X),
    \]
    with the usual convention that $X^{\otimes0}=\bbone$ is the symmetric monoidal unit. Accordingly, we suggest to think of the left hand side of $(\ref{eq:da-formula})$ as a \emph{genuine symmetric powers} construction.
\end{example}

\subsection{Understanding $\und{\Forb}^1$}

The proof of \Cref{thm:free-algebra-formula} will be an application of \Cref{free_omnibus} in the case of the distributive
context $(\cF,\cN,\cR) = (\Fglo,\Forb,\Fglo)$ from Example~\ref{ex:gog}. Therefore we will require an understanding of $\und{\cN}^1 = \und{\Forb}^1 = (\Forb_{/-})^{\Ocoprod}$.

\begin{definition}
    We define $\bbF_{G} \coloneqq \Fun({G},\F)$ to be the category of finite ${G}$-sets for a finite groupoid ${G}\in\Fglo$. These clearly assemble into a strict functor $\ul{\bbF}\colon\Fglo\to \Cat_1$ of 2-categories which we call the \emph{global category of finite equivariant sets}.
\end{definition}

\begin{lemma}[{{\cite[Lemma 5.2.3]{CLL_Global}}}]\label{ex:glo}
    There is an equivalence of global categories $\und{\F} \simeq \Forb_{/-}$.
    In particular, applying $(-)^{\Ocoprod}$ gives an equivalence of normed global categories
    $\und{\F}^{\Ocoprod} \simeq \und{\Forb}^1$.\qed
\end{lemma}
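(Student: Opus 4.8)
The first equivalence $\und{\F}\simeq\Forb_{/-}$ of global precategories is not something I would reprove here: it is exactly \cite[Lemma~5.2.3]{CLL_Global}, realized by sending a finite $G$-set $X\colon G\to\F$ to its action groupoid $X/\!\!/G$ together with the canonical faithful projection $X/\!\!/G\to G$, naturally in $G\in\Fglo^\op$. So the real content of the lemma is the ``in particular'', and the plan is to obtain it by pure functoriality once the relevant identifications are unwound.

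First I would note that $\und{\Forb}^1$ literally agrees with $(\Forb_{/-})^{\Ocoprod}$: by \Cref{lem:n-a}(1) we have $\und{\Forb}^X=(\Fglo_{/X}\times_\Fglo\Forb_{/-})^{\Ocoprod}$, and since $1\in\Fglo$ is terminal the forgetful functor $\Fglo_{/1}\to\Fglo$ is an equivalence, so that $\Fglo_{/1}\times_\Fglo\Forb_{/-}\simeq\Forb_{/-}$. Next I would recall that $(-)^{\Ocoprod}=(-)^{\Ncoprod}\colon\PreCat{\Fglo}\to\PreOp{\Fglo}{\Forb}$ is a functor (\Cref{prop:calg}), hence preserves equivalences, and that it sends the left $\Forb$-adjointable precategory $\Forb_{/-}$ (left $\Forb$-adjointability being part of \Cref{lem:n-a}(1), cf.\ \Cref{lem:unfurl}(3)) to a cocartesian fibration over $\Span_\Forb(\Fglo)$, i.e.\ into $\PreNmCat{\Fglo}{\Forb}$. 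Applying $(-)^{\Ocoprod}$ to the equivalence of the previous paragraph then immediately yields an equivalence $\und{\F}^{\Ocoprod}\simeq(\Forb_{/-})^{\Ocoprod}=\und{\Forb}^1$ of normed global precategories.

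The one point that requires a small additional argument is that both sides are genuine normed global \emph{categories} and not merely precategories; since the two are equivalent it suffices to check this for $\und{\Forb}^1$, equivalently that $\Forb_{/-}$ is product-preserving. For this I would use that $(\Fglo,\Forb)$ is a weakly extensive span pair (part of its being a distributive context, \Cref{ex:gog}): condition~(3) of \Cref{def:weakly_extensive} with $n=2$ and $n=0$, together with the identification $\Fglo^{\Forb}_{/A}=\Forb_{/A}$ (which holds because $\Forb\subset\Fglo$ is left cancellable, so every morphism over $A$ between faithful maps to $A$ is itself faithful), gives $\Forb_{/G\amalg H}\simeq\Forb_{/G}\times\Forb_{/H}$ and $\Forb_{/\emptyset}\simeq\ast$, so $\Forb_{/-}$ is a global category. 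Then \Cref{thm:oper_operad}(3) shows $\und{\Forb}^1=(\Forb_{/-})^{\Ocoprod}$ is an $(\Fglo,\Forb)$-operad, and since it is also a cocartesian fibration over $\Span_\Forb(\Fglo)$ it is product-preserving by \Cref{cor:product_unstr}, hence a normed global category; as $\NmCat{\Fglo}{\Forb}\subset\PreNmCat{\Fglo}{\Forb}$ is full, the equivalence constructed above is one of normed global categories. I do not anticipate a serious obstacle: the only thing to be careful about is keeping the ``slice over the terminal object'' identification and the left $\Forb$-adjointability bookkeeping straight, so that $(-)^{\Ocoprod}$ can simply be invoked as a functor.
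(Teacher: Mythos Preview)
Your proposal is correct and matches the paper's (implicit) approach: the paper gives no proof at all, marking the lemma with \qed and treating both the first equivalence (a direct citation) and the ``in particular'' (apply the functor $(-)^{\Ocoprod}$ and note $\und{\Forb}^1=(\Forb_{/-})^{\Ocoprod}$ since $1$ is terminal) as immediate. Your additional verification that the result is a normed global \emph{category} rather than merely a precategory is correct but could be shortened by observing that $\und{\F}=\F^\flat$ is manifestly product-preserving (being a right Kan extension), whence so is $\Forb_{/-}$ via the first equivalence.
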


We will require the following `monoidal Borelification principle.'
Recall the terminology of \cref{ex:borel-distributive}.

\begin{theorem}[{{\cite[Theorem C]{puetzstueck-new}}}]\label{thm:mon-borel}
    The (monoidal) global Borelification functors sit in a pullback square
    \[\begin{tikzcd}[cramped]
        {\CMon(\Cat)} & \Cat \\
        {\NmCat{\Fglo}{\Forb}} & {\Cat(\Fglo)}
        \arrow["\fgt", from=1-1, to=1-2]
        \arrow["{(-)^\flat}"', from=1-1, to=2-1]
        \arrow["\lrcorner"{anchor=center, pos=0.125}, draw=none, from=1-1, to=2-2]
        \arrow["{(-)^\flat}", from=1-2, to=2-2]
        \arrow["\fgt", from=2-1, to=2-2]
    \end{tikzcd}\]
    In particular, a normed global category is Borel precisely if its underlying global category is Borel.\qed
\end{theorem}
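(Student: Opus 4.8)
We only indicate the main steps; for the full argument see \cite[Theorem C]{puetzstueck-new}. Recall from \Cref{ex:borel-distributive} that $(-)^\flat\colon\CMon(\Cat)\to\NmCat{\Fglo}{\Forb}$ is the right Kan extension along the functor $v\colon\Span(\F)\hookrightarrow\Span_\Forb(\Fglo)$; here $v$ is fully faithful, since a faithful functor out of a groupoid into a finite set forces the source to be discrete, so that a morphism $v(n)\to v(m)$ in $\Span_\Forb(\Fglo)$ is just a span of finite sets, with composition computed by pullback exactly as in $\Span(\F)$ (cf.\ \Cref{thm:spans}). Likewise $(-)^\flat\colon\Cat\to\Cat(\Fglo)$ is the right Kan extension along the (trivially fully faithful) inclusion $\{1\}\hookrightarrow\Fglo^\op$ of the trivial group. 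Consequently both Borelification functors are fully faithful right adjoints, the first to $v^\ast\colon\NmCat{\Fglo}{\Forb}\to\CMon(\Cat)$ and the second to $\ev_1\colon\Cat(\Fglo)\to\Cat$, and in either case the essential image consists precisely of the \emph{Borel} objects, i.e.\ those on which the respective adjunction unit is an equivalence.

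The crucial input is the compatibility $\fgt\circ(-)^\flat\simeq(-)^\flat\circ\fgt$ (as functors $\CMon(\Cat)\to\Cat(\Fglo)$), together with a compatibility of this equivalence with the two adjunctions above. On underlying objects this is immediate from the explicit description in \Cref{ex:borel-distributive}: the underlying global category of $\cC^\flat$ sends $G$ to $\Fun(G,\cC(1))$ with restriction functoriality along backwards maps, which is by definition the Borelification of the underlying category $\fgt(\cC)=\cC(1)$. Conceptually, the square with corners $\F^\op,\Span(\F),\Fglo^\op,\Span_\Forb(\Fglo)$ is a pullback, and the point is that the base-change transformation for right Kan extensions associated to it is invertible because the induced comparison of comma categories is initial, admitting a left adjoint given informally by ``restricting a span to the image of its right leg.'' Promoting this to the stated compatibility with the units of $v^\ast\dashv(-)^\flat$ and of $\ev_1\dashv(-)^\flat$ is the one genuinely technical point, and I expect it to be the main obstacle.

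Granting this, the pullback square follows formally. Write $P\coloneqq\NmCat{\Fglo}{\Forb}\times_{\Cat(\Fglo)}\Cat$ and let $\Phi\colon\CMon(\Cat)\to P$ be the canonical comparison functor. The projection $\pi\colon P\to\NmCat{\Fglo}{\Forb}$ is a base change of the fully faithful $(-)^\flat\colon\Cat\to\Cat(\Fglo)$, hence fully faithful, and the composite $\pi\circ\Phi$ is the fully faithful functor $(-)^\flat\colon\CMon(\Cat)\to\NmCat{\Fglo}{\Forb}$; by cancellation of fully faithful functors, $\Phi$ is fully faithful.

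For essential surjectivity, fix an object $(\cC,D,\varphi\colon\fgt(\cC)\simeq D^\flat)$ of $P$. Since $\pi$ is fully faithful, it suffices to find some $E\in\CMon(\Cat)$ with $E^\flat\simeq\cC$ in $\NmCat{\Fglo}{\Forb}$, as then $(\cC,D,\varphi)\simeq\Phi(E)$ because both lie over equivalent objects of $\NmCat{\Fglo}{\Forb}$. We take $E\coloneqq v^\ast\cC$ and must check that the unit $\eta_\cC\colon\cC\to(v^\ast\cC)^\flat$ is an equivalence. The forgetful functor $\fgt\colon\NmCat{\Fglo}{\Forb}\to\Cat(\Fglo)$ is restriction along the wide subcategory inclusion $\Fglo^\op\hookrightarrow\Span_\Forb(\Fglo)$, hence conservative, so it is enough to show $\fgt(\eta_\cC)$ is an equivalence. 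Using the compatibility of the second paragraph we have $\fgt\big((v^\ast\cC)^\flat\big)\simeq(\fgt v^\ast\cC)^\flat\simeq(\ev_1\fgt(\cC))^\flat\simeq(\ev_1 D^\flat)^\flat\simeq D^\flat\simeq\fgt(\cC)$, and under these identifications $\fgt(\eta_\cC)$ becomes the unit of $\ev_1\dashv(-)^\flat$ at the object $\fgt(\cC)\simeq D^\flat$; as this object lies in the essential image of the fully faithful right adjoint $(-)^\flat\colon\Cat\to\Cat(\Fglo)$, the unit there is an equivalence. Hence $\eta_\cC$ is an equivalence, so $\cC\simeq(v^\ast\cC)^\flat$ and $\Phi$ is essentially surjective. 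Therefore $\Phi$ is an equivalence, i.e.\ the square is a pullback.
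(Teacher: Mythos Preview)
The paper does not prove this statement: it is quoted from \cite{puetzstueck-new} and closed with a \qed\ immediately after the statement. There is therefore no proof in the present paper to compare against.

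Your sketch is a correct outline of the argument. The formal part---that both Borelifications are fully faithful right adjoints (being right Kan extensions along fully faithful functors), that the comparison functor $\Phi$ is fully faithful by cancellation, and that essential surjectivity reduces via conservativity of $\fgt$ to showing the unit $\eta_{\cC}$ is an equivalence---is sound. The one substantive step, which you correctly flag, is that the Beck--Chevalley map $\fgt\circ(-)^\flat\Rightarrow(-)^\flat\circ\fgt$ is invertible; the mate identity $\beta_{L_1}\circ\fgt(\eta_{\cC})\simeq\eta'_{\fgt(\cC)}$ then lets you conclude by 2-out-of-3. Your informal description of why the relevant comma-category comparison is initial (``restrict a span to the image of its right leg'') is the right idea, though making this precise is indeed where the work lies.
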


\begin{remark}
    The functor $\und{\F}\colon \Fglo^\op \to \Cat$ is right Kan extended from its value at the trivial group $1\in\Fglo$, i.e.~$\und{\F} \simeq \F^\flat$ is the Borel global category associated to $\mathbb F$ in the sense of Example~\ref{ex:borel-distributive}.

    By the monoidal Borelification principle, it follows that the normed structure $\und{\bbF}^{\Ocoprod}$ is
    equivalently the monoidal Borelification of the cocartesian symmetric monoidal category of finite sets $\bbF^\amalg$,
    and so we denote it $\ul{\bbF}^\amalg$.
\end{remark}

\begin{remark}\label{rk:glo-core}
	The result above and \cref{cor:iota_N_rep} together show
	that we have unique equivalences
	\[
        \Map_{\Span_\Forb(\Fglo)}(1,-)
        \iso \core \ul{\Forb}^1
        \iso \core \und{\F}^\amalg
	\]
	of normed global categories. At a finite group $G$ these are given on objects by
	\[
        \begin{tikzcd}[cramped, column sep=small, row sep=0pt]
            \Map_{\Span_\Forb(\Fglo)}(1,G)
            \arrow[r,"\sim"] & \core (\Forb_{/G})
            \arrow[r, "\sim"] & \core \F_G\\
	        (1 \leftarrow H \hookrightarrow G) \arrow[r, mapsto] & (H \hookrightarrow G) \arrow[r,mapsto] & G/H
        \end{tikzcd}
	\]
    (see \cite{CLL_Global}*{Lemma 5.2.3} for the description of the second map).
\end{remark}

To understand $\bbU\bbP$, it suffices by \Cref{free_omnibus}$(\ref{omnibus:formula_for_free})$ to understand colimits over the global space $\core\ul{\mathbb F}$.

\begin{construction}
    For any $n\ge0$, we write $\chi_n\colon \ul\Sigma_n\to\core\ul{\mathbb F}$ for the map picking out the tautological $\Sigma_n$-set $\{1,\dots,n\}\in\mathbb F_{\Sigma_n}$ under the Yoneda equivalence.

    More concretely, note that the source and target can be obtained as strict functors $\Fglo^\op\to \Cat_1$ of 2-categories. Then we may write down a strictly natural transformation $\ul\Sigma_n\to\core\ul{\mathbb F}$ given at a group $H$ by sending an object $\phi\colon H\to\Sigma_n$ to $\phi^*\{1,\dots,n\}$ and the map in $\ul\Sigma_n$ corresponding to the $2$-cell $\Sigma_n\ni\sigma\colon \phi\Rightarrow\psi$ in $\Fglo$ to the map $\phi^*\{1,\dots,n\}\to\psi^*\{1,\dots,n\}$ given by $\sigma$. As this by design sends the identity of $\Sigma_n$ to $\{1,\dots,n\}$, this necessarily agrees with $\chi_n$.

    We now define a map
    \[
    \chi\colon\coprod_{n\ge0}\ul\Sigma_n\to\core\ul{\mathbb F}
    \]
    of global spaces as the map given on the $n$-th coproduct summand by $\chi_n$.
\end{construction}

\begin{proposition}[cf.~\cite{schwede-k}*{Example 11.5}]\label{prop:decompose-global-sets}
    The map $\chi$ is an equivalence of global categories.
    \begin{proof}
        Let $H$ be a finite group. The above explicit description makes it clear that $\chi_n(H)\colon\ul\Sigma_n(H)\to\core\mathbb F_H$ defines an equivalence onto the full subcategory spanned by $H$-sets of cardinality $n$. As for varying $n$ these form a partition of $\core\mathbb F_H$, the claim follows.
    \end{proof}
\end{proposition}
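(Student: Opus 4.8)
The plan is to verify that $\chi$ is an equivalence by checking it levelwise, since equivalences in $\Fun(\Fglo^\op,\Cat)$ (and hence of global categories) are detected after evaluation at each object of $\Fglo$. Moreover, both $\coprod_{n\ge0}\ul\Sigma_n$ and $\core\ul{\mathbb F}$ send finite coproducts of groupoids to products: the former by the Yoneda lemma, the latter because $\ul{\mathbb F}=\mathbb F^\flat$ is Borel, so $\ul{\mathbb F}(G\amalg G')=\Fun(G\amalg G',\mathbb F)\simeq\Fun(G,\mathbb F)\times\Fun(G',\mathbb F)$. Hence it suffices to check the claim at connected finite groupoids, i.e.~at finite groups $H$ (though in fact a direct pointwise check at every object of $\Fglo$ would also do).

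So I would fix a finite group $H$ and first identify the source. By the Yoneda equivalence $\ul\Sigma_n(H)\simeq\Fun_{\Fglo}(\ul H,\ul{\Sigma_n})\simeq\Map_{\Fglo}(H,\Sigma_n)$, the groupoid $\ul\Sigma_n(H)$ is the groupoid of group homomorphisms $\phi\colon H\to\Sigma_n$ with morphisms $\phi\to\psi$ given by elements $\sigma\in\Sigma_n$ satisfying $\sigma\phi(-)\sigma^{-1}=\psi(-)$; equivalently, the groupoid of $H$-actions on the set $\{1,\dots,n\}$. The next step is to observe that $\phi\mapsto\phi^*\{1,\dots,n\}$ realizes this as the full subgroupoid $(\core\mathbb F_H)_{=n}\subset\core\mathbb F_H$ spanned by the $H$-sets of cardinality $n$: it is essentially surjective onto $(\core\mathbb F_H)_{=n}$ since any $H$-set of cardinality $n$ admits a bijection to $\{1,\dots,n\}$ along which one may transport the action, and it is fully faithful because an $H$-equivariant bijection $\phi^*\{1,\dots,n\}\iso\psi^*\{1,\dots,n\}$ is exactly an element $\sigma\in\Sigma_n$ intertwining $\phi$ and $\psi$. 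Using the explicit description of $\chi_n$ as a strictly natural transformation of $2$-functors recorded just above the statement (in particular that $\chi_n(H)$ sends $\phi$ to $\phi^*\{1,\dots,n\}$), this identifies $\chi_n(H)$ with the inclusion $(\core\mathbb F_H)_{=n}\hookrightarrow\core\mathbb F_H$.

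Finally, since every finite $H$-set has a well-defined cardinality and hence $\core\mathbb F_H=\coprod_{n\ge0}(\core\mathbb F_H)_{=n}$, the map $\chi(H)=\coprod_{n\ge0}\chi_n(H)$ is an equivalence; as $H$ was arbitrary, $\chi$ is a levelwise and hence a global equivalence. I do not expect a genuine obstacle here: the cardinality decomposition is entirely elementary, and this is essentially Schwede's argument in \cite{schwede-k}*{Example~11.5}; the only point needing mild care is tracking the Yoneda identification of $\ul\Sigma_n(H)$ and confirming that $\chi_n(H)$ is the expected inclusion, which is precisely the compatibility that the preceding construction of $\chi$ via $2$-functors was designed to guarantee.
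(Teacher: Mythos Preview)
Your proposal is correct and follows essentially the same approach as the paper's proof: both check levelwise at a finite group $H$ that $\chi_n(H)$ is an equivalence onto the full subgroupoid of $H$-sets of cardinality $n$, and then conclude via the cardinality partition of $\core\mathbb F_H$. You simply spell out in more detail what the paper compresses into the phrase ``the above explicit description makes it clear.''
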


\begin{notation}\label{not:ext}
    Note that by uniqueness of left adjoints, the composite
    \[
        \Env(\Triv(-))
        \colon \Cat(\Fglo)
        = \Fun^\times(\Fglo^\op,\Cat)
        \to \Fun^\times(\Span_\Forb(\Fglo),\Cat)
        = \NmCat{\Fglo}{\Forb}
    \]
    is equivalent to left Kan extension along $\Fglo^\op \subset \Span_\Forb(\Fglo)$.
    For ease of notation, we will denote the composite by $\Ext \coloneqq \Env \circ \Triv$ for the remainder of this section.
    It will also be useful to distinguish a normed global category from
    its underlying global category notationally. Therefore we will write $\Cc^\otimes$ for a generic normed global category and $\Cc$ for its underlying global category.

    Viewing $\Cc^\otimes$ as a limit preserving functor $\PSh(\Span_\Forb(\Fglo)^\op)^\op\to\Cat$, \cite{CLL_Global}*{Corollary 2.2.8} provides equivalences
    \[
        \cC^\tensor(\Ext(X))
        \simeq  \Fun_{\gl}^{\tensor}(\Ext(X),\cC^\tensor)
        \simeq \Fun_{\gl}(X, \cC)
        \simeq \cC(X)
    \]
    natural in $X$. Recall moreover that for $X$ the represented functor $\und{1}\colon\Fglo^\op\to \Cat$, $\Ext(\ul1) = \Map_{\Span_\Forb(\Fglo)}(1,-)$ admits a unique equivalence to $\iota\und{\F}^\amalg$ by \cref{rk:glo-core}.
\end{notation}

\begin{lemma}
    Let $\Cc^\otimes$ be a weakly distributive normed global precategory. Then $\Cc^\otimes$ has $\Ext(\core\ul{\mathbb F})$-colimits (as a $\Span_{\Forb}(\Fglo)^\op$-precategory) in the sense of Definition~\ref{defi:param-colims}.
\end{lemma}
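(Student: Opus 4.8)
The plan is to reduce the statement to known facts about $\und{\bfU}_M^\times$-cocompleteness via the decomposition $\chi\colon\coprod_{n\ge0}\ul\Sigma_n\iso\core\ul{\mathbb F}$ from \cref{prop:decompose-global-sets}. First I would observe that, applying $\Ext$ to this equivalence and using that $\Ext$ is a left adjoint (\cref{not:ext}) and hence preserves coproducts, we get $\Ext(\core\ul{\mathbb F})\simeq\coprod_{n\ge0}\Ext(\ul\Sigma_n)$ in $\NmCat{\Fglo}{\Forb}$, i.e.~as a $\Span_\Forb(\Fglo)^\op$-precategory. Since having $\Kk$-colimits for a coproduct $\Kk=\coprod_i\Kk_i$ is the same as having $\Kk_i$-colimits for each $i$ (the functor category $\ul\Fun$ sends the coproduct to a product, and a product of functors with left adjoints has a left adjoint iff each factor does), it suffices to show that $\cC^\tensor$ has $\Ext(\ul\Sigma_n)$-colimits for every $n\ge0$.

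Next I would identify $\Ext(\ul\Sigma_n)$ concretely. Since $\ul\Sigma_n$ is the global category represented by the object $\Sigma_n\in\Fglo$ (a one-object groupoid), $\Ext(\ul\Sigma_n)=\Env(\Triv(\ul{\Sigma_n}))$ is, by \cref{prop:fna} combined with \cref{ex:glo} and the identification $\und{\cN}^A\simeq\Env(\ul A^\Ncoprod)$, the pointwise groupoid core $\core\und{\Forb}^{\Sigma_n}\simeq\core\und{\F}^{\Ocoprod}|_{\text{shifted by }\Sigma_n}$; more usefully, $\Ext(\ul{\Sigma_n})$ is a \emph{representable-type} $\Span_\Forb(\Fglo)^\op$-precategory, namely it lies in the essential image of the Yoneda-type embedding $A\mapsto\ul A$ from $\Span_\Forb(\Fglo)$. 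The key point is that $\Ext(\ul\Sigma_n)$, viewed as a $\Span_\Forb(\Fglo)^\op$-precategory, is in $\und{\bfU}_\cR^\times(1)$ for the distributive context $(\Fglo,\Forb,\Fglo)$ — indeed $\core\und{\F}^{\Ocoprod}$ is (the envelope of a) genuine normed global \emph{category}, hence product-preserving, hence a $\Span_\Forb(\Fglo)^\op$-category, and it is left Kan extended from $\Forb$-slices by the analysis in \cref{prop:fna} (the forward-pointing maps in the span category act via the identity-object adjunctions). Then \Cref{prop:distributivity} tells us that any weakly distributive $\cC^\tensor$ — once we know it has the relevant sifted colimits — is $\und{\bfU}_\Fglo^\times$-cocomplete, which precisely means it has $\Kk$-colimits for every $\Kk\in\und{\bfU}_\Fglo^\times(1)$, in particular for $\Kk=\Ext(\ul\Sigma_n)$.

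The subtlety is the colimit hypothesis: $\cC^\tensor$ is only assumed \emph{weakly} distributive, i.e.~it has fiberwise \emph{sequential} colimits rather than all fiberwise sifted colimits, so \Cref{prop:distributivity} does not apply verbatim. Here I would argue more carefully using \Cref{thm:CatM-cocomplete} directly: tracing through its proof via \Cref{lemma:basic-colim} and \Cref{lemma:colimit-closure}, the only sifted colimits actually needed to build $\Ext(\ul\Sigma_n)$-colimits are (i) $\incl_!\shfy$ of finite coproducts of representables times categories with terminal objects — which need no colimits in $\cC$ beyond the left adjoints $f_!$ along forward/backward maps — and (ii) \emph{filtered} colimits used to pass from finite subsets of the index set to the whole, which are in particular \emph{sequential} colimits when the relevant index categories are countable. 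Since $\Forb_{/\Sigma_n}\simeq\F_{\Sigma_n}$ is essentially countable and all the presheaf categories in play are generated under sequential colimits from the finite-coproduct pieces, this is enough. The main obstacle is therefore to verify that, for the \emph{specific} object $\core\ul{\mathbb F}$ (equivalently the $\ul\Sigma_n$), the filtered colimits appearing in the proof of \Cref{thm:CatM-cocomplete} can be arranged to be sequential — which follows from the countability just noted, together with \Cref{lem:shift-distr} to reduce from $\cC^\tensor(-\times\Sigma_n)$-style shifts back to $\cC^\tensor$ itself. I would spell this out as a short remark that \Cref{thm:CatM-cocomplete} and its supporting lemmas go through with ``sifted'' replaced by ``sequential'' as long as the target $\und{\bfU}_M^\times$-objects are essentially countable, and then apply it to $\Ext(\core\ul{\mathbb F})$.
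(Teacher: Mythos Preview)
Your first paragraph contains a genuine gap. The parenthetical justification---``a product of functors with left adjoints has a left adjoint iff each factor does''---is false for infinite products without additional coproducts in the target, and more seriously, even the \emph{finite} case does not go through in this parametrized setting: having $(\Kk_1\amalg\Kk_2)$-colimits as a $\Span_\Forb(\Fglo)^\op$-precategory requires a parametrized left adjoint to the diagonal $\cC^\tensor\to\cC^\tensor\times\cC^\tensor$, i.e.\ fiberwise binary coproducts preserved by \emph{all} structure maps, including the norms. But norms do not preserve coproducts (this is precisely what distributivity is about), so the reduction from $\coprod_n\Ext(\ul\Sigma_n)$ to the individual summands does not work as stated. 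Your third paragraph attempts a patch by revisiting the proof of \cref{thm:CatM-cocomplete}, but this addresses the wrong issue (it argues for $\Ext(\ul\Sigma_n)$-colimits, which are already immediate from representability) and never fixes the coproduct step.

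The paper's proof is much shorter and avoids all of this. Instead of an infinite coproduct, it writes $\core\ul{\mathbb F}\simeq\colim_{n\ge0}\ul{G_n}$ where each $G_n=\coprod_{k=0}^n\Sigma_k$ is a \emph{single} object of $\Fglo$, so $\Ext(\ul{G_n})$ is a single $\Span_\Forb(\Fglo)^\op$-representable. Conditions~(2) and~(3) of weak distributivity say exactly that $\cC^\tensor$ admits colimits indexed by such representables (\cref{ex:distributivity-as-cocompleteness}); then \cref{lemma:colimit-closure} with $K=\omega$ closes the class of admissible shapes under sequential colimits, using only condition~(1). No appeal to $\und{\bfU}_\Fglo^\times$-cocompleteness, \cref{prop:distributivity}, or countability arguments is needed.
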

\begin{proof}
    Note that by Proposition~\ref{prop:decompose-global-sets} we have an equivalence of global spaces
    \[
        \core\und{\F}
        \simeq \mathop{\text{colim}}_{n \ge 0}\underline{\textstyle\coprod_{k= 0}^n\Sigma_k}
    \]
    so applying the left adjoint $\Ext$ yields an equivalence
    \[
        \Ext(\core\ul{\mathbb F})
        \simeq \mathop{\text{colim}}_{n\ge 0}\Ext\!  \left(\underline{\textstyle\coprod_{k = 0}^n\Sigma_k}\right)
    \]
    of normed global categories.
    Since $\cC^\tensor$ is weakly distributive, it admits $\Ext(G)$-colimits for every $G \in \Fglo$ (Example~\ref{ex:distributivity-as-cocompleteness}). Moreover, $\cC^\tensor$ admits fiberwise sequential colimits,
    so the claim now follows from \cref{lemma:colimit-closure}.
\end{proof}

\begin{proof}[Proof of Theorem~\ref{thm:free-algebra-formula}]
	In view of \cref{lemma:CAlg-shift-general,lem:shift-distr} we may assume without loss of generality that $G=1$.

    Suppose first that $\Ee^\otimes$ is a distributive (and not just weakly distributive)
    normed global precategory.
    Then \cref{rk:glo-core,free_omnibus}
    shows that a left adjoint $\mathbb P$ of $\mathbb U$ exists and provides an equivalence $\phi$ between $\mathbb U\mathbb P$ and the composite
    \begin{equation*}
        \Ee(1)
        \simeq\Fun_{\gl}^{\otimes}(\core\ul{\mathbb F}^\amalg, \Ee^\otimes)
        \xrightarrow{\res\,} \Fun_{\gl}(\core\ul{\mathbb F}, \Ee)
        \xrightarrow{\colim} \Ee(1)
    \end{equation*}
    such that $\phi$ is oplax natural in normed functors.
    Using \cref{not:ext} and the $\Cat$-linear adjunction $\Ext \colon \PreCat{\Fglo} \rightleftarrows \PreNmCat{\Fglo}{\Forb} \colon \fgt$ with counit $\epsilon$
    we can rewrite this composite as
    \begin{equation}\label{eq:general-colim-formula}
        \Ee^\tensor(1)
        \simeq\cE^\tensor(\core \und{\F}^\amalg)
        \xrightarrow{\epsilon^*\,} \cE^\tensor(\Ext(\core \und{\F}))
        \xrightarrow{\colim} \Ee^\tensor(1)
    \end{equation}
    If $\Cc^\otimes$ is now any weakly distributive normed global precategory, then the $\Span_{\Forb}(\Fglo)^\op$-parametrized Yoneda embedding \cite{martini2021yoneda} of $\cC^{\tensor,\op}$
    gives rise to the fully faithful \cite{martini2021yoneda}*{Corollary~4.7.16} and colimit-preserving \cite{martiniwolf2021limits}*{Proposition~5.2.9${}^\op$} embedding
    \[
        y_{\cC^{\otimes,\op}}^\op \colon \cC^\otimes \hookrightarrow \widehat{\cC}^\tensor
        \coloneqq \und{\PSh}_{\Span_\Forb(\Fglo)^\op}(\cC^{\tensor,\op})^\op.
    \]
    Note that ${\widehat{\cC}}^\tensor$ is cocomplete by another application of \cite{martiniwolf2021limits}*{Proposition~5.2.9${}^\op$} and therefore in particular distributive (Examples~\ref{ex:fiberwise} and~\ref{ex:distributivity-as-cocompleteness}).

    We now claim that
    $
        \bbP(1) \colon \widehat{\cC}^\tensor(1)
        \to \und{\CAlg}_\gl(\widehat{\cC}^\tensor)(1)
    $
    restricts on full subcategories to a left adjoint
    $
        \cC^\tensor(1)
        \to \und{\CAlg}_\gl(\cC^\tensor)(1)
        \simeq \Fun_\gl^\tensor(\und{\F}^\amalg,\cC^\tensor)
    $ of $\bbU(1)$.
    Note that a normed global functor $\und{\F}^\amalg \to \widehat{\cC}^\tensor$
    factors through $\cC^\tensor$ if and only if it does so after
    precomposing by the essentially surjective $\core \und{\F}^\amalg \to \und{\F}^\amalg$.
    Thus, to see that $\bbP(1)$ restricts as desired, it suffices to check that $\bbU\bbP(1) \colon \widehat{\cC}(1) \to \widehat{\cC}(1)$ restricts to $\cC^\tensor(1) \to \cC^\tensor(1)$.
    But this follows from the description (\ref{eq:general-colim-formula}),
    the fact that $\cC^\tensor \hookrightarrow \widehat{\cC}^\tensor$
    preserves colimits, and that $\cC^\tensor$ admits $\Ext(\core \und{\F})$-colimits by the previous lemma.

    Thus $\bbU(1) \colon \und{\CAlg}_\gl(\cC^\tensor)(1) \to \cC^\tensor(1)$
    admits a left adjoint $\bbP(1)$,
    and $\phi$ (for $\Ee^\tensor=\widehat{\cC}^\tensor$) restricts to an equivalence $\psi$ between the endofunctor $\mathbb U\mathbb P$ of $\Cc^\tensor(1)$ and the composite $(\ref{eq:general-colim-formula})$ for $\Ee^\tensor=\Cc^\tensor$. As the Yoneda embedding is functorial, the equivalence $\psi$ is again oplax natural
    in normed functors.

    We have thus shown that for all weakly distributive normed global precategories, $\bbU\bbP$ is computed by the composite $(\ref{eq:general-colim-formula})$.
    Therefore it only remains to identify this composite for any weakly distributive normed global \emph{category} (as opposed to a precategory) with the genuine symmetric powers.

    Under the equivalence $\cC^\tensor(\Ext(\core \und{\F})) \simeq \Cc(\core\ul{\mathbb F})\simeq\Cc(\coprod_{n\ge0}\ul\Sigma_n)\simeq\prod_{n\ge0}\Cc(\Sigma_n)$, the rightmost arrow corresponds to the functor $(X_n)_{n\in\mathbb N}\mapsto\coprod_{n\ge 0}\text{Sub}^{1}_{\Sigma_n}X_n$.
    It therefore remains to identify the composite
    $\Cc(1)\to\Cc(\core\ul{\mathbb F})\to\Cc(\Sigma_n)$
    or equivalently $\cC^\tensor(\Ext(\und{1})) \to \cC^\tensor(\Ext(\core \und{\F})) \to \cC^\tensor(\Ext(\und{\Sigma}_n))$
    with the map ${\Nm_{\Sigma_{n-1}}^{\Sigma_n}}\circ{\Inf^1_{\Sigma_{n-1}}}$.
    This amounts to saying that the composite
    \[
        \Ext(\und{\Sigma}_n)
        \xto{\Ext(\chi_n)} \Ext(\core \und{\F})
        \xto{\epsilon} \core\und{\F}^{\amalg}
        \simeq \Ext(\und{1})
    \]
    in $\PSh(\Span_{\Forb}(\Fglo))$ is the Yoneda image of the zig-zag $1 \leftarrow \Sigma_{n-1} \hookrightarrow \Sigma_n$ (recall that $\Ext$ sends representable
    functors to corepresentable ones).
    By the non-parametrized Yoneda lemma, this can be checked
    by plugging in the identity span $(\Sigma_n = \Sigma_n = \Sigma_n) \in \Ext(\und{\Sigma}_n)(\Sigma_n)$.
    Since this lies in the image of the unit
    $\eta \colon \und{\Sigma}_n(\Sigma_n) \to \Ext(\Sigma_n)(\Sigma_n)$,
    naturality and the triangle identity tell us that we
    need to compute the image of $\id_{\Sigma_n}$
    under $\und{\Sigma}_n \xto{\chi_n} \core \und{\F} = \core \und{\F}^{\amalg}|_{\Fglo^\op} \simeq \Map_{\Span_\Forb(\Fglo)}(1,-)|_{\Fglo^\op}$,
    where the last equivalence is that of \cref{rk:glo-core}.
    By the remark, we know that the inverse of this equivalence
    sends $1 \leftarrow \Sigma_{n-1} \hookrightarrow \Sigma_n$
    to the tautological $\Sigma_n$-set $\Sigma_n/\Sigma_{n-1}$,
    which is also the image of $\id_{\Sigma_n}$ under $\chi_n$,
    as desired.
\end{proof}

\begin{remark}\label{rk:equivariant-sucks}
    If $G$ is a finite group, then Theorem~\ref{free_omnibus} applies just as well to the distributive context $(\mathbb F_G,\mathbb F_G,\mathbb F_G)$ from Example~\ref{ex:G-distributivity}, showing that for a distributive normed $G$-category $\Cc^\otimes$, the forgetful functor $\ul\CAlg_G(\Cc^\otimes)\to\Cc$ has a left adjoint $\mathbb P$ given in degree $G/G$ by the formula
    \[
        \mathbb U\mathbb P(X)\simeq\colim_{\core\ul{\mathbb F}_G}\ul X\simeq\coprod_{n\ge 0}\colim_{\core\ul{\mathbb F}_{G,n}}\ul X
    \]
    where $\ul X\colon\core\ul{\mathbb F}_G^\amalg\to\Cc^\otimes$ is the normed functor classifying the object $X$, and $\ul{\mathbb F}_{G,n}\subset\ul{\mathbb F}_G$ is the full $G$-subcategory of equivariant sets with precisely $n$ elements; this formula also appears as \cite{NardinShah}*{Example~4.3.7}, where a specific model of the unstraightening of $\ul{\mathbb F}_{G,n}$ is denoted $\textbf{O}_{G\times\Sigma_n,\Gamma_n}$.

    However, unlike in the global case, there does not seem to be a simple way to express this parametrized colimit in terms of restrictions, norms, and inductions (there are no inflations nor subductions for $G$-categories).
\end{remark}

\section{Ultra-commutative global ring spectra as parametrized algebras}\label{sec:global-model}

In this section, we construct the normed global category of global spectra $\und{\Sp}_\gl^\tensor$
and prove \cref{introthm:global-param} from the introduction.
A suitable global comparison functor $\und{\UCom}_\gl \to \und{\Sp}_\gl^\tensor$
was already constructed in \cite{puetzstueck-new},
but we recall the constructions for the convenience of the reader.

\begin{construction}\label{constr:spgl}
    Consider the symmetric monoidal $1$-category of flat symmetric spectra $\Sp^{\Sigma,\tensor}_\textup{flat}$ \cite{hss,shipley-convenient}, with symmetric monoidal product given by the usual smash product. We form the associated Borel normed global category $(\Sp^{\Sigma,\tensor}_\textup{flat})^\flat$ in the sense of Example~\ref{ex:borel-distributive}. Recall that this means that $(\Sp^{\Sigma,\tensor}_\textup{flat})^\flat(G)=\Fun(G,\Sp^{\Sigma}_\textup{flat})$ naturally in $G\in\Fglo^\op$, with the covariant functoriality in fold maps encoding the smash product, and the norm $\Nm^G_H$ for finite groups $H\subset G$ given by the classical, $1$-categorical symmetric monoidal norm (which in this case is known as the \emph{Hill--Hopkins--Ravenel norm}).

    We now equip $(\Sp^{\Sigma,\tensor}_\textup{flat})^\flat$  at level $G$ with the $G$-global weak equivalences of \cite{g-global}*{Definition~3.1.28}. By \cite{g-global}*{Lemma~3.1.50 and Proposition~3.1.62} and \cite{Lenz-Stahlhauer}*{Corollary~5.19} both restrictions and norms preserve $G$-global weak equivalences between flat symmetric spectra, and so $(\Sp^{\Sigma,\tensor}_\textup{flat})^\flat$ improves to a diagram in relative categories. Pointwise inverting the $G$-global weak equivalences, we obtain the \emph{normed global category of global spectra} $\und{\Sp}_\gl^\tensor$.
    By construction, the localization functors assemble into a normed global functor $L\colon (\Sp^{\Sigma,\tensor}_\textup{flat})^\flat \to \und{\Sp}_\gl^\tensor$.
\end{construction}

\begin{remark}\label{rk:who-cares-about-flatness}
    By cofibrant replacement in the flat model structures of \cite{g-global}*{Theorem~3.1.40}, the underlying global category $\und{\Sp}_\gl \coloneqq \und{\Sp}_\gl^\tensor|_{\Fglo^\op}$
    can be identified with the global category of global spectra constructed in \cite{CLL_Global} as the analogous localization of the Borel category of \emph{all} (not necessarily flat) symmetric spectra. In particular, by the main theorem of \emph{op.~cit.} it has a universal property as the `free globally presentable equivariantly stable global category.' We will return to this universal property as well as alternative (less model-categorical) descriptions of $\ul\Sp_\gl$ and $\ul\Sp_\gl^\otimes$ in Section~\ref{sec:tambara}.
\end{remark}

\begin{remark}
    Building on the previous remark, one can equivalently obtain $\und{\Sp}_\gl^\tensor$ as an analogous localization of the Borelification of all symmetric spectra. However, in this case the norms a priori have to be derived, and it is not clear whether the localization functor is actually normed or just lax normed.\footnote{To our knowledge, the question whether the smash product of symmetric spectra is fully homotopical, is still open, see e.g.~the discussion in the introduction of \cite{sagave-schwede}.} For the details of this construction we refer the reader to \cite{puetzstueck-new}*{Construction 4.13}.
\end{remark}

We now recall the construction of the global comparison functor $\und{\UCom}_\gl \to \und{\CAlg}_\gl(\und{\Sp}_\gl^\tensor)$ from \cite[Construction 4.23]{puetzstueck-new}.

\begin{construction}\label{constr:comparison}
    Since $L$ is a normed global functor, we obtain an induced global functor
    \[
        \und{\CAlg}_\gl(L) \colon \und{\CAlg}_\gl((\Sp^{\Sigma,\tensor}_\textup{flat})^\flat )\to \und{\CAlg}_\gl(\und{\Sp}^\tensor_\gl).
    \]
    By \cite{puetzstueck-new}*{Proposition~3.12}, the left hand side is equivalent to the global category $\big(\CAlg(\Sp^{\Sigma,\tensor}_\textup{flat})\big)^\flat$ in a way that is compatible with the evident forgetful functors to $(\Sp^{\Sigma,\tensor}_\textup{flat})^\flat$. In particular, we see that the composite
    \[
        \big({\CAlg(\Sp^{\Sigma,\tensor}_\textup{flat})}\big)^\flat\to\und{\CAlg}_\gl(\und{\Sp}^\tensor_\gl)
    \]
    inverts all $G$-global weak equivalences. It therefore descends to a functor
    \[
        \Phi\colon\ul\UCom_\gl\to\und{\CAlg}_\gl(\und{\Sp}^\tensor_\gl),
    \]
    where the source denotes the localization of $\big({\CAlg(\Sp^{\Sigma,\tensor}_\textup{flat})}\big)^\flat$ at the $G$-global weak equivalences.
    In summary, we then have a commutative diagram
    \[\begin{tikzcd}[cramped]
        {\und{\CAlg}_\gl((\Sp^{\Sigma,\tensor}_{\textup{flat}})^\flat)} &[2em] {\und{\CAlg}_\gl(\und{\Sp}_\gl^\tensor)} \\
        {(\CAlg(\Sp^{\Sigma,\tensor}_\textup{flat}))^\flat} & {\und{\UCom}_\gl}\rlap.
        \arrow["{\und{\CAlg}_\gl(L)}", from=1-1, to=1-2]
        \arrow["\sim"', from=1-1, to=2-1]
        \arrow["\cL"', from=2-1, to=2-2]
        \arrow["\Phi"', from=2-2, to=1-2]
    \end{tikzcd}\]
    As the left hand vertical map lies over the forgetful functors to $(\Sp^{\Sigma,\otimes}_\text{flat})^\flat$, the universal property of localization implies that $\Phi$ fits into a commutative diagram
    \[
        \begin{tikzcd}[column sep=small]
            \ul\UCom_\gl\arrow[rr,"\Phi"]\arrow[dr, bend right=15pt, "\mathbb U"'] &[.2em]&[-.2em] \ul\CAlg_\gl(\ul\Sp_\gl^\otimes)\arrow[dl, bend left=15pt, "\mathbb U"]\rlap.\\
            & \ul\Sp_\gl
        \end{tikzcd}
    \]
\end{construction}

\begin{remark}\label{rk:ucom-model}
    Similarly to Remark~\ref{rk:who-cares-about-flatness}, the category $\CAlg(\text{$G$-Sp}^\Sigma)$ carries a \emph{positive flat $G$-global model structure} \cite{Lenz-Stahlhauer}*{Proposition~5.7}, such that the cofibrant objects are in particular flat spectra when we forget the ring structure \cite{shipley-convenient}*{Corollary~4.3}. Analogously to Remark~\ref{rk:who-cares-about-flatness}, we therefore could have equivalently defined $\ul\UCom_\gl$ by localizing the Borel global category of all commutative algebras in symmetric spectra.
\end{remark}

\begin{remark}
    In \cite{schwede2018global}*{Theorem~5.4.3}, Schwede introduces a \emph{global model structure} on the category of strictly commutative algebras in orthogonal spectra, and calls the objects of the resulting localization \emph{ultra-commutative global ring spectra}. Schwede's model structure records equivariant information for all compact Lie groups, but as we show in Appendix~\ref{app:orthogonal}, once one restricts to information about \emph{finite} groups, the resulting $\infty$-category can be equivalently modelled by Hausmann's global model structure on $\CAlg(\Sp^\Sigma)$ \cite{hausmann-global}*{Theorem~3.5}, which literally agrees with the flat ($1$-)global model structure from Remark~\ref{rk:who-cares-about-flatness}. In other words, the underlying category $\UCom_\gl$ of $\ul\UCom_\gl$ is given by Schwede's category of ultra-commutative global ring spectra, localized at the global family of finite groups.
\end{remark}

We can now state the following precise version of Theorem~\ref{introthm:global-param} from the introduction:

\begin{theorem}\label{thm:global-model}
    The functor $\Phi\colon\ul\UCom_\textup{gl}\to\ul\CAlg_\gl(\ul\Sp_\textup{gl}^\otimes)$ is an equivalence.
\end{theorem}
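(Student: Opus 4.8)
The plan is to carry out the monadicity argument outlined in the introduction. Since both $\ul\UCom_\gl$ and $\ul\CAlg_\gl(\ul\Sp_\gl^\otimes)$ are product-preserving functors $\Fglo^\op\to\Cat$ --- the target by \cref{thm:oper_operad}(5), using that $\ul\Sp_\gl^\otimes$ is a distributive (hence in particular a normed) global category by \cref{thm:spgl-dist} --- and $\Fglo$ is the finite coproduct completion of the connected finite groupoids, it suffices to check that $\Phi$ is an equivalence on every connected groupoid, i.e.\ that $\Phi(G)\colon\UCom_\textup{$G$-gl}\to\ul\CAlg_\gl(\ul\Sp_\gl^\otimes)(G)$ is an equivalence for every finite group $G$. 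By \cref{constr:comparison} this functor lies over the forgetful functors $\mathbb U$ to $\Sp_\textup{$G$-gl}=\ul\Sp_\gl(G)$.

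First I would establish that both forgetful functors are monadic. For the target this is \cref{cor:par_U_monadic}, using again that $\ul\Sp_\gl^\otimes$ is distributive. For the source, recall from \cref{rk:ucom-model} that $\UCom_\textup{$G$-gl}$ is the localization of $\CAlg(\text{$G$-Sp}^\Sigma)$ --- equipped with the positive flat $G$-global model structure of \cite{Lenz-Stahlhauer} --- at the underlying $G$-global equivalences; the free--forgetful adjunction to $\text{$G$-Sp}^\Sigma$ is a Quillen adjunction for the (positive) flat $G$-global model structures, so the derived adjunction $\mathbb P\dashv\mathbb U$ has $\mathbb U$ conservative (equivalences of $G$-global ring spectra are detected underlying) and preserving sifted colimits (reflexive coequalizers and filtered colimits of commutative algebras are computed underlying), hence is monadic by the Barr--Beck--Lurie theorem \cite{HA}*{Theorem 4.7.3.5}. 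Under $\Phi(G)$ these two monadic adjunctions are compatible, so the comparison functor is an equivalence as soon as the induced map of monads on $\Sp_\textup{$G$-gl}$ is an equivalence, where one monad is the free functor $\mathbb U\mathbb P$ for $\ul\Sp_\gl^\otimes$ and the other is the derived strict free commutative monoid functor.

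To identify these two monads I would invoke the universal free-algebra formula of \cref{thm:free-algebra-formula}. Applied to the weakly distributive normed global category $\ul\Sp_\gl^\otimes$ it gives $\mathbb U\mathbb P(X)\simeq\coprod_{n\ge0}\Sub^G_{\Sigma_n\times G}\Nm^{\Sigma_n\times G}_{\Sigma_{n-1}\times G}\Inf^G_{\Sigma_{n-1}\times G}(X)$, oplax naturally in normed functors. Applied instead to the Borel normed global category $(\Sp^{\Sigma,\otimes}_\textup{flat})^\flat$ --- which is weakly distributive, one motivation for that weakening --- the same theorem identifies the free normed algebra functor with the \emph{strict} free commutative monoid functor $X\mapsto\coprod_n(X^{\wedge n})/\Sigma_n$: on the Borel category $\Inf$ is restriction along a surjection, $\Nm^{\Sigma_n\times G}_{\Sigma_{n-1}\times G}\Inf^G_{\Sigma_{n-1}\times G}(X)$ is the point-set Hill--Hopkins--Ravenel norm, which equals $X^{\wedge n}$ with $\Sigma_n$ permuting the smash factors and $G$ acting diagonally, and $\Sub^G_{\Sigma_n\times G}$ is the strict left Kan extension, i.e.\ strict $\Sigma_n$-orbits. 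Feeding the normed localization functor $L\colon(\Sp^{\Sigma,\otimes}_\textup{flat})^\flat\to\ul\Sp_\gl^\otimes$ into the oplax-naturality statement of \cref{thm:free-algebra-formula}, using that $L$ is compatible with the point-set norms of flat spectra (as in \cref{constr:spgl}, by \cites{g-global,Lenz-Stahlhauer}) and that the derived subduction of such a spectrum is modelled by its strict $\Sigma_n$-orbits (the homotopical behaviour of symmetric powers of flat symmetric spectra from \cites{g-global,Lenz-Stahlhauer}), one sees that $\mathbb U\mathbb P(LX)\simeq L\big(\coprod_n(X^{\wedge n})/\Sigma_n\big)$ naturally in flat $X$. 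On the source side, \cref{constr:comparison} (together with \cite{puetzstueck-new}*{Proposition~3.12}) identifies $\ul\CAlg_\gl((\Sp^{\Sigma,\otimes}_\textup{flat})^\flat)$ with $(\CAlg(\Sp^{\Sigma,\otimes}_\textup{flat}))^\flat$ compatibly with the forgetful functors and with $L$; hence the source monad is the derived strict free commutative monoid functor, which again sends $LX$ to $L\big(\coprod_n(X^{\wedge n})/\Sigma_n\big)$ via the same comparison, since $\Phi$ is the descent of $\ul\CAlg_\gl(L)$. As every object of $\Sp_\textup{$G$-gl}$ has the form $LX$ for a flat $X$, this shows the monad map is an equivalence, completing the proof.

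The step I expect to be the main obstacle is the last one: matching the two descriptions of the free functor so that the comparison is an equivalence of \emph{monads}, not merely of underlying endofunctors, and in particular verifying that the Beck--Chevalley/oplax-naturality comparisons of \cref{thm:free-algebra-formula} along $L$ line up with the model-categorical facts that the point-set norm and the strict symmetric powers are homotopically correct on flat symmetric spectra. The other steps --- the reduction to connected groupoids and the monadicity of the model-categorical forgetful functor --- are comparatively routine.
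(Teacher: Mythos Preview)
Your approach is essentially the same as the paper's, and the overall structure---reduce to each $G$, check both $\mathbb U$'s are monadic, and compare the monads using the universal free-algebra formula applied to both $(\Sp^{\Sigma,\otimes}_\textup{flat})^\flat$ and $\ul\Sp_\gl^\otimes$ via the normed functor $L$---is exactly what is carried out there. Two points deserve comment.

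First, your concern about showing the comparison is an equivalence \emph{of monads} rather than merely of endofunctors is misplaced. Once you have the commutative triangle over $\Sp_\textup{$G$-gl}$ with both diagonal maps monadic, \cite{HA}*{Corollary~4.7.3.16} says $\Phi(G)$ is an equivalence as soon as the Beck--Chevalley map $\Ppar\to\Phi\circ\Pder$ is an equivalence of functors; no separate compatibility of monad structures needs to be checked.

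Second, and more seriously, you repeatedly say ``flat'' where the paper needs ``\emph{positively} flat.'' The claim that strict $\Sigma_n$-orbits of $X^{\wedge n}$ model the derived subduction is \emph{false} for merely flat $X$---the sphere is flat but not positively flat, and for $X=\mathbb S$ the action on $X^{\wedge n}$ is trivial, so strict and derived orbits differ. The paper handles this by observing that every $G$-global spectrum is $LX$ for a \emph{positively} flat $X$ (cofibrant replacement in the positive flat model structure), then proving (\cref{lemma:smash-power-free}) that for positively flat $Y$ the $\Sigma_n$-action on $Y^{\wedge n}$ is levelwise free outside the basepoint, so that \cref{lemma:compute-left-derived} applies and the strict quotient computes the derived one. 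Note one cannot simply restrict to positively flat spectra from the outset: the unit is not positively flat, so this is not a symmetric monoidal subcategory, and the Borel normed category would not make sense. The paper therefore works with flat spectra throughout but passes to positively flat representatives only at the final subduction step; your sketch should do the same.
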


The proof will occupy the rest of this section. Before we dive into the details, let us give a general overview of the proof strategy: By construction, $\Phi\colon\ul\UCom_\gl\to\ul\CAlg_\gl(\ul\Sp_\gl^\otimes)$ is compatible with the forgetful functors $\mathbb U$ to $\ul\Sp_\gl$. We will show below that both of these forgetful functors are monadic right adjoints; by abstract nonsense about monadicity, the theorem will then follow once we can show that $\Phi$ also commutes with the left adjoints to the forgetful functors (via the Beck--Chevalley map), and the hard part is understanding the two left adjoints.

In §\ref{subsec:spgl-dist} we will show that $\ul\Sp_\gl^\otimes$ is distributive. By our earlier results, this implies that $\mathbb U\colon\ul\CAlg_\gl(\ul\Sp^\otimes_\gl)\to\ul\Sp_\gl$ indeed admits a left adjoint $\Ppar$ (for `parametrized') such that the resulting adjunction is monadic; at the same time, it will also provide a formula for the left adjoint. We will also show that the $1$-categorical model $(\Sp^{\Sigma,\otimes}_\text{flat})^\flat$ is at least weakly distributive, which again provides us with an explicit identification of the left adjoint $\Pmod \colon(\Sp^{\Sigma}_\text{flat})^\flat\to\ul\CAlg_\gl\big((\Sp^{\Sigma,\otimes}_\text{flat})^\flat\big)$ (for `model'). Note that while this description could have been easily obtained by hand, the advantage of our approach is that we immediately get for free that this identification is compatible with the normed functor $L$, which is a key ingredient of the proof of Theorem~\ref{thm:global-model}.

In §\ref{subsec:monadicity} we show that $\Pmod$ can be left derived to a functor $\Pder\colon\ul\Sp_\gl\to\ul\UCom_\gl$ left adjoint to the forgetful functor $\mathbb U$, and that the resulting adjunction is still monadic.

In §\ref{subsec:proof-ucom-vs-nalg} we will then combine all of the above pieces to compare the left adjoints $\Ppar$ and $\Pder$ and deduce the theorem. This will proceed by in turn comparing both of these left adjoints to $\Pmod$: as already mentioned, the comparison between $\Pmod$ and $\Ppar$ will follow almost for free from Theorem~\ref{thm:free-algebra-formula}; the comparison between $\Pmod$ and $\Pder$ on the other hand relies on some model categorical results from \cite{g-global,Lenz-Stahlhauer}.

\subsection{Distributivity of global spectra}\label{subsec:spgl-dist} As promised, we begin by proving that the normed global categories involved are (weakly) distributive. We first consider the easier $1$-categorical case:

\begin{lemma}\label{lemma:model-weak-dist} The normed global category
$(\Sp^{\Sigma,\tensor}_\textup{flat})^{\flat}$ is weakly distributive.
\end{lemma}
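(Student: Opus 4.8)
The plan is to verify the three conditions in the definition of weak distributivity one at a time for $\cC^\tensor = (\Sp^{\Sigma,\tensor}_\textup{flat})^{\flat}$, reducing everything to known facts about the (flat) model structures of \cite{g-global} and the basechange results for Borel normed global categories.

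First I would handle the existence of fiberwise sequential colimits and their compatibility with restrictions and norms. At level $G$, the category $\cC^\tensor(G) = \Fun(G,\Sp^{\Sigma}_\textup{flat})$ is a diagram category in symmetric spectra; after localizing at the $G$-global weak equivalences it becomes the underlying $\infty$-category of the flat $G$-global model structure of \cite{g-global}*{Theorem~3.1.40}, which is presentable, so in particular has all colimits. Restriction functors along injective homomorphisms are induced by exact functors of diagram categories and are left Quillen for the flat model structures, hence preserve all colimits on underlying $\infty$-categories; the norm functors $\Nm^G_H$ are, by construction, the $1$-categorical symmetric monoidal (Hill--Hopkins--Ravenel) norms, and while these are \emph{not} colimit-preserving in general, they do preserve \emph{sequential} (in fact all filtered) colimits, since an $n$-fold smash product of symmetric spectra is computed levelwise and smash products of symmetric spectra preserve filtered colimits levelwise; one then checks this descends to the localizations using that sequential colimits of flat spectra along flat cofibrations are again flat and homotopical. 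This is the only point where the restriction to \emph{sequential} rather than arbitrary sifted colimits is essential, and I would flag it as such.

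Next, the existence of left adjoints $f_!$ to the restriction functors $f^* \colon \cC(G) \to \cC(H)$ for \emph{all} maps $f$ in $\Fglo$ — not just the faithful ones — follows because $\cC = \cC^\tensor|_{\Fglo^\op}$ is, after Remark~\ref{rk:who-cares-about-flatness}, the global category $\ul\Sp_\gl$ of \cite{CLL_Global}, which is globally presentable; in particular all restriction functors are part of an adjoint triple, so $f_!$ exists. (Concretely, for surjections these are the subduction functors and for injections the induction functors, both constructed model-categorically in \cite{g-global}.) Finally, for the basechange condition along all maps in $\Span_\Forb(\Fglo)^\op$ — which by Remark~\ref{rem:distributivity_is_base_change} splits into the ordinary (smooth) basechange square for pullbacks in $\Fglo$ and the distributivity square against norms — I would invoke the fact that $\cC^\tensor$ is a \emph{Borel} normed global category, i.e.\ $\cC^\tensor \simeq (\Sp^{\Sigma,\tensor}_\textup{flat})^\flat$ is right Kan extended from the point along $\Span(\mathbb F)\hookrightarrow\Span_\Forb(\Fglo)$. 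The clean way to organize this is to observe that the underlying \emph{cocomplete} symmetric monoidal $\infty$-category $\Sp$ has its tensor product preserving colimits in each variable, so that $\Sp^\flat$ is distributive by \cite{Elmanto_Haugseng_Bispans}*{Proposition~3.2.8} (cf.\ Example~\ref{ex:borel-distributive}); the point here is merely that the $1$-categorical model $(\Sp^{\Sigma,\tensor}_\textup{flat})^\flat$ presents the same normed global $\infty$-category on underlying $\infty$-categories, and the basechange squares are detected there.

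\textbf{Main obstacle.}
The delicate step is the interaction of the norm functors with colimits at the \emph{pointset} level versus after localization: the Hill--Hopkins--Ravenel norm of symmetric spectra is only homotopical on flat spectra (and only known to be so there — cf.\ the open question about homotopicality of the smash product), so one must be careful that the sequential colimits witnessing Condition~(1) can be computed along flat cofibrations and stay within the flat subcategory, and that the distributivity/basechange squares of Condition~(3) — which a priori mix norms, restrictions, and left adjoints $f_!$ that are \emph{not} themselves Quillen on the nose — genuinely hold after deriving. I expect the cleanest route around this to be to not argue model-categorically at all for Condition~(3), but to transport the question to the underlying normed global $\infty$-category and cite that it is the Borelification $\Sp^\flat$ of the cocomplete symmetric monoidal $\infty$-category of spectra, for which distributivity (and hence a fortiori the basechange squares of weak distributivity) is already recorded in Example~\ref{ex:borel-distributive}.
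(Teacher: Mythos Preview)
There is a genuine gap: you are proving the wrong statement. The object $(\Sp^{\Sigma,\tensor}_\textup{flat})^{\flat}$ in this lemma is the Borel normed global \emph{$1$-category} (built from the honest $1$-category $\Sp^\Sigma_\textup{flat}$), \emph{before} inverting any weak equivalences. The localization at the $G$-global weak equivalences is $\ul\Sp_\gl^\otimes$, which is a different object and is the subject of the separate (and much harder) Theorem~\ref{thm:spgl-dist}. Your proposal repeatedly slides from the former to the latter: you localize in Condition~(1), you invoke Remark~\ref{rk:who-cares-about-flatness} to identify $\cC$ with $\ul\Sp_\gl$ for Condition~(2), and for Condition~(3) you try to transport everything to an $\infty$-categorical Borelification. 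None of these moves is available here, because the lemma is used precisely to apply Theorem~\ref{thm:free-algebra-formula} to the \emph{un-localized} model (cf.\ the proof of Theorem~\ref{thm:global-model}, where $\mathbb P^{\text{bor}}$ is the free functor for algebras in $(\Sp^{\Sigma,\tensor}_\textup{flat})^\flat$ itself).

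Concretely, the $1$-category $\Sp^\Sigma_\textup{flat}$ is not known to have all colimits (see the remark immediately following the lemma in the paper, and the sentence in the definition of weak distributivity explaining that the notion exists exactly to accommodate this), so your appeals to presentability and to ``globally presentable'' fail at the $1$-categorical level. Moreover, even the localization $\ul\Sp_\gl^\otimes$ is \emph{not} the Borelification $\Sp^\flat$ of the $\infty$-category of spectra (the $G$-global category $\Sp_{G\text{-}\gl}$ is genuinely larger than $\Fun(BG,\Sp)$), so Example~\ref{ex:borel-distributive} applied to $\Sp$ does not compute its basechange squares either. The paper's actual proof is entirely $1$-categorical: it embeds into the distributive Borel category $(\Sp^{\Sigma,\tensor})^\flat$ of \emph{all} symmetric spectra and then checks by hand (via latching maps, and via an explicit argument that quotients by group actions and coproducts preserve flatness) that flat spectra are closed under sequential colimits and the $1$-categorical left adjoints to restriction.
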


\begin{proof}
    We consider the fully faithful normed inclusion $(\Sp^{\Sigma,\tensor}_\textup{flat})^{\flat}\subset (\Sp^{\Sigma,\tensor})^{\flat}$ induced by the symmetric monoidal inclusion $\Sp^{\Sigma,\otimes}_\text{flat}\subset\Sp^{\Sigma,\otimes}$. The right hand side is the monoidal global Borelification of a presentably symmetric monoidal category, hence distributive (Example~\ref{ex:borel-distributive}). To show that the full normed subcategory $(\Sp^{\Sigma,\tensor}_\textup{flat})^{\flat}$ is weakly distributive, it therefore suffices to show that flat spectra are closed under sequential colimits and the left adjoints to restrictions. For the closure under sequential colimits, note that by definition a spectrum is flat if and only if for every $n\ge0$ a certain natural \emph{latching map} $L_n X\to X_n$ is a cofibration of simplicial sets, i.e.~levelwise injective. The \emph{latching object} $L_nX$ is defined as a certain colimit of the entries $X_m$ of $X$, and in particular it preserves sequential colimits. The claim therefore follows as sequential colimits of injections of sets are injections again.

In the same way one shows that flat spectra are closed under coproducts (which is also a formal consequence of them being the cofibrant objects of a model category) as well as quotients by group actions (which is not formal, and uses that injections of $G$-sets have complements). Since any left adjoint to restriction is given on underlying non-equivariant spectra by a quotient of a finite coproduct by a group action, this proves the remaining closure properties.
\end{proof}

\begin{remark}
    We do not know whether $(\Sp^{\Sigma,\tensor}_\text{flat})^\flat$ is even distributive. Note that the subcategory $\Sp^\Sigma_\text{flat}$ is \emph{not} closed under all colimits in $\Sp^\Sigma$ (for example, because it contains all free symmetric spectra and these generate $\Sp^\Sigma$ under colimits)---however, this does not rule out that $\Sp^\Sigma_\text{flat}$ might have all colimits (still preserved by the smash product), which would imply distributivity of $(\Sp^{\Sigma,\tensor}_\text{flat})^\flat$.
\end{remark}

We now move on to the other global category appearing in our comparison.

\begin{theorem}\label{thm:spgl-dist}
    The normed global category $\ul\Sp^\otimes_\textup{gl}$ is distributive.
\end{theorem}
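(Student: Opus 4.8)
The plan is to verify directly that $\und\Sp_\gl^\tensor$ satisfies the four conditions of \cref{def:distributivity} for the distributive context $(\Fglo,\Forb,\Fglo)$. Condition~(1) is the requirement that each $\Sp_{\text{$G$-gl}}$ admits fiberwise sifted colimits (in fact all colimits), with restrictions and norms preserving sifted colimits. Fiberwise cocompleteness is immediate from the model-categorical description: $\Sp_{\text{$G$-gl}}$ is the localization of a combinatorial model category, hence presentable. That restrictions preserve all colimits follows since they are left adjoints (e.g.~by the main theorem of \cite{CLL_Global}, or directly, as restriction along a faithful map is, up to equivalence, induction followed by restriction along an injective-on-objects map, both visibly colimit preserving on the pointset level up to derivation); that norms preserve \emph{sifted} colimits is the content of the fact that $\Nm^G_H$ commutes with filtered colimits (it is a finite smash power) and geometric realizations, which one reads off from \cite{Lenz-Stahlhauer}*{Corollary~5.19} together with the explicit pointset model of the norm. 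Condition~(2), the existence of left adjoints $f_!$ to arbitrary restrictions $f^*\colon\Sp_{\text{$K$-gl}}\to\Sp_{\text{$G$-gl}}$, is again part of the globally presentable structure of $\und\Sp_\gl$ established in \cite{CLL_Global} (and recalled in \cref{rk:who-cares-about-flatness}): by definition a globally presentable global category has left adjoints to all restrictions satisfying basechange for pullbacks of finite groupoids, which is precisely Condition~(3).

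So the real content is Condition~(4): for every distributivity diagram (\ref{distr_diagram}) in $\Fglo$ with $m\colon A\rightarrowmono B$ in $\Fglo$ and $n\colon B\rightarrownorm C$ in $\Forb$, the Beck--Chevalley transformation $m'_!\,n'_\tensor\,\epsilon^*\to n_\tensor\,m_!$ is an equivalence, where $n_\tensor$ denotes the norm functor. The strategy here is to bootstrap from the $1$-categorical model. First I would observe that the analogous statement holds for the Borel normed global category $(\Sp^{\Sigma,\tensor})^\flat$ of \emph{all} symmetric spectra --- or rather, for a genuinely distributive replacement --- but more usefully, that one can check distributivity on a generating class. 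Concretely, both sides of the Beck--Chevalley map are colimit-preserving functors $\Sp_{\text{$X$-gl}}\to\Sp_{\text{$C$-gl}}$ in the variable living over $X$ (the total space of $\epsilon$): $m'_!$, $\epsilon^*$ are left adjoints hence colimit preserving, and $n'_\tensor$ is a norm, which preserves sifted colimits and finite coproducts, hence all colimits, by the argument for Condition~(1) together with \cite{CLL_Global}*{Proposition~4.2.14}; similarly $n_\tensor m_!$ preserves colimits. Since $\Sp_{\text{$X$-gl}}$ is generated under colimits by the image of the left adjoints $\Sigma^\infty_+\colon\Spc_{\text{$X$-gl}}\to\Sp_{\text{$X$-gl}}$ of representable global spaces, it suffices to check that the Beck--Chevalley map is an equivalence on such suspension spectra of orbits. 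This reduces the claim to the corresponding distributivity statement for the \emph{global category of spaces} $\und\Spc_\gl$ with its normed (Cartesian) structure, which in turn can be verified on the pointset level: distributivity diagrams in $\Fglo$ are built from the self-indexed products $n_*$ on slice categories $\Forb_{/(-)}\simeq\Fun(-,\F)$ (Example~\ref{ex:goo}), and the norm of a suspension spectrum is the suspension spectrum of the corresponding indexed smash power of based spaces, so the identity $m'_!\,n'_\tensor\,\epsilon^*\simeq n_\tensor\,m_!$ becomes the standard combinatorial distributivity of indexed products over indexed coproducts of finite sets --- precisely the content encoded in the bispan formalism of \cite{Elmanto_Haugseng_Bispans} and \cite{CHLL_Bispans}. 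Alternatively, and perhaps more cleanly, I would invoke \cite{Elmanto_Haugseng_Bispans}*{Proposition~3.2.8} for the Borel normed global category $\Spc^\flat$ built from the Cartesian symmetric monoidal structure on spaces to get that $\und\Spc_\gl$ (which is Borel, being right Kan extended from a point --- cf.~the discussion after \cref{thm:mon-borel}) is distributive, and then note that $\und\Sp_\gl^\tensor$ is obtained from $\und\Spc_{\gl,*}^\tensor$ by a parametrized stabilization that preserves the relevant adjointability, since $\Sigma^\infty$ is a normed left adjoint.

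The main obstacle I anticipate is verifying the norm side of Condition~(1) and the colimit-preservation of $n'_\tensor$ in the distributivity square with sufficient care: norms are only well behaved on flat spectra at the pointset level, and one must ensure that the derived norm --- which is what $n_\tensor$ in $\und\Sp_\gl^\tensor$ computes --- genuinely preserves sifted colimits and finite coproducts rather than merely filtered colimits. This is where \cref{constr:spgl}, the choice of the flat model, and \cite{Lenz-Stahlhauer}*{Corollary~5.19} together with \cite{g-global}*{Proposition~3.1.62} do the essential work, and I would spell out that the flat model structure is monoidal and that the norm sends flat cofibrations to flat cofibrations, so that the pointset norm is already fully homotopical on flat inputs and commutes with the relevant pointset colimits by the explicit formula as an indexed smash power. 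Once colimit-preservation of both composites in the variable over $X$ is in hand, the reduction to orbits and thence to the (formal, or pointset-verifiable) distributivity for global spaces finishes the proof; I would also record, as in \cref{ex:distributivity-as-cocompleteness} and \cref{rem:distributivity_is_base_change}, that Conditions~(3) and~(4) together are exactly the statement that the left adjoints $f_!$ for $f\in\Fglo$ satisfy basechange along all maps in $\Span_\Forb(\Fglo)^\op$, which is how the result will be used in \cref{sec:global-model}.
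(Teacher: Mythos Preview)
Your proposal has a genuine gap in the treatment of Condition~(4). You claim that $n'_\tensor$ ``preserves sifted colimits and finite coproducts, hence all colimits,'' and similarly that $n_\tensor m_!$ preserves colimits. This is false: norms do \emph{not} preserve finite coproducts. For instance, $\Nm^{\Sigma_2}_1(X\vee Y)\simeq (X\vee Y)^{\wedge 2}$ decomposes into four summands, not two. The distributivity condition you are trying to verify is precisely the statement describing how $n_\tensor$ interacts with indexed coproducts $m_!$; if norms already preserved coproducts, Condition~(4) would be trivial. Consequently your reduction-to-generators argument collapses: neither side of the Beck--Chevalley map is colimit-preserving in the input variable, so checking on suspension spectra of orbits does not suffice.

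The paper's proof takes a completely different route for Condition~(4). It works at the pointset level, where distributivity holds for the Borel $1$-category $(\Sp^{\Sigma,\tensor}_{\textup{flat}})^\flat$ by Lemma~\ref{lemma:model-weak-dist}. The entire difficulty is then that the localization $L\colon(\Sp^{\Sigma,\tensor}_{\textup{flat}})^\flat\to\ul\Sp_\gl^\tensor$ does \emph{not} commute with left adjoints $f_!$ in general (Lemma~\ref{lemma:compute-left-derived}), so one cannot simply descend the pointset identity. The proof factors the map $C\to D$ through $\mathbb{E}$ and $\Forb$; the $\Forb$ case is easy (Corollary~\ref{cor:forb-homotopical}), but the $\mathbb{E}$ case requires a delicate argument tracking \emph{projective} cofibrancy through the norm and restriction functors, using Lemma~\ref{lemma:compute-left-derived}(3) (free kernel action suffices) and Lemma~\ref{lemma:projective-smash} (smash products of inflations of projectives are projective). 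Your alternative suggestion---that $\ul\Spc_\gl$ is Borel and hence distributive by \cite{Elmanto_Haugseng_Bispans}*{Proposition~3.2.8}, and that this passes to spectra via stabilization---also does not work as stated: the global category $\ul\Spc_\gl$ of global spaces is not Borel (it records genuine fixed-point data for all finite groups), and even if one had unstable distributivity, showing it survives parametrized stabilization when the norms are not cocontinuous would itself require an argument.
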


The proof will require some preparations. We begin with the following lemma, which will be instrumental in proving that restrictions and norms for $\ul\Sp_\gl^\tensor$ preserve $\Delta^\op$-indexed colimits.

\begin{lemma}\label{lemma:geometric-real-homotopical}
    For any finite group $G$, geometric realization in $\textup{$G$-Sp}^\Sigma$ preserves $G$-global weak equivalences.
    \begin{proof}
        There is a (simplicial) \emph{injective $G$-global model structure} on $\textup{$G$-Sp}^\Sigma$, whose cofibrations are just the levelwise injections \cite{g-global}*{Corollary~3.1.46}. By definition, a simplicial object in $\textup{$G$-Sp}^\Sigma$ is then Reedy cofibrant with respect to this model structure, if and only if it is levelwise given by Reedy cofibrant diagrams in $\text{SSet}$. But \emph{every} simplicial object in $\text{SSet}$ is Reedy cofibrant, hence so is every simplicial object in $\textup{$G$-Sp}^\Sigma$. As geometric realization in any simplicial model category is left Quillen for the Reedy model structure, the claim follows from Ken Brown's Lemma.
    \end{proof}
\end{lemma}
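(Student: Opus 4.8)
The plan is to realize geometric realization as a left Quillen functor for a model structure in which \emph{every} simplicial object is cofibrant, so that it preserves all (levelwise) weak equivalences for purely formal reasons via Ken Brown's Lemma.

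First I would invoke the \emph{injective $G$-global model structure} on $\textup{$G$-Sp}^\Sigma$ from \cite{g-global}*{Corollary~3.1.46}: this is a simplicial model structure whose weak equivalences are the $G$-global weak equivalences and whose cofibrations are exactly the levelwise injections, i.e.\ the maps that are monomorphisms in each spectrum level and each simplicial degree. Endowing the category $s(\textup{$G$-Sp}^\Sigma)$ of simplicial objects with the associated Reedy model structure over $\Delta^\op$, its weak equivalences are the levelwise $G$-global weak equivalences, so the content of the lemma is precisely that the geometric realization functor $|\cdot|\colon s(\textup{$G$-Sp}^\Sigma)\to\textup{$G$-Sp}^\Sigma$ preserves weak equivalences.

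Next I would recall the standard fact that in any simplicial model category geometric realization is left Quillen with respect to the Reedy model structure on simplicial objects; by Ken Brown's Lemma it therefore preserves weak equivalences between Reedy cofibrant objects. The key observation is then that every object of $s(\textup{$G$-Sp}^\Sigma)$ is Reedy cofibrant: Reedy cofibrancy of $X_\bullet$ means that each latching map $L_nX\to X_n$ is a levelwise injection, and since latching objects are computed levelwise in simplicial sets, this reduces to the classical fact that every simplicial object in $\mathrm{SSet}$ is Reedy cofibrant (equivalently, that the latching maps of a bisimplicial set in the outer direction are monomorphisms, a consequence of the Eilenberg--Zilber lemma). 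Combining the two facts, $|\cdot|$ preserves \emph{all} weak equivalences of the Reedy model structure, which unwinds to the statement that a levelwise $G$-global weak equivalence $f_\bullet\colon X_\bullet\to Y_\bullet$ is sent to a $G$-global weak equivalence $|f_\bullet|\colon |X_\bullet|\to |Y_\bullet|$.

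The only point requiring any care is the assertion that geometric realization is left Quillen for this particular injective model structure, but this is formal once one knows the model structure is simplicial: the usual skeletal-filtration analysis exhibits $\mathrm{sk}_n|X_\bullet|\to\mathrm{sk}_{n+1}|X_\bullet|$ as a pushout of the pushout--product of $L_{n+1}X\to X_{n+1}$ with $\partial\Delta^{n+1}\hookrightarrow\Delta^{n+1}$, which is again a levelwise injection (and a levelwise $G$-global weak equivalence whenever $L_{n+1}X\to X_{n+1}$ is), so that realization sends Reedy (trivial) cofibrations to (trivial) cofibrations. I do not expect a genuine obstacle here; the whole argument is formal model-category bookkeeping, with the only $G$-global input being the cited existence of the simplicial injective model structure.
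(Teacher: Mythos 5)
Your proposal is correct and follows exactly the same route as the paper's proof: pass to the simplicial injective $G$-global model structure, observe that every simplicial object is Reedy cofibrant because latching objects are computed levelwise in $\mathrm{SSet}$, and conclude via left Quillen-ness of realization for the Reedy structure together with Ken Brown's Lemma. The extra skeletal-filtration justification you include is sound but not needed beyond the standard citation.
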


Verifying the basechange condition for the left adjoints to restrictions will be significantly more involved. Ultimately, we want to reduce this to the case of the normed global $1$-category $(\Sp^{\Sigma,\otimes}_\text{flat})^\flat$ considered above; however, while the localization functor $L\colon (\Sp^{\Sigma,\otimes}_\text{flat})^\flat\to\ul\Sp_\gl^\otimes$ preserves restrictions and norms, it does \emph{not} commute with left adjoints in general as the latter do not preserve weak equivalence between flat spectra as a consequence of \cite{g-global}*{Example~3.1.51}. The following lemma will serve as a replacement:

\begin{lemma}\label{lemma:compute-left-derived}
    Let $f\colon H\to G$ be a homomorphism of finite groups. Then the restriction $f^*\colon \Sp_{G\textup{-gl}}\to\Sp_{H\textup{-gl}}$ admits a left adjoint $f_!$. If $X\in\textup{$H$-Sp}^\Sigma_\textup{flat}$, then the Beck--Chevalley map $f_!LX\to Lf_!X$ is an equivalence in each of the following cases:
    \begin{enumerate}
        \item The $H$-symmetric spectrum $X$ is cofibrant in the \emph{projective} $H$-global model structure of \cite{g-global}*{Theorem~3.1.41}.
        \item The underlying $\ker(f)$-symmetric spectrum of $X$ is cofibrant in the projective $\ker(f)$-global model structure.
        \item The subgroup $\ker(f)$ acts levelwise freely on $X$ outside the basepoint.
    \end{enumerate}
    Moreover, $(1)\Rightarrow(2)\Rightarrow(3)$.
\end{lemma}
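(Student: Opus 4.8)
The existence of the left adjoint $f_!$ is a purely formal consequence of the fact that $f^*$ preserves all colimits (being a functor of presentable categories arising from a Quillen adjunction at the model categorical level), together with the fact that $\und\Sp_\gl$ is fiberwise presentable; alternatively, one cites that the restriction functor is right Quillen for the (projective or flat) $G$-global model structures, so has a left Quillen adjoint on pointset models, which descends. The content of the lemma is the identification of the Beck--Chevalley map, and the heart of the matter is implication $(3)\Rightarrow$ ``the map is an equivalence''; the implications $(1)\Rightarrow(2)\Rightarrow(3)$ are bookkeeping.

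For $(1)\Rightarrow(2)$: a projectively cofibrant $H$-global spectrum is in particular projectively cofibrant as a $\ker(f)$-global spectrum, since restriction along the inclusion $\ker(f)\hookrightarrow H$ is left Quillen for the projective model structures --- this follows from the description of the generating cofibrations of the projective $G$-global model structure in \cite{g-global} (they are built from semifree spectra $\mathcal{L}_{G,\mathcal{A}}(\iota)$ whose restriction to a subgroup is again a coproduct of such generators). For $(2)\Rightarrow(3)$: the generating projective cofibrations for the $\ker(f)$-global model structure have the property that $\ker(f)$ acts levelwise freely away from the basepoint (this is exactly the ``freeness'' built into the semifree construction indexed over faithful $\ker(f)$-sets, cf.\ \cite{g-global}*{Remark~3.1.39 and Proposition~3.1.62}), and the class of spectra with this freeness property is closed under the pushouts, transfinite compositions and retracts used to build cell complexes; hence every projectively cofibrant $\ker(f)$-spectrum has levelwise free $\ker(f)$-action outside the basepoint.

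For the main claim in case $(3)$: factor $f$ as the surjection $H\rightarrowepic H/\ker(f)$ followed by the injection $H/\ker(f)\hookrightarrow G$, and correspondingly factor $f_!$ (and its pointset model) as a composite. For the injective part, the left adjoint to restriction along an injection is modeled by an honest (levelwise) induction $G_+\wedge_{H/\ker(f)}(-)$, which manifestly preserves $G$-global weak equivalences between flat spectra --- this is one of the standard homotopical properties recorded in \cite{g-global}*{§3.1} --- so no derivation is needed there and the Beck--Chevalley map is an equivalence for all flat $X$. For the surjective part, write $N=\ker(f)$; the left adjoint to inflation along $H\rightarrowepic H/N$ is the $N$-orbit functor $(-)/N$. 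The point is that, while $(-)/N$ is only left Quillen and hence a priori needs cofibrant replacement, it \emph{is} homotopical when restricted to $N$-symmetric spectra on which $N$ acts levelwise freely outside the basepoint: for such spectra, $(-)/N$ agrees with the homotopy orbits, because the relevant quotient of levelwise free $N$-sets computes the homotopy quotient (this is the analog of the classical fact that for a free $G$-simplicial set, strict and homotopy orbits coincide). Concretely, one checks that a $G$-global weak equivalence between two such spectra is sent by $(-)/N$ to a $G/N$-global weak equivalence, e.g.\ by testing on the equivariant homotopy groups via the corresponding statement for free $N$-CW complexes, or by invoking the Quillen equivalence between the $N$-global model structure restricted to free objects and an appropriate model for the orbit category as in \cite{g-global}. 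Since both $f_!$ and its derived functor preserve colimits and $X$ flat may be resolved by free objects, the Beck--Chevalley map $f_!LX\to Lf_!X$ is an equivalence. Assembling the two factors gives the claim for general $f$ in case $(3)$, and a fortiori in cases $(1)$ and $(2)$.

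\textbf{Main obstacle.} The delicate step is the homotopical statement for $(-)/N$ on levelwise-free $N$-spectra: one must be careful that ``levelwise free outside the basepoint'' is exactly the condition under which strict orbits compute homotopy orbits in the $G$-global (not just non-equivariant) sense, and that this interacts correctly with the ambient $G/N$-action. I expect this to be available from the results of \cite{g-global} on the projective model structure and its free cells (and possibly \cite{Lenz-Stahlhauer} for the multiplicative refinements used elsewhere), but locating the precise statement --- rather than re-deriving it --- is where the real work lies.
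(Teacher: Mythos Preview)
Your proposal is correct in substance and matches the paper's overall structure, but you take an unnecessary detour in case~(3). The paper does not factor $f$: having noted that $f_!$ is left Quillen for the projective model structures (\cite{g-global}*{Lemma~3.1.49}), so that the derived left adjoint exists and case~(1) is immediate, it observes that the Beck--Chevalley map in case~(3) is the image under $f_!$ of a projective cofibrant replacement $p\colon X'\to X$, and then cites a single result (\cite{g-global}*{Proposition~3.1.54}) asserting directly that $f_!(p)$ is a $G$-global weak equivalence whenever $\ker(f)$ acts freely on $X$. Your factoring into surjection followed by injection works, but it splits the problem into two homotopical statements (for $(-)/N$ on $N$-free spectra, and for induction on flat spectra) that are both special cases of this same proposition; in particular, your claim that induction along an injection preserves weak equivalences between flat spectra is not obviously prior to the lemma and is in fact derived from it in the paper (Corollary~\ref{cor:forb-homotopical}). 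One further caveat: the slogan ``strict orbits equal homotopy orbits for free actions'' is the right non-equivariant intuition, but the statement you actually need---that $(-)/N$ sends $H$-global weak equivalences between $N$-free spectra to $H/N$-global weak equivalences---concerns equivariant homotopy groups at all subgroups, not a single homotopy quotient, so the slogan alone does not justify it (you correctly flag this as the main obstacle). For $(1)\Rightarrow(2)$ and $(2)\Rightarrow(3)$ the paper simply cites \cite{Lenz-Stahlhauer}*{Proposition~1.44} and \cite{g-global}*{Remark~3.1.22}; your sketches for these are fine.
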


To simplify our terminology, we will simply refer to the cofibrant objects of the $H$-global projective model structure as \emph{projective} below.\footnote{The reader familiar the model categorical approach to classical equivariant stable homotopy theory should be warned that being cofibrant in the projective \emph{$G$-global} model structure is very different from being cofibrant in the projective \emph{$G$-equivariant} model structure, cf.~\cite{g-global}*{Warning~3.1.22}. For example, the implication $(1)\Rightarrow(3)$ in the above lemma shows that the sphere is not $G$-globally projectively cofibrant unless $G=1$.}

    \begin{proof}
        \cite{g-global}*{Lemma~3.1.49} shows that $f_!\colon\text{$H$-Sp}^\Sigma\to\text{$G$-Sp}^\Sigma$ is left Quillen for the projective model structures. We conclude that $f^*\colon \Sp_{G\textup{-gl}}\to\Sp_{H\textup{-gl}}$ has a left adjoint given as the left derived functor of $f_!$; in particular, the Beck--Chevalley map is an equivalence on projectively cofibrant spectra.

        We now observe that $(1)\Rightarrow(2)$ by \cite{Lenz-Stahlhauer}*{Proposition~1.44} while $(2)\Rightarrow(3)$ by \cite{g-global}*{Remark~3.1.22}. It then only remains to show that the Beck--Chevalley map $f_!LX\to Lf_!X$ is invertible whenever $\ker(f)$ acts freely on $X$. Unravelling definitions, this amounts to saying that if $p\colon X'\to X$ is a projectively cofibrant replacement, then $f_!(p)$ is a $G$-global weak equivalence, which is a direct consequence of \cite{g-global}*{Proposition~3.1.54}.
    \end{proof}

\begin{remark}
    Note that restriction along $f\colon H\to G$ does \emph{not} preserve projectivity unless $f$ is injective, i.e.~we can't avoid the above problem by defining $\ul\Sp^\otimes_\gl$ in terms of the projective model structures instead. In fact, as explained in a more general context in \cite{Lenz-Stahlhauer}*{Remark~2.13}, having to deal with two different model structures and their interplay is unavoidable: if we could find for every $G$ a model structure on $G$-symmetric spectra with weak equivalences the $G$-global weak equivalences and such that restrictions are both left and right Quillen, this would for every $p\colon G\to 1$ force the inflation $p^*\colon\Sp_\gl\to\Sp_\text{$G$-gl}$ to be fully faithful, so that in particular $p_*p^*\mathbb S\simeq\mathbb S$. However, the tom Dieck splitting shows that the two sides are not even non-equivariantly equivalent unless $G$ is trivial.
\end{remark}

\begin{corollary}\label{cor:forb-homotopical}
    Let $f\colon H\to G$ be any map in $\Forb$. Then $f^*\colon\ul\Sp_\gl(G)\to \ul\Sp_\gl(H)$ has a left adjoint $f_!$. Moreover, the Beck--Chevalley transformation $f_!L\to Lf_!$ is an equivalence.
    \begin{proof}
        The map $f$ can be factored into a sequence of maps each of which is a coproduct of injective homomorphisms together with fold maps $K\amalg K\to K$ (for varying finite groups $K$). Because $\ul\Sp_{\gl}$ preserves products, it therefore suffices to treat the case that $f\colon H\to G$ is an injective group homomorphism or a fold map $f\colon G\amalg G\to G$.

        The first case is an immediate consequence of Lemma~\ref{lemma:compute-left-derived}, as the third condition is vacuous for injective homomorphisms. On the other hand, in the second case $f_!$ is simply given by the coproduct, so the claim follows from the fact that the coproduct of $G$-symmetric spectra preserves $G$-global weak equivalences by \cite{g-global}*{Lemma~3.1.43}.
    \end{proof}
\end{corollary}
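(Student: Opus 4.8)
The plan is to reduce the statement to two elementary cases by decomposing $f$ and exploiting that $\ul\Sp_\gl$ preserves finite products. First I would observe that, since $\Forb=\mathbb F[\Orb]$ is a free coproduct completion, any faithful morphism $f\colon H\to G$ of finite groupoids factors as a finite composite of maps each of which is, up to relabelling of coproduct summands, either a finite coproduct of injective group homomorphisms, or the coproduct of a single fold map $K\amalg K\to K$ (for a finite group $K$) with identities, or a coprojection $\emptyset\to K$ (again coproducted with identities). Because $\ul\Sp_\gl$ sends coproducts of finite groupoids to products of categories, and sends the structural maps above to the corresponding diagonals, projections, and to terminal functors, and because left adjoints compose and are preserved by finite products, it suffices to verify the existence of $f_!$ and the invertibility of $\BC_!\colon f_!L\to Lf_!$ when $f$ is a single injective homomorphism of finite groups or a single fold map $f\colon G\amalg G\to G$.

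Next, for an injective homomorphism $f\colon H\hookrightarrow G$, I would simply invoke \cref{lemma:compute-left-derived}: it already produces the left adjoint $f_!$ of $f^*$ as the left derived functor of the point-set level $f_!$ (which is left Quillen for the projective $H$-global model structure), and its third hypothesis --- that $\ker(f)$ act levelwise freely away from the basepoint --- is vacuous for injective $f$, since then $\ker(f)$ is trivial. Hence the Beck--Chevalley map $f_!LX\to Lf_!X$ is an equivalence for \emph{every} flat $H$-symmetric spectrum $X$, which is exactly the assertion in this case.

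For a fold map $f\colon G\amalg G\to G$, I would note that $f^*$ is the diagonal $\ul\Sp_\gl(G)\to\ul\Sp_\gl(G)\times\ul\Sp_\gl(G)$, whose left adjoint is the coproduct $(X,Y)\mapsto X\amalg Y$, modelled on the point-set level by the wedge of $G$-symmetric spectra. By \cite{g-global}*{Lemma~3.1.43} the wedge of $G$-symmetric spectra preserves $G$-global weak equivalences, so it is already homotopical and therefore its own left derived functor; consequently $L$ commutes with it strictly and $\BC_!$ is an equivalence here as well. Assembling the cases using that Beck--Chevalley transformations compose then finishes the argument.

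The step I expect to require the most care is the first one: one must check that the chosen factorization of $f$ really passes only through maps that $\ul\Sp_\gl$ handles well, and in particular that \emph{no} non-injective homomorphism appears. It is precisely the faithfulness hypothesis on $f$ that guarantees every homomorphism occurring in the factorization is injective, so that \cref{lemma:compute-left-derived} applies with its third condition automatically satisfied; for a general morphism of $\Fglo$ the analogous statement genuinely fails, as $L$ need not commute with the left adjoints of inflation functors.
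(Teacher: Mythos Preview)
Your proposal is correct and follows essentially the same route as the paper: factor $f$ using that $\Forb$ is a free coproduct completion, use product preservation of $\ul\Sp_\gl$ to reduce to the two elementary cases, then invoke Lemma~\ref{lemma:compute-left-derived} for injective homomorphisms and \cite{g-global}*{Lemma~3.1.43} for fold maps. You are slightly more explicit than the paper in singling out coprojections $\emptyset\to K$ as a separate constituent of the factorization and in stressing why faithfulness is essential, but the argument is otherwise the same.
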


Finally, we will need the following lemma producing projective  $(G\times H)$-global spectra for us:

\begin{lemma}\label{lemma:projective-smash}
    Let $G,H$ be finite groups, and let $X\in\textup{$G$-Sp}^\Sigma, Y\in\textup{$H$-Sp}^\Sigma$ be projective. Then the $(G\times H)$-symmetric spectrum $\Inf^G_{G \times H}X\smashp \Inf^{H}_{G \times H} Y$ is again projective.
\end{lemma}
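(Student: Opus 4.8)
The plan is to reduce the statement to the standard fact that the monoidal product of projective cofibrant objects is projective cofibrant, combined with the observation that inflation along a quotient map sends projective cofibrant objects to projective cofibrant objects. More precisely, I would argue as follows. First, recall from \cite{g-global}*{Theorem~3.1.41} that the projective $K$-global model structure on $\textup{$K$-Sp}^\Sigma$ is cofibrantly generated, with generating cofibrations obtained from the generating cofibrations of the projective $K$-global level model structure. The key point is that these generating cofibrations for a group $K$ of the form $G\times H$ can be understood via \emph{external} smash products: the projective $(G\times H)$-global model structure is the pushout-product model structure built from the projective $G$-global and projective $H$-global model structures along the inflations $\Inf^G_{G\times H}$ and $\Inf^H_{G\times H}$. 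This is essentially the content of \cite{g-global}*{proof of Proposition~3.1.62 and surrounding discussion}, or can be verified directly by comparing generating (acyclic) cofibrations; I would cite the appropriate statement from \cite{g-global} or \cite{Lenz-Stahlhauer} here.

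\textbf{Key steps.} Granting this, the smash product
\[
    \smashp \colon \textup{$G$-Sp}^\Sigma \times \textup{$H$-Sp}^\Sigma \to \textup{$(G\times H)$-Sp}^\Sigma,
    \qquad (X,Y) \mapsto \Inf^G_{G\times H}X \smashp \Inf^H_{G\times H}Y,
\]
is then a left Quillen bifunctor by construction of the pushout-product model structure, and in particular it preserves cofibrant objects: if $X$ is projective in $\textup{$G$-Sp}^\Sigma$ and $Y$ is projective in $\textup{$H$-Sp}^\Sigma$, then $\Inf^G_{G\times H}X\smashp\Inf^H_{G\times H}Y$ is projective in $\textup{$(G\times H)$-Sp}^\Sigma$. (Here one uses that $\emptyset$ is cofibrant, so that $X = X\smashp S^0$ lies in the image; strictly one applies the pushout-product to $\emptyset\to X$ and $\emptyset\to Y$.) If a citation to the external smash product model structure is not directly available in the form needed, the fallback is to check directly on generators: the generating cofibrations of the projective $K$-global model structure are of the form $\mathbb S_{\mathcal{E}_K,m}^{\mathcal{A}}\smashp(\partial\Delta^n\hookrightarrow\Delta^n)_+$ for suitable indexing categories, and one computes that the smash product of such a generator for $G$ with such a generator for $H$, after inflation, is again such a generator for $G\times H$ (up to the pushout-product with the boundary inclusions, which is handled by the standard argument with the pushout-product axiom for simplicial sets). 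Then a transfinite induction / retract argument upgrades this from generators to all cofibrant objects.

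\textbf{Main obstacle.} The main difficulty is purely bookkeeping: pinning down the precise statement in \cite{g-global} (or \cite{Lenz-Stahlhauer}) that identifies the projective $(G\times H)$-global model structure with the external pushout-product of the projective $G$-global and projective $H$-global model structures along the inflation functors, and checking that the inflations $\Inf^G_{G\times H}$, $\Inf^H_{G\times H}$ are the correct left Quillen functors here (rather than, say, the functors one would use for the flat model structures). There is a genuine subtlety in that inflation along a \emph{quotient} map is well-behaved for the projective global model structures precisely because projective cofibrations are built from free actions in the appropriate sense — this is exactly the phenomenon recorded in \cite{g-global}*{Warning~3.1.22} and \cite{g-global}*{Remark~3.1.22} — so one must be careful that inflation along the projections $G\times H\to G$ and $G\times H\to H$ genuinely preserves projective cofibrancy. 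Once the correct model-categorical input is cited, the remainder is a routine application of the pushout-product axiom, so I would keep that part brief.
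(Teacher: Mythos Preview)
Your fallback approach --- checking directly on generating cofibrations that the external smash product sends a $G$-global projective cofibration and an $H$-global projective cofibration to a $(G\times H)$-global projective cofibration --- is exactly what the paper does, and it works. The paper writes out the generating cofibrations as $\bm\Sigma(A,-)\smashp_{\Sigma_A} X_+\smashp(\partial\Delta^n\hookrightarrow\Delta^n)_+$ with $X$ a $(\Sigma_A\times G)$-set on which $G$ acts freely, computes the pushout product with an analogous $H$-side generator explicitly, and observes that the result has the required form because a product of a free $G$-set with a free $H$-set carries a free $(G\times H)$-action. So your ``fallback'' is not a fallback at all; it is the proof.

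Where you go astray is in the framing. You spend effort hoping to cite a result identifying the projective $(G\times H)$-global model structure as a pushout-product model structure, and you worry about whether the inflations $\Inf^G_{G\times H}$, $\Inf^H_{G\times H}$ ``genuinely preserve projective cofibrancy.'' They do \emph{not}: the paper explicitly warns immediately after the lemma that $\Inf^G_{G\times H}X$ alone is never projective unless $H=1$ or $X=0$. There is no factoring of the argument through inflation being separately left Quillen; the whole point is that the \emph{bifunctor} preserves cofibrancy even though neither inflation does on its own. Your reference to \cite{g-global}*{Proposition~3.1.62} is also off --- that result concerns homotopicality of restrictions, not an external pushout-product structure. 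Drop the search for a citable black box, drop the remarks about inflation being left Quillen, and just run the direct generator computation you already sketched.
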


Beware that $\Inf^G_{G \times H}X$ alone will \emph{not} be projective unless $H=1$ or $X=0$, and similarly for $\Inf^H_{G \times H}Y$.

\begin{proof}
    We will more generally show that the pushout product of any $G$-global projective cofibration with an $H$-global projective cofibration is a $(G\times H)$-global projective cofibration. As usual, it suffices to consider the case of generating cofibrations, which by \cite{g-global}*{Proposition~3.1.20} are precisely those of the form
    \[
        \bm\Sigma(A,-)\smashp_{\Sigma_A} X_+\smashp (\partial\Delta^n\hookrightarrow\Delta^n)_+
    \]
    where $\bm\Sigma$ is the indexing category for symmetric spectra, $A\in\bm\Sigma$, $n\ge 0$, and $X$ is a transitive $(\Sigma_A\times G)$-set such that the $G$-action is free. As coproducts of cofibrations are cofibrations, we see that the above is more generally a cofibration for any (not necessarily transitive) $(\Sigma_A\times G)$-set with free $G$-action.

    Similarly, a set of generating cofibrations for the $H$-global projective model structure is given by the maps
    \[
        \bm\Sigma(B,-)\smashp_{\Sigma_B} Y_+\smashp (\partial\Delta^m\hookrightarrow\Delta^m)_+
    \]
    for $(\Sigma_B\times H)$-sets $Y$ with free $H$-action. The pushout product of two such maps is then given by
    \begin{equation}\label{eq:to-show-proj}
        \bm\Sigma(A\amalg B,-)\smashp_{\Sigma_{A\amalg B}} Z_+\smashp \big((\partial\Delta^n\hookrightarrow\Delta^n){\scriptstyle\square}(\partial\Delta^m\hookrightarrow\Delta^m)\big)_+
    \end{equation}
    where $\scriptstyle\square$ refers to the pushout product in simplicial sets and $Z=\Sigma_{A\amalg B}\times_{\Sigma_A\times\Sigma_B}(X\times Y)$. By assumption on $X$ and $Y$, $G\times H$ acts freely on $X\times Y$, and hence on $Z$. Accordingly, the left adjoint functor
    $\bm\Sigma(A\amalg B,-)\smashp_{\Sigma_{A\amalg B}} Z_+\smashp (-)$
    sends generating cofibrations of simplicial sets to $(G\times H)$-global projective cofibrations of $(G\times H)$-symmetric spectra. Thus, it already sends all cofibrations of simplicial sets to $(G\times H)$-global projective cofibrations, and in particular $(\ref{eq:to-show-proj})$ is a $(G\times H)$-global projective cofibration by the pushout product axiom for ordinary simplicial sets.
\end{proof}

\begin{proof}[Proof of Theorem~\ref{thm:spgl-dist}]
    We recommend the reader skips this argument on first reading, and instead enjoys a nice book or a beautiful walk.

    As shown in \cite{CHLL_NRings}*{proof of Proposition~5.5.7}, flat symmetric spectra are closed under geometric realization and the smash product functor $\Sp^\Sigma\times\Sp^\Sigma\to\Sp^\Sigma$ preserves geometric realizations up to \emph{isomorphism}. In particular, we see that all norms in $(\Sp^{\Sigma,\otimes}_{\text{flat}})^\flat$ preserve geometric realizations up to isomorphism. By \cite[Corollary 5.19]{Lenz-Stahlhauer} and \cref{lemma:geometric-real-homotopical} both norms and geometric realization are homotopical, so we conclude that norms in $\ul\Sp_\gl^\otimes$ preserve $\Delta^\op$-shaped colimits. An analogous argument shows that they also preserve all filtered colimits (using that they are again homotopical \cite{g-global}*{Corollary~3.1.46}), hence all sifted colimits. On the other hand, restrictions of $\ul\Sp_\gl$ even preserve arbitrary colimits, as they are left adjoints by \cite{g-global}*{Lemma~3.1.50}. This verifies condition (1) of \Cref{def:distributivity}.

  	As recalled in Remark~\ref{rk:who-cares-about-flatness}, the underlying global category $\ul\Sp_\gl$ is cocomplete, so the left adjoints to restriction satisfy basechange with respect to maps in $\Fglo$. This shows (2) and (3) of \Cref{def:distributivity};  it remains to verify (4), the Beck--Chevalley condition for commuting norms with left adjoints to restrictions. This, however, will involve substantial work.

    We consider a pullback diagram in $\Span_{\Forb}(\Fglo)^\op$ (cf.~\cref{rem:dist_diag_comm_in_span})
    of the form
    \begin{equation}\label{diag:spanop-pb}
        \begin{tikzcd}[column sep=small, row sep=1.25em]
            A & A & B \\
            E & E & D \\
            C & C & D\rlap.
            \arrow[equal, from=1-2, to=1-1]
            \arrow[equal, from=2-2, to=2-1]
            \arrow[equal, from=3-2, to=3-1]
            \arrow[equal, from=2-3, to=3-3]
            \arrow["f", from=1-2, to=1-3]
            \arrow["q"', from=2-1, to=3-1]
            \arrow["q", from=2-2, to=3-2]
            \arrow[norm, from=2-1, to=1-1]
            \arrow[norm, from=2-2, to=1-2]
            \arrow[norm, from=2-3, to=1-3]
            \arrow[from=2-2, to=2-3]
            \arrow[from=3-2, to=3-3]
            \arrow["\lrcorner"{anchor=center, pos=0.125, rotate=90}, draw=none, from=2-2, to=1-3]
            \arrow["\lrcorner"{anchor=center, pos=0.125, rotate=-90}, draw=none, from=2-2, to=3-1]
        \end{tikzcd}
    \end{equation}
    We may factor $C\to D$ as a coproduct of surjective homomorphisms (i.e.~a map in $\mathbb E$) followed by a map in $\Forb$. Because Beck--Chevalley maps compose, it suffices to prove that norms commute with each class of maps separately.

    Assume first that $C\to D$ is in $\Forb$, hence so is $A\to B$ by Remark~\ref{rk:goo-ggg-pb}. By Corollary~\ref{cor:forb-homotopical}, $L\colon (\Sp^{\Sigma,\otimes}_\text{flat})^\flat\to\ul\Sp_\gl^\otimes$ commutes with the left adjoints to restriction along $A\rightarrowmono B$ and $C\rightarrowmono D$. As $L$ is essentially surjective, the adjointability for $(\Sp^{\Sigma,\otimes}_\text{flat})^\flat$ proven in Lemma~\ref{lemma:model-weak-dist} implies the corresponding statement for $\ul\Sp_\gl^\otimes$.

    Next, we consider the case that $C\to D$ is in $\mathbb E$. To begin, we decompose $B$ into its components, which induces decompositions of $D$ and $C$ by pulling back. As coproducts in $\Fglo$ yield products in $\Span_{\Forb}(\Fglo)^\op$ by semiadditivity, and since pullbacks are stable under products, we may then decompose $(\ref{diag:spanop-pb})$ into a (co)product of pullbacks. As adjointability of each of these summands implies adjointability of the original diagram, we have reduced to the case that $B$ is a group (viewed as a $1$-object groupoid).

    We now want to prove adjointability by reducing to the underived statement again. As remarked above, $L$ does not commute with left adjoints to restriction in general, so some care will be needed. As before, we will call a $G$-symmetric spectrum \emph{projective} whenever it is cofibrant in the $G$-global projective model structure. More generally, for $G\in\Fglo$ with components $G_1,\dots,G_n$, we will call $X\in(\Sp^\Sigma_\text{flat})^\flat(G)$ \emph{projective} if its restriction to each individual $G_i$ is projective in the above sense.

    Let now $\coprod_{i=1}^n(p_i\colon C_i\rightarrowepic D_i)$ be the decomposition of $C\rightarrowepic D$. Then the lower right path from $C$ to $B$ through the diagram (\ref{diag:spanop-pb}) sends $(X_1,\dots,X_n)\in\ul\Sp_\gl^\otimes(C_1)\times\cdots\times\ul\Sp_\gl^\otimes(C_n)$ to $\smash{\bigotimes_{i=1}^n \Nm^B_{D_i} p_{i!}(X)}$. By Lemma~\ref{lemma:compute-left-derived} and the fact that $L$ is a normed functor, this can be computed by taking for each $i$ a projective representative $\overline{X}_i$ of $X_i$ in $\text{$C_i$-Sp}^\Sigma$ and performing the quotients and norms on the pointset level. We claim that plugging the same representatives into the upper left path still computes the corresponding derived functors; as adjointability holds on the pointset level by another application of Lemma~\ref{lemma:model-weak-dist}, this will then finish the proof.

    The only step in the upper left path of the diagram that needs deriving (given that we work in \emph{flat} symmetric spectra throughout) is the final step: to compute the left adjoint to restriction along $A\to B$, we ought to take an $A$-projective replacement. We want to show that this cofibrant replacement does not change the homotopy type of the resulting $B$-global spectrum, i.e.~the Beck--Chevalley map $f_!L\overline Y\to Lf_!\overline Y$ is invertible where $\overline Y\coloneqq \Nm^A_E q^*(\overline X_1,\dots,\overline X_n)\in(\Sp^{\Sigma,\otimes}_\textup{flat})^\flat(A)$.

    Write $A=\coprod_{i=1}^nA_i$ and let $K$ be the disjoint union of the kernels of the components $A_i\to B$; by Lemma~\ref{lemma:compute-left-derived} it will be enough to show that the restriction of $\overline{Y}$ to $K$ is projective.
    Consider for this the following pullback in $\Fglo$:
    \begin{equation}\label{eq:pb-no-spans}
        \begin{tikzcd}[cramped]
            L\arrow[r]\arrow[d,mono] & 1\arrow[d,mono]\\
            A\arrow[r] & B
            \arrow["\lrcorner"{anchor=center, pos=0.125}, draw=none, from=1-1, to=2-2]
        \end{tikzcd}
    \end{equation}
    By the universal property of the pullback, $K\to A$ factors through $L$, and by left cancellability the resulting map $K\to L$ is in $\Forb$.\footnote{In fact, it is not hard to show using \cite{CLL_Global}*{Lemma~4.3.2} that it is even the inclusion of a coproduct summand.}

    As restrictions along injective homomorphisms preserve projectivity by \cite{Lenz-Stahlhauer}*{Proposition~3.1.44}, it will suffice to show that the restriction of $\overline Y$ to $L$ is projective. Passing through the pullback-preserving functor $\Fglo \to \Span_{\Forb}(\Fglo)^\op$, we may interpret $(\ref{eq:pb-no-spans})$ as a pullback in $\Span_{\Forb}(\Fglo)^\op$ when we view it as a diagram of forward maps. Pasting pullback squares in $\Span_{\Forb}(\Fglo)^{\op}$, we see that we can compute $L$ as the pullback of $C=C\to D$ along the composite
    \[
        \begin{tikzcd}[column sep=1.75em]
            1 &\arrow[l,norm, "="'] 1 \arrow[r, mono] & B &\arrow[l, norm] D\arrow[r, "="] & D
        \end{tikzcd}
    \]
    in $\Span_{\Forb}(\Fglo)^\op$, which is a certain span \begin{tikzcd}[cramped,column sep=small] 1 &\arrow[l,norm] 1\times_B D \arrow[r] & D\end{tikzcd}. While it is not hard to compute $S\coloneqq 1\times_BD$ explicitly, all that we will need below is that it is a set (i.e.~a discrete groupoid), as it admits a faithful map to $1$.

    To summarize, we have now reduced to showing that for a pullback diagram
    \[\begin{tikzcd}[column sep=small, row sep=1.25em]
        L & L & 1 \\
        M & M & S \\
        C & C & D
        \arrow[equal, from=1-2, to=1-1]
        \arrow[equal, from=2-2, to=2-1]
        \arrow[equal, from=3-2, to=3-1]
        \arrow[norm, from=2-1, to=1-1]
        \arrow[norm, from=2-2, to=1-2]
        \arrow[norm, from=2-3, to=1-3]
        \arrow["q"', from=2-1, to=3-1]
        \arrow["q", from=2-2, to=3-2]
        \arrow[mono, from=2-3, to=3-3]
        \arrow[epic, from=3-2, to=3-3]
        \arrow[from=1-2, to=1-3]
        \arrow[from=2-2, to=2-3]
        \arrow["\lrcorner"{anchor=center, pos=0.125, rotate=90}, draw=none, from=2-2, to=1-3]
        \arrow["\lrcorner"{anchor=center, pos=0.125, rotate=-90}, draw=none, from=2-2, to=3-1]
    \end{tikzcd}\]
    in $\Span_{\Forb}(\Fglo)^\op$ with $S$ discrete, the composite
    $\Nm^L_Mq^*$ preserves projectivity. As usual, we can reduce this further by factoring the right vertical span into its forwards and backwards part. The pullback along $S=S\rightarrowmono D$ can then be simply computed in $\Fglo$, where it has the form
    \[
        \begin{tikzcd}[cramped]
            T\arrow[r,epic]\arrow[d,mono] & S\arrow[d,mono]\\
            C\arrow[r,epic] & D
            \arrow["\lrcorner"{anchor=center, pos=0.125}, draw=none, from=1-1, to=2-2]
        \end{tikzcd}
    \]
    with $T\rightarrowepic S$ again a coproduct of surjections and with $T\rightarrowmono C$ faithful; in particular, restriction along the latter preserves projectivity. As an upshot (replacing $D$ by $S$ and $C$ by $T$), we have reduced further to the case that $S\rightarrowmono D$ is the identity, i.e.~our diagram has the form
    \[\begin{tikzcd}[column sep=small, row sep=1.25em]
        L & L & 1 \\
        M & M & D \\
        C & C & D
        \arrow[equal, from=1-2, to=1-1]
        \arrow[equal, from=2-2, to=2-1]
        \arrow[equal, from=3-2, to=3-1]
        \arrow[equal, from=2-3, to=3-3]
        \arrow[norm, from=2-1, to=1-1]
        \arrow[norm, from=2-2, to=1-2]
        \arrow[norm, from=2-3, to=1-3]
        \arrow["q"', from=2-1, to=3-1]
        \arrow["q", from=2-2, to=3-2]
        \arrow[epic, from=3-2, to=3-3]
        \arrow[from=1-2, to=1-3]
        \arrow[from=2-2, to=2-3]
        \arrow["\lrcorner"{anchor=center, pos=0.125, rotate=90}, draw=none, from=2-2, to=1-3]
        \arrow["\lrcorner"{anchor=center, pos=0.125, rotate=-90}, draw=none, from=2-2, to=3-1]
    \end{tikzcd}\]
    with $D$ a finite set, and $C\to D$ of the form $\coprod_{d\in D}C_d\to D$ with groups $C_d$.

    We are now finally at the point where all the maps can easily be made explicit via the translation between pullback squares in spans and distributivity diagrams from Remark~\ref{rem:dist_diag_comm_in_span}: we have $L=\prod_{d\in D}C_d$, $M=L\times D$, and the map $q\colon M\to C$ is given on $\{d\}\times L$ by projecting to the $d$-th factor and then including. Thus, if $\overline X_d\in\text{$C_d$-Sp}^\Sigma_\text{flat}$, then $\Nm^L_Mq^*(\overline X_d)_{d\in D}=\bigwedge_{d\in D}\Inf^L_{C_d} \overline X_d$. Applying Lemma~\ref{lemma:projective-smash} inductively shows that this is indeed projective. This, finally, completes the proof.
\end{proof}

\subsection{Monadicity}\label{subsec:monadicity} In this subsection we will introduce the left adjoint of the forgetful functor $\mathbb U\colon\ul\UCom_\gl\to\ul\Sp_\gl$ and show that the resulting adjunction is monadic.

\begin{construction}\label{constr:p-derived-vs-mod}
    As recalled in Remark~\ref{rk:ucom-model}, $\CAlg(\text{$G$-Sp}^\Sigma)$ admits a model structure transferred from the \emph{positive flat $G$-global model structure} on $\text{$G$-Sp}^\Sigma$; in particular, the adjunction
    \[
        \Pmod\colon \text{$G$-Sp}^\Sigma \rightleftarrows \CAlg(\text{$G$-Sp}^\Sigma)\noloc\mathbb U
    \]
    is a Quillen adjunction for the positive flat model structure on the source. We conclude that $\mathbb U\colon\ul\UCom_\gl(G)\to \ul\Sp_\gl(G)$ admits a left adjoint $\Pder$, given by the left derived functor of $\Pmod$. The Beck--Chevalley transformation
    \begin{equation}\label{eq:Pder-vs-mod}
        \alpha\colon\Pder\circ L\to \Ll\circ\Pmod
    \end{equation}
     associated to the commutative square
    \[
        \begin{tikzcd}
            \CAlg(\text{$G$-Sp}^\Sigma_\text{flat})\arrow[d,"\mathbb U"']\arrow[r, "\mathcal L"] & \und{\UCom}_\gl(G)\arrow[d,"\mathbb U"]
            \\
            \text{$G$-Sp}^\Sigma_\text{flat} \arrow[r, "L"'] &
            \ul\Sp_\gl(G)
        \end{tikzcd}
    \]
    is an equivalence whenever $X\in\text{$G$-Sp}^\Sigma_\text{flat}$ is actually \emph{positively flat}, i.e.~cofibrant in the positive flat $G$-global model structure.
\end{construction}

\begin{lemma}
    The left adjoints $\mathbb P^\textup{der}$ assemble into a global functor $\ul\Sp_\gl\to\ul\UCom_\gl$ left adjoint to $\bbU$.
    \begin{proof}
        It suffices to verify the Beck--Chevalley condition, which follows immediately from the observation that also the restrictions are left Quillen for the positive flat model structures, see \cite{Lenz-Stahlhauer}*{Lemma~5.10}.
    \end{proof}
\end{lemma}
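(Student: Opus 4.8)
The plan is to apply the criterion of \cref{rem:adj_in_functor_cat}: a morphism $\bbU\colon\ul\UCom_\gl\to\ul\Sp_\gl$ in the $(\infty,2)$-category $\Fun(\Fglo^\op,\Cat)$ of global categories is a right adjoint precisely if each component $\bbU(G)$ is a right adjoint and all of the associated naturality squares are left adjointable. The pointwise left adjoints $\Pder(G)\dashv\bbU(G)$ have just been constructed, so the only thing left to verify is that for every homomorphism $f\colon H\to G$ of finite groups the square
\[
\begin{tikzcd}
\ul\UCom_\gl(G)\arrow[r,"\bbU"]\arrow[d,"f^*"'] & \ul\Sp_\gl(G)\arrow[d,"f^*"]\\
\ul\UCom_\gl(H)\arrow[r,"\bbU"'] & \ul\Sp_\gl(H)
\end{tikzcd}
\]
is left adjointable. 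Since this square commutes on the nose --- the forgetful functors being compatible with restriction already on the point-set level --- this amounts to showing that the Beck--Chevalley transformation $\Pder f^*\Rightarrow f^*\Pder$ is an equivalence.

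I would check this by computing all four functors through the point-set models on a single cofibrant object. By \cite{Lenz-Stahlhauer}*{Lemma~5.10} restriction along $f$ is left Quillen for the positive flat $G$-global model structures on both $\text{$G$-Sp}^\Sigma$ and $\CAlg(\text{$G$-Sp}^\Sigma)$, and $\Pmod$ is left Quillen by \cref{constr:p-derived-vs-mod}. Hence, starting from a positively flat $X\in\text{$G$-Sp}^\Sigma$, the spectrum $f^*X$ is again positively flat and $\Pmod(X)$ is cofibrant in $\CAlg(\text{$G$-Sp}^\Sigma)$, so that $\Pder(H)(f^*LX)$, $f^*\Pder(G)(LX)$, $\Pmod(f^*X)$ and $f^*\Pmod(X)$ are all computed by these point-set constructions, using the equivalences $\alpha$ of $(\ref{eq:Pder-vs-mod})$ and their counterparts over $H$. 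As restriction along $f$ is strong symmetric monoidal it commutes strictly with the free commutative algebra functor, giving $f^*\Pmod(X)=\Pmod(f^*X)$ on the nose; unwinding the definition of the Beck--Chevalley map then shows that it is an equivalence on $LX$, and hence on all of $\ul\Sp_\gl(G)$ since every object is of this form.

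The one point that needs care --- and the reason \cite{Lenz-Stahlhauer}*{Lemma~5.10} is invoked rather than just the fact that $\Pmod$ is left Quillen --- is that one must be able to derive the entire square at once, i.e.\ know that a single positive flat cofibrant replacement simultaneously computes the derived restriction on $\ul\Sp_\gl$, the derived restriction on $\ul\UCom_\gl$, and the derived free functor. This is exactly what "$f^*$ is left Quillen for the positive flat model structures (on spectra and on commutative algebras)" buys us: $f^*$ preserves positive flat cofibrant objects and the transferred model structure on $\CAlg(\text{$G$-Sp}^\Sigma)$ is compatible with restriction, so no spurious cofibrant replacements are introduced when moving around the square. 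Everything else is formal bookkeeping with Beck--Chevalley transformations, so I do not expect any genuine obstacle once the model-categorical input is in place.
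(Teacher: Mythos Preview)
Your proposal is correct and follows exactly the approach the paper has in mind: reduce to the Beck--Chevalley condition via \cref{rem:adj_in_functor_cat}, then use that restrictions are left Quillen for the positive flat model structures (so that a single positively flat replacement computes all the derived functors in the square at once, and the point-set identity $f^*\Pmod\cong\Pmod f^*$ transports to the derived level). The paper's proof is the same argument compressed to one sentence; you have faithfully unpacked it.
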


\begin{lemma}\label{lemma:U-derived-monadic}
    The adjunction
    \[
        \mathbb P^\textup{der}\colon\ul\Sp_\gl(G)\rightleftarrows\ul\UCom_\gl(G)\noloc\mathbb U
    \]
    is monadic for every finite group $G$.
    \begin{proof}
        By Lemma~\ref{lemma:geometric-real-homotopical}, geometric realizations in $\text{$G$-Sp}^\Sigma$ are homotopical. As the forgetful functor $\mathbb U\colon\CAlg(\text{$G$-Sp}^\Sigma)\to\text{$G$-Sp}^\Sigma$ of simplicial $1$-categories preserves geometric realization up to isomorphism, we conclude that also geometric realizations in $\CAlg(\text{$G$-Sp}^\Sigma)$ are homotopical, and that the resulting functor $\mathbb U\colon\ul\UCom_\gl(G)\to  \ul\Sp_\gl(G)$ on localizations preserves $\Delta^\op$-indexed colimits. As it is also conservative by definition, the claim follows from Barr--Beck--Lurie \cite[Theorem 4.7.3.5]{HA}.
    \end{proof}
\end{lemma}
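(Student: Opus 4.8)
The plan is to deduce the monadicity of $\mathbb P^\textup{der}\dashv\mathbb U$ from the Barr--Beck--Lurie theorem \cite{HA}*{Theorem~4.7.3.5}: it suffices to check that the right adjoint $\mathbb U\colon\ul\UCom_\gl(G)\to\ul\Sp_\gl(G)$ is conservative and that it preserves geometric realizations of $\mathbb U$-split simplicial objects. I will in fact verify the stronger statement that $\mathbb U$ preserves all $\Delta^\op$-indexed colimits.

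Conservativity is essentially built into the construction. By Remark~\ref{rk:ucom-model}, the category $\ul\UCom_\gl(G)$ is the Dwyer--Kan localization of $\CAlg(\text{$G$-Sp}^\Sigma)$ at those maps of commutative algebras whose underlying map of symmetric spectra is a $G$-global weak equivalence --- these being by definition the weak equivalences of the transferred positive flat $G$-global model structure --- while $\ul\Sp_\gl(G)$ is the localization of $\text{$G$-Sp}^\Sigma$ at the $G$-global weak equivalences. Hence a morphism in $\ul\UCom_\gl(G)$ is an equivalence if and only if its image under $\mathbb U$ is, which is exactly conservativity of the induced functor on localizations.

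For the colimit condition I would proceed in two steps. First, geometric realizations in $\text{$G$-Sp}^\Sigma$ are homotopical by Lemma~\ref{lemma:geometric-real-homotopical}. Second --- and this is the point requiring care --- the $1$-categorical forgetful functor $\mathbb U\colon\CAlg(\text{$G$-Sp}^\Sigma)\to\text{$G$-Sp}^\Sigma$ of simplicial categories preserves geometric realizations up to isomorphism; this holds because the free commutative algebra monad is built out of the smash product of symmetric spectra, which commutes with sifted colimits since symmetric spectra are assembled levelwise from simplicial sets, so that the bar construction computing the realization of a simplicial commutative algebra is already a diagram of underlying spectra (cf.\ the analogous argument in \cite{CHLL_NRings}*{proof of Proposition~5.5.7}). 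Combining the two steps, geometric realizations in $\CAlg(\text{$G$-Sp}^\Sigma)$ are homotopical as well, and the induced functor $\mathbb U\colon\ul\UCom_\gl(G)\to\ul\Sp_\gl(G)$ on localizations preserves $\Delta^\op$-indexed colimits; since $\ul\UCom_\gl(G)$ admits such colimits (again computed in the $1$-categorical model), Barr--Beck--Lurie applies and the proof is complete. The only non-formal ingredient is the strict commutation of $\mathbb U$ with geometric realizations, so that is where I expect the real work --- such as it is --- to lie.
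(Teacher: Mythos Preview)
Your proposal is correct and follows essentially the same route as the paper: verify conservativity of $\mathbb U$ from the definition of the weak equivalences, then use Lemma~\ref{lemma:geometric-real-homotopical} together with the fact that the $1$-categorical forgetful functor preserves geometric realizations up to isomorphism to conclude that $\mathbb U$ preserves $\Delta^\op$-indexed colimits on localizations, and finish with Barr--Beck--Lurie. The paper simply asserts the strict commutation of $\mathbb U$ with geometric realizations without further justification, whereas you sketch an argument for it; otherwise the proofs are identical.
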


\subsection{The comparison}\label{subsec:proof-ucom-vs-nalg}

We have constructed $\Pder\colon\Sp_\gl\to\UCom_\gl$ as the left derived functor of
\[
    \Pmod\colon\Sp^\Sigma_\text{flat}\to\CAlg(\Sp^\Sigma_\text{flat}),\
    X\mapsto\bigvee_{n\ge0} X^{\smashp n}/\Sigma_n
\]
Note that as predicted by Theorem~\ref{thm:free-algebra-formula}, this agrees with the formula
\begin{equation}\label{eq:Pmod-explicit}
    X\mapsto\bigvee_{n\ge 0} \Sub_{\Sigma_n}^1\Nm^{\Sigma_n}_{\Sigma_{n-1}}\Inf^1_{\Sigma_{n-1}}X,
\end{equation}
interpreted in the normed global category $(\Sp^{\Sigma,\otimes}_{\text{flat}})^\flat$.
We want to establish the analogous description for $\Pder$, which will ultimately amount to showing that each of the individual constituents of $(\ref{eq:Pmod-explicit})$ already computes the corresponding derived functor when $X$ is positively flat. As usual, the only tricky part will be the final subduction $\Sub_{\Sigma_n}^1 = (-)/\Sigma_n$, and in light of Lemma~\ref{lemma:compute-left-derived} this boils down to a question about the freeness of the action.
The following folklore result will turn out to be precisely what we need:

\begin{lemma}\label{lemma:smash-power-free}
    Let $Y$ be a positively flat symmetric spectrum. Then the $\Sigma_n$-action on $Y^{\smashp n}$ via permuting the factors is levelwise free.
    \begin{proof}
        Applying \cite{harper-corrigendum}*{Proposition~7.7*} with $B=\mathbb S,X=0$ shows that $(Y^{\smashp n})_m$ is cofibrant in the projective model structure on $\Fun(\Sigma_n,\text{SSet}_*)\cong\Fun(\Sigma_n,\text{SSet})_*$. As the cofibrant objects of the latter are precisely the $\Sigma_n$-simplicial sets on which $\Sigma_n$ acts freely outside the basepoint \cite{cellular}*{Proposition~2.16}, this proves the claim.
    \end{proof}
\end{lemma}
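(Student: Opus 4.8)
The plan is to prove the stronger statement that for each level $m$ the pointed $\Sigma_n$-simplicial set $(Y^{\smashp n})_m$ is cofibrant in the projective model structure on $\Fun(\Sigma_n, \text{SSet}_*)$, and then to recall that the cofibrant objects of this model structure are exactly the pointed $\Sigma_n$-simplicial sets on which $\Sigma_n$ acts freely away from the basepoint — which is precisely the asserted levelwise freeness.

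First I would unwind what ``positively flat'' provides. By definition $Y$ is cofibrant in the positive flat ($1$-global) model structure on $\Sp^\Sigma$, hence in particular flat cofibrant, so it is a retract of a transfinite composite of pushouts of the generating flat cofibrations; these are built from free $\Sigma$-cells, and it is exactly this ``flat'' cell structure (as opposed to the coarser ``projective'' one) that makes smash powers behave well. Since $(-)^{\smashp n} \colon \Sp^\Sigma \to \Sigma_n\text{-}\Sp^\Sigma$ preserves colimits in each variable, $Y^{\smashp n}$ is assembled from iterated pushout--products of these cells, and the combinatorics of iterated pushout--products of flat cells forces the $\Sigma_n$-action (permuting the smash factors) to be levelwise free outside the basepoint. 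This is the content of Harper's analysis of smash powers of cofibrant objects in a symmetric monoidal model category: applying \cite{harper-corrigendum}*{Proposition~7.7*} with $B = \mathbb S$ and $X = 0$ yields that $(Y^{\smashp n})_m$ is projective cofibrant in $\Fun(\Sigma_n, \text{SSet}_*) \cong \Fun(\Sigma_n, \text{SSet})_*$ for every $m$.

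It then remains to identify the projective cofibrant objects. The generating cofibrations of the projective model structure on $\Fun(\Sigma_n, \text{SSet}_*)$ are the free cells $(\Sigma_n)_+ \smashp (\partial\Delta^k \hookrightarrow \Delta^k)_+$, and the class of pointed $\Sigma_n$-simplicial sets with free action away from the basepoint contains these cells and is closed under retracts, coproducts, pushouts along monomorphisms, and transfinite composition; hence every projective cofibrant object lies in this class. This is \cite{cellular}*{Proposition~2.16}, and combining it with the previous paragraph finishes the argument.

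The step I expect to be the main obstacle is the reduction to Harper's proposition: one must check that the levelwise pointed-simplicial-set structure underlying $\Sp^\Sigma$ satisfies the hypotheses of \cite{harper-corrigendum}*{Proposition~7.7*}, that it is genuinely flatness (rather than positivity, which is harmless since positively flat implies flat cofibrant) that is being invoked, and that the cell-by-cell reduction is compatible with forming the $n$-fold smash. A more elementary alternative would be to argue directly from the wreath-product description $(Y^{\smashp n})_m \cong \bigvee_{k_1 + \cdots + k_n = m} (\Sigma_m)_+ \smashp_{\Sigma_{k_1} \times \cdots \times \Sigma_{k_n}} (Y_{k_1} \smashp \cdots \smashp Y_{k_n})$, tracking the induced $\Sigma_n$-action through the flat cell structure of $Y$, but this does not appear to save any work.
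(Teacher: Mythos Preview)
Your proposal is correct and takes essentially the same approach as the paper: invoke \cite{harper-corrigendum}*{Proposition~7.7*} (with $B=\mathbb S$, $X=0$) to obtain projective $\Sigma_n$-cofibrancy of each level $(Y^{\smashp n})_m$, then identify projective cofibrant objects as those with free action away from the basepoint via \cite{cellular}*{Proposition~2.16}. One small correction to your discussion of obstacles: positivity, not merely flatness, is the hypothesis genuinely needed for Harper's result --- the sphere spectrum $\mathbb S$ is flat but not positively flat, and $\mathbb S^{\smashp n}\cong\mathbb S$ carries the trivial $\Sigma_n$-action in level $0$, which is not free for $n\ge 2$.
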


Putting everything together, we are now ready to prove our comparison theorem:

    \begin{proof}[{Proof of Theorem~\ref{thm:global-model}}]
        Fix a finite group $G$. By construction, we have a commutative diagram
        \[
            \begin{tikzcd}[column sep=small]
                \ul\UCom_\gl(G)\arrow[rr, "\Phi"]\arrow[dr, bend right=15pt, "\mathbb U"'] &[.5em]&[-.5em] \ul\CAlg_\gl(\ul\Sp_\gl^\otimes)(G)\arrow[dl, "\mathbb U", bend left=15pt]\rlap.\\
                &\ul\Sp_\gl(G)
            \end{tikzcd}
        \]
        By Lemma~\ref{lemma:U-derived-monadic} and \Cref{cor:par_U_monadic}, respectively, the two forgetful functors are part of monadic adjunctions
        \begin{align*}
            \Pder\colon\ul\Sp_\gl(G) &\rightleftarrows\ul\UCom_\gl(G)\noloc\mathbb U\\
            \Ppar\colon\ul\Sp_\gl(G) &\rightleftarrows\ul\CAlg_\gl(\ul\Sp^\otimes_\gl)(G)\noloc\mathbb U,
        \end{align*}
        so it will suffice by \cite{HA}*{Corollary 4.7.3.16} that the Beck--Chevalley map $\beta\colon\Ppar\to \Phi\circ\Pder$ is an equivalence of functors $\ul\Sp_\gl(G)\to\ul\CAlg_\gl(\ul\Sp_\gl^\otimes)(G)$.
        For this we consider the pasting of the square from \cref{constr:p-derived-vs-mod} with the above:
        \begin{equation}\label{diag:phi-extended}
            \begin{tikzcd}
                {\CAlg(\text{$G$-Sp}^{\Sigma}_\text{flat})}\arrow[r, "\mathcal L"]\arrow[d,"\mathbb U"'] &\ul\UCom_\gl(G) \arrow[r,"\Phi"]\arrow[d,"\mathbb U"] & \ul\CAlg_\gl(\ul\Sp_\gl^\otimes)(G)\arrow[d,"\mathbb U"]\\
                \text{$G$-Sp}^\Sigma_\text{flat}\arrow[r, "L"'] & \ul\Sp_\gl(G)\arrow[r,equal] &\ul\Sp_\gl(G)\rlap.
            \end{tikzcd}
        \end{equation}
        By the compatibility of mates with pastings, the Beck--Chevalley map associated to the total square agrees with the composite
        \[
            \Ppar LX
            \xrightarrow{\;\beta_{LX}\;}
            \Phi\Pder LX
            \xrightarrow{\;\Phi\alpha_{X}\;}\Phi\mathcal L\Pmod X,
        \]
        where $\alpha$ is the Beck--Chevalley transformation of the left square.
        As every object of $\ul\Sp^\tensor_\gl(G)$ is of the form $LX$ for a \emph{positively} flat $G$-symmetric spectrum $X$ (by cofibrant replacement in the $G$-global positive model structure), and since $\alpha_X$ is an equivalence for any such $X$ by Construction~\ref{constr:p-derived-vs-mod}, it suffices by 2-out-of-3 that the Beck--Chevalley map associated to the pasting $(\ref{diag:phi-extended})$ is an equivalence when restricted to positively flat $G$-symmetric spectra $X$.

        For this, recall the definition of $\Phi$ from \cref{constr:comparison},
        and note that the composite rectangle $(\ref{diag:phi-extended})$
        is equivalent to
        \[
            \begin{tikzcd}
                {\text{$G$-CAlg}(\Sp^{\Sigma}_\text{flat})}\arrow[r, "\sim"]\arrow[d,"\mathbb U"'] & \ul\CAlg_\gl((\Sp^{\Sigma,\otimes}_\text{flat})^\flat)(G)\arrow[d,"\mathbb U"]\arrow[r, "\ul\CAlg_\gl(L)"] &[2em] \ul\CAlg_\gl(\ul\Sp^\otimes_\gl)(G)\arrow[d,"\mathbb U"]\\
                \text{$G$-Sp}^\Sigma_\text{flat}\arrow[r,equal] & (\Sp^\Sigma_\text{flat})^\flat(G)\arrow[r, "L"'] & \ul\Sp_\gl(G)\rlap.
            \end{tikzcd}
        \]
        The Beck--Chevalley map of the left hand square is invertible as both horizontal maps are equivalences; using once more that Beck--Chevalley maps compose, it will therefore suffice that the Beck--Chevalley map $\gamma$ of the right hand square is an equivalence for every \emph{positively} flat $X\in\text{$G$-Sp}^\Sigma_\text{flat}$.

        For this, we write $\mathbb P^\text{bor}$ for the left adjoint of the middle vertical functor. Note that the adjunctions
        \begin{align*}
            \mathbb P^\text{bor}\colon(\Sp^{\Sigma}_\text{flat})^\flat &\rightleftarrows \ul\CAlg_\gl\big((\Sp^{\Sigma,\otimes}_\text{flat})^\flat\big) \noloc\mathbb U\\
            \mathbb P^\text{par}\colon\ul\Sp_\gl&\rightleftarrows \ul\CAlg_\gl(\ul\Sp_\gl^\otimes) \noloc\mathbb U
        \end{align*}
        are now both instances of Theorem~\ref{thm:free-algebra-formula}. Whether the Beck--Chevalley map $\gamma$ is an equivalence can be checked after applying the conservative functor $\mathbb U$, and the aforementioned Theorem~\ref{thm:free-algebra-formula} provides us with a commutative diagram
        \[
            \begin{tikzcd}[column sep=-15,cramped]
                \mathbb U\Ppar LX\arrow[r, "\mathbb U(\gamma)"]\arrow[d, "\sim"'] & \mathbb U\ul\CAlg_\gl(L)\mathbb P^\text{bor} X\arrow[r, "\sim"] & L\mathbb U\mathbb P^\text{bor} X\arrow[d, "\sim"]\\
                \bigoplus\limits_{n\ge0}\Sub_{\Sigma_n}^1\Nm^{\Sigma_n}_{\Sigma_{n-1}}\Inf_{\Sigma_{n-1}}^1 LX\arrow[rr, "\BC_!"']
                &&
                L\Big(\hskip1pt{\bigvee\limits_{n\ge 0} \Sub_{\Sigma_n}^1\Nm^{\Sigma_n}_{\Sigma_{n-1}}\Inf_{\Sigma_{n-1}}^1 X
                }\Big)
            \end{tikzcd}
        \]
        (where we write $\Nm^{\Sigma_n}_{\Sigma_{n-1}}$ instead of $\Nm^{\Sigma_n\times G}_{\Sigma_{n-1}\times G}$ etc.~for simplicity).
        It will therefore be enough by $2$-out-of-$3$ that the lower horizontal map is an equivalence for positively cofibrant $X$. This follows at once by combining the following observations:
        \begin{enumerate}
            \item $L$ commutes with all restrictions/inflations and norms (by construction).
            \item $L$ preserves coproducts by \cite{g-global}*{Lemma~3.1.43}.
            \item For every \emph{positively} flat $X$, the Beck--Chevalley map
                \[
                    \Sub_{\Sigma_n}^1 L(X^{\smashp n})
                    = \Sub_{\Sigma_n}^1L\Nm^{\Sigma_n}_{\Sigma_{n-1}}\Inf_{\Sigma_{n-1}}^1X
                    \to L(\Sub_{\Sigma_n}^1\Nm^{\Sigma_n}_{\Sigma_{n-1}}\Inf_{\Sigma_{n-1}}^1 X)
                \]
            is an equivalence by Lemma~\ref{lemma:compute-left-derived} combined with Lemma~\ref{lemma:smash-power-free}.\qedhere
        \end{enumerate}
    \end{proof}

    \begin{remark}
        It might be tempting to try and simplify the above argument by replacing $\Sp^{\Sigma,\otimes}_\text{flat}$ with the subcategory of positively flat symmetric spectra throughout. However, this is \emph{not} a symmetric monoidal subcategory of $\Sp^\Sigma$ as the unit (the sphere) is not \emph{positively} flat.
    \end{remark}

    \begin{remark}
        Roughly speaking, the above proof relied on being able to give a `uniform' description for the free algebra functors that simultaneously captures the case of strict algebras on the pointset level as well as the case of parametrized algebras on the $\infty$-categorical level, with this description being simple enough to control the process of deriving it. In view of Remark~\ref{rk:equivariant-sucks}, it is not immediately clear how to adapt this strategy to give a parametrized description of strictly commutative $G$-equivariant ring spectra. Instead, we will deduce the latter comparison in Section~\ref{sec:globalize-equivariant-alg} from the global comparison,  building on the results of \cite{CLL_Clefts}.
    \end{remark}

\section{Norms in a globalized world}\label{sec:norms-and-globalization}

Having proven the equivalence of ultra-commutative global ring spectra and normed algebras, we will now begin our process of understanding the situation at a fixed group $G$. As we just mentioned, our approach will be to reduce to the global statement, by exploiting the strong connection between global and equivariant spectra. This connection is for our purposes best explained by the main result of \cite{Linskens2023globalization}, which expresses the global category of global spectra as a \emph{globalization} of the global category of equivariant spectra. We will begin by recalling this construction.

\subsection{Partially lax functors and rigidification}
Consider a pair $(T,M)$ of a category $T$ together with a wide subcategory $M \subset T$.
In the following we will need to consider functors of $T$-(pre)categories which are lax \emph{away} from $M$.
More specifically, in the unstraightened picture, this consists of a functor between cocartesian fibrations
over $T^\op$ which is only required to preserves cocartesian edges over $M^\op$.
We will refer to such functors as \emph{$\neg M$-lax $T$-functors}.

\begin{definition}
    We write $\PreCat{T}^{\negMlax}$ for the subcategory of $\Cat_{/T^{\op}}$ spanned by the cocartesian fibrations and morphisms which preserve cocartesian edges over maps in $M^{\op}$.
    If $T$ furthermore has finite coproducts, we write $\iCat{T}^{\negMlax}$ for the full subcategory of $\PreCat{T}^{\negMlax}$ spanned by the (unstraightenings of) $T$-categories.
\end{definition}

\begin{notation}
    As usual, these categories canonically enhance to $(\infty,2)$-categories,
    and we denote the hom category between $\Cc$ and $\Dd$ by $\Fun_T^{\negMlax}(\Cc,\Dd)$.
\end{notation}

\begin{example}
    We may apply the previous definition to the pair $(\Span_{\cN}(\cF)^{\op},\cF)$, obtaining a category $\PreCat{\Span_{\cN}(\cF)^{\op}}^{\neglax{\cF}}$. This is equivalent to the full subcategory of $\PreOp{\cF}{\cN}$ spanned by the $\cN$-normed $\cF$-precategories. Similarly, the category $\iCat{\Span_{\cN}(\cF)^{\op}}^{\neglax{\cF}}$ is equivalent to the full subcategory of $\Op{\cF}{\cN}$ spanned by the $\cN$-normed $\cF$-categories.
    We will therefore denote these categories by $\PreNmCat{\cF}{\cN}^{\cN\textup{-lax}}$ and $\NmCat{\cF}{\cN}^{\cN\textup{-lax}}$ respectively.
\end{example}

\begin{example}
    In the case $\Span_\cN(\cF) = \Span(\F)$,
    we see that $\NmCat{\cF}{\cN}^{\cN\textup{-lax}}$
    is the category of symmetric monoidal categories and lax symmetric monoidal functors.
\end{example}

\begin{definition}
    Observe that the (unstraightened) Yoneda embedding defines a functor $\und{(-)}\colon T^\op \to \iCat{T}^{\negMlax}$.
    We define the \emph{$M$-rigidification} of a $T$-precategory $\Cc$ to be the functor
    \[
        \Rig_M^T\cC \colon T^\op \to \Cat,\quad A\mapsto \Fun^{\negMlax}_T(\und{A},\Cc).
    \]
    In the special case $(T,M) = (\Fglo,\Forb)$, we will refer to $\Rig_M^T\Cc$ as the \emph{globalization} of $\Cc$, following \cite{Linskens2023globalization}, and denote it by $\Glob(\Cc)$.
    This construction clearly assembles into a functor $\Rig_M^T \colon \PreCat{T}^{\negMlax} \to \PreCat{T}$.
\end{definition}

\begin{remark}
It is worthwhile to unwind the definition of $\Rig_M^T\Cc(A)$. This category is given by the category of $\neg M$-lax functors $s\colon \underline{A}\rightarrow \Cc$, i.e.~by the category of those functors
\[\begin{tikzcd}
	{(T_{/A})^{\op}} && {\Un^\co (\Cc)} \\
	& {T^{\op}}
	\arrow["s", from=1-1, to=1-3]
	\arrow[from=1-1, to=2-2]
	\arrow[from=1-3, to=2-2]
\end{tikzcd}\]
which preserve cocartesian lifts of maps in $M^\op$.
Given an object $B\to A$ over $A$, $s(B)$ is an object of $\Cc(B) \subset \Un^\co(\Cc)$, the fiber over $B$. Given a map $f\colon B\to B'$ of objects over $A$, we obtain a map $s_f\colon f^*s(B')\to s(B)$ by factoring the map $s(B')\to s(B)$ into a cocartesian edge followed by an edge in the fiber $\Cc(B)$. The condition for $s$ to be $\neg M$-lax implies that $s_f$ is an equivalence whenever $f$ is in $M$.
\end{remark}

\begin{definition}\label{def:universal-not-M-lax-funct}
Consider the $T$-functor $\Delta\colon \Cc\to \Rig_M^T\Cc$ given at $A\in T$ by
\[
    \Delta(A)\colon \Cc(A)
    \simeq \Fun_T(\und{A}, \Cc)
    \hookrightarrow \Fun_T^{\negMlax}(\und{A}, \Cc)
    \buildrel\text{def}\over= \Rig_M^T\Cc(A),
\]
the inclusion of honest $T$-functors into $\neg M$-lax $T$-functors $\und{A}\to \Cc$. One can show, see \cite[Construction 4.7]{Linskens2023globalization}, that each functor $\Delta(A)$ admits a right adjoint
\[
    \mathrm{ev}_A\colon \Rig_M^T\Cc(A)\rightarrow \Cc(A),
\]
which is given by evaluating at $\id_A \in \und{A}(A)$. Evaluating the Beck--Chevalley transformation $f^* \mathrm{ev}_{B'}\to\mathrm{ev}_Bf^*$ associated to a map $f\colon B\rightarrow B'$ at some $s \in \Rig_M^T\cC(B')$, we find that it is precisely given by the lax structure map $s_f\colon f^*s(B')\rightarrow s(B)$. In particular, it is an equivalence for $f\in M$. Therefore \cite{Linskens2023globalization}*{Lemma 3.19} implies that the functors $\mathrm{ev}_A$ assemble into an $\neg M$-lax functor, and ultimately into a natural transformation
\[
    \mathrm{ev}\colon \Rig_M^T \Rightarrow \id_{\PreCat{T}^\negMlax}.
\]
\end{definition}

A beautiful observation of Abell\'an gives the following universal property of this construction.

\begin{theorem}[{\cite[Theorem 4]{Abellan23}}]
The natural transformation $\mathrm{ev}\colon \Rig_M^T \Rightarrow \id$ exhibits
\[
    \Rig_M^T \colon \PreCat{T}^{\negMlax}\to \PreCat{T}
\]
as a right adjoint to the inclusion $\PreCat{T} \hookrightarrow \PreCat{T}^{\negMlax}$. Moreover, the adjunction $\incl\dashv\Rig_M^T$ is $\Cat$-linear.
\end{theorem}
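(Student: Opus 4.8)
The adjunction statement is \cite{Abellan23}*{Theorem~4}, so the plan is to recall the shape of that argument and then to supply the $\Cat$-linearity, which is a short formal addition. For the adjunction one would take the counit to be $\mathrm{ev}\colon\Rig_M^T\Rightarrow\id$ and the unit to be the $T$-functor $\Delta$, both already constructed in \cref{def:universal-not-M-lax-funct}, and verify the two triangle identities pointwise. The identity $\mathrm{ev}\circ\incl(\Delta)\simeq\id$ reduces at $A\in T$ to $\mathrm{ev}_A\circ\Delta(A)\simeq\id_{\cC(A)}$, which holds because $\Delta(A)$ is a fully faithful left adjoint of $\mathrm{ev}_A$ (being the inclusion of the full subcategory $\Fun_T(\ul A,\cC)\subset\Fun_T^{\negMlax}(\ul A,\cC)=\Rig_M^T\cC(A)$ of honest $T$-functors), so the unit of $\Delta(A)\dashv\mathrm{ev}_A$ is invertible. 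For the other triangle identity $\Rig_M^T(\mathrm{ev}_\cC)\circ\Delta_{\Rig_M^T\cC}\simeq\id$ one would unwind, using naturality of the parametrized Yoneda equivalence $\Fun_T(\ul A,\cE)\simeq\cE(A)$, that for a $\neg M$-lax functor $s\colon\ul A\to\cC$ with classifying honest functor $\hat s\colon\ul A\to\Rig_M^T\cC$, the composite $\mathrm{ev}_\cC\circ\hat s$ sends each object $g\colon B\to A$ of $\ul A$ to $\hat s(g)(\id_B)=(s\circ\ul g)(\id_B)=s(g)$, whence $\mathrm{ev}_\cC\circ\hat s\simeq s$. A more robust route, closer to \cite{Abellan23}, is instead to prove directly that $\incl$ preserves colimits and that $\PreCat{T}^{\negMlax}$ is presentable, obtain a right adjoint $R$ from the adjoint functor theorem, and identify $R$ with $\Rig_M^T$ by writing an arbitrary honest $T$-precategory as a colimit of representables $\ul A$ and invoking $\Fun_T(\ul A,R\cC)\simeq R\cC(A)$ to force $R\cC(A)\simeq\Fun_T^{\negMlax}(\ul A,\cC)$.

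For the $\Cat$-linearity, the observation is that $\incl$ is itself (strong) $\Cat$-linear: it is the identity on objects, and the $\Cat$-module structures on $\PreCat{T}$ and on $\PreCat{T}^{\negMlax}$ are both given by the fiberwise formula $K\otimes\cC=(A\mapsto K\times\cC(A))$ from \cref{rem:2cat-structure}, which $\incl$ preserves on the nose. Hence, by \cite{HA}*{Corollary~7.3.2.7} (recalled in \cref{rem:2cat-structure}), the right adjoint $\Rig_M^T$ acquires a canonical lax $\Cat$-linear structure and the whole adjunction upgrades to one of $(\infty,2)$-categories, which is what the asserted $\Cat$-linearity means.

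The genuinely non-formal inputs are structural and would be imported: that $\Rig_M^T\cD$ is again an honest $T$-precategory (and a $T$-category when $\cD$ is one), and that the evaluation functors $\mathrm{ev}_A$ glue into the $\neg M$-lax transformation $\mathrm{ev}$, are \cite{Linskens2023globalization}*{Lemma~4.4, Construction~4.7, Lemma~3.19}, while the (co)completeness and presentability of $\PreCat{T}^{\negMlax}$ required for the second route are part of the setup of \cite{Abellan23}. Given those, the main remaining obstacle is $\infty$-categorical coherence: the pointwise identity $\mathrm{ev}_\cC\circ\hat s\simeq s$ above must be promoted to a coherent equivalence of $\neg M$-lax $T$-functors, natural in $\cC$, and then to an equality of natural transformations witnessing the triangle identity; I would arrange this by reading the equivalence off a single $2$-cell (obtained from naturality of the Yoneda equivalence and the triangle identities of the pointwise adjunctions $\Delta(A)\dashv\mathrm{ev}_A$) rather than checking it object-by-object.
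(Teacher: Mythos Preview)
Your approach is correct and essentially matches the paper's: both take $\Delta$ as unit and $\mathrm{ev}$ as counit, verify the triangle identities, and deduce $\Cat$-linearity from the observation that $\incl$ is obviously $\Cat$-linear. The only difference is that the paper's proof defers the triangle identity verification entirely to \cite[Theorem~4.10]{Linskens2023globalization} rather than sketching it as you do; your alternative adjoint-functor-theorem route is not used in the paper.
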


\begin{proof}
    We claim that the (honest) $T$-functor $\Delta\colon \Cc\to \Rig_M^T\Cc$ is a compatible unit of the putative adjunction of categories. The triangle identities are verified in the proof of \cite[Theorem 4.10]{Linskens2023globalization}. Finally, note that the left adjoint $\incl$ is obviously $\Cat$-linear.
\end{proof}

Let us now mention the key example for our purposes, justifying the name `globalization' for $\Glob \coloneqq \Rig^{\Fglo}_{\Forb}$:

\begin{example}\label{ex:globalization}
    For every finite group $G$, we define $\text{$G$-Sp}^\Sigma_\text{equiv.~proj.}$ as the full subcategory of the category of $G$-symmetric spectra spanned by those objects that are cofibrant in the equivariant projective model structure of \cite{hausmann-equivariant}*{Theorem~4.8}. By \cite{CLL_Clefts}*{Lemma~9.3} these assemble into a global subcategory $\ul\Sp^{\Sigma}_\text{equiv.~proj.}$ of the global $1$-category $(\Sp^\Sigma)^\flat$, and this improves to a diagram in relative categories by equipping $G$-symmetric spectra with the \emph{equivariant stable weak equivalences} of \cite{hausmann-equivariant}*{Definition~2.35}. We denote the corresponding localization by $\ul\Sp$ and call it the \emph{global category of equivariant spectra}.

    The inclusions $\text{$G$-Sp}^\Sigma_\text{equiv.~proj.}\hookrightarrow\text{$G$-Sp}_\text{flat}^\Sigma$ are then homotopical for the $G$-global weak equivalences on the target by \cite{g-global}*{Proposition~3.3.1}, so we obtain a global functor $\iota_!\colon\ul\Sp\hookrightarrow\ul\Sp_\gl$. This admits an $\neg\Forb$-lax right adjoint $\iota^*\colon\ul\Sp_\gl\to\ul\Sp$, induced pointwise by the localization functors, see~\cite{CLL_Clefts}*{Lemma~9.12}.

    Comparing the universal properties of both sides (see \cite{CLL_Global}*{Theorem~7.3.2} and \cite{CLL_Clefts}*{Theorem~9.4}, respectively), the second author showed \cite{Linskens2023globalization}*{Theorem~5.14} that we have an equivalence $\ul\Sp_\gl\simeq\Glob(\ul\Sp)$ fitting into a commutative diagram
    \[
        \begin{tikzcd}[column sep=small]
            &[-.4em]\ul\Sp\arrow[dl, bend right=10pt, "\Delta"']\arrow[dr, bend left=10pt, "\iota_!"]\\
            \Glob(\ul\Sp)\arrow[rr,"\sim"']&&\ul\Sp_\gl\rlap.
        \end{tikzcd}
    \]
    In other words, $\iota^*\colon\ul\Sp_\gl\to\ul\Sp$ is the universal $\neg\Forb$-lax functor to $\ul\Sp$.
\end{example}

\begin{remark}\label{rem:glob-of-cat-is-cat}
    Note that in the previous example, the globalization of the global $\infty$-category $\ul\Sp$ turned out to be a global $\infty$-category again. This is not a coincidence: \cite{Linskens2023globalization}*{Lemma 4.4} shows that $\Glob$ restricts to a functor $\Cat(\Fglo)^{\lnot\Forb\text{-lax}}\to\Cat(\Fglo)$.
\end{remark}

The previous example does not yet give us any control of the multiplicative norm functors present on both sides of the equivalence. We will now explain how to achieve this. Suppose $(\cF,\cN,\Mm)$ is a distributive context, and recall that this demands the existence of a factorization system $(\cE,\Mm)$ on $\cF$. Given an $\Nn$-normed $\Ff$-precategory, we can form the $\Mm$-rigidification of its underlying $\cF$-precategory; on the other hand, we can also consider its `$\Mm$-rigidification as an $\cN$-normed $\cF$-precategory,' i.e.~the rigidification with respect to the pair $(\Span_{\cN}(\cF)^{\op},\Span_{\Mm,\Nn}(\cF)^{\op})$. Our next goal is to compare the two, for which we first fix some notation:

\begin{notation}\label{not:E-lax}
We will use the notation
\[
    \PreNmCat{\cF}{\cN}^{\lax{\Ee}}
    \coloneqq \PreCat{\Span_{\cN}(\cF)^{\op}}^\neglax{\Span_{\cM,\cN}(\cF)^{\op}}
\]
and denote $\Cat^*(\cF)^\neglax{\cM}$ by $\PreCat{\cF}^\lax{\cE}$. We will refer to morphisms in either category as $\cE$-lax functors. We will also denote functor categories in either category by $\Fun^{\lax{\Ee}}(-,-)$, potentially with a subscript if there is ambiguity.

Finally, we will denote the functor
\[
    \Rig_{\Span_{\cM,\cN}(\cF)^{\op}}^{\Span_{\cN}(\cF)^{\op}}
    \colon \PreNmCat{\cF}{\cN}^{\lax{\cE}}\to \PreNmCat{\cF}{\cN}
\]
by $\Rig^\otimes$ and the functor $\Rig_{\cM}^{\cF}\colon \PreCat{\cF}^{\lax{\cE}} \to \PreCat{\cF}$ by $\Rig$.
\end{notation}

Below, we will show that the canonical functor $\Rig\to \fgt \circ \Rig^\otimes$ is an equivalence. However, this will rely on a slightly different universal property of the envelope construction than we have previously used, which we will quickly discuss now.

\begin{definition}\label{def:fact_stable}
	Let $S$ be a category equipped with a factorization system $(E,M)$. We say a wide subcategory $E_0\subset E$ is \emph{factorization stable} if for every commutative square
	\[\begin{tikzcd}
		A & A' \\
		{B} & {B'}
		\arrow[epic, from=1-1, to=1-2]
		\arrow[mono, from=1-1, to=2-1]
		\arrow[mono, from=1-2, to=2-2]
		\arrow[epic, from=2-1, to=2-2]
	\end{tikzcd}\]
	in which the horizontal edges are in $E$ and the vertical edges are in $M$, if the map $B\rightarrowepic B'$ is in $E_0$ then so is the map $A\rightarrowepic A'$. We will denote by $(E_0,M)$ the wide subcategory of $S$ spanned by arrows which may be written as composites of the form
\[\begin{tikzcd}
	\cdot & \cdot & \cdot,
	\arrow["{\in E_0}\;\;", epic, from=1-1, to=1-2]
	\arrow["{\in M}", mono, from=1-2, to=1-3]
\end{tikzcd}\]
which is indeed a subcategory of $S$ as $E_0$ is factorization stable.
\end{definition}

Recall that given a subcategory $S_0\subset S$ we write $\Cat^{{S_0}\dcocart}_{/S}$ for the subcategory of $\Cat_{/S}$ spanned by the $S_0$-cocartesian fibrations and on morphisms by those functors which preserve cocartesian edges over morphisms in $S_0$.

\begin{proposition}
The envelope construction $\Env(-)$ defines a $\Cat$-linear left adjoint to the inclusion $\Cat^{{E_0}\dcocart}_{/S}\hookrightarrow\Cat^{{(E_0,M)}\dcocart}_{/S}$.
\end{proposition}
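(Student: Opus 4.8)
The plan is to follow the proof of \cref{thm:BHS}(2) essentially verbatim, isolating the single place where the hypotheses genuinely differ. First I would check that $\Env$ restricts to a functor $\Cat_{/S}^{E_0\dcocart}\to\Cat_{/S}^{(E_0,M)\dcocart}$: given an $E_0$-cocartesian fibration $p\colon\cX\to S$, I claim the envelope $\Env(\cX)$, viewed over $S$ via the target projection, admits cocartesian lifts over all of $(E_0,M)$ and not merely over $M$. By the description of cocartesian edges in \cref{thm:BHS}(1), a cocartesian lift of $g\colon b_0\to b_1$ at an object $(x,m_0\colon p(x)\rightarrowmono b_0)$ of $\Env(\cX)$ exists precisely when $\cX$ admits a $p$-cocartesian lift of the $E$-part of the composite $g\circ m_0$; writing the $(E,M)$-factorization of this composite as $g\circ m_0=m'\circ e$ with $e\colon p(x)\rightarrowepic c$, the square with horizontal edges $e$ (top) and $g$ (bottom) and vertical edges $m_0$ and $m'$ has horizontals in $E$ and verticals in $M$. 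Hence if $g$ lies in $E_0$, factorization stability of $E_0$ (\cref{def:fact_stable}) forces $e\in E_0$, so the lift exists; and if $g$ lies in $M$, then $g\circ m_0$ lies in $M$, so $e$ is an equivalence and the lift is trivial. Since cocartesian lifts compose and every morphism of $(E_0,M)$ is by definition of the form $m\circ e$ with $e\in E_0$ and $m\in M$, this proves the claim on objects, and the same edge description shows that $\Env$ carries a morphism preserving $p$-cocartesian edges over $E_0$ to one preserving cocartesian edges over $(E_0,M)$.

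With this in hand, I expect the adjunction and its $\Cat$-linearity to be formal consequences of \cref{thm:BHS}(2). I would take the unit to be the identity section $\eta_\cX\colon\cX\to\Env(\cX)$, $x\mapsto(x,\id_{p(x)})$, which lies in $\Cat_{/S}^{E_0\dcocart}$ by the edge description, and then verify its universal property by constructing an explicit inverse to precomposition: for an $(E_0,M)$-cocartesian fibration $q\colon\cD\to S$, a functor $F\colon\cX\to\cD$ over $S$ preserving $p$-cocartesian edges over $E_0$ is sent to $\bar F\colon\Env(\cX)\to\cD$, $(x,m\colon p(x)\rightarrowmono b)\mapsto m_!F(x)$, where $m_!$ is the $q$-cocartesian pushforward (available since $\cD$ is in particular $M$-cocartesian). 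That $\bar F$ preserves cocartesian edges over $(E_0,M)$, that $\bar F\circ\eta_\cX\simeq F$, and that $F\mapsto\bar F$ is inverse to $\eta_\cX^*$ as an equivalence of mapping categories, naturally in $\cX$ and $\cD$, is a diagram chase with the universal property of cocartesian morphisms, entirely parallel to \cite{BHS_Algebraic_Patterns}*{Propositions~2.1.4 and~2.2.4}; the corresponding counit is $\epsilon_\cD(d,m)=m_!d$, and the triangle identities reduce to the pointwise computations $\epsilon_{\Env(\cX)}(\Env(\eta_\cX)(x,m))=m_!(x,\id_{p(x)})=(x,m)$ and $\epsilon_\cD(\eta_\cD(d))=\id_!d=d$. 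Finally, $\Cat$-linearity is inherited from \cref{thm:BHS}(2): the natural equivalence $\Env(\cX\times K)\simeq\Env(\cX)\times K$ established there restricts to the subcategories at hand, which are closed under the $\Cat$-tensoring.

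The main obstacle is the first step — pinning down that factorization stability of $E_0$ is exactly the condition guaranteeing that $\Env(\cX)$ is $(E_0,M)$-cocartesian — together with the attendant bookkeeping with the explicit form of the cocartesian edges. Once that is settled, everything else runs in parallel with the full envelope adjunction, so no further genuinely new difficulty is expected.
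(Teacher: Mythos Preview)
Your approach is correct but organized differently from the paper's. Both identify factorization stability as the crucial ingredient: it ensures that for $g \in E_0$ and $m_0 \in M$, the $E$-component of $g \circ m_0$ again lies in $E_0$, so the required $p$-cocartesian lift in $\cX$ exists. Where you re-establish the adjunction by hand (constructing the inverse $\bar F$ and checking triangle identities), the paper instead takes the minimal case $E_0 = \core S$ from \cite{BHS_Algebraic_Patterns}*{Proposition~2.1.4} as a black box, obtaining the $\Cat$-linear adjunction $\Cat_{/S} \rightleftarrows \Cat^{M\dcocart}_{/S}$ for free, and then merely checks that it restricts: this only requires that $\Env$ preserve $E_0$-cocartesian fibrations, and that a map $\Env(\cX) \to \cY$ in $\Cat^{M\dcocart}_{/S}$ preserve $E_0$-cocartesian lifts iff its restriction along the unit does. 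The paper's route is shorter because it isolates exactly what is new beyond BHS; yours is more self-contained but duplicates work already done there.

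One caution: you invoke \cref{thm:BHS}(1) to characterize cocartesian edges in $\Env(\cX)$, but as stated that result assumes $p$ is $E$-cocartesian, whereas here $p$ is only $E_0$-cocartesian. The paper sidesteps this by applying \cref{thm:BHS}(1) only to $\Env(\id_S) = \Ar_M(S)$ (where the hypothesis is trivially met) and arguing via the factored composite $\Env(\cX) \to \Ar_M(S) \xrightarrow{t} S$: the first map, a basechange of $p$, has cocartesian lifts over squares with top edge in $E_0$, and factorization stability then forces every $t$-cocartesian lift of an $E_0$-edge to be such a square. You would need either to justify that the edge description holds more generally, or to adopt the same workaround.
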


Note that the maximal case $E_0=E$ is precisely \cite{BHS_Algebraic_Patterns}*{Theorem~E}, recalled as Theorem~\ref{thm:BHS} above. As the other extreme, the minimal case $E_0=\core S$ appears as \cite{BHS_Algebraic_Patterns}*{Proposition~2.1.4}.

\begin{proof}
    By the special case $E_0=\core S$ just cited, it suffices to show that $\Env$ preserves $E_0$-cocartesian fibrations, and that for each $E_0$-cocartesian fibration $\Xx$ and $(E_0,M)$-cocartesian fibration $\Yy$, a map $\Env(\Xx)\to\Yy$ in $\Cat_{/S}^{M\dcocart}$ preserves $E_0$-cocartesian lifts if and only if the composite $\Xx\to\Env(\Xx)\to\Yy$ with the unit does so.

    For the first claim, we let $p\colon\Xx\to S$ be $E_0$-cocartesian. Then the basechange $\Env(\Xx)=\Xx\times_{S}\Ar_M(S)\to\Ar_M(S)$ admits cocartesian lifts over all maps in $\Ar_M(S)$ whose source is a map in $E_0$, hence in particular over all maps of the form
    \begin{equation}\label{diag:E_0-cc}
        \begin{tikzcd}
            \cdot\arrow[d,mono]\arrow[r,epic, "{}\in E_0\;\;"] &  \cdot\arrow[d,mono]\\
            \cdot\arrow[r,epic,"{}\in E_0\;\;"'] & \cdot\rlap,
        \end{tikzcd}
    \end{equation}
    i.e.~where both source and target lie in $E_0$. On the other hand, by the special case $E_0=E$ (Theorem~\ref{thm:BHS} above), each of these maps is itself cocartesian for $t\colon\Ar_M(S)=\Env(\id_S)\to S$, and conversely any cocartesian lift of a map in $E_0$ has this form by factorization stability. Thus, the composite $\Env(\Xx)\to\Ar_M(S)\to S$ has cocartesian lifts for maps in $E_0$, as claimed.

    For the second claim, we can essentially repeat the argument of Barkan--Haugseng--Steinebrunner: Retracing our steps shows that the $E_0$-cocartesian edges in $\Env(\Xx)$ are precisely given by the pairs of an $E_0$-cocartesian edge in $\Xx$ and a map in $\Ar_M(S)$ of the form $(\ref{diag:E_0-cc})$. In particular, the unit $\Xx\to\Env(\Xx)$ preserves cocartesian edges over $E_0$, and we see that \emph{any} $\Env(\Xx)\to\Yy$ over $S$ preserves $E_0$-cocartesian lifts to objects of the form $(x,\id_{p(x)})$ if and only if its restriction to $\Xx$ preserves $E_0$-cocartesian lifts. However, a general object $(x, m\colon p(x)\rightarrowmono y)$ is itself the target of the cocartesian lift of $m$ to $(x,\id_{p(x)})$. If we now let $e\colon y\to z$ be any map in $E_0$, then factoring $em$ with respect to $(E,M)$ yields, by factorization stability, a diagram of the form $(\ref{diag:E_0-cc})$, which we can lift to a diagram in $\Env(\Xx)_\cocart$ sending the top left vertex to $(x,\id_{p(x)})$, and hence sending the lower left vertex to our fixed general object $(x,m)$. If $\Env(\Xx)\to\Yy$ is also a map in $\Cat_{/S}^{M\dcocart}$, then it sends the given cocartesian lifts of the vertical maps to cocartesian edges, and by the above it sends the lift of the top horizontal map to a cocartesian edge. Thus, right cancellability of cocartesian edges in $\Yy$ shows that it also sends the fixed lift of the lower horizontal map to a cocartesian edge; this is precisely what we had to prove.
\end{proof}

We may apply this to the factorization system $(\cF^{\op},\cN)$ on $\Span_{\cN}(\cF)$ equipped with the subcategory $\cM^\op\subset \cF^\op$; one easily checks that this is factorization stable, since $\Mm$ is closed under basechange in $\cF$. Note that with these choices, an $\cE$-lax functor of $\cN$-normed $\cF$-precategories, as defined in \Cref{not:E-lax}, is precisely a functor which preserves cocartesian edges over morphisms in $(E_0,M) \simeq \Span_{\cM,\cN}(\cF)$. In keeping with previous notation, we will write $\PreOp{\cN}{\cF}^{\neglax{\cM}}$ for the category of $\cF^{\op}$-cocartesian fibrations over $\Span_\cN(\cF)$ and functors which preserve cocartesian edges over $\cM^{\op}$. We may specialize the previous proposition to this context, and passing to full subcategories gives the following corollary.

\begin{corollary}\label{cor:lax-env}
The envelope construction defines a $\Cat$-linear left adjoint to the inclusion
\[
    \PreNmCat{\cF}{\cN}^{\lax{\cE}}
    \hookrightarrow \PreOp{\cF}{\cN}^{\neglax{\cM}}.
\]
In particular, we obtain a natural equivalence
\[
    \Fun^{\lax{\cE}}(\Env(\cO),\Cc) \simeq \Fun^{\neglax{\cM}}(\cO,\Cc)
\]
for all $\cN$-normed $\cF$-preoperads $\cO$ and $\cN$-normed $\cF$-precategories $\Cc$.\qed
\end{corollary}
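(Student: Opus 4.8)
The plan is to obtain both assertions at once from the preceding Proposition, specialized to a suitable factorization system on $\Span_{\cN}(\cF)$ and then restricted to full subcategories.

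First I would set $S \coloneqq \Span_{\cN}(\cF)$ with its standard factorization system $(E,M) = (\cF^{\op},\cN)$ from \cref{thm:spans}(5), and take $E_0 \coloneqq \cM^{\op} \subseteq \cF^{\op} = E$. The only hypothesis of that Proposition that needs checking is that $E_0$ is factorization stable in the sense of \cref{def:fact_stable}. For this I would use the standard dictionary between commuting squares of spans and ordinary diagrams: a square in $\Span_{\cN}(\cF)$ whose two horizontal edges lie in $\cF^{\op}$ and whose two vertical edges lie in $\cN$ corresponds to a pullback square in $\cF$
\[
\begin{tikzcd}
A' \arrow[r, norm] \arrow[d] \arrow[dr, pullback] & B' \arrow[d]\\
A \arrow[r, norm] & B
\end{tikzcd}
\]
with the horizontal maps in $\cN$, in which the left vertical map $A'\to A$ is the one recorded by the top edge $A\rightarrowepic A'$ of the original square and the right vertical map $B'\to B$ is the one recorded by its bottom edge $B\rightarrowepic B'$. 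Since the former is a base change of the latter, and $\cM$ — being the right class of the factorization system $(\cE,\cM)$ on $\cF$ — is closed under base change, the top edge lies in $E_0 = \cM^{\op}$ whenever the bottom one does; this is exactly factorization stability. The Proposition then supplies a $\Cat$-linear adjunction $\Env(-)\colon \Cat^{\cM^{\op}\dcocart}_{/S} \rightleftarrows \Cat^{(E_0,M)\dcocart}_{/S} \noloc \incl$.

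Next I would invoke the identification $(E_0,M) \simeq \Span_{\cM,\cN}(\cF)$ recorded before the statement, so that morphisms in $\Cat^{(E_0,M)\dcocart}_{/S}$ are exactly $\cE$-lax functors of $\cN$-normed $\cF$-precategories (\cref{not:E-lax}) and morphisms in $\Cat^{\cM^{\op}\dcocart}_{/S}$ are exactly $\neg\cM$-lax functors. Then I would restrict the adjunction to full subcategories: on the right to $\PreNmCat{\cF}{\cN}^{\lax{\cE}}$, the full subcategory of $\Cat^{(E_0,M)\dcocart}_{/S}$ on the $\Span_{\cM,\cN}(\cF)$-cocartesian fibrations that are cocartesian over all of $\Span_{\cN}(\cF)$ (the $\cN$-normed $\cF$-precategories), and on the left to $\PreOp{\cF}{\cN}^{\neglax{\cM}}$, the full subcategory of $\Cat^{\cM^{\op}\dcocart}_{/S}$ on the $\cM^{\op}$-cocartesian fibrations that are in fact $\cF^{\op}$-cocartesian (the $(\cF,\cN)$-preoperads). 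The right adjoint $\incl$ visibly preserves these subcategories, and $\Env(-)$ does so by \cref{thm:BHS}(1), since the envelope of an $\cF^{\op}$-cocartesian fibration is cocartesian over $\Span_{\cN}(\cF)$. Thus the adjunction restricts, and it remains $\Cat$-linear as the relevant $\Cat$-module structures are inherited from $\Cat_{/S}$ and restrict to these subcategories (cf.\ \cref{rem:2cat-structure}). Finally, by \cref{rem:2cat-structure} the $\Cat$-linear adjunction upgrades to an adjunction of $(\infty,2)$-categories, and passing to mapping categories yields the asserted natural equivalence $\Fun^{\lax{\cE}}(\Env(\cO),\Cc) \simeq \Fun^{\neglax{\cM}}(\cO,\Cc)$.

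The only step requiring real attention is the factorization-stability check — concretely, keeping track of which leg of the pullback square in $\cF$ is recorded by which edge of the original commuting square of spans. Everything else is bookkeeping together with the two structural inputs \cref{thm:spans}(5) and \cref{thm:BHS}(1).
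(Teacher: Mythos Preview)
Your proof is correct and follows essentially the same approach as the paper: apply the preceding Proposition with $E_0=\cM^{\op}$, verify factorization stability via closure of $\cM$ under basechange, identify $(E_0,M)\simeq\Span_{\cM,\cN}(\cF)$, and restrict to full subcategories. The paper treats the restriction step as immediate, while you correctly make explicit that $\Env$ lands in cocartesian fibrations by \cref{thm:BHS}(1).
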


We can now prove the main result of this subsection.

\begin{theorem}\label{thm:Normed_Rig}
Suppose $(\cF,\cN,\Mm)$ is a distributive context. Then the Beck--Chevalley transformation filling the square
\begin{equation}\label{diag:bc-rig}
\begin{tikzcd}[column sep=large]{\PreNmCat{\cF}{\cN}^{\lax{\cE}}} & {\PreNmCat{\cF}{\cN}} \\
    {\PreCat{\cF}^{\lax{\cE}}} & {\PreCat{\cF}}
	\arrow["{\mathrm{Rig^\otimes}}", from=1-1, to=1-2]
	\arrow["{\fgt}"', from=1-1, to=2-1]
    \arrow["{\mathrm{BC}_*}"{description}, shorten <=9pt, shorten >=7pt, Rightarrow, from=1-2, to=2-1]
	\arrow["{\fgt}", from=1-2, to=2-2]
	\arrow["{\mathrm{Rig}}"', from=2-1, to=2-2]
\end{tikzcd}
\end{equation}
is an equivalence. In particular, given any $\cN$-normed $\cF$-precategory $\Cc$, there is a unique pair of an $\cN$-normed structure on $\Rig(\Cc)$ together with an extension of $\ev \colon \Rig(\Cc)\to\Cc$ to an $\Ee$-lax $\cN$-normed functor.
\end{theorem}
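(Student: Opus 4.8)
The plan is to prove that the Beck--Chevalley transformation $\BC_*$ in the square of \cref{thm:Normed_Rig} is an equivalence, and then read off the final statement as a formal consequence. First I would note that the square of inclusions $\PreNmCat{\cF}{\cN}\hookrightarrow\PreNmCat{\cF}{\cN}^{\lax{\cE}}$ and $\PreCat{\cF}\hookrightarrow\PreCat{\cF}^{\lax{\cE}}$ strictly commutes with the two forgetful functors $\fgt$ (a strict normed functor restricts to a strict $\cF$-functor), so by \cite{Abellan23}*{Theorem~4} the transformation $\BC_*$ is precisely the mate of this commuting square. It therefore suffices to check that for every $\cN$-normed $\cF$-precategory $\Cc$ and every $A\in\cF$ the component $(\fgt\,\Rig^\otimes\Cc)(A)\to(\Rig\,\fgt\Cc)(A)$ is an equivalence. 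Using the description of rigidification as a category of $\neg$-lax sections recalled after \cref{def:universal-not-M-lax-funct} together with \cref{cor:iota_N_rep} (which identifies the representable $\cN$-normed $\cF$-category $\core\und{\cN}^A$ with the corepresentable $\Map_{\Span_\cN(\cF)}(A,-)$, hence $\und A$ with the coslice $\Span_\cN(\cF)_{A/}\to\Span_\cN(\cF)$), the left-hand side becomes the category of sections of $\Un^\co(\Cc)\to\Span_\cN(\cF)$ over $\Span_\cN(\cF)_{A/}$ which are cocartesian over $\Span_{\cM,\cN}(\cF)\subset\Span_\cN(\cF)$, while the right-hand side becomes the category of sections of the pullback $\Un^\co(\fgt\Cc)=\Un^\co(\Cc)\times_{\Span_\cN(\cF)}\cF^\op$ over $(\cF^\op)_{A/}=(\cF_{/A})^\op$ which are cocartesian over $\cM^\op\subset\cF^\op$. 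Unwinding the definition of $\ev$ (evaluation at the identity span), $\BC_*$ at $A$ becomes restriction along the evident inclusion $\jmath_A\colon(\cF^\op)_{A/}\hookrightarrow\Span_\cN(\cF)_{A/}$.

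The second step is to exhibit $\jmath_A$ as a coreflective inclusion. By \cref{thm:spans}(5) the backwards and forwards maps constitute an orthogonal factorization system $(\Span_\cN(\cF),\cF^\op,\cN)$, so every morphism $A\to X$ of $\Span_\cN(\cF)$ factors canonically as $A\xrightarrow{b}Y\xrightarrow{n}X$ with $b$ backwards and $n\in\cN$ --- which for a span $A\xleftarrow{b}Y\xrightarrow{n}X$ is simply the decomposition into its two legs. Sending $(A\to X)$ to the object $(b\colon Y\to A)\in\cF_{/A}$ therefore defines a right adjoint $R_A\colon\Span_\cN(\cF)_{A/}\to(\cF^\op)_{A/}$ to $\jmath_A$, with counit $\jmath_AR_A(A\to X)\to(A\to X)$ given by the forward map $n\in\cN$. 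Since $\jmath_A$ is coreflective, restriction of sections along it admits a left adjoint $(\jmath_A)_!$ computed by cocartesian pushforward along these counits, and a section of $\Un^\co(\Cc)$ lies in the essential image of $(\jmath_A)_!$ (i.e.\ is left Kan extended from $(\cF^\op)_{A/}$) as soon as it is cocartesian over the counit morphisms. As every counit morphism lies over $\cN\subset\Span_{\cM,\cN}(\cF)$, each $\neg\Span_{\cM,\cN}(\cF)$-lax section has this property, so $\jmath_A^*$ is fully faithful on such sections.

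It then remains to check essential surjectivity, namely that the left Kan extension $(\jmath_A)_!t$ of a $\neg\cM^\op$-lax section $t$ over $(\cF^\op)_{A/}$ is again $\neg\Span_{\cM,\cN}(\cF)$-lax. Using the factorization system $(\cM^\op,\cN)$ on $\Span_{\cM,\cN}(\cF)$ (an adequate triple $(\cF,\cM,\cN)$ produces it, since $\cM$ is a right class and $(\cF,\cN)$ a span pair, cf.\ \cref{def:dist}), it suffices to treat morphisms of the coslice lying over a forward map in $\cN$ and over a backwards map in $\cM^\op$. In the first case $R_A$ of the morphism is an identity, and cocartesianness of $(\jmath_A)_!t$ on it follows from the compatibility of iterated cocartesian pushforwards. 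In the second case, composition in the span category replaces the forward leg $n$ by a base change $P\to X'$ and the backwards leg $Y\to A$ by $P\to Y$ (a base change of the given $\cM$-map, hence again in $\cM$ by pullback-stability of the right class); thus $R_A$ of the morphism projects to a $\cM^\op$-map, $t$ is cocartesian over it by hypothesis, and a short diagram chase using naturality of the counit and right-cancellability of cocartesian edges shows $(\jmath_A)_!t$ is cocartesian over the original morphism. Hence $\jmath_A^*$ is an equivalence, proving the square is a Beck--Chevalley square. The final statement now follows formally: viewing a given $\cN$-normed $\cF$-precategory $\Cc$ as an $\cE$-lax object via $\incl$, the object $\Rig^\otimes(\Cc)$ is a normed refinement of $\Rig(\fgt\Cc)$ and $\ev\colon\Rig^\otimes(\Cc)\to\Cc$ is the asserted $\cE$-lax normed extension; uniqueness is immediate from the adjunction $\incl\dashv\Rig^\otimes$.

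\textbf{Main obstacle.} The genuinely technical point is the essential-surjectivity step: showing that the $\neg$-lax structure survives the left Kan extension along the coreflective inclusion $\jmath_A$. This forces one to track how the factorization system $(\cF^\op,\cN)$ on $\Span_\cN(\cF)$ interacts with the subcategories $\cN$ and $\cM=\cR$ under the pullbacks implicit in span-category composition, and to carry out the requisite cocartesian-edge diagram chases. A secondary, bookkeeping-type difficulty is matching the abstract mate $\BC_*$ with the concrete restriction map $\jmath_A^*$ through the section description of rigidification and the explicit form of Abell\'an's counit $\ev$.
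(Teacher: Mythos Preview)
Your argument is correct and complete, but it follows a different path from the paper's own proof. Both begin by unwinding $\Rig^\otimes(\Cc)(A)$ as partially lax functors out of the representable $\und{A}$, i.e.\ as partially lax sections over the coslice $\Span_\cN(\cF)_{A/}$. After that they diverge. The paper identifies the representable as $\Env(\Triv(\und{A}_\cF))$ and appeals to \cref{cor:lax-env}, the $\cE$-lax version of the envelope adjunction (itself resting on the ``factorization stable'' proposition immediately preceding it), to pass from $\Fun^{\lax{\cE}}(\Env(\Triv(\und{A}_\cF)),\Cc)$ to $\Fun^{\neglax{\cM}}(\Triv(\und{A}_\cF),\Cc)$; one further use of the $\Triv\dashv\fgt$ adjunction at the $\neglax{\cM}$ level then yields $\Rig(\fgt\Cc)(A)$.

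You instead work directly at the level of indexing categories: you exhibit a right adjoint $R_A$ to the inclusion $\jmath_A\colon(\cF^\op)_{A/}\hookrightarrow\Span_\cN(\cF)_{A/}$ using the factorization system $(\cF^\op,\cN)$, note that the counits all lie in $\cN\subset\Span_{\cM,\cN}(\cF)$ (so every $\cE$-lax section is cocartesian over them and hence in the image of the relative left Kan extension $(\jmath_A)_!$), and then carry out the cocartesian-edge diagram chase for essential surjectivity. This avoids the envelope machinery entirely and needs only standard facts about relative Kan extensions along coreflective inclusions over a cocartesian fibration. The paper's route is more modular: \cref{cor:lax-env} is a general statement that gets reused (notably in \cref{thm:algebras_in_Rig}), whereas your argument effectively inlines the special case of it needed here. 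One minor point: your citation of \cref{cor:iota_N_rep} is not really needed---the identification of the unstraightened representable with the coslice is immediate and does not require passing through $\core\und{\cN}^A$.
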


\begin{proof}
    Let $\Cc^\tensor$ be an $\cN$-normed $\cF$-precategory,
    with underlying $\cF$-precategory $\cC$.
    Recall from \cref{cor:iota_N_rep} that the represented $\Span_{\cN}(\cF)^\op$-precategory $\und{A}$ is equivalent to $\iota\und{\cN}^A\simeq \Env(\Triv(\und{A}))$, where the latter $\und{A}$ refers to the $\cF$-precategory represented by $A$. So we may compute
    \begin{align*}
        \Rig^\otimes(\Cc^\tensor)(A)\buildrel\text{def}\over=\Fun^{\lax{\Ee}}(\und{A},\Cc^\tensor)&\simeq \Fun^{\lax{\Ee}}( \Env(\Triv(\und{A})),\Cc^\tensor)
        \\&\simeq \Fun^{\neglax{\cM}}(\Triv(\und{A}),\Cc^\tensor),
	\end{align*}
	where the second equivalence is precisely that of the previous corollary.
    Let us write $i\colon \cF^{\op} \hookrightarrow \Span_{\cN}(\cF)$ for the canonical inclusion.
    Recall from the proof of \Cref{prop:calg} that the adjunction $\Triv\dashv \fgt$ was obtained by restricting the adjunction $i_!\dashv i^*$ on slice categories.
    However, a simple inspection shows that this adjunction also restricts to a $\Cat$-linear adjunction
	\[
	    \Triv(-)\colon \Cat^{\Mm^{\op}\dcocart}_{/\cF^\op}
        \rightleftarrows \Cat^{\Mm^{\op}\dcocart}_{/\Span_\cN(\cF)}\noloc \fgt.
	\]
    Therefore we obtain further natural equivalences
	\[
        \Fun^{\neglax{\cM}}_{\Span_{\cN}(\cF)^{\op}}(\Triv(\und{A}),\Cc^\tensor)
        \simeq \Fun^{\neglax{\cM}}_{\cF}(\und{A},\Cc)
        \buildrel\text{def}\over= \Rig(\cC)(A).
	\]
	By inspection, the resulting equivalence
    $\Rig^\otimes(\Cc^\tensor)|_{\cF^\op} \simeq \Rig(\Cc)$ of $\cF$-precategories fits into a commutative diagram
    \[
        \begin{tikzcd}
            \Rig^\otimes(\Cc^\tensor)|_{\cF^\op}\arrow[d,"\ev|_{\cF^\op}"']\arrow[r,"\sim"] & \Rig(\Cc)\arrow[d,"\ev"]\\
            \Cc^\tensor|_{\cF^\op}\arrow[r,equals] & \Cc\rlap,
        \end{tikzcd}
    \]
    i.e.~it agrees with the Beck--Chevalley transformation filling the square $(\ref{diag:bc-rig})$.

    For the final statement, we see that $\Rig^\tensor(\cC^\tensor)$ and the counit of the adjunction provide the claimed $\cN$-normed structure on $\Rig(\cC)$ and $\ev \colon \Rig(\cC) \to \cC$, and uniqueness of this extension follows from the universal property of $\Rig^\tensor$ together with conservativity of the forgetful functor.
\end{proof}

\begin{notation}
Justified by the previous results, given an $\cN$-normed $\cF$-precategory $\cC$, we will typically identify $\Rig(\Cc)$ and $\Rig^\otimes(\Cc)$.
As a special case, given a normed global precategory $\Cc$, we may canonically view $\Glob(\Cc)$ as a normed global precategory (only occasionally denoted $\Glob^\otimes$ for emphasis). Note that Remark~\ref{rem:glob-of-cat-is-cat} shows that if $\Cc$ was a normed global category, then so is $\Glob(\Cc)$.
\end{notation}

\Cref{thm:Normed_Rig} also gives us a different description of the category of normed algebras in a rigidification, which we record now.

\begin{theorem}\label{thm:algebras_in_Rig}
	Let $\Cc$ be an $\cN$-normed $\cF$-precategory. Then
	\[
	\ul\CAlg^\cN_\cF(\Rig(\Cc))(A)
	\simeq \Fun^{\neglax{\cM}}(\und{A}^\Ncoprod,\Cc)
	= \plaxlim\limits_{B\in\Span_\cN(\cF_{/A})}\Cc(B),
	\]
	naturally in $\Cc$ and $A$, where the partially lax limit is with respect to the marking $(\cF_{/A}\times_\cF \Mm)^{\op}$. Moreover, this equivalence fits into a diagram
	\begin{equation}\label{diag:compatibility-CAlg-in-Rig}
		\begin{tikzcd}
			\ul\CAlg^\cN_\cF(\Rig(\Cc))\arrow[r,"\sim"]\arrow[d,"\mathbb U"'] & \Fun^{\neglax{\cM}}(\und{(-)}^\Ncoprod,\Cc)\arrow[d,"\incl^*"]\\
			\Rig(\Cc)\arrow[r,equals] & \Fun^{\neglax{\cM}}(\Triv\und{(-)},\Cc)
		\end{tikzcd}
	\end{equation}
	commuting naturally in $\Cc$, where $\incl$ is the map from Lemma~\ref{lem:unfurl}(2).
\end{theorem}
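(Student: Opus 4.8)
\textbf{Proof plan for Theorem~\ref{thm:algebras_in_Rig}.}
The strategy is to chase through the adjunctions already established, rather than to re-prove any of them. Recall from \Cref{thm:Normed_Rig} that $\Rig^\otimes=\Rig_{\Span_{\cM,\cN}(\cF)^{\op}}^{\Span_\cN(\cF)^{\op}}$ is right adjoint to the inclusion $\PreNmCat{\cF}{\cN}\hookrightarrow\PreNmCat{\cF}{\cN}^{\lax{\cE}}$, and that this adjunction is $\Cat$-linear, hence upgrades to an adjunction of $(\infty,2)$-categories. On the other hand, $\ul\CAlg^\cN_\cF$ is the right adjoint of $(-)^\Ncoprod$, and \Cref{cor:forget-as-restrict} together with \Cref{prop:fna} identifies $\ul\CAlg^\cN_\cF(\Cc)(A)\simeq\Fun_\cF^{\Nstr}(\und{\cN}^A,\Cc)\simeq\Fun_\cF^{\Nstr}(\Env(\ul A^\Ncoprod),\Cc)$, with the forgetful functor $\mathbb U$ given by restriction along $\Env(\incl)\colon\Env(\Triv(\ul A))\to\Env(\ul A^\Ncoprod)$.

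First I would compute, for fixed $A\in\cF$,
\[
    \ul\CAlg^\cN_\cF(\Rig^\otimes(\Cc))(A)
    \simeq\Fun_\cF^{\Nstr}(\Env(\ul A^\Ncoprod),\Rig^\otimes(\Cc))
    \simeq\Fun^{\lax{\Ee}}(\Env(\ul A^\Ncoprod),\Cc),
\]
where the first equivalence is the identification just recalled and the second is the $(\infty,2)$-categorical adjunction of \Cref{thm:Normed_Rig} (note $\Env(\ul A^\Ncoprod)$ is an honest $\cN$-normed $\cF$-precategory, so it lands in the left-hand side of that adjunction). Now applying the $\Cat$-linear envelope adjunction of \Cref{cor:lax-env} to the $(\cF,\cN)$-preoperad $\ul A^\Ncoprod$ gives
\[
    \Fun^{\lax{\Ee}}(\Env(\ul A^\Ncoprod),\Cc)\simeq\Fun^{\neglax{\cM}}(\ul A^\Ncoprod,\Cc),
\]
which is the first claimed identification. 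For the second — the description as a partially lax limit — I would invoke \Cref{ex:Triv-Ncoprod-rep}, which gives $\ul A^\Ncoprod\simeq\Span_\Nn(\Ff_{/A})$ naturally in $A$; unwinding the definition of $\Fun^{\neglax{\cM}}$ as the category of functors $\Span_\cN(\cF_{/A})\to\Un^\co(\Cc)$ over $\Span_\cN(\cF)$ preserving cocartesian lifts over $\cM^\op$ (equivalently, over $(\cF_{/A}\times_\cF\cM)^\op$, using that $\cM$ is closed under basechange) is exactly the definition of $\plaxlim_{B\in\Span_\cN(\cF_{/A})}\Cc(B)$ with the indicated marking. Naturality in $\Cc$ is automatic from the naturality of all the adjunctions involved; naturality in $A$ follows from the naturality of the equivalence $\ul A^\Ncoprod\simeq\Span_\Nn(\Ff_{/A})$ in Example~\ref{ex:Triv-Ncoprod-rep}.

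For the compatibility square~\eqref{diag:compatibility-CAlg-in-Rig}, I would trace the forgetful functor $\mathbb U$ through each of the three equivalences above. Under the first equivalence, $\mathbb U$ is restriction along $\Env(\incl)\colon\Env(\Triv(\ul A))\to\Env(\ul A^\Ncoprod)$ by \Cref{cor:forget-as-restrict}; since all the subsequent identifications are natural in the preoperad/precategory variable, $\mathbb U$ is carried to restriction along $\incl\colon\Triv(\ul A)\to\ul A^\Ncoprod$ at the level of $\Fun^{\neglax{\cM}}(-,\Cc)$. Finally $\Fun^{\neglax{\cM}}(\Triv(\ul A),\Cc)\simeq\Fun^{\neglax{\cM}}_{\cF}(\ul A,\Cc)=\Rig(\Cc)(A)$ by the same $\Triv\dashv\fgt$ restriction argument used in the proof of \Cref{thm:Normed_Rig} (restricting the $i_!\dashv i^*$ adjunction to $\cM^\op$-cocartesian fibrations), and this is exactly the bottom map of~\eqref{diag:compatibility-CAlg-in-Rig}. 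The main obstacle I anticipate is purely bookkeeping: making sure that the several incarnations of ``$\Env(\incl)$'' versus ``$\incl$'' are matched up correctly across the $(\infty,2)$-categorical adjunctions, and that the marking $(\cF_{/A}\times_\cF\cM)^\op$ on the partially lax limit is the one actually produced by unwinding $\Fun^{\neglax{\cM}}$ — i.e.\ that pulling back the subcategory $\cM^\op\subset\Span_\cN(\cF)$ along $\Span_\cN(\cF_{/A})\to\Span_\cN(\cF)$ yields $\Span_{\cM,\cN}(\cF_{/A})$, which is where left-cancellability and stability of $\cM$ under basechange get used. None of this requires new ideas beyond the representability results of Section~\ref{sec:basic}.
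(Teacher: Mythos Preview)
Your proposal is correct and follows essentially the same approach as the paper: the same three-step chain of adjunction equivalences (representability of $\ul\CAlg^\cN_\cF$, the universal property of $\Rig^\otimes$, and the lax envelope adjunction of \Cref{cor:lax-env}), followed by unwinding the definition of the partially lax limit, with the compatibility square coming from naturality in the preoperad variable. One small slip in your final paragraph: the pullback of $\cM^\op\subset\Span_\cN(\cF)$ along $\Span_\cN(\cF_{/A})\to\Span_\cN(\cF)$ is $(\cF_{/A}\times_\cF\cM)^\op$, not $\Span_{\cM,\cN}(\cF_{/A})$ --- but you already state the correct marking earlier, so this does not affect the argument.
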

\begin{proof}
	We may compute:
	\begin{align*}
		\ul{\CAlg}^{\cN}_{\cF}(\Rig(\Cc))(A)
		&\simeq \Fun^{\tensor}(\Env(\ul{A}^\Ncoprod), \Rig(\Cc)) \\
		&\simeq \Fun^{\lax{\cE}}(\Env(\und{A}^{\Ncoprod}),\Cc) \\
		&\simeq \Fun^{\neglax{\cM}}(\und{A}^\Ncoprod,\Cc),
	\end{align*}
	where the first equivalence follows by adjunction, the second by the universal property of $\Rig(\Cc)$,
	and the third by \cref{cor:lax-env}. Commutativity of $(\ref{diag:compatibility-CAlg-in-Rig})$ is then a direct consequence of naturality of the individual adjunction equivalences.

	For the equivalence of the last term with the partially lax limit in the statement,
	recall that the cocartesian unstraightening of $\und{A}^{\Ncoprod}$ is $\Span_\cN(\cF_{/A})\to \Span_{\cN}(\cF)$,
	and that an edge is cocartesian over $\Mm^{\op}$ if it is in the subcategory $(\cF_{/A}\times_\cF \Mm)^{\op}\subset \Span_\cN(\cF_{/A})$.
	Recall further that $\Fun^{\neglax{\cM}}(\und{A}^\Ncoprod,\Cc)$ is by definition the category of functors $\Span_{\cN}(\cF_{/A})\to \Un^{\cocart}(\Cc)$ over $\Span_{\cN}(\cF)$ which preserve cocartesian edges over $\Mm^{\op}$.
	Pulling back to $\Span_{\cN}(\cF_{/A})$, this is equivalent to the full subcategory of sections of the cocartesian fibration
	\[
	\Un^\cocart(\cC)\times_{\Span_{\cN}(\cF)}\Span_{\cN}(\cF_{/A}) \to \Span_{\cN}(\cF_{/A})
	\]
	spanned by those sections which send maps in $(\cF_{/A}\times_\cF \Mm)^{\op}$ to cocartesian edges.
	By definition, this is the partially lax limit of the functor
	\[
	\Span_{\cN}(\cF_{/A}) \to \Span_{\cN}(\cF) \xrightarrow{\Cc} \Cat,
	\]
	where we mark $\Span_{\cN}(\cF_{/A})$ by the subcategory $(\cF_{/A}\times_\cF \Mm)^{\op}$.
\end{proof}

\subsection{Applications to (global) equivariant homotopy theory}
We may now specialize the above discussion to $(\Fglo,\Forb,\Forb)$ to obtain a normed version of the identification $\Glob(\ul{\Sp})\simeq \ul{\Sp}_{\gl}$.

\begin{construction}\label{constr:spotimes}
	We begin by constructing a normed enhancement
	$\und{\Sp}^\tensor$ of the global category
	of equivariant spectra $\und{\Sp}$ introduced in \cref{ex:globalization}. Namely, consider once more for each finite group $G$ the full subcategory $\text{$G$-Sp}_\text{equiv.proj.}^\Sigma\subset \text{$G$-Sp}^\Sigma$ spanned by those $G$-symmetric spectra that are cofibrant in the projective equivariant model structure \cite{hausmann-equivariant}*{Theorem~4.8}. We have seen in \cref{ex:globalization} that the restriction $f^*\colon \text{$G$-Sp}^\Sigma\to \text{$H$-Sp}^\Sigma$ along any homomorphism {$f\colon H\to G$} of finite groups defines a homotopical functor $\text{$G$-Sp}_\text{equiv.proj.}^\Sigma\to \text{$H$-Sp}_\text{equiv.proj.}^\Sigma$. On the other hand, \cite{hausmann-equivariant}*{Proposition 6.1 and Theorem 6.7} show that the smash product and Hill--Hopkins--Ravenel norms similarly restrict to homotopical functors. Thus, we obtain a normed global subcategory $\ul\Sp^{\Sigma,\otimes}_\text{equiv.~proj.}\subset(\Sp^{\Sigma,\otimes})^\flat$ which we can localize to a normed global category $\ul\Sp^\otimes$ with underlying global category $\ul\Sp$. This admits a normed global functor $\ul\Sp^\otimes\hookrightarrow\ul\Sp_\gl^\otimes$ induced by the inclusion of equivariantly projective into flat symmetric spectra.
\end{construction}

Recall from Example~\ref{ex:goo} the factorization system $(\Forb,\mathbb E)$ on $\Fglo$.

\begin{corollary}\label{cor:globalize-sp-monoidal}
	The inclusion $\ul\Sp^\otimes\hookrightarrow\ul\Sp_\gl^\otimes$ admits an $\mathbb E$-lax normed right adjoint, and this exhibits $\ul\Sp^\otimes_\gl$ as the normed globalization of $\ul\Sp^\otimes$.
	\begin{proof}
		By \cite[Proposition 4.21]{puetzstueck-new} the pointwise right adjoints assemble into an $\mathbb E$-lax normed functor. As its underlying functor $\ul{\Sp}_\gl\to\ul{\Sp}$ is an (un-normed) globalization by \cref{ex:globalization}, the result is a consequence of \Cref{thm:Normed_Rig}.
	\end{proof}
\end{corollary}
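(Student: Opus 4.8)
The plan is to deduce the normed statement formally from the non-normed globalization $\ul\Sp_\gl\simeq\Glob(\ul\Sp)$ of \cref{ex:globalization}, using the uniqueness built into \Cref{thm:Normed_Rig}. Throughout we work in the distributive context $(\Fglo,\Forb,\Forb)$ of \cref{ex:goo}; for this context the relevant left class is the class $\mathbb E$ of \cref{ex:goo}, so that $\Rig^\otimes$ is the normed globalization $\Glob^\otimes$ and an $\mathbb E$-lax normed functor is a functor of cocartesian fibrations over $\Span_{\Forb}(\Fglo)$ preserving cocartesian lifts only over $\Span_{\Forb,\Forb}(\Fglo)^\op$. Recall from \cref{constr:spotimes,constr:spgl} that $\ul\Sp^\otimes$ and $\ul\Sp_\gl^\otimes$ are normed global categories with underlying global categories $\ul\Sp$ and $\ul\Sp_\gl$, and that the symmetric monoidal inclusion of equivariantly projective into flat symmetric spectra induces an honest normed global functor $\iota_!^\otimes\colon\ul\Sp^\otimes\to\ul\Sp_\gl^\otimes$ lifting the functor $\iota_!$ of \cref{ex:globalization}.

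First I would produce the $\mathbb E$-lax normed right adjoint. Levelwise, $\iota_!(G)\colon\Sp_G\to\Sp_{G\textup{-gl}}$ admits a right adjoint $\iota^*(G)$, induced by the localization functors as in \cref{ex:globalization}. By \cite{puetzstueck-new}*{Proposition~4.21} these pointwise right adjoints assemble into an $\mathbb E$-lax normed functor $\iota^{*,\otimes}\colon\ul\Sp_\gl^\otimes\to\ul\Sp^\otimes$, and it is right adjoint to $\iota_!^\otimes$ in the $(\infty,2)$-category of normed global categories and $\mathbb E$-lax normed functors; this already proves the first assertion. By construction, the underlying $\neg\Forb$-lax global functor of $\iota^{*,\otimes}$ is the functor $\iota^*\colon\ul\Sp_\gl\to\ul\Sp$ of \cref{ex:globalization}.

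It remains to identify $\ul\Sp_\gl^\otimes$ with $\Glob^\otimes(\ul\Sp^\otimes)$ compatibly with all structure maps. By \cite{Linskens2023globalization}*{Theorem~5.14} (recalled in \cref{ex:globalization}) there is an equivalence $\ul\Sp_\gl\simeq\Glob(\ul\Sp)$ under which $\iota_!$ is the unit $\Delta\colon\ul\Sp\to\Glob(\ul\Sp)$ and $\iota^*$ is the counit $\ev\colon\Glob(\ul\Sp)\to\ul\Sp$. Hence the pair consisting of the $\Forb$-normed structure $\ul\Sp_\gl^\otimes$ on $\Glob(\fgt\,\ul\Sp^\otimes)\simeq\ul\Sp_\gl$ together with the $\mathbb E$-lax normed functor $\iota^{*,\otimes}$ extending $\ev$ is precisely of the form appearing in the uniqueness clause of \Cref{thm:Normed_Rig}; therefore it agrees with the canonical pair $\big(\Glob^\otimes(\ul\Sp^\otimes),\ev^\otimes\big)$. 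Transporting the commutative triangle of \cref{ex:globalization} through the conservative forgetful functor $\fgt\colon\PreNmCat{\Fglo}{\Forb}\to\PreCat{\Fglo}$ then identifies $\iota_!^\otimes$ with the unit $\Delta^\otimes\colon\ul\Sp^\otimes\to\Glob^\otimes(\ul\Sp^\otimes)$, so that $\iota_!^\otimes$, with counit $\iota^{*,\otimes}$, exhibits $\ul\Sp_\gl^\otimes$ as the normed globalization of $\ul\Sp^\otimes$.

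The one genuinely non-formal ingredient is \cite{puetzstueck-new}*{Proposition~4.21}: one must check that the localization functors $\Sp_{G\textup{-gl}}\to\Sp_G$, which are only homotopically meaningful after a suitable cofibrant replacement, are compatible with the Hill--Hopkins--Ravenel norms and the restrictions up to coherent higher homotopy, and that the resulting functor of cocartesian fibrations is $\mathbb E$-lax but not honestly normed --- reflecting that $\iota^*$ does not commute with inflations. Granting this input, the remainder is purely formal: \Cref{thm:Normed_Rig} does the heavy lifting by reducing the normed statement to the already-established non-normed globalization, while conservativity of $\fgt$ takes care of the bookkeeping of structure maps. The main point requiring care in writing out the details is that the three identifications --- $\iota_!\simeq\Delta$, $\iota^*\simeq\ev$, and their normed lifts --- are mutually compatible, which is exactly where one invokes the precise form of the universal property of $\Glob^\otimes=\Rig^\otimes$ from \Cref{thm:Normed_Rig}.
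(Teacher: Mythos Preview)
Your proof is correct and follows essentially the same approach as the paper: cite \cite{puetzstueck-new}*{Proposition~4.21} for the $\mathbb E$-lax normed right adjoint, observe that its underlying global functor is the (un-normed) globalization of \cref{ex:globalization}, and invoke the uniqueness clause of \Cref{thm:Normed_Rig}. The paper's proof is simply the terse version of what you wrote; your additional paragraph identifying $\iota_!^\otimes$ with $\Delta^\otimes$ via uniqueness of adjoints is correct but not strictly needed for the corollary as stated.
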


Combining \Cref{thm:global-model,thm:algebras_in_Rig}, we obtain Theorem~\ref{introthm:ucomgl-from-sp} from the introduction:

\begin{corollary}\label{cor:ucom-via-equivariant-spectra}
    There is a natural (in $G \in \Fglo^\op$) equivalence
    \[
        \UCom_\textup{$G$-gl}
        \simeq \plaxlim\limits_{H\in\Span_{\Forb}(\Fglo_{/G})} \textup{Sp}_H,
    \]
    where we mark the faithful backwards maps. Moreover, this lifts the equivalence
    \[
        \Sp_\textup{$G$-gl}\simeq\plaxlim\limits_{H\in(\Fglo_{/G})^\op} \textup{Sp}_H
    \]
    from Example~\ref{ex:globalization}.\qed
\end{corollary}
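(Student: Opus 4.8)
The plan is to deduce the corollary by combining the three results cited in the statement with one additional input from \cite{Linskens2023globalization}: that the equivalence $\Sp_{\text{$G$-gl}}\simeq\plaxlim_{H\in(\Fglo_{/G})^\op}\Sp_H$ is realized by the globalization construction, i.e.~that $\Glob(\ul\Sp)(G)\simeq\Sp_{\text{$G$-gl}}$ compatibly with the identification $\ul\Sp_\gl\simeq\Glob(\ul\Sp)$ recalled in Example~\ref{ex:globalization}.

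First I would chain together equivalences. By \Cref{thm:global-model} (and its naturality in homomorphisms of finite groups, as recorded in Theorem~\ref{introthm:global-param}), there is a natural equivalence $\UCom_{\text{$G$-gl}}\simeq\CAlg_{\text{$G$-gl}}(\ul\Sp_\gl^\otimes)=\CAlg_\Fglo^\Forb(\ul\Sp_\gl^\otimes)(G)$, which unwinds (by \Cref{cor:forget-as-restrict} applied to $(\Ff,\Nn)=(\Fglo,\Forb)$, or directly from Lemma~\ref{lemma:CAlg-slice}) to $\ul\CAlg_\gl(\ul\Sp_\gl^\otimes)(G)$. Next, \Cref{cor:globalize-sp-monoidal} identifies $\ul\Sp_\gl^\otimes$ with the normed globalization $\Glob^\otimes(\ul\Sp^\otimes)=\Rig^\otimes(\ul\Sp^\otimes)$ with respect to the distributive context $(\Fglo,\Forb,\Forb)$ from Example~\ref{ex:goo} (here $\cE=\mathbb E$ is the left class with right class $\Forb$, so that $\cE$-lax functors are precisely $\mathbb E$-lax functors). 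Therefore $\ul\CAlg_\gl(\ul\Sp_\gl^\otimes)(G)\simeq\ul\CAlg_\Fglo^\Forb(\Rig^\otimes(\ul\Sp^\otimes))(G)$, and now \Cref{thm:algebras_in_Rig}, applied with $\cF=\Fglo$, $\cN=\Forb$, $\cM=\Forb$, and $A=G$, identifies this with
\[
    \Fun^{\neg\Forb\textup{-lax}}(\ul G^{\Forb\textup{-}\amalg},\ul\Sp^\otimes)
    =\plaxlim_{H\in\Span_\Forb(\Fglo_{/G})}\Sp_H,
\]
where the partially lax limit is with respect to the marking $(\Fglo_{/G}\times_\Fglo\Forb)^\op$, i.e.~is lax away from the faithful backwards maps $H\leftarrowmono K$. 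Combining and tracking naturality in $G\in\Fglo^\op$ (which holds at each stage by the cited naturality statements) yields the desired equivalence.

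For the second assertion, that this lifts the non-multiplicative comparison, I would use the commuting square $(\ref{diag:compatibility-CAlg-in-Rig})$ of \Cref{thm:algebras_in_Rig} together with the compatibility of the equivalence of \Cref{thm:global-model} with the forgetful functors to $\ul\Sp_\gl$ (part of Theorem~\ref{introthm:global-param}). Concretely, $\bbU\colon\ul\CAlg_\gl(\Rig^\otimes(\ul\Sp^\otimes))\to\Rig(\ul\Sp)=\Glob(\ul\Sp)$ corresponds under the equivalence above to restriction along $\incl\colon\Triv(\ul G)\hookrightarrow\ul G^{\Forb\textup{-}\amalg}$, i.e.~to the canonical map $\plaxlim_{H\in\Span_\Forb(\Fglo_{/G})}\Sp_H\to\plaxlim_{H\in(\Fglo_{/G})^\op}\Sp_H$ induced by the inclusion of the backwards-span subcategory $(\Fglo_{/G})^\op\hookrightarrow\Span_\Forb(\Fglo_{/G})$; on the other side, $\bbU\colon\UCom_{\text{$G$-gl}}\to\Sp_{\text{$G$-gl}}$ is the usual forgetful functor, which under $\ul\Sp_\gl\simeq\Glob(\ul\Sp)$ (Example~\ref{ex:globalization}) is precisely the identification $\Sp_{\text{$G$-gl}}\simeq\plaxlim_{H\in(\Fglo_{/G})^\op}\Sp_H$ from \cite{Linskens2023globalization}. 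Thus the square of forgetful functors matches up, proving the lifting claim.

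The main obstacle I anticipate is purely bookkeeping rather than conceptual: one must carefully check that all the identifications assembled above are natural in $G\in\Fglo^\op$ and compatible with the forgetful functors simultaneously, since each cited theorem gives naturality and compatibility separately but they must be glued into a single coherent chain. In particular, the identification $\ul\CAlg_\Fglo^\Forb(-)(G)\simeq\CAlg_{\Fglo_{/G}}^{\Forb\times_\Fglo\Fglo_{/G}}(\Span(\pi)^*-)$ of Lemma~\ref{lemma:CAlg-slice} and the normed-globalization statement of \Cref{cor:globalize-sp-monoidal} both need to be invoked with their full naturality, and one should double-check that the distributive context used in \Cref{cor:globalize-sp-monoidal} (namely $(\Fglo,\Forb,\Forb)$, which is the relevant one for rigidification along $\Forb$) is the same one feeding into \Cref{thm:algebras_in_Rig}; this is exactly the context from Example~\ref{ex:goo}, so the pieces do fit, but the marking conventions ($\cM=\Forb$ hence lax away from faithful backwards maps) must be transcribed without sign errors.
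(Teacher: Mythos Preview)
Your proposal is correct and follows exactly the approach the paper indicates: the paper deduces this corollary by combining Theorem~\ref{thm:global-model} with Theorem~\ref{thm:algebras_in_Rig} (via Corollary~\ref{cor:globalize-sp-monoidal}), and marks the proof with \qed. Your argument simply spells out these steps explicitly, including the use of diagram~(\ref{diag:compatibility-CAlg-in-Rig}) for the lifting claim, which is the natural way to make the compatibility with forgetful functors precise.
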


\begin{remark}\label{rk:BMY}
    In \cite{deflations}*{Definition~6.5}, Blumberg, Mandell and Yuan define the category of \emph{global commutative ring spectra with multiplicative deflations} as the partially lax limit of a certain diagram $\Span(\Fglo)\to\Cat, G\mapsto\Sp_G$. As we will show in Theorem~\ref{thm:equivariant-uniqueness}, the restriction of this diagram to $\Span_\Forb(\Fglo)$ agrees with $\ul\Sp^\otimes$, so specializing the above Corollary to $G=1$, we see that any global commutative ring spectrum with multiplicative deflations indeed has an underlying ultra-commutative ring spectrum in the sense of \cite{schwede2018global}, affirming \cite{deflations}*{Remark~6.6}. We will say more about multiplicative deflations in §\ref{subsec:mult-defl}.

    On the other hand, also the partially lax limit
    \[
        \CAlg_{\gl}(\ul{\Sp}^\otimes)=\plaxlim_{H\in\Span_{\Forb}(\Fglo)}\Sp_H
    \]
    where we mark \emph{all} backwards maps has been considered before: these are the \emph{global algebras} of \cite{yuan-frobenii}. The above corollary then exhibits these global algebras as the full subcategory of $\UCom_\gl$ spanned by those ultra-commutative global ring spectra whose underlying global spectrum is in the essential image of the inclusion $\Sp\hookrightarrow\Sp_\gl$. Note that this is different from being contained in the essential image of the fully faithful left adjoint $\CAlg(\Sp)\to\UCom_\gl$ of the forgetful functor, see \cite{Lenz-Stahlhauer}*{Warning~6.28}. Thus, a global algebra does indeed carry more structure than just an ordinary $E_\infty$ ring spectrum.
\end{remark}

\section{Globalizing ultra-commutative $G$-ring spectra}\label{sec:globalize-equivariant-alg}
Once again, for this section we fix a distributive context $(\cF,\cN,\Mm)$. However, this time we make the additional assumption that $\cN$ is contained in $\Mm$. The reader should have $(\Fglo,\Forb,\Forb)$ in mind.

\subsection{Global categories of equivariant algebras}
Let $\Cc$ be an $\Mm$-distributive $\cN$-normed $\cF$-precategory, and write $\Cc|_{\Mm}$ for the restriction of $\Cc$ to a (distributive) $\cN$-normed $\Mm$-precategory. We are going to investigate the connection between $\ul\CAlg_{\cF}^{\cN}(\Rig^\otimes \cC)$ and $\ul\CAlg_{\Mm}^{\cN}(\cC|_{\Mm})$, i.e.~between the $\cF$-precategory of $\Nn$-normed algebras in $\Rig^\otimes\Cc$ versus the $\cM$-precategory of $\cN$-normed algebras in $\Cc|_\cM$. We will see that the latter is naturally a right Bousfield localization of the former. We begin with the following definition.

\begin{definition}
    We say a functor $\cX\to \Span_{\cN}(\cF)$ is an \emph{$\Mm$-partial $(\cF,\cN)$-operad} if it is a $(\Span_\cN(\cF), \cR^\op, \Span_{\Ee,\cN}(\cF))$-operad in the sense of Definition~\ref{def:operad}, i.e.
	\begin{enumerate}
		\item $\cX\to \Span_{\cN}(\cF)$ admits cocartesian lifts of maps in $\cR^{\op}$,
		\item $\cX$ has finite products which are preserved by the map to $\Span_{\cN}(\cF)$, \emph{and}
		\item for every two objects $X,Y\in \cX$, the projection maps $X \times Y\to X,Y$ are cocartesian.
	\end{enumerate}
    We denote the category of $\Mm$-partial $(\cF,\cN)$-operads by $\Op{\cF\sbar \Mm}{\Nn}$.
    Maps of $\Mm$-partial $(\cF,\cN)$-operads are given by maps of $\Mm^{\op}$-cocartesian fibrations.
\end{definition}

\begin{example}
    For $\Mm = \cF$ this recovers the $(\cF,\cN)$-operads
    of \cref{def:f-n-operad}.
\end{example}

Connected to this definition is the following lemma.

\begin{lemma}
The inclusion $i\colon \Span_{\cN}(\Mm) \to \Span_{\cN}(\cF)$ induces a $\Cat$-linear adjunction
\[
\mathrm{Triv}\colon \Op{\Mm}{\cN} \rightleftarrows \Op{\cF\sbar \Mm}{\cN}\noloc (-)|_{\Mm}.
\]
\end{lemma}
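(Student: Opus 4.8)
The plan is to construct the adjunction by restricting the already-known $\Cat$-linear adjunction $i_! \dashv i^*$ on slice categories, exactly as was done for $\Triv \dashv \fgt$ in the proof of Proposition~\ref{prop:calg} and for the $\cE$-lax variant in the proof of Theorem~\ref{thm:Normed_Rig}. Concretely, the inclusion $i\colon\Span_\cN(\Mm)\hookrightarrow\Span_\cN(\cF)$ induces a $\Cat$-linear adjunction
\[
    i_!\colon\Cat_{/\Span_\cN(\Mm)}\rightleftarrows\Cat_{/\Span_\cN(\cF)}\noloc i^*,
\]
where $i^*$ is pullback and $i_!$ is postcomposition with $i$; here I use that $i$ is fully faithful (being a span category inclusion along the wide subcategory $\Mm\subset\cF$), so that $i_!$ is fully faithful and the unit is an equivalence. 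The first task is to check that this adjunction restricts to one between the relevant subcategories $\Op{\Mm}{\cN}\subset\Cat^{\Mm^\op\dcocart}_{/\Span_\cN(\Mm)}$ and $\Op{\cF\sbar\Mm}{\cN}\subset\Cat^{\Mm^\op\dcocart}_{/\Span_\cN(\cF)}$.

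First I would verify that $i^*$ preserves the cocartesian-fibration and product conditions: given an $\Mm$-partial $(\cF,\cN)$-operad $\cX\to\Span_\cN(\cF)$, the pullback $i^*\cX\to\Span_\cN(\Mm)$ admits cocartesian lifts over $\cN^\op\subset\Mm^\op$ since $\cN\subset\Mm$ (this is where the standing assumption $\cN\subset\Mm$ enters, and indeed it is needed so that $\Span_\cN(\Mm)$ makes sense with the same $\cN$) — wait, more carefully: an $\Mm$-partial operad has cocartesian lifts over $\cR^\op = \Mm^\op$, so $i^*\cX$ has cocartesian lifts over all of $\Mm^\op$; finite products are preserved since $i$ itself preserves finite products (both $\Span_\cN(\Mm)$ and $\Span_\cN(\cF)$ have products computed via coproducts in $\Mm$ resp.\ $\cF$, and $\Mm\hookrightarrow\cF$ preserves coproducts by Definition~\ref{def:dist}(\ref{dist:4})), and pullback preserves limits; and the cocartesianness of projections is stable under pullback. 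Conversely, for $i_!$: given an $(\Mm,\cN)$-operad $\cO\to\Span_\cN(\Mm)$, the composite $\cO\to\Span_\cN(\Mm)\hookrightarrow\Span_\cN(\cF)$ has cocartesian lifts over $\Mm^\op$ because $\Span_\cN(\Mm)^\op\subset\Span_\cN(\cF)^\op$ is a subcategory and the relevant lifts already exist in $\cO$ (using that $\Mm^\op$ is contained in $\Span_\cN(\Mm)$); it has finite products preserved by the structure map since both $\cO\to\Span_\cN(\Mm)$ and $i$ preserve them; and projections remain cocartesian. Thus $i_!\cO$ is an $\Mm$-partial $(\cF,\cN)$-operad, and on morphisms both functors evidently preserve the relevant cocartesian-edge-preservation conditions. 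The $\Cat$-linearity is inherited: the $\Cat$-module structures on both sides are the restrictions of those on the slice categories (via $S\times-\colon\Cat\to\Cat_{/S}$), and the adjunction $i_!\dashv i^*$ on slices is $\Cat$-linear, so the restricted adjunction is too; the identification of the restricted right adjoint with $(-)|_\Mm$ is by definition of the latter as pullback along $i$.

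The main obstacle I anticipate is the bookkeeping needed to confirm that $i_!$ and $i^*$ genuinely land in the operad subcategories rather than merely the $\Mm^\op$-cocartesian-fibration categories — in particular verifying the product conditions (2) and (3) of the definition of $\Mm$-partial operad survive both pushforward along the fully faithful $i$ and pullback along $i$. For $i^*$ this is a routine stability-under-pullback argument using that $i$ preserves products. For $i_!$ one must use fully faithfulness of $i$ together with the fact that $\Span_\cN(\Mm)\hookrightarrow\Span_\cN(\cF)$ is a wide-subcategory-type inclusion that reflects products (or equivalently, that products in $\cO$ pushed forward remain products, since $i$ is fully faithful and preserves products). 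Everything else is a direct transcription of the arguments already carried out in Proposition~\ref{prop:calg} and Theorem~\ref{thm:Normed_Rig}, so the proof should be short once these verifications are spelled out.
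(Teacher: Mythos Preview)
Your approach is essentially the same as the paper's: restrict the $\Cat$-linear slice adjunction $i_! \dashv i^*$ to the subcategories of $\Mm^\op$-cocartesian fibrations (as in Proposition~\ref{prop:calg}) and then to the operad subcategories using that $i$ preserves finite products (as in Theorem~\ref{thm:oper_operad}(\ref{triv_on_operad})). One minor slip: the inclusion $i\colon\Span_\cN(\Mm)\hookrightarrow\Span_\cN(\cF)$ is \emph{not} fully faithful in general (backward legs in $\cF$ need not lie in $\Mm$), only a monomorphism; this still gives $i^*i_!\simeq\id$ and hence that $\Triv$ is fully faithful, but in any case that conclusion is not used elsewhere in your argument.
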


\begin{proof}
    The existence of the adjunction above on $\cM^\op$-cocartesian fibrations is analogous to the construction of the adjunction $\Triv\dashv \fgt$ in \Cref{prop:calg}. As the inclusion $i$ preserves finite products, we conclude that the adjunction restricts to the categories of operads, analogously to \Cref{thm:oper_operad}(\ref{triv_on_operad}).
\end{proof}

\begin{construction}\label{constr:gamma}
    Consider the inclusion $\cM_{/A} \subset \cF_{/A}$, which gives the square
    \[
    \begin{tikzcd}
        {\Span_{\cN}(\Mm_{/A})} & {\Span_{\cN}(\cF_{/A})} \\
        {\Span_{\cN}(\Mm)} & {\Span_{\cN}(\cF)}
        \arrow[from=1-1, to=1-2]
        \arrow[from=1-1, to=2-1]
        \arrow[from=1-2, to=2-2]
        \arrow[from=2-1, to=2-2]
    \end{tikzcd}
    \]
    naturally in $A\in \Mm$.
    Recall that the right vertical map is precisely $\und{A}^{\Ncoprod}\in \Op{\cF}{\cN}$. Amusingly, the left hand map is also $\und{A}^{\Ncoprod}$ in $\Op{\Mm}{\cN}$, but now $\und{A}$ is the $\Mm$-groupoid represented by $A\in \Mm$ and $(-)^{\Ncoprod}$ is the functor $\iCat{\Mm}\to \Op{\Mm}{\cN}$. To avoid confusion, we will denote the former by $\und{A}^{\Ncoprod}_{\cF}$ and the latter by $\und{A}^{\Ncoprod}_{\Mm}$.
With this notation, the square above gives a morphism
\[
    \gamma_A \colon \Triv(\und{A}^{\Ncoprod}_{\Mm})\to\und{A}^{\Ncoprod}_{\cF}
\]
in $\Op{\cF\sbar \Mm}{\cN}$, naturally in $A \in \cM$. This in turn induces a functor
\begin{multline*}
    \ul\CAlg^{\cN}_{\cF}(\Rig^\otimes\Cc)(A)
    \simeq \Fun^{\neglax{\cM}}_{\Span_{\cN}(\cF)^{\op}}(\und{A}^\Ncoprod_{\cF},\Cc)\\
    \xrightarrow{\gamma_A^*} \Fun^{\neglax{\cM}}_{\Span_{\cN}(\cF)^{\op}}(\Triv(\und{A}_\cM^\Ncoprod),\Cc)
    \simeq \ul\CAlg^{\cN}_{\Mm}(\Cc|_{\Mm})(A),
\end{multline*}
where the first equivalence above is \Cref{thm:algebras_in_Rig} and the second follows by the adjunction of the previous lemma.
Moreover, this is natural in $A\in \Mm$, and so we obtain an $\cM$-functor
\[
    \fgt\colon \ul\CAlg^{\cN}_{\cF}(\Rig^\otimes\Cc)|_{\Mm}
    \to \ul\CAlg^{\cN}_{\Mm}(\Cc|_{\Mm}).
\]
\end{construction}

\begin{theorem}\label{thm:global-extension}
    Let $\Cc$ be an $\Mm$-distributive $\Nn$-normed $\cF$-precategory.
    Then the $\Mm$-functor $\fgt$ is a localization.
    Moreover, it admits a parametrized left adjoint
    \[
        \mathcal L\colon\ul\CAlg^{\cN}_{\Mm}(\Cc|_{\Mm})
        \hookrightarrow \ul\CAlg^{\cN}_{\cF}(\Rig^\otimes\Cc)|_{\Mm}
    \]
    whose essential image is even an $\cF$-subprecategory of $\ul\CAlg^{\cN}_{\cF}(\Rig^\otimes\Cc)$.
\end{theorem}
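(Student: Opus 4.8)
The plan is to check the hypotheses of a parametrized Bousfield localization criterion at each level $A\in\cM$ and then verify that the resulting local objects and left adjoints organize into $\cF$-parametrized structures. First I would unwind the $\cM$-functor $\fgt$ pointwise: by Construction~\ref{constr:gamma} its value at $A$ is restriction $\gamma_A^*$ along the map $\gamma_A\colon\Triv(\und A_\cM^\Ncoprod)\to\und A_\cF^\Ncoprod$ of $\cM$-partial $(\cF,\cN)$-operads, where after unwinding the relevant adjunctions this is a map of $\neg\cM$-lax functor categories out of envelopes. The key structural input is that $\gamma_A$ fits into a factorization system picture: on unstraightenings it is the inclusion $\Span_\cN(\cM_{/A})\hookrightarrow\Span_\cN(\cF_{/A})$, and since $\cN\subset\cM$ this inclusion admits a left adjoint (given fiberwise by the $(\cE,\cM)$-factorization, i.e.\ by the functor $\lambda$ appearing in the proof of the lemma identifying $i_!\simeq\lambda^*$ in Section~\ref{subsec:cleft-Kan}). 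I would then invoke \cref{thm:cleft-Kan} (or rather \cref{lem:left_kan_along_env} applied to $\gamma_A$, using $\cM$-distributivity of $\Cc$ via \cref{prop:distributivity}) to produce the pointwise left adjoint $\mathcal L(A)$ by left Kan extension along $\Env(\gamma_A)$, together with the Beck--Chevalley compatibility with the forgetful functors to $\Cc|_{\cF^\op}$.

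Next I would show $\mathcal L(A)$ is fully faithful, which is exactly what makes $\fgt$ a localization. By the last clause of \cref{thm:cleft-Kan}, it suffices that $\Env(\gamma_A)$ is fully faithful, and this reduces to $\gamma_A$ being fully faithful on the nose; but $\gamma_A$ is the inclusion of a full subcategory $\Span_\cN(\cM_{/A})\subset\Span_\cN(\cF_{/A})$ since $\cM$ is wide and closed under basechange in $\cF$ (so $\cM_{/A}\subset\cF_{/A}$ is a full subcategory, and spans between objects of $\cM_{/A}$ with legs in $\cM$ and $\cN\subset\cM$ stay inside). Hence $\fgt(A)\mathcal L(A)\simeq\id$, so $\mathcal L(A)$ is fully faithful and $\fgt(A)$ is a Bousfield localization at each $A$. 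To see these assemble into a \emph{parametrized} adjunction I would verify the Beck--Chevalley condition of \cref{rem:adj_in_functor_cat}: for a map $f\colon A\to B$ in $\cM$ I need the square relating $\mathcal L(A),\mathcal L(B)$ and the restrictions to be left adjointable, which follows from smooth/proper basechange (\cref{prop:lfib-smooth}, \cref{prop:smooth-proper-basechange}) once I observe that the square of operad maps $\Triv(\und A^\Ncoprod_\cM)\to\und A^\Ncoprod_\cF$, $\Triv(\und B^\Ncoprod_\cM)\to\und B^\Ncoprod_\cF$ is, on unstraightenings, cartesian with the relevant legs (pointwise) right fibrations — the same mechanism used in the proof of \cref{thm:free-algs}.

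Finally, for the essential-image statement I would argue that an object of $\ul\CAlg^{\cN}_{\cF}(\Rig^\otimes\Cc)(A)$, viewed via \cref{thm:algebras_in_Rig} as a $\neg\cM$-lax functor $\und A^\Ncoprod_\cF\to\Cc$, lies in the essential image of $\mathcal L(A)$ if and only if it is \emph{left Kan extended from} $\Span_\cN(\cM_{/A})$ along $\gamma_A$; equivalently, using the pointwise description of local objects, if and only if its restriction functors $e^*$ are equivalences for $e$ in the left class of the factorization system $(\Span_{\cM,\cE,\cN}(\cF_{/A}),\dots)$ (concretely, along spans whose forward leg is an equivalence and whose backward leg lies in $\cE^\op$). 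This is a condition stable under restriction along all maps of $\cF$ (not just $\cM$): given $g\colon A'\to A$ in $\cF$, pullback $\Span_\cN(\cF_{/A'})\to\Span_\cN(\cF_{/A})$ sends the relevant $\cE$-maps to $\cE$-maps since $\cE$ and $\cN$ are both stable under basechange in $\cF$. Hence the full subprecategory of $\ul\CAlg^{\cN}_{\cF}(\Rig^\otimes\Cc)$ cut out levelwise by ``being local'' is closed under all restrictions, i.e.\ is an $\cF$-subprecategory, and its restriction to $\cM$ is precisely the essential image of $\mathcal L$. The main obstacle I anticipate is the bookkeeping in this last step: correctly identifying the class of maps along which local objects must be constant, and checking its stability under \emph{arbitrary} (not merely $\cM$-) restrictions, since this is where the hypothesis $\cN\subseteq\cM$ and the basechange-stability of $\cE$ are really used.
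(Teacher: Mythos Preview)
Your construction of the pointwise left adjoints $\mathcal L(A)$ via left Kan extension along $\Env(\gamma_A)$ and your argument for their full faithfulness are essentially what the paper does (see Lemma~\ref{lemma:LKE-CAlgRig} and Remark~\ref{rk:Env-factorization-system}). However, your treatment of the essential-image statement has a genuine gap, and the paper's approach there is quite different.

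The problem is your characterization of $\essim(\mathcal L(A))$. You assert that $\gamma_A\colon\Span_\cN(\cM_{/A})\hookrightarrow\Span_\cN(\cF_{/A})$ admits a left adjoint induced by the $(\cE,\cM)$-factorization, and then that being left Kan extended along $\gamma_A$ is equivalent to the lax structure maps along $\cE$-backward spans being equivalences. The first claim is unjustified: the unit of such an adjunction would have to be a forward map $X\to X'$ lying in $\cN$, but the $(\cE,\cM)$-factorization produces a map in $\cE$, not in $\cN$. More seriously, the second claim is false. A $\neg\cM$-lax section $s\colon\Span_\cN(\cF_{/A})\to\Un^\cocart(\Cc)$ whose lax structure maps along $\cE$-backward spans are equivalences is simply a \emph{strict} section (since it is already strict over $\cM^\op$ and $(\cE,\cM)$ factors $\cF$), i.e.\ an object of $\ul\CAlg^\cN_\cF(\Cc)(A)$. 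This is not the same as being in $\essim(\mathcal L(A))\simeq\ul\CAlg^\cN_\cM(\Cc|_\cM)(A)$: the former records compatible data over all of $\Span_\cN(\cF_{/A})$, the latter only over $\Span_\cN(\cM_{/A})$. In general these differ unless $\Cc$ happens to be Borel along $\cE$.

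The paper instead argues via monadicity. It first proves (Proposition~\ref{prop:left-adj-to-fgt-for-rig}, using the parametrized factorization system of Proposition~\ref{prop:Env-param-fs}) that $\bbU\colon\ul\CAlg^\cN_\cF(\Rig^\otimes\Cc)\to\Rig(\Cc)$ has a strong $\cF$-left adjoint $\bbP$. One then has $\mathcal L\bbP\simeq\bbP\Delta$ since both are left adjoint to the same functor, and since $\bbP\Delta$ is an $\cF$-functor one gets $f^*\mathcal L\bbP(X)\simeq\mathcal L\bbP(f^*X)\in\essim(\mathcal L)$ for every $f$ in $\cF$. Monadicity (Corollary~\ref{cor:par_U_monadic}) then gives that $\ul\CAlg^\cN_\cM(\Cc|_\cM)(A)$ is generated under sifted colimits by $\essim(\bbP)$, and since $\essim(\mathcal L(B))$ is closed under colimits and $f^*$ preserves sifted colimits, the claim follows. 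The $\cM$-functoriality of $\mathcal L$ is then deduced from the essential-image statement, rather than via a separate Beck--Chevalley verification.
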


Before we prove the theorem, let us note the following immediate consequence:

\begin{corollary}
Under the above assumptions, there is a unique pair of an extension of $\ul\CAlg^{\cN}_{\Mm}(\Cc|_{\Mm})$ to an $\cF$-precategory $\ul\CAlg^\cN_{\Mm\triangleright \cF}(\Cc)$ together with an extension of $\mathcal L$ to an $\cF$-functor
    $
        \Ll\colon\ul\CAlg^\cN_{\Mm\triangleright \cF}(\Cc)
        \hookrightarrow \ul\CAlg^{\cN}_{\cF}(\Rig^\otimes\Cc).
    $\qed
\end{corollary}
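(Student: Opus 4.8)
The plan is to deduce this corollary from Theorem~\ref{thm:global-extension} by the same general principle already used twice in this section (for instance in the proof of Theorem~\ref{thm:Normed_Rig} and in the discussion after Corollary~\ref{cor:globalize-sp-monoidal}): a parametrized left adjoint whose essential image is an $\cF$-subprecategory can be promoted to an honest $\cF$-functor by restricting the ambient $\cF$-precategory to that image. Concretely, Theorem~\ref{thm:global-extension} tells us that $\mathcal L$ exhibits $\ul\CAlg^{\cN}_{\Mm}(\Cc|_{\Mm})$ as (equivalent to) a full $\Mm$-subprecategory of $\ul\CAlg^{\cN}_{\cF}(\Rig^\otimes\Cc)|_{\Mm}$, and moreover that this subprecategory is closed under the restriction maps coming from \emph{all} of $\cF$, not merely those in $\Mm$. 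Thus one may simply \emph{define} $\ul\CAlg^\cN_{\Mm\triangleright \cF}(\Cc)$ to be the full $\cF$-subprecategory of $\ul\CAlg^{\cN}_{\cF}(\Rig^\otimes\Cc)$ spanned, in each degree $A\in\cF$, by the essential image of $\mathcal L(A)$, and let $\Ll$ be the inclusion.

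First I would make precise what an ``$\cF$-subprecategory'' picked out by a collection of full subcategories means: given $\Cc\colon\cF^\op\to\Cat$ and for each $A$ a full subcategory $\Cc'(A)\subset\Cc(A)$ which is stable under all restrictions $f^*$, the assignment $A\mapsto\Cc'(A)$ canonically inherits the structure of a functor $\cF^\op\to\Cat$ together with a natural fully faithful inclusion $\Cc'\hookrightarrow\Cc$; this is just the statement that full subcategories of the values of a functor that are preserved by the transition functors assemble into a subfunctor, which one can see e.g.\ by forming the appropriate full sub-cocartesian-fibration of $\Un^\ct(\Cc)$. Applying this with $\Cc=\ul\CAlg^{\cN}_{\cF}(\Rig^\otimes\Cc)$ and $\Cc'(A)=\essim(\mathcal L(A))$ yields the desired $\cF$-precategory $\ul\CAlg^\cN_{\Mm\triangleright \cF}(\Cc)$ together with the fully faithful $\cF$-functor $\Ll$. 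That the essential images are indeed stable under restriction along all of $\cF$ is exactly the last clause of Theorem~\ref{thm:global-extension}.

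Next I would check the two compatibility requirements implicit in the phrase ``an extension of $\ul\CAlg^{\cN}_{\Mm}(\Cc|_{\Mm})$'' and ``an extension of $\mathcal L$''. Restricting the new $\cF$-precategory to $\Mm$ gives, in degree $A\in\Mm$, the full subcategory $\essim(\mathcal L(A))\subset\ul\CAlg^{\cN}_{\cF}(\Rig^\otimes\Cc)(A)$, which by Theorem~\ref{thm:global-extension} is precisely the essential image of the fully faithful $\mathcal L\colon\ul\CAlg^{\cN}_{\Mm}(\Cc|_{\Mm})\hookrightarrow\ul\CAlg^{\cN}_{\cF}(\Rig^\otimes\Cc)|_{\Mm}$; hence corestricting $\mathcal L$ to its essential image gives an equivalence $\ul\CAlg^{\cN}_{\Mm}(\Cc|_{\Mm})\simeq\ul\CAlg^\cN_{\Mm\triangleright \cF}(\Cc)|_{\Mm}$ of $\Mm$-precategories, and under this identification $\Ll|_{\Mm}$ is $\mathcal L$. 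So both extensions exist.

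For uniqueness, I would argue as in the analogous uniqueness statements earlier: suppose $(\Dd,\Dd\hookrightarrow\ul\CAlg^{\cN}_{\cF}(\Rig^\otimes\Cc))$ is another pair extending $(\ul\CAlg^{\cN}_{\Mm}(\Cc|_{\Mm}),\mathcal L)$. Since $\Dd\hookrightarrow\ul\CAlg^{\cN}_{\cF}(\Rig^\otimes\Cc)$ is a fully faithful $\cF$-functor, $\Dd$ is the $\cF$-subprecategory on some restriction-stable collection of full subcategories $\Dd(A)\subset\ul\CAlg^{\cN}_{\cF}(\Rig^\otimes\Cc)(A)$; for $A\in\Mm$ the extension condition forces $\Dd(A)=\essim(\mathcal L(A))$, and for general $A\in\cF$ factoring any object through a restriction from $\Mm$ — or more simply, using that $\Dd$ is determined by its values on $\Mm$ together with restriction-stability, since every object of $\cF$ receives a map from one in $\Mm$ after passing to the slice, although here one should instead invoke that a full $\cF$-subprecategory is \emph{generated} by any restriction-stable family of full subcategories containing a family on $\Mm$, by closing under the $\cF$-restrictions — forces $\Dd(A)=\essim(\mathcal L(A))$ as well. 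Hence $\Dd=\ul\CAlg^\cN_{\Mm\triangleright \cF}(\Cc)$, and the inclusions agree. The main obstacle, and the one place where I would be careful, is this last uniqueness step: one must be sure that ``full $\cF$-subprecategory'' really is the same datum as a restriction-stable family of full subcategories of the values, so that such a subprecategory is rigidly determined by its restriction to $\Mm$ once one knows the $\Mm$-part generates under $\cF$-restrictions — equivalently, that the essential image of $\mathcal L$ on all of $\cF$ is the restriction-closure of its essential image on $\Mm$, which is precisely the content of the ``even an $\cF$-subprecategory'' clause of Theorem~\ref{thm:global-extension}. Given that clause, everything else is formal.
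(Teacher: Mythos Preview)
Your approach is correct and matches the paper's: the corollary is marked with a \qedsymbol\ immediately after the statement, indicating it follows directly from Theorem~\ref{thm:global-extension} by taking $\ul\CAlg^\cN_{\Mm\triangleright \cF}(\Cc)$ to be the full $\cF$-subprecategory on the essential images of $\mathcal L$. Your existence argument spells this out correctly.

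However, your uniqueness argument is overcomplicated and reveals a small confusion. You write ``for $A\in\Mm$ the extension condition forces $\Dd(A)=\essim(\mathcal L(A))$, and for general $A\in\cF$\dots'', and then reach for restriction-closure arguments. But recall from Definition~\ref{def:dist} that in a distributive context $(\cF,\cN,\Mm)$ the subcategory $\Mm\subset\cF$ is \emph{wide}: every object of $\cF$ already lies in $\Mm$. Hence once you know $\Dd(A)=\essim(\mathcal L(A))$ for all $A\in\Mm$, you know it for all $A\in\cF$, and the full $\cF$-subprecategory is determined. No closure-under-restrictions argument is needed; the ``general $A\in\cF$'' case you worry about is vacuous.
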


\begin{definition}
We call this extension $\ul\CAlg^\cN_{\Mm\triangleright \cF}(\Cc)$ the \emph{$\cF$-precategory of $\cN$-normed $\Mm$-algebras}~in~$\Cc$.
\end{definition}

\begin{remark}\label{rk:fgt-lax-extension}
    Passing to right adjoints, the corollary shows that the forgetful functors assemble into an $\Ee$-lax $\cF$-functor $\fgt\colon\ul\CAlg^{\cN}_{\cF}(\Rig^\otimes\Cc)\to \ul\CAlg^\cN_{\Mm\triangleright \cF}(\Cc)$  which is pointwise a Dwyer--Kan localization. We can therefore interpret the functoriality of $\ul\CAlg_{\Mm\triangleright \cF}^{\cN}(\Cc)$ with respect to general maps in $\cF$ as the left derived functors of the corresponding restrictions in $\ul\CAlg^{\cN}_{\cF}(\Rig^\otimes\Cc)$.
\end{remark}

We now turn to the proof of Theorem~\ref{thm:global-extension}. We will use \cref{thm:cleft-Kan} to construct the pointwise left adjoints.

\begin{lemma}\label{lemma:LKE-CAlgRig}
    Let $\Cc$ be an $\Mm$-distributive $\Nn$-normed $\Ff$-precategory, and let $f\colon\cO\to\cO'$ be a map in $\Op{\Ff\sbar\Mm}{\Nn}$. Then $f^*\colon\Fun^{\neglax\Mm}(\cO',\Cc)\to\Fun^{\neglax\Mm}(\cO,\Cc)$ has a left adjoint $f_!$ given by pointwise left Kan extension along $\Env(f)$, where $\Env$ denotes the envelope with respect to the factorization system $(\Mm^\op,\Span_{\Ee,\Nn}(\Ff))$.
\end{lemma}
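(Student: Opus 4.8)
\textbf{Proof plan for Lemma~\ref{lemma:LKE-CAlgRig}.}
The plan is to deduce this from the general parametrized Kan extension criterion of \cref{thm:cleft-Kan}, applied to the factorization system $(\Mm^\op, \Span_{\Ee,\Nn}(\Ff))$ on $\Span_\cN(\cF)^\op$ (noting that by \cref{prop:part_fact_on_spans} this is indeed a factorization system on $\Span_\cN(\cF)$, and taking opposites gives the one on $\Span_\cN(\cF)^\op$ with right class $\Span_{\Ee,\Nn}(\Ff)$ --- here we use the standing assumption $\cN\subset\Mm$ so that $\cE\subset\Mm$ and the relevant span pair conditions hold). The objects $\Env(\cO)$, $\Env(\cO')$ are $\cN$-normed $\cF$-categories by \cref{cor:lax-env} together with \cref{prop:envelope-operad} applied to the factorization system $(\Span_\cN(\cF),\cR^\op,\Span_{\Ee,\Nn}(\cF))$: an $\Mm$-partial $(\cF,\cN)$-operad is by definition a $(\Span_\cN(\cF),\cR^\op,\Span_{\Ee,\Nn}(\cF))$-operad, and the proposition says its envelope straightens to a product-preserving functor, hence a $\Span_{\Ee,\Nn}(\cF)^\op$-category, which is what \cref{thm:cleft-Kan} requires for its source and target. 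The coefficient $\cC$ is $\ul{\bfU}_{\Span_{\Ee,\Nn}(\cF)}^\times$-cocomplete by \cref{prop:distributivity}, since $\Mm$-distributivity is by construction $\Span_{\Ee,\Nn}(\cF)$-cocompleteness for the adjoints $m_!$ (Example~\ref{ex:distributivity-as-cocompleteness}) plus fiberwise sifted colimits.

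It then remains to verify the two hypotheses of \cref{thm:cleft-Kan} for the map $\Env(f)$ and the left class $\Span_{\Ee,\Nn}(\cF)\subset\Span_\cN(\cF)^\op$. First I would recall from \cref{rem:cocart_in_Env_1} (and the proof of \cref{cor:lax-env}) the explicit description of the functoriality of $\Env(\cO)$: for a map $e$ in $\Span_{\Ee,\Nn}(\cF)$, the transition functor $\Env(\cO)(C)\to\Env(\cO)(B)$ is, just as in the proof of \cref{lem:left_kan_along_env}, a basechange of the corresponding postcomposition functor between slices of $\Span_{\Ee,\Nn}(\cF)$ (reading $e$ through its forwards-then-backwards factorization in $\Span_\cN(\cF)$), hence a right fibration; this handles the first hypothesis. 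Second, the naturality square
\[
\begin{tikzcd}
\Env(\cO)(C)\arrow[d,"e^*"']\arrow[r, "\Env(f)"] & \Env(\cO')(C)\arrow[d,"e^*"]\\
\Env(\cO)(B)\arrow[r, "\Env(f)"'] & \Env(\cO')(B)
\end{tikzcd}
\]
is a pullback: exactly as in the proof of \cref{lem:left_kan_along_env}, it is obtained as a pullback of a pullback square of the form $(\cO\rightrightarrows\cO')\to(\Span_\cN(\cF)\rightrightarrows\Span_\cN(\cF))\leftarrow(\text{slice}\rightrightarrows\text{slice})$, and is therefore itself a pullback; the argument goes through verbatim with $\Span_{\Ee,\Nn}(\cF)$ in place of $\cN$ since the only input was the description of cocartesian edges in envelopes from \cref{thm:BHS}.

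Having checked the hypotheses, \cref{thm:cleft-Kan} produces the pointwise left Kan extension $\Env(f)_!$ along $\Env(f)$ on the level of $\Span_\cN(\cF)^\op$-functor categories with coefficients in $\cC$, which is precisely a left adjoint to $\Env(f)^*$. Combining this with the adjunction equivalence $\Fun^{\lax\cE}(\Env(\cO),\cC)\simeq\Fun^{\neglax\Mm}(\cO,\cC)$ of \cref{cor:lax-env} (natural in $\cO$, hence compatible with restriction along $f$) transports the left adjoint to the desired $f_!$ on the categories $\Fun^{\neglax\Mm}(-,\cC)$. I do not anticipate a serious obstacle; the main point requiring care is bookkeeping the factorization systems correctly --- making sure that $\cN\subset\Mm$ guarantees $(\Mm^\op,\Span_{\Ee,\Nn}(\cF))$ is a genuine factorization system on $\Span_\cN(\cF)$ and that $\Span_{\Ee,\Nn}(\cF)$ is the relevant left class with the right closure-under-coproducts property needed to invoke \cref{prop:distributivity} and \cref{thm:cleft-Kan}.
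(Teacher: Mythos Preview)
Your overall strategy---passing to $\Env(f)^*$ via \cref{cor:lax-env} and then applying \cref{thm:cleft-Kan}, with the hypotheses supplied by \cref{prop:distributivity}, \cref{prop:envelope-operad}, and the argument of \cref{lem:left_kan_along_env}---is exactly the paper's. But you have the left and right classes of the factorization system on $T=\Span_\cN(\cF)^\op$ swapped, and this makes one of your claimed inputs false.

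Taking opposites of $(\cM^\op,\Span_{\cE,\cN}(\cF))$ on $\Span_\cN(\cF)$ yields the factorization system $(E,M)=(\Span_{\cE,\cN}(\cF)^\op,\cM)$ on $T$: the \emph{right} class is $M=\cM$, not $\Span_{\cE,\cN}(\cF)$. Hence the cocompleteness hypothesis of \cref{thm:cleft-Kan} is $\ul{\bfU}_\cM^\times$-cocompleteness, which is precisely what \cref{prop:distributivity} provides for an $\cM$-distributive $\cC$; your assertion that $\cC$ is $\ul{\bfU}_{\Span_{\cE,\cN}(\cF)}^\times$-cocomplete is neither what is needed nor what that proposition says. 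Similarly, \cref{prop:envelope-operad} (applied with $(S,E,M)=(\Span_\cN(\cF),\cM^\op,\Span_{\cE,\cN}(\cF))$) shows that $\Env(\cO),\Env(\cO')$ are $\cN$-normed $\cF$-categories, i.e.\ $T$-categories, not ``$\Span_{\cE,\cN}(\cF)^\op$-categories.'' The right-fibration and pullback checks are then for maps in the left class $E$, and do go through as in \cref{lem:left_kan_along_env}. (Your aside that $\cN\subset\cM$ forces $\cE\subset\cM$ is false---the two classes of a factorization system meet only in equivalences---and is in any case not used here.)
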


Below we will be able to treat the notion of a \emph{pointwise} Kan extension as a black box; the curious reader is referred back to Remark~\ref{rk:pointwise-Kan}.

\begin{proof}
    To prove this, observe that by adjunction $f^*$ is equivalent to the restriction
    \begin{equation}\label{eq:restriction-enveloped}
        \Fun_\cF^{\Nstr}(\Env(\cO'),\Cc)
        \to \Fun_\cF^{\Nstr}(\Env(\cO),\Cc)
    \end{equation}
    along $\Env(f)$.
    We apply \cref{thm:cleft-Kan} to exhibit a left adjoint to restriction along $\Env(\gamma_A)$.
    To check the hypotheses, note that
    \begin{itemize}
        \item $\Cc$ is $\Mm$-distributive and so $\ul{\bfU}_{\Mm}^{\times}$-cocomplete by \Cref{prop:distributivity}.
        \item The envelopes $\Env(\cO)$ and $\Env(\cO')$ are $\cN$-normed $\cF$-categories by \Cref{prop:envelope-operad} (applied to $(S,E,M) = (\Span_\cN(\cF),\cM^\op,\Span_{\cE,\cN}(\cF))$).
        \item That the required maps are right fibrations and that the required squares are pullbacks follows exactly as in the proof of \Cref{lem:left_kan_along_env}.
    \end{itemize}
    Having checked the hypotheses, we obtain that $(\ref{eq:restriction-enveloped})$
    admits a left adjoint given by pointwise left Kan extension.
\end{proof}

\begin{remark}\label{rk:Env-factorization-system}
    We continue to write $\Env$ for the envelope with respect to the factorization system $(\Mm^\op,\Span_{\Ee,\Nn}(\Ff))$. By \cite{HHLNa}*{Corollary~2.22}, we can identify $\Env(1)=\Ar_{\Span_{\Ee,\Nn}(\Ff)}(\Span_\Nn(\Ff))$ with the category of spans in (a subcategory of) $\Fun(\Lambda^2_0,\Ff)$, whose morphisms are of the form
    \[
        \begin{tikzcd}
            \cdot &\cdot\arrow[l] \arrow[r,norm] & \cdot\\
            \cdot\arrow[u,epic]\arrow[d,norm] & \arrow[d,norm]\arrow[l]\arrow[u,epic]\cdot\arrow[r,norm]\arrow[dl, "\llcorner"{very near start},phantom]\arrow[ur,"\urcorner"{very near start},phantom] & \cdot\arrow[u,epic]\arrow[d,norm]\\
            \cdot &\arrow[l]\cdot\arrow[r,norm] & \cdot\rlap,
        \end{tikzcd}
    \]
    and hence the envelope of $\Span_\Nn(\Ff_{/A})=\ul A_\Ff^{\Ncoprod}$ is given for any $A\in\Ff$ by the span category
    \[
        \begin{tikzcd}
            A & \arrow[l,equals] A\arrow[r,equals] &A\\
            \arrow[u]\cdot &\arrow[u]\cdot\arrow[l] \arrow[r,norm] & \cdot\arrow[u]\\
            \cdot\arrow[u,epic]\arrow[d,norm] & \arrow[d,norm]\arrow[l]\arrow[u,epic]\cdot\arrow[r,norm]\arrow[dl, "\llcorner"{very near start},phantom]\arrow[ur,"\urcorner"{very near start},phantom] & \cdot\arrow[u,epic]\arrow[d,norm]\\
            \cdot &\arrow[l]\cdot\arrow[r,norm] & \cdot\vphantom{Y}
        \end{tikzcd}
        \qquad\text{with fibers}\qquad
        \begin{tikzcd}
            A & \arrow[l,equals] A\arrow[r,equals] &A\\
            \arrow[u]\cdot &\arrow[u]\cdot\arrow[l] \arrow[r,norm] & \cdot\arrow[u]\\
            \cdot\arrow[u,epic]\arrow[d,norm] & \arrow[d,norm]\arrow[l,equals]\arrow[u,epic]\cdot\arrow[r,norm]\arrow[dl, "\llcorner"{very near start},phantom]\arrow[ur,"\urcorner"{very near start},phantom] & \cdot\arrow[u,epic]\arrow[d,norm]\\
            B &\arrow[l,equals] B \arrow[r,equals] & B\rlap.
        \end{tikzcd}
    \]
    The map $\gamma_A$ from Construction~\ref{constr:gamma} is fully faithful as $\Mm\subset\Ff$ is left cancellable and $\Nn\subset\Mm$, hence also $\Env(\gamma_X)$ is fully faithful. We may therefore identify the envelope of $\Triv(\ul A_\Mm^{\Ncoprod})$ with the full subcategory of the above given by those objects where the topmost vertical maps to $A$ belong to $\Mm$.
\end{remark}

\begin{observation}
    The description from Remark~\ref{rk:Env-factorization-system} makes it clear that each category $\Env(\ul{A}_\Ff^{\Ncoprod})(B)$ carries a factorization system, given by the standard factorization system on a span category consisting of backwards and forwards maps.
\end{observation}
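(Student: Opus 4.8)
The claim is purely a statement about the category $\Env(\ul{A}_\Ff^{\Ncoprod})(B)$ for each fixed $A\in\Ff$ and $B\in\Span_\cN(\Ff)$: it carries a factorization system whose two classes are the backwards and forwards maps of the relevant span category. The plan is to simply trace through the explicit description of $\Env(\ul{A}_\Ff^{\Ncoprod})$ assembled in \cref{rk:Env-factorization-system}. There we identified $\Env(\ul{A}_\Ff^{\Ncoprod})$ as a span category of the form $\Span_{B',F'}(\cX)$ for a suitable adequate triple $(\cX, \cX_{B'}, \cX_{F'})$ (a subcategory of $\Fun(\Lambda^2_0,\Ff)$-indexed diagrams over $\Span_\cN(\Ff)$, via the general formula $\Ar_{\Span_{\Ee,\Nn}(\Ff)}(\Span_\Nn(\Ff))$ from \cite{HHLNa}*{Corollary~2.22}). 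The fiber over $B$ is again of this shape.

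First I would recall from \cref{thm:spans}(5) that for \emph{any} adequate triple $(\cX,\cX_{B'},\cX_{F'})$, the backwards and forwards maps form an orthogonal factorization system $(\Span_{B',F'}(\cX), \cX_{B'}^\op, \cX_{F'})$ on the associated span category. This is exactly the statement we want, applied to the adequate triple whose span category is $\Env(\ul{A}_\Ff^{\Ncoprod})$ (or, after restricting, its fiber over $B$). So the only genuine content is to observe that $\Env(\ul{A}_\Ff^{\Ncoprod})(B)$ really is of the form $\Span_{B',F'}(\cX)$ for an honest adequate triple --- i.e.\ that the relevant subcategory of $\Lambda^2_0$-diagrams, together with its backwards/forwards maps, satisfies the adequacy conditions (pullbacks of backwards along forwards exist and both classes are pullback-stable). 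This is immediate from the construction: $\Env(1)$ is by definition $\Ar_M(S)$ for the factorization system $(S,E,M)=(\Span_\cN(\Ff),\cM^\op,\Span_{\Ee,\Nn}(\Ff))$, and the envelope $\Env(\ul{A}_\Ff^{\Ncoprod})$ is a pullback of this along $\ul{A}_\Ff^{\Ncoprod}\to S$; since $\Span$ preserves limits (\cref{thm:spans}(2)) and the pullback of an adequate triple along a map of adequate triples is again an adequate triple (\cref{thm:spans}(1)), the result is presented as a span category of an adequate triple, and likewise after passing to the fiber over $B\in\Span_\cN(\Ff)$.

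Concretely, then, the steps are: (i) unwind, using \cref{rk:cocart_in_Env_1} and \cref{rk:Env-factorization-system}, that $\Env(\ul{A}_\Ff^{\Ncoprod})$ together with its target projection to $\Span_\cN(\Ff)$ arises as $\Span$ of an adequate triple pulled back from $(S,\cM^\op,\Span_{\Ee,\Nn}(\Ff))$; (ii) pass to the fiber over $B$, which is again such a span category by \cref{thm:spans}(1),(2); (iii) quote \cref{thm:spans}(5) to conclude that backwards and forwards maps form an orthogonal factorization system on it. I do not expect any real obstacle here --- the statement is essentially a bookkeeping remark recording a structure that will be used (presumably to identify when certain Kan extensions along $\Env(\gamma_A)$ can be computed or to characterize cocartesian edges in $\Env(\gamma_A)$), and the only thing to be careful about is making sure the two ``forwards'' classes (the one coming from $\cM$ on the source $\Ff$-direction and the one coming from the span direction) are kept distinct, matching the explicit diagrams drawn in \cref{rk:Env-factorization-system}.
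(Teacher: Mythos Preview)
Your proposal is correct and matches the paper's approach. The paper treats this as a bare observation with no proof: it simply records that, since \cref{rk:Env-factorization-system} identifies $\Env(\ul{A}_\Ff^{\Ncoprod})(B)$ explicitly as a span category, it automatically carries the backwards/forwards factorization system of \cref{thm:spans}(5). Your plan just makes this reasoning explicit, in particular your step~(iii) is exactly the content; steps~(i)--(ii) justify that the fiber is an honest $\Span$ of an adequate triple, which the paper leaves implicit in the phrase ``makes it clear.''
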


\begin{proposition}\label{prop:Env-param-fs}
The above defines a \emph{parametrized factorization system} in the sense of \cite{shah2022parametrizedII}*{Definition~3.1}, i.e.~given any span $B\gets B'\rightarrownorm B''$, the structure map
$
    \Env(\und{A}_\cF^{\Ncoprod})(B)\to \Env(\und{A}_\cF^{\Ncoprod})(B'')
$
restricts to a map between the left parts of the factorization systems as well as to a map between the right parts.

Similarly, if $A\to A'$ is any map in $\Ff$, then the functor
$
    \Env(\und{A}_\cF^\Ncoprod)(B)\to \Env(\und{A'}_\cF^\Ncoprod)(B)
$
induced by the pushforward preserves the above factorization systems.
    \begin{proof}
        We begin by observing that the final statement about the pushforward functoriality in $A$ is immediate from the definitions.

        Next, we consider the functoriality with respect to backwards maps $B\gets B'$. The cocartesian pushforward $g$ of a map $f$ in $\Env(\und{A}_\cF^\Ncoprod)(B)$ is then uniquely characterized by being a map in the fiber over $B'$ that fits into a square
        \begin{equation*}
            \begin{tikzcd}
                \cdot\arrow[d, "f"']\arrow[r, "\text{co}"] & \cdot\arrow[d, "g"]\\
                \cdot\arrow[r, "\text{co}"'] &\cdot
            \end{tikzcd}
        \end{equation*}
        where the horizontal maps are cocartesian over $B\gets B'$. By Theorem~\ref{thm:BHS}, these cocartesian lifts are precisely given by the backwards maps of the form
                \[
            \begin{tikzcd}
                A\arrow[r, equals] & A\\
                \cdot\arrow[u] & \arrow[l,mono] \cdot\arrow[u]\\
                \cdot\arrow[u,epic]\arrow[d,norm] & \cdot\arrow[l]\arrow[d,norm]\arrow[u,epic]\arrow[dl, "\llcorner"{very near start}, phantom]\\
               B & \arrow[l] B'\llap.
            \end{tikzcd}
        \]
        As backwards maps are right cancellable, we immediately see that if $f$ is a backwards map, then so is $g$.

        Now assume that $f$ is a forwards map instead. We first compute the composite
        \[
            \begin{tikzcd}
                A\arrow[r, equals] & A\\
                U\arrow[u] & \arrow[l,mono] U'\arrow[u]\\
                V\arrow[u,epic]\arrow[d,norm] & V'\arrow[l]\arrow[d,norm]\arrow[u,epic]\arrow[dl, "\llcorner"{very near start}, phantom]\\
                B & \arrow[l] B'
            \end{tikzcd}
            \;\;\circ\;\;
            \begin{tikzcd}
                A\arrow[r, equals] & A\\
                \vphantom{A'}W \arrow[u]\arrow[r,norm] & U\arrow[u]\\
                \vphantom{A'}X\arrow[u,epic]\arrow[r,norm]\arrow[d,norm] \arrow[ur,phantom, "\urcorner"{very near start}] & V\arrow[u,epic]\arrow[d,norm]\\
                B\arrow[r,equals] & B\vphantom{A'}
            \end{tikzcd}
            \;\;=\;\;
            \begin{tikzcd}
                A\arrow[r,equals] & A\arrow[r,equals] & A\\
                W\arrow[u] &\arrow[l,mono] \smash{U'\times_UW}\vphantom{A}\arrow[r,norm]\arrow[u] & U'\arrow[u]\\
                X\arrow[d,norm]\arrow[u,epic] &\arrow[l]\arrow[dl,"\llcorner"{very near start},phantom]\arrow[ur, "\urcorner",very near start,phantom] \smash{V'\times_VX}\vphantom{B}\arrow[u,epic]\arrow[r,norm]\arrow[d,norm] & V'\arrow[u,epic]\arrow[d,norm]\\
                B & \arrow[l] B' \arrow[r,equals] & B'
            \end{tikzcd}
        \]
        of $f$ followed by a cocartesian lift of $B\gets B'$. The map $U'\times_UW\to W$ is in $\Mm$ (being a basechange of a map in $\Mm$), i.e.~the backwards part of the resulting span is a cocartesian lift of $B\gets B'$. As the forward part is evidently fiberwise, we conclude that it computes the cocartesian pushforward of $f$; in particular, the latter is indeed a forward map, as we wanted to show.

        Finally, we consider the functoriality in forward maps $B'\rightarrownorm B''$. Using Theorem~\ref{thm:BHS} once more, we see that its cocartesian lifts are precisely the forward maps of the form
        \[
            \begin{tikzcd}
                A\arrow[r,equals] & A\\
                \cdot\arrow[u]\arrow[r,equals] &\cdot\arrow[u]\\
                \cdot\arrow[d,norm]\arrow[u,epic]\arrow[r,equals]\arrow[ur,"\urcorner"{very near start}, phantom] &\cdot\arrow[d,norm]\arrow[u,epic]\\
                B'\arrow[r, norm] & B''\llap.
            \end{tikzcd}
        \]
        Thus, the functoriality is simply given by postcomposition in the lowermost square, which evidently preserves backwards maps and forward maps separately.
    \end{proof}
\end{proposition}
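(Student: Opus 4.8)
The plan is to verify the two compatibility axioms of a parametrized factorization system in the sense of \cite{shah2022parametrizedII}*{Definition~3.1} by hand, using the explicit model of $\Env(\und{A}_\cF^{\Ncoprod})$ recorded in \Cref{rk:Env-factorization-system} together with the description of cocartesian morphisms in an envelope from \cref{thm:BHS}. The pushforward functoriality in $A$ is the easy half of the statement: a map $A\to A'$ in $\cF$ acts on the span diagrams of \Cref{rk:Env-factorization-system} only by postcomposing the topmost vertical legs with $A\to A'$, leaving the rest of the span structure untouched, so it visibly sends backwards maps to backwards maps and forwards maps to forwards maps.

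For the functoriality in morphisms of $\Span_\cN(\cF)$, I would use that any such morphism factors as a backwards map $B\leftarrow B'$ (a morphism of $\cF^\op\subset\Span_\cN(\cF)$) followed by a forwards map $n\colon B'\rightarrownorm B''$; since the structure functors $\Env(\und{A}_\cF^{\Ncoprod})(-)$ compose, it suffices to treat these two classes separately. The forwards case is immediate from \cref{thm:BHS}: the cocartesian lift of $B'\rightarrownorm B''$ in $\Env(\und{A}_\cF^{\Ncoprod})$ is a forwards map of span categories whose only non-trivial data sits in the bottommost square, so cocartesian transport is just postcomposition there, which manifestly preserves the backwards and forwards classes of the fibre span categories separately.

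The substantive case is transport along a backwards map $B\leftarrow B'$. Here I would first identify, via \cref{thm:BHS}, the cocartesian lifts over $B\leftarrow B'$ as the backwards maps of the indicated shape, so that the cocartesian pushforward $g$ of a morphism $f$ in the fibre over $B$ is the unique fibrewise morphism over $B'$ fitting into a square whose horizontal legs are such lifts. If $f$ is a backwards map, right cancellability of backwards maps in a span category forces $g$ to be backwards as well. If $f$ is a forwards map, I would instead compute the composite of $f$ with a cocartesian lift of $B\leftarrow B'$ explicitly, via a pullback-pasting argument: the newly formed backwards leg of the resulting span is a basechange of the original backwards leg, hence still lies in $\Mm$ by stability of $\Mm$ under basechange in $\cF$ (part of the definition of a distributive context, \Cref{def:dist}), so this composite is itself a cocartesian lift followed by a fibrewise forwards map, whence $g$ is a forwards map. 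The main obstacle I anticipate is purely bookkeeping: keeping straight the nested span constructions---the envelope is assembled from an arrow category and a span category, and each of its fibres is again a span category---and making the pullback-pasting computations precise enough to read off that the relevant legs land in the prescribed classes.
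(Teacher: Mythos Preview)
Your proposal is correct and follows essentially the same approach as the paper: the same decomposition into backwards and forwards structure maps, the same use of right cancellability for the backwards-in-backwards case, the same explicit pullback computation invoking stability of $\Mm$ under basechange for the forwards-in-backwards case, and the same postcomposition argument for forward structure maps. The paper carries out the pullback-pasting computation with an explicit diagram, but your outline captures exactly the same content.
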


\begin{proposition}\label{prop:left-adj-to-fgt-for-rig}
    Let $\Cc$ be an $\cM$-distributive $\cN$-normed $\cF$-precategory. Then $\mathbb U\colon\ul\CAlg_\Ff^\Nn\Rig(\Cc)\to\Rig(\Cc)$ has a (strong) left adjoint $\mathbb P$.
    \begin{proof}
        By \Cref{thm:algebras_in_Rig}, $\mathbb U$ is given in degree $A$ up to natural equivalence by
        \[
            \Fun^{\neglax{\Mm}}(\ul{A}^\Ncoprod_{\cF},\Cc)
            \xrightarrow{\incl^*}\Fun^{\neglax{\Mm}}(\Triv(\ul{A}_\cF),\Cc),
        \]
        hence equivalently by
        \[
            \Fun^{\Nstr}_{\Ff}(\Env\ul{A}^\Ncoprod_{\cF},\Cc)
            \xrightarrow{\Env(\incl)^*}\Fun^{\Nstr}_{\Ff}(\Env\Triv(\ul{A}_\cF),\Cc),
        \]
        where we take envelopes with respect to the factorization system $(\Mm^\op,\Span_{\Ee,\Nn}(\Ff))$ on $\Span_\Nn(\Ff)$. By \cref{lemma:LKE-CAlgRig}, this has a left adjoint $\mathbb P(A)$ given by pointwise left Kan extension, so it only remains to verify the Beck--Chevalley condition with respect to restriction along any $f\colon A\to B$ in $\cF$.

        Arguing as before, this amounts to showing that applying $\Fun^{\Nstr}_\Ff(-,\Cc)$ to
        \[
            \begin{tikzcd}
                \Env\Triv(\ul{A}_\cF)\arrow[r,hook] \arrow[d, "\Env\Triv(\ul f)"'] & \Env(\ul{A}^\Ncoprod_{\cF})\arrow[d, "\Env(\ul f^\Ncoprod)"]\\
                \Env\Triv(\ul{B}_\cF) \arrow[r,hook] & \Env(\ul{B}^\Ncoprod_{\cF})
            \end{tikzcd}
        \]
        results in a horizontally left adjointable square. For this we note that by \cref{prop:Env-param-fs}, the horizontal maps are given by the inclusions of the left halves of parametrized factorization systems, and that the right hand vertical map also preserves the right parts of these factorization systems. The claim will therefore follow from Lemma~\ref{lemma:rough-basechange} (applied with $T=\Span_\Nn(\Ff)^\op$) once we show that the map induced by $\Env(\ul f^\Ncoprod)$ on the right halves of the factorization systems is a (pointwise) right fibration. But this map is simply a basechange of $\Ff_{/f}\colon\Ff_{/A}\to\Ff_{/B}$, hence indeed a right fibration.
    \end{proof}
\end{proposition}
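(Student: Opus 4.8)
The plan is to avoid invoking \Cref{free_omnibus} directly---there is no reason to expect $\Rig(\Cc)$ itself to be distributive---and instead exploit the explicit description of normed algebras in a rigidification provided by \Cref{thm:algebras_in_Rig}. That result identifies $\mathbb U$ in degree $A\in\Ff$ with the restriction $\incl^*\colon\Fun^{\neglax{\Mm}}(\ul{A}^{\Ncoprod}_{\cF},\Cc)\to\Fun^{\neglax{\Mm}}(\Triv(\ul{A}_{\cF}),\Cc)$ along the canonical map $\incl$ of $\Mm$-partial $(\Ff,\Nn)$-operads. First I would pass to envelopes with respect to the factorization system $(\Mm^{\op},\Span_{\Ee,\Nn}(\Ff))$ on $\Span_{\Nn}(\Ff)$, rewriting this as a restriction along $\Env(\incl)$ between categories of normed functors out of $\Nn$-normed $\Ff$-\emph{categories}; since $\Cc$ is $\Mm$-distributive, \Cref{lemma:LKE-CAlgRig} then produces a left adjoint $\mathbb P(A)$, computed by pointwise parametrized left Kan extension along $\Env(\incl)$. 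As in the proof of \Cref{lem:left_kan_along_env}, the hypotheses of \Cref{lemma:LKE-CAlgRig} (and hence of \Cref{thm:cleft-Kan}) amount to checking that the relevant structure maps of the envelopes are right fibrations and that the relevant naturality squares are pullbacks, both of which follow by unwinding the (co)cartesian structure of the span categories in sight.

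Having produced $\mathbb P(A)$ for every $A$, the harder task is to assemble these pointwise functors into a parametrized (strong) left adjoint $\mathbb P$ of $\mathbb U$; by \Cref{rem:adj_in_functor_cat} this is a Beck--Chevalley condition. Concretely, for each $f\colon A\to B$ in $\Ff$ one must show that applying $\Fun^{\Nstr}_{\Ff}(-,\Cc)$ to the commuting square of $\Nn$-normed $\Ff$-categories formed by $\Env(\incl)$ for $A$ and for $B$ together with the pushforward normed functors $\Env(\Triv(\ul{f}))$ and $\Env(\ul{f}^{\Ncoprod})$ yields a horizontally left adjointable square.

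The hard part will be this last verification. The key input is \Cref{prop:Env-param-fs}: each fibre $\Env(\ul{A}^{\Ncoprod}_{\cF})(B)$ carries a parametrized factorization system in which $\Env(\Triv(\ul{A}_{\cF}))$ appears as the wide subcategory of left-class maps, and the pushforward functoriality in $A$ respects these factorization systems. Feeding this into a basechange criterion for functors of this shape---namely that the square in question becomes left adjointable as soon as the functor induced on the right classes is a pointwise right fibration---reduces the claim to the observation that the functor induced by $\Env(\ul{f}^{\Ncoprod})$ on right classes is a basechange of $\Ff_{/f}\colon\Ff_{/A}\to\Ff_{/B}$, hence a right fibration. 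I anticipate that one cannot shortcut this via parametrized smooth/proper basechange, because the slices $\Env(\ul{A}^{\Ncoprod}_{\cF})(B)$ are not comma categories; this is precisely why the parametrized factorization-system machinery of \Cref{prop:Env-param-fs} is needed. Once the adjointability is in hand, the functors $\mathbb P(A)$ glue to the desired parametrized left adjoint $\mathbb P$.
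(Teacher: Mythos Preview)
Your proposal is correct and follows essentially the same route as the paper: identify $\mathbb U$ via \Cref{thm:algebras_in_Rig}, pass to envelopes for the factorization system $(\Mm^{\op},\Span_{\Ee,\Nn}(\Ff))$, invoke \Cref{lemma:LKE-CAlgRig} for the pointwise left adjoints, and then verify the Beck--Chevalley condition using the parametrized factorization systems of \Cref{prop:Env-param-fs} together with the basechange criterion you allude to (which is precisely \Cref{lemma:rough-basechange}), reducing to the observation that the induced map on right classes is a basechange of $\Ff_{/f}$.
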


\begin{remark}
    We conjecture that for an $\cM$-distributive $\cN$-normed $\cF$-category $\Cc$, $\Rig(\Cc)$ is distributive. This would fit into the general pattern observed in \cite{Linskens2023globalization}, that the rigidification of $\Cc$ can in some cases equivalently be understood as the free cocompletion of $\Cc$ under certain (parametrized) colimits. Given such a fact, the previous proposition would follow immediately from our general results about the existence of free functors for normed algebras in a distributive category.
\end{remark}

\begin{proof}[Proof of Theorem~\ref{thm:global-extension}]
    Fix any $A\in \cF$.
    As the first step, we will prove that the functor
    $\fgt(A)\colon\und{\CAlg}^\cN_\cF(\Rig^\tensor\Cc)(A)\to\und{\CAlg}^\cN_\Mm(\Cc|_{\Mm})(A)$
    admits a fully faithful left adjoint $\mathcal L(A)$; in particular, this shows that $\fgt(A)$ is a localization.

    Recall that the map $\Env(\gamma_A)$ from Construction~\ref{constr:gamma} is actually fully faithful (Remark~\ref{rk:Env-factorization-system}), hence so is the pointwise left Kan extension along it (cf.~Theorem~\ref{thm:cleft-Kan}), giving the desired left adjoint $\cL(A)$ to $\fgt(A)$.
    We will now show that the essential images of these fully faithful left adjoints form an $\cF$-subprecategory; already from them being an $\cM$-subprecategory it will then follow by abstract nonsense about Bousfield localizations that the $\mathcal L(A)$'s assemble into an $\cM$-functor left adjoint to $\fgt$.

    To prove the claim, we first recall that $\mathbb U\colon\ul\CAlg_{\Ff}^\Nn\Rig(\Cc)\to\Rig^\otimes\Cc$ has an $\Ff$-strong left adjoint $\bbP$ by \Cref{prop:left-adj-to-fgt-for-rig}. Then the following
    diagram commutes for each $A\in\Ff$ because the corresponding diagram of right adjoints commutes:
    \[
        \begin{tikzcd}
            \Cc(A)\arrow[d,"\mathbb P"']\arrow[r, hook, "\Delta"] & \Rig(\Cc)(A)\arrow[d, "\mathbb P"]\\
            \ul\CAlg_\Mm^\cN(\Cc|_{\Mm})(A)\arrow[r, "\mathcal L"', hook]&\ul\CAlg^\cN_\cF(\Rig^\tensor\Cc)(A)\rlap;
        \end{tikzcd}
    \]
    here $\Delta$ is the strong $\Ff$-functor from \cref{def:universal-not-M-lax-funct}.
    As both the top horizontal and right vertical arrows are part of $\cF$-functors,
    it follows that for every $f\colon B\to A$ in $\cF$ and every $X\in\Cc(A)$ we have
    \[
        f^*\mathcal L\mathbb P(X)\simeq f^*\mathbb P\Delta (X)\simeq \mathbb P\Delta (f^*X)\simeq \mathcal L\mathbb P(f^*X)\in\essim(\mathcal L).
    \]
    Moreover, by \cite[Proposition 4.7.3.14]{HA}, the monadicity of $\und{\CAlg}^{\cN}_{\Mm}(\Cc|_{\Mm})(A)$ over $\Cc(A)$ (see \Cref{cor:par_U_monadic}) implies that $\und{\CAlg}^\cN_\Mm(\Cc|_{\Mm})(A)$ is generated under sifted colimits by the essential image of $\mathbb P$, so that the essential image of $\cL(A)$ is generated under sifted colimits by objects of the form $\cL\mathbb P(X)$. As the essential image of the fully faithful left adjoint $\cL(B)$ is closed under arbitrary colimits and since $f^*$ preserves sifted colimits by \cref{lem:calg-fib-colims}(2), the claim follows.
\end{proof}

\begin{remark}\label{rem:compat-with-p}
    The above proof shows that we can extend the $\Mm$-functor $\mathbb P\colon\Cc|_{\Mm}\to \ul\CAlg^{\cN}_{\Mm}(\Cc|_{\Mm})$ in a unique way to an $\cF$-functor $\mathbb P\colon\Cc\to\ul\CAlg^\cN_{\Mm\triangleright \cF}(\cC)$ such that $\mathcal L\mathbb P\simeq\mathbb P\Delta$ as $\cF$-functors. In particular, we get a natural extension of $\mathbb U\colon\ul\CAlg^{\cN}_{\Mm}(\Cc|_{\Mm})\to\Cc|_{\Mm}$ to an $\Ee$-lax $\cF$-functor $\mathbb U\colon\ul\CAlg^\cN_{\Mm\triangleright \cF}(\Cc)\to\Cc$ by passing to right adjoints. However, this extension will in general not be a strong $\cF$-functor, see Warning~\ref{warn:U-ext-no-strict}.
\end{remark}

\subsection{Globalizing equivariant algebras}
With the above results at hand, it is now no longer difficult to prove the main abstract result of this section:

\begin{theorem}\label{thm:globalize-algebras}
    Let $(\cF,\cN,\Mm)$ be a distributive context such that $\cN \subset \Mm$ and let $\Cc$ be an $\Mm$-distributive $\cN$-normed $\cF$-precategory.
    Then
    $
        \fgt\colon\ul\CAlg^{\cN}_{\cF}(\Rig(\Cc))\to \ul\CAlg^\cN_{\Mm\triangleright \cF}(\Cc)
    $
    (see Remark~\ref{rk:fgt-lax-extension}) is the universal $\Ee$-lax $\cF$-functor.
\end{theorem}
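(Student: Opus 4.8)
The plan is to verify the universal property of the rigidification $\Rig = \Rig^{\Fglo}_{\Forb}$ (more precisely, of $\Rig$ with respect to the relevant pair) directly against the construction of $\ul\CAlg^\cN_{\Mm\triangleright\cF}(\Cc)$, by reducing everything to the universal property of $\Rig^\otimes$ established in \Cref{thm:Normed_Rig} and the identification of algebras in a rigidification from \Cref{thm:algebras_in_Rig}. Concretely, by \Cref{thm:Normed_Rig} the functor $\Rig^\otimes\colon\PreNmCat{\cF}{\cN}^{\lax\cE}\to\PreNmCat{\cF}{\cN}$ is right adjoint to the inclusion, with counit the $\cE$-lax normed functor $\ev$. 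Applying the $2$-functor $\ul\CAlg^\cN_\cF$ (which is a right adjoint, hence preserves the relevant $(\infty,2)$-categorical structure) to $\Rig^\otimes\Cc$ gives $\ul\CAlg^\cN_\cF(\Rig^\otimes\Cc)$, and the $\cE$-lax counit $\ev\colon\Rig^\otimes\Cc\to\Cc$ induces an $\cE$-lax $\cF$-functor $\ul\CAlg^\cN_\cF(\Rig^\otimes\Cc)\to\ul\CAlg^\cN_\cF(\Cc)$. The point is that this composite, postcomposed with the localization $\fgt$ of \Cref{thm:global-extension}, \emph{is} (up to the identifications of that theorem) the canonical $\cE$-lax $\cF$-functor $\fgt\colon\ul\CAlg^\cN_\cF(\Rig\Cc)\to\ul\CAlg^\cN_{\Mm\triangleright\cF}(\Cc)$ appearing in the statement; this is essentially bookkeeping with \Cref{constr:gamma}.

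\textbf{Key steps.} First I would spell out what it means for $\fgt$ to be ``the universal $\cE$-lax $\cF$-functor'': for every $\cF$-precategory $\Dd$ and every $\cE$-lax $\cF$-functor $F\colon\Dd\to\ul\CAlg^\cN_{\Mm\triangleright\cF}(\Cc)$ there should be a unique (strong) $\cF$-functor $\widetilde F\colon\Dd\to\ul\CAlg^\cN_\cF(\Rig\Cc)$ with $\fgt\circ\widetilde F\simeq F$, compatibly. Equivalently, $\fgt$ should exhibit its target as the rigidification $\Rig\bigl(\ul\CAlg^\cN_\cF(\Rig\Cc)\bigr)$; note $\ul\CAlg^\cN_\cF(\Rig\Cc)$ is already a genuine $\cF$-precategory, so this is the content that $\fgt\colon\ul\CAlg^\cN_\cF(\Rig\Cc)\to\ul\CAlg^\cN_{\Mm\triangleright\cF}(\Cc)$ makes the source into the $\cF$-rigidification of the target. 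Second, using \Cref{thm:algebras_in_Rig} I would identify, naturally in $A\in\cF$,
\[
    \ul\CAlg^\cN_\cF(\Rig\Cc)(A)\simeq\Fun^{\neglax{\Mm}}(\und A^\Ncoprod_\cF,\Cc),\qquad
    \ul\CAlg^\cN_{\Mm\triangleright\cF}(\Cc)(A)\simeq\Fun^{\neglax{\Mm}}(\Triv(\und A^\Ncoprod_\Mm),\Cc)
\]
(the second by \Cref{thm:global-extension}, \Cref{constr:gamma} and the adjunction $\Triv\dashv(-)|_\Mm$), under which $\fgt(A)$ becomes restriction along $\gamma_A$. Third, I would invoke the parametrized analogue of the universal property of the envelope: the pair $(\und{(-)}^\Ncoprod_\Mm,\Triv)$ corepresents $\Mm$-algebras and, crucially, the natural transformation $\gamma\colon\Triv(\und{(-)}^\Ncoprod_\Mm)\Rightarrow\und{(-)}^\Ncoprod_\cF$ assembles the $\cE$-lax structure: a $\neg\Mm$-lax functor out of $\und A^\Ncoprod_\cF$ restricts along $\gamma_A$ to a $\neg\Mm$-lax functor out of $\Triv(\und A^\Ncoprod_\Mm)$, and this restriction is precisely the operation of \emph{forgetting the forwards functoriality along $\cE$}, i.e. forgetting strongness to $\cE$-laxness. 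Thus a cone over the source (in the $\Fun^{\neglax{\Mm}}$-description) is the same as an $\cE$-lax cone over the target, which is exactly the universal property we want; I would make this precise by comparing the two relevant $\Cat$-enrichments, invoking \cite{Heyer-Mann} for the $(\infty,2)$-categorical bookkeeping exactly as in \Cref{rem:2cat-structure}.

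\textbf{Main obstacle.} The genuinely delicate point is the third step: matching the ``$\cE$-lax'' direction on the side of $\ul\CAlg^\cN_{\Mm\triangleright\cF}(\Cc)$ (which by construction is the right adjoint to the localization $\mathcal L$ of \Cref{thm:global-extension}, with functoriality in $\cF$ interpreted as left-derived restriction, \Cref{rk:fgt-lax-extension}) with the ``$\cE$-lax'' direction coming from $\gamma$ and the envelope. One has to check that the $\cE$-lax structure maps of $\fgt$ — the Beck--Chevalley transformations $f^*\fgt\to\fgt f^*$ for $f$ running over maps \emph{not} in $\Mm$ — are, under the \Cref{thm:algebras_in_Rig} identification, exactly the lax structure maps of the $\neg\Mm$-lax functors involved, and conversely that there are no further constraints; this is where \Cref{prop:Env-param-fs} (the parametrized factorization system on $\Env(\und A^\Ncoprod_\cF)$) and the pointwise-left-Kan-extension formula from \Cref{thm:cleft-Kan} (used already in \Cref{lemma:LKE-CAlgRig} and \Cref{prop:left-adj-to-fgt-for-rig}) do the real work. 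Once that compatibility is in hand, the universal property transports formally from \Cref{thm:Normed_Rig} via the $\Cat$-linear $2$-adjunction $(-)^\Ncoprod\dashv\ul\CAlg^\cN_\cF$, and I would conclude by remarking that uniqueness of such universal $\cE$-lax functors (again by conservativity of the relevant forgetful functors, as in \Cref{thm:Normed_Rig}) pins down the identification with the $\fgt$ in the statement.
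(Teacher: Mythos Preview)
Your approach differs substantially from the paper's, and the ``formal transport'' you rely on in the final step does not go through as stated.

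The paper does \emph{not} attempt to verify the universal property directly. Instead it passes to the induced strong $\cF$-functor
\[
    \widehat{\fgt}\colon \ul\CAlg^\cN_\cF(\Rig^\otimes\Cc)\to \Rig\,\ul\CAlg^\cN_{\Mm\triangleright\cF}(\Cc)
\]
and shows this is an equivalence by a \emph{monadicity} argument: both sides are monadic over $\Rig(\Cc)$---the source via $\mathbb U$ (\Cref{cor:par_U_monadic}), the target via $\Rig(\mathbb U)$, using that $\Rig$ is a $2$-functor compatible with the $\Cat$-cotensoring. By \cite[Corollary~4.7.3.16]{HA} it then suffices to check that the Beck--Chevalley map $\Rig(\mathbb P)\to\widehat{\fgt}\circ\mathbb P$ is invertible. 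This is verified after applying the conservative $\ev$, reducing to the concrete Beck--Chevalley statement of \Cref{prop:mysterious-Beck-Chevalley}, which is in turn proved via \Cref{lemma:rough-basechange} applied to the parametrized factorization systems on the envelopes (from \Cref{prop:Env-param-fs}).

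Your gap is the claimed ``formal transport via the $2$-adjunction $(-)^\Ncoprod\dashv\ul\CAlg^\cN_\cF$.'' That adjunction together with \Cref{thm:Normed_Rig} gives
\[
    \Fun_\cF(\Dd,\ul\CAlg^\cN_\cF(\Rig^\otimes\Cc))
    \simeq \Fun^{\Nstr}_\cF(\Dd^\Ncoprod,\Rig^\otimes\Cc)
    \simeq \Fun^{\lax\cE}_\cF(\Dd^\Ncoprod,\Cc),
\]
but you then need to identify the right hand side with $\Fun^{\lax\cE}_\cF(\Dd,\ul\CAlg^\cN_{\Mm\triangleright\cF}(\Cc))$. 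This is precisely the content of the theorem, not a formal consequence of adjunctions: $\ul\CAlg^\cN_{\Mm\triangleright\cF}(\Cc)$ is \emph{not} defined by a universal property---its $\cF$-functoriality is defined indirectly as the essential image of $\mathcal L$ inside $\ul\CAlg^\cN_\cF(\Rig^\otimes\Cc)$, and there is no analogue of the $(-)^\Ncoprod\dashv\ul\CAlg^\cN_\cF$ adjunction living over the $\cE$-lax world that would let you move from $\Dd^\Ncoprod$ back to $\Dd$ on the right. Your second identification $\ul\CAlg^\cN_{\Mm\triangleright\cF}(\Cc)(A)\simeq\Fun^{\neglax\Mm}(\Triv(\und A^\Ncoprod_\Mm),\Cc)$ is correct but only natural in $A\in\Mm$, not in $A\in\cF$, so it gives you no handle on the $\cE$-lax structure of $\fgt$. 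The monadicity argument sidesteps exactly this issue by replacing ``match the $\cE$-lax structures everywhere'' with the single Beck--Chevalley check of \Cref{prop:mysterious-Beck-Chevalley}, which is a tractable computation on free objects.
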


The proof of the theorem will rely on a monadicity argument, and the following lemma will turn out to precisely provide the required Beck--Chevalley condition:

\begin{proposition}\label{prop:mysterious-Beck-Chevalley}
    Let $\Cc$ be an $\Mm$-distributive $\Nn$-normed $\Ff$-precategory. Then the Beck--Chevalley transformation
    \[
        \mathbb P_\Mm\circ\ev\to\fgt\circ\mathbb P_\Ff
    \]
    induced by the commutative square
    \begin{equation}\label{diag:so-many-forgetful-functors}
        \begin{tikzcd}
            \ul\CAlg^\Nn_\Ff(\Rig\,\Cc)|_\Mm\arrow[d,"\mathbb U"']\arrow[r, "\fgt"] & \ul\CAlg^\Nn_\Mm(\Cc|_\Mm)\arrow[d, "\mathbb U"]\\
            (\Rig\,\Cc)|_\Mm\arrow[r, "\ev"'] & \Cc|_\Mm
        \end{tikzcd}
    \end{equation}
    of $\Mm$-precategories is an equivalence.
    \begin{proof}
        We can check the Beck--Chevalley condition after evaluating at each $A\in\Mm$. For this we observe that we may identify $(\ref{diag:so-many-forgetful-functors})$ in degree $A$ with the square obtained by applying $\Fun^{\Nn\text-\otimes}_\Ff(-,\Cc)$ to the square
        \begin{equation}\label{diag:enveloped-and-franked}
            \begin{tikzcd}
                \mathord{\Env}(\ul{A}_{\cF}^{\Ncoprod}) &\arrow[l,hook'] \mathord{\Env}(\ul{A}_{\cM}^{\Ncoprod})\\
                \mathord{\Env}(\Triv(\ul{A}_\Ff))\arrow[u,hook] &\arrow[l,hook']\arrow[u,hook]\mathord{\Env}(\Triv(\ul{A}_\cM)),
            \end{tikzcd}
        \end{equation}
        where $\ul{A}_\cM$ again refers to the presheaf on $\cM$ represented by $A$ (with unstraightening $(\cM_{/A})^{\op}$), $\ul A_\Ff$ is the corresponding $\Ff$-presheaf, and $\Env$ always refers to the envelope for $\Mm^\op\subset\Span_\Nn(\Ff)$. By Lemma~\ref{lemma:LKE-CAlgRig}, the restrictions along the vertical maps admit left adjoints given by pointwise left Kan extension. Recall now from Remark~\ref{rk:Env-factorization-system} that we may identify ${\Env}(\ul{A}_{\cM}^\Ncoprod)(B)\hookrightarrow{\Env}(\ul{A}_{\cF}^\Ncoprod)(B)$ for any $B\in\Ff$ with the inclusion of the span categories whose morphisms look as follows:
        \[
            \left\{\,\begin{tikzcd}[cramped]
                A\arrow[r,equal] & A\arrow[r,equal] & A\\
                \cdot\arrow[u,mono] &\arrow[l,mono]\arrow[u,mono]\cdot\arrow[r,norm] & \cdot\arrow[u,mono]\\
                \cdot\arrow[u,epic]\arrow[d,norm] &\arrow[l,equal]\cdot\arrow[u,epic]\arrow[d,norm]\arrow[r,norm]\arrow[ur,phantom,"\urcorner"{very near start}] & \cdot\arrow[u,epic]\arrow[d,norm]\\
                B\arrow[r,equal] & B\arrow[r,equal] &B\vphantom g
            \end{tikzcd}\,\right\}
            \lhook\joinrel\longrightarrow
            \left\{\,\begin{tikzcd}[cramped]
                A\arrow[r,equal] & A\arrow[r,equal] & A\\
                \cdot\arrow[u] &\arrow[l]\arrow[u]\cdot\arrow[r,norm] & \cdot\arrow[u]\\
                \cdot\arrow[u,epic]\arrow[d,norm] &\arrow[l,equal]\cdot\arrow[u,epic]\arrow[d,norm]\arrow[r,norm]\arrow[ur,phantom,"\urcorner"{very near start}] & \cdot\arrow[u,epic]\arrow[d,norm]\\
                B\arrow[r,equal] & B\arrow[r,equal] &B\vphantom g
            \end{tikzcd}\,\right\}.
        \]
        Under this identification, the vertical maps in $(\ref{diag:enveloped-and-franked})$ are precisely given by the inclusion of the backwards arrows of these two span categories. By Proposition~\ref{prop:Env-param-fs}, the backwards and forwards maps assemble into a parametrized factorization system on ${\Env}(\ul{A}_{\cF}^\Ncoprod)$; it is then clear that this restricts to a parametrized factorization system on  ${\Env}(\ul{A}_{\cM}^\Ncoprod)$. The basechange condition will therefore once more be an instance of Lemma~\ref{lemma:rough-basechange}, once we show that the induced functor on forward maps is a pointwise right fibration.  But this map is given in degree $B$ by the full inclusion
        \[
            \left\{\,
                \begin{tikzcd}[cramped]
                    A\arrow[r,equal] & A\\
                   \arrow[u,mono]\cdot\arrow[r,norm] & \cdot\arrow[u,mono]\\
                    \cdot\arrow[u,epic]\arrow[d,norm]\arrow[r,norm]\arrow[ur,phantom,"\urcorner"{very near start}] & \cdot\arrow[u,epic]\arrow[d,norm]\\
                    B\arrow[r,equal] &B\vphantom g
                \end{tikzcd}
            \,\right\}
            \lhook\joinrel\longrightarrow
            \left\{\,
            \begin{tikzcd}[cramped]
                A\arrow[r,equal] & A\\
               \arrow[u]\cdot\arrow[r,norm] & \cdot\arrow[u]\\
                \cdot\arrow[u,epic]\arrow[d,norm]\arrow[r,norm]\arrow[ur,phantom,"\urcorner"{very near start}] & \cdot\arrow[u,epic]\arrow[d,norm]\\
                B\arrow[r,equal] &B\vphantom{g}
            \end{tikzcd}
            \,\right\},
        \]
        which is a sieve as $\Nn\subset\Mm$, hence in particular a right fibration.
    \end{proof}
\end{proposition}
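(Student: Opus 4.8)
The goal is to prove that the Beck--Chevalley transformation $\mathbb P_\Mm\circ\ev\to\fgt\circ\mathbb P_\Ff$ associated to the square $(\ref{diag:so-many-forgetful-functors})$ is an equivalence. The plan is to work pointwise at each $A\in\Mm$, identify the square $(\ref{diag:so-many-forgetful-functors})$ in degree $A$ with the square obtained by applying $\Fun^{\Nn\text-\otimes}_\Ff(-,\Cc)$ to a square $(\ref{diag:enveloped-and-franked})$ of envelopes (with respect to the factorization system $(\Mm^\op,\Span_{\Ee,\Nn}(\Ff))$ on $\Span_\Nn(\Ff)$), and then recognize the resulting Beck--Chevalley condition as an instance of the abstract basechange statement Lemma~\ref{lemma:rough-basechange} for parametrized factorization systems.

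\textbf{Key steps.} First I would produce the identification of $(\ref{diag:so-many-forgetful-functors})$ at $A$ with $\Fun^{\Nn\text-\otimes}_\Ff(-,\Cc)$ applied to $(\ref{diag:enveloped-and-franked})$: the bottom row is the description of $\mathbb U$ from \cref{thm:algebras_in_Rig} combined with the $\Triv\dashv(-)|_\Mm$ adjunction on operads, while the top horizontal map is $\fgt=\gamma_A^*$ from Construction~\ref{constr:gamma}, all translated through the envelope adjunctions of \cref{cor:lax-env}. Second, by \cref{lemma:LKE-CAlgRig} both vertical restrictions in $(\ref{diag:enveloped-and-franked})$ admit left adjoints given by pointwise left Kan extension, so the question of whether the Beck--Chevalley transformation is invertible becomes the question of whether the square $(\ref{diag:enveloped-and-franked})$ is ``horizontally left adjointable'' after applying $\Fun^{\Nn\text-\otimes}_\Ff(-,\Cc)$. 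Third, I would invoke Remark~\ref{rk:Env-factorization-system} to identify, for each $B\in\Ff$, the inclusion $\Env(\ul A^\Ncoprod_\cM)(B)\hookrightarrow\Env(\ul A^\Ncoprod_\Ff)(B)$ explicitly with the full subcategory cut out by requiring the topmost vertical maps into $A$ to lie in $\Mm$, and to observe that the two horizontal maps in $(\ref{diag:enveloped-and-franked})$ are precisely the inclusions of the backwards parts of the parametrized factorization systems supplied by \cref{prop:Env-param-fs} (the one on $\Env(\ul A^\Ncoprod_\Ff)$ visibly restricting to one on the full subcategory $\Env(\ul A^\Ncoprod_\cM)$). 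Fourth, I would check the remaining hypothesis of Lemma~\ref{lemma:rough-basechange} applied with $T=\Span_\Nn(\Ff)^\op$: the map induced on the forward parts of these factorization systems is, in degree $B$, the full inclusion of those distributivity-diagram-shaped spans whose indicated map into $A$ lies in $\Mm$, which is a \emph{sieve} precisely because $\Nn\subset\Mm$ (left cancellability of $\Mm$ is what closes this off), and in particular a pointwise right fibration. Applying Lemma~\ref{lemma:rough-basechange} then yields the desired horizontal left adjointability, hence the equivalence of Beck--Chevalley transformations.

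\textbf{Main obstacle.} The routine but genuinely fiddly part is the bookkeeping in the first step: correctly unwinding the four corners of $(\ref{diag:so-many-forgetful-functors})$ through the chain of adjunction equivalences (\cref{thm:algebras_in_Rig}, the $\Triv\dashv(-)|_\Mm$ adjunction on $\Mm$-partial operads, and \cref{cor:lax-env}) so that the resulting square is literally $\Fun^{\Nn\text-\otimes}_\Ff(-,\Cc)$ applied to $(\ref{diag:enveloped-and-franked})$ \emph{with the correct maps}, and then matching up the vertical maps in $(\ref{diag:enveloped-and-franked})$ with the inclusions of backwards arrows of the span categories described in Remark~\ref{rk:Env-factorization-system}. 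The conceptual core--that the forward-part inclusion is a sieve because $\Nn\subset\Mm$--is short once the setup is in place, so the argument is essentially a careful diagram-chase reducing everything to Lemma~\ref{lemma:rough-basechange}; the assumption $\Nn\subset\Mm$ standing in this section is used exactly here and should be flagged.
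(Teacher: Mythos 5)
Your proposal is correct and follows essentially the same route as the paper: reduce pointwise over $A\in\Mm$, identify the square with $\Fun^{\Nn\text-\otimes}_\Ff(-,\Cc)$ applied to the square of envelopes $(\ref{diag:enveloped-and-franked})$, invoke Lemma~\ref{lemma:LKE-CAlgRig} for the pointwise left Kan extensions, and conclude via Lemma~\ref{lemma:rough-basechange} applied to the parametrized factorization systems of Proposition~\ref{prop:Env-param-fs}, the key point being that the induced inclusion on forward parts is a sieve (hence a right fibration) because $\Nn\subset\Mm$. Two trivial slips worth noting: it is the \emph{vertical} maps of $(\ref{diag:enveloped-and-franked})$ as drawn that are the inclusions of the backwards parts, and the sieve property uses closure of $\Mm$ under composition (together with $\Nn\subset\Mm$) rather than left cancellability of $\Mm$.
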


\begin{proof}[Proof of Theorem~\ref{thm:globalize-algebras}]
    We may equivalently show that the strong $\cF$-functor
    \[
        \widehat{\fgt}\colon\ul\CAlg^{\cN}_{\cF}\Rig^\otimes\Cc
        \to\Rig\,\ul\CAlg_{\Mm\triangleright \cF}^\cN (\Cc)
    \]
    induced by $\fgt$ is an equivalence. For this we consider the commutative diagram
    \[
    \begin{tikzcd}
        \ul\CAlg^{\cN}_{\cF}\Rig^{\otimes}\Cc\arrow[rr, "\widehat{\fgt}"]\arrow[dr, bend right=15pt, "\mathbb U"', end anchor={[yshift=.6pt]west}] && \Rig\, \ul\CAlg_{\Mm\triangleright \cF}^{\cN}\,\Cc\arrow[dl, bend left=15pt, "\Rig(\mathbb U)", end anchor={[yshift=.6pt]east}]\\[.33ex]
        & \Rig\,\Cc
    \end{tikzcd}
    \]
    of strong $\cF$-functors. Recall that the left functor is a monadic right adjoint at every level by \Cref{cor:par_U_monadic}. On the other hand, we claim $\Rig(\bbU)$ is also a monadic right adjoint:
    It is conservative because $\bbU$ is and admits a left adjoint since $\bbU$ does and $\Rig$ is a 2-functor;
    moreover, since $\Rig$ is the right adjoint in a $\Cat$-linear adjunction,
    it is compatible with the cotensoring, so it follows from \cref{rem:fiberwise-colim-cotensor}
    that also $\Rig(\bbU)$ preserves fiberwise sifted colimits.

    Therefore it suffices by \cite[Corollary 4.7.3.16]{HA} to show that the Beck--Chevalley map $\Rig(\bbP) \to \smash{\widehat{\smash{\fgt}\vphantom{t}}}\circ\bbP$ is invertible. By the (2-)universal property of $\Rig$, this can be checked after postcomposing with the universal $\Ee$-lax functor $\Rig\,\ul\CAlg^{\Nn}_{\Mm\triangleright\Ff}(\Cc)\to\ul\CAlg^{\Nn}_{\Mm\triangleright\Ff}(\Cc)$. As moreover the Beck--Chevalley map $\mathbb P\circ\ev\to{\ev}\circ{\Rig(\mathbb P)}$ associated to the right hand square in
    \[
        \begin{tikzcd}
            \ul\CAlg^{\cN}_{\cF}\Rig^{\otimes}\Cc\arrow[d,"\mathbb U"']\arrow[r, "\widehat{\fgt}"] & \Rig\, \ul\CAlg_{\Mm\triangleright \cF}^{\cN}(\Cc) \arrow[r, "\ev"]\arrow[d,"\Rig(\mathbb U)"] & \ul\CAlg_{\Mm\triangleright \cF}^{\cN}(\Cc)\arrow[d,"\mathbb U"]\\
            \Rig(\Cc)\arrow[r,equal] & \Rig(\Cc)\arrow[r,"\ev"'] & \Cc
        \end{tikzcd}
    \]
    is invertible by $2$-functoriality of $\Rig$, we altogether see that the Beck--Chevalley map for the left hand square (i.e.~the one in question) is invertible if and only if the Beck--Chevalley map for the total rectangle is so. But the latter can be checked on underlying $\Mm$-precategories, which is precisely the content of Proposition~\ref{prop:mysterious-Beck-Chevalley}.
\end{proof}

We now specialize to the case $(\Ff,\Nn,\Mm)=(\Fglo,\Forb,\Forb)$, corresponding to (global) equivariant homotopy theory.

\begin{definition}
    Suppose $\Cc$ is an equivariantly distributive normed global category.
    Then we obtain a global category\footnote{Since $(\Fglo,\Forb)$ is even extensive, \cref{thm:oper_operad}(5) shows that this is really is a global category, and not just a precategory.}
    $
        \ul\CAlg_{\eq\triangleright\gl}(\cC)
        \coloneqq \ul\CAlg_{\Forb\triangleright\Fglo}^{\Forb}(\Cc)
    $
    of $\Forb$-normed $\Forb$-algebras in $\Cc$.
    \end{definition}

    \begin{remark}\label{rem:g-algebras}
        Given a normed global category $\cC$ and a finite group $G$,
        note that
        \begin{align*}
            \und{\CAlg}_{\eq\triangleright\gl}(\cC)(G)
            &= \und{\CAlg}_\Forb^{\Forb}(\cC|_{\Forb})(G)\\
            &\simeq \Fun_{\Span(\Forb)}^{\lax{\Forb}}(\Span(\F_G), \cC|_{\Span(\Forb)})\\
            &\simeq \Fun_{\Span(\F_G)}^{\lax{\F_G}}(\Span(\F_G), \cC|_{\Span(\F_G)})
            \eqqcolon \CAlg_G(\cC|_G).
        \end{align*}
        The latter agrees with the definition of \emph{normed $G$-algebras} considered in \cite[Section 9.2]{BachmannHoyois2021Norms}. Similarly, one can use \cite[Section 5.2]{BHS_Algebraic_Patterns} to see that this also agrees with the definition of \emph{$G$-commutative algebras} in a \emph{$G$-symmetric monoidal $G$-$\infty$-category} (\Cref{ex:G-distributivity}) used in \cite{NardinShah}, see \cite{CHLL_NRings}*{Remark~5.6.4}.
    \end{remark}

We can combine the above results with the other universal property of $\Rig=\Glob$ from \cite{Linskens2023globalization} to obtain a universal property of the inclusion $\Ll\colon\ul\CAlg_{\Mm\triangleright\Ff}^\Nn(\Cc)\hookrightarrow\ul\CAlg_{\Ff}^\Nn(\Glob^\otimes\Cc)$. To state this we will need a couple of definitions:

\begin{definition}
    A global precategory is called \emph{fiberwise presentable} if it factors through the non-full subcategory $\Pr^\text{L}\subset\Cat$ of presentable categories and left adjoint functors.
\end{definition}

\begin{definition}
    A global precategory $\Cc\colon\Fglo^\op\to\Cat$ is called \emph{equivariantly presentable} if it is fiberwise presentable and left $\Forb$-adjointable (i.e.~$\Forb$-cocomplete in the sense of Example~\ref{ex:distributivity-as-cocompleteness}). We call $\Cc$ \emph{globally presentable} if it is fiberwise presentable and left $\Fglo$-adjointable (i.e.~$\Fglo$-cocomplete).
\end{definition}

\begin{remark}
    Recall that a commutative square of functors all of which admit both adjoints is horizontally right adjointable
    if and only if it is vertically left adjointable. In particular, globally presentable categories are also right $\Fglo$-coadjointable (`$\Fglo$-complete').
\end{remark}

\begin{corollary}
    Let $\Cc$ be an equivariantly distributive normed global category whose underlying global category is fiberwise presentable (hence equivariantly presentable). Then:
    \begin{enumerate}
        \item $\ul\CAlg_{\eq\triangleright\gl}(\Cc)$ is equivariantly presentable.
        \item $\ul\CAlg_{\gl}\Glob(\Cc)$ is globally presentable.
        \item $\mathcal L\colon \ul\CAlg_{\eq\triangleright\gl}(\Cc)\hookrightarrow\ul\CAlg_{\gl}\Glob(\Cc)$ is the initial equivariantly cocontinuous (i.e.~left $\Forb$-adjointable) functor from $\ul\CAlg_{\eq\triangleright\gl}(\Cc)$ to a globally presentable category.
    \end{enumerate}
\end{corollary}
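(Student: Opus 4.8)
The plan is to deduce all three parts from the results established above combined with the two universal properties of the globalization $\Glob$. For part~(1), we already know from \Cref{thm:oper_operad}(5) that $\ul\CAlg_{\eq\triangleright\gl}(\Cc) = \ul\CAlg_\Forb^\Forb(\Cc|_\Forb)$ is a global category, and \Cref{cor:calg-cocomplete} shows it is fiberwise cocomplete and left $\Forb$-adjointable (here we use that $(\Fglo,\Forb)$ is extensive left cancellable and that $\Cc|_\Forb$ is a $\Forb$-normed $\Forb$-category with fiberwise sifted colimits, which it is since $\Cc$ is equivariantly distributive). To upgrade fiberwise cocompleteness to fiberwise \emph{presentability}, I would invoke \Cref{cor:par_U_monadic}: each $\ul\CAlg_\Forb^\Forb(\Cc|_\Forb)(G)$ is monadic over $\Cc|_\Forb(G)$, which is presentable by hypothesis, and the monad preserves $\kappa$-filtered colimits for suitable $\kappa$ because $\bbU$ does (it preserves all sifted colimits by \Cref{lem:calg-fib-colims}) and $\bbP$ is a left adjoint; hence the category of algebras is presentable by the standard presentability-of-algebras result \cite{HA}*{Corollary 4.2.3.7}. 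This gives equivariant presentability.

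\textbf{Part (2).} Here I would use \Cref{thm:Normed_Rig} to view $\Glob(\Cc)=\Glob^\otimes(\Cc)$ as a normed global category, and then apply the $\Fglo$-distributive case of our machinery. Since $(\Fglo,\Forb,\Fglo)$ is a distributive context and $\Glob^\otimes\Cc$ is $\Fglo$-distributive (this would follow from the conjectured statement in the remark after \Cref{prop:left-adj-to-fgt-for-rig}; if that is not available, one argues directly that $\Glob$ of an equivariantly distributive category is $\Fglo$-distributive using that $\Glob$ freely adjoins the missing parametrized colimits, cf.\ \cite{Linskens2023globalization}), \Cref{cor:calg-cocomplete} shows $\ul\CAlg_\gl\Glob(\Cc)$ is fiberwise cocomplete and left $\Fglo$-adjointable. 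Fiberwise presentability is obtained exactly as in part~(1) via \Cref{cor:par_U_monadic} once we know $\Glob^\otimes\Cc$ is fiberwise presentable, which in turn follows from the presentability of $\Cc$ together with the description of $\Glob(\Cc)(G)$ as a partially lax limit \cite{Abellan23}*{Theorem 4}, since partially lax limits of presentable categories along left-adjointable diagrams are presentable.

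\textbf{Part (3).} This is the heart of the statement. By \Cref{thm:globalize-algebras} (applied with $(\Ff,\Nn,\Mm)=(\Fglo,\Forb,\Fglo)$, noting $\Forb\subset\Fglo$), the forgetful functor $\fgt\colon\ul\CAlg_\gl\Glob^\otimes(\Cc)\to\ul\CAlg_{\eq\triangleright\gl}(\Cc)$ is the \emph{universal $\mathbb E$-lax $\Fglo$-functor}, i.e.\ $\ul\CAlg_\gl\Glob^\otimes(\Cc)\simeq\Glob\big(\ul\CAlg_{\eq\triangleright\gl}(\Cc)\big)$ and $\Ll$ is the canonical map $\Delta\colon\ul\CAlg_{\eq\triangleright\gl}(\Cc)\to\Glob\big(\ul\CAlg_{\eq\triangleright\gl}(\Cc)\big)$. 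Now I invoke the \emph{other} universal property of globalization from \cite{Linskens2023globalization} (the one used in \Cref{ex:globalization}): for an equivariantly presentable global category $\Dd$, the map $\Delta\colon\Dd\to\Glob(\Dd)$ is the initial equivariantly cocontinuous functor from $\Dd$ to a globally presentable global category. Applying this to $\Dd=\ul\CAlg_{\eq\triangleright\gl}(\Cc)$, which is equivariantly presentable by part~(1), and identifying $\Glob(\Dd)\simeq\ul\CAlg_\gl\Glob^\otimes(\Cc)$ and $\Delta\simeq\Ll$ via \Cref{thm:globalize-algebras}, yields exactly the claimed universal property; globally presentability of the target is part~(2).

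\textbf{Main obstacle.} The genuinely nontrivial point is ensuring that $\Glob^\otimes(\Cc)$ is \emph{$\Fglo$-distributive} (not merely equivariantly distributive), since \Cref{thm:globalize-algebras} as stated requires $\Cc$ to be $\Mm$-distributive with $\Mm=\Fglo$ in the input, and more importantly since \Cref{cor:calg-cocomplete} needs $\Fglo$-distributivity of $\Glob^\otimes\Cc$ to conclude left $\Fglo$-adjointability of the algebras. This is the content of the conjecture recorded after \Cref{prop:left-adj-to-fgt-for-rig}. I expect one can circumvent the full conjecture by arguing more hands-on: the Beck--Chevalley/distributivity conditions for $\Glob^\otimes\Cc$ along maps in $\Fglo$ can be checked after evaluation, using the partially lax limit description of $\Glob$ together with the fact that left Kan extensions along the structure maps of $\Glob$ exist and satisfy basechange because $\Cc$ already satisfies the \emph{equivariant} distributivity conditions and the extra maps in $\Fglo\setminus\Forb$ (the non-faithful ones, i.e.\ the $\mathbb E$-part) are precisely the ones $\Glob$ formally adjoins left adjoints for in a basechange-compatible way. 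Making this precise is the only step requiring real work; everything else is an assembly of the cited results.
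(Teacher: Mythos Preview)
Your argument has two related gaps, both stemming from a misidentification of the distributive context.

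\textbf{The triple in Part (3).} You apply \Cref{thm:globalize-algebras} with $(\Ff,\Nn,\Mm)=(\Fglo,\Forb,\Fglo)$, but with $\Mm=\Fglo$ the rigidification $\Rig^{\Fglo}_{\Fglo}$ is the identity, not $\Glob$. The correct triple is $(\Fglo,\Forb,\Forb)$: then $\Rig=\Rig^{\Fglo}_{\Forb}=\Glob$, and the hypothesis of \Cref{thm:globalize-algebras} is that $\Cc$ be $\Forb$-distributive, which is exactly equivariant distributivity. With this correction your ``main obstacle'' vanishes: no conjecture about $\Fglo$-distributivity of $\Glob^\otimes\Cc$ is needed anywhere, and indeed the paper explicitly leaves that as an open conjecture and does not use it.

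\textbf{Part (2) via \Cref{cor:calg-cocomplete} does not work.} Even granting full distributivity of $\Glob^\otimes\Cc$, \Cref{cor:calg-cocomplete} only ever yields left $\Nn$-adjointability of $\ul\CAlg_\cF^\cN(\Cc)$, and here $\Nn=\Forb$. It cannot produce left $\Fglo$-adjointability, which is what global presentability requires. The paper's argument is genuinely different: it first notes that $\Glob(\Cc)$ itself is globally presentable by \cite[Theorem~4.6]{Linskens2023globalization}, and then uses that $\bbU\colon\ul\CAlg_\gl\Glob(\Cc)\to\Glob(\Cc)$ is a conservative global right adjoint with global left adjoint $\bbP$ (this is \Cref{prop:left-adj-to-fgt-for-rig}, which holds for merely $\Forb$-distributive $\Cc$). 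From this one shows that restrictions in $\ul\CAlg_\gl\Glob(\Cc)$ admit both adjoints, and then checks right $\Fglo$-\emph{co}adjointability by a Beck--Chevalley argument reducing to that of $\Glob(\Cc)$ via conservativity of $\bbU$; left $\Fglo$-adjointability is then equivalent. Fiberwise accessibility comes from \cite[Proposition~5.4.7.11]{HTT}, and fiberwise cocompleteness from \Cref{cor:calg-cocomplete}.

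\textbf{Part (1).} Your direct approach via \Cref{cor:calg-cocomplete} applied to $(\Forb,\Forb)$ only yields the $\Forb$-parametrized statement; it does not give left $\Forb$-adjointability with respect to basechange along arbitrary maps in $\Fglo$. The paper instead deduces (1) from (2): since $\cL$ is a fully faithful global functor which is a pointwise left adjoint, the essential image is closed under fiberwise colimits and inherits left $\Forb$-adjointability from $\ul\CAlg_\gl\Glob(\Cc)$ by the same Beck--Chevalley transfer argument.

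In summary: Part (3) is essentially correct once the triple is fixed (and then follows from \Cref{thm:globalize-algebras} and \cite[Theorem~4.10]{Linskens2023globalization} exactly as the paper does), but your approaches to (1) and (2) rely on a conjecture the paper avoids, and your route through \Cref{cor:calg-cocomplete} cannot give $\Fglo$-adjointability in any case.
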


\begin{proof}
    By Theorem \ref{thm:oper_operad}(5), Example \ref{ex:gog}, and Remark \ref{rem:glob-of-cat-is-cat} all of these are indeed global categories (as opposed to precategories).
        Now given (1) and (2), point (3) will immediately follow from Theorem~\ref{thm:globalize-algebras} via \cite[Theorem 4.10]{Linskens2023globalization}.

    The accessibility of the categories in points (1) and (2) follows from \cite[Proposition 5.4.7.11]{HTT}, see also Remark 5.4.7.13 there. Applying \Cref{cor:calg-cocomplete}, we see that $\und{\CAlg}_\gl(\Glob(\cC))$
    is also fiberwise cocomplete, hence fiberwise presentable.

    On the other hand, by \cite[Theorem 4.6]{Linskens2023globalization}, the underlying global category of $\Glob(\Cc)$ is globally presentable. Now recall that $\mathbb U\colon\ul\CAlg_{\gl}\Glob(\Cc)\to\Glob(\Cc)$ is conservative and admits a global left adjoint $\mathbb P$. So we may show that more generally a fiberwise presentable global precategory $\Dd$ which admits a conservative global right adjoint $\mathbb{U}\colon \Dd\to \Cc$ to a globally presentable global precategory $\Cc$ is again globally presentable.

    Let $f \colon X \to Y$ in $\Fglo$. Then $\bbU f^* \simeq f^*\bbU$ preserves limits, so by conservativity also $f^* \colon \cD(Y) \to \cD(X)$ preserves limits. As it also preserves colimits by assumption, the adjoint functor theorem shows that it admits both adjoints. Thus, instead of proving that $\cD$ is left $\Fglo$-adjointable, we may equivalently show that it is right $\Fglo$-coadjointable.
    Using that $\bbU$ is a conservative $\Fglo$-right adjoint,
    this reduces to the fact that $\cC$ is right $\Fglo$-coadjointable;
    namely, consider the following pullback square in $\Fglo$ and two induced commutative rectangles:
    \[\begin{tikzcd}[cramped,sep=scriptsize]
        {X'} & X & {\cC(X')} & {\cD(X')} & {\cD(X)} & {\cC(X')} & {\cC(X)} & {\cD(X)} \\
        {Y'} & Y & {\cC(Y')} & {\cD(Y')} & {\cD(Y)} & {\cC(Y')} & {\cC(Y)} & {\cD(Y)}
        \arrow["{g'}", from=1-1, to=1-2]
        \arrow["{f'}"', from=1-1, to=2-1]
        \arrow["\lrcorner"{anchor=center, pos=0.125}, draw=none, from=1-1, to=2-2]
        \arrow["f", from=1-2, to=2-2]
        \arrow["\bbU"', from=1-4, to=1-3]
        \arrow["{g'^*}"', from=1-5, to=1-4]
        \arrow["{g'^*}"', from=1-7, to=1-6]
        \arrow["\bbU"', from=1-8, to=1-7]
        \arrow["g"', from=2-1, to=2-2]
        \arrow["{f'^*}", from=2-3, to=1-3]
        \arrow["{f'^*}", from=2-4, to=1-4]
        \arrow["\bbU", from=2-4, to=2-3]
        \arrow["{f^*}"', from=2-5, to=1-5]
        \arrow["{g^*}", from=2-5, to=2-4]
        \arrow["{f'^*}", from=2-6, to=1-6]
        \arrow["{f^*}", from=2-7, to=1-7]
        \arrow["{g^*}", from=2-7, to=2-6]
        \arrow["{f^*}"', from=2-8, to=1-8]
        \arrow["\bbU"', from=2-8, to=2-7]
    \end{tikzcd}\]
    Evidently, the rectangles have equivalent pastings. We want to show that the right square of the middle
    rectangle is vertically right adjointable. The squares with horizontal maps given by $\bbU$ are vertically right adjointable
    because they are horizontally left adjointable by existence of $\bbP$.
    Moreover, the left square in the right rectangle is right adjointable by global presentability of $\cC$.
    Since Beck--Chevalley maps compose and $\bbU$ is conservative, we conclude.

    Now we turn to the equivariant presentability of $\ul\CAlg_{\eq\triangleright\gl}(\Cc)$. As mentioned, each  $\ul\CAlg_{\eq\triangleright\gl}(\Cc)(A)$ is accessible. Since $\cL$ is a fully faithful global functor which is pointwise left adjoint,
    it then follows that $\und{\CAlg}_{\eq\triangleright\gl}(\cC)$ is closed under fiberwise colimits in $\ul\CAlg_\gl(\Glob(\Cc))$; we conclude that it is fiberwise cocomplete, hence fiberwise presentable. Moreover, since $\cL$ is an $\Forb$-left adjoint, the fact that $\und{\CAlg}_\gl(\Glob(\cC))$ is left $\Forb$-adjointable implies that also $\und{\CAlg}_{\eq\triangleright\gl}(\cC)$ is so, analogously to how we verified right $\Fglo$-coadjointability of $\ul\CAlg_\gl(\Glob(\Cc))$ above.
\end{proof}

\subsection{Equivariant ultra-commutative ring spectra as normed algebras} We will now compare the global category $\ul\CAlg_{\eq\triangleright\gl}(\ul\Sp^\tensor)$ to the more classical definition of \emph{genuine commutative $G$-ring spectra} in terms of model categories. We start with the following (thankfully easier) equivariant analogue of Theorem~\ref{thm:spgl-dist}:

\begin{proposition}\label{prop:sp-equ-distr}
    The normed global category $\ul\Sp^\otimes$ of Construction~\ref{constr:spotimes} is equivariantly distributive.
    \begin{proof}
        By \cite{CHLL_NRings}*{Proposition~5.5.7} the norm $\Nm^G_H\colon\Sp_H\to\Sp_G$ preserves sifted colimits for every group $G$ and every subgroup $H$; as each $\Sp_G$ is presentably symmetric monoidal (see e.g.~Example~5.4.7 of \emph{op.\ cit.}), we conclude that all norms preserve sifted colimits. On the other hand, \cite{CLL_Clefts}*{Theorem~9.4} shows that restrictions in $\und{\Sp}$ (along maps in $\Fglo$) even preserve all colimits.

        Next, we recall that since $\Sp^{\Sigma,\tensor}$ is presentably
        symmetric monoidal, $(\Sp^{\Sigma,\tensor})^\flat$ is distributive (Example~\ref{ex:borel-distributive}).
        Since $\und{\Sp}^\tensor$ is defined as a localization of the full subcategory $\und{\Sp}^{\Sigma,\tensor}_{\text{equiv.~proj.}} \subset (\Sp^{\Sigma,\tensor})^\flat$,
        it now suffices to observe that by \cite{hausmann-equivariant}*{§5.2}
        also the left adjoints $\Ind_H^G \colon(\Sp^{\Sigma,\tensor})^\flat(H)\to(\Sp^{\Sigma,\tensor})^\flat(G)$
        restrict to homotopical functors on projectively cofibrant objects
        for every subgroup inclusion $H \subset G$,
        and similarly for the coproducts (as in any model category).
    \end{proof}
\end{proposition}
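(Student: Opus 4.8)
The plan is to verify the four conditions of $\Forb$-distributivity from \cref{def:distributivity} directly for $\ul\Sp^\otimes$, working with the distributive context $(\Fglo,\Forb,\Forb)$ of \cref{ex:goo} and reducing each condition to a statement about the point-set model $(\Sp^{\Sigma,\otimes})^\flat$ together with the homotopical behaviour of the relevant functors on equivariantly projective symmetric spectra.

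First I would dispatch Condition~(1), the existence of fiberwise sifted colimits. Each $\Sp_G$ is presentably symmetric monoidal (e.g.\ by \cite{CHLL_NRings}*{Example~5.4.7}), hence admits sifted colimits, and the restriction functors along maps in $\Fglo$ preserve all colimits by \cite{CLL_Clefts}*{Theorem~9.4}, being left adjoints. For the forward maps of $\Span_\Forb(\Fglo)$ it then suffices, after decomposing them into fold maps and norms along injective homomorphisms, to note that the smash product of a presentably symmetric monoidal category preserves sifted colimits in each variable separately --- hence as a functor of two variables, using cofinality of the diagonal $K\to K\times K$ for sifted $K$ --- while the Hill--Hopkins--Ravenel norms $\Nm^G_H$ preserve sifted colimits by \cite{CHLL_NRings}*{Proposition~5.5.7}.

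For Conditions~(2)--(4), I would exploit that $(\Sp^{\Sigma,\otimes})^\flat$ is distributive by \cref{ex:borel-distributive} --- indeed $\Sp^{\Sigma,\otimes}$ is a cocomplete symmetric monoidal $1$-category whose product preserves colimits in each variable --- and hence in particular equivariantly distributive, with all the relevant Beck--Chevalley maps already isomorphisms at the point-set level. Since $\ul\Sp^\otimes$ is by construction the Dwyer--Kan localization of the full normed global subcategory $\ul\Sp^{\Sigma,\otimes}_{\text{equiv.~proj.}}\subset(\Sp^{\Sigma,\otimes})^\flat$ and the localization functor $L$ is a normed global functor, it then remains to observe that the equivariantly projective spectra are closed under the left adjoints to restrictions along maps in $\Forb$ --- that is, under inductions $\Ind^G_H$ along subgroup inclusions and under finite coproducts --- and that these functors are homotopical on equivariantly projective objects; this holds by \cite{hausmann-equivariant}*{\S5.2} for inductions and is automatic for coproducts. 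Consequently $L$ commutes with these left adjoints, and as it already commutes with restrictions and norms by construction and is essentially surjective, the Beck--Chevalley equivalences of Conditions~(3) and~(4) for $\ul\Sp^\otimes$ follow from those for $(\Sp^{\Sigma,\otimes})^\flat$, while Condition~(2) is immediate. (One could alternatively obtain Conditions~(2) and~(3) at once from $\ul\Sp$ being equivariantly presentable --- equivalently, $\Forb$-cocomplete --- by \cite{CLL_Clefts}*{Theorem~9.4}.)

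The step I expect to be the main obstacle is Condition~(4), the genuine distributivity of norms over inductions: it is the only one that truly mixes norms with left adjoints to restrictions, hence the only one not already visible on the underlying global category. Fortunately, in contrast with the analogous global statement \cref{thm:spgl-dist}, the reduction to the point-set level goes through cleanly, because inductions along subgroup inclusions --- together with smash products and Hill--Hopkins--Ravenel norms --- preserve equivariant projectivity and are homotopical there by \cite{hausmann-equivariant}*{Proposition~6.1, Theorem~6.7, and \S5.2}; the remaining work is the bookkeeping needed to check that every point-set functor appearing in a distributivity diagram for maps in $\Forb$ stays within the equivariantly projective spectra.
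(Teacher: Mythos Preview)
Your proposal is correct and follows essentially the same approach as the paper: condition~(1) via \cite{CHLL_NRings}*{Proposition~5.5.7} and \cite{CLL_Clefts}*{Theorem~9.4}, and conditions~(2)--(4) by reducing to the distributivity of $(\Sp^{\Sigma,\otimes})^\flat$ together with the homotopical behaviour of $\Ind_H^G$ and coproducts on equivariantly projective spectra from \cite{hausmann-equivariant}*{\S5.2}. Your write-up is somewhat more explicit about the bookkeeping (and notes the alternative route for (2)--(3) via equivariant presentability), but the argument is the same.
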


\begin{construction}
    Just like its $G$-global (flat) counterpart, the equivariant projective model structure on $G$-symmetric spectra has a positive analogue, also constructed in \cite{hausmann-equivariant}*{Theorem~4.8}, whose cofibrant objects are those $G$-equivariantly projectively cofibrant spectra $X$ such that $X_0=0$. Accordingly, we have a further global subcategory $\ul\Sp^\Sigma_\text{pos.~equiv.~proj.}\subset\ul\Sp^\Sigma_\text{equiv.~proj.}$ spanned by these objects, and the inclusion induces an equivalence on Dwyer--Kan localizations.

    By \cite{hausmann-equivariant}*{Theorem~6.15}, the positive projective model structure transfers to the category of commutative algebras. Thus, the categories $\CAlg(\text{$G$-Sp}^{\Sigma,\tensor})_\text{pos.~equiv.~proj.}$ for varying $G$ assemble into a full global subcategory \[\ul{\CAlg(\Sp^{\Sigma,\tensor})}_\text{pos.~equiv.~proj.}\subset\big({\CAlg(\Sp^{\Sigma,\tensor})}\big)^\flat,\] improving to a diagram in relative categories. We caution the reader that the underlying $G$-symmetric spectrum of an object of $\CAlg(\text{$G$-Sp}^{\Sigma,\tensor})_\text{pos.~equiv.~proj.}$ need \emph{not} be cofibrant in the (positive) $G$-equivariant projective model structure again.\footnote{This is not just a pointset level artifact, but has a homotopy invariant interpretation, see Warning~\ref{warn:U-ext-no-strict} below.} On the other hand, the left adjoints $\mathbb P\colon\text{$G$-Sp}^\Sigma\to\CAlg(\text{$G$-Sp}^\Sigma)$ do indeed restrict accordingly by definition of a transferred model structure, so they assemble into a global functor
    \[
        \mathbb P\colon\ul\Sp^\Sigma_\text{pos.~equiv.~proj.}\to\ul{\CAlg(\Sp^{\Sigma,\tensor})}_\text{pos.~equiv.~proj.},
    \]
    and by Ken Brown's Lemma they preserve $G$-equivariant weak equivalences. Thus, writing $\ul\UCom$ for the corresponding localization of the target and conflating the localization of the source with $\ul\Sp$, we arrive at a global functor $\mathbb P\colon\ul\Sp\to\ul\UCom$.
\end{construction}

\begin{construction}
    By \cite{Lenz-Stahlhauer}*{Lemma~6.27} the inclusions
    \[
        \CAlg(\text{$G$-Sp}^{\Sigma,\tensor})_\text{pos.~equiv.~proj.}\hookrightarrow
        \CAlg(\text{$G$-Sp}^{\Sigma,\tensor})_\text{flat}
    \]
    are homotopical for the $G$-equivariant weak equivalences on the source and the $G$-global weak equivalences on the target, and as explained after \emph{loc.\ cit.},
    the resulting functor on Dwyer--Kan localizations is fully faithful, with its right adjoint being a localization at the $G$-equivariant weak equivalences.

    As in the previous construction, we can then localize this to a fully faithful inclusion $\ul\UCom\hookrightarrow\ul\UCom_\gl$, which by construction fits into a commutative diagram
    \[
        \begin{tikzcd}
            \ul\Sp\arrow[r,hook]\arrow[d,"\mathbb P"'] & \ul\Sp_\gl\arrow[d,"\mathbb P"]\\
            \ul\UCom\arrow[r,hook] & \ul\UCom_\gl\rlap.
        \end{tikzcd}
    \]
\end{construction}

We can now finally prove Theorem~\ref{introthm:equivariant-param} from the introduction:

\begin{theorem}\label{thm:equivariant-model}
    There is an equivalence of global categories
    \[
        \ul\CAlg_{\eq\triangleright\gl}(\ul\Sp^\otimes)
        \simeq\ul\UCom
    \]
    compatible with the free functors from $\ul\Sp$.
    In view of \cref{rem:g-algebras}, this gives for every finite group $G$
    an equivalence
    \[
        \CAlg_G(\und{\Sp}^\tensor_G) \simeq \UCom_G
        = \CAlg(G\text{-}\Sp^{\Sigma,\tensor})[W_{G\textup{-equiv.~stable}}^{-1}]
    \]
    between normed algebras in
    $\und{\Sp}_G^\tensor \coloneqq \und{\Sp}^\tensor|_{\Span(\F_G)}$
    and the Dwyer--Kan localization of the 1-category
    of strictly commutative algebras in $G$-symmetric spectra,
    compatible with the forgetful functors to $\Sp_G$.
\end{theorem}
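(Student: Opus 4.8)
The plan is to deduce Theorem~\ref{thm:equivariant-model} from the already-established global comparison (Theorem~\ref{thm:global-model}) together with the globalization machinery of Section~\ref{sec:norms-and-globalization}. First I would apply Theorem~\ref{thm:globalize-algebras} to the distributive context $(\Fglo,\Forb,\Forb)$ and the equivariantly distributive normed global category $\ul\Sp^\otimes$ (which is indeed equivariantly distributive by Proposition~\ref{prop:sp-equ-distr}). This gives that the forgetful functor $\ul\CAlg_\gl(\Glob^\otimes\ul\Sp^\otimes)\to\ul\CAlg_{\eq\triangleright\gl}(\ul\Sp^\otimes)$ is the universal $\mathbb E$-lax $\Fglo$-functor. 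On the other hand, Corollary~\ref{cor:globalize-sp-monoidal} identifies $\ul\Sp_\gl^\otimes$ with $\Glob^\otimes(\ul\Sp^\otimes)$ as normed global categories, so combining with Theorem~\ref{thm:global-model} we obtain that $\ul\CAlg_{\eq\triangleright\gl}(\ul\Sp^\otimes)$ is the target of the universal $\mathbb E$-lax $\Fglo$-functor out of $\ul\UCom_\gl$ (here using that $\Phi\colon\ul\UCom_\gl\xrightarrow{\sim}\ul\CAlg_\gl(\ul\Sp_\gl^\otimes)$ is an equivalence compatible with the forgetful functors).

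Next I would establish the parallel statement on the model-categorical side: by construction of $\ul\UCom\hookrightarrow\ul\UCom_\gl$ in the last Construction of the excerpt, the right adjoint to this inclusion is pointwise a Dwyer--Kan localization at the $G$-equivariant weak equivalences, and (via \cite{Lenz-Stahlhauer}*{Lemma~6.27} and the discussion following it) it assembles into an $\mathbb E$-lax $\Fglo$-functor exhibiting $\ul\UCom$ as the globalization-target of $\ul\UCom_\gl$. More precisely, I would invoke the universal property of $\Glob$ from \cite{Linskens2023globalization}*{Theorem~4.10} applied to the $\neg\Forb$-lax right adjoint $\ul\UCom_\gl\to\ul\UCom$ to identify $\ul\UCom$ as the partially lax limit computing $\Glob$ of... wait---the direction is the other way: $\ul\UCom_\gl$ is the globalization of $\ul\UCom$. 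So the cleaner route is: we have two $\mathbb E$-lax $\Fglo$-functors out of $\ul\UCom_\gl$, namely $\fgt\colon\ul\UCom_\gl\to\ul\UCom$ and $\fgt\colon\ul\CAlg_\gl(\Glob^\otimes\ul\Sp^\otimes)\to\ul\CAlg_{\eq\triangleright\gl}(\ul\Sp^\otimes)$, both of which are pointwise Dwyer--Kan localizations at the same class of weak equivalences (the $G$-equivariant stable weak equivalences, which under $\Phi$ correspond on both sides). The equivalence $\Phi$ intertwines these, and since a Dwyer--Kan localization at a fixed class is unique, I would conclude that $\Phi$ descends to an equivalence $\ul\UCom\simeq\ul\CAlg_{\eq\triangleright\gl}(\ul\Sp^\otimes)$ fitting into the commutative diagram with the two copies of $\ul\UCom_\gl$. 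Compatibility with the free functors $\mathbb P$ from $\ul\Sp$ then follows from Remark~\ref{rem:compat-with-p}, which pins down $\mathbb P$ on the algebra side via $\mathcal L\mathbb P\simeq\mathbb P\Delta$, matched against the pointset-level $\mathbb P$ on the $\ul\UCom$ side using that the latter also commutes with the globalization inclusion (by the commutative square in the last Construction).

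The main obstacle I anticipate is verifying that the two localizations really are localizations at \emph{the same} class of morphisms, compatibly with $\Phi$. This requires knowing that an arrow in $\ul\UCom_\gl(G)$ is inverted by $\fgt\colon\ul\UCom_\gl(G)\to\ul\UCom(G)$ iff its image under $\Phi$ is inverted by $\fgt\colon\ul\CAlg_\gl(\ul\Sp_\gl^\otimes)(G)\to\ul\CAlg_{\eq\triangleright\gl}(\ul\Sp^\otimes)(G)$; since both forgetful functors to $\Sp_\gl$ are conservative and $\Phi$ commutes with them, it suffices to check that both localizations-to-equivariant correspond, under the respective forgetful functors, to inverting exactly the arrows that become equivalences in $\ul\Sp$. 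For $\ul\UCom_\gl\to\ul\UCom$ this is essentially the definition (plus \cite{Lenz-Stahlhauer}*{Lemma~6.27}); for $\ul\CAlg_\gl(\ul\Sp_\gl^\otimes)\to\ul\CAlg_{\eq\triangleright\gl}(\ul\Sp^\otimes)$ one needs Theorem~\ref{thm:global-extension}, which says $\fgt$ is a Bousfield localization whose local objects are detected after applying $\mathbb U$, so a map is inverted iff its image in $\ul\Sp_\gl$ is inverted by $\fgt\colon\ul\Sp_\gl\to\ul\Sp$, i.e.\ lands in the essential image of $\ul\Sp\hookrightarrow\ul\Sp_\gl$ after the Bousfield localization. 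Threading these identifications together carefully---and in particular checking that the isomorphism $\Phi$ is compatible with the relevant restriction/inflation/norm functoriality so that the equivalence is one of \emph{global} categories and not merely levelwise---is where the real work lies; everything else is a formal consequence of uniqueness of Dwyer--Kan localizations and the two universal properties of $\Glob$.
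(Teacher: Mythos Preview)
Your proposal is correct and, after the self-correction midway through, lands on precisely the paper's argument: both $\ul\UCom$ and $\ul\CAlg_{\eq\triangleright\gl}(\ul\Sp^\otimes)$ embed as full global subcategories of $\ul\UCom_\gl\simeq\ul\CAlg_\gl(\ul\Sp_\gl^\otimes)$ via fully faithful left adjoints, and one checks the essential images agree by checking the right adjoints invert the same maps---namely those whose underlying map in $\Sp_{\text{$G$-gl}}$ becomes an equivalence in $\Sp_G$. The paper proceeds directly to this comparison without the detour through the universal property of $\Glob$; your initial attempt via Theorem~\ref{thm:globalize-algebras} is not needed here (it is used instead for Theorem~\ref{thm:ucom-glob}), and your worry about ``compatibility with restriction/inflation/norm functoriality'' is automatic once you know both embeddings are global functors and $\Phi$ is a global equivalence.
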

\begin{proof}
    By Theorem~\ref{thm:global-extension} together with Remark~\ref{rem:compat-with-p}, we have a fully faithful inclusion
    $
        \ul\CAlg_{\eq\triangleright\gl}(\ul\Sp^\otimes)
        \hookrightarrow \ul\CAlg_{\gl}(\ul\Sp^\otimes_\gl)
    $
    compatible with the free functors.
    On the other hand, we also have a fully faithful inclusion
    $\ul\UCom\hookrightarrow\ul\UCom_\gl$ compatible with the free functors by the previous construction. It thus suffices to show that the essential images of these two embeddings match up under the equivalence
    $\und{\UCom}_\gl \simeq \ul\CAlg_\gl(\ul\Sp^\otimes_\gl)$
    from Theorem~\ref{thm:global-model}.

    To prove the claim, we may work at a fixed group $G$, and we may equivalently show that the right adjoints of the above embeddings invert the same maps in
    $\UCom_\text{$G$-gl} \simeq \ul\CAlg_\gl(\ul\Sp^\otimes_\gl)(G)$.
    The right adjoint of
    $
        \ul\CAlg_{\eq\triangleright\gl}(\ul\Sp^\otimes)(G)
        \hookrightarrow \ul\CAlg_\gl(\ul\Sp^\otimes_\gl)(G)
    $
    is simply the forgetful functor $\fgt$, so it inverts precisely those maps whose underlying map in $\Sp_\text{$G$-gl}$ forgets to an equivalence in $\Sp_G$.
    On the other hand, the right adjoint to $\UCom_G\hookrightarrow \UCom_{\text{$G$-gl}}$ is also just the forgetful functor, so the maps inverted by it admit the same description. The claim follows, as the equivalence
    $\ul\UCom_\gl \simeq \ul\CAlg_\gl(\ul\Sp^\otimes_\gl)$
    from Theorem~\ref{thm:global-model} is compatible with the forgetful functors to $\ul\Sp_\gl$.
\end{proof}

\begin{corollary}
    There are equivalences
    \[
        \UCom_G \simeq \plaxlim\limits_{H\in\Span(\mathbb{F}_{G})}\Sp_H
    \]
    for all finite groups $G$, where we mark the backwards maps.
\end{corollary}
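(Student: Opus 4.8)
The plan is to obtain the corollary as a purely formal consequence of \cref{thm:equivariant-model}, by unwinding the definition of the category of normed $G$-algebras into a partially lax limit.

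First I would invoke \cref{thm:equivariant-model} to rewrite $\UCom_G \simeq \CAlg_G(\und{\Sp}^\tensor_G)$, where $\und{\Sp}^\tensor_G = \und{\Sp}^\tensor|_{\Span(\mathbb{F}_G)}$ is the restriction of the normed global category of equivariant spectra along the embedding $\Span(\mathbb{F}_G)\hookrightarrow\Span_\Forb(\Fglo)$ induced by the identification $\mathbb{F}_G\simeq\Forb_{/G}$ of \cref{ex:goo}. By \cref{rem:g-algebras} the left-hand side is by definition the category $\Fun^{\lax{\mathbb{F}_G}}_{\Span(\mathbb{F}_G)}\bigl(\Span(\mathbb{F}_G), \und{\Sp}^\tensor_G\bigr)$ of $\mathbb{F}_G$-lax functors, which is the same as $\CAlg_{\mathbb{F}_G}^{\mathbb{F}_G}(\und{\Sp}^\tensor_G)$ for the weakly extensive span pair $(\mathbb{F}_G,\mathbb{F}_G)$ of \cref{ex:FG3}.

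Next I would unwind this exactly as in the proof of \cref{thm:algebras_in_Rig}. The terminal $(\mathbb{F}_G,\mathbb{F}_G)$-preoperad $\Span(\mathbb{F}_G)$ has cocartesian unstraightening the identity $\Span(\mathbb{F}_G)\to\Span(\mathbb{F}_G)$, so a morphism in $\PreOp{\mathbb{F}_G}{\mathbb{F}_G}$ from it to $\und{\Sp}^\tensor_G$ is precisely a section of the cocartesian unstraightening $\Un^\co(\und{\Sp}^\tensor_G)\to\Span(\mathbb{F}_G)$ which sends the morphisms of the backwards subcategory $\mathbb{F}_G^\op\subset\Span(\mathbb{F}_G)$ to cocartesian edges; by the definition of the partially lax limit recalled in the proof of \cref{thm:algebras_in_Rig}, this category is exactly $\plaxlim_{H\in\Span(\mathbb{F}_G)}\und{\Sp}^\tensor(H)$ with the marking given by the backwards maps. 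It then only remains to identify the diagram $H\mapsto\und{\Sp}^\tensor(H)$: by \cref{constr:spotimes} the restriction of $\und{\Sp}^\tensor$ along $\Span(\mathbb{F}_G)\hookrightarrow\Span_\Forb(\Fglo)$ sends a transitive $G$-set $G/K$ to $\Sp_K$, and hence $H=\coprod_i G/K_i$ to $\prod_i\Sp_{K_i}=:\Sp_H$, with the backwards functoriality given by (products of) restriction functors — which is precisely the indexing datum appearing in the statement. The asserted naturality in $G$ and the compatibility with the forgetful functors to $\Sp_G=\Sp_{G/G}$ are then inherited from the corresponding assertions in \cref{thm:equivariant-model}, together with the (evident) naturality of the identifications used above.

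The substance of the corollary is entirely carried by \cref{thm:equivariant-model}; the remaining work is bookkeeping, and the only points that deserve care are (i) confirming that the $\mathbb{F}_G$-lax functor category of \cref{rem:g-algebras} is literally the partially lax limit with the \emph{backwards} maps marked (equivalently, that ``$\mathbb{F}_G$-lax'' means strict on restrictions and lax on norms), and (ii) checking that restricting $\und{\Sp}^\tensor$ along $\Span(\mathbb{F}_G)\hookrightarrow\Span_\Forb(\Fglo)$ reproduces the stated diagram of equivariant spectra and restriction functors. Neither of these presents a genuine obstacle, and I do not expect to need anything beyond the results already established.
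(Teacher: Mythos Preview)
Your proposal is correct and follows essentially the same approach as the paper: invoke \cref{thm:equivariant-model} to get $\UCom_G \simeq \CAlg_G(\und{\Sp}^\tensor_G) = \Fun^{\lax{\mathbb{F}_G}}_{\Span(\mathbb{F}_G)}(\Span(\mathbb{F}_G), \und{\Sp}^\tensor_G)$ via \cref{rem:g-algebras}, and then observe that this is by definition the claimed partially lax limit. You are simply more explicit than the paper in unwinding the last identification (which the paper dispatches in a single sentence), and your remarks on naturality and forgetful functors go slightly beyond what the corollary asserts.
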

\begin{proof}
    The above theorem gives an equivalence
    \[
        \UCom_G
        \simeq \CAlg_G(\und{\Sp}_G^\tensor)
        = \Fun_{\Span(\F_G)}^{\lax{\F_G}}(\Span(\F_G),\und{\Sp}_G^\tensor),
    \]
    and the latter is precisely the definition of the partially lax limit in the statement.
\end{proof}

\begin{warn}\label{warn:U-ext-no-strict}
    The forgetful functors $\mathbb U\colon\UCom_{G}\to\Sp_G$ assemble into an $\mathbb E$-lax global functor $\ul\UCom\to\ul\Sp$ right adjoint to $\mathbb P$, but as we will now show, this lax functor is \emph{not} strong, and hence by the above theorem neither is the forgetful functor $\ul\CAlg_{\eq\triangleright\gl}(\ul\Sp^\otimes)\to\ul\Sp$.
    This highlights the subtlety in constructing
    a \emph{global} category of normed $\Forb$-algebras in the global category of \emph{equivariant} spectra.\footnote{In fact, the normed global category corepresenting $\CAlg_1$ is the wide subcategory of $\ul{\mathbb F}^{\amalg}$ given by the isovariant maps, and so a Yoneda computation shows that for a general normed global category $\Cc$ the restriction $\CAlg_G(\Cc)\to\CAlg_1(\Cc)$ has no section unless $G=1$ and the only $(\infty,2)$-natural maps in the other direction are tensor powers of the norm $\Nm^G_1$.}

    If $X$ is cofibrant in the model structure on $\CAlg(G\text{-}\Sp^\Sigma)$ transferred from the positive projective model structure, then its restriction along $\alpha\colon H\to G$ can be computed directly on the pointset level. However, the restriction of its underlying $G$-spectrum is obtained by first taking a projectively cofibrant replacement $f\colon X'\to X$ in $G\text{-}\Sp^\Sigma$, and then restricting $X'$; unravelling definitions, the Beck--Chevalley map is precisely given by $\alpha^*(f)\colon\alpha^*(X')\to\alpha^*(X)$.

    Now let $F_1=\bm\Sigma(1,-)\smashp S^1$ be the symmetric spectrum representing the functor $Y\mapsto\Omega Y_1$ and let $X=\mathbb P(F_1)$. As $F_1$ is cofibrant in the positive projective model structure, $X$ is cofibrant in the transferred model structure. On the other hand, one directly computes that $X\cong\bigvee_{n\ge 0}\bm\Sigma(n,-)\smashp_{\Sigma_n}S^n$ in the $1$-category of symmetric spectra. Fix now a cofibrant replacement $X'\to X$ in the non-equivariant projective model structure; we want to show that there is some group $G$ such that $X'\to X$ is not a weak equivalence with respect to the trivial action, i.e.~$X'\to X$ is not a \emph{global} weak equivalence.

    We will prove more generally that there is no global weak equivalence $X'\simeq X$ at all. To this end, observe that by \cite{g-global}*{Proposition~3.3.1} the global stable homotopy type modelled by $X'$ is contained in the essential image of the left adjoint $\Sp\hookrightarrow \Sp_\gl$ of the forgetful functor, i.e.~$X'$ maps trivially into any global spectrum $Y$ with trivial underlying non-equivariant spectrum. It therefore suffices to find such a $Y$ into which $X$ maps non-trivially, or equivalently, into which one of the $\bm\Sigma(n,-)\smashp_{\Sigma_n}S^n$ maps non-trivially.

    We will do this for the summand $Z\coloneqq\bm\Sigma(2,-)\smashp_{\Sigma_2}S^2$ (an analogous argument works for every $n\ge2$). Namely, observe that $Z$ corepresents the functor $Y\mapsto \textup{maps}^{\Sigma_2}(S^2, Y_2)$ in the simplicially enriched $1$-category of symmetric spectra, hence the functor $Y\mapsto\pi_0^{\Sigma_2}(Y)$ on the global stable homotopy category. It therefore suffices to take any global spectrum with trivial underlying spectrum but non-trivial $\pi_0^{\Sigma_2}$, and of course such global spectra abound. A concrete example of such a $Y$ is as follows: let $\mathbb A\coloneqq\ul{\pi}_0\mathbb S$ be the Burnside ring global Mackey functor (corepresenting evaluation at the trivial group $1$), and let $f\colon\mathbb A\to\ul{\mathbb Z}$ be the functor to the constant global Mackey functor at $\mathbb Z$ classifying $1\in\mathbb Z$. Then $f$ is an isomorphism at the trivial group, but not injective at the group $\Sigma_2$ (as $\mathbb A(\Sigma_2)$ has rank $2$), so its kernel is a global Mackey functor $M$ with $M(1)=0, M(\Sigma_2)\not=0$. Then the global Eilenberg--MacLane spectrum $HM$ has the desired properties.
\end{warn}

\begin{theorem}\label{thm:ucom-glob}
    The inclusion $\ul{\textup{UCom}}\hookrightarrow\ul{\textup{UCom}}_{\textup{gl}}$ has an $\mathbb E$-lax right adjoint, and this exhibits $\ul{\UCom}_\gl$ as the globalization of $\ul\UCom$. In other words, we have equivalences
    \[
        \UCom_\textup{$G$-gl}\simeq\plaxlim_{H\in(\Fglo_{/G})^{\op}}\UCom_\text{$H$},
    \]
    where $(\Fglo_{/G})^{\op}$ is marked by the maps over $G$ whose underlying map in $\Fglo$ is in $\Forb$. Moreover, this equivalence is natural in $G\in\Fglo^\op$ and compatible with the forgetful functors to $\UCom_G$.
    \begin{proof}
        By the proof of Theorem~\ref{thm:equivariant-model} we have a commutative diagram
        \begin{equation*}
            \begin{tikzcd}
                \ul\UCom\arrow[r,hook]\arrow[d,"\sim"'] & \ul\UCom_\gl\arrow[d,"\sim"]\\
                \ul\CAlg_{\eq\triangleright\gl}(\ul\Sp^\otimes)\arrow[r,"\Ll"'] & \ul\CAlg_\gl(\ul\Sp^\otimes_\gl)\rlap.
            \end{tikzcd}
        \end{equation*}
        The right adjoint of the lower horizontal map is a globalization by Theorem~\ref{thm:globalize-algebras} together with Corollary~\ref{cor:globalize-sp-monoidal}, so the claim follows.
    \end{proof}
\end{theorem}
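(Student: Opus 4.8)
�The plan is to deduce Theorem~\ref{thm:ucom-glob} from two facts established earlier: the equivalence $\ul\UCom\simeq\ul\CAlg_{\eq\triangleright\gl}(\ul\Sp^\otimes)$ compatible with free functors (Theorem~\ref{thm:equivariant-model}), and the corresponding equivalence $\ul\UCom_\gl\simeq\ul\CAlg_\gl(\ul\Sp^\otimes_\gl)$ (Theorem~\ref{thm:global-model}). Combining these with the abstract globalization statement for normed algebras, the result should be essentially formal.

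First I would record the commuting square of global categories
\[
    \begin{tikzcd}
        \ul\UCom\arrow[r,hook]\arrow[d,"\sim"'] & \ul\UCom_\gl\arrow[d,"\sim"]\\
        \ul\CAlg_{\eq\triangleright\gl}(\ul\Sp^\otimes)\arrow[r,"\Ll"'] & \ul\CAlg_\gl(\ul\Sp^\otimes_\gl)
    \end{tikzcd}
\]
which is implicit in the proof of Theorem~\ref{thm:equivariant-model}: there the essential image of $\ul\UCom\hookrightarrow\ul\UCom_\gl$ was identified with the essential image of $\Ll$ under the vertical equivalences, and these embeddings were all shown to be compatible with the free functors $\mathbb P$ from $\ul\Sp$ and $\ul\Sp_\gl$, hence also with their right adjoints $\mathbb U$. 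Next I would invoke Theorem~\ref{thm:globalize-algebras} in the distributive context $(\Fglo,\Forb,\Forb)$: since $\ul\Sp^\otimes$ is equivariantly distributive by Proposition~\ref{prop:sp-equ-distr}, and its normed globalization is $\ul\Sp^\otimes_\gl$ by Corollary~\ref{cor:globalize-sp-monoidal}, the theorem says that $\fgt\colon\ul\CAlg_\gl(\ul\Sp^\otimes_\gl)\to\ul\CAlg_{\eq\triangleright\gl}(\ul\Sp^\otimes)$ (the $\mathbb E$-lax right adjoint to $\Ll$) exhibits $\ul\CAlg_\gl(\ul\Sp^\otimes_\gl)$ as the globalization of $\ul\CAlg_{\eq\triangleright\gl}(\ul\Sp^\otimes)$. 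Transporting along the vertical equivalences gives that $\ul\UCom\hookrightarrow\ul\UCom_\gl$ admits an $\mathbb E$-lax right adjoint exhibiting $\ul\UCom_\gl$ as $\Glob(\ul\UCom)$.

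Having identified $\ul\UCom_\gl\simeq\Glob(\ul\UCom)=\Rig^\Fglo_\Forb(\ul\UCom)$, the partially lax limit formula
\[
    \UCom_\text{$G$-gl}\simeq\Glob(\ul\UCom)(G)=\plaxlim_{H\in(\Fglo_{/G})^\op}\UCom_H
\]
is simply the unpacking of the definition of $\Rig^T_M$ as a $\neg M$-lax functor category, using the description of $\Rig^T_M\cC(A)$ as a partially lax limit marked by $(\cF_{/A}\times_\cF M)^\op$ recorded right after Definition~\ref{def:universal-not-M-lax-funct} (and made explicit in Theorem~\ref{thm:algebras_in_Rig} for $A=1$, and in general for $\und A^\Ncoprod$; here we need only the un-normed version $\Rig$, so the slice description suffices). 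Naturality in $G$ and compatibility with the forgetful functors to $\UCom_G$ follow because the whole chain of equivalences is natural and compatible with $\mathbb U$ by construction. I do not expect any serious obstacle: the only point requiring a little care is checking that the identification $\ul\UCom_\gl\simeq\ul\CAlg_\gl(\ul\Sp^\otimes_\gl)$ really does intertwine the two $\mathbb E$-lax right adjoints (equivalently, that it sends the globalization unit to the globalization unit), but this is forced since both right adjoints are characterized by inverting the same class of maps—those becoming equivalences in $\ul\Sp$—exactly as in the proof of Theorem~\ref{thm:equivariant-model}.
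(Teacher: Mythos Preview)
Your proposal is correct and follows essentially the same approach as the paper's proof: both use the commutative square from the proof of Theorem~\ref{thm:equivariant-model} together with Theorem~\ref{thm:globalize-algebras} and Corollary~\ref{cor:globalize-sp-monoidal} to identify the lower $\Ll$ as left adjoint to a globalization, then transport along the vertical equivalences. You have merely spelled out in more detail the unpacking of the partially lax limit formula and the naturality/compatibility claims that the paper leaves implicit.
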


\section{Norms, Mackey functors, and Tambara functors}\label{sec:tambara}
While we defined our normed categories of global and equivariant spectra by suitably deriving constructions on the level of model categories (as these are easiest to construct, and enable the comparison to ultra-commutative ring spectra), one can also attempt to construct them in a purely $\infty$-categorical fashion, using the Mackey functor description of equivariant and global spectra \cite{cmnn,CLL_Spans}. The purpose of this section is to first carry out the Mackey functor approach, and then to show that this is equivalent to our model categorical construction.
Beyond the relevance of comparing these two a priori very different approaches, this will also allow us to deduce a description for connective ultra-commutative global and $G$-equivariant ring spectra as \emph{spectral Tambara functors}, see Theorems~\ref{thm:equivariant-Tambara} and~\ref{thm:global-Tambara}.

For the normed structure on equivariant spectra/Mackey functors, this will basically amount to reorganizing the arguments given for a fixed group $G$ in \cite{CHLL_NRings}. However, these arguments ultimately rely on the fact that the suspension spectrum functor $\Sigma^\infty\colon\Spc_{G,*}\to\Sp_{G}$ already has a non-parametrized universal property in $\CAlg(\Pr^\text{L})$, so this line of reasoning cannot be applied to the categories of $G$-global spectra. Instead, the key new input for the global case will be \cref{thm:Normed_Rig} above, which will allow us to reduce this to the equivariant case.

\subsection{Mackey functor models of equivariant and global stable homotopy theory}
We begin by recalling the Mackey functor models of equivariant and global spectra from \cite{cmnn,CLL_Spans}, at this point without norms.

\begin{construction}
    We write $\ul\Mack$ for the global category
    \[
        \Fglo^\op\hskip-1pt
        \xrightarrow{\null\;\hskip1.5pt({\mathbb F}^\flat\hskip-.5pt,\hskip1pt{\mathbb F}^\flat\hskip-.5pt,\hskip1pt{\mathbb F}^\flat)\hskip1.5pt\;\null}\AdTrip\xrightarrow{\;\Span\;}\Cat
        \xrightarrow{\;P_\Sigma\;}{\Cat}(\sift)
    \]
    and $\ul\Mack_\gl$ for the global category
    \[
        \Fglo^\op
        \xrightarrow{\;(\Fglo^\flat,\Fglo^\flat,\Forb^\flat)\;}\AdTrip
        \xrightarrow{\;\Span\;}\Cat\xrightarrow{\;P_\Sigma\;}{\Cat}(\sift)\rlap.
    \]
    In particular, the functoriality is given in both cases by the universal property of $P_\Sigma$ or, equivalently \cite{HHLNa}*{Theorem~8.1}, by left Kan extension.
\end{construction}

\begin{remark}
    Via straightening--unstraightening, we can identify $\Fglo^\flat$ with the contravariant functor $\Fglo_{/-}\colon X\mapsto \Fglo_{/X}$, where the functoriality is via pullback. We claim that this restricts to an equivalence between $\Forb^\flat$ and the full subcategory $\Fglo_{/-}\times_{\Fglo}\Forb$, or equivalently, that a map $A\to B$ of (finite) groupoids over $X$ is faithful if and only if the induced map $A_x\to B_x$ on fibers is a faithful for every $x\in X$. But indeed, since a map of spaces is a summand inclusion iff its fibers are either empty or contractible, $f\colon A\to B$ is faithful if and only if for every $b\in B$ the fiber $f^{-1}(b)$ is a discrete groupoid. As this agrees with the fiber of $A_x\to B_x$ for $x$ the image of $b$ by the pasting law, the claim then follows.

    In particular, one may equivalently define $\ul\Mack_\gl(X) \coloneqq \Fun^\times(\Span_{\Forb}(\Fglo_{/X}),\Spc)$ with the evident functoriality. This `unstraightened' perspective on global Mackey functors is used in \cite{CLL_Spans}, and we will freely apply results from \emph{loc.~cit.} for our present definition, leaving the straightening--unstraightening translation implicit.

    Similarly, one observes that $\Fglo^\flat\simeq\Fglo_{/-}$ identifies $\mathbb F^\flat$ with the full global subcategory $\Forb_{/-}\subset\Fglo_{/-}$ whose objects are the faithful functors. Combining this with the equivalence $({\Forb_{/X}})_{/Y\to X}\simeq\Forb_{/Y}$ for every $Y\to X$ in $\Forb$, we can then identify the restriction of $\ul\Mack$ along $\mathbb F_G\simeq\Forb_{/G}\to\Forb$ with $X\mapsto{\Fun}^\times\big({\Span}\big((\mathbb F_G)_{/X}\big),{\Spc}\big)$, which is the perspective taken in \cite{CHLL_NRings}.
\end{remark}

As promised, the above is equivalent to our model categorical approach:

\begin{theorem}\label{thm:unnormed-comparison}
	~
    \begin{enumerate}
        \item $\ul\Mack$ is equivariantly presentable, and there is a unique equivalence \[\ul\Sp\iso{\Sp}\otimes\ul\Mack\eqqcolon\ul\SpMack\] sending the sphere $\mathbb S$ to the delooping of $\hom(1,-)\colon\Span(\mathbb F)\to\CMon$.
        \item $\ul\Mack_\textup{gl}$ is globally presentable, and there exists a unique equivalence \[\ul\Sp_\textup{gl}\iso{\Sp}\otimes\ul\Mack_\textup{gl}\eqqcolon\ul\SpMack_\textup{gl}\] sending $\mathbb S$ to the delooping of $\hom(1,-)\colon\Span_{\Forb}(\Fglo)\to\CMon$.
        \item The inclusion $i\colon\mathbb F\hookrightarrow\Fglo$ induces an equivariantly cocontinuous fully faithful functor $i_!\colon\ul\SpMack\hookrightarrow\ul\SpMack_\textup{gl}$ sitting inside a commutative diagram
        \begin{equation}\label{diag:globalization-square}
            \begin{tikzcd}
                \ul\Sp\arrow[r,hook]\arrow[d,"\sim"'] & \ul\Sp_\textup{gl}\arrow[d,"\sim"]\\
                \ul\SpMack\arrow[r, hook, "i_!"'] & \ul\SpMack_\textup{gl}\rlap.
            \end{tikzcd}
        \end{equation}
        Moreover, $i_!$ has an $\mathbb E$-lax right adjoint $i^*$, and this exhibits $\ul\SpMack_\gl$ as the globalization of $\ul\SpMack$.
    \end{enumerate}
    \begin{proof}
        We will prove the claims in a peculiar order.

        We begin by proving the second statement, which follows immediately by combining \cite{CLL_Global}*{Theorem~7.3.2} with \cite{CLL_Spans}*{Corollary~9.17}.

        Next, we will show that $\ul\Mack$ is equivariantly presentable and that $i_!\colon\ul\Mack\hookrightarrow\ul\Mack_\gl$ is equivariantly cocontinuous; from this the corresponding statement about the pointwise stabilizations will then follow via \cite{CLL_Clefts}*{Lemma~8.5}. To prove both claims, it suffices to show that the essential image of $\ul\Mack\hookrightarrow\ul\Mack_\gl$ is closed under ordinary colimits and the left adjoints to restrictions along maps in $\Forb$. The first statement is clear since $i_!$ is a fully faithful left adjoint. For the second statement, it suffices to observe that for any \emph{injective} homomorphism $\alpha\colon H\rightarrowmono G$, a left adjoint to $\Span(\alpha^*)\colon \Span_{\Forb}(\Fun(G,\Fglo))\to\Span_{\Forb}(\Fun(H,\Fglo))$ is given by $\Span(\alpha_!)$, by a straightforward application of \cite{BachmannHoyois2021Norms}*{Corollary~C.21}, and that the functor $\alpha_!\colon \Fun(H,\Fglo)\to\Fun(G,\Fglo)$ restricts to $\Fun(H,\mathbb F)\to\Fun(G,\mathbb F)$ (being given on underlying non-equivariant objects simply by a finite coproduct).

        Next, we note that $\ul\Mack$ is \emph{equivariantly semiadditive} in the sense of \cite{CLL_Global}*{Example~4.5.2}: namely, since we have just shown that it is equivariantly cocomplete, we may check this on underlying $\Forb$-categories, where this is an instance of \cite{CLL_Spans}*{Corollary~9.14} (for $P=T=\Orb$). Thus, $\ul\SpMack$ is \emph{equivariantly stable} by \cite{CLL_Clefts}*{Lemma~8.8}, and so the universal property of $\ul\Sp$ \cite{CLL_Clefts}*{Theorem~9.4} shows that there is a unique equivariantly cocontinuous functor $\Phi\colon\ul\Sp\to\ul\SpMack$ sending $\mathbb S$ to the delooping of $\Hom(1,-)$. We want to show that $\Phi$ is an equivalence, which can be checked on underlying $\mathbb O$-categories, where this follows by combining \cite{CLL_Clefts}*{Theorem~9.5} with \cite{CLL_Spans}*{Corollary~9.14} (for $P=T=\Orb$ again).

        Now we can prove the rest of the third statement. All maps in $(\ref{diag:globalization-square})$ are equivariantly cocontinuous, so to verify commutativity it will be enough to compare the images of the sphere spectrum. By construction, the upper path sends this to the delooping $\psi$ of $\hom(1,-)\colon \Span_\Forb(\Fglo) \to \CMon$. An easy Yoneda argument now shows that $i_!$ sends the delooping of $\hom(1,-)\colon\Span(\mathbb F)\to\CMon$ to $\psi$, so also the lower path through the diagram also sends $\mathbb S$ to $\psi$, as claimed. Finally, with commutativity of $(\ref{diag:globalization-square})$ established, $\ul\Sp\hookrightarrow\ul\Sp_\gl$ being left adjoint to a globalization by Example~\ref{ex:globalization} immediately shows that so is $i_!$.
    \end{proof}
\end{theorem}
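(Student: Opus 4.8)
The plan is to prove the four assertions comprising \cref{thm:unnormed-comparison} by bootstrapping from the global case, exactly as the excerpt hints, exploiting the fact that $\ul\Sp$, $\ul\Sp_\gl$, $\ul\Mack$, $\ul\Mack_\gl$ all arise from universal properties and that globalization (in the sense of \cref{thm:Normed_Rig}, or here just the un-normed version from \cite{Linskens2023globalization}) can be computed as a partially lax limit.

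First I would dispatch statement (2): $\ul\Mack_\gl$ is globally presentable and $\ul\Sp_\gl\simeq\Sp\otimes\ul\Mack_\gl$ by the sphere-fixing equivalence. This is immediate from combining the universal property of $\ul\Sp_\gl$ proven in \cite{CLL_Global}*{Theorem~7.3.2} with the identification of global Mackey functors as the free equivariantly-semiadditive cocompletion established in \cite{CLL_Spans}*{Corollary~9.17}; one just has to check the sphere goes to the delooping of $\hom(1,-)$, which pins down the equivalence uniquely. Next I would establish that $\ul\Mack$ is equivariantly presentable and that $i_!\colon\ul\Mack\hookrightarrow\ul\Mack_\gl$ is equivariantly cocontinuous. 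The key point is that $i_!$ is a fully faithful left adjoint, so its essential image is automatically closed under colimits; the only real content is closure under the left adjoints $\Ind_H^G$ to restrictions along injective homomorphisms, and here the computation is that $\Span(\alpha_!)$ is left adjoint to $\Span(\alpha^*)$ via \cite{BachmannHoyois2021Norms}*{Corollary~C.21}, together with the observation that $\alpha_!$ preserves the property of being a faithful functor (being a finite coproduct on underlying objects). Passing to pointwise stabilizations via \cite{CLL_Clefts}*{Lemma~8.5} then transfers equivariant presentability and cocontinuity to $\ul\SpMack$ and $i_!$ on the level of spectral Mackey functors.

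Then I would verify statement (1). Having shown $\ul\Mack$ equivariantly cocomplete, equivariant semiadditivity can be checked on underlying $\Forb$-categories, where it is \cite{CLL_Spans}*{Corollary~9.14} with $P=T=\Orb$; hence $\ul\SpMack$ is equivariantly stable by \cite{CLL_Clefts}*{Lemma~8.8}. The universal property of $\ul\Sp$ from \cite{CLL_Clefts}*{Theorem~9.4} then produces a unique equivariantly cocontinuous $\Phi\colon\ul\Sp\to\ul\SpMack$ fixing the sphere, and that $\Phi$ is an equivalence can again be checked on underlying $\mathbb O$-categories, combining \cite{CLL_Clefts}*{Theorem~9.5} with the same \cite{CLL_Spans}*{Corollary~9.14}. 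Finally, for statement (3): all four arrows in the square $(\ref{diag:globalization-square})$ are equivariantly cocontinuous functors out of $\ul\Sp$, so by the universal property it suffices to compare the images of $\mathbb S$; a Yoneda argument shows $i_!$ sends the delooping of $\hom(1,-)$ on $\Span(\mathbb F)$ to that on $\Span_\Forb(\Fglo)$, which is the common image, giving commutativity. The $\mathbb E$-lax right adjoint and the globalization statement then follow formally: $\ul\Sp\hookrightarrow\ul\Sp_\gl$ is left adjoint to a globalization by \cref{ex:globalization}, and commutativity of $(\ref{diag:globalization-square})$ transports this conclusion to $i_!$.

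The main obstacle is not any single hard estimate but rather the bookkeeping: making sure the several universal properties invoked (of $P_\Sigma$, of $\ul\Sp$, of $\ul\Sp_\gl$, of globalization) are applied to categories that genuinely satisfy their hypotheses, and that the straightening/unstraightening dictionary between $\Fglo^\flat\simeq\Fglo_{/-}$ and the span-of-slice-categories picture used in \cite{CLL_Spans} is handled coherently (the excerpt already records the needed translation that a map of groupoids over $X$ is faithful iff it is fiberwise faithful). The subtle step is pinning down the equivalences \emph{uniquely} by their value on the sphere — this requires knowing that the relevant mapping spaces of equivariantly cocontinuous functors between these presentable global categories are appropriately discrete on the sphere, which is exactly what the cited universal properties provide, but one must be careful to invoke them in the stable rather than merely semiadditive setting.
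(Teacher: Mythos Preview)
Your proposal is correct and follows essentially the same approach as the paper: you prove (2) first via the cited universal properties, then establish equivariant presentability of $\ul\Mack$ and equivariant cocontinuity of $i_!$ by the closure-of-essential-image argument (using \cite{BachmannHoyois2021Norms}*{Corollary~C.21} for the span adjunction), transfer to stabilizations via \cite{CLL_Clefts}*{Lemma~8.5}, deduce (1) from equivariant semiadditivity and the universal property of $\ul\Sp$, and finally obtain (3) by comparing images of the sphere and transporting the globalization statement from \cref{ex:globalization}. The order, the key lemmas, and the cited results all match the paper's own proof.
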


\subsection{Norms for equivariant Mackey functors}
Before we can describe the norms on $\ul\SpMack$ and $\ul\SpMack_\gl$, we have to deal with the unstable situation. For this, recall that a map $f\colon X\to Y$ of (pointed or unpointed) simplicial sets with $G$-action is called a \emph{$G$-equivariant weak equivalence} if the induced map $f^H\colon X^H\to Y^H$ is a weak homotopy equivalence for every $H\subset G$, or equivalently if the geometric realization $|f|$ is a \emph{$G$-equivariant homotopy equivalence} (i.e.~there exists an equivariant map $g\colon|Y|\to |X|$ together with homotopies through equivariant maps from $|f|g$, $g|f|$ to the respective identities).

\begin{proposition}
    Write $\SSet_*^\otimes$ for the symmetric monoidal $1$-category of finite pointed simplicial sets, where the symmetric monoidal structure is the smash product. Then the associated Borel normed $1$-category $(\SSet_*^\otimes)^\flat$ can be localized with respect to the equivariant weak equivalences, yielding a normed global category $\ul\Spc_*^\otimes$.
\end{proposition}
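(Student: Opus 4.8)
The plan is to produce the normed global category $\ul\Spc_*^\otimes$ by a levelwise (i.e.\ pointwise over $\Span_\Forb(\Fglo)$) Dwyer--Kan localization of the Borel normed global category $(\SSet_*^\otimes)^\flat$. For this it suffices to check that at every object $G\in\Fglo$ the value $(\SSet_*^\otimes)^\flat(G)=\Fun(G,\SSet_*)$ carries a natural choice of weak equivalences (the $G$-equivariant weak equivalences in the sense recalled above), and that every structure functor of the Borel normed global category---i.e.\ every functor attached to a morphism in $\Span_\Forb(\Fglo)$---is homotopical, i.e.\ preserves these weak equivalences. Given this, the localizations assemble by the usual argument (e.g.\ applying the localization functor levelwise and invoking that Dwyer--Kan localization is a functor out of relative categories), and one obtains a functor $\ul\Spc_*^\otimes\colon\Span_\Forb(\Fglo)\to\Cat$ together with a normed global functor $(\SSet_*^\otimes)^\flat\to\ul\Spc_*^\otimes$.

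\textbf{Key steps.} First I would reduce, exactly as in the decomposition used in the proof of Corollary~\ref{cor:forb-homotopical}, every morphism of $\Span_\Forb(\Fglo)$ to a composite of the basic functors: restrictions $\Res^G_H$ along injective group homomorphisms, inflations $\Inf^G_K$ along surjective homomorphisms (more generally restrictions along arbitrary maps in $\Fglo^\op$), fold maps $G\amalg G\rightarrowmono G$ encoding the smash product, and the symmetric monoidal norms $\Nm^G_H$ for subgroup inclusions $H\subset G$. It is then enough to check homotopy invariance for each of these separately. For the restrictions and inflations this is classical: restriction along any group homomorphism sends $G$-equivariant weak equivalences to $H$-equivariant weak equivalences since $(\phi^*X)^L=X^{\phi(L)}$. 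For the smash product (the fold maps), one uses that smashing with a fixed finite pointed simplicial set preserves weak equivalences of pointed simplicial sets, together with the fact that fixed points commute with smash products of the relevant form; more robustly, one passes to geometric realizations and uses that smashing preserves equivariant homotopy equivalences. The only nontrivial point is the norm $\Nm^G_H\colon\Fun(H,\SSet_*)\to\Fun(G,\SSet_*)$: here I would argue that on geometric realizations $|\Nm^G_H(f)|$ is built from $|f|$ by an $|H|$-fold ($=|G/H|$-fold, indexed over $G/H$) smash power with the twisted $G$-action, and that an $H$-equivariant homotopy equivalence $|f|$ is sent by this construction to a $G$-equivariant homotopy equivalence---the homotopies and inverse are transported through the twisted smash power functorially. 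This is precisely the finite-group case of the statement that the Hill--Hopkins--Ravenel norm on (orthogonal/symmetric) spectra preserves equivariant equivalences between cofibrant objects, specialized to the much easier setting of finite pointed simplicial sets where every object is ``cofibrant''; one may alternatively cite \cite{hausmann-equivariant}*{§5.2 and §6} or the spaces-level statement in \cite{BachmannHoyois2021Norms}.

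\textbf{Main obstacle.} The homotopy invariance of restrictions, inflations, and smash products is routine; the genuine content is the homotopy invariance of the norm functor $\Nm^G_H$. The subtlety is that $\Nm^G_H$ is built from an iterated smash product with a permutation action, and a naive weak equivalence argument on fixed points would require a Reedy/cofibrancy hypothesis. The clean way around this---and the step I expect to require the most care---is to work with geometric realizations and equivariant \emph{homotopy} equivalences throughout: since every finite simplicial set has a finite CW realization, every $G$-equivariant weak equivalence of finite $G$-simplicial sets realizes to a $G$-equivariant homotopy equivalence, and the twisted smash power is a (product-preserving up to the twist) functor on the homotopy category of $G$-spaces, hence automatically preserves such homotopy equivalences. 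Packaging this as a statement about the relative category $(\Fun(G,\SSet_*), W_{G\textup{-equiv.}})$ and verifying that all structure functors of $(\SSet_*^\otimes)^\flat$ are maps of relative categories then finishes the proof, since Dwyer--Kan localization applied levelwise produces the desired normed global category together with its normed localization functor.
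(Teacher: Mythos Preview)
Your proposal is correct and follows essentially the same approach as the paper: decompose the structure maps of $(\SSet_*^\otimes)^\flat$ into restrictions along group homomorphisms, smash products (fold maps), and norms $\Nm^G_H$, and verify each is homotopical with respect to the equivariant weak equivalences. The paper is terser---it simply notes that restrictions are homotopical ``from the definitions'' and cites \cite{CHLL_NRings}*{Proposition~5.2.11} for the smash product and norm---whereas you sketch the arguments directly via the equivalent characterization of equivariant weak equivalences as (after geometric realization) equivariant \emph{homotopy} equivalences, which the norm preserves since it is an enriched functor.
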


    The restriction along $\Span(\mathbb F_G)\simeq\Span(\Forb_{/G})\to\Span_{\Forb}(\Fglo)$ of this normed global category is denoted $\ul{\mathfrak S}_{G,*}$ in \cite{CHLL_NRings}*{Section~5}.

\begin{proof}
    Factoring a general map in $\Fglo$ as usual, we have to show that for every homomorphism $f\colon G\to H$ of finite groups the restriction $f^*\colon\Fun(G,\SSet_*)\to\Fun(H,\SSet_*)$ is homotopical, that the smash product of $G$-simplicial sets preserves weak equivalences in each variable, and that for every $H\subset G$ the symmetric monoidal norm $\Fun(H,\SSet_*)\to\Fun(G,\SSet_*)$ is again homotopical. The first statement is clear from the definitions, while the remaining statements are verified in \cite{CHLL_NRings}*{Proposition~5.2.11}.
\end{proof}

\begin{lemma}\label{lem:normed-pt-spc-free-sift}
    The inclusion $(\mathbb F_*^\otimes)^\flat\hookrightarrow\ul\Spc_*^\otimes$ is pointwise a sifted cocompletion.
    \begin{proof}
        Note that the functors $\Span(\mathbb F_G)\simeq\Span(\Forb_{/G})\to\Span_{\Forb}(\Fglo)$ are jointly essentially surjective: if $X$ is a general object in $\Fglo$ with decomposition into components $X\simeq\coprod_{i=1}^n X_i$, then $X$ lives over the group $\prod_{i=1}^nX_i$ via the faithful map $\coprod_{i=1}^nX_i\to\prod_{i=1}^nX_i$ given by the `identity matrix.' Thus, we may check the claim after restricting to each $\Span(\mathbb F_G)$, where this appears as \cite{CHLL_NRings}*{Corollary~5.2.7}.
    \end{proof}
\end{lemma}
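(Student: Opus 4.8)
The statement to prove is \cref{lem:normed-pt-spc-free-sift}: that the inclusion $(\mathbb F_*^\otimes)^\flat\hookrightarrow\ul\Spc_*^\otimes$ of normed global categories is pointwise a sifted cocompletion.

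The plan is to reduce the claim to its known counterpart for a fixed finite group $G$, namely \cite{CHLL_NRings}*{Corollary~5.2.7}, which asserts that for every finite group $G$ the functor $(\mathbb F_G)_*\hookrightarrow\mathfrak S_{G,*}$ (i.e.\ the restriction of our two normed global categories along $\Span(\mathbb F_G)\simeq\Span(\Forb_{/G})\to\Span_\Forb(\Fglo)$) exhibits the target as the free sifted cocompletion of the source at each object of $\Span(\mathbb F_G)$. Since being a sifted cocompletion is a property to be checked pointwise at each object $X\in\Span_\Forb(\Fglo)$, it suffices to show that every such $X$ lies (up to equivalence) in the image of one of the functors $\Span(\mathbb F_G)\to\Span_\Forb(\Fglo)$, because then the evaluation of the inclusion $(\mathbb F_*^\otimes)^\flat\hookrightarrow\ul\Spc_*^\otimes$ at $X$ agrees with the evaluation of $(\mathbb F_G)_*\hookrightarrow\mathfrak S_{G,*}$ at the corresponding object, which is a sifted cocompletion by the cited result.

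So the only thing to verify is that the functors $\Span(\mathbb F_G)\simeq\Span(\Forb_{/G})\to\Span_\Forb(\Fglo)$, as $G$ ranges over finite groups, are jointly essentially surjective. This is a purely combinatorial statement about $\Fglo$: every object of $\Span_\Forb(\Fglo)$ is an object of $\Fglo$, i.e.\ a finite $1$-groupoid $X$, and I must exhibit a finite group $G$ together with a faithful functor $X\to G$ (viewing $G$ as a one-object groupoid), since $\Forb_{/G}$ consists of the faithful functors with target $G$. Writing $X\simeq\coprod_{i=1}^n X_i$ as a coproduct of its connected components, each $X_i$ is (equivalent to) a one-object groupoid $BG_i$ for a finite group $G_i$; I would take $G\coloneqq\prod_{i=1}^n G_i$ and use the faithful map $\coprod_{i=1}^n BG_i\to B\!\left(\prod_{i=1}^n G_i\right)$ induced on each summand by the inclusion $G_i\hookrightarrow\prod_{j=1}^n G_j$ — concretely the ``identity matrix'' inclusion, which is faithful because on each component it is the inclusion of a subgroup, and distinct components map to disjoint cosets so no non-identity automorphism is identified across components. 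This exhibits $X$ as an object of $\Forb_{/G}\simeq\mathbb F_G$, completing the argument.

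The main obstacle here is essentially bookkeeping rather than any genuine difficulty: one must be slightly careful that ``sifted cocompletion at each object'' is indeed the pointwise condition being asserted, and that the identification of the fibers of the two normed global categories over $X$ with the fibers of $(\mathbb F_G)_*$ resp.\ $\mathfrak S_{G,*}$ over the preimage object is compatible with the inclusion functors — but this is immediate from the definitions, since both $(\mathbb F_*^\otimes)^\flat$ and $\ul\Spc_*^\otimes$ are restricted along $\Span(\mathbb F_G)\to\Span_\Forb(\Fglo)$ to obtain $(\mathbb F_G)_*$ resp.\ $\mathfrak S_{G,*}$, and restriction of a normed functor along this map is computed objectwise. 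Hence there is really nothing to prove beyond the joint essential surjectivity, which is the short computation indicated above.
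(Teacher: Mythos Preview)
Your proposal is correct and follows essentially the same approach as the paper's proof: both reduce to \cite{CHLL_NRings}*{Corollary~5.2.7} by observing that the functors $\Span(\mathbb F_G)\to\Span_{\Forb}(\Fglo)$ are jointly essentially surjective, and both verify this via the same ``identity matrix'' faithful map $\coprod_i X_i\to\prod_i X_i$. Your write-up is slightly more verbose about the bookkeeping, but the argument is identical.
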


We can now introduce our normed enhancement of $\ul\Mack$:

\begin{construction}
    The cartesian symmetric monoidal structure on $\mathbb F$ induces a normed structure $\ul{\mathbb F}^\times$ on the Borelification as before.
    Applying $\Span$ pointwise, we obtain a normed global category $\Span(\ul{\mathbb F})^\otimes$. Finally, passing to pointwise sifted cocompletion yields
    $
        \ul\Mack^\otimes
        \coloneqq \Fun^\times(-,\Spc)\circ \Span(\ul{\mathbb F})^\otimes
        \colon\Span_{\Forb}(\Fglo)
        \to{\Cat}(\sift)
    $.
\end{construction}

\begin{remark}
    If $G$ is any finite group, then the restriction of $\ul\Mack^\otimes$ along $\Span(\mathbb F_{G})\simeq\Span(\Forb_{/G})\to\Span_{\Forb}(\Fglo)$ recovers the normed $G$-category $\ul\NMon_G$ of \cite{CHLL_NRings}*{Section~5} (up to straightening--unstraightening as before).
\end{remark}

\begin{proposition}
    There is a unique equivariantly cocontinuous normed global functor $\mathbb P\colon \ul\Spc_*^\otimes\to\ul\Mack^\otimes$ sending $S^0$ to $\hom(1,-)\colon\Span(\mathbb F)\to\Spc$.
    \begin{proof}
        The argument is essentially the same as for a fixed group, cf.~\cite{CHLL_NRings}*{Proposition~5.3.1}.

        For uniqueness, we first claim that there is at most one equivariantly cocontinuous $\ul\Spc_*\to\ul\Mack$ sending $S^0$ to $\hom(1,-)$. Indeed, by the universal property of sifted cocompletion, this is equivalent to saying that there is a unique functor $\psi\colon \ul{\mathbb F}_*\to\ul\Mack$ sending $S^0$ to $\hom(1,-)$ and commuting with left adjoints to restrictions. As every element of $\ul{\mathbb F}_*$ can be written as $n_!f^*S^0$ for $f$ a map in $\Fglo$ and $n$ a map in $\Forb$, any such $\psi$ will necessarily land in the Yoneda image of $\Span(\ul{\mathbb F})$, so it will in particular be enough to show that there is a unique functor $\ul{\mathbb F}_*\to\Span(\ul{\mathbb F})$ sending $S^0$ to $1$ and commuting with left adjoints to restrictions. As $\Span(\ul{\mathbb F})$ is Borel (since $\ul{\mathbb F}=\mathbb F^\flat$ is Borel and $\Span$ preserves limits), this is equivalent to saying that there is a unique finite coproduct preserving functor $\mathbb F_*\to\Span(\mathbb F)$ sending $S^0$ to $1$. This follows again from the universal property of $\mathbb F_*$ in $\Cat^\amalg$.

        For uniqueness of the normed structure, we observe that it is similarly enough to show uniqueness of a normed structure on $\ul{\mathbb F}_*\to\Span(\ul{\mathbb F})$. Using the monoidal Borelification principle once more, this is equivalent to uniqueness of a normed structure on $\mathbb F_*\to\Span(\mathbb F)$, which follows from idempotency of both sides in $\CAlg(\Cat^\amalg)$, see \cite{harpaz2020ambidexterity}*{Corollary~5.5}.

        It remains to construct one such normed global left adjoint, for which we basically run the above argument backwards: the unique coproduct preserving functor $\mathbb F_*\to\Span(\mathbb F)$ sending $S^0$ to $1$ has a unique symmetric monoidal structure by idempotency, inducing $\ul{\mathbb F}_*^\otimes\to\Span(\ul{\mathbb F})^\otimes\hookrightarrow\ul\Mack^\otimes$. Using the universal property of sifted cocompletion, this then extends to $\ul\Spc_*^\otimes\to\ul\Mack^\otimes$, and it only remains to check that this is equivariantly cocontinuous. As in the proof of \cref{lem:normed-pt-spc-free-sift}, this can be checked after restricting to $\Span(\mathbb F_G)$ for every finite group $G$; the resulting functor of normed $G$-categories then admits the analogous description to the above, using the monoidal Borelification for $G$-categories instead of the one for global categories. However, for this functor it was verified in \cite{CHLL_NRings}*{proof of Proposition~5.3.1} that it is a $G$-equivariant left adjoint.
    \end{proof}
\end{proposition}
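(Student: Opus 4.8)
The plan is to establish the three assertions of the proposition (existence of a functor $\mathbb P\colon\ul\Spc_*^\otimes\to\ul\Mack^\otimes$, its equivariant cocontinuity, and uniqueness) by systematically reducing each claim to the already-known statement for a fixed finite group $G$, namely \cite{CHLL_NRings}*{Proposition~5.3.1}. The reduction mechanism is the joint essential surjectivity of the functors $\Span(\mathbb F_G)\simeq\Span(\Forb_{/G})\to\Span_{\Forb}(\Fglo)$ established in the proof of Lemma~\ref{lem:normed-pt-spc-free-sift}: every object of $\Span_\Forb(\Fglo)$ lives faithfully over some finite group, so properties of a normed global functor that can be checked objectwise (or more precisely after restriction along each $\Span(\mathbb F_G)$) may be verified group-by-group.

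For \emph{uniqueness}, I would first note that by the universal property of pointwise sifted cocompletion (Lemma~\ref{lem:normed-pt-spc-free-sift}), an equivariantly cocontinuous normed global functor out of $\ul\Spc_*^\otimes$ is determined by a normed global functor out of $(\ul{\mathbb F}_*^\otimes)$ which commutes with the left adjoints to restrictions; moreover, since every object of $\ul{\mathbb F}_*$ has the form $n_!f^*S^0$ with $f$ in $\Fglo$ and $n$ in $\Forb$, such a functor automatically lands in the Yoneda image of $\Span(\ul{\mathbb F})$. One is thus reduced to uniqueness of a (normed) finite-coproduct-preserving functor $\ul{\mathbb F}_*\to\Span(\ul{\mathbb F})$ sending $S^0$ to $1$. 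Applying the monoidal Borelification principle (Theorem~\ref{thm:mon-borel}), this in turn reduces to uniqueness of a coproduct-preserving functor $\mathbb F_*\to\Span(\mathbb F)$ sending $S^0$ to $1$, together with uniqueness of a normed (i.e.\ symmetric monoidal) enhancement of it. The former is the universal property of $\mathbb F_*$ in $\Cat^\amalg$; the latter follows from idempotency of $\mathbb F_*$ (equivalently of $\Span(\mathbb F)$) in $\CAlg(\Cat^\amalg)$, cf.~\cite{harpaz2020ambidexterity}*{Corollary~5.5}. This is essentially the same chain of reductions used for a fixed $G$ in \emph{op.\ cit.}, only now carried out globally.

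For \emph{existence}, I would run this argument backwards: start from the unique symmetric monoidal, coproduct-preserving functor $\mathbb F_*^\otimes\to\Span(\mathbb F)^\otimes$ sending $S^0$ to $1$ (obtained from idempotency), Borelify it to $\ul{\mathbb F}_*^\otimes\to\Span(\ul{\mathbb F})^\otimes\hookrightarrow\ul\Mack^\otimes$, and extend along the pointwise sifted cocompletion of Lemma~\ref{lem:normed-pt-spc-free-sift} to obtain the desired $\mathbb P\colon\ul\Spc_*^\otimes\to\ul\Mack^\otimes$. The only nontrivial point left is equivariant cocontinuity: again by joint essential surjectivity this may be checked after restricting along each $\Span(\mathbb F_G)\to\Span_\Forb(\Fglo)$, where the restricted functor admits the analogous explicit description (using monoidal Borelification for $G$-categories in place of the one for global categories) and is shown to be a $G$-equivariant left adjoint in \cite{CHLL_NRings}*{proof of Proposition~5.3.1}.

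The main obstacle, such as it is, is bookkeeping rather than conceptual: one must be careful that the two applications of the Borelification principle (for global categories, Theorem~\ref{thm:mon-borel}, and for $G$-categories) are compatible with the restriction functors $\Span(\mathbb F_G)\to\Span_\Forb(\Fglo)$, so that restricting the global construction really does recover the $G$-equivariant construction $\ul\NMon_G$, and not merely something equivalent. This compatibility is what licenses quoting \cite{CHLL_NRings}*{Proposition~5.3.1} verbatim for the group-by-group checks; once it is in place, all three assertions follow formally.
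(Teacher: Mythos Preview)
Your proposal is correct and follows essentially the same approach as the paper: reduce via sifted cocompletion to $\ul{\mathbb F}_*^\otimes\to\Span(\ul{\mathbb F})^\otimes$, use (monoidal) Borelification to reduce further to the non-parametrized statement about $\mathbb F_*\to\Span(\mathbb F)$ (settled by the universal property of $\mathbb F_*$ and idempotency), and check equivariant cocontinuity group-by-group via the restriction to $\Span(\mathbb F_G)$ and \cite{CHLL_NRings}*{Proposition~5.3.1}. The only cosmetic difference is that your opening paragraph advertises joint essential surjectivity as the main reduction mechanism, whereas in your (and the paper's) actual argument it is Borelification that carries the weight for uniqueness, with joint essential surjectivity used only for the cocontinuity check at the end.
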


\begin{theorem}\label{thm:equivariant-uniqueness}
    There exists a unique pair of
    \begin{enumerate}
        \item a normed structure on $\ul\SpMack$ such that all norms preserve sifted colimits and each $\ul\SpMack(X)$ is a presentably symmetric monoidal category, and
        \item a normed structure on the stabilization map $\ell\colon\ul\Mack\to\ul\SpMack$.
    \end{enumerate}
    Moreover, there exists a unique normed equivalence $\ul\Sp^\otimes\simeq\ul\SpMack^\otimes$ making the following diagram of normed global categories and normed functors commute:
    \[
        \begin{tikzcd}
            \ul\Spc_{*}^\otimes\arrow[d,"\Sigma^\infty"']\arrow[r,"\mathbb P"] & \ul\Mack^\otimes\arrow[d,"\ell"]\\
            \ul\Sp^\otimes\arrow[r, "\sim"'] & \ul\SpMack^\otimes\kern-1.5pt\rlap.\kern1.5pt
        \end{tikzcd}
    \]
    \begin{proof}
        By \cite{CHLL_NRings}*{Lemma~5.5.3 and Proposition~5.5.4}, the stabilization map $\ul\Mack(X)\to\ul\SpMack(X)$ lifts uniquely to a map in $\CAlg(\Pr^\text{L})$, and this is the universal example of a map inverting $\Sigma\bbone$ in either $\CAlg(\Pr^\textup{L})$ or $\CAlg({\Cat}(\sift))$. This immediately proves the first half.

        For the second half, we observe that by Proposition 5.5.6 of \emph{op.~cit.} the composite $\ul\Spc_*(X)\to\ul\Mack(X)\to\ul\SpMack(X)$ is given by universally inverting $\Nm^X_1S^1$ in $\CAlg({\Cat}(\sift))$ whenever $X$ is connected. The same then holds for general $X$ by product preservation. As also the map $\Sigma^\infty\colon\ul\Spc^\otimes_*\to\ul\Sp^\otimes$ admits the same pointwise description as a consequence of \cite{gepnermeier2020equivTMF}*{Theorem~C.7} (cf.~\cite{CHLL_NRings}*{proof of Proposition 5.5.6}), the claim follows.
    \end{proof}
\end{theorem}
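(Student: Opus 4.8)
The plan is to reduce the statement to a pointwise (fiberwise) universal property and then invoke the analogous results proven for a single group in \cite{CHLL_NRings}. First I would observe that $\ul\Mack^\tensor$, being a product-preserving functor $\Span_\Forb(\Fglo)\to\Cat(\sift)$, equips each $\ul\Mack(X)$ with a presentably symmetric monoidal structure by restriction along the (unique) product-preserving functor $\Span(\F)\to\Span_\Forb(\Fglo)$ singling out $X$; by construction the resulting tensor product is the sifted cocompletion of a cartesian monoidal structure, hence accessible and preserving sifted colimits separately in each variable. The stabilization $\ell(X)\colon\ul\Mack(X)\to\ul\SpMack(X)=\Sp\tensor\ul\Mack(X)$ is then the universal example of a map inverting the suspension $\Sigma\bbone$ of the monoidal unit, and \cite{CHLL_NRings}*{Lemma~5.5.3 and Proposition~5.5.4} show that it lifts uniquely to $\CAlg(\PrL)$ and carries a universal property there as well as in $\CAlg(\Cat(\sift))$. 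This produces, uniquely, a presentably symmetric monoidal structure on each $\ul\SpMack(X)$ through which $\ell(X)$ factors, settling parts (1) and (2) of the theorem pointwise, together with the claimed uniqueness.

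The second step is to promote these pointwise data to a normed structure on $\ul\SpMack$ and a normed refinement of $\ell$, i.e.\ to a product-preserving functor $\Span_\Forb(\Fglo)\to\Cat(\sift)$. Since the universal property identified above is natural in the input symmetric monoidal category, localizing $\ul\Mack^\tensor$ pointwise yields a genuine functor $\ul\SpMack^\tensor$ on $\Span_\Forb(\Fglo)$ together with a natural transformation $\ell\colon\ul\Mack^\tensor\Rightarrow\ul\SpMack^\tensor$, and $\ul\SpMack^\tensor$ preserves finite products because this localization commutes with finite products of categories. Uniqueness of the pair (normed structure, normed refinement of $\ell$) with the stated properties then follows from the pointwise uniqueness together with conservativity of the forgetful functor $\NmCat{\Fglo}{\Forb}\to\iCat{\Fglo}$ --- two such structures agreeing on underlying global categories agree, as $\Span_\Forb(\Fglo)$ and $\Fglo^\op$ have the same objects.

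For the last claim I would exhibit both $\Sigma^\infty\colon\ul\Spc_*^\tensor\to\ul\Sp^\tensor$ and the composite $\ell\circ\mathbb P\colon\ul\Spc_*^\tensor\to\ul\SpMack^\tensor$ as carrying the same universal property, checked one fiber at a time. When $X$ is connected, \cite{CHLL_NRings}*{Proposition~5.5.6} identifies $\ell(X)\circ\mathbb P(X)$ with the universal example of inverting $\Nm^X_1 S^1$ in $\CAlg(\Cat(\sift))$, and \cite{gepnermeier2020equivTMF}*{Theorem~C.7} (cf.\ \cite{CHLL_NRings}*{proof of Proposition~5.5.6}) gives exactly the same description of $\Sigma^\infty(X)$; for general $X$ one concludes by product preservation of all functors involved. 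Uniqueness of an object carrying a universal property then yields a unique equivalence $\ul\Sp^\tensor\simeq\ul\SpMack^\tensor$ under $\ul\Spc_*^\tensor$, and since the universal property was set up at the level of normed functors out of $\ul\Spc_*^\tensor$ (via the reduction to connected $X$ plus product preservation), this equivalence is automatically a normed equivalence. I expect the main obstacle to be exactly this bookkeeping: making sure the fiberwise universal properties assemble into a statement about \emph{normed} functors, which is why one passes first to connected groupoids $X$ --- where the fixed-group results of \cite{CHLL_NRings} apply verbatim --- and only afterwards re-extends along the jointly essentially surjective inclusions $\Span(\F_G)\simeq\Span(\Forb_{/G})\hookrightarrow\Span_\Forb(\Fglo)$.
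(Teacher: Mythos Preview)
Your proposal is correct and follows essentially the same approach as the paper: you invoke the pointwise universal property of $\ell(X)$ as symmetric monoidal inversion of $\Sigma\bbone$ from \cite{CHLL_NRings}*{Lemma~5.5.3, Proposition~5.5.4} to obtain the normed structure on $\ul\SpMack$, and then identify both $\ell\circ\mathbb P$ and $\Sigma^\infty$ via the fiberwise characterization as inversion of $\Nm^X_1 S^1$ from \cite{CHLL_NRings}*{Proposition~5.5.6} and \cite{gepnermeier2020equivTMF}*{Theorem~C.7}, reducing to connected $X$ and extending by product preservation. One small remark: your appeal to ``conservativity of the forgetful functor'' for uniqueness is slightly misphrased---what you actually use is that the pointwise universal property in $\CAlg(\Cat(\sift))$ determines both values and transition functors uniquely, which is exactly how the paper's terse ``this immediately proves the first half'' should be unpacked.
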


\begin{warn}
    Since the norms of $\ul\Mack^\tensor$ do not preserve \emph{all} colimits, the norms of $\ul{\SpMack}^\tensor$ cannot be simply obtained by taking the Lurie tensor product of $\und{\Mack}^\tensor$ with $\Sp$. Moreover, the norms for $\ul{\SpMack}^\tensor$ will no longer be given by left Kan extension along maps of span categories.
\end{warn}

\subsection{Norms for global Mackey functors}
Let us enhance $\ul\Mack_\gl$ to a normed global category:

\begin{lemma}\label{lem:goo-ring-context}
    The triple $(\Fglo,\Forb,\Forb)$ is a \emph{ring context} in the sense of \cite{CHLL_NRings}*{Definition~4.3.2}.

    \begin{proof}
        It is clear that $\Fglo$ has pullbacks, and we already mentioned in \cref{rem:orbital-dist} that $(\Fglo,\Forb)$ and $(\Fglo,\Fglo)$ are extensive span pairs by virtue of being free finite coproduct completions of orbital pairs. It then only remains to show that for every $m\colon X\to Y$ in $\Forb$ the pullback functor $m^*\colon\Fglo_{/Y}\to\Fglo_{/X}$ has a right adjoint $m_*$ mapping $\Fglo_{/X}\times_{\Fglo}\Forb$ to $\Fglo_{/Y}\times_{\Fglo}\Forb$. After straightening, this corresponds to saying that $m^*\colon\Fun(Y,\Fglo)\to\Fun(X,\Fglo)$ has a right adjoint restricting to $\Fun(X,\Forb)\to\Fun(Y,\Forb)$. This is immediate from the pointwise formula for right Kan extensions, as faithful functors are closed under limits.
    \end{proof}
\end{lemma}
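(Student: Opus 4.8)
The plan is to unwind the definition of a \emph{ring context} from \cite{CHLL_NRings}*{Definition~4.3.2} and check its conditions one by one for the triple $(\Fglo,\Forb,\Forb)$. Recall that this definition asks for: the ambient category $\Fglo$ to admit pullbacks; the pairs $(\Fglo,\Forb)$ and $(\Fglo,\Fglo)$ to be extensive span pairs; and, for every faithful map $m\colon X\rightarrowmono Y$, the pullback functor $m^*\colon\Fglo_{/Y}\to\Fglo_{/X}$ to admit a right adjoint $m_*$ which preserves the full subcategories of faithful maps, i.e.\ restricts to a functor $\Fglo_{/X}\times_\Fglo\Forb\to\Fglo_{/Y}\times_\Fglo\Forb$, together with the associated base change compatibility.

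I would first dispose of the easy conditions. That $\Fglo$ has pullbacks and that $(\Fglo,\Forb)$ and $(\Fglo,\Fglo)$ are extensive span pairs has essentially already been recorded in \cref{rem:orbital-dist} (via \cite{CHLL_NRings}*{Example~3.1.9}): $\Fglo=\mathbb F[\Glo]$ and $\Forb=\mathbb F[\Orb]$ are the free finite coproduct completions of the orbital pairs $(\Glo,\Glo)$ and $(\Glo,\Orb)$, and orbital pairs give extensive span pairs upon free coproduct completion. The substance of the argument is therefore the remaining condition on the norms. Here the key tool is the natural straightening equivalence $\Fglo_{/Y}\simeq\Fun(Y,\Fglo)$ of \cite{CLL_Global}*{Lemma~5.2.3}, under which $m^*$ becomes restriction along the functor of groupoids underlying $m$. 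Since $\Fglo$ is locally cartesian closed, hence complete, this restriction functor admits a right adjoint computed by pointwise right Kan extension, which provides the desired $m_*$.

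It then remains to see that $m_*$ preserves faithfulness. Under the straightening equivalence, a map over $Y$ is faithful precisely when the corresponding functor $Y\to\Fglo$ lands in the discrete objects, i.e.\ factors through $\mathbb F\hookrightarrow\Fglo$ (fibrewise, a map of groupoids is faithful iff its fibres are discrete), so faithful maps over $Y$ correspond to the full subcategory $\Fun(Y,\mathbb F)\subset\Fun(Y,\Fglo)$. Thus the claim reduces to the statement that right Kan extension carries $\Fun(X,\mathbb F)$ into $\Fun(Y,\mathbb F)$, which is immediate from the pointwise formula and the fact that $\mathbb F\subset\Fglo$ is closed under finite limits; the base change requirement, if present in \cite{CHLL_NRings}*{Definition~4.3.2}, follows likewise from the pointwise formula together with naturality of the straightening equivalence. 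The only mildly delicate point in the whole argument is the identification of the subcategory of faithful maps under straightening; once that is in hand everything is formal, and assembling the verified conditions proves the lemma.
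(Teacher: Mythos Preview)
Your overall approach matches the paper's: dispose of the extensivity conditions via \cref{rem:orbital-dist}, straighten slices to functor categories, and verify the remaining condition via the pointwise formula for right Kan extension together with a closure-under-limits fact. The one discrepancy is your identification of $\Fglo_{/X} \times_\Fglo \Forb$. Computed as the literal pullback along the forgetful functor $(Z \to X) \mapsto Z$ and the wide inclusion $\Forb \hookrightarrow \Fglo$, this is the \emph{wide} subcategory of $\Fglo_{/X}$ on those morphisms $Z \to Z'$ over $X$ that are faithful; under straightening it becomes the non-full subcategory $\Fun(X, \Forb) \subset \Fun(X, \Fglo)$ of natural transformations with faithful components, which is precisely what the paper writes. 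You instead identify it with the full subcategory $\Fun(X, \mathbb{F})$ of discrete-valued functors, which corresponds rather to $\Forb_{/X} \subset \Fglo_{/X}$, the full subcategory on objects with faithful structure map. The required check is therefore that right Kan extension along $m$ preserves natural transformations with faithful components; this follows from the pointwise formula because faithful maps are closed under limits in $\Ar(\Fglo)$, and this is what the paper invokes. Your appeal to $\mathbb{F} \subset \Fglo$ being closed under finite limits only establishes the weaker statement that discrete-valued functors are preserved (which is in fact the special case of the transformation $F \to *$). The repair is immediate once the correct subcategory is pinned down.
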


\begin{construction}\label{constr:Mackgl}
    As $(\Fglo,\Forb,\Forb)$ is a (semi)ring context, \cite{CHLL_NRings}*{Lemma~4.1.7} extends $\Span_{\Forb}(\Fglo^\flat)$ to a normed global category $\Span_{\Forb}(\Fglo^\flat)^\otimes$, such that the norm $n_\otimes$ along $n\colon A\to B$ is given by left Kan extension along $\Span(n_*)$.\footnote{In general, $\Span(n_*)$ is not adjoint to $\Span(n^*)$, so $n_\otimes$ is \emph{not} just the right adjoint of $n^*$.}

    By the monoidal Borelification principle, this normed structure can also be uniquely characterized by what it does on the underlying category $\Span_{\Forb}(\Fglo)$. Simply by definition, this is obtained via the functoriality of $\Span$ from the cartesian symmetric monoidal structure on $\Fglo$.

    By \cite{CHLL_NRings}*{Proposition~3.3.1} this then again yields a normed structure on $\ul\Mack_\gl$ via free sifted cocompletion, see also \cite{CHLL_NRings}*{Definition~4.1.9}.
\end{construction}

\begin{remark}\label{rk:GGO}
    The argument from the proof of Lemma~\ref{lem:goo-ring-context} applies verbatim to show that the pullback functor $f^*\colon\Fglo_{/Y}\to\Fglo_{/X}$ more generally has a right adjoint $f_*$ restricting to $\Fglo_{/X}\times_{\Fglo}\Forb\to\Fglo_{/Y}\times_{\Fglo}\Forb$ for \emph{every} $f$ in $\Fglo$ (and not just those maps in $\Forb$), i.e.~also $(\Fglo,\Fglo,\Forb)$ is a ring context.
    One then obtains a `globally normed' global category $\ul\Mack_\gl^\botimes\colon\Span(\Fglo)\to{\Cat}$ extending $\ul\Mack_\gl$ in the same way as the previous construction (i.e.~with norms for arbitrary maps of groupoids, not only for the faithful ones).
    We will return to this observation in §\ref{subsec:mult-defl}.
\end{remark}

\begin{theorem}\label{thm:global-existence}
    There exists a unique pair of
    \begin{enumerate}
        \item an enhancement of $\ul\SpMack_\textup{gl}$ to a normed global category $\ul{\SpMack}_\textup{gl}^\otimes$ such that each $\ul{\SpMack}_\textup{gl}^\otimes(X)$ is presentably symmetric monoidal and all norms preserve sifted colimits, together with
        \item an extension of the stabilization map $\ul\Mack_\textup{gl}\to\ul\SpMack_\textup{gl}$ to a normed functor.
    \end{enumerate}
\end{theorem}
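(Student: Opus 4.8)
The plan is to deduce this global statement from its equivariant counterpart (Theorem~\ref{thm:equivariant-uniqueness}) by exploiting the universal property of the (normed) globalization established in \cref{thm:Normed_Rig,thm:globalize-algebras}, together with the Mackey-functor version of globalization recorded in Theorem~\ref{thm:unnormed-comparison}(3). The key point is that both $\ul\SpMack$ and $\ul\SpMack_\gl$, and likewise $\ul\Mack$ and $\ul\Mack_\gl$, are related by globalization, so a normed structure downstairs should propagate upstairs in an essentially unique fashion.

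First I would produce the normed structure. By Theorem~\ref{thm:unnormed-comparison}(2) the underlying global category $\ul\Mack_\gl$ is globally presentable, hence in particular equivariantly presentable, and \cref{constr:Mackgl} already equips it with a normed structure $\ul\Mack_\gl^\otimes$ whose norms (being pointwise free sifted cocompletions of left Kan extensions along span categories) preserve sifted colimits. Applying the pointwise-stabilization construction of \cite{CHLL_NRings}*{Lemma~5.5.3 and Proposition~5.5.4} levelwise, exactly as in the proof of Theorem~\ref{thm:equivariant-uniqueness}, upgrades the stabilization map $\ul\Mack_\gl\to\ul\SpMack_\gl$ to a normed functor $\ell_\gl\colon\ul\Mack_\gl^\otimes\to\ul\SpMack_\gl^\otimes$ with each $\ul\SpMack_\gl^\otimes(X)$ presentably symmetric monoidal; since these constructions are functorial and compatible with the normed structure maps, they assemble into a genuine normed global category. (Alternatively, and more in keeping with the section's strategy, one can obtain $\ul\SpMack_\gl^\otimes$ as the normed globalization $\Glob^\otimes(\ul\SpMack^\otimes)$ via \cref{thm:Normed_Rig}, using that $\ul\SpMack_\gl\simeq\Glob(\ul\SpMack)$ by Theorem~\ref{thm:unnormed-comparison}(3); Theorem~\ref{thm:globalize-algebras}-type reasoning then transports the normed stabilization map from the equivariant to the global level. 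I would use whichever of these is cleaner to write out.)

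Next I would establish uniqueness. Suppose $\ul\Dd^\otimes$ is a second normed global category enhancing $\ul\SpMack_\gl$ with presentably symmetric monoidal fibers and norms preserving sifted colimits, together with a normed extension of $\ell_\gl$. Restricting along $\mathbb F\hookrightarrow\Fglo$ and using Theorem~\ref{thm:unnormed-comparison}(3), the underlying equivariant data $\ul\Dd^\otimes|_{\mathbb F}$ together with the restricted normed stabilization map satisfies the hypotheses of Theorem~\ref{thm:equivariant-uniqueness}, so there is a unique normed equivalence $\ul\Dd^\otimes|_{\mathbb F}\simeq\ul\SpMack^\otimes$ compatible with $\ell$. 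Both $\ul\Dd^\otimes$ and $\ul\SpMack_\gl^\otimes$ are (equivariantly, indeed globally) presentable normed global categories whose norms preserve sifted colimits; one checks, using that globalization is a $\Cat$-linear right adjoint (\cite{Abellan23}) and preserves the relevant presentability and normed-cocompleteness conditions, that each is the normed globalization of its own equivariant restriction. The universal property of the normed globalization (\cref{thm:Normed_Rig}, applied to the pair $(\Fglo,\Forb,\Forb)$ using $\Forb\subset\Forb$) then upgrades the equivariant equivalence to a unique normed equivalence $\ul\Dd^\otimes\simeq\ul\SpMack_\gl^\otimes$ compatible with $\ell_\gl$, giving the claimed uniqueness.

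\textbf{Main obstacle.} The crux is verifying that $\ul\SpMack_\gl^\otimes$ (and any competitor $\ul\Dd^\otimes$) is genuinely recovered as the normed globalization of its equivariant restriction, i.e.\ that the $\mathbb E$-lax right adjoint produced by \cref{thm:Normed_Rig} is compatible with \emph{all} the data (norms, sifted-colimit-cocontinuity, the stabilization map) and not merely with the underlying global category. This requires combining the un-normed identification $\ul\SpMack_\gl\simeq\Glob(\ul\SpMack)$ from Theorem~\ref{thm:unnormed-comparison}(3) with the normed-rigidification machinery of Section~\ref{sec:norms-and-globalization}, and checking that the normed structure on $\Glob^\otimes(\ul\SpMack^\otimes)$ coming from \cref{thm:Normed_Rig} has presentably symmetric monoidal fibers and sifted-colimit-preserving norms --- the latter is the analogue of \cref{thm:spgl-dist} in this Mackey-functorial setting and is where most of the genuine work lies.
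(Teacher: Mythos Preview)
Your option (a) is essentially the paper's approach, and in fact delivers uniqueness immediately once stated carefully: each $\ul\SpMack_\textup{gl}(X)$ is the symmetric monoidal inversion of $\Sigma\bbone$ in $\ul\Mack_\textup{gl}(X)$, first in $\CAlg(\PrL)$ and then in $\CAlg(\Cat(\sift))$, so the normed extension of the stabilization map is determined pointwise by this universal property --- no globalization machinery needed. What you gloss over is the one genuinely new check required for existence: for each span from $X$ to $Z$ in $\Span_\Forb(\Fglo)$ one must show that the corresponding norm--restriction composite sends $\Sigma\bbone$ to something that becomes invertible in $\ul\SpMack_\textup{gl}(Z)$. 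In Theorem~\ref{thm:equivariant-uniqueness} this followed from a non-parametrized universal property of $\Sigma^\infty$ in $\CAlg(\PrL)$, which is unavailable globally. The paper instead transports invertibility from the equivariant side: it builds a symmetric monoidal functor $\ul\SpMack(Z)\to\ul\SpMack_\textup{gl}(Z)$ out of the normed map $i_!\colon\ul\Mack^\otimes\to\ul\Mack_\textup{gl}^\otimes$, and uses that $\Sigma\bbone$ is already invertible in $\ul\SpMack(Z)$. Your sentence ``since these constructions are functorial and compatible with the normed structure maps'' hides precisely this step.

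Your option (b) and your uniqueness argument via globalization are circular: the normed identification $\ul\SpMack_\textup{gl}^\otimes\simeq\Glob^\otimes(\ul\SpMack^\otimes)$ is Theorem~\ref{thm:normed-comparison}, proved \emph{after} and \emph{using} the present theorem. The uniqueness argument has a further conceptual gap: both $\ul\SpMack^\otimes$ and $\ul\SpMack_\textup{gl}^\otimes$ are functors on the same category $\Span_\Forb(\Fglo)$, and there is no ``restriction along $\mathbb F\hookrightarrow\Fglo$'' converting one into the other --- the relation is via the $\mathbb E$-lax right adjoint $i^*$. Even granting some such restriction, an arbitrary competitor $\ul\Dd^\otimes$ satisfying the stated hypotheses has no reason to be the normed globalization of anything; that is a special feature of $\ul\SpMack_\textup{gl}^\otimes$, not a formal consequence of being presentably symmetric monoidal with sifted-colimit-preserving norms.
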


\begin{proof}
    A significant part of this argument is analogous to the equivariant case handled in \cite{CHLL_NRings}*{§5.5} and recalled in \cref{thm:equivariant-uniqueness} above, and accordingly we will be a bit terse about this part.

    Fix $X\in\Fglo$ and equip $\und{\Mack}_\gl(X)$ with the symmetric monoidal structure encoded in the normed structure. By \cite{CHLL_Bispans}*{Lemma 5.5.3}, the stabilization map $\und{\Mack}_\gl(X)\to\und{\SpMack}_\gl(X)$ lifts uniquely to a map in $\CAlg(\text{Pr}^\text{L})$, and the resulting map is the symmetric monoidal inversion of $\Sigma\bbone$ in $\CAlg(\Pr^\text{L})$. Combining \cite{BachmannHoyois2021Norms}*{Proposition~4.1} with \cite{CHLL_Bispans}*{Lemma~5.5.2}, we then see that this is also a symmetric monoidal inversion in $\CAlg({\Cat}(\sift))$; in particular, not only do symmetric monoidal left adjoints inverting $\Sigma\bbone$ lift uniquely through it, but this holds more generally for any sifted colimit preserving symmetric monoidal functor inverting $\Sigma\bbone$.

    With this universal property at hand, it will be enough to show that for any map
    \[\begin{tikzcd}
        X &\arrow["f"',l] Y \arrow["n",r,norm] & Z
    \end{tikzcd}\]
    in $\Span_{\Forb}(\Fglo)$ the functor $n_\otimes f^*\colon\und{\Mack}_\gl(X)\to\und{\Mack}_\gl(Z)$ preserves sifted colimits, and that $n_\otimes f^*\Sigma\bbone$ is inverted by $\und{\Mack}_\gl(Z)\to\und{\SpMack}_\gl(Z)$.

    For the statement about sifted colimits, it suffices to observe that $n_\otimes f^*$ is given by restricting a left adjoint
    $
        \Fun(\Span_{\Forb}(\Fglo^\flat(X)),\Spc)
        \to \Fun(\Span_{\Forb}(\Fglo^\flat(Y)),\Spc)
    $
    to the categories of product preserving functors, and the latter are closed under sifted colimits.

    For the invertibility of $n_\otimes f^*\Sigma\bbone$ in $\ul\SpMack_{\gl}^\tensor(Z)$, note first that $\Sigma\bbone$ is invertible in $\SpMack(Z)$ and that both $f^*$ and $n_\otimes$ admit natural symmetric monoidal structures. It therefore suffices to show that each $\ul\SpMack(X)\to\ul\SpMack_\gl(X)$ admits a symmetric monoidal structure. Comparing the universal properties of symmetric monoidal inversion of $S^1$ and of usual (non-monoidal) stabilization, it suffices to construct such a structure on the composite $\ul\Mack(X)\to\ul\SpMack(X)\to\ul\SpMack_\gl(X)$. But we can also factor this as $\ul\Mack(X)\to\ul\Mack_\gl(X)\to\ul\SpMack_\gl(X)$, where the second map comes with a symmetric monoidal structure by design, while $i_!\colon\ul\Mack\to\ul\Mack_\gl$ even has a normed structure induced by the unique symmetric monoidal structure on $\mathbb F\hookrightarrow\Fglo$.
\end{proof}

Using this, we can now state our comparison result:

\goodbreak
\begin{theorem}\label{thm:normed-comparison}\
    \begin{enumerate}
        \item The normed global functor $i_!\colon\ul\Mack^\otimes\hookrightarrow\ul\Mack_\gl^\otimes$ lifts uniquely to a normed global functor $i_!\colon\ul\SpMack^\otimes\hookrightarrow\ul\SpMack_\gl^\otimes$.
        \item This normed global functor has an $\mathbb E$-lax right adjoint $i^*\colon\ul\SpMack^\otimes_\gl\to\ul\SpMack^\otimes$, which exhibits $\ul\SpMack^\otimes_\gl$ as the normed globalization of $\ul\SpMack^\otimes$.
        \item There is a preferred equivalence $\ul\Sp_\gl^\otimes\simeq\ul\SpMack^\otimes_\gl$ of normed global categories.
    \end{enumerate}
\end{theorem}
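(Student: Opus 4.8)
The plan is to deduce the equivalence of normed global categories from the equivalences we already have in hand, together with the uniqueness statements of Theorems~\ref{thm:equivariant-uniqueness}, \ref{thm:global-existence}, and~\ref{thm:global-model}. First I would recall that parts (1) and~(2) of this theorem have already been established, so that $\ul\SpMack_\gl^\otimes$ is the normed globalization of $\ul\SpMack^\otimes$ in the sense of \cref{thm:Normed_Rig}, i.e.\ $\ul\SpMack_\gl^\otimes\simeq\Glob^\otimes(\ul\SpMack^\otimes)$ compatibly with the inclusions from $\ul\SpMack^\otimes$. On the other hand, \cref{thm:equivariant-uniqueness} provides a normed equivalence $\ul\Sp^\otimes\simeq\ul\SpMack^\otimes$, and \cref{cor:globalize-sp-monoidal} shows that $\ul\Sp_\gl^\otimes$ is the normed globalization of $\ul\Sp^\otimes$. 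Since normed globalization is a functor (being the right adjoint $\Rig^\otimes$ of an adjunction of $(\infty,2)$-categories, which is natural in equivalences), applying $\Glob^\otimes$ to the normed equivalence $\ul\Sp^\otimes\simeq\ul\SpMack^\otimes$ transports $\ul\Sp_\gl^\otimes\simeq\Glob^\otimes(\ul\Sp^\otimes)$ to $\Glob^\otimes(\ul\SpMack^\otimes)\simeq\ul\SpMack_\gl^\otimes$. Composing, we obtain the desired equivalence $\ul\Sp_\gl^\otimes\simeq\ul\SpMack_\gl^\otimes$ of normed global categories.

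The only subtlety is to make sure the resulting equivalence is canonical and compatible with the structures in play---in particular, one wants it to lift the unnormed comparison of \cref{thm:unnormed-comparison}(2) and to be compatible with the inclusions $\ul\Sp^\otimes\hookrightarrow\ul\Sp_\gl^\otimes$ and $\ul\SpMack^\otimes\hookrightarrow\ul\SpMack_\gl^\otimes$. For this, I would instead argue via the uniqueness clause of \cref{thm:global-existence}: that theorem characterizes the normed structure on $\ul\SpMack_\gl$ together with the normed stabilization $\ul\Mack_\gl\to\ul\SpMack_\gl$ uniquely. Thus it suffices to verify that $\ul\Sp_\gl^\otimes$ equipped with an appropriate stabilization map satisfies the same two properties. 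That each $\ul\Sp_\gl^\otimes(X)$ is presentably symmetric monoidal follows from the fact that $\ul\Sp_\gl$ is globally presentable (\cref{rk:who-cares-about-flatness}) and that its symmetric monoidal structure is obtained from a presentably symmetric monoidal $1$-categorical model; that all norms preserve sifted colimits is precisely part~(1) of the proof of \cref{thm:spgl-dist}. For the stabilization map, one uses the normed global functor $L\colon(\Sp^{\Sigma,\otimes}_\text{flat})^\flat\to\ul\Sp_\gl^\otimes$ together with the Mackey-functor identification of the unstable situation; more precisely, $\ul\Mack_\gl$ is a localization of a presheaf category and the normed functor $\ul\Mack_\gl^\otimes\to\ul\Sp_\gl^\otimes$ can be built from the monoidal Borelification principle (\cref{thm:mon-borel}) applied to the non-equivariant stabilization $\CMon\to\Sp_{\ge0}$ and then globalized.

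The step I expect to be the main obstacle is checking that the normed stabilization map $\ul\Mack_\gl^\otimes\to\ul\Sp_\gl^\otimes$ actually exists with the required properties---that is, producing a genuinely normed (not merely lax normed) functor out of the Mackey-functor model into the model-categorical construction $\ul\Sp_\gl^\otimes$. Here one cannot simply appeal to the non-parametrized universal property of $\Sigma^\infty$ as in the purely equivariant argument of \cite{CHLL_NRings}, since that universal property fails globally (as discussed in the introduction). The resolution is exactly \cref{thm:Normed_Rig}: one constructs the normed stabilization $\ul\Mack^\otimes\to\ul\Sp^\otimes$ at the equivariant level (which is \cref{thm:equivariant-uniqueness}), and then uses that both $\ul\Sp_\gl^\otimes$ and $\ul\SpMack_\gl^\otimes$ are the normed globalizations of their equivariant counterparts together with $2$-functoriality of $\Glob^\otimes=\Rig^\otimes$ to transport the equivariant normed comparison to a global one. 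Once this is set up, compatibility with the inclusions and with the unnormed comparison follows formally from the $2$-functoriality of $\Rig^\otimes$ and the uniqueness parts of the cited theorems, and the equivalence $\ul\Sp_\gl^\otimes\simeq\ul\SpMack_\gl^\otimes$ drops out.
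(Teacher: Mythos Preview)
Your first paragraph is correct and matches the paper's argument exactly: assuming parts (1) and (2), one knows $\ul\SpMack_\gl^\otimes$ is the normed globalization of $\ul\SpMack^\otimes$, Corollary~\ref{cor:globalize-sp-monoidal} gives the same for $\ul\Sp_\gl^\otimes$ over $\ul\Sp^\otimes$, and then functoriality of $\Glob^\otimes$ transports the equivariant equivalence of Theorem~\ref{thm:equivariant-uniqueness} to the desired global one. Your second and third paragraphs explore an alternative via the uniqueness clause of Theorem~\ref{thm:global-existence}, correctly identify that this would require producing a normed stabilization $\ul\Mack_\gl^\otimes\to\ul\Sp_\gl^\otimes$ that is not available directly, and then circle back to the first argument; this detour is unnecessary and can be dropped.
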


We will need one additional ingredient for the proof:

\begin{lemma}\label{lemma:right-adjoint-strong-normed}
    The normed global functor $i_!\colon\ul\Mack^\otimes\hookrightarrow\ul\Mack^\otimes_\gl$ has an $\mathbb E$-lax right adjoint $i^*$.
    \begin{proof}
        Note that $i_!$ has a pointwise right adjoint $i^*$, which is simply given by restriction along $\Span(i^\flat)$ (as $i$ preserves coproducts).

        If $m\colon A\rightarrowmono B$ is a map in $\Forb$, then $m^*\colon\ul\Mack(B)\to\ul\Mack(A)$ is given by left Kan extension along $\Span(m^*)\colon\Span(\mathbb F^\flat(B))\to\Span(\mathbb F^\flat(A))$, and similarly for $\ul\Mack_\gl$. By an easy application of \cite{BachmannHoyois2021Norms}*{Proposition~C.21}, the vertical arrows in
        \begin{equation}\label{diag:Forb-shriek}
            \begin{tikzcd}
                \Span(\mathbb F^\flat(B))\arrow[r, "i"] & \Span_\Forb(\Fglo^\flat(B))\\
                \Span(\mathbb F^\flat(A))\arrow[u, "\Span(m^*)"]\arrow[r,"i"'] & \Span_\Forb(\Fglo^\flat(A))\arrow[u, "\Span(m^*)"']
            \end{tikzcd}
        \end{equation}
        have a left adjoints induced by the left adjoints $\mathbb F^\flat(A)\to\mathbb F^\flat(B)$ and $\mathbb G^\flat(A)\to\mathbb G^\flat(B)$. The diagram without the spans is vertically left adjointable by direct inspection, hence so is $(\ref{diag:Forb-shriek})$. By $2$-functoriality of $\Fun^\times(-,\Spc)$, we conclude that $i_!$ commutes with left adjoints to restrictions along $m$, so that $i^*$ commutes with restrictions along $m$.

        It remains to show that $i^*$ commutes with norms, for which it suffices that for every map $n\colon A\rightarrownorm B$ in $\Forb$, the square
        \begin{equation}\label{diag:i^*-normed}
            \begin{tikzcd}
                \Fun(\Span(\mathbb F^\flat(A)),\Spc)\arrow[d,"i_!"']\arrow[r, "\Span(n_*)_!"] &[1.5em] \Fun(\Span(\mathbb F^\flat(B)),\Spc)\arrow[d,"i_!"]\\
                \Fun(\Span_\Forb(\Fglo^\flat(A)),\Spc)\arrow[r, "\Span(n_*)_!"'] & \Fun(\Span_\Forb(\Fglo^\flat(B)),\Spc)
            \end{tikzcd}
        \end{equation}
        is vertically right adjointable, or equivalently that the diagram where we passed to right adjoints everywhere is vertically right adjointable (for this to make sense, we had to get rid of product preservation). For this we want to apply \cite{CHLL_Bispans}*{Theorem~4.1.5} to the diagram
    \begin{equation}\label{diag:chill-basechange}
            \begin{tikzcd}
                \Span(\mathbb F^\flat(A))\arrow[d,"i"']\arrow[r, "\Span(n_*)"] &[1.5em] \Span(\mathbb F^\flat(B))\arrow[d,"i"]\\
                \Span_\Forb(\Fglo^\flat(A))\arrow[r, "\Span(n_*)"'] & \Span_\Forb(\Fglo^\flat(B))
            \end{tikzcd}
        \end{equation}
        with the standard factorization systems on the span categories:
        \begin{enumerate}
            \item For any $C\in\Fglo$, the map $i\colon\mathbb F^\flat(C)\hookrightarrow\Forb^\flat(C)$ is a sieve and hence in particular a right fibration. Setting $C=A,B$ we see that both vertical maps in $(\ref{diag:chill-basechange})$ restrict to right fibrations on the monic parts of the factorization systems.
            \item The diagram
            \[
                \begin{tikzcd}
                    \PSh(\mathbb F^\flat(A))\arrow[r, "(n_*)_!"]\arrow[d,"i_!"'] & \PSh(\mathbb F^\flat(B))\arrow[d, "i_!"]\\
                    \PSh(\Fglo^\flat(A))\arrow[r, "(n_*)_!"'] & \PSh(\Fglo^\flat(B))
                \end{tikzcd}
            \]
            is horizontally left adjointable (as it is so before applying presheaves), hence vertically right adjointable. Thus, also the diagram obtained by passing to right adjoints everywhere is vertically right adjointable.
        \end{enumerate}
        With these established, the aforementioned \cite{CHLL_Bispans}*{Theorem~4.1.5} applies to show that $(\ref{diag:i^*-normed})$ is vertically right adjointable, as claimed.
    \end{proof}
\end{lemma}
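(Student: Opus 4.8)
The plan is to construct $i^*$ pointwise as the right adjoint to $i_!$ and then verify the two base-change conditions that promote such a pointwise right adjoint to an $\mathbb E$-lax normed global functor right adjoint to $i_!$: namely, that the Beck--Chevalley transformations for $i^*$ are equivalences both for restrictions along faithful maps and for norms along faithful maps. Since every morphism of $\Span_{\Forb,\Forb}(\Fglo)$ factors as a backwards faithful map followed by a forwards faithful map, this is exactly what it means for $i^*$ to be a strong functor on the marked subcategory, hence $\mathbb E$-lax (cf.\ the characterization of adjunctions in functor $\infty$-categories in \cref{rem:adj_in_functor_cat}); the residual lax structure maps attached to the remaining backwards maps --- those in $\mathbb E$ --- are then simply the associated Beck--Chevalley maps, which need not be invertible. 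This is precisely why $i^*$ is only $\mathbb E$-lax and not strong.

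First I would record that, because $i\colon\mathbb F\hookrightarrow\Fglo$ preserves finite coproducts, the induced maps of span categories $\Span(i^\flat(X))\colon\Span(\mathbb F^\flat(X))\to\Span_\Forb(\Fglo^\flat(X))$ preserve finite products, so restriction along them carries product-preserving presheaves to product-preserving presheaves. As $i_!$ is by definition levelwise left Kan extension along these functors, its pointwise right adjoint $i^*$ is exactly this restriction functor, applied levelwise in $X$. Then for a faithful map $m\colon A\rightarrowmono B$, recall (\cref{constr:Mackgl}, and the analogous construction for $\ul\Mack$) that the restriction $m^*$ on both $\ul\Mack$ and $\ul\Mack_\gl$ is computed as left Kan extension along $\Span(m^*)$ applied to the relevant span categories of equivariant sets. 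I would contemplate the resulting square of span categories with $\Span(i^\flat)$ as vertical maps and $\Span(m^*)$ as horizontal maps: its vertical maps admit left adjoints $\Span(m_!)$ by a routine application of \cite{BachmannHoyois2021Norms}*{Proposition~C.21}, and the square without the $\Span(-)$'s --- a square of $\Fun(-,\mathbb F)$'s and $\Fun(-,\Fglo)$'s --- is vertically left adjointable by direct inspection (all left adjoints being induced by finite coproducts). Applying the $2$-functor $\Fun^\times(-,\Spc)$ then shows that $i_!$ commutes with the left adjoints of restrictions along $m$, equivalently that $i^*$ commutes with those restrictions.

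The main obstacle is compatibility with norms. For a faithful $n\colon A\rightarrownorm B$ one must show that the square with $i_!$ vertical and the norm $\Span(n_*)_!$ horizontal is vertically right adjointable. This is a base-change statement for functors of span categories, and I would deduce it from the parametrized base-change theorem \cite{CHLL_Bispans}*{Theorem~4.1.5}, applied to the square of span categories $\Span(n_*)$ lying over $i$ and equipped with the standard factorization systems. Verifying its hypotheses amounts to: (i) $i\colon\mathbb F^\flat(X)\hookrightarrow\Fglo^\flat(X)$ is a sieve, hence a right fibration, so both vertical maps restrict to right fibrations on the monic parts of the factorization systems; and (ii) a presheaf-level left-adjointability, which holds because it already holds before passing to presheaves. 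I expect the bookkeeping of matching up the factorization systems and checking these two hypotheses precisely to be the only genuine work; each individual ingredient is elementary. Combining the pointwise adjunction with these two compatibilities then exhibits $i^*$ as the desired $\mathbb E$-lax normed global right adjoint to $i_!$.
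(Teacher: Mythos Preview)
Your approach is essentially identical to the paper's: construct $i^*$ as the pointwise right adjoint, verify compatibility with restrictions along $\Forb$ via \cite{BachmannHoyois2021Norms}*{Proposition~C.21} and $2$-functoriality of $\Fun^\times(-,\Spc)$, and verify compatibility with norms via \cite{CHLL_Bispans}*{Theorem~4.1.5} applied to the square of span categories with the standard factorization systems.

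There is one slip in your hypothesis~(i): the inclusion $\mathbb F^\flat(X)\hookrightarrow\Fglo^\flat(X)$ is \emph{not} a sieve (e.g.\ $BG\to *$ is a map in $\Fglo$ with target in $\mathbb F$ but source not in $\mathbb F$). What you actually need---and what the paper checks---is that the restriction of the vertical map to the \emph{forward} (monic) parts of the span factorization systems is a right fibration. The forward maps in $\Span_{\Forb}(\Fglo^\flat(X))$ lie in $\Forb^\flat(X)$, not all of $\Fglo^\flat(X)$, and the inclusion $\mathbb F^\flat(X)\hookrightarrow\Forb^\flat(X)$ \emph{is} a sieve (a faithful map with discrete target has discrete source). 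Your conclusion is correct, but the justification should invoke this smaller target.
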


\begin{proof}[Proof of Theorem~\ref{thm:normed-comparison}]
    By the universal property of symmetric monoidal inversion in $\CAlg({\Cat}(\sift))$, $i_!\colon\ul\Mack^\otimes\hookrightarrow \ul\Mack^\otimes_\gl$ induces a normed global functor $i_!\colon\ul\SpMack^\otimes\hookrightarrow\ul\SpMack^\otimes_\gl$ sitting in a commutative diagram
    \begin{equation}\label{diag:mack-spmack}
        \begin{tikzcd}
            \ul\Mack^\otimes\arrow[d,"\ell"']\arrow[r, "i_!", hook] & \ul\Mack^\otimes_\gl\arrow[d,"\ell"]\\
            \ul\SpMack^\otimes\arrow[r, "i_!"', hook] & \ul\SpMack^\otimes_\gl\rlap.
        \end{tikzcd}
    \end{equation}
    In particular, the underlying global functor $\ul\SpMack\hookrightarrow\ul\SpMack_\gl$ is the functor $i_!$ from Theorem~\ref{thm:unnormed-comparison}, hence left adjoint to a globalization. \Cref{thm:Normed_Rig} will then show that also the right adjoint to $\ul\SpMack^\otimes\hookrightarrow\ul\SpMack_\gl^\otimes$ is a normed globalization, as soon as we show that this right adjoint commutes strongly with norms.

    We will show that the $\mathbb E$-lax right adjoint $i^*\colon\ul\Mack_\gl^\otimes\to\ul\Mack^\otimes$ from the previous lemma lifts to $\ul\SpMack_\gl^\otimes\to\ul\SpMack^\otimes$; as this lift will then be the desired right adjoint by $2$-functoriality of symmetric monoidal inversion, this will in particular imply the claim.

    Observe first that $i^*\colon\ul\Mack_\gl\to\ul\Mack$ preserves fiberwise colimits: namely, sifted colimits are computed in the whole functor category (on which $i^*$ admits a right adjoint via right Kan extension), while coproducts are computed as products (which it preserves as $i^*$ at the same time \emph{is} a right adjoint). Thus, we in particular see that $i^*\Sigma\bbone\simeq \Sigma i^*\bbone\simeq\Sigma\bbone$, so that $i^*$ indeed induces a functor $\ul\SpMack_\gl^\otimes\to\ul\SpMack^\otimes$ by the universal property of symmetric monoidal inversion.

    Altogether, we have shown that $i_!\colon\ul\SpMack^\otimes\hookrightarrow\ul\SpMack_\gl^\otimes$ is left adjoint to the universal $\mathbb E$-lax functor to $\ul\SpMack^\otimes$.  As $\ul\Sp^\otimes\hookrightarrow\ul\Sp^\otimes_\gl$ is similarly left adjoint to the universal $\mathbb E$-lax functor to $\ul\Sp^\otimes$ by Corollary~\ref{cor:globalize-sp-monoidal}, the equivalence $\ul\Sp^\otimes\simeq \ul\SpMack^\otimes$ from Theorem~\ref{thm:equivariant-uniqueness} then globalizes to $\ul\Sp^\otimes_\gl\simeq\ul\SpMack^\otimes_\gl$, as desired.
\end{proof}

\subsection{Ultra-commutative global ring spectra as global Tambara functors} \ \\
In \cite{CHLL_NRings}*{Theorems~5.6.1 and~5.6.3}, Cnossen, Haugseng and a non-empty subset of the present authors showed that \emph{connective} equivariant spectra assemble into a full normed subcategory of $\ul\Sp^\otimes$, and that equivariant normed algebras in this can be identified with \emph{space-valued Tambara functors}. In light of our Theorem~\ref{thm:equivariant-model}, we can restate this as follows:

\begin{theorem}\label{thm:equivariant-Tambara}
	For every finite group $G$, the subcategory $\UCom_\textup{$G$,$\ge0$}$ of connective ultra-commutative $G$-equivariant ring spectra is equivalent to the full subcategory of
	\[
	\Fun^\times(\Bispan(\mathbb F_{G}),\Spc)
	\]
	spanned by the \emph{$\bm G$-Tambara functors}, i.e.~those functors $F$ such that for every $X\in\mathbb F_{G}$ the h-space structure $F(X)\times F(X)\to F(X)$ induced by the bispan
	\[
	X\amalg X\xleftarrow{\;=\;} X\amalg X\xrightarrow{\;=\;}X\amalg X\xrightarrow{\;(\id,\id)\;} X
	\]
	is grouplike.\qed
\end{theorem}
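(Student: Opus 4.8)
The plan is to deduce this from \cref{thm:equivariant-model} together with the main results of \cite{CHLL_NRings} on connective normed $G$-algebras. Recall from \cite{CHLL_NRings}*{Theorem~5.6.1} that the connective objects assemble into a full normed subcategory $\ul\Sp^\otimes_{\ge0}\hookrightarrow\ul\Sp^\otimes$; writing $\ul\Sp^\otimes_{G,\ge0}$ for its restriction along $\Span(\mathbb F_G)\hookrightarrow\Span_\Forb(\Fglo)$, the same theorem shows that a normed $G$-algebra in $\ul\Sp^\otimes_G$ lies in the essential image of $\CAlg_G(\ul\Sp^\otimes_{G,\ge0})\hookrightarrow\CAlg_G(\ul\Sp^\otimes_G)$ if and only if its underlying $G$-spectrum is connective (this uses that connective $G$-spectra are closed under restrictions, the Hill--Hopkins--Ravenel norms, and the smash product). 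Finally, \cite{CHLL_NRings}*{Theorem~5.6.3} identifies $\CAlg_G(\ul\Sp^\otimes_{G,\ge0})$ with the full subcategory $\textup{Tamb}_G\subset\Fun^\times(\Bispan(\mathbb F_G),\Spc)$ spanned by the grouplike product-preserving functors, i.e.\ exactly the $G$-Tambara functors in the statement, the grouplikeness being tested against the transfer bispan along $(\id,\id)\colon X\amalg X\to X$.

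First I would invoke \cref{thm:equivariant-model}, which provides an equivalence $\CAlg_G(\ul\Sp^\otimes_G)\simeq\UCom_G$ compatible with the forgetful functors to $\Sp_G$. Since this equivalence commutes with the forgetful functors, it restricts to an equivalence between the respective full subcategories of objects whose underlying $G$-spectrum is connective. On the $\UCom_G$-side this full subcategory is by definition $\UCom_{G,\ge0}$; on the $\CAlg_G(\ul\Sp^\otimes_G)$-side it is $\CAlg_G(\ul\Sp^\otimes_{G,\ge0})$ by the discussion above. Composing with the equivalence $\CAlg_G(\ul\Sp^\otimes_{G,\ge0})\simeq\textup{Tamb}_G$ of \cite{CHLL_NRings}*{Theorem~5.6.3} then yields the desired equivalence $\UCom_{G,\ge0}\simeq\textup{Tamb}_G$, and \emph{loc.\ cit.}\ also supplies the concrete description of $\textup{Tamb}_G$ as the grouplike functors in $\Fun^\times(\Bispan(\mathbb F_G),\Spc)$.

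The only step that is not a formal concatenation of equivalences we already have is the identification of the connective subcategory on the normed-algebra side with $\CAlg_G(\ul\Sp^\otimes_{G,\ge0})$, i.e.\ that \emph{being a connective ultra-commutative $G$-ring spectrum can be detected on underlying $G$-spectra}. This is precisely the content of \cite{CHLL_NRings}*{Theorem~5.6.1}; alternatively it also follows abstractly from the fact that $\ul\Sp^\otimes_{\ge0}\hookrightarrow\ul\Sp^\otimes$ is a full normed subcategory closed under fiberwise sifted colimits, together with the conservativity of $\mathbb U$ from \cref{lem:calg-fib-colims}(1). I expect this mild bookkeeping around connectivity to be the only obstacle; everything else is a direct chaining of the equivalences of \cref{thm:equivariant-model} and of \cite{CHLL_NRings}.
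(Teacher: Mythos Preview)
Your proposal is correct and matches the paper's approach exactly: the theorem carries a \qed\ because it is presented as an immediate restatement of \cite{CHLL_NRings}*{Theorems~5.6.1 and~5.6.3} in light of \cref{thm:equivariant-model}, and your argument spells out precisely this concatenation of equivalences, including the observation that connectivity is detected on underlying $G$-spectra via compatibility of the equivalence with the forgetful functors.
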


\begin{remark}
    For more details about categories of bispans we refer the reader to \cite{CHLL_NRings}*{§2.3}.
\end{remark}

Recall that a $G$-global spectrum $X$ is called \emph{connective} \cite{g-global}*{Definition~3.4.11} if the underlying equivariant spectrum of $\phi^*X$ is connective for every $\phi\colon H\to G$. As an equivariant spectrum is connective iff its associated spectral Mackey functor factors through $\Sp_{\ge0}\subset\Sp$, an easy diagram chase shows that the equivalence $\ul\Sp_\gl\simeq\ul\SpMack_\gl$ restricts to an equivalence between the global subcategory given in degree $G$ by the connective global spectra and the one given in degree $G$ by the product preserving functors $\Span_{\Forb}(\Fglo^\flat(G))\to\Sp_{\ge0}$.

\begin{proposition}\label{prop:deloc-normed}
    The full global subcategory $\und{\SpMack}_{\gl,\geq0}$ is closed under norms,
    and the normed stabilization map factors as
    \[
        \und{\Mack}_\gl^\tensor \xto{S^\tensor} \und{\SpMack}_{\gl,\geq0}^\tensor \subset \und{\SpMack}_{\gl}^\tensor.
    \]
    Moreover, $S^\tensor$ has a fully faithful lax normed right adjoint $\iota \colon \und{\SpMack}_{\gl,\geq0}^\tensor \hookrightarrow \und{\Mack}_\gl^\tensor$.
    \begin{proof}
        Identifying
        $
            \Fun^\times(\Span_{\Forb}(\Fglo^\flat(X)),\Spc)
            \simeq \Fun^\times(\Span_{\Forb}(\Fglo^\flat(X)),\CMon),
        $
        the stabilization map is simply given by postcomposition with the Bousfield localization $\CMon\to\Sp_{\ge 0}$.
        From this description, it is evident that the stabilization map factors through a
        left Bousfield localization $S \colon \und{\Mack}_\gl \to \und{\SpMack}_{\gl,\geq0}$.
        Since the stabilization map is normed, $\und{\SpMack}_{\gl,\geq0}$ inherits the normed
        structure from $\und{\SpMack}_{\gl}^\tensor$ and $S$ canonically upgrades to a normed functor,
        as desired.
    \end{proof}
\end{proposition}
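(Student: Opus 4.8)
The plan is to reduce the statement to an elementary fact about postcomposition with the group-completion functor, the only nonformal input being the already-established normedness of the stabilization map $\ell\colon\und{\Mack}_\gl^\tensor\to\und{\SpMack}_\gl^\tensor$ (Theorem~\ref{thm:global-existence}). First I would fix $X\in\Fglo$, write $\mathcal A_X\coloneqq\Span_{\Forb}(\Fglo^\flat(X))$, and record the pointwise picture: since $\mathcal A_X$ is semiadditive, $\und{\Mack}_\gl(X)=\Fun^\times(\mathcal A_X,\Spc)\simeq\Fun^\times(\mathcal A_X,\CMon)$, while $\und{\SpMack}_\gl(X)=\Fun^\times(\mathcal A_X,\Sp)$ and $\und{\SpMack}_{\gl,\geq0}(X)=\Fun^\times(\mathcal A_X,\Sp_{\geq0})=\Fun^\times(\mathcal A_X,\CMon^{\textup{gp}})$ by the discussion preceding the proposition. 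Under these identifications the component $\ell_X$ is postcomposition with $\CMon\to\Sp$, which factors as $\CMon\xto{\lambda}\CMon^{\textup{gp}}\simeq\Sp_{\geq0}\hookrightarrow\Sp$; since $\lambda$ is a Bousfield localization with fully faithful right adjoint $\rho$, postcomposition with $\lambda$ restricts to a Bousfield localization $S_X\colon\und{\Mack}_\gl(X)\to\und{\SpMack}_{\gl,\geq0}(X)$ with fully faithful right adjoint $\iota_X=\rho\circ(-)$, and $\ell_X$ is $S_X$ followed by the inclusion into $\und{\SpMack}_\gl(X)$. In particular the essential image of $\ell_X$ is exactly $\und{\SpMack}_{\gl,\geq0}(X)$.

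Next I would show that $\und{\SpMack}_{\gl,\geq0}^\tensor\subset\und{\SpMack}_\gl^\tensor$ is a full normed subglobal category, i.e.\ that its value is closed under restrictions and norms. Closure under restrictions is immediate: a restriction along a backward map of $\Span_{\Forb}(\Fglo)$ acts on $\und{\Mack}_\gl$, hence on $\und{\SpMack}_\gl$, by precomposition with a functor of span categories, which commutes with postcomposition along $\CMon\to\Sp$ and so preserves connectivity. For a norm $n_\tensor$ along $n\in\Forb$ I would instead use that $\ell$ is normed: any object of $\und{\SpMack}_{\gl,\geq0}(X)$ is of the form $\ell_X M$, so $n_\tensor\ell_X M\simeq\ell_Y\, n_\tensor M$ lies again in the essential image of $\ell_Y$, hence is connective. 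Thus $\und{\SpMack}_{\gl,\geq0}^\tensor$ carries a unique normed structure for which the inclusion into $\und{\SpMack}_\gl^\tensor$ is a normed functor, and since $\ell$ takes values in this full normed subcategory it factors through it as a normed functor $S^\tensor$, giving the claimed factorization.

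Finally, for the lax normed right adjoint: the $\iota_X$ have already been produced pointwise as fully faithful right adjoints of the $S_X$, concretely as postcomposition with $\rho\colon\CMon^{\textup{gp}}\hookrightarrow\CMon$. As $S^\tensor$ is a strong normed functor, taking these right adjoints pointwise produces mates of all of its naturality squares; over backward maps these mates are equivalences, since $\iota_X$, being postcomposition with $\rho$, commutes strictly with the restriction functors (precomposition), while over the norm maps they furnish the (a priori non-invertible) lax structure. This exhibits $\iota\colon\und{\SpMack}_{\gl,\geq0}^\tensor\to\und{\Mack}_\gl^\tensor$ as a lax normed functor right adjoint to $S^\tensor$ — an instance of the general principle recalled in Remark~\ref{rem:2cat-structure} that the right adjoint of a $\Cat$-linear, hence strong, normed functor canonically acquires a lax normed structure. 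Full faithfulness of $\iota$ is checked pointwise, where it follows from that of $\rho$.

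The step I expect to be the only genuine obstacle is the closure of $\und{\SpMack}_{\gl,\geq0}^\tensor$ under the norm functors $n_\tensor$: unlike for restrictions, one cannot compute $n_\tensor$ fiberwise over $\mathcal A_X$ (the norms are not postcomposition, nor left Kan extension along functors of span categories), so the argument genuinely relies on the already-established normedness of $\ell$ together with the identification of $\und{\SpMack}_{\gl,\geq0}$ as its essential image. Everything else — the pointwise Bousfield-localization bookkeeping and the passage to the lax right adjoint — is formal.
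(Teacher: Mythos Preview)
Your approach is the same as the paper's: both identify the stabilization pointwise with postcomposition by $\CMon\to\Sp_{\ge0}$, observe this is a Bousfield localization, and then use the normedness of $\ell$ (Theorem~\ref{thm:global-existence}) to conclude that the essential image is a full normed subcategory through which $\ell$ factors. The paper is terser and leaves the lax normed right adjoint implicit; your spelling-out of this via mates is the right idea.

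There is one inaccuracy that affects two of your justifications. The restriction functors on $\und{\Mack}_\gl$ (and on $\und{\SpMack}_\gl$) are \emph{not} given by precomposition with a functor of span categories; by the construction of $\und{\Mack}_\gl$ and the remark following it, they are given by \emph{left Kan extension} along such functors. For closure of $\und{\SpMack}_{\gl,\ge0}$ under restrictions this does not matter: your essential-image argument for norms applies verbatim to restrictions as well, since $\ell$ is in particular a strict global functor. But it does undermine your stated reason that $\iota$ is strict on backward maps, since postcomposition with $\rho$ does not automatically commute with left Kan extension. The fix is short: what you actually need is that $f^*_{\Mack}$ preserves grouplike objects, and this holds because $\und{\Mack}_\gl(X)$ is semiadditive and $f^*_{\Mack}$ is a left adjoint, hence preserves finite coproducts $=$ finite products, hence sends the shear map of $M$ to the shear map of $f^*_{\Mack}M$.
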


We can now prove the following global version of Theorem~\ref{thm:equivariant-Tambara}:

\begin{theorem}\label{thm:global-Tambara}
    For every finite group $G$, the category $\UCom_{G\text{-}\gl,\geq0}$ of connective ultra-commutative $G$-global ring spectra is equivalent to the full subcategory of
    \[
        \Fun^\times(\Bispan(\Fglo_{/G},\Fglo_{/G}\times_{\Fglo}\Forb,\Fglo_{/G}\times_{\Fglo}\Forb),\Spc)
    \]
    spanned by the \emph{$\bm G$-global Tambara functors}, i.e.~those functors $F$ such that for every $X\in\Fglo_{/G}$ the h-space structure $F(X)\times F(X)\to F(X)$ induced by the bispan
    \[
        X\amalg X\xleftarrow{\;=\;} X\amalg X\xrightarrow{\;=\;}X\amalg X\xrightarrow{\;(\id,\id)\;} X
    \]
    is grouplike.
\end{theorem}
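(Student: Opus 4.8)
The plan is to deduce this from the equivariant Tambara comparison (Theorem~\ref{thm:equivariant-Tambara}) together with the globalization machinery developed in Sections~\ref{sec:norms-and-globalization} and~\ref{sec:globalize-equivariant-alg}, exactly in parallel with how Theorem~\ref{thm:global-model} was deduced from the equivariant comparison. First I would invoke Theorem~\ref{thm:ucom-glob} (in its $G$-global form), which identifies $\UCom_{G\text{-}\gl}$ with $\CAlg_{\gl}(\ul\Sp^\otimes_\gl)(G)$, and hence with $\CAlg_{\eq\triangleright\gl}(\ul\Sp^\otimes)(G)$ globalized; combining this with the Mackey functor model $\ul\Sp^\otimes_\gl\simeq\ul\SpMack^\otimes_\gl$ from Theorem~\ref{thm:normed-comparison}(3), we get $\UCom_{G\text{-}\gl}\simeq\CAlg_\gl(\ul\SpMack^\otimes_\gl)(G)$. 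The point of passing to the Mackey functor model is that $\ul\SpMack^\otimes_\gl$ is built purely $\infty$-categorically from the normed global category $\Span_\Forb(\Fglo^\flat)^\otimes$ of Construction~\ref{constr:Mackgl}, whose norms are literally given by left Kan extension along maps of span categories, so that the results of \cite{CHLL_NRings} on categories of normed algebras in such categories apply directly.

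Next I would restrict attention to the connective part. By Proposition~\ref{prop:deloc-normed}, the full normed global subcategory $\ul\SpMack^\otimes_{\gl,\ge0}$ is closed under norms and receives a normed stabilization map $S^\otimes\colon\ul\Mack_\gl^\otimes\to\ul\SpMack^\otimes_{\gl,\ge0}$ with fully faithful lax normed right adjoint. One then needs the analogue of \cite{CHLL_NRings}*{Theorem~5.6.1} in the global setting: that $\UCom_{G\text{-}\gl,\ge0}$ (the connective objects) is equivalent to $\CAlg_\gl(\ul\SpMack^\otimes_{\gl,\ge0})(G)$, and that the latter identifies with the subcategory of $\CAlg_\gl(\ul\Mack^\otimes_\gl)(G)$ on those commutative-monoid-valued Tambara functors whose additive monoid structure is grouplike. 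The first identification follows from the compatibility of the equivalence $\UCom_{G\text{-}\gl}\simeq\CAlg_\gl(\ul\SpMack^\otimes_\gl)(G)$ with the forgetful functors to $\Sp_{G\text{-}\gl}$ (so that an algebra is connective iff its underlying global spectrum is, iff its underlying spectral global Mackey functor factors through $\Sp_{\ge0}$, iff it lands in $\ul\SpMack^\otimes_{\gl,\ge0}$), together with the fact that connectivity of a $G$-global spectrum is detected pointwise on $\phi^*X$ for $\phi\colon H\to G$ and hence is a condition on the underlying global spectrum in the sense just described. The second identification is the delooping step, exactly as in \cite{CHLL_NRings}*{§5.6}: one uses that $S^\otimes$ is a normed Bousfield localization to transport the universal property, reducing normed algebras in $\ul\SpMack^\otimes_{\gl,\ge0}$ to group-complete normed algebras in $\ul\Mack^\otimes_\gl$.

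It then remains to unwind $\CAlg_\gl(\ul\Mack^\otimes_\gl)(G)$ into the bispan description. By Construction~\ref{constr:Mackgl} and \cite{CHLL_NRings}*{§4--5}, the normed global category $\ul\Mack^\otimes_\gl$ is precisely the category of space-valued `semi-Mackey functors' for the semiring context $(\Fglo,\Forb,\Forb)$ (which is a ring context by Lemma~\ref{lem:goo-ring-context}), and its category of normed algebras over $G$ is, by the general bispan formalism of \cite{CHLL_NRings}*{Theorem~5.6.3} together with Lemma~\ref{lemma:CAlg-slice} to pass to the slice $(\Fglo_{/G},\Fglo_{/G}\times_\Fglo\Forb,\Fglo_{/G}\times_\Fglo\Forb)$, equivalent to $\Fun^\times(\Bispan(\Fglo_{/G},\Fglo_{/G}\times_\Fglo\Forb,\Fglo_{/G}\times_\Fglo\Forb),\Spc)$. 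Under this equivalence, the grouplikeness condition on the additive h-space structure is exactly the statement that the h-space structure induced by the displayed fold bispan is grouplike, which matches the definition of a $G$-global Tambara functor. Finally one checks the claimed subcategory of Tambara functors is precisely the essential image of the connective objects; this follows from the delooping step, since grouplike commutative monoids are the same as connective spectra.

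The main obstacle I expect is the delooping/group-completion step, i.e.~establishing rigorously that normed algebras in $\ul\SpMack^\otimes_{\gl,\ge0}$ correspond to group-complete semi-Mackey-Tambara functors. In the equivariant case \cite{CHLL_NRings} this rested on the suspension spectrum functor $\Sigma^\infty\colon\Spc_{G,*}\to\Sp_G$ having a non-parametrized universal property in $\CAlg(\Pr^\textup{L})$, which fails globally; the remedy is to use Theorem~\ref{thm:global-existence} and Proposition~\ref{prop:deloc-normed}, which provide the needed normed localization structure intrinsically, and then to lift the equivariant delooping statement along the normed globalization of Theorem~\ref{thm:normed-comparison}(2) using Theorem~\ref{thm:globalize-algebras} to compare categories of normed algebras before and after globalizing. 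The bispan bookkeeping (passing to slices, matching up the semiring context $(\Fglo_{/G},\Fglo_{/G}\times_\Fglo\Forb,\Fglo_{/G}\times_\Fglo\Forb)$ with the one implicit in $\ul\Mack^\otimes_\gl$) is routine given Lemmas~\ref{lem:goo-ring-context} and~\ref{lemma:CAlg-slice} and the machinery of \cite{CHLL_NRings}, so I do not anticipate serious difficulty there.
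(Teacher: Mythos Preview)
Your proposal reaches the right destination but by a longer and harder road than the paper's. The paper's proof does \emph{not} deduce the global Tambara comparison from the equivariant one via globalization machinery. Instead it works directly with the global Mackey functor model: combining Theorem~\ref{thm:global-model} with the normed equivalence $\ul\Sp_\gl^\otimes\simeq\ul\SpMack_\gl^\otimes$ of Theorem~\ref{thm:normed-comparison} yields $\UCom_{G\text{-}\gl}\simeq\ul\CAlg_\gl(\ul\SpMack_\gl^\otimes)(G)$ compatibly with the forgetful functors, and connectivity is tested after $\bbU$. The paper then applies \cite{CHLL_NRings}*{Theorem~4.3.6} directly to the ring context $(\Fglo_{/G},\Fglo_{/G}\times_\Fglo\Forb,\Fglo_{/G}\times_\Fglo\Forb)$ to identify $\ul\CAlg_\gl(\ul\Mack_\gl^\otimes)(G)$ with product-preserving functors out of the bispan category; no passage through Lemma~\ref{lemma:CAlg-slice} or the equivariant Theorem~\ref{thm:equivariant-Tambara} is needed.

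The step you flag as the ``main obstacle'' --- embedding $\ul\CAlg_\gl(\ul\SpMack_{\gl,\ge0}^\otimes)(G)$ into $\ul\CAlg_\gl(\ul\Mack_\gl^\otimes)(G)$ with image the grouplike objects --- is handled in the paper by an elementary observation rather than by globalizing the equivariant delooping. Since the lax-normed right adjoint $\iota$ of Proposition~\ref{prop:deloc-normed} is fully faithful and its left adjoint is a localization, the induced map on cocartesian unstraightenings is fully faithful, and hence postcomposition $\iota\circ-$ is fully faithful on categories of sections (i.e.~on normed algebras). The essential image is then characterized by the underlying Mackey functor factoring through grouplike commutative monoids, which is exactly the displayed h-space condition. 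Your proposed route through Theorem~\ref{thm:globalize-algebras} and a lift of the equivariant delooping would presumably also work, but the machinery is unnecessary here; the paper's argument bypasses the $\Sigma^\infty$ universal-property issue entirely by never invoking it. (Incidentally, your parenthetical ``in parallel with how Theorem~\ref{thm:global-model} was deduced from the equivariant comparison'' has the logical direction reversed: in the paper the global statement is proved first and the equivariant one deduced from it.)
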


\begin{proof}
    We claim that we have a commutative diagram
    \[\begin{tikzcd}
        {\UCom_{G\text{-}\gl,\geq0}} & {\und{\CAlg}_\gl(\und{\SpMack}_{\gl,\geq0}^\tensor)(G)} &[1em] {\und{\CAlg}_\gl(\und{\Mack}_\gl^\tensor)(G)} \\
        {\Sp_{G\text{-}\gl,\geq0}} & {\und{\SpMack}_{\gl,\geq0}(G)} & {\und{\Mack}_{\gl}(G)}\rlap.
        \arrow["\sim", from=1-1, to=1-2]
        \arrow["\bbU"', from=1-1, to=2-1]
        \arrow["\iota\circ-", from=1-2, to=1-3]
        \arrow["\bbU", from=1-2, to=2-2]
        \arrow["\bbU", from=1-3, to=2-3]
        \arrow["\sim"', from=2-1, to=2-2]
        \arrow["\iota"', from=2-2, to=2-3]
    \end{tikzcd}\]
    The left square uses the normed equivalence $\und{\Sp}_\gl^\tensor \simeq \und{\SpMack}^\tensor_\gl$
    of \cref{thm:normed-comparison} together with the fact that the equivalence of \cref{thm:global-model} is compatible with the forgetful functors $\bbU$, and we can check connectivity after applying $\bbU$.
    The right square uses the fully faithful lax-normed right adjoint $\iota$ of the above proposition;
    note that the induced map on cocartesian unstraightenings is again fully faithful as its left adjoint is a localization. Hence the postcomposition functor $\iota\circ-$ is also fully faithful.

    It is now clear that the top horizontal composite identifies $\UCom_{G\text{-}\gl,\geq0}$ with the full subcategory $\cC \subset \und{\CAlg}_\gl(\und{\Mack}_\gl^\tensor)(G)$ on those normed algebras whose underlying $G$-global Mackey functor $M$ factors through $\text{CGrp}\subset\CMon$, i.e.~those such that the h-space structure $M(X)\times M(X)\to M(X)$ induced by
    \[
        X\amalg X\xleftarrow{\;=\;}X\amalg X\xrightarrow{\;(\id,\id)\;}X
    \]
    is grouplike. Finally, applying \cite{CHLL_NRings}*{Theorem 4.3.6} to the ring context $(\Fglo_{/G}, \Fglo_{/G}\times_{\Fglo}\Forb,\Fglo_{/G}\times_{\Fglo}\Forb)$ identifies $\Cc$ with the category of $G$-global Tambara functors.
\end{proof}

\subsection{Multiplicative deflations}\label{subsec:mult-defl}
Recall from Remark~\ref{rk:GGO} that also $(\Fglo,\Fglo,\Forb)$ is a ring context. Applying Construction~\ref{constr:Mackgl} in this context then provides an extension of $\ul\Mack_\gl^\otimes$ to a `globally normed global category,' i.e.~a functor $\ul\Mack_\gl^\botimes\colon\Span(\Fglo)\to\Cat$, such that for every surjective group homomorphism $p\colon G\to G/N$ the functor
\begin{equation}\label{eq:geometric-fixed-points}
    \Phi^N\coloneqq p_\otimes\colon \und{\Mack}_\gl(G) \to \und{\Mack}_\gl(G/N)
\end{equation}
is given by left Kan extension along
\[
    \Span(p_*)\colon\Span_{\Forb}(\Fglo^\flat(G))\to\Span_{\Forb}(\Fglo^\flat(G/N)).
\]
\begin{lemma}
    There exists a unique pair of
    \begin{enumerate}
        \item an extension of $\ul\SpMack_\gl^\otimes$ to $\ul\SpMack_\gl^\botimes\colon\Span(\Fglo)\to{\Cat}(\sift)$, and
        \item an extension of the stabilization map $\ell\colon\ul\Mack_\gl^\otimes\to\ul\SpMack_\gl^\otimes$ to a globally normed functor $\ul\Mack_\gl^\botimes\to\ul\SpMack_\gl^\botimes$.
    \end{enumerate}
    Moreover, in this extension the norms $\Phi^N$ along surjective group homomorphisms actually preserve all colimits.
    \begin{proof}
        Arguing as before (and with Theorem~\ref{thm:global-existence} at hand), for the existence and uniqueness it suffices that $\ell\Phi^N(\Sigma\bbone)$ is invertible for every normal subgroup $N\leq G$. We will show that $(\ref{eq:geometric-fixed-points})$ is actually a left adjoint; this will immediately imply $\Phi^N(\Sigma\bbone)\simeq\Sigma\Phi^N(\bbone)\simeq\Sigma\bbone$, which is inverted by $\ell$ by definition, and it will also imply that its stabilization $\und{\SpMack}_\gl(G)\to\und{\SpMack}_\gl(G/N)$ is again cocontinuous.

        To prove that $(\ref{eq:geometric-fixed-points})$ is a left adjoint, we observe that $p_*=(-)^{hN}\colon\Fglo^\flat(G)\to\Fglo^\flat(G/N)$ preserves finite coproducts, so that $\Span(p_*)$ preserves finite products. Accordingly,
        \[
            \Span(p_*)^*\hskip-1pt\colon{\Fun}\big({\Span}_{\Forb}\big(\Fglo^\flat(G/N)\big),{\Spc}\big)\to{\Fun}\big({\Span}_{\Forb}\big(\Fglo^\flat(G)\big),{\Spc}\big)
        \]
        restricts to the desired right adjoint.
    \end{proof}
\end{lemma}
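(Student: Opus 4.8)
The plan is to proceed exactly as in the proof of \cref{thm:global-existence} above, replacing the role of $\CAlg({\Cat}(\sift))$ with the (stronger) fact that $\und{\SpMack}_\gl(G/N)$ receives from $\und{\Mack}_\gl(G/N)$ a universal cocontinuous symmetric monoidal functor inverting $\Sigma\bbone$ in $\CAlg(\Pr^{\textup L})$. Concretely, fix a general map
\[
    \begin{tikzcd}
        X & Y\arrow[l,"f"'] \arrow[r,"n",norm] & Z
    \end{tikzcd}
\]
in $\Span(\Fglo)$ (now with $n$ an arbitrary map of finite groupoids, not necessarily faithful). By the universal property of symmetric monoidal inversion of $\Sigma\bbone$ in $\CAlg(\Pr^{\textup L})$ (see \cite{CHLL_Bispans}*{Lemma~5.5.3}, combined with \cite{BachmannHoyois2021Norms}*{Proposition~4.1} and \cite{CHLL_Bispans}*{Lemma~5.5.2} as in the proof of \cref{thm:global-existence}), it suffices to show that $n_\otimes f^*\colon\und{\Mack}_\gl(X)\to\und{\Mack}_\gl(Z)$ preserves sifted colimits, that $\ell$ inverts $n_\otimes f^*(\Sigma\bbone)$, and that this factorization is compatible with the normed structure already constructed on the faithful part $\Span_\Forb(\Fglo)\subset\Span(\Fglo)$; the last point is automatic once we observe that, restricted to $\Span_\Forb(\Fglo)$, the universal property being used here is a strengthening of the one used in \cref{thm:global-existence}, so uniqueness forces the two constructions to agree.

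First I would factor $n$ into its surjective and faithful parts $Y\rightarrowepic W\rightarrowmono Z$ using the factorization system $(\mathbb E,\Forb)$ on $\Fglo$. As norms compose, $n_\otimes=(W\rightarrowmono Z)_\otimes\circ(Y\rightarrowepic W)_\otimes$, so I only need to treat the two extreme cases. For $n$ faithful, everything is already covered by \cref{thm:global-existence} (and its proof), so the real content is the case where $n=p\colon G\rightarrowepic G/N$ is a surjective group homomorphism (after decomposing into components and using product preservation to reduce to groups). Here the previous lemma identifies $\Phi^N=p_\otimes$ with left Kan extension along $\Span(p_*)$, where $p_*=(-)^{hN}\colon\Fglo^\flat(G)\to\Fglo^\flat(G/N)$. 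The key observation---also made in the previous lemma---is that $p_*$ preserves finite coproducts, hence $\Span(p_*)$ preserves finite products, so $\Span(p_*)^*$ restricts to a right adjoint on product-preserving presheaves and therefore $\Phi^N$ is a genuine left adjoint (not merely a sifted-colimit-preserving functor). Consequently $\Phi^N$ preserves all colimits and $\Phi^N(\Sigma\bbone)\simeq\Sigma\Phi^N(\bbone)\simeq\Sigma\bbone$, which is inverted by $\ell$ by the very construction of $\und{\SpMack}_\gl$; this also shows that the stabilized functor $\und{\SpMack}_\gl(G)\to\und{\SpMack}_\gl(G/N)$ is again cocontinuous, giving the final sentence of the statement.

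For the $f^*$ part: pullback functors $f^*\colon\und{\Mack}_\gl(X)\to\und{\Mack}_\gl(Y)$ are restriction along $\Span((f^\flat)^*)$ followed by a left adjoint (as in \cref{constr:Mackgl}), hence preserve sifted colimits, and they carry a symmetric monoidal structure; combined with the above this shows $n_\otimes f^*$ preserves sifted colimits and that $\ell$ inverts $n_\otimes f^*(\Sigma\bbone)$ (using that $\Sigma\bbone$ is invertible in $\und{\SpMack}_\gl(Z)$ together with the symmetric monoidal structures on $f^*$ and $n_\otimes$, exactly as in \cref{thm:global-existence}). The main obstacle I anticipate is bookkeeping around the universal property: I must make sure the inversion is performed in $\CAlg(\Pr^{\textup L})$ so that \emph{all} cocontinuous symmetric monoidal functors inverting $\Sigma\bbone$ factor through it, and then that this automatically refines the $\CAlg({\Cat}(\sift))$-level universal property used in \cref{thm:global-existence}, so that the resulting $\botimes$-structure restricts on $\Span_\Forb(\Fglo)$ to the previously constructed $\otimes$-structure rather than to some a priori different one. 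Once that compatibility is in place, the uniqueness clause is a formal consequence of $2$-functoriality of symmetric monoidal inversion, just as in \cref{thm:normed-comparison}.
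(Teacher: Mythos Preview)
Your proposal is correct and takes essentially the same approach as the paper: both reduce (via the universal property of symmetric monoidal inversion of $\Sigma\bbone$, as in the proof of \cref{thm:global-existence}) to showing that $\ell\Phi^N(\Sigma\bbone)$ is invertible, and both establish this by observing that $p_*=(-)^{hN}$ preserves finite coproducts, so that $\Span(p_*)^*$ restricts to product-preserving presheaves and hence $\Phi^N$ is a genuine left adjoint. The paper's write-up is simply terser, absorbing your explicit factorization into surjective and faithful parts and your compatibility discussion into the phrase ``Arguing as before (and with \cref{thm:global-existence} at hand).''
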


\begin{construction}
    Analogously, one obtains globally normed categories $\ul\Mack^\botimes$ and $\ul\SpMack^\botimes$. As the inclusion $i\colon\mathbb F\hookrightarrow \Fglo$ preserves all limits, we can then lift $(\ref{diag:mack-spmack})$ to a commutative diagram
    \[
        \begin{tikzcd}
            \ul\Mack^\botimes\arrow[d,"\ell"']\arrow[r, "i_!", hook] & \ul\Mack_\gl^\botimes\arrow[d,"\ell"]\\
            \ul\SpMack^\botimes \arrow[r, "i_!"', hook] & \ul\SpMack^\botimes_\gl
        \end{tikzcd}
    \]
    of globally normed categories.
\end{construction}

\begin{theorem}
    \
    \begin{enumerate}
        \item The functor $i_!\colon\ul\SpMack^\botimes\hookrightarrow \ul\SpMack^\botimes_\gl$ admits an $\mathbb E$-lax right adjoint $i^*$, and this is the universal $\mathbb E$-lax $\Span(\Fglo)$-functor to $\ul\SpMack^\botimes$.
        \item There is an equivalence
        \begin{equation}
            \begin{aligned}\label{eq:comparison-globally-normed-calg}
            \und{\CAlg}_\Fglo^\Fglo(\ul\SpMack_\gl^\botimes)(G)
            \coloneqq{}&\Fun_{\Span(\Fglo)}^\lax{\Fglo}(\Span(G\text{-}\Fglo), \ul\SpMack_\gl^\botimes)\\
            \simeq{}&\Fun_{\Span(\Fglo)}^{\neglax{\Forb^\op}}(\Span(G\text-\Fglo),\ul\SpMack^\botimes)
            \end{aligned}
        \end{equation}
        natural in $G\in\Fglo^\op$ and sitting in a commutative triangle
        \[
            \begin{tikzcd}[column sep=small]
                \und{\CAlg}_\Fglo^\Fglo(\ul\SpMack_\gl^\botimes)\arrow[dr, bend right=15pt, "i^*\circ\mathbb U"']\arrow[rr,"\sim"] &&[-1em] \Fun^{\neg\Forb^\op\textup{-lax}}_{\Span(\Fglo)}(\Span(\Fglo^\flat),\ul\SpMack^\botimes)\arrow[dl, "\ev_{\id}", bend left=15pt]\\
                & \ul\SpMack^\botimes
            \end{tikzcd}
        \]
        of partially lax functors.
    \end{enumerate}
\end{theorem}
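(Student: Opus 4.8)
The plan is to transport the proofs of Theorems~\ref{thm:normed-comparison} and~\ref{thm:algebras_in_Rig} from the $\Forb$-normed ($\otimes$) world to the globally normed ($\botimes$) world, working with the distributive context $(\Fglo,\Fglo,\Forb)$ of Example~\ref{ex:goo} (which is a ring context by Remark~\ref{rk:GGO}) in place of $(\Fglo,\Forb,\Forb)$, and with the associated factorization system $(\mathbb E,\Forb)$ on $\Fglo$.

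For part~(1), the first step is to prove the globally normed analogue of Lemma~\ref{lemma:right-adjoint-strong-normed}: the (already constructed) strong globally normed functor $i_!\colon\ul\Mack^\botimes\to\ul\Mack_\gl^\botimes$ admits an $\mathbb E$-lax right adjoint $i^*$, given pointwise by restriction along $\Span(i^\flat)$. That $i^*$ commutes strongly with restrictions along faithful maps and with $\Forb$-norms is verbatim as in Lemma~\ref{lemma:right-adjoint-strong-normed}; the one genuinely new point is commutation with the deflation norms $\Phi^N=p_\otimes$ along surjective homomorphisms $p\colon G\rightarrowepic G/N$. Since $\Phi^N$ is computed on Mackey functors by left Kan extension along $\Span(p_*)$ with $p_*=(-)^{hN}$, and $\Span(p_*)$ is not adjoint to any restriction functor, this commutation cannot be checked by a naive adjointness argument; instead I would apply the bispan basechange result \cite{CHLL_Bispans}*{Theorem~4.1.5} to the square relating $\Span(i^\flat)$ and $\Span(p_*)$, whose hypotheses are met because $i^\flat$ restricts to right fibrations (indeed sieves) on fibers, and $p_*=(-)^{hN}$ preserves finite coproducts, so that the induced square of presheaf categories is adjointable. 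Granting this, $i^*$ preserves fiberwise colimits (sifted colimits are computed in the ambient functor categories, where $i^*$ has a further right adjoint; finite coproducts are products, preserved since $i^*$ is also a right adjoint), hence $i^*\Sigma\bbone\simeq\Sigma\bbone$, so by the universal property of fiberwise symmetric monoidal inversion in $\CAlg({\Cat}(\sift))$ used in Theorem~\ref{thm:global-existence} it descends to an $\mathbb E$-lax functor $i^*\colon\ul\SpMack_\gl^\botimes\to\ul\SpMack^\botimes$, right adjoint to $i_!$ by $2$-functoriality. By Theorem~\ref{thm:unnormed-comparison}(3) the underlying global functor of $i_!$ is the globalization $\ul\SpMack\hookrightarrow\ul\SpMack_\gl$, and $i^*$ is strong along all norms (being $\mathbb E$-lax); the uniqueness statement of Theorem~\ref{thm:Normed_Rig} for the context $(\Fglo,\Fglo,\Forb)$ then identifies $\ul\SpMack_\gl^\botimes$ with $\Rig^\botimes(\ul\SpMack^\botimes)$ (with $\ev=i^*$) and shows that $i^*$ is the universal $\mathbb E$-lax $\Span(\Fglo)$-functor to $\ul\SpMack^\botimes$.

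For part~(2), I would specialize Theorem~\ref{thm:algebras_in_Rig} to the context $(\Fglo,\Fglo,\Forb)$, with $\Cc=\ul\SpMack^\botimes$ and $A=G$, and use the identification $\ul\SpMack_\gl^\botimes\simeq\Rig^\botimes(\ul\SpMack^\botimes)$ from part~(1). This directly yields
\[
\und{\CAlg}_\Fglo^\Fglo(\ul\SpMack_\gl^\botimes)(G)\simeq\Fun^{\neglax{\Forb^{\op}}}_{\Span(\Fglo)}(\ul G^\Ncoprod,\ul\SpMack^\botimes),
\]
and Example~\ref{ex:Triv-Ncoprod-rep} identifies $\ul G^\Ncoprod$ with $\Span(\Fglo_{/G})$, naturally in $G\in\Fglo^\op$; this is the asserted equivalence. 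The commutative triangle is the content of diagram~$(\ref{diag:compatibility-CAlg-in-Rig})$ of Theorem~\ref{thm:algebras_in_Rig}: under the equivalence, the forgetful functor $\mathbb U$ corresponds to restriction along $\incl\colon\Triv\ul{(-)}\to\ul{(-)}^\Ncoprod$, and the counit $\ev\colon\Rig^\botimes(\ul\SpMack^\botimes)\to\ul\SpMack^\botimes$ is precisely $i^*$. Since $\incl$ sends the tautological object $\id_G\in\Triv\ul G(G)=(\Fglo_{/G})^{\op}(G)$ to the identity span, the composite $i^*\circ\mathbb U$ becomes evaluation at the identity span, i.e.~$\ev_{\id}$, and all of this is natural in $G$.

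The main obstacle is the single new computation in part~(1): showing that the Mackey-level $i^*$ commutes strongly with the deflation norms $\Phi^N$. Everything else is a direct transcription of earlier arguments (the unnormed comparison of Theorem~\ref{thm:unnormed-comparison}, the rigidification results of Theorems~\ref{thm:Normed_Rig} and~\ref{thm:algebras_in_Rig}, and the construction of $\ul\SpMack^\botimes$ and $\ul\SpMack_\gl^\botimes$ via fiberwise symmetric monoidal inversion), but the deflation check genuinely requires the bispan basechange formalism of \cite{CHLL_Bispans} because these norms are not right adjoints of restrictions, and some care is needed to verify the adjointability hypotheses.
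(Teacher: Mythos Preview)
Your proposal is correct and follows essentially the same route as the paper. The one difference is purely presentational: you treat commutation of $i^*$ with the deflation norms $\Phi^N$ as a ``genuinely new point'' requiring a separate application of \cite{CHLL_Bispans}*{Theorem~4.1.5}, whereas the paper simply observes that the proof of Lemma~\ref{lemma:right-adjoint-strong-normed} never used the hypothesis $n\in\Forb$ anywhere (the sieve condition on $i$ and the adjointability of the $n_*$-square hold for arbitrary $n$ in $\Fglo$), so the same argument covers deflation norms verbatim. Your explicit descent from the Mackey to the spectral level via symmetric monoidal inversion is also correct but is left implicit in the paper's compressed phrasing; part~(2) is handled identically in both via Theorem~\ref{thm:algebras_in_Rig}.
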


For $G=1$, the right hand side in $(\ref{eq:comparison-globally-normed-calg})$ is the category of \emph{global commutative ring spectra with multiplicative deflations} from \cite{deflations} mentioned in Remark~\ref{rk:BMY}.

\begin{proof}
    Arguing precisely as in the proof of Lemma~\ref{lemma:right-adjoint-strong-normed} (which, by direct inspection, never used that the map denoted $n\colon A\to B$ there was actually contained in $\Forb\subset\Fglo$) we see that $i_!$ has an $\mathbb E$-lax right adjoint $i^*\colon\ul\SpMack_\gl^\botimes\to\ul\SpMack^\botimes$. By \cref{thm:Normed_Rig}, the universal property can then be checked on underlying global categories, which is part of Theorem~\ref{thm:unnormed-comparison}.
    With this established, the second statement is an instance of Theorem~\ref{thm:algebras_in_Rig}.
\end{proof}

Finally, we can also give a Tambara functor description for the connective objects:

\begin{theorem}\label{thm:BMY-Tambara}\
    \begin{enumerate}
        \item The connective $G$-global spectra for varying $G$ assemble into a globally normed global subcategory $\ul\SpMack_{\gl,\ge0}^\botimes\subset\ul\SpMack_{\gl,\ge0}^\botimes$.
        \item The stabilization map $\ul\Mack_\gl^\botimes\to\ul\SpMack_\gl^\botimes$ induces a Bousfield localization onto $\ul\SpMack_{\gl,\ge0}^\botimes$. The essential image of the (fully faithful, lax normed) right adjoint consists precisely of the grouplike Mackey functors.
        \item For every $G\in\Fglo$ there is an equivalence between $\ul\CAlg^\botimes_\gl(\ul\SpMack_{\gl,\ge0}^\botimes)(G)$ and the full subcategory of
        \[
            \Fun^\times(\Bispan(\Fglo_{/G},\Fglo_{/G},\Forb_{/G}),\Spc)
        \]
        spanned by the Tambara functors (defined as before).
    \end{enumerate}
\end{theorem}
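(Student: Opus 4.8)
\textbf{Proof plan for Theorem~\ref{thm:BMY-Tambara}.}
The strategy is to mimic the argument for the $\Forb$-normed case (Proposition~\ref{prop:deloc-normed} and Theorem~\ref{thm:global-Tambara}), systematically replacing $\Forb$ by $\Fglo$ wherever the norms now also range over surjective homomorphisms. First I would prove (1) and (2) simultaneously, exactly as in Proposition~\ref{prop:deloc-normed}. Using the identification $\Fun^\times(\Span_{\Forb}(\Fglo^\flat(X)),\Spc)\simeq\Fun^\times(\Span_{\Forb}(\Fglo^\flat(X)),\CMon)$ and postcomposition with the Bousfield localization $\CMon\to\Sp_{\ge0}$, the stabilization map factors through a fiberwise localization $S\colon\ul\Mack_\gl\to\ul\SpMack_{\gl,\ge0}$. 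The only new point compared to the $\Forb$-case is that the relevant stabilization map $\ul\Mack^\botimes_\gl\to\ul\SpMack^\botimes_\gl$ is a \emph{globally} normed functor (by the previous lemma in \S\ref{subsec:mult-defl}), hence $\ul\SpMack_{\gl,\ge0}$ inherits a globally normed structure and $S$ upgrades to a globally normed functor; here one uses the observation from the proof of that lemma that the norms $\Phi^N$ along surjections preserve all colimits, so the localization is compatible with them. That the fully faithful right adjoint $\iota$ is lax normed, and that its essential image is the subcategory of grouplike (i.e.\ $\text{CGrp}$-valued) Mackey functors, is the same verification as in the $\Forb$-case, since grouplikeness is detected pointwise.

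For (3), I would assemble a commutative diagram analogous to the one in the proof of Theorem~\ref{thm:global-Tambara}, now using the globally normed categories of normed algebras:
\[
\begin{tikzcd}
    {\und{\CAlg}_\gl^\botimes(\und{\SpMack}_{\gl,\geq0}^\botimes)(G)} & {\und{\CAlg}_\gl^\botimes(\und{\Mack}_\gl^\botimes)(G)} \\
    {\und{\SpMack}_{\gl,\geq0}(G)} & {\und{\Mack}_{\gl}(G)\rlap.}
    \arrow["\iota\circ-", hook, from=1-1, to=1-2]
    \arrow["\bbU"', from=1-1, to=2-1]
    \arrow["\bbU", from=1-2, to=2-2]
    \arrow["\iota"', hook, from=2-1, to=2-2]
\end{tikzcd}
\]
The top horizontal map is fully faithful because $\iota$ is, and because the induced map on cocartesian unstraightenings is fully faithful (its left adjoint being a fiberwise localization), so $\iota\circ-$ is fully faithful on functor/algebra categories over $\Span(\Fglo)$. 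Chasing through, the image consists of those globally normed algebras in $\ul\Mack_\gl^\botimes$ whose underlying global Mackey functor is grouplike, i.e.\ satisfies the stated h-space condition. Finally, applying \cite{CHLL_NRings}*{Theorem~4.3.6} to the ring context $(\Fglo_{/G},\Fglo_{/G},\Forb_{/G})$ — which is a ring context by the same argument as Lemma~\ref{lem:goo-ring-context} together with Remark~\ref{rk:GGO} applied in the slice — identifies $\und{\CAlg}_\gl^\botimes(\und{\Mack}_\gl^\botimes)(G)$ with $\Fun^\times(\Bispan(\Fglo_{/G},\Fglo_{/G},\Forb_{/G}),\Spc)$, and the full subcategory picked out above is exactly the Tambara functors as defined in the statement.

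The main obstacle I anticipate is item (1)/(2): one has to be careful that the normed structure of $\ul\SpMack^\botimes_{\gl,\ge0}$ really exists as a \emph{full} globally normed subcategory, i.e.\ that the connective objects are genuinely closed under \emph{all} the norms $p_\otimes f^* n_\otimes$, including those involving geometric-fixed-point norms $\Phi^N$ along surjections. This reduces to the fact that $\Phi^N$ preserves connectivity, which in turn follows from the fact (established in the previous lemma) that $\Phi^N$ preserves all colimits and sends the unit to the unit, so it commutes with the truncation/connective-cover functor; but since connectivity of a global Mackey functor is defined levelwise over all $H$, one must check this after the further restrictions appearing in the span-category description, which is where a short diagram chase using the pointwise definition of connectivity is needed. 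Once that closure is in place, everything else is a bookkeeping exercise in applying the cited results of \cite{CHLL_NRings,CHLL_Bispans} in the ring context $(\Fglo,\Fglo,\Forb)$ and its slices.
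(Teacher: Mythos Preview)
Your proposal is correct and follows essentially the same approach as the paper: parts (1)--(2) via the argument of Proposition~\ref{prop:deloc-normed} (using that the stabilization has already been upgraded to a globally normed functor), and part (3) via \cite{CHLL_NRings}*{Theorem~4.3.6} applied to the ring context $(\Fglo_{/G},\Fglo_{/G},\Forb_{/G})$. The obstacle you anticipate is handled more directly than you suggest: once the stabilization map $\ul\Mack_\gl^\botimes\to\ul\SpMack_\gl^\botimes$ is globally normed and factors through the connective part, closure of connectives under all norms (including $\Phi^N$) is automatic exactly as in Proposition~\ref{prop:deloc-normed}, so no separate argument via colimit preservation of $\Phi^N$ or commutation with connective covers is required.
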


In particular, for $G=1$ this means that connective global commutative ring spectra with multiplicative deflations can equivalently be viewed as Tambara functors $\Bispan(\Fglo,\Fglo,\Forb)\to\Spc$.

\begin{proof}
    As we have already improved the stabilization to a globally normed functor $\ul\Mack_\gl^\botimes\to\ul\SpMack_\gl^\botimes$, the first two statements follow precisely as in \cref{prop:deloc-normed}. The final statement is then again a consequence of \cite{CHLL_NRings}*{Theorem~4.3.6}.
\end{proof}

\appendix
\section{Orthogonal spectra vs.\ symmetric spectra}\label{app:orthogonal}
\begin{convention}
    In this appendix only, we will refer to ordinary $1$-categories as \emph{categories} and to $(\infty,1)$-categories as $\infty$-categories
    (unlike in the rest of this paper, where category means $(\infty,1)$-category).
\end{convention}

Schwede originally introduced \emph{ultra-commutative global ring spectra} in terms of a certain \emph{global model structure} on the category of strictly commutative algebras in orthogonal spectra \cite{schwede2018global}*{Theorem 5.4.3}. We will write $\UCom_\gl^\text{Lie}$ for the associated $\infty$-category (with the superscript indicating that Schwede's model comes with genuine fixed points for all compact Lie groups, and not just all finite groups). On the other hand, our reference model for ultra-commutative global ring spectra (and all other kinds of spectra) is based on \emph{symmetric} spectra, following \cite{hausmann-equivariant, hausmann-global}, and the goal of this short appendix is to compare these two.

\begin{construction}
    The category $\Sp^O$ of orthogonal spectra in the category $\text{Top}$ of compactly generated weak Hausdorff spaces comes with a forgetful functor $\Sp^O\to\Sp^\Sigma(\text{Top})$ to the category of symmetric spectra in $\text{Top}$. Postcomposing with the singular complex functor, we obtain a functor $i^*\colon \Sp^O\to\Sp^\Sigma$ to the category of symmetric spectra in simplicial sets, as considered in the main text.

    The functor $i^*$ has a left adjoint $i_!$, given explicitly as a composition
    \[
        \Sp^\Sigma\xrightarrow{\;|\cdot|\;}\Sp^\Sigma(\text{Top})\xrightarrow{\text{enriched left Kan extension}}\Sp^O.
    \]
    The second arrow has a natural symmetric monoidal structure by \cite{MMSS}*{Proposition~3.3}, while the first inherits one from the fact that geometric realization preserves products (see also §19 of \emph{op.~cit.}). In summary, we have an adjunction
    \begin{equation}\label{eq:sym-vs-orth}
        i_!\colon\Sp^\Sigma\rightleftarrows\Sp^O\noloc i^*
    \end{equation}
    with symmetric monoidal left adjoint, so that $i^*$ admits a unique \emph{lax} symmetric monoidal structure making unit and counit into symmetric monoidal transformations. In particular, $(\ref{eq:sym-vs-orth})$ lifts to an adjunction
    \begin{equation}\label{eq:CAlg-sym-vs-orth}
        i_!\colon\CAlg(\Sp^\Sigma)\rightleftarrows\CAlg(\Sp^O)\noloc i^*.
    \end{equation}
\end{construction}

We can now state our comparison:

\begin{theorem}\label{thm:orth-vs-sym}
    The right adjoint $i^*$ in $(\ref{eq:CAlg-sym-vs-orth})$ descends to a functor of $\infty$-categories $\UCom_\gl^\textup{Lie}\to\UCom_\gl$, and this functor is a right Bousfield localization at the `$\Fin$-global weak equivalences,' i.e.~the weak equivalences of the $\Fin$-global model structure of \cite{schwede2018global}*{Theorem 4.3.17}.
\end{theorem}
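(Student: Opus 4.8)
The plan is to reduce the statement about strictly commutative ring spectra to the already-understood comparison between the two underlying model categories of spectra, exploiting that both the orthogonal and symmetric sides of the story are built by transferring model structures along the respective free-forgetful adjunctions.

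First I would set up the relevant model structures: on $\CAlg(\Sp^O)$ Schwede's $\Fin$-global model structure transferred from the $\Fin$-global model structure on $\Sp^O$ (the restriction of the global model structure of \cite{schwede2018global}*{Theorem~4.3.17} to the global family $\Fin$ of finite groups), and on $\CAlg(\Sp^\Sigma)$ the ($1$-)global model structure of Remark~\ref{rk:ucom-model}, transferred from the flat global model structure on $\Sp^\Sigma$. The key input is the known Quillen equivalence between $\Sp^O$ with its $\Fin$-global model structure and $\Sp^\Sigma$ with its flat global model structure: the forgetful functor $i^* \colon \Sp^O \to \Sp^\Sigma$ together with its left adjoint forms such a Quillen equivalence (this follows by combining Schwede's comparison between orthogonal and symmetric spectra with the identification of the global family restriction; see also the discussion surrounding \cite{hausmann-global}). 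Next I would verify that $(\ref{eq:CAlg-sym-vs-orth})$ is a Quillen adjunction for the transferred model structures: this is formal once one knows that $i^*$ preserves fibrations and acyclic fibrations, which holds because fibrations and acyclic fibrations in the transferred structures are created by the forgetful functors to spectra, and $i^*$ (as a functor of spectra) is right Quillen.

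Then I would argue that $(\ref{eq:CAlg-sym-vs-orth})$ is in fact a Quillen equivalence. The cleanest route is to check the derived unit and counit criterion: for a cofibrant $A \in \CAlg(\Sp^\Sigma)$ and fibrant $B \in \CAlg(\Sp^O)$, a map $i_!A \to B$ is a $\Fin$-global weak equivalence of orthogonal spectra if and only if its adjunct $A \to i^*B$ is a flat global weak equivalence of symmetric spectra. Since both weak equivalences are detected on underlying spectra, and since the forgetful functors $\CAlg \to \Sp$ preserve the relevant (co)fibrant objects up to the passage to flat spectra (using \cite{shipley-convenient}*{Corollary~4.3} on the symmetric side and the analogous statement on the orthogonal side), this reduces to the corresponding statement for the Quillen equivalence on spectra. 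Passing to $\infty$-categorical localizations, the right Quillen functor $i^*$ descends to an equivalence $\UCom_\gl^{\textup{Lie}}[W_{\Fin\text{-gl}}^{-1}] \simeq \UCom_\gl$, where the source is the localization of $\CAlg(\Sp^O)$ at the $\Fin$-global weak equivalences. Finally, since $\UCom_\gl^{\textup{Lie}}$ is by definition the localization of $\CAlg(\Sp^O)$ at the \emph{full} global weak equivalences (for all compact Lie groups), the localization at the subclass $W_{\Fin\text{-gl}}$ is computed by a further right Bousfield localization, and the composite $\UCom_\gl^{\textup{Lie}} \to \UCom_\gl^{\textup{Lie}}[W_{\Fin\text{-gl}}^{-1}] \simeq \UCom_\gl$ is therefore the claimed right Bousfield localization; that the functor induced by $i^*$ agrees with this composite is immediate from the construction.

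The main obstacle I anticipate is bookkeeping around the three different flavours of cofibrancy (flat versus positive flat, and the corresponding orthogonal notions) and ensuring that the forgetful functors $\CAlg(\Sp^{O/\Sigma}) \to \Sp^{O/\Sigma}$ interact correctly with $i_!, i^*$ so that the derived-equivalence check genuinely reduces to the spectrum-level statement; in particular one must be slightly careful that $i_!$ of a (positively) flat symmetric commutative ring spectrum is again suitably cofibrant as an orthogonal commutative ring spectrum, and that $i^*$ of a fibrant orthogonal commutative ring spectrum has underlying symmetric spectrum modelling the correct homotopy type. None of this is conceptually deep, but it is where the actual work lies. I would also double-check that Schwede's $\Fin$-global model structure on $\CAlg(\Sp^O)$ exists and is the transferred one — this is \cite{schwede2018global}*{Theorem~5.4.3} restricted to the family $\Fin$, together with the fact that restricting a global model structure to a global family is again transferred from the corresponding restriction on spectra.
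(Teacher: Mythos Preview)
Your approach routes through the $\Fin$-global model structure on $\CAlg(\Sp^O)$ and a Quillen \emph{equivalence}, whereas the paper works directly with Schwede's full global model structure on $\CAlg(\Sp^O)$ and shows the resulting Quillen \emph{adjunction} has fully faithful derived left adjoint. These are genuinely different routes, but yours has a gap that, once filled, collapses back to the paper's argument.

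The gap is your final step. You assert that ``the localization at the subclass $W_{\Fin\textup{-gl}}$ is computed by a further right Bousfield localization,'' i.e.\ that $\UCom_\gl^{\textup{Lie}} \to \UCom_\gl^{\textup{Lie}}[W_{\Fin\textup{-gl}}^{-1}]$ has a fully faithful left adjoint. (Minor point: $W_{\Fin\textup{-gl}}$ is the \emph{larger} class, not a subclass.) This assertion is exactly the non-trivial content of the theorem, and you give no justification for it. Your Quillen equivalence only produces a derived adjunction between $\CAlg(\Sp^O)[W_{\Fin\textup{-gl}}^{-1}]$ and $\UCom_\gl$, not one between $\UCom_\gl^{\textup{Lie}}$ and $\UCom_\gl$; so you have not even established that the functor $i^*\colon\UCom_\gl^{\textup{Lie}}\to\UCom_\gl$ has a left adjoint, let alone a fully faithful one. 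Nor does the $\Fin$-global model structure on $\CAlg(\Sp^O)$ arise as a right Bousfield localization of the global one in the model-categorical sense (they have different fibrations, since on $\Sp^O$ the $\Fin$-global model structure has fewer cofibrations than the flat global one).

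The paper instead shows that $(\ref{eq:CAlg-sym-vs-orth})$ is Quillen for the transferred \emph{positive global} model structures (this uses Hausmann's result that $i_!\dashv i^*$ is Quillen for the positive global model structures on spectra, with $i^*$ homotopical), and then checks directly that the derived unit $X\to i^*i_!X$ is a weak equivalence for every cofibrant $X\in\CAlg(\Sp^\Sigma)$: this can be tested on underlying spectra, where it follows from the spectrum-level statement because such $X$ is flat (by \cite{shipley-convenient}*{Corollary~4.3}). This gives full faithfulness of $\mathbf{L}i_!$ without ever invoking a $\Fin$-global model structure on $\CAlg(\Sp^O)$. Your ``reduce to the spectrum level'' idea is exactly right, but it should be applied to the derived unit for the \emph{global} Quillen adjunction, not deferred until after a Quillen-equivalence detour.
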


In other words, once one forgets that manifolds of positive dimension exist, there is no difference between Schwede's original approach and Hausmann's model used in this paper.

For the proof of the theorem, we recall several model category structures on the categories involved:

\goodbreak
\begin{proposition}\label{prop:model-structures}\
    \begin{enumerate}
        \item The flat cofibrations of orthogonal spectra and the global (weak) equivalences of \cite{schwede2018global}*{Definition~4.1.3} are part of a model structure on $\Sp^O$, called the \emph{global model structure}. This also has a variant where the cofibrations are instead only the \emph{positive} flat cofibrations (i.e.~flat cofibrations $f\colon X\to Y$ such that $f_0$ is a homeomorphism).
        \item The positive global model structure on $\Sp^O$ transfers to $\CAlg(\Sp^O)$, i.e.~the latter has a model structure where the weak equivalences and fibrations are created by the forgetful functor.
        \item The flat cofibrations of symmetric spectra and the global weak equivalences (as considered in this paper) are part of a \emph{global model structure} on $\Sp^\Sigma$. Again, this has a variant where the cofibrations are the \emph{positive} flat cofibrations.
        \item The positive global model structure on $\Sp^\Sigma$ transfers to $\CAlg(\Sp^\Sigma)$.
    \end{enumerate}
    \begin{proof}
        The first statement is \cite{schwede2018global}*{Theorem~4.3.18 and Proposition~4.3.33}, while the second one can be found as Theorem~5.4.3 of \emph{op.~cit.}
        The statements about the symmetric spectrum models appear as \cite{hausmann-global}*{Theorem~2.18 and Theorem~3.5}.
    \end{proof}
\end{proposition}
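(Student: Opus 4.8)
The plan is to assemble Proposition~\ref{prop:model-structures} entirely from the existing literature, since each of the four model structures has been constructed before; I would organize the recollection so that the reader sees precisely which results are being invoked and, in particular, where the positivity hypothesis enters. No genuinely new argument is needed, so the "proof" is a sequence of pointers together with a few sentences reconciling conventions.

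First I would treat parts~(1) and~(3), the (positive) global model structures on the spectrum categories themselves. For orthogonal spectra, I would recall that Schwede builds the global model structure in two stages: the flat level model structure on $\Sp^O$ (cofibrations the flat cofibrations, weak equivalences the level global equivalences), followed by a Bousfield-type localization producing the global equivalences of \cite{schwede2018global}*{Definition~4.1.3}; this is \cite{schwede2018global}*{Theorem~4.3.18}, and the positive variant, obtained by restricting to positive flat cofibrations $f\colon X\to Y$ (those with $f_0$ a homeomorphism), is \cite{schwede2018global}*{Proposition~4.3.33}. For symmetric spectra the construction is formally identical: Hausmann runs the same two-stage procedure and obtains the (positive) flat global model structure in \cite{hausmann-global}*{Theorem~2.18}. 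Here the only point to check is that these cofibrations and weak equivalences agree on the nose with the ones used in the main body (Remarks~\ref{rk:who-cares-about-flatness} and~\ref{rk:ucom-model}), which holds because both are defined via flat cofibrations and global weak equivalences; I would add a sentence to this effect.

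Next I would handle parts~(2) and~(4), the transfer to commutative algebras. The strategy is the standard lifting theorem for transferred model structures along the free--forgetful adjunction between a spectrum category and its commutative algebras: since $\Sp^O$ and $\Sp^\Sigma$ are locally presentable and their positive global model structures are cofibrantly generated with all objects small, the transferred structure on $\CAlg$ exists as soon as every sequential composite of pushouts of free maps on generating acyclic cofibrations is a weak equivalence. This is precisely the commutative monoid axiom, and it is where positivity is essential: for a positive flat (acyclic) cofibration $f$, the successive subquotients of the symmetric power filtration of the free commutative monoid on $f$ are again positive flat (acyclic) cofibrations, because $\Sigma_n$ acts freely on the relevant latching cells; this fails without the positivity hypothesis, which is the classical reason the absolute (non-positive) flat model structure does \emph{not} transfer to $\CAlg$. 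This verification is carried out by Schwede in the course of proving \cite{schwede2018global}*{Theorem~5.4.3} for orthogonal spectra and by Hausmann in \cite{hausmann-global}*{Theorem~3.5} for symmetric spectra, so the proofs of~(2) and~(4) again reduce to citing those results, and I would do exactly that.

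I do not expect any real obstacle: all the substance lives in the cited sources, and the task is bookkeeping. The only places deserving a sentence of care are (i) confirming that Hausmann's positive flat global model structure on $\Sp^\Sigma$ literally is the $1$-global specialization of the positive flat $G$-global model structures of \cite{Lenz-Stahlhauer} that are used elsewhere in the paper --- which holds since both are defined by flat cofibrations and ($G$-)global weak equivalences --- and (ii) noting, although it is not needed for the existence statement itself, that cofibrant objects of $\CAlg(\Sp^\Sigma)$ have flat underlying symmetric spectra by \cite{shipley-convenient}*{Corollary~4.3}; this last fact is what later makes the transferred model structure useful for computing the derived free functor, and I would mention it in a closing remark rather than in the proof proper.
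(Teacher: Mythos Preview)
Your proposal is correct and takes essentially the same approach as the paper: both reduce the proposition entirely to citations, with you invoking exactly the same results (\cite{schwede2018global}*{Theorem~4.3.18, Proposition~4.3.33, Theorem~5.4.3} and \cite{hausmann-global}*{Theorems~2.18 and~3.5}) that the paper cites. The only difference is that you add explanatory context about the transfer argument and positivity, which the paper omits in favor of bare citation.
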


\begin{theorem}[Hausmann]
    The adjunction $(\ref{eq:sym-vs-orth})$ is a Quillen adjunction when we equip both sides with the global model structures, or when we equip both sides with the positive global model structures. In each case, the right adjoint is homotopical, and the induced functor on Dwyer--Kan localizations is a right Bousfield localization at the $\Fin$-global weak equivalences.
    \begin{proof}
        Orthogonal spectra also carry a \emph{$\Fin$-global model structure} with the $\Fin$-global weak equivalences as weak equivalences and fewer cofibrations, see \cite{schwede2018global}*{Theorem~4.3.17}. Combining \cite{hausmann-global}*{Proposition~2.19 and Theorem~5.3} shows that $(\ref{eq:sym-vs-orth})$ is a Quillen equivalence for this model structure, and as noted in the proof there, the right adjoint is homotopical. This proves the statement about the usual global model structures.

        For the part about the positive global model structures it only remains to show that $i_!\dashv i^*$ is a Quillen adjunction. As the positive global model structure has the same weak equivalences as, but fewer cofibrations than the usual global one, it is clear that $i_!$ sends acyclic cofibrations to weak equivalences. It therefore only remains to show that $i_!$ sends cofibrations to cofibrations. For this it suffices to observe that the slice of the inclusion $i\colon \bm\Sigma\to\textbf{O}$ between the topologically enriched indexing categories for symmetric and orthogonal spectra, respectively, over the trivial inner product space is contractible, so that $(i_!X)_0\cong |X_0|$ naturally in $X\in\Sp^\Sigma$.
    \end{proof}
\end{theorem}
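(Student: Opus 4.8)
The plan is to split the theorem into two model-categorical assertions, to be proved separately for the global model structures and for the positive global model structures, namely that $i_!\dashv i^*$ is a Quillen adjunction and that the right adjoint $i^*$ is homotopical; the statement about Dwyer--Kan localizations is then formal. Concretely, a homotopical right Quillen functor descends to the localizations, and to see that the descended functor from the Dwyer--Kan localization of $\Sp^O$ at the global equivalences to $\Sp_\gl$ is a \emph{right} Bousfield localization at the $\Fin$-global equivalences it suffices to exhibit a fully faithful left adjoint and to identify the inverted maps --- both of which will fall out of the stronger fact, quoted from Hausmann, that $i_!\dashv i^*$ is a Quillen \emph{equivalence} when $\Sp^\Sigma$ carries the global model structure and $\Sp^O$ carries the \emph{$\Fin$-global} model structure of \cite{schwede2018global}*{Theorem~4.3.17}.

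First I would treat the non-positive global case. Combining \cite{hausmann-global}*{Proposition~2.19 and Theorem~5.3} yields the Quillen equivalence just mentioned, together with the fact that the right adjoint $i^*$ is homotopical for it, sending $\Fin$-global equivalences of orthogonal spectra to global equivalences of symmetric spectra; since global equivalences are $\Fin$-global equivalences, $i^*$ is then homotopical also for the usual global model structure on $\Sp^O$. The delicate step is upgrading the $\Fin$-global Quillen equivalence to a Quillen \emph{adjunction} for the usual global model structure on $\Sp^O$: the functor $i_!$ still preserves cofibrations ($\Fin$-global cofibrations are flat, and $i_!$ of a flat symmetric spectrum is flat), so the only thing to check is that $i_!$ of a global acyclic cofibration of $\Sp^\Sigma$ is a \emph{global} --- not merely $\Fin$-global --- acyclic cofibration of $\Sp^O$. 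This amounts to the claim that $i_!$ of a cofibrant symmetric spectrum is ``$\Fin$-global in nature'', so that a $\Fin$-global equivalence between orthogonal spectra lying in the essential image of the derived $i_!$ is automatically a genuine global equivalence; this is exactly what \cite{hausmann-global}*{Theorem~5.3} provides. With it in hand, $i^*$ descends to a functor from the Dwyer--Kan localization of $\Sp^O$ at its global equivalences to $\Sp_\gl$; since $i^*$ is also homotopical for $\Fin$-global equivalences, this functor factors as the further localization at the $\Fin$-global equivalences followed by the equivalence supplied by the cited Quillen equivalence, and its left adjoint --- the left-derived $i_!$ --- is fully faithful because its unit is invertible already at the $\Fin$-level. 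Thus it is a right Bousfield localization at the $\Fin$-global weak equivalences.

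Next I would reduce the positive global case to the one just treated. The positive global model structures on $\Sp^\Sigma$ and $\Sp^O$ have the same weak equivalences as the non-positive ones, so $i^*$ remains homotopical and the induced functor on Dwyer--Kan localizations is unchanged; it therefore remains only to see that $i_!\dashv i^*$ is still a Quillen adjunction. A positive global acyclic cofibration is in particular an ordinary global acyclic cofibration, so the non-positive case already shows $i_!$ carries it to a global acyclic cofibration of $\Sp^O$; the one new point is that $i_!$ preserves \emph{positive} flat cofibrations, i.e.~the extra requirement of being a homeomorphism in level~$0$. This is a direct calculation with the topologically enriched indexing categories: the comma category of the inclusion $\bm\Sigma\to\textbf{O}$ over the trivial inner product space is contractible, so that $(i_!X)_0\cong|X_0|$ naturally in $X\in\Sp^\Sigma$, whence $i_!$ of a level-$0$ isomorphism is again a level-$0$ isomorphism.

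The hard part will be the passage, in the global case, from the $\Fin$-global model structure to the genuine all-compact-Lie-group global model structure on orthogonal spectra: the two have different classes of cofibrations, so the identity is not a Quillen functor between them and one cannot merely compose Quillen adjunctions. Everything reduces to understanding the essential image of $i_!$ well enough to know that $\Fin$-global and global equivalences agree there, which is the real content of Hausmann's Theorem~5.3; the remainder of the argument is bookkeeping.
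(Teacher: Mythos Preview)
Your approach and the paper's coincide on the essentials: both cite Hausmann's Proposition~2.19 and Theorem~5.3 for the $\Fin$-global Quillen equivalence with homotopical right adjoint, deduce the Bousfield-localization claim formally from this, and handle the positive case via the level-$0$ computation $(i_!X)_0\cong|X_0|$. The positive case and the formal deduction of the Bousfield-localization statement are essentially identical to the paper's.

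Where you differ is in the non-positive case: the paper treats the entire statement for the usual global model structures as a direct consequence of the cited Hausmann results (``This proves the statement about the usual global model structures''), whereas you isolate what you call a delicate step --- upgrading the Quillen adjunction from the $\Fin$-global to the all-compact-Lie global model structure on $\Sp^O$ --- and try to argue for it separately. Your argument for this step has a gap: you assert that Hausmann's Theorem~5.3 shows that a $\Fin$-global equivalence lying in the essential image of the derived $i_!$ is automatically an all-Lie global equivalence, but Theorem~5.3 (as you and the paper both describe it) is the Quillen \emph{equivalence} statement for the $\Fin$-global model structure; it says nothing directly about all-Lie global weak equivalences on $\Sp^O$, so this is not something you can simply read off. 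Even granting such a statement, you would still need a reduction via Ken Brown's lemma or generating acyclic cofibrations, since an arbitrary acyclic cofibration need not have cofibrant source and target. The paper's terseness here indicates that the combination of Proposition~2.19 and Theorem~5.3 is meant to be taken as a black box for the full non-positive claim, with Proposition~2.19 presumably already furnishing the left Quillen property for the flat all-Lie global model structure directly --- so your extra step is likely unnecessary, and in any case your proposed resolution of it does not go through as written.
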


\begin{proof}[Proof of Theorem~\ref{thm:orth-vs-sym}]
    We equip both sides of the adjunction (\ref{eq:CAlg-sym-vs-orth}) with the transferred model structures recalled in Proposition~\ref{prop:model-structures}. The previous theorem then immediately shows that $i^*$ is homotopical right Quillen, and hence $i_!$ is in particular left Quillen.

    To see that the left derived functor $\textbf{L}i_!$ is fully faithful, it then suffices to show that the derived unit $\id\to i^*\textbf{L}i_!$ is an equivalence, or equivalently, that the ordinary unit $X\to i^*i_!X$ is a global weak equivalence for any cofibrant $X\in\CAlg(\Sp^\Sigma)$. This can be checked on underlying symmetric spectra, where this is a consequence of the previous theorem as $X$ is in particular flat as a symmetric spectrum by \cite{shipley-convenient}*{Corollary~4.3}.

    We conclude that $\textbf{L}i_!\dashv i^*$ is a right Bousfield localization. As $i^*$ inverts precisely the $\Fin$-global weak equivalences by yet another application of the previous theorem, this completes the proof.
\end{proof}

\begin{remark}
    Using analogous arguments, one deduces from \cite{hausmann-equivariant}*{Theorem 7.5} that $\UCom_G$ can be equivalently described as the Dwyer--Kan localization at the $G$-equivariant weak equivalences of the category of strictly commutative algebras in orthogonal spectra with $G$-action, as considered in \cite{mandell-may}*{Section~III.8}.
\end{remark}

\section{Basechange lemmas}\label{app:basechange}
\subsection{Smooth and proper basechange} This appendix follows §§\ref{subsec:cleft_colimits}--\ref{subsec:cleft-Kan} in that our basic objects of study are $T$-precategories for some fixed category $T$, cf.\ \cref{warn:t-cat}.
The reader should think of the cases $T = \cF$ and $T= \Span_\cN(\cF)^\op$ for a span pair $(\cF,\cN)$,
which recovers the setting of ($\cN$-normed) $\cF$-precategories from the main text.
Our first goal is to prove a $T$-parametrized version of the classical
smooth/proper basechange theorem for Kan extensions,
see e.g.~\cite[Theorem 6.4.13]{cisinski-book}.\footnote{Unfortunately, the convention for the terminology of smooth and proper functors in \cite{cisinski-book} is opposite to that of \cite{HTT} and \cite{martini2021yoneda,martiniwolf2021limits} (although they agree on initial / final functors). Since we heavily cite the latter, we have decided to adopt their conventions.}

Recall from \cite[Definition 4.4.4]{martini2021yoneda} and the discussion right below it that a functor $p \colon \cX \to \cY$ of $T$-precategories is \emph{proper} if for any diagram of pullback squares
\[\begin{tikzcd}
    \mathcal{S} & \cV & \cX \\
    \mathcal{R} & \cZ & \cY
	\arrow["j", from=1-1, to=1-2]
	\arrow[from=1-1, to=2-1]
	\arrow["\lrcorner"{anchor=center, pos=0.125}, draw=none, from=1-1, to=2-2]
	\arrow[from=1-2, to=1-3]
	\arrow["q"', from=1-2, to=2-2]
	\arrow["\lrcorner"{anchor=center, pos=0.125}, draw=none, from=1-2, to=2-3]
	\arrow["p", from=1-3, to=2-3]
	\arrow["i", from=2-1, to=2-2]
	\arrow[from=2-2, to=2-3]
\end{tikzcd}\]
if $i$ is initial in the sense of \cite[Section 4.3]{martini2021yoneda}, then so is $j$.
Dually, we say that $p$ is \emph{smooth} if $p^\op \colon \cX^\op \to \cY^\op$ is proper,
or equivalently if for any pullback square as above, if $i$ is final then so is $j$.
The most important class of such functors come from left and right fibrations, i.e.~those functors internally right orthogonal to initial and final functors, respectively:

\begin{proposition}[{{\cite[Proposition 4.4.7]{martini2021yoneda}}}]\label{prop:lfib-smooth}
    Any left (right) fibration of $T$-precategories is smooth (proper).\qed
\end{proposition}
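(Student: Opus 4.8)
The statement is \cite[Proposition~4.4.7]{martini2021yoneda} applied with $\cB = \PSh(T)$ (cf.\ the footnote to \cref{warn:t-cat}), so the most economical ``proof'' is that citation; below I outline the conceptual argument one would give from scratch. The first move is to reduce to the case of left fibrations. By definition $p$ is smooth iff $p^{\op}$ is proper, and $p$ is internally right orthogonal to the initial functors iff $p^{\op}$ is internally right orthogonal to the final functors; hence, once we know that every left fibration is smooth, a right fibration $p$ has $p^{\op}$ a left fibration, thus smooth, thus $p = (p^{\op})^{\op}$ is proper.

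Next I would invoke two inputs from parametrized category theory. The first is the colimit-cofinality characterization of $T$-final functors from \cite[§4.3]{martini2021yoneda}: a $T$-functor $j\colon\cS\to\cV$ is $T$-final if and only if for every left fibration $\cD\to\cV$ (equivalently, every $T$-functor from $\cV$ to the $T$-category of anima) the base-change map $|\cS\times_\cV\cD|\to|\cD|$ of $T$-groupoid completions is an equivalence---equivalently, the induced map on the $T$-colimits of the corresponding classifying functors is an equivalence. The second is that left fibrations, being the maps internally right orthogonal to the initial functors, are stable both under base change and under composition; both are formal from the orthogonality description.

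With these in hand the argument is short. Given a pullback diagram as in the statement with $p\colon\cX\to\cY$ a left fibration and $i\colon\mathcal R\to\cZ$ final, the projection $q\colon\cV := \cZ\times_\cY\cX\to\cZ$ is again a left fibration, and $j\colon\mathcal S := \mathcal R\times_\cZ\cV\to\cV$ is precisely the base change of $i$ along $q$; so I must show $j$ is $T$-final. Let $\cD\to\cV$ be an arbitrary left fibration. Then $\cD\to\cV\to\cZ$ is a composite of left fibrations, hence a left fibration over $\cZ$, so finality of $i$ yields $|\mathcal R\times_\cZ\cD|\iso|\cD|$. Since $\mathcal R\times_\cZ\cD\simeq(\mathcal R\times_\cZ\cV)\times_\cV\cD = \mathcal S\times_\cV\cD$, this is exactly the base-change condition for $\cD$; as $\cD$ was arbitrary, $j$ is $T$-final. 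This shows left fibrations are smooth, and the first reduction completes the proof.

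The main obstacle is not this formal skeleton but the two inputs it rests on: the colimit-cofinality characterization of $T$-finality (including the identification of the $T$-colimit of a left-fibration-classifying functor with the $T$-groupoid completion of its total space), and the stability of left fibrations under base change and composition---both genuinely rely on the parametrized Yoneda machinery and the theory of $T$-(co)limits developed in \cite{martini2021yoneda}, which is precisely why in the body of the paper we simply cite it. A secondary but recurring pitfall is keeping the variance conventions consistent (smooth versus proper, initial versus final, over- versus under-slices), where the literature is notoriously inconsistent; I would pin these down once at the outset against the definitions recalled immediately before the proposition.
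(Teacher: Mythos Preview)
The paper gives no proof beyond the citation (as the trailing \qed\ indicates), which you correctly recognize in your first sentence. Your added outline---reducing to left fibrations via opposites, then testing finality against left fibrations using the colimit/groupoid-completion characterization together with closure of left fibrations under pullback and composition---is a sound sketch of the standard argument, and you rightly flag that the substantive inputs (the parametrized cofinality criterion and the orthogonality machinery) are precisely what is developed in \cite{martini2021yoneda}.
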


Importantly, being a left or right fibration of $T$-precategories is a pointwise condition:

\begin{proposition}\label{prop:param-lfib}
    A $T$-functor $p \colon \cX \to \cY$ is a left (right) fibration of $T$-precategories if and only if $p(A) \colon \cX(A) \to \cY(A)$ is a left (right) fibration for each $A \in T$.
\end{proposition}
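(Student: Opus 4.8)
The plan is to reduce the statement to the characterization of initial and final functors internal to $\PSh(T)$-parametrized category theory, and then to use that ``internally left/right orthogonal to initial/final functors'' is a condition that can be checked fiberwise. Concretely, by definition a $T$-functor $p\colon\cX\to\cY$ is a left fibration of $T$-precategories if it is internally right orthogonal to every initial $T$-functor, i.e.\ if for every initial $T$-functor $i\colon\cR\to\cZ$ the commutative square
\[
\begin{tikzcd}
\ul\Fun_T(\cZ,\cX)\arrow[r,"i^*"]\arrow[d,"p_*"'] & \ul\Fun_T(\cR,\cX)\arrow[d,"p_*"]\\
\ul\Fun_T(\cZ,\cY)\arrow[r,"i^*"'] & \ul\Fun_T(\cR,\cY)
\end{tikzcd}
\]
is a pullback of $T$-precategories. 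The dual statement defines right fibrations. So the first step is to recall from \cite{martini2021yoneda} the precise internal-orthogonality definition and the fact (which we may take from \emph{loc.\ cit.}) that, after unstraightening to the topos $\PSh(T)$, one has the standard interplay between initial functors, cofinal functors, and left fibrations.

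The key reduction is that everything in sight is detected pointwise. Extending a $T$-precategory uniquely to a limit-preserving functor $\PSh(T)^\op\to\Cat$, a square of $T$-precategories is a pullback if and only if it is a pointwise pullback at every $A\in T$ (in fact at every $A\in\PSh(T)$, but $T$ suffices by density). Combined with the natural equivalences $\pi_A^*\ul\Fun_T(-,-)\simeq\ul\Fun_{T_{/A}}(\pi_A^*(-),\pi_A^*(-))$ from \cite{CLL_Global}*{Corollary~2.2.11} (already used repeatedly in the main text, e.g.\ in the proof of \cref{lemma:BC-res-pw}), checking the orthogonality square above pointwise at $A$ amounts to checking the analogous orthogonality for $\pi_A^*p\colon\pi_A^*\cX\to\pi_A^*\cY$ as a $T_{/A}$-functor. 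Hence $p$ is a left fibration of $T$-precategories if and only if each $\pi_A^*p$ is a left fibration of $T_{/A}$-precategories.

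Next I would observe that a left fibration of $T_{/A}$-precategories, when restricted along the inclusion of the terminal object $\{\id_A\}\hookrightarrow T_{/A}$, yields a left fibration of categories: indeed the functor $\ev_{\id_A}\colon\PreCat{T_{/A}}\to\Cat$ sends initial $T_{/A}$-functors to initial functors (as $\{\id_A\}$ is terminal, hence the evaluation commutes with the relevant (co)limits) and commutes with the formation of internal functor categories in the sense that evaluation of $\ul\Fun_{T_{/A}}(\cC,\cD)$ at $\id_A$ is $\Fun_{T_{/A}}(\cC,\cD)$. This shows the ``only if'' direction: if $p$ is a left fibration of $T$-precategories, then $p(A)=\ev_{\id_A}(\pi_A^*p)$ is a left fibration of categories for each $A$. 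For the converse, one uses that the pointwise pullback condition for the orthogonality square at $A$ reduces, via the equivalences above, to a pullback condition on $\Fun_{T_{/A}}$-categories, which in turn can be tested by further pulling back to all $B\to A$ in $T_{/A}$ and evaluating at the terminal object; there each instance becomes the orthogonality of $p(B)$ against a left fibration of categories, which holds by hypothesis. The main obstacle here is bookkeeping: one must check that the relevant orthogonality (a pullback of \emph{parametrized} functor categories) genuinely decomposes into the pointwise orthogonality statements, which requires knowing that initial $T_{/A}$-functors can be built out of (or detected by) the representables $\ul B$ together with the characterization of initiality via the parametrized Yoneda lemma; this is exactly the kind of argument carried out in \cite{martini2021yoneda}*{Section~4.4} and I would cite it rather than reprove it.

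Finally, the right fibration case follows by applying the left fibration case to $p^\op$ after noting that $(-)^\op$ on $T$-precategories is computed pointwise and exchanges initial and final functors, so $p$ is a right fibration of $T$-precategories iff $p^\op$ is a left fibration iff each $p^\op(A)=p(A)^\op$ is a left fibration of categories iff each $p(A)$ is a right fibration. This completes the proof modulo the cited internal-orthogonality formalism; the only genuinely delicate point, as noted, is the pointwise decomposition of the orthogonality square, and I expect this to be a short argument once the density of representables and the compatibility of $\ul\Fun_T$ with restriction are in hand.
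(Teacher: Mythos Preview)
Your approach via internal orthogonality is genuinely different from the paper's and, as written, has a real gap. The paper does not touch the orthogonality definition at all: it passes to complete Segal objects in $\PSh(T)$ and invokes \cite{martini2021yoneda}*{Proposition~4.1.3}, which says that $p$ is a left fibration if and only if the canonical map $\cX_n \to \cX_0 \times_{\cY_0} \cY_n$ is an equivalence for every $n\ge 1$. Since equivalences and pullbacks in $\PSh(T)$ are detected pointwise, this condition is visibly equivalent to the same criterion for each $\cX_\bullet(A)\to\cY_\bullet(A)$, i.e.\ to $p(A)$ being a left fibration for every $A$. That is the entire proof.

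The gap in your argument is the step where you pass from ``$\pi_A^*p$ is a left fibration of $T_{/A}$-precategories'' to ``$p(A)$ is a left fibration of categories'' and back. You claim that $\ev_{\id_A}$ sends initial $T_{/A}$-functors to initial functors, but this is neither obvious nor justified by the remark that $\id_A$ is terminal; initiality in parametrized category theory is not a pointwise condition, and the internal orthogonality square at $A$ involves $\Fun_{T_{/A}}(\pi_A^*(-),\pi_A^*(-))$, not $\Fun(\cdot(A),\cdot(A))$. In your converse paragraph you also write ``orthogonality of $p(B)$ against a left fibration of categories'' where you need orthogonality against an \emph{initial} functor, and it is unclear why evaluating an initial $T$-functor at $B$ yields one. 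Even granting that these steps can be repaired by citing further results from \cite{martini2021yoneda}, the route is far longer than the Segal argument and essentially proves the proposition by first proving something at least as hard. The Segal characterization sidesteps all of this because it expresses ``left fibration'' as an equational condition on the underlying simplicial object, which is automatically local.
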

\begin{proof}
    By \cite[Section 3.5]{martini2021yoneda} we can equivalently view $T$-precategories as complete Segal objects in $\PSh(T)$.
    Let $\cX_\bullet \to \cY_\bullet$ be the map of complete Segal objects in $\PSh(T)$
    corresponding to $p$ under this equivalence, so that for every $A \in T$ the category $\cX(A)$ corresponds to the complete Segal space $\cX_\bullet(A)$.

    By \cite[Proposition 4.1.3]{martini2021yoneda} $p$ is a left fibration
    of $T$-precategories if and only if a certain map $\cX_n \to \cX_0 \times_{\cY_0} \cY_n$ defined using the simplicial structure is an equivalence, for every $n \geq 1$.
    This can be checked after evaluating at each $A \in T$,
    which by another application of the cited Proposition (this time for $T = 1$)
    is equivalent to $\cX(A) \to \cY(A)$ being a left fibration for every $A \in T$.
\end{proof}

\begin{lemma}\label{lem:basechange-omega}
    Consider a cartesian square of small $T$-precategories
    \begin{equation}\label{diag:basechange-cart-square}\begin{tikzcd}
        \cV & \cX \\
        \cZ & \cY\rlap.
        \arrow["g", from=1-1, to=1-2]
        \arrow["q"', from=1-1, to=2-1]
        \arrow["\lrcorner"{anchor=center, pos=0.125}, draw=none, from=1-1, to=2-2]
        \arrow["p", from=1-2, to=2-2]
        \arrow["f"', from=2-1, to=2-2]
    \end{tikzcd}\end{equation}
    If $p$ is smooth, then the commutative square
    \[\begin{tikzcd}
        {\Fun_T(\cV,\und{\Spc}_T)} & {\Fun_T(\cX,\und{\Spc}_T)} \\
        {\Fun_T(\cZ,\und{\Spc}_T)} & {\Fun_T(\cY,\und{\Spc}_T)}
        \arrow["{g^*}"', from=1-2, to=1-1]
        \arrow["{q^*}", from=2-1, to=1-1]
        \arrow["{p^*}"', from=2-2, to=1-2]
        \arrow["{f^*}", from=2-2, to=2-1]
    \end{tikzcd}\]
    is vertically left
    and horizontally right adjointable.

    Dually, if $p$ is proper, the square is vertically right and horizontally left adjointable.
\end{lemma}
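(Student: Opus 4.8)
\textbf{Proof plan for Lemma~\ref{lem:basechange-omega}.}
The plan is to reduce everything to the universal case of presheaves and the abstract smoothness/properness formalism of Martini--Weiss. First I would fix notation: write $L_f \dashv f^*$, $L_g \dashv g^*$, etc., for the left Kan extension adjunctions (these exist on $\Fun_T(-,\und{\Spc}_T)$ since $\und{\Spc}_T$ is $T$-cocomplete, being the presheaf $T$-category on the terminal $T$-category), and dually $f^* \dashv R_f$ for the right Kan extensions. The square of pullback comes with two Beck--Chevalley transformations: $\BC_! \colon L_q g^* \Rightarrow f^* L_p$ (the candidate for vertical left adjointability) and $\BC^* \colon q^* R_f \Rightarrow R_g p^*$ (the candidate for horizontal right adjointability in the proper case). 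Note that these two are formally equivalent to each other by a standard mates argument once we know either holds, so it suffices to treat, say, vertical left adjointability in the smooth case, the rest following by passing to opposites and mates.

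Next I would invoke the parametrized analogue of the fact that $L_p$ is computed by a relative colimit / pointwise formula. Concretely: $\und{\Spc}_T$ is $\und{\bfU}$-cocomplete for $\und{\bfU}$ all of $\und{\Cat}_T$, so by \cite{martiniwolf2021limits}*{Theorem~6.3.5} the left Kan extensions $L_p$ and $L_q$ exist and are pointwise, meaning they can be computed fiberwise using the comma $T$-precategories. In the cited framework of \cite{martini2021yoneda}*{Section~4.4}, the statement that $p$ is smooth is precisely designed so that forming $f^*$ commutes with the relevant comma-category colimits: the key input is that for $Y \in \cY$ the comma $T$-precategory $\cX_{Y/}$ (or its analogue) base-changes correctly along $f$, and smoothness guarantees the comparison functor $\cV_{\bullet/} \to \cX_{\bullet/} \times_{\cY}\cZ$ is final on each relevant slice, hence induces an equivalence on colimits valued in $\und{\Spc}_T$. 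Thus $\BC_!$ becomes, after unwinding the pointwise formula at each $A \in T$ and each object, the colimit comparison map of a final functor, which is an equivalence. I would carry this out by first checking it at the level of the parametrized Yoneda lemma (testing against corepresentables $\und{A} \times -$), reducing to the non-parametrized smooth basechange theorem applied fiberwise; this is legitimate because by \cref{prop:param-lfib} and the pointwise nature of all the functors involved, every condition in sight can be verified levelwise over $T$.

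For the dual (proper) case I would simply apply the smooth case to the opposite square $(\ref{diag:basechange-cart-square})^\op$, using that $p$ proper means $p^\op$ smooth, and that $(\und{\Spc}_T)^\op$-valued functor categories translate $R$'s into $L$'s; the two adjointability conclusions swap accordingly. Finally, the statement that vertical left adjointability implies horizontal right adjointability (and vice versa) in the \emph{same} square is the standard observation that in a commuting square of right adjoints, the mate of the Beck--Chevalley transformation for the left adjoints is the Beck--Chevalley transformation for the right adjoints, so one is invertible iff the other is; I would include this as a one-line remark citing \cite{HA}*{Corollary~7.3.2.7} or the analogous $(\infty,2)$-categorical statement. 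The main obstacle I anticipate is bookkeeping: making the identification of $\BC_!$ with a final-functor colimit comparison precise in the parametrized setting, i.e.\ matching the comma $T$-precategories produced by the pointwise Kan extension formula of \cite{martiniwolf2021limits} with the pullback $\cV = \cZ \times_\cY \cX$ in a way that is natural enough to conclude levelwise. Once that identification is in place, the actual verification that the relevant functor is final is exactly the hypothesis that $p$ is smooth, so no further work is needed there.
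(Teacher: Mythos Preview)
Your approach is genuinely different from the paper's, and there is a gap in the reduction step you sketch.

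The paper does not unwind the pointwise Kan extension formula at all. Instead, it uses parametrized straightening--unstraightening \cite{martini2021yoneda}*{Theorem~4.5.1} to identify $\Fun_T(-,\und{\Spc}_T)$ with $\mathsf{LFib}(-)$, the category of left fibrations of $T$-precategories over the given base. Under this identification, all four functors $p^*,q^*,f^*,g^*$ become literal pullback of left fibrations, and the Beck--Chevalley question becomes a question about iterated pullbacks. Martini--Wolf have already packaged exactly this: by \cite{martiniwolf2021limits}*{Remark~5.5.5}, a smooth (proper) functor is $\mathsf{LFib}$-smooth ($\mathsf{LFib}$-proper), and then \cite{martiniwolf2021limits}*{Proposition~5.5.3} gives the adjointability directly. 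The mates observation (your last paragraph) then handles the horizontal versus vertical translation. The entire argument is two citations after the straightening step; no comma categories, no pointwise formula, no levelwise reduction.

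Your comma-category strategy is the classical one and can be made to work in the parametrized setting, but the specific reduction you propose is problematic. You write that you would reduce ``to the non-parametrized smooth basechange theorem applied fiberwise,'' justified by \cref{prop:param-lfib} and ``the pointwise nature of all the functors involved.'' But \cref{prop:param-lfib} only tells you that left/right \emph{fibrations} are detected levelwise; it says nothing about final or initial functors, and indeed parametrized finality is \emph{not} a levelwise condition in general. Consequently, smoothness of $p$ as a $T$-functor does not simply mean that each $p(A)$ is smooth, and the non-parametrized basechange theorem at each $A$ does not directly give you what you need. If you want to run the comma-category argument, you have to stay inside the parametrized world throughout: show that the comparison $(\pi_A^*\cV)_{/z} \to (\pi_A^*\cX)_{/f(z)}$ of $T_{/A}$-precategories is final using the parametrized definition of smoothness (which is stable under passing to slices $T_{/A}$), and then invoke parametrized cofinality for the colimit comparison. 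This is doable, but it is exactly the ``bookkeeping'' you flag, and it is not bypassed by going levelwise. (Also: the authors are Martini--Wolf, not Martini--Weiss.)
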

\begin{proof}
    Note that since $\und{\Spc}_T$ is a complete and cocomplete $T$-precategory \cite{martiniwolf2021limits}*{Proposition~5.2.9},
    it follows that all functors in the second square admit both adjoints \cite{martiniwolf2021limits}*{Corollary~6.3.7}.
    It then follows by abstract nonsense that the square is horizontally left (right)
    adjointable if and only if it is vertically right (left) adjointable.
    Thus it suffices to focus on vertical adjointability.

    By the (natural) straightening--unstraightening equivalence of \cite[Theorem 4.5.1]{martini2021yoneda} (and passing to global sections) we can identify the square in question with
    \[\begin{tikzcd}
        {\textsf{LFib}(\cV)} & {\textsf{LFib}(\cX)} \\
        {\textsf{LFib}(\cZ)} & {\textsf{LFib}(\cY)}
        \arrow["{g^*}"', from=1-2, to=1-1]
        \arrow["{q^*}", from=2-1, to=1-1]
        \arrow["{p^*}"', from=2-2, to=1-2]
        \arrow["{f^*}", from=2-2, to=2-1]
    \end{tikzcd}\]
    where $\textsf{LFib}(-)$ is the category of left fibrations of $T$-precategories with given base, and the maps are given by pullback.
    Now by \cite[Remark 5.5.5]{martiniwolf2021limits}, if $p$ is smooth (proper), then it is $\mathsf{LFib}$-smooth ($\mathsf{LFib}$)-proper in the sense of Definition 5.5.1 of \emph{op.~cit.}, as discussed in the remark.
    This allows us to apply Proposition 5.5.3 of \emph{op.~cit.}\ to deduce that the above square is vertically left (right) adjointable.
\end{proof}

\begin{lemma}\label{lem:yoneda-kan-adjointable}
    Consider a functor $f \colon \cX \to \cY$ of $T$-precategories,
    and let $y \colon \cC \hookrightarrow \und{\PSh}_T(\cC)$ denote the $T$-parametrized Yoneda
    embedding \cite{martini2021yoneda}*{§4.7} of some $T$-precategory $\cC$.
    We have a commutative square
    \[\begin{tikzcd}
        {\Fun_T(\cX,\cC)} & {\Fun_T(\cX,\und{\PSh}_T(\cC))} \\
        {\Fun_T(\cY,\cC)} & {\Fun_T(\cY,\und{\PSh}_T(\cC))}
        \arrow["{y}\circ-", hook, from=1-1, to=1-2]
        \arrow["{f^*}", from=2-1, to=1-1]
        \arrow["{y}\circ-"', hook, from=2-1, to=2-2]
        \arrow["{f^*}"', from=2-2, to=1-2]
    \end{tikzcd}\]
    which is vertically right adjointable if the pointwise right Kan extension (see Remark~\ref{rk:pointwise-Kan}${}^\op$)
    along $f$ exists in $\cC$. Dually, if we use the fully faithful colimit-preserving
    $y_{\cC^\op}^\op \colon \cC \hookrightarrow \und{\PSh}_T(\cC^\op)^\op$
    instead, then the resulting square is left adjointable if the pointwise left Kan extension along $f$ exists in $\cC$.
\end{lemma}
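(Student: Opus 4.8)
The plan is to prove the first statement in detail, the second being formally dual. Unwinding the definition of vertical right adjointability (\cref{rem:adj_in_functor_cat}), we must produce right adjoints to the two vertical functors $f^*\colon\Fun_T(\cY,\cC)\to\Fun_T(\cX,\cC)$ and $f^*\colon\Fun_T(\cY,\und{\PSh}_T(\cC))\to\Fun_T(\cX,\und{\PSh}_T(\cC))$, and then show that the Beck--Chevalley transformation $(y\circ-)\circ f_*\Rightarrow f_*\circ(y\circ-)$ filling the square is an equivalence. The right adjoint $f_*^{\cC}$ on the left exists precisely by the standing hypothesis that the pointwise right Kan extension along $f$ exists in $\cC$; this is its defining property, via the dual of \cref{rk:pointwise-Kan}. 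The right adjoint $f_*^{\und{\PSh}}$ on the right exists because $\und{\PSh}_T(\cC)$ is a complete $T$-precategory by \cite{martiniwolf2021limits}*{Proposition~5.2.9}, and hence admits all pointwise right Kan extensions along $f$ by the criterion recalled in \cref{rk:pointwise-Kan}; in particular this Kan extension is again pointwise.

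To check that the resulting mate transformation is an equivalence, I would argue pointwise: evaluating on an object $g\in\Fun_T(\cX,\cC)$, we must see that a certain map in $\Fun_T(\cY,\und{\PSh}_T(\cC))$ is an equivalence, which may be tested after evaluating at each $A\in T$ and $Y\in\cY(A)$; using the compatibility of $\und{\PSh}_T(-)$, of the Yoneda embedding, and of $f_*$ with restriction along $\pi_A$ \cite{martini2021yoneda}, one reduces further to the case where $A$ is terminal. Then I would invoke the parametrized pointwise formula of \cref{rk:pointwise-Kan}${}^{\op}$, which expresses both $(f_*^{\cC}g)(Y)$ and $(f_*^{\und{\PSh}}(y\circ g))(Y)$ as a limit of one and the same $T$-diagram, indexed by the comma $T$-precategory $\cX_{Y/}\coloneqq\cX\times_{\cY}\cY_{Y/}$, taken in $\cC$ respectively in $\und{\PSh}_T(\cC)$; under this identification the mate is simply the comparison map from $y$ applied to the first limit to the corresponding limit formed in $\und{\PSh}_T(\cC)$. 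Since the parametrized Yoneda embedding $y\colon\cC\hookrightarrow\und{\PSh}_T(\cC)$ preserves all parametrized limits existing in $\cC$ (because limits in $\und{\PSh}_T(\cC)\simeq\und{\Fun}_T(\cC^{\op},\und{\Spc}_T)$ are computed in $\und{\Spc}_T$, and mapping into a limit preserves it), this comparison is an equivalence, and hence so is the mate.

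For the second statement I would replace $\cC$ by $\cC^{\op}$ throughout and use that $y_{\cC^{\op}}^{\op}\colon\cC\hookrightarrow\und{\PSh}_T(\cC^{\op})^{\op}$ is fully faithful and preserves parametrized colimits, exactly as recalled in the proof of \cref{thm:free-algebra-formula} (via \cite{martini2021yoneda}*{Corollary~4.7.16} and \cite{martiniwolf2021limits}*{Proposition~5.2.9${}^{\op}$}). The argument then goes through verbatim with limits replaced by colimits and right Kan extensions by left Kan extensions: $\und{\PSh}_T(\cC^{\op})^{\op}$ is cocomplete, hence admits all pointwise left Kan extensions along $f$; the left adjoint $f_!^{\cC}$ exists by hypothesis; and the mate is tested pointwise, where it follows from $y_{\cC^{\op}}^{\op}$ carrying the pointwise colimit formula for $f_!^{\cC}g$ to the one for $f_!^{\und{\PSh}}$ applied to $y_{\cC^{\op}}^{\op}\circ g$.

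The main obstacle I anticipate is bookkeeping rather than conceptual: carefully matching the two instances of the parametrized pointwise formula (which is only sketched in \cref{rk:pointwise-Kan}) and checking that the Yoneda embedding's (co)limit-preservation is compatible, pointwise in $T$, with the mate transformation — in particular that $f_*^{\und{\PSh}}$ really is pointwise and that the restriction functors $\pi_A^*$ intertwine $y$ with $y_{\pi_A^*\cC}$. Fortunately the comma $T$-precategories $\cX_{Y/}$ depend only on $f$ and not on $\cC$, so once the pointwise formulas are aligned on the nose the comparison is immediate.
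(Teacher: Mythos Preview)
Your proposal is correct and follows essentially the same approach as the paper: both arguments reduce to the pointwise formula for right Kan extensions (\cref{rk:pointwise-Kan}${}^\op$) together with the fact that the parametrized Yoneda embedding preserves all limits (the paper cites \cite{martiniwolf2021limits}*{Proposition~4.4.8} for this). The paper's proof is a one-liner to this effect; your expanded version with the reduction to terminal $A$ and the explicit comma-category indexing is more detailed than necessary but not wrong.
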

\begin{proof}
    This is immediate from the pointwise formula for right Kan extensions (Remark~\ref{rk:pointwise-Kan}) and the fact that the Yoneda embedding $y$ preserves all limits by \cite{martiniwolf2021limits}*{Proposition 4.4.8}.
\end{proof}

\begin{proposition}\label{prop:smooth-proper-basechange}
    Consider again a cartesian square of $T$-precategories as in (\ref{diag:basechange-cart-square}) and let $\cC$ be another $T$-precategory. We obtain a commutative diagram
    \begin{equation}\begin{tikzcd}
        {\und{\Fun}_T(\cV,\cC)} & {\und{\Fun}_T(\cX,\cC)} \\
        {\und{\Fun}_T(\cZ,\cC)} & {\und{\Fun}_T(\cY,\cC)}\rlap.
        \arrow["{g^*}"', from=1-2, to=1-1]
        \arrow["{q^*}", from=2-1, to=1-1]
        \arrow["{p^*}"', from=2-2, to=1-2]
        \arrow["{f^*}", from=2-2, to=2-1]
    \end{tikzcd}\end{equation}
    If $p$ is proper (smooth), then the square is vertically right (left) adjointable
    or horizontally left (right) adjointable as soon as the requisite adjoints exist
    and are given by pointwise Kan extensions.
\end{proposition}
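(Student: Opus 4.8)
The plan is to bootstrap everything to the case $\cC=\und{\Spc}_T$ treated in \cref{lem:basechange-omega}, using the parametrized Yoneda embedding of $\cC$ as a go-between. I will describe the argument for a \emph{proper} $p$ and \emph{vertical right} adjointability; the remaining three cases (proper/smooth $\times$ vertical/horizontal) run on the same template, except that one uses the colimit-preserving embedding $y_{\cC^\op}^\op\colon\cC\hookrightarrow\und{\PSh}_T(\cC^\op)^\op$ in place of $y_\cC$ whenever the relevant adjoints are \emph{left} Kan extensions, and one passes to opposite categories where necessary (recall $p$ proper $\Leftrightarrow$ $p^\op$ smooth, and properness/smoothness of $p$ persists for $p\times\id_\cD$ and for $p^\op\times\id_\cD$, since the relevant pullbacks of $p\times\id_\cD$ are just pullbacks of $p$).

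\emph{Step 1: from $\und{\Fun}_T$ to $\Fun_T$.} A square of $T$-precategories is vertically right adjointable if and only if it becomes so after applying $\pi_A^*$ for every $A\in T$ (apply \cref{rem:adj_in_functor_cat} in $\Fun(T^\op,\Cat)$; note that having a parametrized right adjoint, resp.\ the Beck--Chevalley transformation being invertible, are pointwise conditions). Since $\pi_A^*$ is a base change it preserves the cartesian square, sends $p$ to the proper functor $\pi_A^*p$ (properness being cut out by a base-change-stable condition), and sends pointwise Kan extensions to pointwise Kan extensions; moreover $\und{\Fun}_T(\cX,\cC)(A)\simeq\Fun_{T_{/A}}(\pi_A^*\cX,\pi_A^*\cC)$ by \cite{CLL_Global}*{Corollary~2.2.11}, compatibly with restriction. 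Relabelling $T_{/A}\rightsquigarrow T$, we are reduced to the statement for the ordinary (global sections) functor categories $\Fun_T(-,\cC)$, over an arbitrary $T$, assuming $p_*$ and $q_*$ exist as pointwise right Kan extensions into $\cC$.

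\emph{Step 2: the presheaf model.} Put $\widehat{\cC}\coloneqq\und{\PSh}_T(\cC)\simeq\und{\Fun}_T(\cC^\op,\und{\Spc}_T)$; cartesian closedness of $T$-precategories gives $\Fun_T(\cX,\widehat{\cC})\simeq\Fun_T(\cX\times\cC^\op,\und{\Spc}_T)$, naturally in restriction along $T$-functors into $\cX$. The square $\cV\times\cC^\op=(\cX\times\cC^\op)\times_{\cY\times\cC^\op}(\cZ\times\cC^\op)$ is again cartesian and $p\times\id_{\cC^\op}$ is again proper, so \cref{lem:basechange-omega} shows that the $\widehat{\cC}$-valued square is vertically right adjointable, with $p_*,q_*$ the (automatically pointwise) right Kan extensions into the complete $\widehat{\cC}$.

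\emph{Step 3: transport along Yoneda.} The embedding $y_\cC\colon\cC\hookrightarrow\widehat{\cC}$ is fully faithful and preserves all limits (\cite{martini2021yoneda}*{Proposition~4.4.8}), so $y_\cC\circ(-)$ is fully faithful (hence conservative) on each $\Fun_T(-,\cC)$, and by \cref{lem:yoneda-kan-adjointable} it commutes with the right Kan extensions along $p$ and along $q$ (which exist pointwise in $\cC$ by hypothesis); note also that restriction functors, being precomposition, commute with the postcomposition $y_\cC\circ(-)$. Chaining these identifications with the $\widehat{\cC}$-valued Beck--Chevalley equivalence of Step 2 yields
\[
(y_\cC\circ-)\,q_*g^*\simeq q_*g^*\,(y_\cC\circ-)\simeq f^*p_*\,(y_\cC\circ-)\simeq (y_\cC\circ-)\,f^*p_*,
\]
and a check of compatibility of mates with whiskering and pasting identifies this composite with $(y_\cC\circ-)$ applied to the Beck--Chevalley transformation $\BC\colon q_*g^*\Rightarrow f^*p_*$ for $\cC$. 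Conservativity of $y_\cC\circ(-)$ then forces $\BC$ to be an equivalence, which is the desired vertical right adjointability.

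The main obstacle is the last point of Step 3: verifying rigorously that, under the chain of equivalences, $(y_\cC\circ-)\BC$ really is $\BC^{\widehat{\cC}}$ whiskered by $(y_\cC\circ-)$ — i.e.\ the compatibility of the mate construction with the $2$-categorical pasting of the cube whose front and back faces are the $\cC$- and $\widehat{\cC}$-valued squares and whose side faces are the squares of \cref{lem:yoneda-kan-adjointable}. A secondary nuisance is the two ``mixed'' cases (proper $p$ with horizontal left adjointability, smooth $p$ with vertical left adjointability), where one must track opposite categories carefully when rewriting $\Fun_T(\cX,\und{\PSh}_T(\cC^\op)^\op)\simeq\Fun_T(\cX^\op\times\cC,\und{\Spc}_T)^\op$ and translating ``left adjointable'' for the $\und{\PSh}_T(\cC^\op)^\op$-valued square into ``right adjointable'' for the $\und{\Spc}_T$-valued square before invoking \cref{lem:basechange-omega}.
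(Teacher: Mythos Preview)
Your proposal is correct and follows essentially the same strategy as the paper: reduce to global sections, embed $\cC$ into $\und{\PSh}_T(\cC)$ via Yoneda, and invoke \cref{lem:basechange-omega} for the $\und{\Spc}_T$-valued square obtained by currying.

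The one substantive difference is how you handle your self-identified ``main obstacle.'' Rather than directly comparing $(y_\cC\circ-)\BC$ with the whiskered $\BC^{\widehat{\cC}}$ via a cube-compatibility check, the paper organizes the argument as a pasting of squares. Concretely, it writes the rectangle
\[
\Fun_T(-\times\cC^\op,\und{\Spc}_T)\xleftarrow{\;y\circ-\;}\Fun_T(-,\cC)
\]
applied to the original cartesian square, observes that the Yoneda side is vertically right adjointable by \cref{lem:yoneda-kan-adjointable}, and then---using that pre- and postcomposition commute---rewrites the \emph{same} total rectangle with Yoneda on the other side, so that it factors as the $\und{\Spc}_T$-valued square (adjointable by \cref{lem:basechange-omega}) pasted with another Yoneda square (adjointable again by \cref{lem:yoneda-kan-adjointable}). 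Since Beck--Chevalley transformations compose under pasting, this sidesteps the explicit mate-compatibility verification entirely. Your approach is not wrong, but the paper's pasting trick is cleaner and avoids the bookkeeping you flagged.

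A minor point: in Step~1 the paper reduces via $\und{\Fun}_T(\cX,\cC)(A)\simeq\Fun_T(\cX,\und{\Fun}_T(\und{A},\cC))$ (moving $A$ into the target) rather than your $\pi_A^*$; both are valid, but the paper's version avoids checking that properness is preserved by $\pi_A^*$.
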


\begin{proof}
    We will prove that if $p$ is proper and the pointwise right Kan extensions
    $p_*,q_*$ exist in $\cC$, then the square is vertically right adjointable.
    The other cases are handled analogously.

    We can check the equivalence of the Beck--Chevalley map $f^*p_* \Rightarrow q_*g^*$
    in each degree $A \in T$ separately.
    Using that we have an equivalence $\und{\Fun}_T(\cX,\cC)(A) \simeq \Fun_T(\cX,\und{\Fun}_T(\und{A},\cC))$ which is natural in $\cX$ and $A$,
    we may replace $\und{\Fun}_T(\und{A},\cC)$ with $\cC$ and prove that the right square in the following pasting is vertically right adjointable,
    given that both vertical right adjoints exist and $p$ is proper:
    \[\begin{tikzcd}
        {\Fun_T(\cV \times \cC^\op,\und{\Spc}_T)} & {\Fun_T(\cV,\cC)} & {\Fun_T(\cX,\cC)} \\
        {\Fun_T(\cZ \times \cC^\op,\und{\Spc}_T)} & {\Fun_T(\cZ,\cC)} & {\Fun_T(\cY,\cC)}
        \arrow[hook', from=1-2, to=1-1]
        \arrow["{g^*}"', from=1-3, to=1-2]
        \arrow["{(q \times \cC^\op)^*}", from=2-1, to=1-1]
        \arrow["{q^*}", from=2-2, to=1-2]
        \arrow[hook', from=2-2, to=2-1]
        \arrow["{p^*}"', from=2-3, to=1-3]
        \arrow["{f^*}", from=2-3, to=2-2]
    \end{tikzcd}\]
    Here the fully faithful left horizontal inclusions are induced by the $T$-parametrized
    Yoneda embedding $y \colon \cC \hookrightarrow \und{\PSh}_T(\cC)$ and currying.

    We can check the vertical right-adjointability of the right square
    after postcomposing with the bottom left fully faithful functor.
    Moreover, note that the left square is vertically right adjointable
    by \cref{lem:yoneda-kan-adjointable}.
    Since Beck--Chevalley maps compose, we are thus reduced to proving that the entire
    rectangle is vertically right adjointable.
    However, note that since pre- and postcomposition commute, this rectangle can be rewritten as
    \[\begin{tikzcd}
        {\Fun_T(\cV \times \cC^\op,\und{\Spc}_T)} &[1.33em] {\Fun_T(\cX \times \cC^\op,\und{\Spc}_T)} & {\Fun_T(\cX,\cC)} \\
        {\Fun_T(\cZ \times \cC^\op,\und{\Spc}_T)} & {\Fun_T(\cY \times \cC^\op, \und{\Spc}_T)} & {\Fun_T(\cY,\cC)}
        \arrow["{(g \times \cC^\op)^*}"', from=1-2, to=1-1]
        \arrow[hook', from=1-3, to=1-2]
        \arrow["{(q \times \cC^\op)^*}", from=2-1, to=1-1]
        \arrow["{(p \times \cC^\op)^*}"', from=2-2, to=1-2]
        \arrow["{(f \times \cC^\op)^*}", from=2-2, to=2-1]
        \arrow["{p^*}"', from=2-3, to=1-3]
        \arrow[hook', from=2-3, to=2-2]
    \end{tikzcd}\]
    Here, the right square is again vertically right adjointable by \cref{lem:yoneda-kan-adjointable}.
    Since $p \times \cC^\op$ is a pullback of $p$ it is again proper,
    and using once more that Beck--Chevalley maps compose, we are thus reduced to the case $\cC = \und{\Spc}_T$, which was handled in \cref{lem:basechange-omega}.
\end{proof}

\subsection{Demure basechange}
Recall from \cite[Definition 3.1]{shah2022parametrizedII}, that a \emph{factorization system} on a $T$-precategory $\cX$ consists of two wide subcategories $E,M\subset\cX$ such that $(E(A),M(A))$ is a factorization system on $\cX(A)$ for every $A\in T$. In this section we will prove the following application of proper basechange:

\begin{lemma}\label{lemma:rough-basechange}
    Let $\cX$ and $\cX'$ be two $T$-precategories both equipped with parametrized factorization systems $(E,M)$ and $(E',M')$, respectively, and let $f\colon\cX\to \cX'$ be a $T$-functor with the following properties:
    \begin{enumerate}
        \item $f$ maps $E$ to $E'$.
        \item $f$ maps $M$ to $M'$, and the resulting map $M\to M'$ is a right fibration.
    \end{enumerate}
    Let $\cC$ be another $T$-precategory and consider the commutative diagram
    \[\begin{tikzcd}[cramped]
        {\und{\Fun}_T(\cX',\cC)} & {\und{\Fun}_T(\cX,\cC)} \\
        {\und{\Fun}_T(E',\cC)} & {\und{\Fun}_T(E,\cC)}
        \arrow["{f^*}", from=1-1, to=1-2]
        \arrow["{i'^*}"', from=1-1, to=2-1]
        \arrow["{i^*}", from=1-2, to=2-2]
        \arrow["{f|_E^*}", from=2-1, to=2-2]
    \end{tikzcd}\]
    where $i \colon E \hookrightarrow \cX$ and $i' \colon E' \hookrightarrow \cX'$ denote the inclusions.
    Then the square is vertically left adjointable or horizontally right adjointable
    as soon as the requisite adjoints exist and are given by pointwise Kan extensions.
\end{lemma}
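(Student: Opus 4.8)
The plan is to reduce the statement to the smooth/proper basechange result \cref{prop:smooth-proper-basechange} by identifying the relevant square as a pullback of $T$-precategories. Since the situation is symmetric (vertical left adjointability of a commutative square is equivalent to horizontal right adjointability whenever all requisite adjoints exist), it suffices to treat one of the two cases, say vertical left adjointability; we want the Beck--Chevalley transformation $(i'_! \dashv i'^*)$, $(i_! \dashv i_!)$ to be an equivalence, where $i_!$ denotes the pointwise left Kan extension along $i \colon E \hookrightarrow \cX$, and similarly for $i'$.

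First I would observe that, because $(E,M)$ is a parametrized factorization system, the inclusion $i \colon E \hookrightarrow \cX$ is a \emph{right fibration}. Indeed, at each $A \in T$ the inclusion $E(A) \hookrightarrow \cX(A)$ of the left class of a factorization system is a right fibration — this is a standard fact, since its straightening sends $x \in \cX(A)$ to the space of factorizations $x \to \bullet$ with left leg in $E(A)$, which is contractible (unique factorizations) — and by \cref{prop:param-lfib} being a right fibration of $T$-precategories is a pointwise condition. The same applies to $i' \colon E' \hookrightarrow \cX'$.

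Next I would verify that the square
\[
    \begin{tikzcd}
        E \arrow[r, "f|_E"] \arrow[d, hook, "i"'] & E' \arrow[d, hook, "i'"]\\
        \cX \arrow[r, "f"'] & \cX'
    \end{tikzcd}
\]
is a pullback of $T$-precategories. This can be checked pointwise, i.e.~for each $A \in T$: an object of the pullback $E'(A) \times_{\cX'(A)} \cX(A)$ is an object $x \in \cX(A)$ together with a witness that $f(x)$ lies in the wide subcategory $E'(A)$, but this is a property, and by hypothesis (1) combined with hypothesis (2) we can check that $f(x) \in E'(A)$ is equivalent to $x \in E(A)$: one direction is (1), and for the converse one uses that the map $M \to M'$ is a right fibration to transport the $M'$-factor of the (image of the) trivial factorization of $x$ back along $f$, showing via uniqueness of factorizations that the $M$-part of $x$ must be an equivalence. (Here, mirroring the argument in the proof of \cref{prop:mysterious-Beck-Chevalley}, one checks that the fully faithful inclusion of the relevant subcategory of spans/factorizations is a sieve, hence a right fibration, and a pullback square of right fibrations over a common base with a fully faithful leg is itself a pullback.) With the square a pullback and $i'$ a right fibration, hence proper by \cref{prop:lfib-smooth}, \cref{prop:smooth-proper-basechange} applies directly: the induced square on $\und{\Fun}_T(-,\cC)$ is vertically left adjointable as soon as the pointwise left Kan extensions exist, which is the hypothesis of the lemma.

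The main obstacle I anticipate is the pullback verification in the previous paragraph — more precisely, checking carefully that hypothesis (2) (that $f|_M \colon M \to M'$ is a right fibration, not merely that $f$ preserves $M$) is exactly what is needed to conclude that $x \in E(A)$ whenever $f(x) \in E'(A)$. This is where one must argue with the parametrized factorization system and uniqueness of factorizations; once that is in place, the rest is a formal application of the already-established smooth/proper basechange machinery, and the symmetry remark disposes of the horizontal case.
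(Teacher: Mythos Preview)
Your approach has a fatal gap: the inclusion $i' \colon E' \hookrightarrow \cX'$ of the left class of a factorisation system is \emph{not} a right fibration. Since $E'$ is a wide subcategory, the inclusion is bijective on objects, so for it to be a right fibration every morphism of $\cX'$ would have to lift to $E'$, i.e.\ lie in $E'$, forcing $E' = \cX'$; consider for instance the (epi,\,mono) system on sets. What you describe as the ``straightening'' is not the fibre of $E' \hookrightarrow \cX'$---the actual fibre over any object is a point. The argument in \cref{prop:mysterious-Beck-Chevalley} you are mirroring uses that a certain full subcategory is a \emph{sieve}, which is automatically a right fibration; the left class of a factorisation system is neither full nor a sieve. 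There is also a direction error: even granting that $i'$ were proper, \cref{prop:smooth-proper-basechange} would only yield vertical \emph{right} (or horizontal left) adjointability, whereas you need vertical left adjointability, which requires $i'$ to be \emph{smooth}; and $i'$ is not a left fibration either.

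The paper's argument instead routes through the right half of the factorisation system. A preliminary lemma shows that left Kan extension along $E \hookrightarrow \cX$, after restriction to $M$, agrees with left Kan extension along $\core\cX \hookrightarrow M$ (the key point being that $\core(M_{/x}) \hookrightarrow E \times_\cX \cX_{/x}$ has a left adjoint, hence is final). Applying this for both factorisation systems and using that restriction to $M$ is conservative reduces the question to the pullback square with horizontal maps the inclusions $\core\cX \hookrightarrow M$, $\core\cX' \hookrightarrow M'$ and right vertical map $f|_M \colon M \to M'$. Now hypothesis~(2) says precisely that $f|_M$ is a right fibration, hence proper, and \cref{prop:smooth-proper-basechange} gives horizontal left adjointability---exactly the desired Beck--Chevalley condition. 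Your pullback observation $E \simeq E' \times_{\cX'} \cX$ is correct (via conservativity of $f|_M$), but it does not help: in your orientation the map required to be smooth or proper is $i'$, not $f|_M$.
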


For the proof we will need another lemma:

\begin{lemma}
	Let $\cX$ be a $T$-precategory with a parametrized factorization system $(E,M)$, and let $\cC$ be any $T$-precategory. Assume that the pointwise left Kan extension along $i\colon \core\cX=\core M\hookrightarrow M$ exists in $\cC$. Then also the left Kan extension along $j\colon E\hookrightarrow\cX$ exists and is pointwise. Moreover, the Beck--Chevalley transformation
	\[
	 \begin{tikzcd}            	         \ul\Fun_T(E,\cC)\arrow[r, "j_!"]\arrow[d,"\res"'] & \ul\Fun_T(\cX,\cC)\arrow[d,"\res"]\\            \ul\Fun_T(\core\cX,\cC)\arrow[r, "i_!"']\arrow[ur, Rightarrow,shorten=7pt] & \ul\Fun_T(M,\cC)
      \end{tikzcd}
    \]
    is an equivalence.
    \begin{proof}
        We will show that the inclusion
        \[
            \core\big((\pi_A^*M)_{/X}\big)=(\pi_A^*M)_{/X}\times_{\pi_A^*M}\core(\pi_A^*\cX)\hookrightarrow
            (\pi_A^*\cX)_{/X}\times_{\pi_A^*\cX}\pi_A^*E
        \]
        is final in the sense of \cite{martiniwolf2021limits}*{Proposition~4.6.1}. The lemma will then follow from the pointwise formula.

        To prove the claim, we replace $\cX$ by $\pi_A^*\cX$; we then have to show that the inclusion $\core(M_{/X})\hookrightarrow\cX_{/X}\times_\cX E$ is final, for which we will show that it admits a parametrized left adjoint. By \cite{CHLL_Bispans}*{proof of Proposition~4.1.4} $\incl$ has a pointwise left adjoint $\lambda$. As $\incl$ is fully faithful, $\lambda$ is a localization; the Beck--Chevalley condition with respect to restrictions of $\cX$ then amounts to saying that if $f\colon B\to B'$ is any map in $T$, then $f^*$ sends maps inverted by $\lambda(B')$ to maps inverted by $\lambda(B)$. But \emph{every} map is inverted by $\lambda(B)$ as its target is a groupoid, so the claim follows.
    \end{proof}
\end{lemma}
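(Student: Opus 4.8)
The final statement is a lemma about a single $T$-precategory $\cX$ with a parametrized factorization system $(E,M)$: assuming pointwise left Kan extension along $i\colon\core\cX=\core M\hookrightarrow M$ exists in $\cC$, we must show that left Kan extension along $j\colon E\hookrightarrow\cX$ exists and is pointwise, and that the Beck--Chevalley transformation for the square relating $j_!$, $i_!$ and restriction to $\core\cX$ and $M$ is an equivalence. The strategy is exactly as sketched in the excerpt: reduce everything to a \emph{finality} statement about comma categories, so that the pointwise formula for left Kan extensions (Remark~\ref{rk:pointwise-Kan}${}^\op$, i.e.\ the dual of \cref{rk:pointwise-Kan}) does all the work.

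\textbf{Step 1: identify the comma categories.} Fix $A\in T$ and $X\in\cX(A)$. The pointwise left Kan extension along $j$ exists at $(A,X)$ iff $\pi_A^*\cC$ has $\big((\pi_A^*\cX)_{/X}\times_{\pi_A^*\cX}\pi_A^*E\big)$-colimits; similarly the hypothesis says $\pi_A^*\cC$ has $\core\big((\pi_A^*M)_{/X}\big)$-colimits, where I am using $\core\big((\pi_A^*M)_{/X}\big)\simeq(\pi_A^*M)_{/X}\times_{\pi_A^*M}\core(\pi_A^*\cX)$ (this equivalence is formal: a map in $M$ over $X$ lands in $\core M=\core\cX$ precisely when its source is in $\core\cX$). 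So it suffices to show the inclusion
\[
    \core\big((\pi_A^*M)_{/X}\big)\hookrightarrow(\pi_A^*\cX)_{/X}\times_{\pi_A^*\cX}\pi_A^*E
\]
is final in the sense of \cite{martiniwolf2021limits}*{Proposition~4.6.1}, since a final functor between small $T$-precategories preserves the property of admitting colimits of the relevant shape (and computes them compatibly), which then yields both the existence of $j_!$ and the identification of the Beck--Chevalley map as an equivalence via the pointwise formula.

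\textbf{Step 2: prove finality by exhibiting a parametrized left adjoint.} Replacing $\cX$ by $\pi_A^*\cX$ (using that parametrized factorization systems and comma categories are stable under $\pi_A^*$, as recorded around \cite{shah2022parametrizedII}*{Definition~3.1}), we must show $\incl\colon\core(M_{/X})\hookrightarrow\cX_{/X}\times_\cX E$ is final. Here I would invoke that a $T$-functor admitting a parametrized left adjoint is final (the parametrized analogue of the classical fact, cf.\ \cite{martiniwolf2021limits}). For the pointwise left adjoint: given $(Z\to X)\in(\cX_{/X}\times_\cX E)(B)$, factor $Z\to X$ in the factorization system on $\cX(B)$ as $Z\rightarrowepic W\rightarrowmono X$; but since $Z\to X$ already lies in $E(B)$ and $E$ is the left class, the factorization is $Z = Z \rightarrowmono X$ with the $M$-part being... wait---more carefully: a map in $E$ over $X$ need not land in $\core M_{/X}$ unless its \emph{source} is in the core. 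So instead the left adjoint sends $(Z\to X)$ to its terminal object: namely one checks (as in \cite{CHLL_Bispans}*{proof of Proposition~4.1.4}) that $\core(M_{/X})\hookrightarrow\cX_{/X}\times_\cX E$ has a pointwise left adjoint $\lambda$ because both sides have $X=\id_X$ as a kind of terminal witness---concretely $\lambda(B)$ sends $(Z\to X)$ to $X=\id_X\in\core(M_{/X})(B)$, with unit the given map $Z\to X$ (which is the unique map to a terminal object, and its target being in the core is automatic).

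\textbf{Step 3: upgrade pointwise to parametrized via Beck--Chevalley.} It remains to check that the pointwise left adjoints $\lambda(B)$ assemble into a $T$-functor, i.e.\ the Beck--Chevalley condition with respect to restriction along $f\colon B\to B'$ in $T$ (\cref{rem:adj_in_functor_cat}). Since $\incl$ is fully faithful, $\lambda$ is a localization, so the condition becomes: $f^*$ sends maps inverted by $\lambda(B')$ to maps inverted by $\lambda(B)$. But the target $\core(M_{/X})(B)$ is a \emph{groupoid}, so $\lambda(B)$ inverts \emph{every} map; hence the condition is vacuous. This gives the parametrized left adjoint, hence finality of $\incl$, hence the lemma. \textbf{The main obstacle} is the bookkeeping in Step 1---correctly matching up the two comma categories and checking the equivalence $\core\big((\pi_A^*M)_{/X}\big)\simeq(\pi_A^*M)_{/X}\times_{\pi_A^*M}\core(\pi_A^*\cX)$ is what it should be---and, in Step 2, pinning down precisely why the pointwise left adjoint exists; once finality is in hand, Step 3 is essentially free because the target is a groupoid.
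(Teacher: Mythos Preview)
Your approach is essentially identical to the paper's: both reduce to showing finality of the inclusion $\core(M_{/X})\hookrightarrow\cX_{/X}\times_\cX E$ (after pulling back along $\pi_A$), both establish finality by exhibiting a parametrized left adjoint whose pointwise existence is taken from \cite{CHLL_Bispans}*{proof of Proposition~4.1.4}, and both observe that the Beck--Chevalley condition for assembling the pointwise adjoints is vacuous because the target is a groupoid.

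However, your explicit description of the pointwise left adjoint $\lambda$ in Step~2 is incorrect. The objects of $\cX_{/X}\times_\cX E$ are \emph{arbitrary} maps $Z\to X$ in $\cX$ (not maps in $E$; only the \emph{morphisms} are required to lie in $E$), so in general $Z\to X$ does not define a morphism $(Z\to X)\to(\id_X)$ in $\cX_{/X}\times_\cX E$, and $\id_X$ is not terminal there. The correct left adjoint sends $(Z\to X)$ to the $M$-part $(W\to X)$ of its $(E,M)$-factorization $Z\to W\to X$, with unit the $E$-map $Z\to W$; uniqueness of factorizations then gives the universal property. Since you ultimately defer to the citation for existence, this error does not break the argument, but the attempted concrete description (and the ``terminal witness'' heuristic) should be corrected or simply omitted.
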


\begin{proof}[Proof of Lemma~\ref{lemma:rough-basechange}]
    Assume that the pointwise left Kan extensions along $i$ and $i'$ exist.
    In view of \cref{lem:yoneda-kan-adjointable}, we may assume that $\cC$ is cocomplete (and complete).
    We can now simply repeat the proof of the non-parametrized version of the lemma given in \cite{CHLL_Bispans}*{Lemma~4.1.6}, replacing references to Proposition~4.1.4 of \emph{op.\ cit.}\ by references to the previous lemma, and replacing the appeal to (ordinary, non-parametrized) smooth/proper basechange by a reference to \cref{prop:smooth-proper-basechange}.

    Now assume instead that the pointwise right Kan extensions along $f$ and $f|_E$ exist. We may dually reduce to the case that $\cC$ is complete and cocomplete. But then the two Beck--Chevalley maps are actually total mates of each other, so one is invertible if and only if the other one is so. Thus, the claim follows from the above.
\end{proof}

\section{Ultra-commutativity and ordinary higher algebra}\label{app:ordinary-HA}
The goal of this appendix is to prove a precise version of the slogan that ultra-commutativity cannot be encoded in ordinary higher algebra. We begin with the equivariant case:

\begin{proposition}
    Let $G\not=1$ be any finite group, let $\mathcal O$ be an operad (in the non-parametrized sense), and let $X\in\mathcal O_1$ be any object. Then there does \emph{not} exist an equivalence $\UCom_G\simeq\Alg_{\mathcal O}(\Sp_G)$ fitting into a commutative diagram
    \[
        \begin{tikzcd}[column sep=small]
            \UCom_G\arrow[rr,"\sim"]\arrow[dr, bend right=15pt,"\fgt"'] && \Alg_{\mathcal O}(\Sp_G)\rlap.\arrow[dl, bend left=15pt,"\ev_X"]\\
            & \Sp_G
        \end{tikzcd}
    \]
    \begin{proof}
        As the geometric fixed point functor $\Phi^G\colon\Sp_G\to\Sp$ is symmetric monoidal and cocontinuous, it admits a lax symmetric monoidal right adjoint $R$, yielding an induced adjunction $\Alg_{\mathcal O}(\Phi^G)\dashv\Alg_{\mathcal O}(R)$. We note that both adjoints commute with the forgetful functors $\ev_X$; thus, passing to total mates shows that the left adjoint $\Alg_{\mathcal O}(\Phi_G)$ commutes with both $\ev_X$ and its left adjoint $\mathbb P^{\mathcal O}_X$, so that 
        \begin{equation}\label{eq:Phi-G-vs-monad}
            \Phi^G\circ{\ev_X}\circ\mathbb P^{\mathcal O}_X\simeq{\ev_X}\circ\mathbb P^{\mathcal O}_X\circ\Phi^G.
        \end{equation} 
        In the same way one shows that the inclusion $\Sp_{\ge 0}\hookrightarrow\Sp$ commutes with ${\ev_X}\circ\mathbb P^{\mathcal O}_X$; in particular, ${\ev_X}\circ\mathbb P^{\mathcal O}\colon\Sp\to\Sp$ preserves the full subcategory of connective spectra. Combining this with the equivalence $(\ref{eq:Phi-G-vs-monad})$, we conclude that ${\ev_X}\circ\mathbb P^{\mathcal O}\colon\Sp_G\to\Sp_G$ preserves the full subcategory $\Xx$ spanned by all $G$-spectra such that $\Phi^GX$ is connective. To prove the proposition, it will then suffice to show that the analogous composite $\mathbb U\mathbb P\colon\Sp_G\to\UCom_G\to\Sp_G$ does \emph{not} preserve $\Xx$.

        For this, we let $X\coloneqq\Omega\Sigma^\infty_+G$. Then $\Phi^G(X)\simeq \Omega\Phi^G\Sigma^\infty_+G\simeq\Omega\Sigma^\infty_+(G^G)=0$, so $X\in\Xx$.    To show that $\mathbb U\mathbb P(X)\notin\Xx$, we begin by observing that $X$ is modelled on the pointset level by the $G$-symmetric spectrum $\bm\Sigma(1,-)\smashp G_+$, so we compute as in Warning~\ref{warn:U-ext-no-strict} that $\mathbb U\mathbb P(X)$ is modelled by $\bigvee_{n\ge 0}\bm\Sigma(n,-)\smashp_{\Sigma_n}(G^n)_+$. We now let $n=|G|$, and we pick an arbitrary enumeration $g_1,\dots,g_n$ of the elements of $G$. This then defines an injective homomorphism $\sigma\colon G\to\Sigma_n$ characterized by $g.g_i=g_{\sigma(g)(i)}$ for all $g\in G$ and $1\le i\le n$ (in other words, $\sigma$ classifies the left regular action on $G$, transported to $\{1,\dots,n\}$ via the chosen enumeration). The isotropy of $(g_1,\dots,g_n)$ in the $(G\times\Sigma_n)$-set $G^n$ (with $G$ acting via left multiplication and $\Sigma_n$ via permuting the compontents) consists precisely of the elements $(g,\sigma(g))$, so the inclusion of the orbit of $(g_1,\dots,g_n)$ exhibits $D_\sigma\coloneqq\sigma^*\bm\Sigma(n,-)$ as a summand of $\bm\Sigma(n,-)\smashp_{\Sigma_n}(G^n)_+$ and hence of $\mathbb U\mathbb P(X)$; thus, it will suffice to show that $\Phi^G(D_\sigma)$ is not connective.
        
        To this end we observe that the $\Sigma_n$-spectrum $\bm\Sigma(n,-)$ is isomorphic on the pointset level to the norm of the $\Sigma_{n-1}$-spectrum $\bm\Sigma(1,-)$,where $\Sigma_{n-1}$ acts trivially. It follows that $\bm\Sigma(n,-)$ is an inverse with respect to the smash product of the $\Sigma_n$-spectrum $\Sigma^\infty S^n$ where $\Sigma_n$ acts on $S^n$ by permutation. Thus, $D_\sigma$ is inverse to the $G$-spectrum $\sigma^*\Sigma^\infty S^n$, and the ordinary spectrum $\Phi^G(D_\sigma)$ is inverse to $\Phi^G(\sigma^*\Sigma^\infty S^n)\simeq\Sigma^\infty(\sigma^*S^n)^G$. The fixed-point space $(\sigma^*S^n)^G$ is a sphere again, and its dimension $d$ equals the dimension of the $\sigma(G)$-fixed points of the permutation representation $\R^n$. We therefore have $\Phi^G(D_\sigma)\simeq\Omega^d\mathbb S$ and hence in particular $\pi_{-d}\Phi^G(D_\sigma)\cong\Z$. However, $d\ge1$ as $(1,\dots,1)$ is a non-zero $\Sigma_n$-fixed point of $\R^n$; thus, $\Phi^G(D_\sigma)$ is non-connective, and hence so is $\Phi^G(\mathbb U\mathbb P(X))$.
    \end{proof}
\end{proposition}

Similarly we have the following global version:

\begin{proposition}
    Let $G$ be any finite group (the case $G=1$ is allowed), let $\mathcal O$ be an operad, and let $X\in\mathcal O_1$ be arbitrary. Then there does \emph{not} exist an equivalence $\UCom_\textup{$G$-gl}\simeq\Alg_{\mathcal O}(\Sp_\textup{$G$-gl})$ fitting into a commutative diagram
    \[
        \begin{tikzcd}[column sep=small]
            \UCom_\textup{$G$-gl}\arrow[rr,"\sim"]\arrow[dr, bend right=15pt,"\fgt"'] && \Alg_{\mathcal O}(\Sp_\textup{$G$-gl})\rlap.\arrow[dl, bend left=15pt,"\ev_X"]\\
            & \Sp_\textup{$G$-gl}
        \end{tikzcd}
    \]
    \begin{proof}
        Write $\mathbb P^{\mathcal O}_X$ for the left adjoint to $\ev_X$. If $G\not=1$ then one argues as in the previous proposition to show that ${\ev_X}\circ\mathbb P^{\mathcal O}_X$ preserves the full subcategory $\Xx$ of those $G$-global spectra whose underlying $G$-spectrum has connective geometric fixed points, and $X\coloneqq\Omega\Sigma^\infty_+G$ is again an example of an element of $\Xx$ whose image under $\mathbb U\mathbb P\colon\Sp_\textup{$G$-gl}\to\UCom_\textup{$G$-gl}\to\Sp_\textup{$G$-gl}$ is no longer contained in $\Xx$.

        It therefore only remains to treat the case $G=1$. For this we argue as before to see that ${\ev_X}\circ\mathbb P^{\mathcal O}_X$ preserves the essential image $\Yy$ of the unique symmetric monoidal left adjoint $\Sp\to\Sp_\gl$. However, we saw in Warning~\ref{warn:U-ext-no-strict} that the symmetric spectrum $\bm\Sigma(1,-)\smashp S^1$ models an element of $\Yy$ (in fact, this is just the sphere spectrum) such that its image under $\mathbb U\mathbb P$ does not belong to $\Yy$.
    \end{proof}
\end{proposition}

\bibliography{reference}
\end{document}